\numberwithin{equation}{section}
\newcommand{\ie}{\emph{i.e.}\ }
\theoremstyle{definition}
\newtheorem{definition}{Definition}[section]
\newtheorem{remark}[definition]{Remark} 
\newtheorem{theorem}{Theorem}[section]
\newtheorem{lemma}[theorem]{Lemma}
\newtheorem{corollary}{Corollary}[theorem]
\newtheorem{assumption}{Assumptions} 
\newcommand*\circled[1]{\tikz[baseline=(char.base)]{
            \node[shape=circle,draw,inner sep=1pt] (char) {#1};}}
\def\half{\frac{1}{2}}
\newcommand{\bigo}{{\mathcal O}}
\newcommand\bfa{{\mathbf a}}
\newcommand\bfb{{\mathbf b}}
\newcommand\bfd{{\mathbf d}}
\newcommand\bfe{{\mathbf e}}
\newcommand\bfeta{{\bm{\eta}}}
\newcommand\bff{{\bm f}}
\newcommand\bfg{{\mathbf g}}
\newcommand\bfn{{\mathbf n}}
\newcommand\bfng{{\mathbf{n}_{\Gamma}}}
\newcommand\bfm{{\mathbf m}}
\newcommand\bfchi{{\bm{\chi}}}
\newcommand\bfu{{\mathbf{u}}}
\newcommand\bfv{{\mathbf{v}}}
\newcommand\bfw{{\mathbf w}}
\newcommand\bfx{{\mathbf x}}
\newcommand\bfy{{\mathbf y}}
\newcommand\bfz{{\mathbf z}}
\newcommand\bfA{{\mathbf A}}
\newcommand\bfB{{\mathbf B}}
\newcommand\bfH{\bm{H}}
\newcommand\bfI{{\bm I}}
\newcommand\bfF{{\mathbf F}}
\newcommand\bfG{{\mathbf G}}
\newcommand\bfM{{\mathbf M}}
\newcommand\bfU{{\mathbf U}}
\newcommand\bfV{{\mathbf V}}
\newcommand\bfP{{\mathbf{P}}}
\newcommand{\eu}{\bfe_\bfu}
\newcommand{\ep}{\bfe_p}
\newcommand{\el}{\bfe_{\lambda}}
\newcommand{\Ga}{\Gamma}
\newcommand{\Gah}{\Gamma_h}
\newcommand{\Gat}{\Gamma(t)}
\newcommand{\Gaht}{\Gamma_h(t)}
\newcommand{\Gahn}{\Gamma_h^n}
\newcommand{\GahnN}{\Gamma_h^{n-1}}
\newcommand{\ds}{d\sigma}
\newcommand{\dsh}{d\sigma_h}
\newcommand{\nbg}{\nabla_{\Gamma}}
\newcommand{\nbgcov}{\nabla_{\Gamma}^{cov}}
\newcommand{\nbgcovh}{\nabla_{\Gamma_h}^{cov}}
\newcommand{\nbgh}{\nabla_{\Gamma_h}}
\newcommand{\nbglin}{\nabla_{\Gamma_h^{(1)}}}
\newcommand{\divg}{\textnormal{div}_{\Gamma}}
\newcommand{\divgh}{\textnormal{div}_{\Gah}}
\newcommand{\diver}{\text{div}}
\newcommand{\mat}{\partial^{\bullet}}
\newcommand{\matn}{\partial^{\circ}}
\newcommand{\matd}{\partial^{\circ}_{h}}
\newcommand{\matdt}{\partial^{\circ}_{h,\tau}}
\newcommand{\matdtl}{\partial^{\circ}_{\ell,\tau}}
\newcommand{\matdl}{\partial^{\circ}_{\ell}}
\newcommand{\nb}{\nabla}
\newcommand{\R}{\mathbb{R}}
\def \t {(t)}
\def \to {\rightarrow}
\newcommand{\Th}{\mathcal{T}_h}
\newcommand{\Ttilde}{\Tilde{T}} 
\newcommand{\Ih}{\widetilde{I}_h}
\newcommand{\Ihz}{\widetilde{I}_h^{SZ}}
\newcommand{\Ihl}{{I}_h}
\newcommand{\ru}{\widetilde{r}_u}
\newcommand{\vhl}{\mathbf{v}_h^{\ell}}
\newcommand{\vh}{\mathbf{v}_h} 
\newcommand{\vhn}{\mathbf{v}_h^n}
\newcommand{\vhnN}{\mathbf{v}_h^{n-1}}
\newcommand{\uh}{\mathbf{u}_h}
\newcommand{\uhl}{\mathbf{u}_h^{\ell}}
\newcommand{\uhn}{\mathbf{u}_h^n}
\newcommand{\uhnN}{\mathbf{u}_h^{n-1}}
\newcommand{\wh}{\mathbf{w}_h}
\newcommand{\whl}{\mathbf{w}_h^{\ell}}
\newcommand{\whn}{\mathbf{w}_h^n}
\newcommand{\whnN}{\mathbf{w}_h^{n-1}}
\newcommand{\zh}{\mathbf{z}_h}
\newcommand{\qh}{q_h}
\newcommand{\qhl}{q_h^\ell}
\newcommand{\qhn}{q_h^n}
\newcommand{\ph}{p_h}
\newcommand{\phn}{p_h^n}
\newcommand{\lh}{\lambda_h}
\newcommand{\lhn}{\lambda_h^n}
\newcommand{\xih}{\xi_h}
\newcommand{\xihn}{\xi_h^n}
\newcommand{\thbfu}{\theta_{\bfu}}
\newcommand{\thp}{\theta_{p}}
\newcommand{\thl}{\theta_{\lambda}}
\newcommand{\nh}{\bfn_h}
\newcommand{\nhtil}{\Tilde{\bfn}_h}
\newcommand{\muh}{\mu_h}
\newcommand\bfPg{{\mathbf{P}_{\Ga}}}
\newcommand\bfPh{{\mathbf{P}_h}}
\newcommand\Bhg{{\bm B_h}}
\newcommand{\FlVel}{\bfV_{\Ga}^N}
\newcommand{\TrVel}{\bfV_{\Gah}^N}
\newcommand{\TrVelL}{\bfv_{\Gah}^N}
\newcommand{\mb}{\bfm}
\newcommand{\gb}{\bfg}
\newcommand{\ahat}{a_S}
\newcommand{\ab}{a}
\newcommand{\db}{d}
\newcommand{\ddb}{\bfd}
\newcommand{\bLb}{b^L}
\newcommand{\btil}{\Tilde{b}}
\newcommand{\Grm}{\mathrm{G}}
\newcommand{\mh}{\bfm_h}
\newcommand{\gh}{\bfg_h}
\newcommand{\ddh}{d_h}
\newcommand{\bfdh}{\bfd_h}
\newcommand{\ah}{a_h}
\newcommand{\ahhat}{a_{S,h}}
\newcommand{\bhtil}{b_h^{L}}
\newcommand{\bhtilda}{\tilde{b}_h}
\newcommand\norm[1]{||#1||}
\begin{document}
\author{Charles M. Elliott, Achilleas Mavrakis}
\title[Fully Discrete ESFEM for the Navier-Stokes equations on Evolving Surfaces]{A Fully Discrete Surface Finite Element Method for the Navier--Stokes equations on Evolving Surfaces with prescribed Normal Velocity}
\address{Mathematics Institute, Zeeman Building, University of Warwick, Coventry CV4 7AL, UK}
\email{\href{mailto:c.m.elliott@warwick.ac.uk}{c.m.elliott@warwick.ac.uk}, \href{mailto:Achilleas.Mavrakis@warwick.ac.uk}{Achilleas.Mavrakis@warwick.ac.uk}}
\date{}
\maketitle

\addtolength\leftmargini{-0.15in}
\begin{quote}
\footnotesize
\textsc{Abstract.}  We analyze two fully time-discrete numerical schemes for the incompressible Navier-Stokes equations posed on evolving surfaces in $\mathbb{R}^3$ with prescribed normal velocity using the evolving surface finite element method (ESFEM). We employ generalized Taylor-Hood finite elements
$\mathrm{\mathbf{P}}_{k_u}$-- $\mathrm{P}_{k_{pr}}$-- $\mathrm{P}_{k_\lambda}$, $k_u=k_{pr}+1 \geq 2$, $k_\lambda\geq 1$, for the spatial discretization, where the normal velocity constraint is enforced weakly via a Lagrange multiplier $\lambda$, and a backward Euler discretization for the time-stepping procedure. Depending on the approximation order of $\lambda$ and weak formulation of the Navier-Stokes equations, we present stability and error analysis for two different discrete schemes, whose difference lies in the geometric information needed. We establish optimal velocity $L^{2}_{a_h}$-norm error bounds ($a_h$ an energy norm) for both schemes when $k_\lambda=k_u$, but only for the more information intensive one when $k_\lambda=k_u-1$, using \emph{iso-parametric} and \emph{super-parametric} discretizations, respectively, with the help of a newly derived surface Ritz-Stokes projection. Similarly, stability and optimal convergence for the pressures is established in an $L^2_{L^2}\times L^2_{H_h^{-1}}$-norm ($H_h^{-1}$ a discrete dual space) when $k_\lambda=k_u$, using a novel \emph{Leray time-projection} to ensure weakly divergence conformity for our discrete velocity solution at two different time-steps (surfaces). Assuming further regularity conditions for the more information intensive scheme, along with an almost weak divergence conformity result at two different time-steps, we establish optimal $L^2_{L^2}\times L^2_{L^2}$-norm pressure error bounds when $k_\lambda=k_u-1$,  using \emph{super-parametric} approximation. Simulations verifying our results are provided, along with a comparison test against a penalty approach.
\end{quote}

\section{Introduction}
Analysis of fluid equations on \emph{evolving surfaces} has been a growing field in recent years both in an analytic and numerical setting,  as they are important in describing many phenomena such as liquid films, bubbles, foams and lipid bilayers \cite{ArroyoSimone2009,hu2007continuum,Reuther2018}. Due to surface evolution, the flow now consists of both in-plane (tangential) and out-of-plane (normal) components, the latter leading to shape changes that are generally unknown \cite{TorresMilArroyo2019,Miura2020,LipMembKeber2014}. For example, lipid bilayer membranes can be described as a two-dimensional surface whose tangential fluid-like motions can be prescribed by the evolving surface Navier-Stokes equations (eNSE) \cite{jankuhn2018incompressible,Koba2016Energy,Krause2023ASurfaces}, while elastic forces or bending energies act in the normal direction \cite{TorresMilArroyo2019,BarGarNurFluidMemb2014,barrett2016stable}, or two phase flows where the interface can be modeled by (eNSE); see \cite{garckeStrPresTwoPhase2025,garcke2023structure,barrett2015stable,agnese2020fitted,li2025optimal} where the latter is treated numerically. 
We also refer to \cite{BarGarNur19} for a general review and additional literature.

A simplified variant can be analyzed by specifying the evolution (normal dynamics) of the surface, which leads to the evolving surface Navier-Stokes equations with prescribed out-of-plane flow (normal velocity) as modeled in \cite{jankuhn2018incompressible,Miura2020}. Considering this assumption, it only remains to solve for the in-plane fluidic part. 
Such a simplified model was studied in \cite{olshanskii2022tangential}, where well-posedness was addressed for the tangential surface Navier-Stokes equations  (TSNSE)  posed on a passively evolving surface embedded in $\mathbb{R}^3$. For our numerical purposes, instead of splitting the surface Navier-Stokes equations into in-plane and out-of-plane contributions, we instead treat the equations as vector-valued PDE for the unknown surface velocity $\bfu$, whose normal component is prescribed via a Lagrange multiplier \cite{brandner2022derivations,Miura2020}:

For an evolving two-dimensional closed, compact and oriented surface $\{\Ga(t)\}_{t\in[0,T]} \subset \mathbb{R}^3$ we want to consider a fully discrete evolving surface Taylor-Hood finite element scheme (ESFEM) for the \emph{evolving surface Navier-Stokes equations with extra Lagrange multiplier}:
\begin{align}
\begin{cases}
\label{eq: NS Lagrange begin ENS}
\tag{eNSEL}
\bfu \cdot \bfng = V_{\Ga}\\
\rho\mat \bfu = -\nbg p + 2\mu\divg(E(\bfu)) + \bff - \lambda^{dir} \bfng\\
\divg \bfu =0.
\end{cases}
\end{align}
where $\bfu(\cdot,t)  : \Gat\times[0,T] \to \R^3$ the velocity field, $p(\cdot,t)  : \Gat\times[0,T] \to \R$ the surface pressure  with $\int_{\Ga\t} p \, ds =0$, acting as a Lagrange multiplier for the incompressibility condition and $\lambda^{dir}(\cdot,t): \Gat\times[0,T] \to \R$ an extra Lagrange multiplier associated with the constraint of the prescribed normal velocity $\bfu\cdot\bfng = V_{\Ga}$ (we explain the superscript $dir$ later in \cref{sec: var form ENS}). In addition, $\mat$ denotes the material derivative, $\rho$ is the density distribution and $\mu$ stands for the viscosity coefficient, while $\bff \in ({L}^2(\Gat))^3$ is an external vector forcing term. Here   the   rate-of-strain (deformation) tensor $E(\cdot)$ \cite{Miura2020}, is defined for an arbitrary vector field $\bfv$, by 	
\begin{equation}
		\begin{aligned}\label{eq: Deformation tensor}
			E(\bfv)  = \half (\nbgcov \bfv + \nbg^{cov,t}\bfv).
		\end{aligned}
	\end{equation}
where  $ \nbgcov \bfv := \bfPg \nbg \bfv $ (with transpose $\nbg^{cov,t}\bfv=(\nbg \bfv)^t\bfPg$) and  $\bfPg$ denotes the projection to the tangent space of $\Gamma$.

The system \eqref{eq: NS Lagrange begin ENS} was derived in \cite{Miura2020}, as a thin film limit of an annular three dimensional classical Navier-Stokes system. Other derivations for surface Navier-Stokes equations on evolving surfaces, include \cite{jankuhn2018incompressible}, where the authors use conservation laws of surface mass and momentum quantities as physical principles and \cite{Koba2016Energy} where instead   such equations were derived by variational
energy principles; we also refer the reader to \cite{brandner2022derivations} for a review regarding such derivations. See also \cite{djurdjevac2021non} regarding well-posedness of Oseen equations on moving domains.

In this work, we want to consider a finite element discretization of \eqref{eq: NS Lagrange begin ENS} using \emph{Taylor-Hood} evolving surface finite elements. Surface finite elements methods (SFEM) have been studied extensively in the literature; see \cite{DziukElliott_acta, EllRan21,DziukElliott_SFEM} for elliptic and parabolic problems on a stationary and evolving surfaces $\Ga$. There have also been extensions to vector or tensor-valued functions \cite{hansbo2020analysis,Hardering2022}. Recently, there have been numerous studies around the surface (Navier)-Stokes equations, including parametric (SFEM) \cite{bonito2020divergence,fries2018higher,reusken2024analysis,elliott2024sfem,demlow2024tangential,demlow2025taylor,nochetto2025surface,hardering2023parametric} and Eulerian (TraceFem-CutFem) approaches, see e.g. \cite{Burman2016CutFE,olshanskii2021inf,jankuhn2021Higherror}. Discretizations of fluid problems on surfaces pose a lot of challenges: Mainly, the simultaneous imposition of an out-of-plane (e.g. tangential) condition (motion) and conformity (continuity) of the discrete velocity vector fields used to approximate solutions in the primitive variables; see \cite{lederer2020divergence,reusken2024analysis,elliott2024sfem,hardering2023parametric,demlow2024tangential,Reuther2018} and the approximation of geometric quantities, e.g. mean and Gaussian curvature. There has been a plethora of papers and techniques dealing with these issues, each with its own advantages and disadvantages. 
For example, approaches that enforce the tangentiality condition strongly have been considered in \cite{bonito2020divergence}, where an $H(\divg)$ conforming  finite element method with an interior penalty approach to enforce $H^1-$ continuity weakly is analyzed, while similarly the authors in \cite{demlow2024tangential,demlow2025taylor} use the surface Piola transforms, instead, to ensure strong tangentiality, $H(\divg)$ conformity and also weak $H^1$-continuity, without requiring additional consistency terms as before. Furthermore, simpler existing $H^1$-conforming \emph{Taylor-Hood} finite element methods have been considered in studies, where instead, the authors enforce  the out-of-plane conditions (tangentiality) weakly either by penalization \cite{reusken2024analysis,hardering2023parametric,olshanskii2021inf,jankuhn2021Higherror} or by Lagrange multipliers \cite{elliott2024sfem,elliott2025unsteady}. We follow the latter.

As mentioned, we consider simplified models for fluid deformable surfaces, such that the deformation of the surface is prescribed. Recently, there has been considerable interest in the numerical approximations for solutions of such systems \cite{Krause2023ASurfaces,KrauseDeform_2023,Reuther2018,olshanskii2024eulerian,sahu2025arbitrarylagrangianeulerianfiniteelement}. However, to the authors' knowledge there does not seem to be much literature involving numerical (error) analysis of systems like \eqref{eq: NS Lagrange begin ENS} on evolving surfaces, especially using (Lagrangian)  parametric FEM. In \cite{olshanskii2024eulerian}, the authors study the discretizations 
of the \emph{tangential surface Navier-Stokes equations on a passively involving surfaces} (TSNSE) using an Eulerian formulation of the PDE (unfitted FEM -- TraceFem). The TSNSE arises by decoupling the velocity into a tangential and normal component \cite{olshanskii2022tangential,brandner2022derivations}. Using TraceFem, the authors were able to study a-priori discretization error bounds and find optimal velocity $L^2_{\ah}$ ($\norm{\cdot}_{\ah}$-- an energy norm) error estimates. On the other hand, due to the use of Eulerian time-stepping scheme and therefore the lack of weak divergence conformity of velocity solutions at two different time-steps, e.g.  $\uhnN\notin\bfV_h^{n,div}$,  only a weaker special $L^2_{H^1}$-norm pressure bound was be obtained, assuming also an inverse (albeit weak) CFL condition. Similar difficulties were observed in the case of Navier-Stokes on moving domains \cite{neilan2024eulerian,burman2022eulerian,von2022unfitted}; see also \cite{besier2012pressure}. Only recently in \cite{garcke2025NSParam} has there been a study involving the stability analysis of a numerical method for the Navier-Stokes equation on evolving surface (eNSE) using parametric FEM.

We investigate numerically a similar scheme to \cite{Krause2023ASurfaces} for the full eNSE, where instead of using a penalty-based approach to constraint of the normal component of the velocity, we use a 
Lagrange-based approach and therefore consider  \eqref{eq: NS Lagrange begin ENS}.
Using $H^1$-conforming \emph{Taylor-Hood} ESFEM, we prove optimal $L^2_{\ah}$-norm velocity error estimates. Additionally, compared to the Eulerian FEM approach \cite{olshanskii2024eulerian}, we are also able to prove $L^2_{L^2}$-norm stability and error bounds for the pressure $p$. This is achieved by using a novel discrete Leray \emph{time-projection} $\hat{\bfu}_h^{n-1}$ \cref{eq: discrete time Leray proj ENS} on the previous time-step $\uhnN$, so that $\hat{\bfu}_h^{n-1}$ is weakly divergence free with respect to the next surface but also ``close'' enough to $\uhnN$. To our knowledge this is the first full error analysis for the Navier--Stokes equations on an evolving surface using parametric finite elements.

\subsection{Main Results}
We analyze two fully time-discrete backward Euler numerical schemes of the incompressible surface Navier--Stokes equations on an evolving surface with prescribed normal velocity using the evolving surface finite element method (ESFEM). We employ a generalized Taylor-Hood finite element triple 
$\mathrm{\mathbf{P}}_{k_u}$ -- $\mathrm{P}_{k_{pr}}$ -- $\mathrm{P}_{k_{\lambda}}$, $k_u = k_{pr}+1 \geq 2$, $k_\lambda\geq 1$, to handle the spatial discretization, where  the normal velocity constraint is enforced weakly via an extra Lagrange multiplier $\lambda$. By $k_u$, $k_{pr}$, $k_{\lambda}$ we represent the degree of polynomials used for the velocity field $\bfu$, pressure $p$ and extra Lagrange multiplier $\lambda$ approximations, respectively. For inf-sup stability reasons we always consider $k_u = k_{pr}+1$, see \cref{Lemma: Discrete inf-sup condition Gah Lagrange ENS,lemma: L^2 H^{-1} discrete inf-sup condition Gah Lagrange ENS}. Depending on the richness of $\mathrm{P}_{k_{\lambda}}$, which involves the approximation of $\lambda$, we present a fully discrete stability and error analysis  for two different numerical schemes \eqref{eq: fully discrete fin elem approx ENS} (for $k_\lambda = k_u$) or \eqref{eq: fully discrete fin elem approx cov ENS} (both for $k_\lambda=k_u$ and  $k_\lambda=k_u-1$). The main difference between these two formulations comes down to the choice of convective term and hence the geometric information needed for the approximations; see \cref{remark: diff formulations reason ENS} for more details.

In summary, the main results are the following: 
\begin{itemize}
    \item Based on discrete weak formulations of \eqref{eq: NS Lagrange begin ENS} (see \cref{sec: problem reform ENS}) we expand on some basic results for the surface finite element method \cite{DziukElliott_L2,DziukElliott_acta,EllRan21,highorderESFEM}), involving the surface lifting, approximation of geometry and perturbation of newly introduced bilinear forms, including new discrete transport formulae in \cref{sec: Setup: evolving surface finite element method}, necessary to describe the relation between the time-derivative and time-dependent integrals.
    \item As with the stationary surface case \cite{elliott2025unsteady}, establishing optimal convergence for the $L^2_{\ah}$-norm of the velocity necessitates the introduction of a \emph{new Ritz-Stokes projection} $\mathcal{R}_h(\bfu)$ over an evolving surface. The main difference now is that the material derivative and the surface Ritz-Stokes projection do not commute, i.e. $\matd \mathcal{R}_h\bfu \neq \mathcal{R}_h\matd\bfu$ and therefore it requires further work to establish appropriate approximation properties. Unfortunately, we are only able to show the following suboptimal bounds for the $L^2$-norms  \vspace{-1mm}
    \begin{equation*}
        \begin{aligned}
        \norm{\matd(\bfu - \mathcal{R}_h(\bfu))}_{L^2(\Gaht)}&\leq ch^{r_u}\sum_{j=0}^{1}\norm{(\matn)^j\bfu}_{H^{k_u+1}(\Gat)} \quad \text{ for } k_\lambda = k_u-1,\\[-2pt]
            \norm{\matd(\bfu - \mathcal{R}_h^{\ell}(\bfu))}_{L^2(\Gaht)} &\leq ch^{\widehat{r}_u}\sum_{j=0}^{1}\norm{(\matn)^j\bfu}_{H^{k_u+1}(\Gat)}  \quad \text{ for } k_\lambda = k_u,
        \end{aligned}
    \end{equation*}
with $r_u = min\{k_u,k_g-1\}$, $\widehat{r}_u = min\{k_u,k_g\}$ (see \cref{lemma: Error Bounds der Ritz-Stokes ENS}). However these are still sufficient to establish optimal $L^2_{\ah}$ convergence for the velocity and eventually $L^2_{L^2}\times L^2_{H_h^{-1}}$ (or $L^2_{L^2}\times L^2_{L^2}$) for the pressure, using \emph{iso-parametric discretization} for $k_\lambda = k_u$ and \emph{super-parametric discretization} for $k_\lambda = k_u-1$, as is expected in the stationary case \cite{elliott2024sfem,elliott2025unsteady,fries2018higher}. The main issue lies in the fact that even if we consider the tangential solution $\bfu$, the material derivative is no longer tangential, thus a lot of the geometric perturbation results that take advantage of the said tangentiality (\cref{lemma: Geometric perturbations lagrange ENS}) become void; see \cref{remark: non tangential mat der ENS,remark: issues mat der ENS} for more details.
\item Using ESFEM we prove $L^2_{L^2}$-norm stability and optimal convergence for the pressure $p$. However, it depend on the choice of $k_\lambda$ in the following sense:

\quad -- For $k_\lambda=k_u$, considering \emph{standard relatively low regularity conditions} (see \cref{assumption: Regularity assumptions for velocity estimate ENS}, the ones used to prove $L^2_{\ah}$ convergence), we can show stability and optimal error estimates in the discrete $L^2_{L^2} \times L^2_{H_h^{-1}}$-norm for the \eqref{eq: fully discrete fin elem approx ENS} scheme (see \cref{Lemma: Pressure stab Estimate ENS,theorem: Pressures Error Estimate ENS}), by introducing a novel  discrete Leray \emph{time-projection} \eqref{eq: discrete time Leray proj ENS}, to ensure weakly divergence conformity for our discrete velocity solution at two different time-steps (surfaces),  along with the help of a discrete inverse Stokes operator \eqref{eq: Discrete inverse Stokes ENS}, a newly developed dual energy norm estimate \Cref{lemma: dual estimate ENS} and the $L^2\times H_h^{-1}$ discrete \textsc{inf-sup} condition \eqref{eq: L^2 H^{-1} discrete inf-sup condition Gah Lagrange ENS} (therefore only establishing error bounds in this weaker dual norm for $\lh$). Similarly, we can prove results for the second scheme \eqref{eq: fully discrete fin elem approx cov ENS}, where one uses the $L^2_{L^2} \times L^2_{L^2}$ \textsc{inf-sup} condition \cref{Lemma: Discrete inf-sup condition Gah Lagrange ENS} instead  to obtain pressure results in the $L^2_{L^2}\times L^2_{L^2}$-norm. The difference between the two schemes regards the geometric information needed, i.e. the approximation of the men curvature $\bfH_h$; see \cref{remark: diff formulations reason ENS}.

\quad -- For $k_\lambda=k_u-1$, on the other hand, we show discrete $L^2_{L^2}\times L^2_{L^2}$-norm stability and optimal convergence results (suboptimal w.r.t. the geometric approximations; see \cref{theorem: Pressures Error Estimate HR ENS}, hence the need of \emph{super-parametric discretization}) only for the second scheme \eqref{eq: fully discrete fin elem approx cov ENS} (it is not possible to prove optimal convergence for \eqref{eq: fully discrete fin elem approx ENS}), by considering \emph{additional regularity conditions} (see \cref{assumption: Regularity assumptions for velocity estimate 2 ENS}), and finding an estimate for the time-derivative approximations in a stronger norm (see \cref{sec: Pressure a-priori estimates for kl= ku-1 ENS}), via an ``almost'' weak divergence conformity result for solutions at two different time-steps (surfaces), along with the $L^2_{L^2} \times L^2_{L^2}$ \textsc{inf-sup} condition \cref{Lemma: Discrete inf-sup condition Gah Lagrange ENS}.
\end{itemize}

\subsection{Outline}
The rest of the chapter is organized as follows. Basic notation, e.g. regarding the functions spaces and evolving surfaces is introduced in \cref{sec: Differential geometry on Surfaces}. The continuous evolving surface Navier-Stokes problems, which we will analyze numerically, are introduced and set-up in \cref{sec: var form ENS}, together with new key Transport formulae for moving surfaces, appropriate for our case. 
In \cref{sec: Setup: evolving surface finite element method} we present the evolving surface finite element method, adapted to our situation, introduce the \emph{Taylor-Hood} elements and develop the relative notation regarding discrete surfaces, velocities, material derivative etc. We also recall and \emph{expand} some basic results involving surface lifting, approximation of geometry and perturbation of new bilinear forms (including new discrete Transport formulae). A new surface Ritz-Stokes projection is introduced in \cref{sec: Ritz-Stokes Projection ENS}, along with error bounds involving the material derivative, which are necessary leading up to the a-priori error analysis for our schemes. In \cref{sec: The Fully Discrete Scheme Main ENS} we finally state our two fully discrete numerical schemes, which depends on the choice of the approximation space of the extra Lagrange multiplier $\lambda$, i.e. $k_\lambda$, and the convective term.
The main stability arguments are also carried out in this section, where we make use of a novel \emph{Leray time-projection}, to ensure weakly divergence conformity for our discrete velocity solution at two different time-steps (surfaces). The main convergence results are presented in \cref{sec: error analysis ENS}, where in the first part we establish optimal $L^2_{\ah}$ velocity error estimates for the respective numerical schemes under \emph{standard regularity assumptions}. Then, we establish optimal pressure error estimates in standard spaces. Specifically, for $k_\lambda=k_u$, with the previous \emph{regularity assumptions} in mind, we prove optimal error pressure bounds in an $L^2_{L^2}\times L^2_{H_h^{-1}}$-norm, while for $k_\lambda=k_u-1$, we prove optimal $L^2_{L^2}\times L^2_{L^2}$-norm error pressure bounds, only after considering
 \emph{additional regularity assumptions}. Finally, numerical results are presented in \Cref{sec: Numerical results ENS}, which support the theoretical results.

\section{Differential geometry on surfaces}\label{sec: Differential geometry on Surfaces}
We recall some fundamental notions and tools concerning surface vector-PDEs. We mainly follow definitions in \cite{hansbo2020analysis,jankuhn2018incompressible,elliott2024sfem}. 

\subsection{The closed smooth surface}\label{Sec: The closed smooth surface ENS}
We consider surfaces $\Ga \in \mathbb{R}^3$ to be a closed, oriented, compact $C^m$  two-dimensional hypersurface embedded in $\mathbb{R}^3$. We will require $m\geq 4$ in our analysis, see \cite[Remark 4.1]{elliott2024sfem}. Note that  since  $\Gamma$ is the boundary of an open set we choose the orientation by setting  $\bfng$ to be the unit outward pointing normal to $\Ga$. Let $d(\cdot)$ denote the signed distance functions such that $d\equiv 0 $ on $\Ga$. Then, by the smoothness of $\Ga$, there exists a small enough $\delta$ such that  $d(\cdot) : U_{\delta} \to \mathbb{R}$ is globally Lipchitz-continuous, where 
$U_\delta \subset \mathbb{R}^3$ is the tubular neighborhood $U_{\delta} = \{x \in \R^3\ : |d(x)| <\delta\}$. 
Then we may define the closest point projection map to $\Ga$ as 
\begin{equation}\label{eq: closest point proje ENS}
    \pi(x) = x - d(x)\bfng(\pi(x)) \in \Ga
\end{equation}
for each $x \in U_\delta$, and for $\delta>0$ small enough, see also \cite{GilTrud98,DziukElliott_acta}, where $\nb d(x) = \bfng(\pi(x))$ for $x \in U_\delta$. We can then see that $d(\cdot)$ and $\pi(\cdot)$ are of class $C^m$ and $C^{m-1}$ on $\overline{U_{\delta}}$. Using the projection $\pi$ we may extend functions $\bfv : \Ga \to \mathbb{R}^3$  on $\Ga$ to the tubular neighbourhood $U_{\delta}$, by 
\begin{equation}
\label{eq: smooth extension ENS}
    \bfv^e(x) = \bfv(\pi(x)), \ \ \ x \in U_{\delta}.
\end{equation}
This extension is constant in the normal direction of $\Ga$ and thus contains certain useful additional properties compared to other regular extensions. We define the extension of the normal to the neighborhood $U_{\delta}$ by $\bfn = \bfng^{e} = \bfng\circ \pi$. Furthermore, define the orthogonal projection operator onto the tangent plane $\bfPg (x) = \bfI - \bfng(x) \otimes \bfng(x)$ for $x \in \Ga$ that satisfies $\bfPg^T=\bfPg^2=\bfPg$, $|\bfPg|_{Fr}=2$ and $\bfPg \cdot \bfng=0$, with its extension to  $U_{\delta}$ given by $\bfP = \bfPg^{e} = \bfI - \bfn \otimes \bfn$. We can readily see that $\bfn|_{\Ga} = \bfng$ and $\bfP|_{\Ga} = \bfPg$.

\subsection{Scalar functions and vector fields}\label{sec: scalar-vector func ENS}

We start by defining useful quantities for \emph{scalar} functions following \cite{DziukElliott_acta}. For a function $f \in C^1(\Ga)$ we define the \emph{tangential (surface) derivative} of $f$ as 
\begin{equation}
    \nbg f(x) = \bfPg(x) \nb f^e(x), \ \ \text{with } \ \underline{D}_if(x) = 
    P_{ij}\partial_jf^e(x), \ \ \ x\in\Ga, 
\end{equation}
where we use the Einstein summation as defined in \Cref{appendix: differential operators ENS} and where  $(\nb f^e)_i = \partial_i(f^e)$, $1 \leq i \leq 3$ is the Euclidean derivative. Here $f^e$ is the extension to $U_{\delta}$ where $f^e|_{\Ga} = f$ as in \eqref{eq: smooth extension ENS}. Note also that  we consider $\nbg f(x) = (\underline{D}_1f(x),$ $ \underline{D}_2f(x),$ $\underline{D}_3f(x))^t$ to be a column vector.

Now, since $\bfPg$ is the orthogonal projection, which is symmetric, we have that 
\begin{equation*}
    \nbg f(x) = \bfPg(x) \nb f^e(x) =  \nb^t f^e(x)\bfPg(x), \ \ \ \ \bfPg \nbg f = \nbg f  \ \ \text{on } \Ga.
\end{equation*}
As noted in \cite[Lemma 2.4]{DziukElliott_acta} the definition of the tangential gradient is independent of the extension.

We now introduce surface derivatives for \emph{vector fields} e.g. $\bfv : \Ga \to \mathbb{R}^3$; for more details see \cite{elliott2024sfem} and \Cref{appendix: differential operators ENS}. Using the Einstein summations described in  \Cref{appendix: differential operators ENS}, the \emph{tangential (directional) derivative} is defined via the derivatives in $\mathbb{R}^3$ as
\begin{equation}\label{eq: tangential derivative ENS}
    \nbg \bfv = (\nbg \bfv)_{ij} = (\nb \bfv^e \bfPg)_{ij} =  \partial_l v^e_iP_{jl}  ,
\end{equation}
which is a $3\times3$ matrix and where we note that $\bfv^e = \bfv \circ \pi$, the normal extension. With this in mind, we see that the tangential derivative turns out to be the Euclidean derivative corresponding to this extended vector field i.e. $\nbg \bfv = \nb \bfv^e|_{\Ga}$. 

For general vector fields in $\mathbb{R}^3$, we readily see that the surface gradients above are not truly tangential. Consequently, we define the \emph{covariant derivatives} as follows (see also \cite{brandner2022derivations,fries2018higher}) 
\begin{equation}\label{eq: covariant derivative ENS}
    \nbgcov \bfv = (\bfPg \nbg \bfv)_{ij} = (\bfPg \nb \bfv^e|_{\Ga} \bfPg)_{ij} = P_{jk}\partial_k v^e_lP_{li},
\end{equation}
which is again a $3\times3$ matrix. Now it is clear that, compared to the tangential derivative defined in \eqref{eq: tangential derivative ENS}, the covariant derivative \eqref{eq: covariant derivative ENS} is also a tangential tensor field. 

We may also define the Weingarten map $\bfH$, to quantify the curvature, as
\begin{equation}\label{eq: Weingarten map}
    \bfH := \nbgcov \bfn, \ \text{ with } \ \bfH \bfn=0, \ \ \ \bfH \bfP = \bfP \bfH =\bfH,
\end{equation}
where we observe that $\bfH$ is a tangential (in-plane) tensor and also $C^{m-2}$ and thus bounded on $\Ga$, see \cite{DziukElliott_acta, GilTrud98}. We also denote by $\kappa = tr(\bfH)$ twice the \emph{mean curvature}.

Now, the surface divergence on $\Ga$ is given by 
\begin{equation}\label{eq: divg vector definition ENS}
\divg(\bfv) = tr(\nbgcov \bfv) = tr(\bfPg \nabla \bfv^e \bfPg)= P_{ik}\partial_kv_l^eP_{li} = tr(\nabla \bfv^e \bfPg) =  tr(\nbg \bfv ). 
\end{equation}
One can always split a vector field $\bfv$ into a tangent and a normal component \ie  $\bfv = \bfv_T + \bfv_n$ with $\bfv_T = \bfPg \bfv$ and $\bfv_n = (\bfv \cdot \bfng)\, \bfng$, where also the following useful formulae holds 
\begin{equation}
    \begin{aligned}\label{eq: split cov ENS}
       \nbgcov \bfv = \nbgcov \bfv_T + \bfH v_n.
       \end{aligned}
\end{equation}
We also define, via \eqref{eq: covariant derivative ENS}, the surface divergence of a tensor function $\bfF : \Ga \to \mathbb{R}^{3 \times 3}$:
\begin{equation}
    \begin{aligned}
       \divg(\bfF) :=  \begin{bmatrix}
\divg(\bfF_{1,j}) \\
\divg(\bfF_{2,j}) \\
\divg(\bfF_{3,j})
\end{bmatrix}, \quad j=1,2,3.
       \end{aligned}
\end{equation}
By $(\bfv\cdot\nbgcov)\bfv$ and $(\bfv\cdot\nbgcov)\bfv$ we denote the following vectors
\begin{equation}
    \begin{aligned}
        \big((\bfv\cdot\nbg)\bfv\big)_{i} = v_jP_{jk}\partial_kv_i^e = \big(\nbg\bfv\bfv\big)_{i} = \nbg\bfv \bfv_T,\\
        \big((\bfv\cdot\nbgcov)\bfv\big)_{i}  = v_jP_{jk}\partial_kv_l^eP_{li} = \big(\nbgcov\bfv\bfv_T\big)_{i} = \bfPg\nbg\bfv \bfv_T,
    \end{aligned}
\end{equation}
Finally, for compact surfaces, i.e. $\Ga$, we present the following integration by parts rule (surface divergence Theorem) (see \cite[Theorem 2.10]{DziukElliott_acta}) for smooth enough $\bfv$ and $\xi$:
\begin{equation}
    \begin{aligned}\label{eq: integration by parts cont ENS}
        \int_{\Ga}\bfv\cdot \nbg\xi \, \ds = -\int_{\Ga}\xi \divg\bfv \, \ds + \int_{\Ga}\kappa\xi(\bfv\cdot \bfng) \, \ds.
    \end{aligned}
\end{equation}

\subsection{Function spaces}\label{sec: Function spaces ENS}
We now define function spaces on the surface $\Ga$. By $L^p(\Ga)$, $p \in [1,\infty]$, we define the function space that consists of functions $\zeta : \Ga \to \mathbb{R}$ that are measurable w.r.t. the surface measure $\ds$, endowed with the standard $L^p$-norms, \ie $\norm{\cdot}_{L^p(\Ga)}$; see \cite{DziukElliott_acta}. By $(\cdot,\cdot)_{L^2(\Ga)}$ we denote the usual $L^2(\Ga)$ inner product with norm $\norm{\cdot}_{L^2(\Ga)} = (\cdot,\cdot)^{1/2}_{L^2(\Ga)}$ and by $\norm{\cdot}_{L^\infty(\Ga)}$ we denote the $L^{\infty}(\Ga)$ norm. Also, $W^{k,p}(\Ga)$ denotes the standard function Sobolev spaces as presented in \cite{DziukElliott_acta}, while by $\mathbf{W}^{k,p}(\Ga)=(W^{k,p}(\Ga))^3$ we write the natural extension to vector-valued functions. Specifically, for $p=2$ we write $(H^{k}(\Ga))^n$ with $n=1,3$, which are in fact Hilbert spaces. So, for a function $\bm{z} : \Ga \to \mathbb{R}^n$, with $n=1,3$, the corresponding norm is given by
\begin{equation}
    \begin{aligned}
    \norm{\bm{z}}_{H^k(\Ga)}^2 = \sum_{j=0}^m \norm{\nbg^j \bm{z}}^2_{L^2(\Ga)},
    \end{aligned}
\end{equation}\vspace{-7mm}

\noindent where $\nbg^j$ denote all the weak tangential derivative of order $j$, i.e. $\nbg^j \bm{z} = \overbrace{\nbg \cdot \cdot \cdot \nbg}^{\text{ j times}} \bm{z}$. For $k=0$ we get the usual $L^2-$ norm $\norm{\cdot}_{L^2(\Ga)}$ for scalar or vector-valued functions.

\noindent For vector fields $\bm{z}  : \Ga \to \mathbb{R}^3$ the space $\bfH^1(\Ga) = (H^1(\Ga))^3$ is equipped with the following norm
\begin{equation}
    \begin{aligned} \label{eq: H1 norm definition}
    \norm{\bm{z}}_{H^1(\Ga)}^2 = \norm{\bm{z}}_{L^2(\Ga)}^2 + \norm{\nbg \bm{z}}_{L^2(\Ga)}^2.
    \end{aligned}
\end{equation}
\noindent In general with \emph{\textbf{bold}} we define vector-valued space, unless specified otherwise. We also set
$$L^2_0(\Ga) :=\{\zeta \in L^2(\Ga) | \ \int_\Ga \zeta \, \ds =0\},$$
to be $L^2$ functions with zero mean value, equipped with the standard $L^2$-norm, while the subspace of tangential vector fields is given by
$$\bfH^1_T := \{\bfv \in \bfH^1(\Ga) \: | \: \bfv \cdot \bfng =0\},$$
endowed with the $H^1$-norm \eqref{eq: H1 norm definition}.
\subsection{Evolving surfaces}\label{sec: Evolving surface cont ENS}
Let $\Ga_0\in \mathbb{R}^3$ be a closed, compact $C^k$ surface, $k \geq 3$. In order to define a collection of closed and compact \emph{evolving hypersurfaces} $\{\Gat\}_{t\in[0,T]}\in \mathbb{R}^3$ for each time $t\in[0,T]$ we assume the existence of a smooth $C^k$ trajectory (or flow) $\Phi(\cdot,t) : \Ga_0 \mapsto \Gat$ of a material points (induced by a velocity field; see \eqref{eq: flow ENS} below) such that
\begin{equation*}
    \Gat = \{\Phi(\bfz,t)\, |\, \bfz \in \Ga_0\}.
\end{equation*}
We also write $\Phi(\cdot,t) = \Phi_t(\cdot)$. This map induces the spacetime manifold  
\begin{equation*}
    \mathcal{G}_T = \bigcup_{t\in[0,T]}\Gat\times\{t\}.
\end{equation*}
We also define $\bfng =\bfng(\bfx,t)$ to be the unit normal of $\Gat$ for $(\bfx,t)\in \mathcal{G}_T$ and $\pi(\bfx,t)$ the closest point projection of $\Gat$, see also \eqref{eq: closest point proje 2 ENS}, defined by
\begin{equation}\label{eq: closest point proje 2 ENS}
    \pi(\bfx,t) = \bfx - d(\bfx,t)\bfng(\pi(\bfx,t),t) \ \, \in \Gat.
\end{equation}
In order to describe the evolution of the surface, we need to specify the velocity of $\Gat$. In our setting, as we shall see in \cref{sec: problem reform ENS}, since the normal component of the velocity $\bfu$ is \textbf{known} \eqref{eq: NS Lagrange begin ENS}, we have $\bfu\cdot\bfng = V_{\Ga}$ and therefore our surface evolves with the \emph{normal material velocity (flow)} 
\begin{equation}
    \label{eq: normal vel flow phd}
    \FlVel = V_{\Ga}\bfng \in C^1([0,T];C^k).
\end{equation}
We may then, define the normal trajectory mapping $\Phi_t(\cdot)$ as followed.
\begin{definition}[Normal trajectory]
Let $\Phi_t(\cdot): [0,T]\times \Ga_0 \mapsto \Gat$ be a purely $C^{k}$ geometric diffeomorphism satisfying  
\begin{align}
\begin{cases}\label{eq: flow ENS}
   &\dfrac{d}{dt} \Phi_t(\bfz) = \FlVel\big(\Phi_t(\bfz),t\big) \qquad \text{ for $\bfz \in \Ga_0$ and $t \in (0,T]$},\\
&\Phi_0(\bfz) = \bfz, \qquad\qquad\qquad\qquad\quad\ \ \,  \text{ for $\bfz \in \Ga_0$},
\end{cases}
\end{align}
where $\FlVel$ the normal material velocity.
\end{definition}
\noindent This map exists from standard ODE theory. We also define its inverse mapping $\Phi_{-t}(\cdot) := \Phi^{-1}(\cdot,t).$ By the assumed smoothness, it is clear that $\norm{\FlVel}_{C^{k}(\Gat)} \leq C_{V_{\Ga}}$ for all $t \in [0,T]$.

\subsubsection{Evolving Banach/Bochner Spaces}\label{eq: Evolving Banach/Bochner Spaces ENS}
We now shall briefly introduce  Bochner-type function spaces for evolving hypersurfaces; for more details see \cite{alphonse2015abstract,alphonse2015some}.

First, let us define the \emph{evolving Banach spaces}, which we denote by $X(t)$. For this purpose, it is necessary to use suitable pushforward (pullback) maps to (and from) standard Banach spaces. This is achieved with the help of the \emph{normal trajectory map} \eqref{eq: flow ENS}. So, if consider 
$\eta \in X_0=X(0)$, a standard Banach space, and $\chi \in  X(t)$ for $t\in[0,T]$, e.g. $X(t) = W^{k,p}(\Gat)$, we may define the pushforward map of $\zeta$ and pullback of $\chi$ by
\begin{equation*}
    \begin{aligned}
        \phi_t\zeta = \zeta\circ\Phi_{-t}\in   X(t), \qquad \phi_{-t}\chi = \chi\circ\Phi_t\in   X_0.
    \end{aligned}
\end{equation*}
This naturally extends to vector-valued functions; see \cite{olshanskii2022tangential}. So, the pairs $\big(W^{k,p}(\Gat),\phi_t\big)_{t\in [0,T]}$ are shown to be compatible in the sense of \cite{alphonse2015abstract,alphonse2015some}; that is, classical results carry over to the evolving setting.

\noindent With these definitions in mind, we can define \emph{evolving Bochner spaces} for $p \geq 1$, as in \cite{alphonse2015abstract,alphonse2015some}, by
\begin{align*}
    L^p_X :=\big\{\chi:[0,T]\to \bigcup X(t)\times \{t\}, \, t \mapsto (\overline{\chi}(t),t)\big| \phi_{-t}\overline{\chi}(t) \in L^p([0,T];X_0)  \big\},
\end{align*}
where we identify $\chi$ with $\overline{\chi}$. This space is endowed with the norm
\begin{align*}
    \norm{\chi}_{L^p_{X}}^p = \int_{0}^{T} \norm{\chi}^p_{X(t)}\, dt.
\end{align*}
If in fact $X(t)$ are Hilbert spaces and $p=2$, the above norm is induced by the $L^2$-inner product
\begin{align*}
    (\chi,v)_{L^2_X} = \int_{0}^{T} (\chi(t),v(t))_{X(t)}\,dt \qquad \forall \ \chi,\,v \in L^2_X.
\end{align*}
Following \cite{alphonse2023function,alphonse2015abstract} we see that we can make the identification $(L^2_X)^* \simeq L^2_{X^*}$, where by $X^*$ we denote the dual space. In case where $p=\infty$ the space is equippes with the norm
\begin{align*}
    \norm{\chi}_{L^{\infty}_{X}} =\mathrm{ess \, sup}_{t\in[0,T]} \norm{\chi}^p_{X(t)}.
\end{align*}

Assuming that $\{\Gat\}_{t\in[0,T]}$ \emph{evolving hypersurfaces} evolve with normal velocity field $\FlVel$, then taking into account the trajectory of the material points \eqref{eq: flow ENS}, we define an appropriate time-derivative known as \emph{normal material derivative} (This time for a vector-valued function).
\begin{definition}[Normal material derivative]
Let the hypersurface $\{\Gat\}_{t\in[0,T]}$ with flow defined by \eqref{eq: flow ENS}. For $\bm{\chi} : \mathcal{G}_T \to \mathbb{R}^3$ a sufficiently smooth vector field, we define the normal material derivative as 
\begin{equation}\label{eq: stong mat deriv ENS}
    \matn\bm{\chi} = \Big(\frac{d}{dt}\bm{\chi}\circ \Phi_t\Big) \circ \Phi_{-t} = \phi_t\Big(\frac{d}{dt}\phi_{-t}\bm{\chi}\Big).
\end{equation}
\end{definition}
It is clear that $\matn(\phi_t\bm{\chi})=\matn(\bm{\chi}\circ\Phi_{-t}) =0$, for $\bm{\chi} \in \Ga_0$. It is also clear that, using the normal extension \eqref{eq: smooth extension ENS}, the normal material derivative can be expressed as $\matn\bm{\chi}= \bm{\chi}_{t}^e(\bfx,t) + (\FlVel(\bfx,t)\cdot\nb)\bm{\chi}^e(\bfx,t)$. This strong definition can be generalized to define a weak material derivative in the sense of \cite[Def. 2.28]{alphonse2015abstract}.

\section{Variational formulation}\label{sec: var form ENS}
\subsection{Problem Set-up and reformulation}\label{sec: problem reform ENS}
In our setting, the normal velocity component $\bfu_n = (\bfu\cdot\bfng)\bfng$ is prescribed in \eqref{eq: NS Lagrange begin ENS} by $\bfu\cdot\bfng = V_{\Gamma}$ and so  determines the material velocity (out-of-plane flow) of our surface $\Gat$. In other words, the normal component of $\bfu$ also describes the purely geometric (shape) evolution of the surface $\Gat$, i.e. $\FlVel = \bfu_n = V_{\Ga}\bfng$. Of course, this is just a simplification assumption, and a further physical arbitrary tangential velocity, e.g. $\bfw_T$, may be present for the in-plane flow of the material points of the surface $\Gat$; see \cite{alphonse2015some}. Nevertheless, the tangent part of the velocity in \eqref{eq: NS Lagrange begin ENS} governs the free lateral motion (in-plane motion) of the material surface \cite{jankuhn2018incompressible,Miura2020}.

Now, notice that the material derivative depends on the full velocity $\bfu$. However, as mentioned, the surface velocity (geometric evolution) of our surface $\Gat$, i.e. $\FlVel$, is completely determined by the normal component of $\bfu$, that is $\bfu_n = V_{\Ga}\bfn = \FlVel$ on $\Gat$; see \eqref{eq: flow ENS}. So, since the trajectories of the material points of $\Gat$ are completely determined by $\FlVel$, we rewrite the material derivative in \eqref{eq: NS Lagrange begin ENS} with the help of the \emph{normal time derivative} \eqref{eq: stong mat deriv ENS} as
\begin{equation}
    \matn\bfu = \mat\bfu - (\bfu_T\cdot\nbg)\bfu. 
\end{equation}

It is clear that only the tangential component of $\bfu$, which describes the free lateral motions, is now  \emph{unknown}. So, considering \eqref{eq: NS Lagrange begin ENS} the problem may be formulated as:

Find velocity $\bfu  : \Ga \to \R^3$,   surface pressure $p  : \Ga \to \R$ with $\int_{\Ga\t} p ds =0$, and  new Lagrange multiplier associated with the constraint of the prescribed normal velocity $\lambda^{dir}: \Ga \to \R$, such that
\begin{align}
\begin{cases}
\label{eq: NS Lagrange ENS}
\tag{eNS}
\bfu \cdot \bfng = V_{\Ga}\\
\rho\big(\matn \bfu + (\bfu_T\cdot\nbg)\bfu\big) - 2\mu\divg(E(\bfu)) + \nbg p + \lambda^{dir} \bfng =  \bff\\
\divg \bfu =0,
\end{cases}
\end{align}
which now represents a non-linear problem with only \emph{unknown} the tangent part of the velocity $\bfu(\bfx,t)$ and the pressures $p,\,\lambda$, with initial velocity $\bfu(0) =  \bfu^0 = \bfu_n(0) + \bfu_T(0) = V_{\Gamma}(0)\bfng + \bfu_T^0$. Now $\lambda^{dir}$ refers to the fact that the convective term is written with respect to the directional derivative $\nbg(\cdot)$.

Furthermore, for numerical analysis purposes, we also need to consider a different formulation for \eqref{eq: NS Lagrange begin ENS}. Using \eqref{eq: identity dir to cov} in \cref{appendix: differential operators ENS} and defining a new pressure as 
\begin{equation}\label{eq: new pressure lambda ENS}
\lambda^{cov} = \lambda^{dir} + \rho\big(\nbg(V_{\Ga}) \cdot\bfu_T +(\bfH\bfu_T) \cdot \bfu_T\big), 
\end{equation}
we can rewrite  \eqref{eq: NS Lagrange begin ENS} as  follows: 

Find velocity $\bfu  : \Ga \to \R^3$,   surface pressure $p  : \Ga \to \R$ with $\int_{\Ga\t} p ds =0$, and new Lagrange multiplier associated with the constraint of the prescribed normal velocity $\lambda^{cov} : \Ga \to \R$, such that
\begin{align}
\begin{cases}
\label{eq: NS Lagrange new ENS}
\tag{eNSc}
\bfu \cdot \bfng = V_{\Ga}\\
\rho\big(\matn \bfu + (\bfu_T\cdot\nbgcov)\bfu \big) - 2\mu\divg(E(\bfu)) + \nbg p + \lambda^{cov} \bfng =  \bff\\
\divg \bfu =0,
\end{cases}
\end{align}
with initial velocity $\bfu(0)= \bfu^0 = \bfu_n(0) + \bfu_T(0) = V_{\Gamma}(0)\bfng + \bfu_T^0$. Once again, only the tangential component of $\bfu$ is \emph{unknown}. We denote the new Lagrange multiplier (pressure) by $\lambda^{cov}$ since the convective term is written with respect to the covariant derivative $\nbgcov(\cdot)$. We also note that this system \eqref{eq: NS Lagrange new ENS} reduces to the system studied in \cite{elliott2025unsteady} when $V_{\Ga}=0$, i.e. when the surface is stationary.

From now on, for convenience, we set the \emph{density distribution} $\rho=1$ and the \emph{viscosity parameter} $\mu=1/2$. Furthermore, from now on, we will simply denote both $\lambda^{dir}$ of \eqref{eq: NS Lagrange ENS} and $\lambda^{cov}$ of \eqref{eq: NS Lagrange new ENS} as $\lambda$, since in both cases it acts as a Lagrange multiplier for the normal constraint. Lastly, we see that the incompressibility condition implies the following inextensibility condition
\begin{equation}
    \begin{aligned}
        \frac{d}{dt}|\Gat| = \int_{\Gat} \divg \bfu=0,
    \end{aligned}
\end{equation}
and knowing that $\int_{\Gat} \divg (\bfPg\bfu) = 0$ from the integration by parts formula \eqref{eq: integration by parts cont ENS}, we also have
\begin{equation}
    \int_{\Gat} \divg \bfu_n = \int_{\Gat} \divg (V_{\Gamma}\bfng) =\int_{\Gat} \kappa V_{\Ga} =0.
\end{equation}

\subsection{Bilinear forms}
We now introduce an abstract notation for the bilinear forms that will be used throughout the rest of the text.
\begin{definition}\label{def: bilinear forms ENS}
    For $\bfw,\bfv,\bfz \in \bfH^{1}(\Gat)$, $q,\xi \in H^1(\Gat)$ we define the following bilinear forms
\begin{align*}
    \mb(t;\bfw, \bfv) &:= \int_{\Gat} \bfw \cdot \bfv \, \ds, \qquad \qquad   \  \gb(t;\FlVel;\bfw, \bfv) := \int_{\Gat} \divg(\FlVel) \bfw \cdot  \bfv \, \ds,\\
    \ahat(t;\bfw, \bfv) &:= \int_{\Gat} E(\bfw):E(\bfv)\, \ds, \qquad \quad \ \ab(t;\bfw,\bfv) := \int_{\Gat} E(\bfw):E(\bfv) \, \ds + \int_{\Gat} \bfw \cdot  \bfv \, \ds,\\
    c^{cov}(t;\bfz; \bfw,\bfv) &:= \int_{\Gat} ((\bfz \cdot \nbgcov)\bfw ) \bfv \, \ds, \qquad \  \!\!\!  c(t;\bfz; \bfw,\bfv) := \int_{\Gat} ((\bfz \cdot \nbg)\bfw ) \bfv \, \ds ,\\
    \db(t;\FlVel;\bfw,\bfv) &:= \int_{\Gat} \nbg \bfw : \nbgcov \bfv \, \Tilde{\mathcal{B}}_{\Ga}(\FlVel,\bfPg) \,\ds+ \int_{\Gat} \nbgcov \bfw : \nbg \bfv \, \Tilde{\mathcal{B}}_{\Ga}(\FlVel,\bfPg) \,\ds.\\
    \bLb(t;\bfw,\{q,\xi\}) &:= \int_{\Gat} \bfw \cdot \nbg q  \, \ds + \int_{\Gat} \xi \bfw\cdot \bfng \, \ds,\\
     \btil(t;\FlVel;\bfw,\{q,\xi\}) &:= m(\xi,\bfw\cdot\matn\bfng)  + g(\FlVel;\xi,\bfw\cdot \bfng) + \int_{\Gat}  \bfw \cdot \mathcal{B}_{\Ga}^{\mathrm{div}}(\FlVel) \nbg q \, \ds,
\end{align*}
where $\mathcal{B}_{(\cdot)}^{(\cdot)}(\bullet,\bullet)$ the deformation tensors are defined in \Cref{lemma: Transport formulae app ENS,lemma: Transport formulae app II ENS}. Furthermore, we define $\bfd(t;\bullet;\bullet,\bullet)$ as in \eqref{eq: bfd formula ENS}.
\end{definition}
We will usually omit the arguments with (t) for clarity reasons, unless we state otherwise.  Now, the analysis over an evolving surface necessitates a form of transport theorem. With the help of \cref{appendix: Transport formulae ENS} we have the following.
\begin{lemma}[Transport Theorem]\label{lemma: Transport formulae cont ENS}
Let $\mathcal{M}\t$ be an evolving surface with velocity $\FlVel$. For function $f:\Ga \to \mathbb{R}$ sufficiently smooth we we have 
\begin{equation}
     \label{eq: Transport formulae 1 ENS}
        \frac{d}{dt} \int_{\Gat} f \,\ds = \int_{\Gat} \matn f + f\, \divg(\FlVel) \,\ds.
\end{equation}
Furthemore, for $\bfw,  \bfv, \matn\bfw , \matn\bfv \in \bfH^1(\Gat)$ and $q, \xi , \matn q, \matn \xi \in H^1(\Ga\t)$, the following transport formulae hold:
\begin{align}
    \label{eq: Transport formulae 2 ENS}
    \frac{d}{dt}\mb(\bfw,\bfv) &= \mb(\matn\bfw,\bfv) + \mb(\bfw,\matn\bfv) + \gb(\FlVel;\bfw,\bfv),\\
    \label{eq: Transport formulae 3 ENS}
    \frac{d}{dt}\bLb(\bfw,\{q,\xi\}) &= \bLb(\matn\bfw,\{q,\xi\}) + \bLb(\bfw,\matn\{q,\xi\})+ \btil(\FlVel;\bfw,\{q,\xi\}),  \\
    \label{eq: Transport formulae 3.5 ENS}
     \!\!\!\!\!\!\!\!\frac{d}{dt}(\nbgcov \bfw,\nbgcov \bfv)_{L^2(\Ga \t)} &= (\nbgcov \matn\bfw,\nbgcov \bfv)_{L^2(\Ga\t)}+ (\nbgcov \bfw,\nbgcov \matn\bfv)_{L^2(\Ga\t)} + \db(\FlVel;\bfw,\bfv),  \\
    \label{eq: Transport formulae 4 ENS} 
     \frac{d}{dt}\ahat(\bfw,\bfv) &= \ahat(\matn\bfw,\bfv) + \ahat(\bfw,\matn\bfv) + \ddb(\FlVel;\bfw,\bfv),\\
     \label{eq: Transport formulae 5 ENS}
     \frac{d}{dt}\ab(\bfw,\bfv) &= \ab(\matn\bfw,\bfv) + \ab(\bfw,\matn\bfv) + \gb(\FlVel;\bfw,\bfv) + \ddb(\FlVel;\bfw,\bfv). 
     \end{align}
\end{lemma}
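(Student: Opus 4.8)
The plan is to reduce all six identities to the single scalar transport formula \eqref{eq: Transport formulae 1 ENS} together with the algebra of the normal material derivative $\matn$. For \eqref{eq: Transport formulae 1 ENS} itself I would pull back to the reference surface along the normal flow $\Phi_t$ of \eqref{eq: flow ENS}: writing $\int_{\Gat}f\,\ds=\int_{\Ga_0}(f\circ\Phi_t)\,J_t\,d\sigma_0$ with $J_t$ the tangential Jacobian of $\Phi_t$, one differentiates under the integral using $\tfrac{d}{dt}(f\circ\Phi_t)=(\matn f)\circ\Phi_t$ (the definition \eqref{eq: stong mat deriv ENS}) and the classical area identity $\tfrac{d}{dt}J_t=\bigl((\divg\FlVel)\circ\Phi_t\bigr)J_t$, and then pushes forward again; this is standard and can equally be quoted from \cite{DziukElliott_acta} or from \cref{appendix: Transport formulae ENS}.

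Granting \eqref{eq: Transport formulae 1 ENS}, each of \eqref{eq: Transport formulae 2 ENS}--\eqref{eq: Transport formulae 5 ENS} is obtained by applying it to the appropriate integrand and expanding the resulting $\matn$ of a product. For \eqref{eq: Transport formulae 2 ENS} this is immediate: $\matn(\bfw\cdot\bfv)=\matn\bfw\cdot\bfv+\bfw\cdot\matn\bfv$, and the residual $\int_{\Gat}(\bfw\cdot\bfv)\divg\FlVel\,\ds$ is exactly $\gb(\FlVel;\bfw,\bfv)$. For \eqref{eq: Transport formulae 3 ENS} one differentiates the two pieces of $\bLb$ separately; the new feature is that $\matn\bfng\neq0$, so $\matn(\xi\,\bfw\cdot\bfng)$ produces the term $m(\xi,\bfw\cdot\matn\bfng)$ appearing in the definition of $\btil$, the Jacobian weight applied to $\xi\,\bfw\cdot\bfng$ produces $g(\FlVel;\xi,\bfw\cdot\bfng)$, while commuting $\matn$ past $\nbg$ in $\int_{\Gat}\bfw\cdot\nbg q\,\ds$ — using an appendix identity of the shape $\matn(\nbg q)=\nbg(\matn q)+C(\FlVel)\nbg q$ — together with the corresponding Jacobian weight collects into $\int_{\Gat}\bfw\cdot\mathcal{B}_{\Ga}^{\mathrm{div}}(\FlVel)\nbg q\,\ds$. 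For \eqref{eq: Transport formulae 3.5 ENS} and \eqref{eq: Transport formulae 4 ENS} one proceeds identically, but now commuting $\matn$ past the covariant gradient via an appendix commutator of the shape $\matn(\nbgcov\bfv)=\nbgcov(\matn\bfv)+R(\FlVel,\bfPg,\bfH;\bfv)$ with $R$ linear in $\bfv$ and in the geometric data; substituting this into $\matn(\nbgcov\bfw:\nbgcov\bfv)$, respectively into $\matn\bigl(E(\bfw):E(\bfv)\bigr)$ with $E(\cdot)=\half(\nbgcov(\cdot)+\nbg^{cov,t}(\cdot))$, and collecting the geometric remainders with the measure factor $\divg\FlVel$ yields $\db(\FlVel;\bfw,\bfv)$, respectively $\ddb(\FlVel;\bfw,\bfv)$. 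Finally \eqref{eq: Transport formulae 5 ENS} needs no new work: since $\ab=\ahat+\mb$, one just adds \eqref{eq: Transport formulae 4 ENS} and \eqref{eq: Transport formulae 2 ENS}.

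I expect the main obstacle to be organisational rather than conceptual: one must check that the geometric remainders produced by commuting $\matn$ with $\nbg$ and $\nbgcov$, together with the Jacobian weight $\divg\FlVel$, collect \emph{exactly} into the bilinear forms $\gb$, $\db$, $\ddb$, $\btil$ as written in \cref{def: bilinear forms ENS} — equivalently, that the tensors $\mathcal{B}_{\Ga}$, $\tilde{\mathcal{B}}_{\Ga}(\FlVel,\bfPg)$ and $\mathcal{B}_{\Ga}^{\mathrm{div}}(\FlVel)$ occurring there are precisely the ones produced in \cref{lemma: Transport formulae app ENS,lemma: Transport formulae app II ENS}. Here the purely normal structure $\FlVel=V_{\Ga}\bfng$ is a genuine simplification, since it gives $\nbg\FlVel=\bfng\otimes\nbg V_{\Ga}+V_{\Ga}\bfH$ and hence expresses every remainder through $\bfH$ and $\nbg V_{\Ga}$; the only delicate points are keeping the symmetrisation consistent (because $E(\cdot)$ is symmetric) and tracking which of the two arguments each remainder term is paired against.
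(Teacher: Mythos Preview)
Your proposal is correct and follows essentially the same route as the paper. The paper defers the proof to \cref{appendix: Transport formulae ENS}, where \cref{lemma: Transport formulae app ENS,lemma: Transport formulae app II ENS,lemma: Transport formulae strain tensor app ENS} carry out exactly the programme you describe: the scalar formula \eqref{eq: Transport formulae 1 ENS} is quoted from \cite{DziukElliott_L2}, the remaining identities are obtained by applying it to the relevant integrands and commuting $\matn$ past $\nbg$ via the identity $\matn\nbg g=\nbg\matn g-\mathcal{D}_{\Ga}(\FlVel)\nbg g$ (eq.~\eqref{eq: commutation derivative ENS}), and the geometric remainders are then collected into the tensors $\mathcal{B}_{\Ga}^{\mathrm{div}}$, $\tilde{\mathcal{B}}_{\Ga}(\FlVel,\bfPg)$ that define $\btil$, $\db$, $\ddb$; your anticipated organisational difficulty is precisely what the appendix spends its effort on.
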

We note that an explicit expression for $\bfd(\bullet;\bullet,\bullet)$ is also given in \cref{eq: bfd formula ENS}.

\subsection{Variational formulation}\label{sec: Variational formulation ENS}
We wish to derive a weak formulation for both \eqref{eq: NS Lagrange ENS} and \eqref{eq: NS Lagrange new ENS}. For reasons of brevity, we use the shorthand notation $c^{(\cdot)}(\bullet;\bullet,\bullet)$ to denote either $c(\bullet;\bullet,\bullet)$ or $c^{cov}(\bullet;\bullet,\bullet)$ and, as we mentioned, we use a common notation for the extra Lagrange multiplier associated with the normal constraint $\lambda$. This way, we can define one concise weak formulation.

\noindent So, testing \eqref{eq: NS Lagrange ENS} and \eqref{eq: NS Lagrange new ENS}  with appropriately smooth test functions $\bfv$ and $\{q,\xi\}$, using a common notation for $\lambda$, the integration by parts \eqref{eq: integration by parts cont ENS} (see also \cite[Eq. (2.3)]{fries2018higher}), and setting
\begin{align}\label{eq: function f ENS}
    \bm{\eta} = (\eta_1,\eta_2) =(-\kappa V_{\Ga},V_{\Ga}),
\end{align}
we define the \emph{weak formulation of the evolving surface Navier-Stokes with extra Lagrange multiplier}:
\noindent {\bf (eNSW)}: Given $\{\bff, \bm{\eta}\} \in L^2_{\mathbf{L}^2} \times  (L^2_{L^2_0}\times L^2_{L^2})$ we seek solution $\bfu \in L^{\infty}_{L^2}\cap L^2_{\bfH^1}\cap H^1_{\bfH^{-1}}$, and $\{p,\lambda\} \in  L^2_{L^2_0} \times  L^2_{L^2}$ satisfying the initial condition $ \bfu(\cdot,0)= \bfu_0 \in L^2(\Ga(0))$ such that,
\begin{align}
\begin{cases}
    \label{weak lagrange hom NV ENS}
    \tag{eNSW}
        \mb(\matn\bfu,\bfv) + \ahat(\bfu,\bfv) \ + c^{(\cdot)}(\bfu;\bfu,\bfv)\ + \!\!\!\!&\bLb(\bfv,\{p,\lambda\}) = \mb(\bff,\bfv),\\
        &\bLb(\bfu,\{q,\xi\})=m(\bm{\eta},\{q,\xi\}),
    \end{cases}
\end{align}
for all $\bfv\in \bfH^1(\Gat), \, \{q,\xi\}\in (L_0^2(\Gat)\times L^2(\Gat))$ and for a.e. $t\in[0,T]$, where $m(\bm{\eta},\{q,\xi\}) = m(\eta_1,q) + m(\eta_2,\xi).$ For well-posedness of \eqref{weak lagrange hom NV ENS} we can follow similar computations as in the tangential evolving surface Navier-Stokes in \cite{olshanskii2022tangential}. Other ways of proving well-posedness, may involve a Rothe-type method, by discretizing the time-derivative first, appropriately. Moreover, in our case, we instead have to use a new inf-sup that also involves the extra Lagrange $\lambda$; see for example \cite[Lemma 3.3]{elliott2024sfem} (similarly we can prove an evolving analogue, as in \cite[Lemma 3.3]{olshanskii2022tangential}). Nonetheless, we omit further details, and assume that the following a-priori estimates hold,
 \begin{equation}
     \begin{aligned}
         \norm{\matn\bfu}_{L^2_{\bfH^{-1}}} +  \norm{\bfu}_{L^2_{\bfH^{1}}}+  \norm{\{p,\lambda\}}_{L^{2}_{L^2}\times L^2_{L^2}} \leq \norm{\bfu_0}_{L^2(\Ga)} + \norm{\bff}_{L^{2}_{\mathbf{L}^2}} + \norm{\bm{\eta}}_{L^{2}_{L^2}\times L^{2}_{L^2}}.
     \end{aligned}
 \end{equation}

\begin{remark}[About the source term $\bm{\eta}$]\label{remark: about f source ENS}
Given the solution $\{\bfu,p,\lambda\}$ with data $\{\bff,\bm{\eta}\}$  the problems 
may be rewritten using the substitution $\tilde \bfu= \bfu - \eta_2\bfng - \nbg\phi$ where $\Delta_{\Ga}\phi = \eta_1 = \kappa V_{\Gamma}$. With this substitution we obtain the same equations for  $\{\tilde \bfu,\, \tilde p, \,\tilde \lambda\}$ with new data $\{\tilde \bff,  \bm{\tilde \eta}=0\}$ and an extra zero order term $V_{\Gamma}\bfH\tilde \bfu$. Since the new term can be handled easily in the numerical analysis (see e.g. \cite[Section 4.3]{olshanskii2022tangential}),  for reasons of brevity and clarity, we will not include it and simply analyse the numerical discretizations for the systems setting  $\bm{\eta}=0$. In \cref{remark: about f source discrete ENS} this is discussed further. \end{remark}

Thus, for our analysis of numerical discretization in  \cref{sec: The Fully Discrete Scheme ENS} and \cref{sec: error analysis ENS},  we study the following two \emph{solenoidal} weak formulations.

The \emph{directional weak formulations of the evolving surface Navier-Stokes with extra Lagrange multiplier} is the following:

\noindent {\textbf{(eNSW{\small d}): }} \ Determine $\bfu \in L^{\infty}_{L^2}\cap L^2_{\bfH^1}\cap H^1_{\bfH^{-1}}$, and $\{p,\lambda\} \in  L^2_{L^2_0} \times  L^2_{L^2}$ where $ \bfu(\cdot,0)= \bfu_0 \in L^2(\Ga(0))$ such that,
\begin{align}
\begin{cases}
    \label{weak lagrange hom NV dir ENS}
    \tag{eNSW{\small d}}
        \mb(\matn\bfu,\bfv) + \ahat(\bfu,\bfv) \ + c(\bfu;\bfu,\bfv)\ + \!\!\!\!&\bLb(\bfv,\{p,\lambda\}) = \mb(\bff,\bfv),\\
        &\bLb(\bfu,\{q,\xi\})=0,
    \end{cases}
\end{align}
for all $\bfv\in \bfH^1(\Gat), \, \{q,\xi\}\in (L_0^2(\Gat)\times L^2(\Gat))$ and for a.e. $t\in[0,T]$.

The \emph{covariant weak formulations of the evolving surface Navier-Stokes with extra Lagrange multiplier} is the following:

\noindent {\textbf{(eNSW{\small c}): }} \ Determine $\bfu \in L^{\infty}_{L^2}\cap L^2_{\bfH^1}\cap H^1_{\bfH^{-1}}$, and $\{p,\lambda\} \in  L^2_{L^2_0} \times  L^2_{L^2}$ where $ \bfu(\cdot,0)= \bfu_0 \in L^2(\Ga(0))$ such that,
\begin{align}
\begin{cases}
    \label{weak lagrange hom NV cov ENS}
    \tag{eNSWc}
        \mb(\matn\bfu,\bfv) + \ahat(\bfu,\bfv) \ + c^{cov}(\bfu;\bfu,\bfv)\ + \!\!\!\!&\bLb(\bfv,\{p,\lambda\}) = \mb(\bff,\bfv),\\
        &\bLb(\bfu,\{q,\xi\})=0,
    \end{cases}
\end{align}
for all $\bfv\in \bfH^1(\Gat), \, \{q,\xi\}\in (L_0^2(\Gat)\times L^2(\Gat))$ and for a.e. $t\in[0,T]$. 

 Notice that the name \emph{directional} and \emph{covariant} pertains to the different convective term $c(\bullet;\bullet,\bullet)$ and $c^{cov}(\bullet;\bullet,\bullet)$ respectively of the two weak formulations.

\begin{remark}\label{remark: skew-symmetric Ga ENS}
We can rewrite $c(\bullet;\bullet,\bullet)$, $c^{cov}(\bullet;\bullet,\bullet)$ in  \cref{def: bilinear forms ENS} in a skew-symmetric type form. Indeed, starting with \eqref{weak lagrange hom NV ENS}, using integration by parts \eqref{eq: integration by parts cont ENS}, the fact that $\divg(\bfu) = \divg(\bfu_T) + (\bfu\cdot\bfng)\kappa$ and the definition of $\bfeta$ \eqref{eq: function f ENS}, we derive the following  for $\bfu,\bfv\in \bfH^1(\Gat)$
\begin{align}\label{eq: c equiv expression ENS}
    c(\bfu;\bfu,\bfv) = \frac{1}{2}\Big(\int_{\Gat} ((\bfu \cdot \nbg)\bfu ) \bfv \, \ds -\int_{\Gat} ((\bfu \cdot \nbg)\bfv ) \bfu \, \ds \Big) -\frac{1}{2} m(\eta_1\bfu,\bfv).
\end{align}
In a similar manner, and also relying on the fact that $\nbgcov\bfu = \nbgcov\bfu_T + (\bfu\cdot\bfng)\bfH$, we derive that
\begin{align}\label{eq: ccov equiv expression ENS}
    c^{cov}(\bfu;\bfu,\bfv) = \frac{1}{2}\Big(\int_{\Gat} ((\bfu \cdot \nbgcov)\bfu_T ) \bfv \, \ds &-\int_{\Gat} ((\bfu \cdot \nbgcov)\bfv_T ) \bfu \, \ds \Big)\nonumber \\
    &\qquad\qquad + \int_{\Ga} \eta_2\bfu_T\cdot\bfH\bfv_T    
   - \frac{1}{2} m(\eta_1\bfu_T,\bfv_T).
\end{align}
Since, as mentioned in \cref{remark: about f source ENS}, we will analyze \eqref{weak lagrange hom NV dir ENS} or \eqref{weak lagrange hom NV cov ENS}, we consider $\bm{\eta}=(\eta_1,\eta_2)=0$. That means that the expression \emph{outside the large brackets} in \eqref{eq: c equiv expression ENS} and \eqref{eq: ccov equiv expression ENS} will be zero, and therefore the trilinear forms are now skew-symmetric. Its these consistent modifications that we discretize later in \cref{def: bilinear forms discrete ENS}, and \cref{sec: The Fully Discrete Scheme ENS} or eventually in the numerical results; see \cref{sec: Numerical results ENS}.
\end{remark}

\section{Setup: evolving surface finite element method}\label{sec: Setup: evolving surface finite element method}
\subsection{Recap: Evolving Triangulated Surfaces}\label{sec: Recap: Evolving Triangulated Surfaces}
We want to approximate our smooth evolving surface $\Gat$ by an appropriate high order discretization. To begin with, we approximate our initial surface $\Ga(0)$ by a polyhedral approximation (usually with the help of a $k_g$-order Lagrange interpolation), denoted $\Gah(0)$,
contained in the tubular neighborhood $U_\delta$ i.e. $\Gah(0) \subset U_\delta$, whose nodal points $\{\alpha_j(0)\}_{j=1}^{J}$ lie on the surface $\Ga(0)$.
These nodes define an admissible and conforming (see \cite[Section 6.2]{EllRan21}) triangulation $\Th(0)$ such that 
\begin{equation*}
    \Gah(0) = \bigcup_{T(0) \in \Th(0)} T(0).
\end{equation*}
See \cite{elliott2024sfem,DziukElliott_SFEM,DziukElliott_acta,Demlow2009} for specific construction. Then these nodes of the initial triangulation move along $\Gat$ with our prescribed velocity $\FlVel$, i.e. solve the ODE
\begin{equation*}
    d\alpha_j(t)/dt = \FlVel(\alpha_j(t),t) \qquad j=1,...,J \quad \text{ for }t\in[0,T],
\end{equation*}
where $\FlVel$ is the velocity field associated with the evolution of $\Gat$; see \eqref{eq: flow ENS}. Therefore, the nodes lie on $\Gat$ for all $t\in[0,T]$ and thus also define (by construction) an admissible and conforming triangulation $\Th\t$ such that
\begin{equation*}
    \Gah(t) = \bigcup_{T(t) \in \Th(t)} T(t).
\end{equation*}
See \cite[Section 9.5]{EllRan21} for an exact construction of the evolving triangulation. We denote the discrete spacetime surface as
$\mathcal{G}_{h,T} = \bigcup_{t\in[0,T]}\Gaht\times\{t\}.$

We denote by  $h_{T\t}$ the diameter of a simplex $T(t) \in \Th\t$ and set $h := \sup_{t\in[0,T]}\text{max} \{ h_{T\t} \, : \, T(t) \in \Th\t\}$. Considering the assumptions of the  smoothness of $\FlVel$, we may assume that the evolving triangulated surface is \emph{uniformly quasi-uniform} (see \cite[Section 6.2, Prop. 9.8]{EllRan21}), that is,  the velocity is such that the simplicies in $\mathcal{T}_h\t$ do not become too distorted, or more formally,  $\exists \, \rho >0$ such that for all $t\in[0,T]$ and $h\in(0,h_0)$ for sufficiently small $h_0$ we have 
\begin{equation}
    min\big\{ \rho_{T(t)} \ | \ T(t) \in \Th\t \big\} \geq \rho h,
\end{equation}
where $\rho_{T(t)}$ the standard 2-dim ball contained in each triangle $T(t) \in \Th\t$, as long as, the initial mesh $\Gah(0)$ is also. This is important for the extension of the inf-sup to evolving triangulated surfaces $\Gaht$. 

We now denote element-wise outward unit normal to $\Gah$ by $\nh$ and define the discrete projection $\bfP_h$ onto the tangent space of $\Gah$ by
\begin{equation*}
    \bfPh(x) = \bfI - \nh(x) \otimes \nh(x), \quad x \in T, \text{ where } T \in \Th.
\end{equation*}
As such, one can define the discrete tangential surface operators $\nbgh, \, \nbgcovh$ analogous to the continuous case; see \eqref{eq: Eh forms ENS}. Also, the corresponding discrete Weingarten map is $\bfH_h := \nbgcovh\nh.$
For the curved triangulation, we may also consider the set of all the edges of the triangulation $\Th\t$, denoted by  $\mathcal{E}_h\t$. 
For each edge, we define the outward pointing unit co-normals $\bm{\mu}_h^{\pm}$ with respect to the two adjacent triangles $T^{\pm}$. Notice that on discrete surfaces in general
$$[\bm{\mu}_h]|_E = \bm{\mu}_h^+ + \bm{\mu}_h^- \neq 0.$$ 
For more details on the construction of the triangulated surface see \cite{elliott2024sfem,EllRan21} and references therein.

\subsection{Finite Element Spaces and Discrete Material Derivative}\label{sec: Finite Element Spaces and Discrete Material Derivative}
Certain quantities on the discrete surface $\Gah$, e.g. projections and their derivatives, are only defined elementwise on each surface finite element $T$, therefore, it is convenient to introduce broken surface Sobolev spaces on $\Gah$, with their respected norm \cite{EllRan21}. As a discrete analogue to \cref{sec: Function spaces ENS}, the norm $\norm{\cdot}_{H^m(\Th)}$ related to the \emph{broken Sobolev space} $(H^m(\mathcal{T}_h\t))^n$, with $n=1,3$ and is defined by
\begin{equation}
    \begin{aligned}
        \norm{\bm{z}}_{H^m(\Th\t)}^2 = \sum_{T\in\Th\t}\norm{\bm{z}}^2_{H^m(T)}
    \end{aligned}
\end{equation}
for a vector field $\bm{z} \in H^1(\Th\t)^n$. When a quantity belongs to $\Gaht$ then we replace $\Th\t$ with $\Gaht$.

For every $t \in [0,T]$ and $\Gaht$ we now define the space of $H^1-$ conforming Lagrange finite elements, which also evolves with time, as
\begin{equation}\label{eq: h1 conf space ENS}
    S_{h,k_g}^{k}\t := \{v_h \in C^0(\Gah\t) : v_h|_{T\t} = \hat{v}_h\circ \hat{F}_{T\t}^{-1} \text{ for some } \hat{v}_h \in \mathbb{P}^{k}(\hat{T}), \ \text{ for all } T\t \in \Th\t \},
\end{equation}
where $\mathbb{P}^{k}(\hat{T})$ is the space of piecewise polynomials on the reference element $\hat{T}$ of degree $k$ and $\hat{F}_{T\t}$ is some reference map to some reference element $\hat{T}$ also depending on time; see \cite[Section 4]{elliott2024sfem}, \cite[Definition 6.25, Section 9.2]{EllRan21}. Note also that $S_{h,k_g}^{k}\t \subset C^0(\Gaht)\cap H^1(\Gaht)$.

This finite element subspace is spanned by continuous evolving basis functions (nodal basis) of order $k$ denoted by $\chi_{j,k_g}^{k}(\cdot,t)$, $j=1,...,J$,  that satisfy $\chi_{j,k_g}^{k}(\alpha_i(t),t) = \delta_{ij}$ for all $i,\,j = 1,...,J$, so that $S_{h,k_g}^{k}\t=\mathrm{span}\{\chi_{1,k_g}^{k}(\cdot,t),...,\chi_{J,k_g}^{k}(\cdot,t)\},$ where $J$ the number of vertices. All the above are easily extended to \textbf{vector-valued functions} where now the finite element space takes the form:
\begin{equation}
    (S_{h,k_g}^{k}\t)^3 = S_{h,k_g}^{k}\t\times S_{h,k_g}^{k}\t \times S_{h,k_g}^{k} \t\subset \bfH^1(\Gaht).
\end{equation}

For later use, we denote the velocity approximation order by $k_u$, and for the pressures $p$ and $\lambda$, we represent their approximation orders as $k_{pr}$ and $k_{\lambda}$, respectively. So, we denote the appropriate velocity and pressure spaces as
$$\bfV_h\t := (S_{h,k_g}^{k_u}\t)^3, \ Q_h\t:= S_{h,k_g}^{k_{pr}}\t, 
\ \Lambda_h\t :=S_{h,k_g}^{k_\lambda}\t,$$
where, we now  write  $\mathrm{\mathbf{P}}_{k_u}$-- $\mathrm{P}_{k_{pr}}$-- $\mathrm{P}_{k_{\lambda}}$ to denote the extended \emph{Taylor-Hood} surface finite elements. Let us also define the space of \emph{discrete weakly tangential divergence-free functions}
\begin{equation}\label{eq: discrete weakly divfree space beginning ENS}
    \bfV_h^{div}\t := \{\wh \in \bfV_h\t : \bhtil(\wh,\{\qh,\xi_h\}) =0, ~~\forall \, \{\qh,\xi_h\} \in Q_h\t\times\Lambda_h\t\}.
\end{equation}
We also need the dual space of the finite element space $\Lambda_h\t$, which we denote as 
$H_h^{-1}(\Gah\t)$. For $\ell_h \in \Lambda_h\t$  we define its dual norm as 
\begin{equation}\label{eq: H^-1h definition ENS}
    \norm{\ell_h}_{H_h^{-1}(\Gah\t)} = \sup_{\xi_h\in \Lambda_h\backslash\{0\}}\frac{<\ell_h,\xi_h>_{H^{-1},H^1}}{\norm{\xi_h}_{H^1(\Gah\t)}} = \sup_{\xi_h\in \Lambda_h\t\backslash\{0\}}\frac{(\ell_h,\xi_h)_{L^2(\Gah\t)}}{\norm{\xi_h}_{H^1(\Gah\t)}}.
\end{equation}
Later in \cref{sec: The Fully Discrete Scheme Main ENS} we will address the particular choice of parameter choices.

Using the above definitions allows us to characterize the velocity of an arbitrary material point $\bfx_h(t)$ on the discrete surface $\Gaht$, i.e. the \emph{discrete material (fluid) velocity} $\TrVel$ that governs the evolution of $\Gaht$, as the Lagrange interpolant of fluid velocity $\FlVel$  on the nodes $\{\alpha_j(t)\}_{j=1}^J$, given by
\begin{align}\label{eq: vel discrete surf ENS}
    \frac{d}{dt}\bfx_h(t) = \TrVel(\bfx_h(t),t) := \sum_{j=1}^J \dot{\alpha}_j\t \, \chi_{j,k_g}^{k_u}(\bfx_h(t),t) = \sum_{i=1}^J \FlVel(\alpha_j\t,t) \, \chi_{j,k_g}^{k_u}(\bfx_h(t),t), 
\end{align}
for $\bfx_h \in \Gaht$, where $\{\{\chi_{j,k_g}^{k_u} \bfe_i\}_{i=1}^3\}_{j=1}^J$ the nodal basis of $(S_{h,k_g}^{k_u}\t)^3 = \bfV_h\t$ ($\alpha_j\t$ is a vector-valued quantity) and where we write $\dot{\alpha}_j\t = \frac{d}{dt}\alpha_j\t$. Notice that despite $\FlVel$ being purely normal, $\TrVel$ as an interpolant is not, nonetheless we still choose to depict the discrete material velocity with a superscript $N$. 
Since $\TrVel$ is the Lagrange interpolant of $\FlVel$, it is clear that (see also \cite[Lemma 9.18]{EllRan21}),
\begin{align}\label{eq: discrete vel bound ENS}
    \sup_{t\in[0,T]}\norm{\nbgh\TrVel}_{L^{\infty}(\Gaht)} \leq c\norm{\FlVel}_{W^{1,\infty}(\Gat)}.
\end{align}

This evolution, in turn, as in the continuous case \cref{sec: Evolving surface cont ENS}, induces a discrete analogue flow map $\Phi_t^h : [0,T]\times  \Gah(0) \mapsto \Gaht$ from which we write the pushforward and pullback of $\chi_h \in \Gah(0)$ and $\zeta_h \in \Gaht$ as 
\begin{equation}
    \begin{aligned}
        \phi_t^h \chi_h = \chi_h \circ \Phi_{-t}^h \in \Gaht \qquad \phi_{-t}^h \zeta_h = \zeta_h \circ \Phi_t^h \in \Gah(0),
    \end{aligned}
\end{equation}
and so one defines the \emph{discrete strong normal time derivative} by
\begin{equation}\label{eq: discrete stong mat deriv ENS}
    \matd\zeta_h =  \Big(\frac{d}{dt}\zeta_h\circ \Phi_t^h\Big)\circ\Phi_{-t}^h = \phi_t^h\Big(\frac{d}{dt}\phi_{-t}^h\zeta_h\Big),
\end{equation}
for a sufficiently smooth discrete function $\zeta_h$.
It is also convenient to introduce the space
\begin{align*}
    S_{h,k_g}^{k,C}:=\{z_h,\, \matd z_h \in C^0(\mathcal{G}_{h,T})\big|  z_h(\cdot,t) \in S_{h,k_g}^k, \, t \in [0,T]\}.
\end{align*}

A key result of the evolving surface finite element method is that the discrete material derivative of the nodal basis functions on $\Gaht$ satisfy the \emph{transport property}, shown in \cite{DziukElliott_ESFEM,DziukElliott_L2}:
\begin{equation}\label{eq: transport property ENS}
\matd\chi_{j,k_g}^k=0, \quad \text{for } j=1,...,J.
\end{equation}
This result is crucial during the implementation of our numerical scheme. It also implies that for $z_h \in S^{k}_{h,k_g}$, such that $z_h = \sum_{j=1}^J z_j\t \chi^{k}_{j,k_g}(\cdot,t)$, where $\{z_j\t\}_{j=1}^J$ nodal values, the \emph{discrete normal time derivative} can, instead, be written as 
\begin{align}
    \matd z_h = \sum_{j=1}^J \dot{z}_j\t \chi^{k}_{j,k_g}(\cdot,t),
\end{align}
where we write $ \dot{z}_j = \frac{d}{dt} z_j(t)$. Therefore, we see that $\matd z_h \in S^{k}_{h,k_g}$ and if the nodal values $\{z_j\t\}_{j=1}^J$ are sufficiently smooth in times, then $\matd z_h \in S_{h,k_g}^{k,C}$. All of the above naturally extend to vector-valued functions e.g. for $\bm{z}_h  = \sum_{j=1}^J \dot{\bm{z}}_j\t \chi^{k}_{j,k_g}(\cdot,t) \in (S_{h,k_g}^{k}\t)^3$ with vector nodal values $\{\bm{z}_j\t\}_{j=1}^J \in \mathbb{R}^3.$

\subsection{Lifts to Exact Surface}\label{sec: Lifts to Exact Surface}
Now to relate quantities between the discrete surface $\Gaht$ and the continuous one $\Gat$ we want to make use of a lift operator defined using the time-dependent closest point operator $\pi(\cdot,t)$ \eqref{eq: closest point proje 2 ENS}. For that we assume, from now on, that $h\in(0,h_0)$ with $h_0$ sufficiently small, such that $\Gaht \in U_{\delta}(\Gat)$ for all $t\in[0,T]$.

So, for any (scalar : $n=1$  or vector-valued : $n=3$)  finite element function $\bm{z}_h: \mathcal{G}_{h,T} \to \mathbb{R}^n$, we define its \emph{lift} $\bm{z}_h^\ell : \mathcal{G}_{T} \to \mathbb{R}^n $ onto $\Gat$ by
\begin{equation}\label{eq: lift ENS}
    \bm{z}_h^\ell(\pi(x,t),t) := \bm{z}_h(x,t), \quad \text{for } x\in\Gaht.
\end{equation}
Similarly to the extension \eqref{eq: smooth extension ENS}, we also define the \emph{inverse lift} operator for a function $\bm{z}:\mathcal{G}_{T} \to \mathbb{R}^n$:
\begin{equation}\label{eq: inverse lift ENS}
    \bm{z}^{-\ell}(x,t) := \bm{z}(\pi(x,t),t), \quad \text{for } x\in\Gat.
\end{equation}
Notice that the two extensions \eqref{eq: smooth extension ENS} and \eqref{eq: inverse lift ENS} agree on the discrete surface $\Gaht$, i.e. $\bm{z}^{-\ell} = \bm{z}^e|_{\Gah}$ since we have that $\Gaht \subset U_\delta(\Gat)$. 

Again from the assumption that $\Gaht\subset U_\delta(\Gat)$ for all $t\in[0,T]$, the projection $\pi(\cdot)$ is well-defined and a bijection. Therefore, for every triangle in the triangulation $T\t \in \Th\t$ there exists an induced curvilinear triangle $T^\ell\t = \pi(T\t,t) \subset \Gat$. We further assume that the evolving triangulation is exact, that is, $\Th^{\ell}\t$ forms a conforming subdivision of $\Ga$; see \cite[Definition 7.7]{EllRan21}:
\begin{equation*}
    \Gat = \bigcup_{T^\ell\t \in \Th^\ell\t} \!\!\!\!\!\!\!T^\ell\t.
\end{equation*}
Then, we may introduce the lifted finite element spaces 
\begin{equation*}
    (S_{h,k_g}^{k}\t)^\ell = \{z_h^\ell \in H^1(\Gat) : z_h \in S_{h,k_g}^{k}\t \},
\end{equation*}
where we note that $\{(\chi_{j,k_g}^k)^{\ell}(\cdot,t)\}, $ $j=1,...,J$ forms the lifted nodal basis of $(S_{h,k_g}^{k}\t)^\ell.$ The vector-valued quantities are defined in a similar manner.\\

\noindent \textbf{Geometric Approximations.} \ The discretization of the continuous surface $\Gat$, by a discrete triangulated surface $\Gaht$ described in \cref{sec: Recap: Evolving Triangulated Surfaces}, introduces geometric errors. We now present some key geometric error estimates found in \cite{highorderESFEM,DziukElliott_L2,DziukElliott_acta,DziukElliott_SFEM,LubMans2015Wave}.  In our case, for the numerical study of the evolving surface Navier-Stokes equations, we also need to derive some new geometric estimates involving the material derivative of the discrete normal $\nh$ and projection $\bfPh$.
Define $\bfP, \bfn$ the extension of $\bfPg,\,\bfng$ to the discrete surface $\Gah$.


\begin{lemma}[Geometric Errors]\label{lemma: Geometric errors ENS}
Let $\Gat$ and $\Gaht$ be as described above. Then for sufficiently small $h$ we have
\begin{align}\label{eq: geometric errors 1 ENS}
    \norm{\partial_h^{(k)} d}_{L^\infty(\Gaht)} \leq ch^{k_g + 1}, \quad &\norm{\partial_h^{(k)} (\bfn- \nh)}_{L^\infty(\Gaht)} \leq ch^{k_g}, \quad \norm{\bfH- \bfH_h}_{L^\infty(\Gaht)} \leq ch^{k_g -1}, \\
    \label{eq: geometric errors 1b ENS}
        &\norm{[\bm{\mu}_h]}_{L^{\infty}(\mathcal{E}_h\t)} \leq ch^{k_g}, \quad \norm{\bfP[\bm{\mu}_h]}_{L^{\infty}(\mathcal{E}_h\t)} \leq ch^{2k_g}.
\end{align}
As a consequence, we also have the following
\begin{align}\label{eq: geometric errors 2 ENS}
    &\norm{\partial_h^{(k)}(\bfP- \bfPh)}_{L^\infty(\Gaht)} \leq ch^{k_g}, \quad \norm{\partial_h^{(k)}(\bfP_h\cdot\bfn)}_{L^\infty(\Gaht)} \leq ch^{k_g}, \quad \norm{1-\bfn \cdot \nh}_{L^\infty(\Gah)} \leq ch^{k_g+1}
    \\
    \label{eq: geometric errors 3 ENS}
    &\norm{\partial_h^{(k)}(\bfPh \bfP - \bfP)}_{L^\infty(\Gaht)} \leq c h^{k_g}, \qquad \text{ and } \qquad \norm{\partial_h^{(k)} (\bfP \bfPh \bfP - \bfP)}_{L^\infty(\Gaht)} \leq c h^{k_g+1},
\end{align}
where the superscript $(k)$ denotes the $k$-th discrete normal material derivative, and the constant $c>0$ is independent of $h, \,t$.
\end{lemma}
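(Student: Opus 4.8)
The statement splits into three kinds of bounds, and the plan is to reduce all of them to two ingredients: the classical high-order surface interpolation estimates for the signed distance function and its derivatives (available from \cite{highorderESFEM,Demlow2009,DziukElliott_acta,DziukElliott_L2,DziukElliott_SFEM}), and the observation that, because the surface moves under the \emph{prescribed} flow, these estimates survive differentiation in time. \textbf{Spatial estimates ($k=0$ in \eqref{eq: geometric errors 1 ENS}).} These are classical. The structural fact, recorded in \cref{sec: Recap: Evolving Triangulated Surfaces}, is that the nodes $\alpha_j(t)$ of $\Gaht$ solve $\dot\alpha_j=\FlVel(\alpha_j,t)$ from data $\alpha_j(0)\in\Ga(0)$, hence $\alpha_j(t)\in\Gat$ for every $t$; thus $\Gaht$ is the degree-$k_g$ Lagrange interpolant of $\Gat$ through points lying exactly on $\Gat$, and the triangulation is uniformly quasi-uniform. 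Combining standard polynomial interpolation estimates on the reference element with the properties of $d(\cdot,t)$ on $U_{\delta}(\Gat)$ (namely $\nabla d=\bfn$, with $d$ and its second spatial derivatives bounded uniformly in $t$ by the $C^k$-regularity of $\Gat$) yields $\norm{d}_{L^\infty(\Gaht)}\le ch^{k_g+1}$, $\norm{\bfn-\nh}_{L^\infty(\Gaht)}\le ch^{k_g}$ and $\norm{\bfH-\bfH_h}_{L^\infty(\Gaht)}\le ch^{k_g-1}$ as in the stationary case; uniform quasi-uniformity makes the constants independent of $t$.

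\textbf{Material-derivative estimates ($k\ge1$).} This is the genuinely new part. I would transfer everything to the reference configuration $\Gah(0)$ via the discrete flow map $\Phi^h_t$. Since $\FlVel=V_\Ga\bfng\in C^1([0,T];C^k)$ with $\norm{\FlVel}_{C^k(\Gat)}\le C_{V_{\Ga}}$, the maps $\Phi^h_t$, $\Phi^h_{-t}$ are $C^k$ in space and $C^1$ in time with bounds uniform in $t$ and $h$; under pullback $\partial_h^{(k)}$ becomes iterated $\partial_t$ on the fixed surface $\Gah(0)$, the degree-$k_g$ geometric interpolation operator is time-independent, and the ambient functions $d$, $\bfn$, $\bfH$ stay jointly smooth in $(x,t)$. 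Hence, up to the bookkeeping for the interplay between the geometry degree $k_g$ and the velocity degree $k_u$ appearing in $\TrVel$, a discrete material derivative of a geometric error becomes a $t$-derivative of a \emph{fixed} interpolation error whose time derivatives fall only on smooth data, so the powers of $h$ from the $k=0$ step are preserved. The quantities $\nh$ and $\bfH_h$, which are defined only elementwise through the local parametrisation, are handled by differentiating that parametrisation, whose time-dependence enters only through $\dot\alpha_j=\FlVel(\alpha_j,t)$; this shows $\partial_h^{(k)}\nh$ and $\partial_h^{(k)}\bfH_h$ are elementwise bounded and that $\partial_h^{(k)}(\bfn-\nh)$, $\partial_h^{(k)}(\bfH-\bfH_h)$ inherit the rates above. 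Equivalently one may differentiate identities directly, e.g. $\matd d=\partial_t d+\TrVel\cdot\nabla d$ with $\nabla d=\bfn$, with $\partial_t d$ equal up to sign to the inverse lift of $V_\Ga$, and $\FlVel^{-\ell}\cdot\bfn=V_\Ga^{-\ell}$ on $\Gaht$, which reduces $\matd d$ to an interpolation error of $\FlVel$, and then iterate.

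\textbf{Co-normal jumps \eqref{eq: geometric errors 1b ENS} and consequences \eqref{eq: geometric errors 2 ENS}--\eqref{eq: geometric errors 3 ENS}.} For the jumps I would use that, by exactness, the lifted triangulation $\Th^{\ell}(t)$ is a conforming subdivision of $\Gat$, so adjacent lifted curved triangles share edges; the discrete co-normals $\bm{\mu}_h^{\pm}$ then differ from the (cancelling) continuous co-normals of the shared lifted edge by $O(h^{k_g})$, giving $\norm{[\bm{\mu}_h]}_{L^\infty(\mathcal{E}_h(t))}\le ch^{k_g}$, while the component of this jump lying in the exact tangent plane is one order smaller, since the lifted edges coincide exactly and the jump is essentially normal to $\Gat$ up to second order, whence $\norm{\bfP[\bm{\mu}_h]}_{L^\infty(\mathcal{E}_h(t))}\le ch^{2k_g}$; see \cite{highorderESFEM}. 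The remaining estimates are purely algebraic from the normal estimate and its material derivatives together with $1-\bfn\cdot\nh=\tfrac12|\bfn-\nh|^2$ (both vectors being unit): one writes $\bfP-\bfPh=(\nh-\bfn)\otimes\nh+\bfn\otimes(\nh-\bfn)$, $\bfPh\bfn=(\bfn-\nh)+(1-\nh\cdot\bfn)\nh$, $\bfPh\bfP-\bfP=-\nh\otimes(\bfP\nh)$ and $\bfP\bfPh\bfP-\bfP=-(\bfP\nh)\otimes(\bfP\nh)$ with $\bfP\nh=\bfP(\nh-\bfn)$, reads off the stated powers of $h$, and applies the Leibniz rule with the $k\ge1$ bounds on $\partial_h^{(k)}(\bfn-\nh)$ and the uniform bounds on $\partial_h^{(k)}\nh$, $\partial_h^{(k)}\bfn$ for the $\partial_h^{(k)}$-versions.

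\textbf{Main obstacle.} The hard part is the $k\ge1$ step: making the assertion ``time-differentiation does not degrade the spatial rates'' rigorous and \emph{uniform in} $t$, and in particular controlling $\partial_h^{(k)}\nh$ and $\partial_h^{(k)}\bfH_h$, which are defined only elementwise and mix space and time derivatives of the parametrisation, as well as keeping track of the geometry-degree/velocity-degree interplay in $\TrVel$. The reference-configuration pullback, together with the $C^1([0,T];C^k)$-regularity and uniform bound of the prescribed velocity $\FlVel$ and the uniform quasi-uniformity of $\{\Th(t)\}_{t\in[0,T]}$, is what makes this go through.
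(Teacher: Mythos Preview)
Your proposal is correct and reaches the same conclusions as the paper, with essentially identical treatment of the $k=0$ case, the co-normal jumps (the paper cites \cite{Olshanskii2014ASurfaces} rather than \cite{highorderESFEM}, but the substance is the same), and the algebraic derivation of \eqref{eq: geometric errors 2 ENS}--\eqref{eq: geometric errors 3 ENS} from the normal-error estimate (your identities $\bfPh\bfP-\bfP=-\nh\otimes(\bfP\nh)$ and $\bfP\bfPh\bfP-\bfP=-(\bfP\nh)\otimes(\bfP\nh)$ are exactly what the paper uses).

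The only genuine methodological difference is in the $k\ge1$ step for $\partial_h^{(k)}(\bfn-\nh)$. You propose a pullback to the reference surface $\Gah(0)$, converting discrete material derivatives into plain $\partial_t$ acting on a fixed interpolation error, and argue that the $h$-rates are preserved because time derivatives fall only on smooth data. The paper instead works pointwise in a local frame where the element lies in $\mathbb{R}^2\times\{0\}$ so that $\nh=\bfe_3$, and invokes the explicit component bounds $\|\matd n_j\|_{L^\infty(T)}\le ch^{k_g}$ for $j=1,2$ and $\matd n_3=O(h^{k_g+1})$ established in \cite{LubMans2015Wave,DziukElliott_L2}; this immediately gives $\|\matd(\bfn-\nh)\|\le\|\matd\bfn\|\le ch^{k_g}$ and then the projection estimates by the product rule. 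Your route is more systematic and makes the uniformity in $t$ transparent; the paper's route is more concrete and sidesteps the bookkeeping you flag (the $k_g/k_u$ interplay in $\TrVel$) by directly quoting the needed componentwise bound on $\matd\bfn$ from the literature.
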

\begin{proof}
    A proof for \eqref{eq: geometric errors 1b ENS} is given in \cite{Olshanskii2014ASurfaces}. The rest of the results for $k=0$ appear in numerous literature \cite{DziukElliott_acta,Demlow2009} and references therein. For $k\geq 1$, most of the results can be found in \cite{highorderESFEM,LubMans2015Wave,DziukElliott_L2} etc. Specifically, let us first focus on the second estimate in \eqref{eq: geometric errors 1 ENS} for $k=1$, which has not been proved, but it is an immediate sequence of already known results. Then, for $k>1$ the result follows with similar arguments.
    
    As in \cite{LubMans2015Wave,DziukElliott_L2} we consider $T\t \in \Gaht$ a single element and w.l.o.g. we assume that $T \in \mathbb{R}^2\times\{0\}$. There, it was proven that
    \begin{align}
        \norm{\matd n_j}_{L^{\infty}(T)} \leq ch^{k_g} \text{ for }j=1,2,
    \end{align}
and $\matd n_3 = \bigo(h^{k_g+1})$, where $\bfn = (n_1,n_2,n_3),$. Then, since by construction $\bfn_h = \bfe_3=(0,0,1)$, we clear see that 
\begin{align}
    \norm{\partial_h^{\circ} (\bfn- \nh)}_{L^\infty(T)} \leq \norm{\partial_h^{\circ}\bfn} \leq ch^{k_g}.
\end{align}
Now using the fact that $\bfP = \bfI - \bfn \otimes\bfn$ and $\bfP_h = \bfI - \bfn_h \otimes\bfn_h$ it clear that 
\begin{align}
    \norm{\partial_h^{\circ}(\bfP- \bfPh)}_{L^\infty(T)}\leq ch^{k_g}.
\end{align}
The estimate $\norm{\partial_h^{\circ}(\bfP_h\cdot\bfn)}_{L^\infty(\Gah)} \leq ch^{k_g}$ was proven similarly in \cite[Lemma 5.4]{DziukElliott_L2}. From this, we see that
\begin{align}
   &\norm{\partial_h^{\circ}(\bfPh \bfP - \bfP)}_{L^\infty(T)}  = \norm{\partial_h^{\circ}((\nh\otimes\nh)\bfP)}_{L^\infty(T)} \nonumber \\
   &\qquad \qquad \qquad\qquad\quad \ \leq c\norm{\nh\otimes\partial_h^{\circ}(\bfP\cdot\nh)}_{L^\infty(T)} + \norm{\partial_h^{\circ}\nh \otimes(\bfP\cdot\nh)}_{L^\infty(T)} \leq c h^{k_g},  \\
    &\norm{\partial_h^{\circ}(\bfP \bfPh \bfP - \bfP)}_{L^\infty(T)} \leq \norm{\partial_h^{\circ}(\bfP\cdot\nh\otimes\bfP \cdot\nh)}_{L^\infty(T)}\leq c h^{k_g+1},
\end{align}
which completes the proof.
\end{proof}

Let $\muh(x,t)$ be the Jacobian of the transformation $\pi(x,t)|_{\Gah}$ : $\Gaht \to \Gat$ such that $\muh(x,t)\dsh = \ds$, then the following estimate also holds 
\begin{equation}
    \norm{1-\muh}_{L^{\infty}(\Gaht)} \leq ch^{k_g+1},
\end{equation}
see for instance \cite{DziukElliott_L2,Demlow2009}.

Now, let us compare functions and surface differential operators defined on the discrete surface $\Gah$ and the exact surface $\Ga$.  Set  $\Bhg = \bfPh(\bfI - d\bfH)\bfPg$, then we also relate the tangential derivatives with the help of the lift extension defined above as follows:
\begin{description}
    \item[Scalar Functions :] Using the extension of a scalar function  $\zeta_h : \Gah \to \mathbb{R}$, \eqref{eq: lift ENS} and the chain rule we have that the discrete tangential gradient of $z_h \in H^1(\Gah)$ for $x \in \Gah$ is
\begin{equation}
    \begin{aligned}
        \nbgh z_h(x) = \nbgh z_h^{\ell}(\pi(x)) = \bfPh(\bfI - d\bfH)\nbg z_h^\ell(\pi(x)) = \Bhg \nbg z_h^\ell(\pi(x)),
    \end{aligned}
\end{equation}
We also notice that $\Bhg : T_x(\Gah) \mapsto T_{\pi(x)}(\Ga)$ is invertible; see \cref{Lemma: Bh estimates ENS}, i.e. we have that the map $\Bhg^{-1}$ on the tangent space $T( \Ga)$ is given by $$\Bhg^{-1}|_{T_x \Ga} = \bfPg(\bfI - d\bfH)^{-1}(\bfI - \frac{\nh\otimes\bfn}{\nh \cdot \bfn})\bfPh,$$
thus for $z_h : \Gah \to \mathbb{R}$ and for $x\in \Gah$ we have that (see also \cite{demlow2007adaptive}),
\begin{equation}
    \begin{aligned}
        \nbg z_h^{\ell}(\pi(x)) = \bfPg(\bfI - d\bfH)^{-1}(\bfI - \frac{\nh\otimes\bfn}{\nh \cdot \bfn})\nbgh z_h(x) = \Bhg^{-1}\nbgh z_h(x).
    \end{aligned}
\end{equation}
Noticing that the matrix $\bfG = (\bfI - \frac{\nh\otimes\bfn}{\nh \cdot \bfn}) = (\bfI - \frac{\nh\otimes\bfn}{\nh \cdot \bfn}) \bfPh$ we can easily calculate that  $\Bhg^{-1}\Bhg = \bfPg$ and $\Bhg\Bhg^{-1} = \bfPh$.
\item[Vector Valued Functions :] Considering the tangential gradients of vector valued function \eqref{eq: tangential derivative ENS}, we may apply the lift extension component-wise, for each row, and with similar calculations to the scalar case we may get for $\bm{z}_h \in \bfH^1(\Gah)$, after factoring out the common map $\Bhg$, that
\begin{equation}
    \begin{aligned}\label{eq: gah to ga Bh ENS}
        \nbgh \bm{z}_h(x) = \nbgh \bm{z}_h^\ell(\pi(x)) 
        = \nbg \bm{z}_h^\ell(\pi(x))\Bhg^t.
    \end{aligned}
\end{equation}
Likewise for the inverse transformation we get
\begin{equation}
    \begin{aligned}
        \nbg \bm{z}_h^{\ell}(\pi(x)) = \nbgh \bm{z}_h(x)(\Bhg^{-1})^t.
    \end{aligned}
\end{equation}
\end{description}
Furthermore, we define $\mathcal{Q}_h = \frac{1}{\mu_h}\Bhg^t\Bhg = \frac{1}{\mu_h}(\bfI - d\bfH)\bfPg\bfPh\bfPg(\bfI - d\bfH)$; see \cite{DziukElliott_L2,demlow2007adaptive,LubMans2015Wave} and $\mathcal{W}_h = \frac{1}{\mu_h}\Bhg-\bfPg$, which will be important later on in \cref{sec: Geometric Perturbations Errors}.

\begin{lemma}\label{Lemma: Bh estimates ENS}
Let $\Gat$ and $\Gaht$ be as described above. Then for sufficiently small $h$ we have the following bounds
\begin{align}
    \label{eq: Bh stability ENS}
      \norm{\Bhg}_{L^{\infty}(\Gat)} &\leq c, \qquad\qquad\norm{\Bhg^{-1}}_{L^{\infty}(\Gaht)} \leq 1, \\
     \label{eq: Bh estimates ENS}
     \norm{\partial_h^{(k)}\mathcal{W}_h}_{L^{\infty}(\Gaht)} &\leq ch^{k_g}, \quad \norm{\partial_h^{(k)}\bfP\mathcal{W}_h}_{L^{\infty}(\Gaht)} \leq ch^{k_g+1},  \quad\norm{\bfP - \mathcal{Q}_h}_{L^{\infty}(\Gaht)} \leq ch^{k_g+1}, \nonumber\\
     &\!\!\!\!\qquad\qquad\norm{\bfP(\partial_h^{(k)}\mathcal{Q}_h)\bfP}_{L^{\infty}(\Gaht)} \leq ch^{k_g+1}
\end{align}
where the superscript $(k)$ denotes the $k$-th discrete material derivative.
\end{lemma}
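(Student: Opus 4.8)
The plan is to read off every bound directly from the defining expressions $\Bhg=\bfPh(\bfI-d\bfH)\bfPg$, $\mathcal{W}_h=\mu_h^{-1}\Bhg-\bfPg$ and $\mathcal{Q}_h=\mu_h^{-1}\bfPg(\bfI-d\bfH)\bfPh(\bfI-d\bfH)\bfPg$ on $\Gaht$ --- where $\bfPg$ and $\bfn$ are understood through their normal extensions, so that $\bfPg=\bfP$ on $\Gaht$ --- by isolating the leading order term in each and controlling the remainder with \cref{lemma: Geometric errors ENS}, $\norm{\partial_h^{(k)}d}_{L^\infty(\Gaht)}\le ch^{k_g+1}$, and $\norm{\partial_h^{(k)}(1-\mu_h)}_{L^\infty(\Gaht)}\le ch^{k_g+1}$ (the last obtained by differentiating the Jacobian identity $\mu_h\dsh=\ds$ along the discrete flow as in \cite{DziukElliott_L2,LubMans2015Wave}). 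The stability bounds \eqref{eq: Bh stability ENS} need no expansion: $\norm{\Bhg}_{L^\infty(\Gat)}\le c$ follows from the uniform boundedness of $\bfPh,\bfPg,\bfH$ and from $\norm{d}_{L^\infty(\Gaht)}\le ch^{k_g+1}$, while the contractivity $\norm{\Bhg^{-1}}_{L^\infty(\Gaht)}\le1$ comes from the explicit formula $\Bhg^{-1}|_{T_x\Ga}=\bfPg(\bfI-d\bfH)^{-1}\bigl(\bfI-\tfrac{\nh\otimes\bfn}{\nh\cdot\bfn}\bigr)\bfPh$ together with the classical pointwise comparison of lifted and unlifted tangential gradients; see \cite{DziukElliott_L2,demlow2007adaptive,LubMans2015Wave}.

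For $\mathcal{W}_h$ I would write \[ \mathcal{W}_h=\mu_h^{-1}(\bfPh\bfPg-\bfPg)+(\mu_h^{-1}-1)\bfPg-\mu_h^{-1}d\,\bfPh\bfH\bfPg; \] the last two summands and all their discrete material derivatives are $O(h^{k_g+1})$, whereas the leading term $\bfPh\bfPg-\bfPg=\bfPh\bfP-\bfP$ on $\Gaht$ is $O(h^{k_g})$ with all material derivatives by the first estimate in \eqref{eq: geometric errors 3 ENS}, so distributing $\partial_h^{(k)}$ by the Leibniz rule gives $\norm{\partial_h^{(k)}\mathcal{W}_h}_{L^\infty(\Gaht)}\le ch^{k_g}$. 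Left-multiplying by $\bfP$ and using $\bfP\bfPg=\bfP$ replaces the leading term by $\bfP\bfPh\bfP-\bfP$, which is $O(h^{k_g+1})$ with all material derivatives by the second estimate in \eqref{eq: geometric errors 3 ENS}; this yields $\norm{\partial_h^{(k)}\bfP\mathcal{W}_h}_{L^\infty(\Gaht)}\le ch^{k_g+1}$.

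For $\mathcal{Q}_h$ I would expand $(\bfI-d\bfH)\bfPh(\bfI-d\bfH)=\bfPh-d(\bfH\bfPh+\bfPh\bfH)+d^2\bfH\bfPh\bfH$ and use $\bfPg=\bfP$ to get \[ \mathcal{Q}_h=\bfP+\mu_h^{-1}(\bfP\bfPh\bfP-\bfP)+(\mu_h^{-1}-1)\bfP+O(h^{k_g+1}), \] whence $\norm{\bfP-\mathcal{Q}_h}_{L^\infty(\Gaht)}\le ch^{k_g+1}$ by \eqref{eq: geometric errors 3 ENS}; since every summand of $R:=\mathcal{Q}_h-\bfP$ carries a factor $d$, a factor $1-\mu_h$, or the difference $\bfP\bfPh\bfP-\bfP$, the Leibniz rule also gives $\norm{\partial_h^{(k)}R}_{L^\infty(\Gaht)}\le ch^{k_g+1}$. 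It then remains to handle $\bfP(\partial_h^{(k)}\bfP)\bfP$: differentiating $\bfP=\bfI-\bfn\otimes\bfn$ and using $\bfP\bfn=0$ and $\bfn\cdot\partial_h^{\circ}\bfn=\tfrac12\partial_h^{\circ}|\bfn|^2=0$, one checks that in the Leibniz expansion of $\partial_h^{(k)}(\bfn\otimes\bfn)$ every factor contracted by $\bfP$ on either side either vanishes or pairs with a neighbouring $\bfP$-factor so as to produce only differences $\bfP\bfPh\bfP-\bfP$, each $O(h^{k_g+1})$; this delivers $\norm{\bfP(\partial_h^{(k)}\mathcal{Q}_h)\bfP}_{L^\infty(\Gaht)}\le ch^{k_g+1}$.

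The hard part will be precisely this last bookkeeping. A crude estimate loses a power of $h$ --- only $\norm{\partial_h^{(k)}(\bfn-\nh)}_{L^\infty(\Gaht)}\le ch^{k_g}$ is available for a differentiated normal --- and recovering the sharp $h^{k_g+1}$ hinges entirely on the two-sided projection $\bfP(\,\cdot\,)\bfP$ together with $\bfP\bfn=0$ and $|\bfn|=1$; arranging the Leibniz terms so that every surviving contribution reduces to one of $\partial_h^{(j)}d$, $\partial_h^{(j)}(1-\mu_h)$ or $\partial_h^{(j)}(\bfP\bfPh\bfP-\bfP)$ is the delicate step. This is the evolving, vector-valued counterpart of the geometric perturbation estimates in \cite{DziukElliott_L2,LubMans2015Wave}, on which I would rely for the stationary scalar case.
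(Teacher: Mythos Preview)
Your treatment of \eqref{eq: Bh stability ENS} and of the two $\mathcal{W}_h$ estimates is essentially the paper's argument spelled out: decompose $\mathcal{W}_h$ (respectively $\bfP\mathcal{W}_h$) so that the leading contribution is exactly $\bfPh\bfP-\bfP$ (respectively $\bfP\bfPh\bfP-\bfP$) and invoke \eqref{eq: geometric errors 3 ENS}. The paper's proof says precisely this in one sentence, and for the remaining two estimates simply points to \cite{highorderESFEM,LubMans2015Wave}.

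Your direct argument for $\norm{\bfP-\mathcal{Q}_h}_{L^\infty}$ and for $\norm{\partial_h^{(k)}(\mathcal{Q}_h-\bfP)}_{L^\infty}\le ch^{k_g+1}$ via Leibniz on the remainder $R$ is fine. The gap is in the last step, where you try to show $\bfP(\partial_h^{(k)}\bfP)\bfP=O(h^{k_g+1})$. The explanation that Leibniz terms ``pair with a neighbouring $\bfP$-factor so as to produce only differences $\bfP\bfPh\bfP-\bfP$'' cannot be right: there is no $\bfPh$ in $\bfP$, so no such difference can emerge. What actually happens for $k=1$ is simply the projection identity $\bfP(\partial_h^{\circ}\bfP)\bfP=0$, obtained by differentiating $\bfP^2=\bfP$; combined with your bound on $\partial_h^{\circ}R$ this gives the stated estimate immediately.

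For $k\ge 2$ your approach breaks down genuinely: already
\[
\bfP\bigl(\partial_h^{(2)}(\bfn\otimes\bfn)\bigr)\bfP
=2\,\bfP(\partial_h^{\circ}\bfn)\otimes\bfP(\partial_h^{\circ}\bfn)
=2\,(\partial_h^{\circ}\bfn)\otimes(\partial_h^{\circ}\bfn),
\]
since $\bfn\cdot\partial_h^{\circ}\bfn=0$ makes $\partial_h^{\circ}\bfn$ tangential; this is $O(1)$, not $O(h^{k_g+1})$. In the paper the estimate is only ever applied with a single material derivative (see the term $\int_{\Gat}\nbgcov\whl:\nbgcov\vhl\,\matdl\mathcal{Q}_h^{\ell}$ in the proof of \cref{lemma: Geometric perturbations lagrange II ENS}), so $k=1$ is all that is needed and the projection identity closes the argument. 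If you want the general-$k$ version, the correct object to bound is $\partial_h^{(k)}(\mathcal{Q}_h-\bfP)$ itself, which your Leibniz argument on $R$ already controls; the sandwiched quantity $\bfP(\partial_h^{(k)}\mathcal{Q}_h)\bfP$ does not stay small for $k\ge2$.
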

\begin{proof}
For \eqref{eq: Bh stability ENS} recall   \cite{Heine2004,DziukElliott_acta,hansbo2020analysis,elliott2024sfem}. Regarding  
\eqref{eq: Bh estimates ENS} the last two estimates have been shown in \cite{highorderESFEM,LubMans2015Wave}. The first inequality in \eqref{eq: Bh estimates ENS} is an immediate consequence of the first inequality in \eqref{eq: geometric errors 3 ENS} and the second of the second inequality in \eqref{eq: geometric errors 3 ENS}. 
\end{proof}

\subsubsection{Norm equivalence}
We use $\sim$ to denote the norm equivalence independent of $h$. Recall the following lemma involving equivalence of norms on different surfaces.
\begin{lemma}[Norm Equivalence]
    \label{lemma: norm equivalence ENS}
    Let $v_h \in H^j(T),$ $j\geq 2$ for $T\in \Th$ a face of $\Gah$ and let 
    $T^{\ell} \in \mathcal{T}_h^{\ell}$ be a face of the lifted surface $\Ga$. Then for $h$ small enough we have that following equivalences
    \begin{equation}\label{eq: norm equivalence ENS}
        \begin{aligned}
            \norm{v_h^{\ell}}_{L^2(T^{\ell})}&\sim \norm{v_h}_{L^2(T)}\sim \norm{\tilde{v}_h}_{L^2(\Tilde{T})}, \\
            \norm{\nbg v_h^{\ell}}_{L^2(T^{\ell})}&\sim \norm{\nbgh v_h}_{L^2(T)}\sim \norm{\nbglin\tilde{v}_h}_{L^2(\Tilde{T})},\\
            |\nbgh^j v_h|_{L^2(T)} \leq c \sum_{k=1}^j & |\nbg^k v_h^{\ell}|_{L^2(T^\ell)},  \qquad 
             |\nbg^j v_h^{\ell}|_{L^2(T^{\ell})} \leq c \sum_{k=1}^j |\nb_{\Gah}^k v_h|_{L^2(T^\ell)}.
        \end{aligned}
    \end{equation}
\end{lemma}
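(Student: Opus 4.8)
The plan is to reduce all three chains of (in)equalities to two standard ingredients: a change of variables along the lift map $\pi|_{T} : T \to T^{\ell}$ (and the analogous bijection between the two discrete faces $\Ttilde$ and $T$), whose Jacobian is uniformly close to $1$; and the matrix identities relating the surface gradients on the three faces, controlled by the uniform bounds on the transformation matrices from \cref{lemma: Geometric errors ENS} and \cref{Lemma: Bh estimates ENS}.

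First I would establish the $L^2$ equivalences. Pulling back $v_h^{\ell}$ under $\pi|_{T}$ and using $\muh\,\dsh = \ds$ gives $\norm{v_h^{\ell}}_{L^2(T^{\ell})}^2 = \int_T |v_h|^2\,\muh\,\dsh$; since $\norm{1-\muh}_{L^{\infty}(\Gaht)} \leq ch^{k_g+1}$, for $h$ small $\muh$ lies between $\tfrac12$ and $2$ on $T$, which yields $\norm{v_h^{\ell}}_{L^2(T^{\ell})} \sim \norm{v_h}_{L^2(T)}$. The comparison with $\norm{\tilde v_h}_{L^2(\Ttilde)}$ is the same: $\Ttilde$ and $T$ are both parametrised over a common reference simplex, so there is a smooth bijection $\Ttilde \to T$ with Jacobian $1 + \bigo(h)$, and the change of variables closes the chain.

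Next, for the gradient equivalences I would use the identity \eqref{eq: gah to ga Bh ENS}, namely $\nbgh v_h(x) = \nbg v_h^{\ell}(\pi(x))\,\Bhg^{t}$ on each $T$, together with its inverse $\nbg v_h^{\ell}(\pi(x)) = \nbgh v_h(x)\,(\Bhg^{-1})^{t}$; combining the $L^\infty$ bounds $\norm{\Bhg}_{L^{\infty}} \leq c$ and $\norm{\Bhg^{-1}}_{L^{\infty}} \leq c$ from \eqref{eq: Bh stability ENS} with the bounded-Jacobian change of variables gives $\norm{\nbg v_h^{\ell}}_{L^2(T^{\ell})} \sim \norm{\nbgh v_h}_{L^2(T)}$. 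The equivalence with $\norm{\nbglin \tilde v_h}_{L^2(\Ttilde)}$ then follows either by chaining through $T^{\ell}$ or directly via the (uniformly invertible) transformation matrix relating $\nbglin$ on $\Ttilde$ to $\nbgh$ on $T$.

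Finally, for the higher-order estimates I would differentiate $\nbgh v_h = \nbg v_h^{\ell}(\pi(\cdot))\,\Bhg^{t}$ tangentially and apply the product rule repeatedly: this produces, for each $j\geq 1$, a pointwise representation $\nbgh^{j} v_h = \sum_{k=1}^{j}(\nbg^{k} v_h^{\ell})(\pi(\cdot))\,M_{j,k}$, where each $M_{j,k}$ is a finite tensor product of $\Bhg$ and of tangential derivatives of $\Bhg$, $\bfH$, $d$ and the closest-point projection, all elementwise bounded uniformly in $h$ and $t$ by \cref{lemma: Geometric errors ENS} and \cref{Lemma: Bh estimates ENS}. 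Taking $L^2$-norms over $T$ and using the bounded Jacobian then gives $|\nbgh^{j} v_h|_{L^2(T)} \leq c\sum_{k=1}^{j}|\nbg^{k} v_h^{\ell}|_{L^2(T^{\ell})}$, and the reverse inequality follows symmetrically starting from $\nbg v_h^{\ell} = \nbgh v_h\,(\Bhg^{-1})^{t}$ — which also explains why for $j\geq2$ one can only hope for a one-sided bound over all orders $k\leq j$ rather than a genuine equivalence. I expect the main obstacle to be the bookkeeping in this last step: one must verify that the matrices $M_{j,k}$, assembled from quantities that are only \emph{piecewise} smooth across element edges, have uniformly bounded \emph{broken} Sobolev norms — precisely what the recorded geometric estimates supply — but the product-rule expansion together with the shape-regularity needed to keep the constants from degenerating requires some care.
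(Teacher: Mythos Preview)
Your argument is correct and follows the standard route one finds in the references the paper cites. In fact the paper does not give a proof at all: it simply records that the first two equivalences are classical (citing Dziuk--Elliott and Demlow) and that the higher-order inequalities appear as \cite[Lemma~3.3]{LarssonLarsonContDiscont}. Your sketch is precisely the argument underlying those references: the $L^2$ and first-order equivalences come from the change of variables with Jacobian $\muh = 1 + \bigo(h^{k_g+1})$ together with the uniform bounds on $\Bhg$ and $\Bhg^{-1}$ from \eqref{eq: Bh stability ENS}, and the higher-order one-sided bounds follow by iterating the product rule on $\nbgh v_h = \nbg v_h^{\ell}(\pi(\cdot))\,\Bhg^{t}$ and its inverse, using that $\Bhg$, $d$, $\bfH$ and their tangential derivatives are uniformly bounded elementwise. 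The only caveat worth noting is the one you already flag: the coefficient tensors $M_{j,k}$ are smooth only on each curved element, so the bounds are genuinely in the broken norm --- but this is exactly the setting of the lemma, and shape-regularity plus the geometric estimates of \cref{lemma: Geometric errors ENS} keep the constants uniform in $h$ and $t$.
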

\begin{proof}
    The first two set of equivalences can be found in several literature, e.g. \cite{DziukElliott_acta,Demlow2009}. Regarding the last two inequalities, see \cite[Lemma 3.3]{LarssonLarsonContDiscont}.
\end{proof}

\subsection{Material Velocity of Lifted Surface}\label{sec: Material Velocity of Lifted Surface}
In \cref{sec: Finite Element Spaces and Discrete Material Derivative} we described that the discrete surface $\Gaht$ evolves with discrete material velocity $\TrVel$. With this in mind, following \cite{DziukElliott_L2,DziukElliott_acta,EllRan21}, the associated material velocity of the \emph{lifted} material points $\bfy\t$ of $\Gat$ are defined as
\begin{align}\label{eq: lifted discrete velocity ENS}
    \TrVelL(\bfy\t,t)= \dot{\bfy}\t  = (\bfP - d\bfH)(\bfx_h,t)\TrVel(\bfx_h,t) - d_t(\bfx_h,t)\bfn(\bfx_h,t) - d(\bfx_h,t) \bfn_t(\bfx_h,t),
\end{align}
where $\bfy\t = \pi(\bfx_h\t,t)$ \eqref{eq: closest point proje 2 ENS}, with $\bfx_h\t \in \Gaht$. Then, since $d_t(\bfx_h,t)\bfn(\bfx_h,t)$ is just the normal component of the fluid velocity (in our setting it is our normal fluid velocity $\FlVel$), and the rest of the terms are tangential, it is clear that $\TrVelL - \FlVel$ \emph{ is tangent}. 

So, as before, this \emph{lifted discrete material (fluid) velocity} $\TrVelL$, induces a lifted discrete flow map $\Phi^{\ell}_t:[0,T]\times\Gat$ from which we write the pushforward and pullback of $\chi_h^{\ell} \in \Ga(0)$ and $\zeta_h^{\ell} \in \Gat$ as 
\begin{equation*}
    \begin{aligned}
        \phi_t^{\ell}  \chi_h^{\ell}  = \chi_h^{\ell}  \circ \Phi_{-t}^{\ell}  \in \Gat \qquad \phi_{-t}^{\ell}  \zeta_h^{\ell}  = \zeta_h^{\ell}  \circ \Phi_t^{\ell}  \in \Ga(0),
    \end{aligned}
\end{equation*}
with $\Phi^{\ell}_t(\bfx) = \Phi^{\ell}(\bfx,t) =\Phi^h(\bfx^{-\ell},t)^{\ell} = \Phi^{h}_t(\bfx^{-\ell})^{\ell}$. So, one may also define the \emph{lifted discrete strong normal material derivative} by
\begin{equation}\label{eq: lifted discrete stong mat deriv ENS}
    \matdl\zeta =  \Big(\frac{d}{dt}\zeta\circ \Phi_t^{\ell}\Big)\circ\Phi_{-t}^{\ell} = \phi_t^{\ell}\Big(\frac{d}{dt}\phi_{-t}^{\ell}\zeta\Big),
\end{equation}
for a sufficiently smooth discrete function $\zeta : \mathcal{G}_T \to \mathbb{R}^n$. For more details, see also \cite{EllRan21}. Therefore, it is clear that $\matdl \zeta = (\matd\zeta^{-\ell})^\ell$. Furthermore, the \emph{transport property} clearly still holds for the lifted material derivative; see \cite{DziukElliott_L2}, i.e.
\begin{equation}\label{eq: lifted transport property ENS}
\matdl(\chi_{j,k_g}^k)^{\ell}(\cdot,t)=0, \quad \text{for } j=1,...,J,
\end{equation}
where, as in \cref{sec: Finite Element Spaces and Discrete Material Derivative}, $\{(\chi_{j,k_g}^k)^{\ell}(\cdot,t)\}, $ $j=1,...,J$ is the lifted nodal basis of $(S_{h,k_g}^{k}\t)^\ell.$\\

Now notice, once again, that $\TrVel$ is the Lagrange interpolant of $\FlVel$ onto the finite element space $(S_{h,k_g}^{k_u}\t)^3$. With that in mind, the following high order bounds have been derived in \cite[Lemma 9.22]{EllRan21} or \cite[Lemma 5.4]{highorderESFEM} for the difference between the flow velocity $\FlVel$ and the lifted discrete flow velocity $\TrVelL$ \eqref{eq: lifted discrete velocity ENS}.
\begin{lemma}
For $a=0,1$ it holds that
\begin{equation}\label{eq: difference of velocities ENS}
    \norm{\partial_{\ell}^{a}(\FlVel - \TrVelL)}_{L^{\infty}(\Gat)} + h\norm{\nbg\partial_{\ell}^{a}(\FlVel - \TrVelL)}_{L^{\infty}(\Gat)} \leq c(h^{k_g+1}+h^{k_u+1}),
\end{equation}
where $c$ a positive constant depending on $\mathcal{G}_T$ but independent of $h, \, t$.  
\end{lemma}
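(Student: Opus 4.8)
The plan is to write $\FlVel-\TrVelL$ as the sum of an \emph{interpolation} error of order $h^{k_u+1}$ and a purely \emph{geometric} error of order $h^{k_g+1}$. Two facts make this work: by \eqref{eq: vel discrete surf ENS}, $\TrVel$ is exactly the $\mathbf{P}_{k_u}$-Lagrange interpolant on $\Gaht$ of the smooth field $\FlVel$ (equivalently of its normal extension $\FlVel^{-\ell}=\FlVel^{e}|_{\Gaht}$); and $\FlVel=V_{\Ga}\bfng$ is \emph{purely normal}, which is responsible for the exact cancellation of the only $\bigo(1)$ term that appears when \eqref{eq: lifted discrete velocity ENS} is expanded geometrically. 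This mirrors \cite[Lemma 9.22]{EllRan21}, \cite[Lemma 5.4]{highorderESFEM}, specialised to the present normal-velocity setting.

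First I would dispose of the interpolation part. Since $\FlVel\in C^{1}([0,T];C^{k})$ with $k\ge k_u+1$, the normal extension $\FlVel^{-\ell}$ is correspondingly smooth, so standard Lagrange interpolation estimates on the curved elements $T\t\in\Th\t$, together with the norm equivalences of \cref{lemma: norm equivalence ENS}, give
\[
  \norm{\FlVel^{-\ell}-\TrVel}_{L^{\infty}(\Gaht)}+h\,\norm{\nbgh(\FlVel^{-\ell}-\TrVel)}_{L^{\infty}(\Gaht)}\le c\,h^{k_u+1}.
\]
For $a=1$ one uses the transport property \eqref{eq: transport property ENS} to get $\matd\TrVel=\sum_{j}\matn\FlVel(\alpha_j\t,t)\,\chi_{j,k_g}^{k_u}(\cdot,t)$, i.e. $\matd\TrVel$ is the $\mathbf{P}_{k_u}$-interpolant of $\matn\FlVel$; combined with the identity $\matdl\FlVel-\matn\FlVel=\big((\TrVelL-\FlVel)\cdot\nbg\big)\FlVel$ — valid because the two material derivatives differ by the (tangential) velocity discrepancy $\TrVelL-\FlVel$ noted just after \eqref{eq: lifted discrete velocity ENS} — a bootstrap on the $a=0$ bound shows the displayed estimate persists with $\matd$ applied throughout.

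Next comes the geometric step. Inserting $\FlVel^{-\ell}$ into \eqref{eq: lifted discrete velocity ENS} and using $\FlVel(\pi(\bfx_h,t),t)=\FlVel^{-\ell}(\bfx_h,t)$, I would write, with every quantity evaluated at $\bfx_h\in\Gaht$,
\[
  \TrVelL-\FlVel=(\bfP-d\bfH)(\TrVel-\FlVel^{-\ell})+\big[(\bfP-\bfI)\FlVel^{-\ell}-d_t\bfn\big]-d\,\bfH\FlVel^{-\ell}-d\,\bfn_t .
\]
The bracket is identically zero: normality of $\FlVel$ gives $\FlVel^{-\ell}\cdot\bfn=V_{\Ga}^{e}$, hence $(\bfP-\bfI)\FlVel^{-\ell}=-(\FlVel^{-\ell}\cdot\bfn)\bfn=-V_{\Ga}^{e}\bfn$, while the classical identity $d_t(\cdot,t)=-V_{\Ga}(\pi(\cdot,t),t)=-V_{\Ga}^{e}$ in $U_\delta$ gives $-d_t\bfn=V_{\Ga}^{e}\bfn$. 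The first term is $\le c\,\norm{\TrVel-\FlVel^{-\ell}}_{L^{\infty}(\Gaht)}\le c\,h^{k_u+1}$, using $\norm{d}_{L^{\infty}(\Gaht)}\le c\,h^{k_g+1}$ and boundedness of $\bfH$ (\cref{lemma: Geometric errors ENS}), and the last two terms are $\bigo(|d|)=\bigo(h^{k_g+1})$. Since $\pi$ preserves $L^{\infty}$-norms, this already gives the zeroth-order part of \eqref{eq: difference of velocities ENS} for $a=0$.

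For the $h\norm{\nbg(\cdot)}$ term I would differentiate the displayed decomposition term by term, passing to $\Gaht$ via the lift relations for tangential gradients together with $\norm{\Bhg^{-1}}_{L^{\infty}(\Gaht)}\le 1$ (\eqref{eq: Bh stability ENS}) and the Leibniz rule: the first term loses one power and is $\le c\,h^{k_u}$; in the surviving geometric terms each derivative hitting $d$ costs $\nbgh d=\bfPh\bfn=\bigo(h^{k_g})$ (\eqref{eq: geometric errors 2 ENS}) while otherwise $d$ multiplies a bounded derivative, so these are $\bigo(h^{k_g})$; the vanishing bracket contributes nothing. Multiplying by $h$ closes the $a=0$ case, and the $a=1$ case is the same computation with $\matd$ in place of tangential derivatives: $\matd$ and $\nbgh\matd$ of $d,\bfn,\bfP,\bfH$ are controlled by the superscript-$(k)$ estimates of \cref{lemma: Geometric errors ENS,Lemma: Bh estimates ENS}, $\matd\bfn_t$ and $\matd\bfH$ are bounded by smoothness, $\matd(\TrVel-\FlVel^{-\ell})=\bigo(h^{k_g+1}+h^{k_u+1})$ from the first step, and the bracket, being identically zero on $\Gaht$, has zero material derivative. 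I expect the main obstacle to be bookkeeping rather than a single hard estimate: every geometric quantity lives at $\bfx_h\in\Gaht$, only $\bigo(h^{k_g+1})$ away from $\Ga$, so the clean on-surface identities ($d=0$, $\nbg d=0$) may be used only up to controlled error, and the cancellation of the $\bigo(1)$ bracket — the one place where ``prescribed \emph{normal} velocity'' is genuinely essential — must be verified off the surface.
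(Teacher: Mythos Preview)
The paper does not prove this lemma itself but defers to \cite[Lemma 9.22]{EllRan21} and \cite[Lemma 5.4]{highorderESFEM}, and your proposal reproduces exactly that standard argument: split $\FlVel-\TrVelL$ into the $\bigo(h^{k_u+1})$ interpolation error $\FlVel^{-\ell}-\TrVel$ and the $\bigo(h^{k_g+1})$ geometric error coming from expanding the lift formula \eqref{eq: lifted discrete velocity ENS}, then repeat with $\matd$ for $a=1$. One small correction: the cancellation of the bracket $[(\bfP-\bfI)\FlVel^{-\ell}-d_t\bfn]$ does \emph{not} hinge on $\FlVel$ being purely normal --- for any surface velocity $\bfw$ one has $(\bfP-\bfI)\bfw^{-\ell}=-(\bfw\cdot\bfng)^{-\ell}\bfn$ and $d_t=-(\bfw\cdot\bfng)^{-\ell}$, so the bracket vanishes in general (which is why the cited references prove the estimate without any normality assumption).
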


Furthermore, these bounds allow us to relate the two material derivatives $\matn,$ $\matdl$ in the following way; see \cite[Lemma 9.25]{EllRan21}.
\begin{lemma}
Let $\bm{z} : \mathcal{G}_T \to \mathbb{R}^n$ (scalar: $n=1$, vector-valued: $n=3$) and let $\matn \bm{z}$ and $\matdl \bm{z}$ exist, then
\begin{equation}\label{eq: relate matd to matdl ENS}
\begin{aligned}
     \norm{\matn\bm{z}- \matdl \bm{z}}_{L^{2}(\Gat)} \leq c(h^{k_g+1}+h^{k_u+1})\norm{\bm{z}}_{H^1(\Gat)},  \qquad \text{for }\bm{z}\in (H^1(\Gat))^n,\\
     \norm{\nbg(\matn\bm{z} - \matdl \bm{z})}_{L^{2}(\Gat)} \leq c(h^{k_g}+h^{k_u})\norm{\bm{z}}_{H^2(\Gat)},  \qquad \text{for } \bm{z} \in (H^2(\Gat))^n,
\end{aligned}
\end{equation}
where $c$ a positive constant depending on $\mathcal{G}_T$ but independent of $h, \, t$. 
\end{lemma}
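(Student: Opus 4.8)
The plan is to reduce both inequalities to a single identity that expresses the difference of the two material derivatives through the difference of the driving velocity fields, and then to invoke the velocity estimate \eqref{eq: difference of velocities ENS}. Recall from \cref{sec: Evolving surface cont ENS} that for a sufficiently smooth $\bm{z}:\mathcal{G}_T\to\mathbb{R}^n$ one has $\matn\bm{z} = \bm{z}^e_t + (\FlVel\cdot\nb)\bm{z}^e$ on $\Gat$, with $\bm{z}^e$ the normal extension \eqref{eq: smooth extension ENS}; since by \eqref{eq: lifted discrete velocity ENS} the field $\TrVelL$ is precisely the velocity $\dot{\bfy}(t)$ of the lifted material points $\bfy(t)=\pi(\bfx_h(t),t)$, the flow $\Phi_t^{\ell}$ defining $\matdl$ in \eqref{eq: lifted discrete stong mat deriv ENS} moves points with velocity $\TrVelL$, and hence $\matdl\bm{z} = \bm{z}^e_t + (\TrVelL\cdot\nb)\bm{z}^e$ on $\Gat$ by the chain rule (both expressions being independent of the chosen extension). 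Subtracting, and using that $\FlVel-\TrVelL$ is tangential to $\Gat$ as recorded in \cref{sec: Material Velocity of Lifted Surface} together with $\bfPg\,\nb\bm{z}^e|_{\Gat}=\nbg\bm{z}$, yields the key pointwise identity
\begin{equation*}
    \matn\bm{z}-\matdl\bm{z} = \big((\FlVel-\TrVelL)\cdot\nbg\big)\bm{z} \qquad\text{on }\Gat.
\end{equation*}

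For the first estimate, take $L^2(\Gat)$ norms in this identity, apply Hölder's inequality, and use \eqref{eq: difference of velocities ENS} with $a=0$ to bound $\norm{\FlVel-\TrVelL}_{L^{\infty}(\Gat)}\leq c(h^{k_g+1}+h^{k_u+1})$; this gives $\norm{\matn\bm{z}-\matdl\bm{z}}_{L^2(\Gat)}\leq c(h^{k_g+1}+h^{k_u+1})\norm{\nbg\bm{z}}_{L^2(\Gat)}$, which is the claim for $\bm{z}\in(H^1(\Gat))^n$. For the second estimate, apply $\nbg$ to the identity; by the product rule the outcome splits into a term in which $\nbg(\FlVel-\TrVelL)$ contracts $\nbg\bm{z}$ and a term in which $\FlVel-\TrVelL$ contracts $\nbg^2\bm{z}$ (the contributions of differentiating $\bfPg$ are of lower order and controlled by the same quantities). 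Taking $L^2(\Gat)$ norms and applying \eqref{eq: difference of velocities ENS} once more --- both for $\norm{\FlVel-\TrVelL}_{L^{\infty}(\Gat)}$ and, after dividing that inequality by $h$, for $\norm{\nbg(\FlVel-\TrVelL)}_{L^{\infty}(\Gat)}\leq c(h^{k_g}+h^{k_u})$ --- we obtain $\norm{\nbg(\matn\bm{z}-\matdl\bm{z})}_{L^2(\Gat)}\leq c(h^{k_g}+h^{k_u})\norm{\bm{z}}_{H^2(\Gat)}$, as required.

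The one point needing care, rather than a genuine obstacle, is to justify the pointwise identity at the stated regularity: it is first derived for smooth $\bm{z}$ from the extension formulae above and then extended to $\bm{z}\in(H^1(\Gat))^n$ (resp.\ $(H^2(\Gat))^n$) by a density argument, using the compatibility of the evolving Bochner spaces from \cref{sec: Evolving surface cont ENS} and continuity of both sides in the relevant topologies. The verification that $\TrVelL$ as given in \eqref{eq: lifted discrete velocity ENS} equals $\tfrac{d}{dt}\pi(\bfx_h(t),t)$ --- needed to identify $\matdl$ with $\bm{z}^e_t+(\TrVelL\cdot\nb)\bm{z}^e$ --- is the direct computation of differentiating the closest point projection \eqref{eq: closest point proje 2 ENS} along the discrete trajectory $\bfx_h(t)$; with these in place the geometric bounds are immediate from \eqref{eq: difference of velocities ENS}.
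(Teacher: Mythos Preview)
Your proof is correct and follows precisely the standard argument from the cited references (in particular \cite[Lemma~9.25]{EllRan21}): the paper itself does not give a proof but only refers to the literature, and your derivation of the pointwise identity $\matn\bm{z}-\matdl\bm{z}=\big((\FlVel-\TrVelL)\cdot\nbg\big)\bm{z}$ via the extension formulae together with the tangentiality of $\FlVel-\TrVelL$, followed by a direct application of \eqref{eq: difference of velocities ENS}, is exactly the intended route.
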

The vector-valued case is just a natural point-wise extension. For more details on these type of results see \cite{EllRan21,DziukElliott_acta,DziukElliott_L2,highorderESFEM} and references therein.

\subsection{Discrete Bilinear Forms and Transport Theorem}
Analogously to the continuous case in \cref{sec: scalar-vector func ENS}  we define similarly for the discrete case the following operators for $\wh \in \bfH^1(\Gah)$,
\begin{equation}
    \begin{aligned} \label{eq: Eh forms ENS}
        \nbgcovh \wh &:= \bfPh \nbgh \wh , \qquad 
        E_h(\wh) := \half (\nbgcovh \wh + \nb_{\Gah}^{cov,t} \wh).
     \end{aligned}
\end{equation}
Our discrete finite element methods later on \cref{sec: The Fully Discrete Scheme ENS}  relies on the following time-dependent discrete bilinear forms and inertia terms.
\begin{definition}\label{def: bilinear forms discrete ENS}
    For $\wh,\vh,\zh \in \bfH^{1}(\Gaht)$, $\qh,\xih \in H^1(\Gaht)$ we define the following bilinear forms:
\begin{align*}
    &\mh(t;\wh,\vh) := \int_{\Gah \t} \wh \cdot \vh \, \dsh, \qquad  \ \ \gh(t;\TrVel;\wh,\vh) :=  \int_{\Gah \t} \!\!\!\!\divgh(\TrVel) \wh \cdot \vh \, \dsh,\\
    &\ahhat(t;\wh,\vh) := \int_{\Gaht} E_h(\wh):E_h(\vh) \,\dsh, \quad\\
    &\ah(t;\wh,\vh) := \int_{\Gaht}\!\!\!\! E_h(\wh):E_h(\vh) \, \dsh + \int_{\Gaht} \!\!\!\!\wh \cdot \vh \, \dsh,\\
    &\ddh(t;\TrVel;\wh,\vh) := \int_{\Gaht} \!\!\!\! \nbgh \wh : \nbgcovh \vh \Tilde{\mathcal{B}}_{\Gah}(\TrVel,\bfPh) \, \dsh  + \int_{\Gaht} \!\!\!\! \nbgcovh \wh : \nbgh \vh \Tilde{\mathcal{B}}_{\Gah}(\TrVel,\bfPh) \, \dsh\\
    &c_h(t;\zh; \wh,\vh) := \frac{1}{2} \Big(\int_{\Gaht } ((\bfz_h \cdot \nbgh)\wh ) \vh \, \dsh - \int_{\Gaht } ((\bfz_h \cdot \nbgh)\vh ) \wh \, \dsh \Big), \\
    &c_h^{cov}(t;\zh; \wh,\vh) :=\frac{1}{2} \Big(\int_{\Gah } ((\bfz_h \cdot \nbgcovh)\wh ) \cdot\vh \; \dsh - \int_{\Gah } ((\bfz_h \cdot \nbgcovh)\vh ) \cdot \wh \; \dsh \Big)\\ 
      &\qquad\qquad\qquad\qquad\qquad\qquad - \frac{1}{2}\Big(\int_{\Gaht} (\wh\cdot \nh) \zh \cdot \bfH_h \vh \; \dsh - \int_{\Gaht} (\vh\cdot \nh) \zh \cdot \bfH_h \wh \; \dsh \Big),\\
    &\bhtil(t;\wh,\{\qh,\xih\}) :=\int_{\Gaht} \wh \cdot \nbgh \qh  \, \dsh + \int_{\Gaht} \xi_h \wh\cdot \nh \, \dsh,\\
     &\bhtilda(t;\TrVel;\wh,\{\qh,\xih\}) := m_h(\xih,\wh\cdot\matd\nh)  + g_h(\TrVel;\xih,\wh\cdot \nh) + \int_{\Gaht}  \wh \cdot \mathcal{B}_{\Gah}^{\mathrm{div}}(\TrVel) \nbgh \qh \, \ds,
\end{align*}
where $\mathcal{B}_{(\cdot)}^{(\cdot)}(\bullet,\bullet)$ the discrete deformation tensors are defined in \Cref{lemma: Transport formulae app ENS}. Furthermore, we define $\bfdh(\bullet;\bullet,\bullet)$ as the discrete version of \eqref{eq: bfd formula ENS}; similar to $\ddh(\bullet;\bullet,\bullet)$.
\end{definition}

\begin{remark}\label{remark: skew-symmetric Gah ENS}
Notice that we have explicitly skew-symmetrized the trilinear forms $c_h$ and $c_h^{cov}$, according to their continuous counterpart in \cref{remark: skew-symmetric Ga ENS}, when $\bfeta =0$, which is standard in the literature when using $H^1$-conforming Taylor-Hood elements; see \cref{sec: The Fully Discrete Scheme ENS} for more details. More specifically for $c_h^{cov}$ notice that we have replaced $\nbgcovh\bfPh\wh$ by the equivalent expression $\nbgcovh\wh - (\wh\cdot\nh)\bfH_h$; see \eqref{eq: split cov ENS}.
\end{remark}

 We omit the time-dependency on the bilinear forms, from now unless stated otherwise. Throughout the text we also use the following \emph{energy norm}: 
\begin{align}\label{eq: energy norm ENS}
\norm{\wh}_{\ah} = \norm{E_h(\wh)}_{L^2(\Gaht)} + \norm{\wh}_{L^2(\Gaht)}, \qquad 
\norm{\wh}_{\ah} \leq c\norm{\wh}_{H^{1}(\Gaht)}.
\end{align}

Now, with the help of the Transport theorem presented in \cref{appendix: Transport formulae ENS} the following Lemma holds.
\begin{lemma}[Transport Formula Triangulated Surface]\label{lemma: Transport formulae discrete ENS}
Let $\Gaht$ be an evolving admissible triangulation with material velocity $\TrVel$, as prescribed in \cref{sec: Finite Element Spaces and Discrete Material Derivative}. Then, for $f_h$ sufficiently smooth we have 
\begin{equation}
     \label{eq: Transport formulae 1 discrete ENS}
        \frac{d}{dt} \int_{\Gaht} f_h \,\ds = \int_{\Gaht} \matd f_h + f_h \,\divgh(\TrVel) \,\ds.
\end{equation}
For $\wh, \vh \in  \big(S_{h,k_g}^{k,C}\big)^3 $, and $\qh,\xih \in  S_{h,k_g}^{k,C}$ we have
\begin{align}
   \label{eq: Transport formulae 2 discrete ENS}
    \frac{d}{dt}m_h(\wh,\vh) &= m_h(\matd\wh,\vh) + m_h(\wh,\matd\vh) + g_h(\TrVel;\wh,\vh),\\
     \label{eq: Transport formulae 3 discrete ENS}
    \frac{d}{dt} \bhtil(\wh,\{\qh,\xi_h\}) &= \bhtil(\matd\wh,\{\qh,\xi_h\}) + \bhtil(\wh,\matd\{\qh,\xi_h\})+ \bhtilda(\TrVel;\wh,\{\qh,\xi_h\}),  \\
    \label{eq: Transport formulae 3.5 ENS}
     \!\!\!\!\!\!\!\! \frac{d}{dt}(\nbgcovh \wh,\nbgcovh \vh)_{L^2(\Gah\t)} &= (\nbgcovh \matd\wh,\nbgcovh \vh)_{L^2(\Gah\t)} \\
     &+ (\nbgcovh \wh,\nbgcovh \matd\vh)_{L^2(\Gah\t)} + \ddh(\TrVel;\wh,\vh), \nonumber \\
   \label{eq: Transport formulae 4 discrete ENS}
     \frac{d}{dt}\ahhat(\wh,\vh) &= \ahhat(\matd\wh,\vh) + \ahhat(\wh,\matd\vh) + \bfdh(\TrVel;\wh,\vh),\\
     \label{eq: Transport formulae 5 discrete ENS}
     \frac{d}{dt}\ah(\wh,\vh) &= \ah(\matd\wh,\vh) + \ah(\wh,\matd\vh) + \gh(\TrVel;\wh,\vh)+ \bfdh(\TrVel;\wh,\vh).  
     \end{align}
\end{lemma}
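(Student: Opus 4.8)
The plan is to reduce all six identities to the elementwise Leibniz rule of \cref{appendix: Transport formulae ENS} and then differentiate the integrands by the product rule, using explicit formulae for the discrete material derivatives of the geometric objects $\bfPh$, $\nh$, $\bfH_h$ and of the discrete surface gradients.

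First I would prove \eqref{eq: Transport formulae 1 discrete ENS}. Since $\Gaht = \bigcup_{T(t)\in\Th(t)} T(t)$ with vertices evolving by $\dot\alpha_j = \TrVel(\alpha_j,t)$, and $\TrVel$ is the Lagrange interpolant \eqref{eq: vel discrete surf ENS} generating the discrete flow $\Phi^h_t$, one pulls each elementwise integral back to the fixed element in $\Gah(0)$, differentiates there, and pushes forward; summing over $T(t)\in\Th(t)$ and noting that $\divgh(\TrVel)$ records the infinitesimal change of the surface measure yields the formula exactly as in the smooth case \eqref{eq: Transport formulae 1 ENS}. Identity \eqref{eq: Transport formulae 2 discrete ENS} is then immediate: apply \eqref{eq: Transport formulae 1 discrete ENS} with $f_h=\wh\cdot\vh$, use $\matd(\wh\cdot\vh)=\matd\wh\cdot\vh+\wh\cdot\matd\vh$, and recognise $\int_{\Gaht}(\wh\cdot\vh)\,\divgh(\TrVel)$ as $g_h(\TrVel;\wh,\vh)$.

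The substance lies in the forms carrying derivatives. The key computation is the discrete material derivative of $\nbgh\wh$ and $\nbgcovh\wh = \bfPh\,\nbgh\wh$ for a finite element function $\wh=\sum_j \bfw_j(t)\,\chi_{j,k_g}^{k_u}$: by the transport property $\matd\chi_{j,k_g}^{k_u}=0$ \eqref{eq: transport property ENS} the nodal-coefficient part contributes only $\matd\wh$, while differentiating the geometry produces terms in $\matd\bfPh = -(\matd\nh)\otimes\nh-\nh\otimes(\matd\nh)$, $\matd\nh$, $\matd\bfH_h$ and $\nbgh\TrVel$. Collecting all contributions that do \emph{not} involve $\matd\wh$ or $\matd\vh$ defines, by construction, the deformation tensors $\Tilde{\mathcal{B}}_{\Gah}(\TrVel,\bfPh)$ and $\mathcal{B}^{\mathrm{div}}_{\Gah}(\TrVel)$ of \cref{lemma: Transport formulae app ENS}. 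This gives at once: the transport identity for $(\nbgcovh\cdot,\nbgcovh\cdot)_{L^2(\Gaht)}$ with geometric remainder $\ddh(\TrVel;\wh,\vh)$; identity \eqref{eq: Transport formulae 4 discrete ENS} by expanding $E_h(\wh):E_h(\vh)$ in terms of $\nbgcovh\wh:\nbgcovh\vh$ and $\nbgcovh\wh:(\nbgcovh\vh)^t$ and symmetrising, which produces $\bfdh(\TrVel;\wh,\vh)$; and identity \eqref{eq: Transport formulae 3 discrete ENS} by differentiating $\int_{\Gaht}\wh\cdot\nbgh\qh$ and $\int_{\Gaht}\xih\,\wh\cdot\nh$, where the $\divgh(\TrVel)$ factors give $g_h(\TrVel;\xih,\wh\cdot\nh)$, the term $\matd\nh$ gives $m_h(\xih,\wh\cdot\matd\nh)$, and $\matd(\nbgh\qh)$ gives the $\mathcal{B}^{\mathrm{div}}_{\Gah}(\TrVel)$ contribution, assembling to $\bhtilda(\TrVel;\wh,\{\qh,\xih\})$. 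Finally \eqref{eq: Transport formulae 5 discrete ENS} follows by adding \eqref{eq: Transport formulae 2 discrete ENS} and \eqref{eq: Transport formulae 4 discrete ENS}, since $\ah=\ahhat+m_h$ and the remainder is precisely $g_h(\TrVel;\wh,\vh)+\bfdh(\TrVel;\wh,\vh)$.

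The main obstacle is the bookkeeping for the discrete geometry: $\bfPh$, $\nh$, $\bfH_h$ are only defined elementwise and, unlike the smooth outward normal transported along a material flow, $\matd\nh\neq 0$, so one must carefully separate tangential from normal parts and verify that the leftover terms assemble into exactly the tensors declared in \cref{def: bilinear forms discrete ENS}. Once the explicit formulae for $\matd\bfPh$, $\matd\nh$, $\matd\bfH_h$ and $\matd(\nbgh\wh)$ are established, the remaining steps are routine product-rule calculus mirroring the proof of the smooth \cref{lemma: Transport formulae cont ENS}.
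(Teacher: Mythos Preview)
Your proposal is correct and follows essentially the same route as the paper, which does not write out a proof but simply states that the lemma ``holds with the help of the Transport theorem presented in \cref{appendix: Transport formulae ENS}''; that appendix proves the continuous analogues (\cref{lemma: Transport formulae app ENS,lemma: Transport formulae app II ENS,lemma: Transport formulae strain tensor app ENS}) by exactly the product-rule mechanism you describe, with the commutator formula \eqref{eq: commutation derivative ENS} playing the role of your computation of $\matd(\nbgh\wh)$. One small inaccuracy: your remark that ``unlike the smooth outward normal transported along a material flow, $\matd\nh\neq 0$'' is misleading, since in the continuous setting $\matn\bfng\neq 0$ as well (this is precisely why $\btil$ in \cref{def: bilinear forms ENS} carries the term $m(\xi,\bfw\cdot\matn\bfng)$); the discrete case is not harder in this respect, and $\matd\bfH_h$ does not actually enter any of the forms in the lemma.
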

\begin{remark}[About $\bfdh(\bullet;\bullet,\bullet)$]\label{remark: assertion bdfh ENS}
The discrete formula of $\bfdh(\bullet;\bullet,\bullet)$ can be derived similar to the discrete formula of $\ddh(\bullet;\bullet,\bullet)$ in \cref{def: bilinear forms discrete ENS} by considering \eqref{eq: bfd formula ENS}. For our analysis, we mainly need its bound and eventually some geometric perturbations, see e.g. \cref{lemma: Geometric perturbations lagrange II ENS}. For brevity reasons we make the following compromise:

In general, we see that the structure of $\bfdh(\bullet;\bullet,\bullet)$ is similar to the $\ddh(\bullet;\bullet,\bullet)$; see \eqref{eq: bfd formula ENS}. For that reason, from now on, when we want to carry out calculations for $\bfdh(\bullet;\bullet,\bullet)$, we instead do those calculations, first for the easier case $\ddh(\bullet;\bullet,\bullet)$, and say these results then easily extend to $\bfdh(\bullet;\bullet,\bullet)$. 
\end{remark}

\begin{lemma}[Transport Formula Lifted Triangulated Surface]\label{lemma: Transport formulae lifted ENS}
    Let $\Gat$ be an evolving surface consisting of the lifted triangulation, whose elements move with velocity $\TrVelL$ as described in \cref{sec: Material Velocity of Lifted Surface}. Then for For $\bfw,  \bfv, \matn\bfw , \matn\bfv \in \bfH^1(\Gat)$ and $q, \xi , \matn q, \matn \xi \in H^1(\Ga\t)$,
\begin{align}
    \label{eq: Transport formulae 2 lift ENS}
    \frac{d}{dt}\mb(\bfw,\bfv) &= \mb(\matdl\bfw,\bfv) + \mb(\bfw,\matdl\bfv) + \gb(\TrVelL;\bfw,\bfv),\\
    \label{eq: Transport formulae 3 lift ENS}
    \frac{d}{dt}\bLb(\bfw,\{q,\xi\}) &= \bLb(\matdl\bfw,\{q,\xi\}) + \bLb(\bfw,\matdl\{q,\xi\})+ \btil(\TrVelL;\bfw,\{q,\xi\}),  \\
    \label{eq: Transport formulae 3.5 lift ENS}
     \!\!\!\!\!\!\!\!\!\!\! \frac{d}{dt}(\nbgcov \bfw,\nbgcov \bfv)_{L^2(\Ga \t)} &= (\nbgcov \matdl\bfw,\nbgcov \bfv)_{L^2(\Ga\t)}+ (\nbgcov \bfw,\nbgcov \matdl\bfv)_{L^2(\Ga\t)} + \db(\TrVelL;\bfw,\bfv),  \\
    \label{eq: Transport formulae 4 lift ENS}
     \frac{d}{dt}\ahat(\bfw,\bfv) &= \ahat(\matdl\bfw,\bfv) + \ahat(\bfw,\matdl\bfv) + \ddb(\TrVelL;\bfw,\bfv),\\
     \label{eq: Transport formulae 5 lift ENS}
     \frac{d}{dt}\ab(\bfw,\bfv) &= \ab(\matdl\bfw,\bfv) + \ab(\bfw,\matdl\bfv) + \gb(\TrVelL;\bfw,\bfv) + \ddb(\TrVelL;\bfw,\bfv). 
     \end{align}     
\end{lemma}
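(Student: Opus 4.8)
The plan is to obtain \eqref{eq: Transport formulae 2 lift ENS}--\eqref{eq: Transport formulae 5 lift ENS} by exactly the same route as the continuous Transport Theorem \cref{lemma: Transport formulae cont ENS} and its triangulated counterpart \cref{lemma: Transport formulae discrete ENS}: all three are instances of the generic transport identities of \cref{lemma: Transport formulae app ENS,lemma: Transport formulae app II ENS}, specialised to three different velocity fields --- $\FlVel$, $\TrVel$ and, here, the lifted discrete velocity $\TrVelL$ from \eqref{eq: lifted discrete velocity ENS} together with its material derivative $\matdl$ from \eqref{eq: lifted discrete stong mat deriv ENS}. First I would record the structural fact that legitimises this: by \eqref{eq: lifted discrete velocity ENS}, the smoothness of $\FlVel$ and the $C^{m-1}$ regularity of $\pi(\cdot,t)$, the field $\TrVelL$ is globally Lipschitz on $\Gat$ and smooth on every lifted curvilinear element $T^{\ell}\t \in \Th^{\ell}\t$; it induces precisely the lifted flow $\Phi_t^{\ell}$ of \cref{sec: Material Velocity of Lifted Surface} (hence $\matdl$), and, being a first-order derivation along a flow, it obeys the Leibniz rule for dot products and tensor contractions such as $\matdl(\bfw\cdot\bfv)=\matdl\bfw\cdot\bfv+\bfw\cdot\matdl\bfv$ and $\matdl\big(E(\bfw):E(\bfv)\big)$.

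Next I would establish the scalar Leibniz rule --- the analogue of \eqref{eq: Transport formulae 1 ENS} with $(\FlVel,\matn)$ replaced by $(\TrVelL,\matdl)$. Each lifted element $T^{\ell}\t$ is a material subdomain of $\Gat$ under $\Phi_t^{\ell}$, so Reynolds' transport theorem applies on $T^{\ell}\t$ with no extra boundary contribution (its edges are material curves); summing over $\Th^{\ell}\t$ the interface terms cancel, since the integrands arising below are globally continuous, leaving
\begin{equation*}
    \frac{d}{dt}\int_{\Gat} f \,\ds = \int_{\Gat} \matdl f + f\,\divg(\TrVelL)\,\ds .
\end{equation*}
Then the bilinear-form identities follow by inserting the relevant integrands. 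For \eqref{eq: Transport formulae 2 lift ENS} I would use the Leibniz rule for $\bfw\cdot\bfv$ and recognise $\int_{\Gat}(\bfw\cdot\bfv)\,\divg(\TrVelL)=\gb(\TrVelL;\bfw,\bfv)$. For \eqref{eq: Transport formulae 3.5 lift ENS}, \eqref{eq: Transport formulae 4 lift ENS} and \eqref{eq: Transport formulae 5 lift ENS} I would additionally need the commutator of $\matdl$ with $\nbg$ and $\nbgcov$: differentiating $\nbg\bfw$ along $\Phi_t^{\ell}$ produces, up to curvature corrections, a term involving $\nbg\bfw\,\nbg\TrVelL$, which after contracting with the matching gradient of $\bfv$ and symmetrising is exactly the deformation-tensor form $\db(\TrVelL;\bfw,\bfv)$, resp.\ $\ddb(\TrVelL;\bfw,\bfv)$, built from $\Tilde{\mathcal{B}}_{\Ga}(\TrVelL,\bfPg)$ as in \cref{def: bilinear forms ENS}; combined with the $\divg(\TrVelL)$ weight in the $\ab$-case this gives the extra $\gb(\TrVelL;\cdot,\cdot)+\ddb(\TrVelL;\cdot,\cdot)$ of \eqref{eq: Transport formulae 5 lift ENS}. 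These commutator computations are precisely the ones carried out generically in \cref{lemma: Transport formulae app ENS,lemma: Transport formulae app II ENS}, and I would simply instantiate them with $\bfw=\TrVelL$.

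For \eqref{eq: Transport formulae 3 lift ENS} I would split $\bLb(\bfw,\{q,\xi\})=\int_{\Gat}\bfw\cdot\nbg q\,\ds+\int_{\Gat}\xi\,\bfw\cdot\bfng\,\ds$ and differentiate the two pieces separately: the first brings in the $\mathcal{B}_{\Ga}^{\mathrm{div}}(\TrVelL)$ term through the $\nbg q$-commutator, while the second produces $g(\TrVelL;\xi,\bfw\cdot\bfng)$ from the $\divg(\TrVelL)$ weight and a term $m(\xi,\bfw\cdot\matdl\bfng)$ from $\matdl$ hitting the exact normal of $\Gat$ --- which is bounded since $\matdl\bfng=\matn\bfng+\big((\TrVelL-\FlVel)\cdot\nb\big)\bfn$ with $\matn\bfng$ bounded on the smooth surface and $\FlVel-\TrVelL$ controlled by \eqref{eq: difference of velocities ENS}; collecting these reproduces $\btil(\TrVelL;\bfw,\{q,\xi\})$.

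The step I expect to be the main obstacle is that $\TrVelL$ is only elementwise $C^1$ on the lifted triangulation (since $\TrVel$ is piecewise polynomial), so the gradient-commutator identities producing $\db,\ddb,\btil$ hold only a.e.\ on $\Gat$; I would handle this exactly as in \cref{lemma: Transport formulae discrete ENS}, namely work on each lifted element $T^{\ell}\t$ --- where $\TrVelL$ and the geometry are smooth --- and then sum, the interface contributions vanishing because the lifted elements are material subdomains of the smooth closed surface $\Gat$ and the arguments $\bfw,\bfv,q,\xi$ are globally continuous $H^1$ functions. A minor additional point is the boundedness of $\Tilde{\mathcal{B}}_{\Ga}(\TrVelL,\bfPg)$ and $\mathcal{B}_{\Ga}^{\mathrm{div}}(\TrVelL)$, which follows from the interpolation bound \eqref{eq: discrete vel bound ENS} on $\nbgh\TrVel$ together with norm equivalence under lifting.
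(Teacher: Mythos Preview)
Your proposal is correct and follows the same approach the paper takes: the paper states this lemma without proof, treating it as the lifted analogue of \cref{lemma: Transport formulae cont ENS} and \cref{lemma: Transport formulae discrete ENS}, all three being instances of the generic transport identities in \cref{appendix: Transport formulae ENS} specialised to the velocity fields $\FlVel$, $\TrVel$, and $\TrVelL$ respectively. Your additional care about the elementwise smoothness of $\TrVelL$ and the resulting elementwise argument is appropriate and more detailed than what the paper provides.
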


We note, once again, that an explicit expression for $\bfd(\bullet;\bullet,\bullet)$ is given in \eqref{eq: bfd formula ENS}.  Also, in comparison to \cref{def: bilinear forms ENS} we have that
\begin{align}\label{eq: lifted btil ENS}
    \btil(\TrVelL;\bfw,\{q,\xi\}) := m(\xi,\bfw\cdot\matdl\bfng)  + g(\TrVelL;\xi,\bfw\cdot \bfng) + \int_{\Gat}  \bfw \cdot \mathcal{B}_{\Ga}^{\mathrm{div}}(\TrVelL) \nbg q \, \ds.
\end{align}
\subsection{Properties of the discrete bilinear forms}
We now present some known uniform interpolation and Korn-type inequalities that can be found in \cite{elliott2024sfem,jankuhn2021trace} along with some key results involving \emph{discrete weakly tangential divergence-free functions}, i.e. $\wh \in \bfV_h^{div}\t$ \eqref{eq: discrete weakly divfree space beginning ENS}. We also extend some of those results using Broken Sobolev spaces.
\begin{lemma}[Interpolation and Korn-type inequalities]\label{lemma: Korn's type inequalities Lagrange ENS}
There exists $c>0$ depending on $\mathcal{G}_T$ but independent of $h, \, t$, such that he following important uniform inequalities hold,
\begin{align}
        \label{discrete Korn's inequality T nh ENS}
         \norm{\wh}_{H^1(\Gaht)} &\leq c(\norm{E_h(\wh)}_{L^2(\Gaht)}  +\norm{\wh}_{L^2(\Gaht)} + h^{-1}\norm{\wh \cdot \nh}_{L^2(\Gaht)}),
        \\
        \label{eq: coercivity and Korn's inequality Lagrange ENS}
        \norm{\wh}_{H^1(\Gaht)}^2 &\leq c h^{-2} \norm{\wh}_{\ah}^2  \qquad\qquad\qquad\qquad\quad \ \textnormal{ for all } \wh \in \bfV_h\t,\\
        \label{eq: interpolation inequality Gaglia ENS}
        \norm{\wh}_{L^4(\Gaht)} &\leq c\norm{\wh}_{L^2(\Gaht)}^{1/2}\norm{\wh}_{H^1(\Gaht)}^{1/2} \quad  \quad  \textnormal{ for all } \wh \in \bfH^1(\Gaht).
    \end{align}
\end{lemma}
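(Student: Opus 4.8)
The strategy in each case is to start from a known inequality on the fixed smooth surface $\Gat$ and push it forward to the evolving triangulated surface $\Gaht$ using the lift \eqref{eq: lift ENS}, the norm equivalences of \cref{lemma: norm equivalence ENS}, the geometric error bounds of \cref{lemma: Geometric errors ENS,Lemma: Bh estimates ENS}, and the uniform quasi-uniformity of $\{\Th\t\}_{t\in[0,T]}$; that the resulting constant $c$ is independent of $t$ then follows from the $C^k$-regularity of the geometry in $t$ and the bound $\norm{\FlVel}_{C^k(\Gat)} \le C_{V_\Ga}$, exactly as in the stationary analysis of \cite{elliott2024sfem,jankuhn2021trace}.

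For the discrete Korn inequality \eqref{discrete Korn's inequality T nh ENS} I would proceed as follows. On $\Gat$ one has the surface Korn inequality $\norm{\bfv}_{H^1(\Gat)} \le c\big(\norm{E(\bfv)}_{L^2(\Gat)} + \norm{\bfv}_{L^2(\Gat)} + \norm{\bfv\cdot\bfng}_{H^1(\Gat)}\big)$ for $\bfv \in \bfH^1(\Gat)$, obtained by splitting $\bfv = \bfv_T + \bfv_n$, applying the classical Korn inequality for tangential fields to $\bfv_T$, and using $\nbgcov\bfv = \nbgcov\bfv_T + \bfH v_n$ from \eqref{eq: split cov ENS} together with the boundedness of $\bfH$ (see \cite{jankuhn2018incompressible,hansbo2020analysis}). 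Given a finite element field $\wh \in \bfV_h\t$, applying this to $\bfv = \wh^\ell$, comparing $\nbgh\wh$ with $\nbg\wh^\ell$ through the map $\Bhg$ of \cref{sec: Lifts to Exact Surface} and $E_h(\wh)$ with the lift of $E(\wh^\ell)$ by means of $\norm{\bfP-\bfPh}_{L^\infty}$, $\norm{\bfn-\nh}_{L^\infty} \le ch^{k_g}$ and \cref{Lemma: Bh estimates ENS}, and transferring norms with \cref{lemma: norm equivalence ENS}, one arrives at $\norm{\wh}_{H^1(\Gaht)} \le c\big(\norm{E_h(\wh)}_{L^2(\Gaht)} + \norm{\wh}_{L^2(\Gaht)} + \norm{\wh\cdot\nh}_{H^1(\Gaht)}\big)$ after absorbing lower-order geometric perturbations for $h$ small. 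The factor $h^{-1}$ then enters by invoking the inverse estimate $\norm{\wh\cdot\nh}_{H^1(\Gaht)} \le ch^{-1}\norm{\wh\cdot\nh}_{L^2(\Gaht)}$ on the finite element function $\wh\cdot\nh$, valid elementwise with $h$- and $t$-uniform constant by quasi-uniformity. I expect this to be the main obstacle: one must keep careful track of the powers of $h$ lost when perturbing the deformation tensor so that they are genuinely absorbed, and one must justify the inverse inequality for $\wh\cdot\nh$ even though $\nh$ is not piecewise polynomial when $k_g\ge 2$ --- this last point is handled by writing $\nh = \mathcal{I}_h\nh + (\nh - \mathcal{I}_h\nh)$ with a polynomial interpolant, or simply by the standard inverse inequality for products of polynomials with uniformly smooth functions.

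Estimate \eqref{eq: coercivity and Korn's inequality Lagrange ENS} is then immediate: by \eqref{discrete Korn's inequality T nh ENS} and the definition \eqref{eq: energy norm ENS} of $\norm{\cdot}_{\ah}$ we have $\norm{\wh}_{H^1(\Gaht)} \le c\norm{\wh}_{\ah} + ch^{-1}\norm{\wh\cdot\nh}_{L^2(\Gaht)}$, and since $|\nh| = 1$ pointwise, $\norm{\wh\cdot\nh}_{L^2(\Gaht)} \le \norm{\wh}_{L^2(\Gaht)} \le \norm{\wh}_{\ah}$, so that $\norm{\wh}_{H^1(\Gaht)} \le c(1+h^{-1})\norm{\wh}_{\ah} \le ch^{-1}\norm{\wh}_{\ah}$ for $h\le h_0\le 1$; squaring gives the claim. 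For the interpolation inequality \eqref{eq: interpolation inequality Gaglia ENS} I would lift $\wh \in \bfH^1(\Gaht)$ to $\wh^\ell \in \bfH^1(\Gat)$, apply the Ladyzhenskaya--Gagliardo--Nirenberg inequality $\norm{\bfv}_{L^4(\Gat)} \le c\norm{\bfv}_{L^2(\Gat)}^{1/2}\norm{\bfv}_{H^1(\Gat)}^{1/2}$ on the smooth compact two-dimensional surface $\Gat$, and transfer the three norms back to $\Gaht$ using \cref{lemma: norm equivalence ENS} together with $\norm{1-\muh}_{L^\infty(\Gaht)} \le ch^{k_g+1}$ and \eqref{eq: gah to ga Bh ENS} for the $L^4$- and $H^1$-parts; uniformity in $t$ follows as above. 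The same elementwise arguments yield the broken Sobolev norm analogues used later in the text.
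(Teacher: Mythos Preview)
Your approach is correct and is essentially the one the paper relies on: the paper's proof simply cites \cite[Lemma~4.7]{elliott2024sfem} for \eqref{discrete Korn's inequality T nh ENS}--\eqref{eq: coercivity and Korn's inequality Lagrange ENS} and \cite[Lemma~4.2]{olshanskii2024eulerian} for \eqref{eq: interpolation inequality Gaglia ENS}, noting only that the constants are made uniform in $t$ via the perturbation estimates of \cref{lemma: errors of geometric pert ENS}. Your lift--apply--transfer strategy, together with the inverse estimate on $\wh\cdot\nh$ and the deduction of \eqref{eq: coercivity and Korn's inequality Lagrange ENS} from \eqref{discrete Korn's inequality T nh ENS} via $\norm{\wh\cdot\nh}_{L^2}\le\norm{\wh}_{L^2}$, is exactly how those cited results are established, so you have effectively reproduced the argument behind the citations.
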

\begin{proof}
   Estimates \eqref{discrete Korn's inequality T nh ENS}, \eqref{eq: coercivity and Korn's inequality Lagrange ENS} follow from \cite[Lemma 4.7]{elliott2024sfem} (in our case we make use \cref{lemma: errors of geometric pert ENS} where the constant $c$ is independent of both $h,\,t$), while \eqref{eq: interpolation inequality Gaglia ENS} holds due to \cite[Lemma 4.2]{olshanskii2024eulerian}.    
\end{proof}

\noindent The following special case regarding the discrete Korn-type inequality, with the help of the broken Sobolev spaces has been proven in \cite{elliott2025unsteady}, following similar arguments to \cite{Hardering2022}.
\begin{lemma}\label{lemma: Korn's inequality Ph ENS}
    For $\wh \in \bfV_h$, there exists $c>0$ depending on $\mathcal{G}_T$ but independent of $h, \, t$, such that
    \begin{align}
            \norm{\bfPh\wh}_{H^1(\Th\t)} &\leq c \, \ah(\wh,\wh), \label{eq: korn inequality Ph ENS}\\
             \norm{\bfPh\wh}_{L^4(\Th\t)} &\leq c\norm{\wh}^{1/2}_{L^2(\Gaht)}\norm{\wh}_{\ah}^{1/2} \leq c\norm{\wh}_{\ah} \label{eq: L4 inequality Ph ENS}.
    \end{align}
\end{lemma}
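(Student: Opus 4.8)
The plan is to reduce both estimates to a single Korn-type bound for the elementwise tangential gradient of $\bfPh\wh$, after which \eqref{eq: L4 inequality Ph ENS} follows from \eqref{eq: korn inequality Ph ENS} by an elementwise Ladyzhenskaya inequality. For \eqref{eq: korn inequality Ph ENS} the $L^2$-part of the broken norm is immediate, $\norm{\bfPh\wh}_{L^2(\Th\t)}=\norm{\bfPh\wh}_{L^2(\Gaht)}\le\norm{\wh}_{L^2(\Gaht)}\le\norm{\wh}_{\ah}$, so only $\sum_{T\in\Th\t}\norm{\nbgh(\bfPh\wh)}_{L^2(T)}^2$ has to be controlled. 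On a fixed curved element $T$ the product rule gives $\nbgh(\bfPh\wh)=(\nbgh\bfPh)[\wh]+\bfPh\nbgh\wh=(\nbgh\bfPh)[\wh]+\nbgcovh\wh$; since $\nbgh\bfPh$ is a bounded combination of $\nh$ and the discrete Weingarten map $\bfH_h=\nbgcovh\nh$, which is uniformly $L^\infty$-bounded by \cref{lemma: Geometric errors ENS} (indeed $\norm{\bfH_h}_{L^\infty(\Gaht)}\le\norm{\bfH}_{L^\infty(\Gat)}+ch^{k_g-1}\le C$), one gets $\norm{(\nbgh\bfPh)[\wh]}_{L^2(\Gaht)}\le c\norm{\wh}_{L^2(\Gaht)}$. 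Thus it remains to prove the discrete surface Korn inequality $\norm{\nbgcovh\wh}_{L^2(\Gaht)}\le c\big(\norm{E_h(\wh)}_{L^2(\Gaht)}+\norm{\wh}_{L^2(\Gaht)}\big)$ with $c$ independent of $h$ and $t$.

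For this I would transfer the inequality to the smooth surface $\Gat$ through the lift $\wh\mapsto\whl$. Using $\nbgh\wh=(\nbg\whl\circ\pi)\Bhg^t$ from \eqref{eq: gah to ga Bh ENS}, the uniform bounds on $\Bhg,\Bhg^{-1},\mathcal{Q}_h=\muh^{-1}\Bhg^t\Bhg$ from \cref{Lemma: Bh estimates ENS}, the geometric errors of \cref{lemma: Geometric errors ENS} (in particular $\norm{\bfP-\bfPh}_{L^\infty}\le ch^{k_g}$, $\norm{1-\muh}_{L^\infty}\le ch^{k_g+1}$) and the norm equivalences of \cref{lemma: norm equivalence ENS}, one obtains $\norm{\nbgcovh\wh}_{L^2(\Gaht)}\le c\norm{\nbgcov\whl}_{L^2(\Gat)}+ch^{k_g}\norm{\nbgh\wh}_{L^2(\Th\t)}$ and, likewise, $\norm{E(\whl)}_{L^2(\Gat)}\le c\norm{E_h(\wh)}_{L^2(\Gaht)}+ch^{k_g}\norm{\nbgh\wh}_{L^2(\Th\t)}$. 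Because the triangulation is uniformly quasi-uniform, the inverse inequality $\norm{\nbgh\wh}_{L^2(\Th\t)}\le ch^{-1}\norm{\wh}_{L^2(\Gaht)}$ turns these remainders into $ch^{k_g-1}\norm{\wh}_{L^2(\Gaht)}\le c\norm{\wh}_{L^2(\Gaht)}$ (as $k_g\ge1$, $h\le h_0\le1$). On the smooth closed surface $\Gat$ one then applies the classical tangential surface Korn inequality (see \cite{jankuhn2018incompressible,hansbo2020analysis}) to $\bfPg\whl$, and combines it with the splitting $\nbgcov\whl=\nbgcov(\bfPg\whl)+(\whl\cdot\bfng)\bfH$ and $E(\whl)=E(\bfPg\whl)+(\whl\cdot\bfng)\bfH$ (cf.\ \eqref{eq: split cov ENS}) together with $\norm{\bfH}_{L^\infty(\Gat)}\le C$, to get $\norm{\nbgcov\whl}_{L^2(\Gat)}\le c\big(\norm{E(\whl)}_{L^2(\Gat)}+\norm{\whl}_{L^2(\Gat)}\big)$; the Korn constant is uniform in $t\in[0,T]$ since $\{\Gat\}_{t\in[0,T]}$ is a $C^m$-family ($m\ge4$) with the uniform geometry bounds of \cref{sec: Evolving surface cont ENS} over the compact interval $[0,T]$. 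Chaining these inequalities (and $\norm{\whl}_{L^2(\Gat)}\sim\norm{\wh}_{L^2(\Gaht)}$) yields the discrete Korn bound, hence $\norm{\bfPh\wh}_{H^1(\Th\t)}\le c\norm{\wh}_{\ah}$, i.e.\ \eqref{eq: korn inequality Ph ENS}.

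Finally, \eqref{eq: L4 inequality Ph ENS} follows from \eqref{eq: korn inequality Ph ENS} via the Ladyzhenskaya-type inequality \eqref{eq: interpolation inequality Gaglia ENS} applied on each element: a scaling onto the reference element (valid for $h\le1$ and using the full norm $\norm{\cdot}_{H^1(T)}$) gives $\norm{\bfPh\wh}_{L^4(T)}^2\le c\norm{\bfPh\wh}_{L^2(T)}\norm{\bfPh\wh}_{H^1(T)}$ with $c$ independent of $h$; raising to the fourth power, summing over $T\in\Th\t$ and using discrete Cauchy--Schwarz, $\sum_T a_T^2 b_T^2\le(\sum_T a_T^4)^{1/2}(\sum_T b_T^4)^{1/2}\le(\sum_T a_T^2)(\sum_T b_T^2)$, yields $\norm{\bfPh\wh}_{L^4(\Th\t)}^2\le c\norm{\bfPh\wh}_{L^2(\Gaht)}\norm{\bfPh\wh}_{H^1(\Th\t)}$; then $\norm{\bfPh\wh}_{L^2(\Gaht)}\le\norm{\wh}_{L^2(\Gaht)}$ together with \eqref{eq: korn inequality Ph ENS} give $\norm{\bfPh\wh}_{L^4(\Th\t)}\le c\norm{\wh}_{L^2(\Gaht)}^{1/2}\norm{\wh}_{\ah}^{1/2}\le c\norm{\wh}_{\ah}$.

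I expect the main obstacle to be the discrete surface Korn inequality with a constant uniform in both $h$ and $t$: uniformity in $h$ is obtained by reducing to the fixed smooth surface via the uniformly bounded, uniformly invertible geometric maps and the norm equivalences, and by converting the $h^{k_g}$ geometric remainders into lower-order $L^2$-terms with the inverse inequality on the quasi-uniform mesh; uniformity in $t$ rests on the smoothness and compactness of the evolving family $\{\Gat\}_{t\in[0,T]}$. The only other point requiring care is the $h$-independence of the elementwise Ladyzhenskaya constant, which the reference-element scaling argument supplies.
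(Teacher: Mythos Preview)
The paper does not give its own proof of this lemma; it simply cites \cite{elliott2025unsteady} (the authors' stationary-surface paper) and remarks that the argument follows \cite{Hardering2022}. Your strategy for \eqref{eq: korn inequality Ph ENS}---split off the zero-order term, use the product rule to reduce to a bound on $\nbgcovh\wh$, and then lift to the smooth surface $\Gat$ where the classical tangential Korn inequality and the splitting \eqref{eq: split cov ENS} apply, absorbing the $O(h^{k_g})$ geometric remainders via the inverse inequality---is a sound and standard route, and is in the spirit of what the cited references do.

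Your derivation of \eqref{eq: L4 inequality Ph ENS}, however, contains a genuine gap. The claim that the reference-element scaling yields $\norm{v}_{L^4(T)}^2\le c\,\norm{v}_{L^2(T)}\norm{v}_{H^1(T)}$ with $c$ independent of $h$ is false in general: scaling from $\hat T$ to $T$ gives
\[
\norm{v}_{L^4(T)}^2 \;\le\; c\,\norm{v}_{L^2(T)}\bigl(h^{-2}\norm{v}_{L^2(T)}^2+\norm{\nabla v}_{L^2(T)}^2\bigr)^{1/2},
\]
and the factor $h^{-1}\norm{v}_{L^2(T)}$ cannot be bounded by $\norm{v}_{H^1(T)}$ uniformly in $h$ (take $v\equiv 1$ on $T$: then $\norm{v}_{L^4}^2/(\norm{v}_{L^2}\norm{v}_{H^1})\sim h^{-1}$). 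Since $\bfPh\wh$ is not a piecewise polynomial, you cannot rescue this with an inverse inequality on $\bfPh\wh$ either. The elementwise-then-sum route therefore does not deliver \eqref{eq: L4 inequality Ph ENS}.

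A correct route, parallel to what you already did for \eqref{eq: korn inequality Ph ENS}, is to stay on the smooth surface: write $\bfPh\wh=(\bfPh-\bfP^{-\ell})\wh+\bfP^{-\ell}\wh$, bound the first piece in $L^4$ by $ch^{k_g}\norm{\wh}_{L^4(\Gaht)}\le ch^{k_g-1/2}\norm{\wh}_{L^2(\Gaht)}$ via the polynomial inverse inequality on $\wh$, and for the second piece use the norm equivalence to pass to $\bfPg\whl\in\bfH^1(\Gat)$, where the \emph{global} Gagliardo--Nirenberg inequality on the closed surface $\Gat$ (uniform in $t$ by compactness of the family) together with your Korn bound $\norm{\bfPg\whl}_{H^1(\Gat)}\le c\,\norm{\wh}_{\ah}$ gives exactly $\norm{\bfPg\whl}_{L^4(\Gat)}\le c\,\norm{\wh}_{L^2}^{1/2}\norm{\wh}_{\ah}^{1/2}$.
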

\noindent The following integration by parts formula holds; see \cite{elliott2024sfem}.
\begin{lemma}
 On the discrete surface $\Gaht$ the following integration by parts formula holds:
 \begin{equation}
     \begin{aligned}
     \label{eq: integration by parts ENS}
         \int_{\Gaht} \vh \cdot \nbgh \qh \ \dsh &= - \int_{\Gaht} \qh \divgh \vh \ \dsh + \sum_{T\in \mathcal{T}_h} \int_T (\vh \cdot \nh)\qh \divgh \nh \ \dsh \\
         &+ \sum_{E \in \mathcal{E}_h} \int_E [\mh] \cdot \vh \qh \, d\ell,
     \end{aligned}
 \end{equation}
 for all $\vh \in \bfH^1(\Gaht)$ and $\qh \in H^1(\Gaht)$.
\end{lemma}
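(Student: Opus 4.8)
The plan is to reduce \eqref{eq: integration by parts ENS} to the classical surface divergence theorem applied on each curved element $T\in\Th$ and then to reassemble the element contributions, as in \cite{elliott2024sfem}. The formula \eqref{eq: integration by parts cont ENS} is stated for \emph{closed} surfaces; on a curved surface piece $T$ with boundary $\partial T$ it acquires an additional term $\int_{\partial T}\qh\,(\vh\cdot\bm{\mu}_h)\,d\ell$, where $\bm{\mu}_h$ is the unit outward co-normal of $T$ (tangent to $T$, normal to $\partial T$). On each $T$ the surface normal is $\nh$, the intrinsic divergence is $\divgh$ restricted to $T$, and, by the definition of the discrete Weingarten map, $\divgh\nh=\mathrm{tr}(\nbgcovh\nh)=\mathrm{tr}(\bfH_h)$, so the mean-curvature term is exactly $\int_T (\vh\cdot\nh)\,\qh\,\divgh\nh\,\dsh$. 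Thus, for $\vh,\qh$ smooth on $T$,
\[
\int_T \vh\cdot\nbgh\qh\,\dsh = -\int_T \qh\,\divgh\vh\,\dsh + \int_T (\vh\cdot\nh)\,\qh\,\divgh\nh\,\dsh + \int_{\partial T}\qh\,(\vh\cdot\bm{\mu}_h)\,d\ell .
\]

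Next I would sum this identity over all $T\in\Th$. Since $\Gaht=\bigcup_{T\in\Th}T$, the three area integrals assemble into the first three terms of \eqref{eq: integration by parts ENS}. For the edge contributions, each $E\in\mathcal{E}_h$ is shared by exactly two elements $T^{\pm}$ with outward co-normals $\bm{\mu}_h^{\pm}$; because $\Gaht$ has kinks along $E$, these co-normals do not cancel but add to the jump $[\mh]|_E:=\bm{\mu}_h^{+}+\bm{\mu}_h^{-}$ introduced in \cref{sec: Recap: Evolving Triangulated Surfaces}. Since $\vh\in\bfH^1(\Gaht)$ and $\qh\in H^1(\Gaht)$ are globally $H^1$ on the surface, the trace of $\qh\,\vh$ along $E$ is single-valued, so $\sum_{T\in\Th}\int_{\partial T}\qh\,(\vh\cdot\bm{\mu}_h)\,d\ell=\sum_{E\in\mathcal{E}_h}\int_E [\mh]\cdot\vh\,\qh\,d\ell$. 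Combining the three sums yields \eqref{eq: integration by parts ENS} for smooth $\vh,\qh$.

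Finally, to reach general $\vh\in\bfH^1(\Gaht)$ and $\qh\in H^1(\Gaht)$ I would use density of piecewise-smooth functions together with continuity of every term in \eqref{eq: integration by parts ENS} at fixed $h$: the first two terms are bounded by $\norm{\vh}_{H^1(\Gaht)}\norm{\qh}_{H^1(\Gaht)}$, the mean-curvature term by the same up to the uniform bound on $\divgh\nh=\mathrm{tr}(\bfH_h)$ coming from \cref{lemma: Geometric errors ENS}, and the edge term by a (possibly $h$-dependent, hence harmless) trace inequality on $\mathcal{E}_h$. I expect the only genuinely delicate point to be the orientation bookkeeping for the co-normals — verifying that the two elementwise boundary integrals along a shared edge combine into $[\mh]$ rather than cancelling, which is precisely where the non-smoothness of $\Gaht$ enters; everything else is the Dziuk--Elliott surface divergence theorem applied elementwise.
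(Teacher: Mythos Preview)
Your proposal is correct and is precisely the standard argument: the paper does not give a proof but simply cites \cite{elliott2024sfem}, and your elementwise application of the surface divergence theorem with boundary term, followed by reassembly using single-valued traces of $\qh\vh$ along shared edges to produce the co-normal jump $[\mh]$, is exactly the proof found there. The density step at the end is fine and inessential here, since in the applications the formula is only used for finite element functions.
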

Utilizing all the aforementioned relations and inequalities, we establish the following coercivity and continuity estimates for our discrete bilinear and trilinear forms.
\begin{lemma}[Bounds and coercivity]
\label{Lemma: discrete bounds and coercivity results ENS}
    The following bounds and coercivity estimates hold:
    \begin{align}
        \label{ah boundedness ENS}
        \ah(\wh,\vh) &\leq \norm{\wh}_{\ah}\norm{\vh}_{\ah},\\
        \label{ahhat coercivity ENS}
        \ahhat(\wh,\wh) &\geq \norm{\wh}_{\ah}^2 - \norm{\wh}_{L^2(\Gaht)}^2 ,\\
        \label{g boundedness ENS}
        \gh(t;\TrVel;\wh,\vh) &\leq c\norm{\wh}_{L^2(\Gaht)}\norm{\vh}_{L^2(\Gaht)},\\
        \label{dh boundedness ENS}
        \ddh(t;\TrVel;\wh,\vh) &\leq c\norm{\wh}_{\ah}\norm{\vh}_{\ah},\\
         \label{bfdh boundedness ENS}
        \bfdh(t;\TrVel;\wh,\vh) &\leq c\norm{\wh}_{\ah}\norm{\vh}_{\ah},\\
        \label{bhtilde boundedness ENS}
       \bhtil(\wh,\{\qh,\xi_h\}) &\leq c \norm{\wh}_{\ah} \norm{\{\qh,\xi_h\}}_{L^2(\Gaht)},\\
       \label{bhtilda boundedness ENS}
       \bhtilda(t;\TrVel;\wh,\{\qh,\xi_h\}) &\leq c \norm{\wh}_{\ah}\norm{\{\qh,\xi_h\}}_{L^2(\Gaht)},\\
       \label{ch cov boundedness ENS}
        c_h^{cov}(\zh; \wh,\vh) &\leq \norm{\zh}_{L^2(\Gaht)}^{1/2}\norm{\zh}^{1/2}_{\ah}\norm{\wh}_{\ah}\norm{\vh}^{1/2}_{L^2(\Gaht)}\norm{\vh}^{1/2}_{\ah} \\
        &\quad + \norm{\zh}_{L^2(\Gaht)}^{1/2}\norm{\zh}^{1/2}_{\ah}\norm{\vh}_{\ah}  \norm{\zh}_{L^2(\Gaht)}^{1/2}\norm{\zh}^{1/2}_{\ah}, \nonumber\\
         \label{ch boundedness ENS}
        c_h(\zh; \wh,\vh) &\leq \norm{\zh}_{L^2(\Gaht)}^{1/2}\norm{\zh}^{1/2}_{\ah}\norm{\wh}_{H^1(\Gaht)}\norm{\vh}_{H^1(\Gaht)},
    \end{align}
for all $ \zh(\cdot,t),\wh(\cdot,t),\vh(\cdot,t) \in \bfV_h\t. $ and $\{\qh(\cdot,t),\xi_h(\cdot,t)\} \in Q_h\t\times \Lambda_h\t,$ where the constant $c$ is independent of time $t$ and the mesh-parameter $h$.
\end{lemma}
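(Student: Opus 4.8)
The plan is to prove each of the eleven estimates by unfolding the definitions in \cref{def: bilinear forms discrete ENS}, applying Cauchy--Schwarz and H\"older inequalities, and then invoking three families of uniform‑in‑$(h,t)$ ingredients: (i) $L^\infty$‑bounds on the geometric coefficients --- $\divgh\TrVel$, the tensors $\tilde{\mathcal{B}}_{\Gah}(\TrVel,\bfPh)$ and $\mathcal{B}_{\Gah}^{\mathrm{div}}(\TrVel)$ of \cref{def: bilinear forms discrete ENS}, the discrete Weingarten map $\bfH_h$, and $\matd\nh$ --- all $\le c\norm{\FlVel}_{W^{1,\infty}}\le c$ by \cref{lemma: Geometric errors ENS}, \eqref{eq: discrete vel bound ENS} and smoothness of $\FlVel$ (for $\bfH_h$ via $\norm{\bfH-\bfH_h}_{L^\infty(\Gaht)}\le ch^{k_g-1}$, for $\matd\nh$ via $\norm{\matd(\bfn-\nh)}_{L^\infty}\le ch^{k_g}$); (ii) the identity $\divgh\wh=\operatorname{tr}(E_h(\wh))$ together with $\norm{\nbgcovh\wh}_{L^2(\Th\t)}\le c\norm{\wh}_{\ah}$, the latter from the tangential Korn inequality \eqref{eq: korn inequality Ph ENS} and the product rule $\nbgcovh\wh=\nbgh(\bfPh\wh)-(\nbgh\bfPh)\wh$ with $\nbgh\bfPh$ uniformly bounded; and (iii) the interpolation inequalities \eqref{eq: interpolation inequality Gaglia ENS}, \eqref{eq: L4 inequality Ph ENS}.

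First I would dispatch the easy bounds. Estimate \eqref{ah boundedness ENS} is Cauchy--Schwarz on each of the two integrals defining $\ah$ followed by $a_1b_1+a_2b_2\le(a_1^2+a_2^2)^{1/2}(b_1^2+b_2^2)^{1/2}$ and $\norm{\cdot}_{\ah}^2=\ah(\cdot,\cdot)$; then \eqref{ahhat coercivity ENS} is the identity $\ahhat(\wh,\wh)=\norm{E_h(\wh)}_{L^2(\Gaht)}^2=\norm{\wh}_{\ah}^2-\norm{\wh}_{L^2(\Gaht)}^2$; and \eqref{g boundedness ENS} is Cauchy--Schwarz plus $\norm{\divgh\TrVel}_{L^\infty}\le c$. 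For \eqref{dh boundedness ENS} the structural point is that in every term of $\ddh$ the bare gradient $\nbgh$ meets a projector $\bfPh$, either the one built into $\nbgcovh=\bfPh\nbgh$ or one carried by $\tilde{\mathcal{B}}_{\Gah}$; using $\nbgh\wh\,\bfPh=\nbgh\wh$ and the symmetry of $\bfPh$, one rewrites $\ddh(\TrVel;\wh,\vh)$ purely through $\nbgcovh\wh$, $\nbgcovh\vh$ and the $L^\infty$‑bounded $\tilde{\mathcal{B}}_{\Gah}$, and then Cauchy--Schwarz with $\norm{\nbgcovh\cdot}_{L^2}\le c\norm{\cdot}_{\ah}$ gives the bound; \eqref{bfdh boundedness ENS} follows identically through the reduction in \cref{remark: assertion bdfh ENS}.

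For the mixed forms the issue is to obtain $\norm{\{\qh,\xi_h\}}_{L^2}$ (and not an $H^1$‑norm of $\qh$) on the right. In $\bhtil$ and in the last term of $\bhtilda$ the gradient $\nbgh\qh$ is removed by the integration‑by‑parts formula \eqref{eq: integration by parts ENS}: the volume contribution becomes $-\int_{\Gaht}\qh\,\divgh(\cdot)\dsh$ where $\divgh(\cdot)$ reduces, up to uniformly bounded factors, to $\operatorname{tr}(E_h(\wh))$ --- for the term involving $\mathcal{B}_{\Gah}^{\mathrm{div}}(\TrVel)$ one additionally uses $\nbgcovh\TrVel\,\nh=0$ and the $L^\infty$‑boundedness of the second tangential derivatives of $\TrVel$ (it being the Lagrange interpolant of the smooth $\FlVel$); the curvature boundary term is bounded via $\norm{\divgh\nh}_{L^\infty}=\norm{\operatorname{tr}\bfH_h}_{L^\infty}\le c$; and the edge‑jump term is handled with $\norm{[\bm{\mu}_h]}_{L^\infty(\mathcal{E}_h\t)}\le ch^{k_g}$ plus a trace and an inverse inequality, turning $\sum_E\int_E$ into $ch^{k_g-1}\norm{\wh}_{L^2}\norm{\qh}_{L^2}\le c\norm{\wh}_{\ah}\norm{\qh}_{L^2}$ for $k_g\ge1$. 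The remaining $\xi_h$‑pieces --- $\int\xi_h\wh\cdot\nh$ in \eqref{bhtilde boundedness ENS}, and $m_h(\xi_h,\wh\cdot\matd\nh)$, $g_h(\TrVel;\xi_h,\wh\cdot\nh)$ in \eqref{bhtilda boundedness ENS} --- are immediate from Cauchy--Schwarz, $\norm{\wh\cdot\nh}_{L^2}\le\norm{\wh}_{L^2}\le\norm{\wh}_{\ah}$, and the $L^\infty$‑bounds on $\matd\nh$ and $\divgh\TrVel$.

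Finally the trilinear forms. For $c_h$, each skew‑symmetrised piece equals $\int_{\Gaht}\big((\nbgh\wh)(\bfPh\zh)\big)\cdot\vh\,\dsh$ (since $(\zh\cdot\nbgh)\wh=(\nbgh\wh)\bfPh\zh$); H\"older with exponents $(2,4,4)$, $\norm{\nbgh\wh}_{L^2}\le\norm{\wh}_{H^1}$, $\norm{\bfPh\zh}_{L^4}\le c\norm{\zh}_{L^2}^{1/2}\norm{\zh}_{\ah}^{1/2}$ and $\norm{\vh}_{L^4}\le c\norm{\vh}_{H^1}$ give \eqref{ch boundedness ENS}. For $c_h^{cov}$ the gain of $\norm{\cdot}_{\ah}$ over $\norm{\cdot}_{H^1}$ comes from tangentiality: $(\zh\cdot\nbgcovh)\wh$ and $\bfH_h\vh$ are tangential, so one inserts $\bfPh$ in front of the remaining factor and applies H\"older $(2,4,4)$ with $\norm{\nbgcovh\wh}_{L^2}\le c\norm{\wh}_{\ah}$, $\norm{\bfH_h}_{L^\infty}\le c$, $\norm{\bfPh\zh}_{L^4}\le c\norm{\zh}_{L^2}^{1/2}\norm{\zh}_{\ah}^{1/2}$, $\norm{\bfPh\vh}_{L^4}\le c\norm{\vh}_{L^2}^{1/2}\norm{\vh}_{\ah}^{1/2}$, and $\norm{\wh\cdot\nh}_{L^2}\le\norm{\wh}_{\ah}$ for the curvature pieces; collecting the two skew‑symmetrised terms gives \eqref{ch cov boundedness ENS}. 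I expect the main obstacle to be exactly this recurring point: securing the energy norm $\norm{\cdot}_{\ah}$ rather than the $H^1$‑norm on the right of \eqref{dh boundedness ENS}, \eqref{bfdh boundedness ENS}, \eqref{bhtilda boundedness ENS} and \eqref{ch cov boundedness ENS}, i.e. systematically avoiding the factor $h^{-1}$ of \eqref{eq: coercivity and Korn's inequality Lagrange ENS}. This forces one to exploit that every "bare" $\nbgh$ pairs with a $\bfPh$ (hence becomes a covariant gradient) and every integrated‑by‑parts $\nbgh\qh$ pairs with $\divgh\wh=\operatorname{tr}E_h(\wh)$, and then to invoke the $h$‑uniform tangential Korn inequality \eqref{eq: korn inequality Ph ENS}.
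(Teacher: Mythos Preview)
Your proposal is correct and follows essentially the same approach as the paper: the paper largely cites \cite[Lemma 5.3]{elliott2025unsteady} for \eqref{ah boundedness ENS}, \eqref{bhtilde boundedness ENS}, \eqref{ch cov boundedness ENS}, \eqref{ch boundedness ENS}, invokes the symmetry of $\bfPh$ (hence of $\tilde{\mathcal{B}}_{\Gah}$) together with \cref{lemma: Transport formulae strain tensor app ENS} for \eqref{dh boundedness ENS}--\eqref{bfdh boundedness ENS}, and handles \eqref{bhtilda boundedness ENS} via the integration-by-parts formula \eqref{eq: integration by parts ENS} with the jump bound $|[\bm{\mu}_h]|\le ch^{k_g}$ --- exactly the ingredients you spell out. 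Your write-up simply makes explicit what the paper delegates to the reference, in particular the mechanism $\norm{\nbgcovh\wh}_{L^2}\le c\norm{\wh}_{\ah}$ via \eqref{eq: korn inequality Ph ENS} and the H\"older $(2,4,4)$ splitting with \eqref{eq: L4 inequality Ph ENS} for the trilinear forms.
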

\begin{proof}
The continuity estimates \eqref{ah boundedness ENS}, \eqref{bhtilde boundedness ENS}, and \eqref{ch cov boundedness ENS}, \eqref{ch boundedness ENS}  have already been proven in \cite[Lemma 5.3]{elliott2025unsteady}. Specifically for latter two, we just make use of the previous \cref{lemma: Korn's inequality Ph ENS} and the bounds on the discrete Weingarten map $\norm{\bfH_h}_{L^\infty(\Gaht)} \leq c$. From \cref{def: bilinear forms discrete ENS}, the coercivity estimate \eqref{ahhat coercivity ENS} readily follows, while a simple Cauchy-Schwarz inequality and the bounds on $\TrVel$ \eqref{eq: discrete vel bound ENS} along with the smoothness of $\FlVel$ (see \cref{sec: Evolving surface cont ENS}) give \eqref{g boundedness ENS}. 

Regarding \eqref{dh boundedness ENS}, we notice that $\bfPh$ is symmetric and therefore so is $ \Tilde{\mathcal{B}}_{\Gah}(\TrVel,\bfPh)$. So, following the proof of \Cref{lemma: Transport formulae strain tensor app ENS} gives the desired result. The bound \eqref{bfdh boundedness ENS}, can then be easily extrapolated; see \cref{remark: assertion bdfh ENS}. Finally, \eqref{bhtilda boundedness ENS} is treated similarly to \eqref{bhtilde boundedness ENS} (see again \cite[Lemma 5.3]{elliott2025unsteady}), that is, we use the integration by parts formula \eqref{eq: integration by parts ENS}, along with the fact that  $|[\bm{\mu}_h]| \leq ch^{k_g}$ \eqref{eq: geometric errors 1b ENS} and that $\TrVel$ is the interpolant of the fluid velocity $\FlVel$ and so the bounds for $\TrVel$ are as in \eqref{eq: discrete vel bound ENS}.
\end{proof}

Now we present some key properties involving the \emph{discrete weakly tangential divergence-free functions} $\wh \in \bfV_h^{div}\t$ \eqref{eq: discrete weakly divfree space ENS}, that will be used throughout. These properties, where proved in \cite[Lemma 5.4, Corollary 5.4.1, Lemma 5.5]{elliott2025unsteady}, respectively, for stationary surfaces, but can be easily expanded in the evolving setting, so we omit the proofs. 

\begin{lemma}\label{lemma: divfree L2 inner normal ENS}
Let $\wh \in \bfV_h^{div}\t$ and $\vh \in \bfV_h$, then the following inequalities hold
\begin{align}
\label{eq: divfree L2 inner normal kl=ku ENS}
    (\wh\cdot\nh,\vh\cdot\nh)_{L^2(\Gaht)} &\leq ch\norm{\wh\cdot\nh}_{L^2(\Gaht)}\norm{\vh}_{L^2(\Gaht)}, \  \text{ for } k_\lambda = k_u.\\
    \label{eq: divfree L2 inner normal kl=ku-1 ENS}
    (\wh\cdot\nh,\vh\cdot\nh)_{L^2(\Gaht)} &\leq ch\norm{\wh\cdot\nh}_{L^2(\Gaht)}\norm{\vh}_{H^1(\Gaht)}, \  \text{ for } k_\lambda = k_u-1,
\end{align}
where $c>0$ a constant depending on $\mathcal{G}_T$ but independent of $h, \, t$.
\end{lemma}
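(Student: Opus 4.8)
This follows the pattern of \cite[Lemma 5.4 and Corollary 5.4.1]{elliott2025unsteady}, and the plan is to reduce both inequalities to a single approximation estimate on $\Gaht$. The starting point is the defining property of $\bfV_h^{div}\t$ in \eqref{eq: discrete weakly divfree space beginning ENS}: choosing $\qh=0$ in $\bhtil(\wh,\{\qh,\xih\})=0$ (see \cref{def: bilinear forms discrete ENS}) shows that $\wh\cdot\nh$ is $L^2(\Gaht)$-orthogonal to $\Lambda_h\t=S_{h,k_g}^{k_\lambda}\t$. Hence, for \emph{any} $\xih\in\Lambda_h\t$,
\[
 (\wh\cdot\nh,\vh\cdot\nh)_{L^2(\Gaht)} = (\wh\cdot\nh,\vh\cdot\nh-\xih)_{L^2(\Gaht)} \leq \norm{\wh\cdot\nh}_{L^2(\Gaht)}\,\norm{\vh\cdot\nh-\xih}_{L^2(\Gaht)},
\]
so it suffices to exhibit $\xih\in\Lambda_h\t$ with $\norm{\vh\cdot\nh-\xih}_{L^2(\Gaht)}\leq ch\norm{\vh}_{L^2(\Gaht)}$ when $k_\lambda=k_u$, and with $\norm{\vh\cdot\nh-\xih}_{L^2(\Gaht)}\leq ch\norm{\vh}_{H^1(\Gaht)}$ when $k_\lambda=k_u-1$.

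The function $\vh\cdot\nh$ is only elementwise smooth, since the elementwise normal $\nh$ jumps across edges; to restore global continuity I would first replace $\nh$ by the smooth normal $\bfn=\bfng\circ\pi$ of $\Ga\t$, restricted to $\Gaht$. By \cref{lemma: Geometric errors ENS} (in particular $\norm{\bfn-\nh}_{L^\infty(\Gaht)}\leq ch^{k_g}$, uniformly in $t$) together with $k_g\geq1$ one gets $\norm{\vh\cdot(\nh-\bfn)}_{L^2(\Gaht)}\leq ch^{k_g}\norm{\vh}_{L^2(\Gaht)}\leq ch\norm{\vh}_{L^2(\Gaht)}$, so it remains to approximate $\vh\cdot\bfn\in C^0(\Gaht)$, and I would take $\xih$ to be its nodal (Lagrange) interpolant $\mathcal{I}_h(\vh\cdot\bfn)\in\Lambda_h\t$. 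Fix $T\in\Th\t$ and a point $p_T\in T$. When $k_\lambda=k_u$, the function $\vh\cdot\bfn(p_T)$ (the polynomial $\vh|_T$ contracted with the constant vector $\bfn(p_T)$, hence of degree $k_u$) belongs to the local finite element space and is therefore reproduced by $\mathcal{I}_h$; writing $w_T:=\vh\cdot(\bfn-\bfn(p_T))$ we obtain $(\vh\cdot\bfn-\xih)|_T=w_T-\mathcal{I}_h w_T$, and using $\norm{\bfn-\bfn(p_T)}_{L^\infty(T)}\leq ch$ ($\bfn$ being Lipschitz with constant depending only on $\mathcal{G}_T$), $|T|^{1/2}\leq ch$, the $L^\infty$-stability of the nodal interpolant and the inverse estimate $\norm{\vh}_{L^\infty(T)}\leq ch^{-1}\norm{\vh}_{L^2(T)}$, both $\norm{w_T}_{L^2(T)}$ and $\norm{\mathcal{I}_h w_T}_{L^2(T)}$ are bounded by $ch\norm{\vh}_{L^2(T)}$. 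Squaring, summing over $T\in\Th\t$, and adding the geometric term gives the first bound.

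When $k_\lambda=k_u-1$ the local space no longer reproduces $\vh|_T$ contracted with a constant vector, so the best one can use is a standard (curved-element) interpolation estimate: since $\vh|_T$ is polynomial and $\bfn$ has bounded elementwise first and second derivatives on $\Gaht$ (with bounds depending only on $\mathcal{G}_T$), one has $\norm{\vh\cdot\bfn-\mathcal{I}_h(\vh\cdot\bfn)}_{L^2(T)}\leq ch^2|\vh\cdot\bfn|_{H^2(T)}\leq ch\norm{\vh}_{H^1(T)}$, the last step absorbing the single second-order derivative of $\vh$ by an inverse estimate; summing over $T$ and adding $\norm{\vh\cdot(\nh-\bfn)}_{L^2(\Gaht)}\leq ch^{k_g}\norm{\vh}_{L^2(\Gaht)}\leq ch\norm{\vh}_{H^1(\Gaht)}$ yields the second bound. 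All constants are independent of $t$ because the geometric estimates of \cref{lemma: Geometric errors ENS}, the smoothness bounds on $\bfn$, and the shape-regularity of $\Gaht$ (hence the interpolation and inverse estimates) are uniform in $t$ by the standing assumptions on $\mathcal{G}_T$ and on the evolving triangulation. I expect the crux to be the $k_\lambda=k_u$ case: one must simultaneously trade the discontinuous $\nh$ for the smooth $\bfn$ at the admissible cost $O(h^{k_g})$ and exploit that the degree-$k_u$ interpolant reproduces $\vh$ contracted with a frozen normal on each element, so that only the $O(h)$ spatial variation of $\bfn$ over a single element enters; losing this reproduction when $k_\lambda=k_u-1$ is precisely why one is then forced into the weaker $H^1$-norm on the right-hand side.
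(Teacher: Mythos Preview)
Your argument is correct and follows the same overall strategy as the paper: exploit that $\wh\cdot\nh$ is $L^2$-orthogonal to $\Lambda_h\t$ and reduce the estimate to a super-approximation bound for $\vh\cdot\nh$ in $\Lambda_h\t$, after trading $\nh$ for the smooth normal $\bfn$ via \cref{lemma: Geometric errors ENS}. The only substantive difference is the interpolant used in the approximation step. The paper (following \cite[Lemma~5.4]{elliott2025unsteady}) takes $\xih=\Ihz(\vh\cdot\bfn)$ and invokes the Scott--Zhang super-approximation estimate \eqref{eq: super-approximation estimate 2 ENS} directly, which already gives $\norm{(I-\Ihz)(\bfn\cdot\vh)}_{L^2(\Gaht)}\leq ch\norm{\vh}_{L^2(\Gaht)}$ when $k_\lambda=k_u$; you instead use the nodal Lagrange interpolant together with an explicit frozen-normal reproduction argument on each element. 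Your route is a little more elementary and self-contained (it does not rely on Scott--Zhang theory), while the paper's route is shorter because the super-approximation bound is already recorded in \cref{sec: SZ interpolant ENS}. Both lead to the same constants and the same distinction between the $k_\lambda=k_u$ and $k_\lambda=k_u-1$ cases, for exactly the reason you identify: reproduction of $\vh\cdot\bfn(p_T)$ fails once the degree of $\Lambda_h$ drops by one.
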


\begin{corollary}\label{corollary: divfree L2 norm ENS}
Let $\wh \in \bfV_h^{div}$, then there exists $c>0$ depending on $\mathcal{G}_T$ but independent of $h, \, t$, such that \vspace{-1mm}
\begin{align}
\label{eq: divfree L2 norm kl=ku ENS}
    \norm{\wh\cdot\nh}_{L^2(\Gaht)} &\leq ch\norm{\wh}_{L^2(\Gaht)}, \quad  \text{ for } k_\lambda = k_u,\\
    \label{eq: divfree L2 norm kl=ku-1 ENS}
    \norm{\wh\cdot\nh}_{L^2(\Gaht)} &\leq ch\norm{\wh}_{H^1(\Gaht)}, \quad  \text{ for } k_\lambda = k_u-1. 
\end{align}
\end{corollary}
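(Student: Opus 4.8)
The plan is simply to specialize Lemma~\ref{lemma: divfree L2 inner normal ENS} to the test function $\vh=\wh$. This is admissible because $\bfV_h^{div}\t\subset\bfV_h\t$, so for $\wh\in\bfV_h^{div}\t$ we may insert it into both slots of the bilinear expression $(\wh\cdot\nh,\vh\cdot\nh)_{L^2(\Gaht)}$. With that choice the left-hand side collapses to a square, namely $\norm{\wh\cdot\nh}_{L^2(\Gaht)}^2$.

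First I would treat the case $k_\lambda=k_u$. Applying \eqref{eq: divfree L2 inner normal kl=ku ENS} with $\vh=\wh$ yields
\begin{equation*}
\norm{\wh\cdot\nh}_{L^2(\Gaht)}^2=(\wh\cdot\nh,\wh\cdot\nh)_{L^2(\Gaht)}\leq ch\,\norm{\wh\cdot\nh}_{L^2(\Gaht)}\norm{\wh}_{L^2(\Gaht)}.
\end{equation*}
If $\norm{\wh\cdot\nh}_{L^2(\Gaht)}=0$ the asserted bound \eqref{eq: divfree L2 norm kl=ku ENS} is trivial; otherwise divide both sides by $\norm{\wh\cdot\nh}_{L^2(\Gaht)}$ to obtain $\norm{\wh\cdot\nh}_{L^2(\Gaht)}\leq ch\,\norm{\wh}_{L^2(\Gaht)}$. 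The case $k_\lambda=k_u-1$ is handled verbatim, starting instead from \eqref{eq: divfree L2 inner normal kl=ku-1 ENS}, which replaces $\norm{\wh}_{L^2(\Gaht)}$ by $\norm{\wh}_{H^1(\Gaht)}$ on the right; this gives \eqref{eq: divfree L2 norm kl=ku-1 ENS}. In both cases the constant $c$ is the one furnished by Lemma~\ref{lemma: divfree L2 inner normal ENS}, hence depends only on $\mathcal{G}_T$ and is independent of $h$ and $t$, as claimed.

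There is essentially no obstacle here: all the analytic work — the estimate that trades a factor of $h$ for the loss of exact tangentiality of $\wh$, and the bookkeeping needed to keep the constant uniform in $h$ and $t$ (using the quasi-uniformity of the evolving triangulation and the geometric perturbation bounds) — is already packaged inside Lemma~\ref{lemma: divfree L2 inner normal ENS}. The only point worth a word of care is that the self-testing $\vh=\wh$ is valid and that one does not accidentally need $\wh$ to lie in a smaller space; since $\bfV_h^{div}\t$ is by definition a subspace of $\bfV_h\t$, this is immediate.
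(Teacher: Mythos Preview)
Your proposal is correct and is precisely the intended argument: the paper does not spell out a proof (it refers to \cite{elliott2025unsteady} and omits the details), but labeling the result a ``Corollary'' of Lemma~\ref{lemma: divfree L2 inner normal ENS} signals exactly the self-testing $\vh=\wh$ you carry out.
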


\begin{lemma}[Improved $H^1$ coercivity bound]\label{lemma: improved h1-ah bound ENS}
     If $\underline{k_{\lambda} = k_u}$ and $\bfw_h \in \bfV_h^{div}\t$ then the following  holds \vspace{-2mm}
     \begin{align}\label{eq: improved h1-ah bound ENS}
         \norm{\bfw_h}_{H^1(\Gaht)}^2 &\leq c\norm{\wh}_{\ah},
     \end{align}
where $c>0$ a constant depending on $\mathcal{G}_T$ but independent of $h, \, t$.
\end{lemma}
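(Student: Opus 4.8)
The plan is to obtain \eqref{eq: improved h1-ah bound ENS} as an immediate consequence of the discrete Korn-type inequality \eqref{discrete Korn's inequality T nh ENS} combined with the divergence-free estimate of \cref{corollary: divfree L2 norm ENS}. The only term appearing on the right-hand side of \eqref{discrete Korn's inequality T nh ENS} that is not already controlled by the energy norm is the contribution $h^{-1}\norm{\wh\cdot\nh}_{L^2(\Gaht)}$, and the whole point is that for weakly tangential divergence-free functions in the regime $k_\lambda = k_u$ this term is in fact harmless.

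Concretely, I would start from \eqref{discrete Korn's inequality T nh ENS}, valid for every $\wh \in \bfV_h\t$,
\[
  \norm{\wh}_{H^1(\Gaht)} \leq c\big(\norm{E_h(\wh)}_{L^2(\Gaht)} + \norm{\wh}_{L^2(\Gaht)} + h^{-1}\norm{\wh\cdot\nh}_{L^2(\Gaht)}\big),
\]
and note that the sum of the first two right-hand terms is exactly $\norm{\wh}_{\ah}$ by the definition \eqref{eq: energy norm ENS} of the energy norm. For the last term, since $\wh \in \bfV_h^{div}\t$ and $k_\lambda = k_u$, estimate \eqref{eq: divfree L2 norm kl=ku ENS} of \cref{corollary: divfree L2 norm ENS} gives $\norm{\wh\cdot\nh}_{L^2(\Gaht)} \leq c h \norm{\wh}_{L^2(\Gaht)}$, hence $h^{-1}\norm{\wh\cdot\nh}_{L^2(\Gaht)} \leq c\norm{\wh}_{L^2(\Gaht)} \leq c\norm{\wh}_{\ah}$. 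Substituting back into the Korn inequality yields $\norm{\wh}_{H^1(\Gaht)} \leq c\norm{\wh}_{\ah}$, i.e. \eqref{eq: improved h1-ah bound ENS}. The constant inherits $h$- and $t$-independence from \eqref{discrete Korn's inequality T nh ENS} and \cref{corollary: divfree L2 norm ENS}, which ultimately trace back to the uniform geometric perturbation bounds and the quasi-uniformity of the evolving triangulation.

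There is essentially no obstacle here: the estimate is simply the composition of two results established earlier in the text. The one subtlety worth flagging is the role of the hypothesis $k_\lambda = k_u$. If instead $k_\lambda = k_u - 1$, only the weaker bound \eqref{eq: divfree L2 norm kl=ku-1 ENS} is available, namely $\norm{\wh\cdot\nh}_{L^2(\Gaht)} \leq c h \norm{\wh}_{H^1(\Gaht)}$, so that $h^{-1}\norm{\wh\cdot\nh}_{L^2(\Gaht)} \leq c\norm{\wh}_{H^1(\Gaht)}$ cannot be absorbed into the left-hand side of the Korn inequality; the argument then breaks down and one is forced to retain the $h^{-2}$ factor from \eqref{eq: coercivity and Korn's inequality Lagrange ENS}. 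This is precisely why the improved coercivity bound is stated only in the $k_\lambda = k_u$ regime.
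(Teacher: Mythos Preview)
Your proposal is correct and matches the paper's approach: the paper does not spell out a proof but simply cites \cite[Lemma~5.5]{elliott2025unsteady} together with the preceding \cref{lemma: divfree L2 inner normal ENS} and \cref{corollary: divfree L2 norm ENS}, and the argument there is precisely the combination of the discrete Korn inequality \eqref{discrete Korn's inequality T nh ENS} with the normal-component bound \eqref{eq: divfree L2 norm kl=ku ENS} that you describe. Your discussion of why the argument fails for $k_\lambda = k_u-1$ is also on point and consistent with the paper's use of \eqref{eq: coercivity and Korn's inequality Lagrange ENS} in that regime.
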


\subsection{Geometric Perturbations Error Bounds}\label{sec: Geometric Perturbations Errors}
Let us, first, recall some geometric perturbation results found in several literature \cite{DziukElliott_L2,DziukElliott_acta,hardering2023parametric,Hardering2022,highorderESFEM,elliott2024sfem,jankuhn2021trace,hansbo2020analysis}.
\begin{lemma}\label{lemma: errors of geometric pert ENS}
For $h$ small enough and $\wh,\vh  \in \bfH^1(\Gaht)$, $\bff \in (L^2(\Gat))^3$,  we have the following perturbation bounds, with constant $c>0$ depending on $\mathcal{G}_T$ but independent of $h, \, t$
\begin{align}\label{eq: errors of domain of integration data ENS}
        |\mb(\bff,\vhl)- \mh(\bff^{-\ell},\vh)| &\leq  ch^{k_g+1}\norm{\bff}_{L^2(\Gat)}\norm{\vh}_{L^2(\Gaht)},\\[5pt]
         \label{eq: Geometric perturbations nbg ENS}
        \norm{(\nbg\whl)^{-\ell} - \nbgh\wh}_{L^2(\Gaht)} &\leq ch^{k_g}\norm{\wh}_{H^1(\Gaht)},\\[5pt]
        \label{eq: Geometric perturbations a 1 ENS}
        |(E(\bfw_h^{\ell}),E(\vhl))_{L^2(\Gat)} - (E_h(\bfw_h),E_h(\vh))_{L^2(\Gaht)} | &\leq ch^{k_g} \norm{\bfw_h}_{H^1(\Gaht)}\norm{\vh}_{H^1(\Gaht)},\\[5pt]
        \norm{E(\bfw_h^{\ell})^{-\ell} - E_h(\bfw_h)}_{L^2(\Gaht)} &\leq ch^{k_g}\norm{\bfw_h}_{H^1(\Gaht)}. \label{eq: Geometric perturbations a 2 ENS}
    \end{align}
\end{lemma}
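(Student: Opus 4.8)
The plan is to route all four bounds through one device: transport every integral over $\Gat$ to an integral over $\Gaht$ via the closest point projection $\pi(\cdot,t)$, using $\ds=\muh\,\dsh$ and $\vhl(\pi(x))=\vh(x)$, and rewrite the lifted tangential derivatives by means of the factorization \eqref{eq: gah to ga Bh ENS} through $\Bhg=\bfPh(\bfI-d\bfH)\bfPg$. After this, each difference splits into a leading term plus remainders, and every remainder carries one of the geometric factors $\muh-1$, $d$, $\bfn-\nh$, $\bfP-\bfPh$, $\mathcal{W}_h$, or $\bfP-\mathcal{Q}_h$, all of which are of order $h^{k_g}$ in $L^\infty(\Gaht)$ (several of order $h^{k_g+1}$) by \cref{lemma: Geometric errors ENS} and \cref{Lemma: Bh estimates ENS}. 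One then closes with Cauchy--Schwarz and the norm equivalences of \cref{lemma: norm equivalence ENS}. I would carry out the four estimates in the order (first), (second), (fourth), (third), since the last two reduce to the second.

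For the first estimate I would change variables in $\mb(\bff,\vhl)$ to get $\mb(\bff,\vhl)-\mh(\bff^{-\ell},\vh)=\int_{\Gaht}\bff^{-\ell}\cdot\vh\,(\muh-1)\,\dsh$, and bound this by $\norm{\muh-1}_{L^\infty(\Gaht)}\norm{\bff^{-\ell}}_{L^2(\Gaht)}\norm{\vh}_{L^2(\Gaht)}$, using $\norm{\muh-1}_{L^\infty(\Gaht)}\le ch^{k_g+1}$ and $\norm{\bff^{-\ell}}_{L^2(\Gaht)}\le c\norm{\bff}_{L^2(\Gat)}$ (change of variables again). For the second estimate, \eqref{eq: gah to ga Bh ENS} gives $\nbgh\wh(x)=\nbg\whl(\pi(x))\,\Bhg^t$, so, since $\nbg\whl=\nbg\whl\bfPg$,
\[
(\nbg\whl)^{-\ell}(x)-\nbgh\wh(x)=\nbg\whl(\pi(x))\big(\bfPg-\bfPg\Bhg^t\big),\qquad \bfPg-\bfPg\Bhg^t=(\bfPg\nh)\otimes\nh+d\,\bfPg\bfH\bfPh .
\]
Because $\bfPg\nh=\bfPg(\nh-\bfn)$, \cref{lemma: Geometric errors ENS} yields $\norm{\bfPg-\bfPg\Bhg^t}_{L^\infty(\Gaht)}\le ch^{k_g}$; taking $L^2$-norms and invoking \cref{lemma: norm equivalence ENS} (so that $\norm{\nbg\whl(\pi(\cdot))}_{L^2(\Gaht)}\le c\norm{\wh}_{H^1(\Gaht)}$) gives the bound.

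The fourth estimate then follows by writing $E(\whl)(\pi(\cdot))=\tfrac12\bfP\big(\nbg\whl(\pi(\cdot))+(\nbg\whl)^t(\pi(\cdot))\big)\bfP$ and $E_h(\wh)=\tfrac12\bfPh\big(\nbgh\wh+(\nbgh\wh)^t\big)\bfPh$, substituting $\nbg\whl(\pi(\cdot))=\nbgh\wh+G$ with $\norm{G}_{L^2(\Gaht)}\le ch^{k_g}\norm{\wh}_{H^1(\Gaht)}$ (the second estimate) and $\bfP=\bfPh+(\bfP-\bfPh)$: every term of $E(\whl)(\pi(\cdot))-E_h(\wh)$ except the leading one carries a factor $G$ or $\bfP-\bfPh$, hence is $\bigo(h^{k_g})$ in $L^2(\Gaht)$. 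Finally, for the third estimate, after changing variables $(E(\whl),E(\vhl))_{L^2(\Gat)}-(E_h(\wh),E_h(\vh))_{L^2(\Gaht)}$ equals the integral over $\Gaht$ of $E_h(\wh):E_h(\vh)(\muh-1)$ plus cross terms each containing $E(\whl)(\pi(\cdot))-E_h(\wh)$ or $E(\vhl)(\pi(\cdot))-E_h(\vh)$; these are controlled by Cauchy--Schwarz together with $\norm{\muh-1}_{L^\infty(\Gaht)}\le ch^{k_g+1}$ and the fourth bound.

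The only step requiring any care is the expansion in the last two estimates, where the double projections $\bfPg(\cdot)\bfPg$ and the transpose in $E(\cdot)$ generate several geometric correction terms; but since only order $h^{k_g}$ is claimed, the crude $L^\infty$-bounds $\norm{\mathcal{W}_h}_{L^\infty(\Gaht)},\norm{\bfP-\bfPh}_{L^\infty(\Gaht)}\le ch^{k_g}$ and $\norm{d}_{L^\infty(\Gaht)}\le ch^{k_g+1}$ already suffice term by term and no cancellation between the correction terms is needed, so the argument is routine bookkeeping rather than a genuine obstacle. These are in any case classical estimates, available in \cite{DziukElliott_L2,DziukElliott_acta,highorderESFEM,Hardering2022,elliott2024sfem}, and I would mainly supply the two non-standard ingredients (the evolving/material-derivative versions handled in \cref{lemma: Geometric errors ENS}) where they enter.
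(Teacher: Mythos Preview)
Your proposal is correct and follows the standard route for these classical geometric perturbation estimates. The paper itself does not give a self-contained argument here: its proof simply refers to \cite{highorderESFEM,DziukElliott_acta} for the first two bounds and to \cite[Lemma~4.6]{elliott2024sfem} for the last two, noting only that the constants can be taken independent of $t$ in the evolving setting. Your sketch is precisely the kind of computation those references carry out, so there is nothing to add.
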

\begin{proof}
Refer to \cite{highorderESFEM,DziukElliott_acta} for the initial two estimates, which can be easily generalized to vector-valued functions, and \cite[Lemma 4.6]{elliott2024sfem} for the remaining two (notice that the constant $c$ is now independent of both $h, \, t$).
\end{proof}
\begin{lemma}[Lemma 5.8 \cite{elliott2025unsteady}]\label{lemma: weingarten map improved ENS}
The following bound holds for all $\vh \in \bfH^1(\Gaht)$,
    \begin{equation}
        \begin{aligned}\label{eq: weingarten map improved ENS}
             \Big|\int_{\Gaht} \beta^{-\ell}\,\vh\cdot\bfH_h\bfu^{-\ell} \, \dsh - \int_{\Gat} \beta\,\vhl\cdot\bfH\bfu \, \ds\Big| \leq ch^{k_g}\norm{\beta}_{H^1(\Gat)}\norm{\bfu}_{H^1(\Gat)}\norm{\vh}_{H^1(\Gaht)}.
        \end{aligned}
    \end{equation}
\end{lemma}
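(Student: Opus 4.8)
The plan is to transport both integrals onto the discrete surface $\Gaht$ and then isolate the geometric discrepancy. Using the Jacobian identity $\muh\,\dsh=\ds$ together with $(\vhl)^{-\ell}=\vh$ on $\Gaht$ and the fact that the inverse lift commutes with matrix--vector products, the second integral becomes $\int_{\Gaht}\beta^{-\ell}\,\vh\cdot\bfH^e\bfu^{-\ell}\,\muh\,\dsh$, where $\bfH^e$ is the normal extension of $\bfH$ (which coincides with the inverse lift on $\Gaht$). Hence
\begin{equation*}
 \Big|\int_{\Gaht}\beta^{-\ell}\,\vh\cdot\big(\bfH_h-\muh\,\bfH^e\big)\bfu^{-\ell}\,\dsh\Big|\le \Big|\int_{\Gaht}\beta^{-\ell}\,\vh\cdot(\bfH_h-\bfH^e)\bfu^{-\ell}\,\dsh\Big| + \Big|\int_{\Gaht}(1-\muh)\,\beta^{-\ell}\,\vh\cdot\bfH^e\bfu^{-\ell}\,\dsh\Big|.
\end{equation*}
For the last term I would use $\norm{1-\muh}_{L^\infty(\Gaht)}\le ch^{k_g+1}$, $\norm{\bfH^e}_{L^\infty(\Gaht)}\le c$, H\"older's inequality with exponents $(2,4,4)$, the equivalence $\norm{g^{-\ell}}_{L^p(\Gaht)}\sim\norm{g}_{L^p(\Gat)}$, and the two-dimensional embedding $H^1\hookrightarrow L^4$ (on $\Gat$ for $\beta,\bfu$, and via \eqref{eq: interpolation inequality Gaglia ENS} for $\vh$); this already yields a bound of order $h^{k_g+1}$.

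The first term is the crux, because $\norm{\bfH_h-\bfH^e}_{L^\infty(\Gaht)}\le ch^{k_g-1}$ is exactly one power short of the claimed rate, so the missing power must be extracted from the tangentiality of both Weingarten maps: $\bfH_h=\bfPh\bfH_h\bfPh$ and $\bfH^e=\bfP\bfH^e\bfP$ on $\Gaht$. Writing $\vh\cdot\bfH_h\bfu^{-\ell}=(\bfPh\vh)\cdot\bfH_h(\bfPh\bfu^{-\ell})$, $\vh\cdot\bfH^e\bfu^{-\ell}=(\bfP\vh)\cdot\bfH^e(\bfP\bfu^{-\ell})$, and telescoping through $(\bfP\vh)\cdot\bfH_h(\bfP\bfu^{-\ell})$, one obtains two "projection-mismatch" terms, each carrying a factor $\bfPh-\bfP$, plus one "sandwiched" remainder $\int_{\Gaht}\beta^{-\ell}\,(\bfP\vh)\cdot(\bfH_h-\bfH^e)(\bfP\bfu^{-\ell})\,\dsh$. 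Since $\norm{\bfPh-\bfP}_{L^\infty(\Gaht)}\le ch^{k_g}$ by \eqref{eq: geometric errors 2 ENS}, the mismatch terms are controlled exactly as the $\muh$-term above and contribute order $h^{k_g}$.

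For the sandwiched term I would recall $\bfH_h=\nbgcovh\nh=\bfPh(\nbgh\nh)\bfPh$ and $\nbgh\bfn^e=\bfH^e\bfPh+O(h^{k_g+1})$ on $\Gaht$ (the $O(h^{k_g+1})$ being the discrepancy between $\nabla\bfn^e|_{\Gaht}$ and $\bfH^e$, caused by $|d|\le ch^{k_g+1}$). Modulo this $O(h^{k_g+1})$ error and further $O(h^{k_g})$ corrections from replacing $\bfPh$ by $\bfP$ (each multiplying the bounded tensor $\bfH_h$, hence harmless after H\"older), the sandwiched integrand reduces to a sum of terms of the form $\beta^{-\ell}\,\bfa\cdot\big(\nbgh(\nh-\bfn^e)\big)\bfb$ with $\bfa,\bfb$ built from $\bfP\vh$ and $\bfP\bfu^{-\ell}$. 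Here $\nh-\bfn^e$ is only $O(h^{k_g})$ but is differentiated once; the missing power is recovered by integrating by parts over the triangulation $\Th$ via \eqref{eq: integration by parts ENS}, moving the tangential derivative off $\nh-\bfn^e$ onto the $H^1$-factors $\beta^{-\ell},\bfP\vh,\bfP\bfu^{-\ell}$, leaving $\nh-\bfn^e$ undifferentiated ($O(h^{k_g})$ in $L^\infty(\Gaht)$) and producing element-interior terms together with edge terms, the latter bounded using the co-normal jump estimate $\norm{[\bm{\mu}_h]}_{L^\infty(\mathcal{E}_h\t)}\le ch^{k_g}$ from \eqref{eq: geometric errors 1b ENS} and trace inequalities. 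Collecting everything and once more applying H\"older with $(2,4,4)$ and $H^1\hookrightarrow L^4$ to turn the three surviving factors into $\norm{\beta}_{H^1(\Gat)}$, $\norm{\bfu}_{H^1(\Gat)}$ and $\norm{\vh}_{H^1(\Gaht)}$ yields the asserted $ch^{k_g}$ bound. The real work — and the main obstacle — is the bookkeeping in this last step: verifying that every term produced by the integration by parts is genuinely $O(h^{k_g})$ and that no un-sandwiched curvature factor or an additional power of $\norm{\vh}_{H^1}$ survives.
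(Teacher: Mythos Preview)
The paper does not actually prove this lemma: it is stated with the attribution ``Lemma 5.8 \cite{elliott2025unsteady}'' and no proof is given, so there is nothing in the present paper to compare your argument against line by line.

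That said, your strategy is the correct one and matches what is behind the cited result. The decisive observation is exactly what you identify: the naive bound $\norm{\bfH_h-\bfH^e}_{L^\infty}\le ch^{k_g-1}$ is one order short, and the missing power is recovered by writing the curvature difference (after sandwiching with tangential projections) as a tangential derivative of $\nh-\bfn^e$ and integrating by parts element-wise via \eqref{eq: integration by parts ENS}, so that $\nh-\bfn^e$ is left undifferentiated at order $h^{k_g}$ while the derivative lands on the $H^1$ factors $\beta$, $\bfu$, $\vh$. This is precisely the mechanism behind the ``non-standard geometric approximation estimates'' alluded to in \cref{remark: Better Approximations ENS} and in \cite[Lemma~4.2]{hansbo2020analysis}; the edge contributions are handled with $\norm{[\bm{\mu}_h]}_{L^\infty(\mathcal{E}_h)}\le ch^{k_g}$ as you say. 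Your use of H\"older with exponents $(2,4,4)$ together with the two-dimensional embedding $H^1\hookrightarrow L^4$ to close the estimate is also the right way to accommodate the trilinear structure. The one place to be a little careful in the bookkeeping is that after integration by parts a derivative may fall on $\bfP$ (hidden in $\bfa=\bfP\vh$ or $\bfb=\bfP\bfu^{-\ell}$), producing a bounded curvature factor rather than a genuine $\nbgh\vh$ or $\nbgh\bfu$; this is harmless but should be tracked so that the final bound reads $\norm{\vh}_{H^1(\Gaht)}$ and not merely $\norm{\nbgh\vh}_{L^2}$.
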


We now want to compare the bilinear forms on $\Gat$ and $\Gaht$ via the lifting process. We will see that the geometric approximation order of the discrete surface $\Gaht$, i.e. $\bigo(k_g)$, appears as a discrepancy error. Before that, for brevity reasons, let us introduce some further notation. For $\wh,\vh \in \bfH^1(\Gaht)$ and $\qh,\xih \in H^1(\Gaht)$ we define the following:
\begin{align}
    &\Grm_{\mb}(\wh,\vh):= \mh(\wh,\vh) - \mb(\whl,\vhl), \qquad \Grm_{\gb}(\wh,\vh):=\gh(\TrVel;\wh,\vh)-\gb(\TrVelL;\whl,\vhl),\nonumber\\
    &\Grm_{\ab}(\wh,\vh):= \ah(\wh,\vh) -\ab(\bfw_h^{\ell},\vhl), \qquad \, \quad \Grm_{\db}(\wh,\vh):=\ddh(\TrVel;\wh,\vh) - \db(\TrVelL;\wh,\vhl),\nonumber\\
    &\Grm_{\ddb}(\wh,\vh):=\bfdh(\TrVel;\wh,\vh) - \ddb(\TrVelL;\wh,\vhl), \quad
    \Grm_{\bLb}(\wh,\{q_h,\xih\}):= \bhtil(\bfw_h,\{\qh,\xi_h\}) - b^L(\bfw_h^{\ell},\{\qhl,\xi_h^{\ell}\}), \nonumber\\
    \label{eq: The G pert ENS}
    &\Grm_{\btil}(\wh,\{q_h,\xih\}) : =\bhtilda(t;\FlVel;\wh,\{\qh,\xi_h\}) - \btil(\TrVelL;\whl,\{\qhl,\xi_h^{\ell}\}).
\end{align}

Most of the following geometric perturbation error estimates have already been proven in \cite{elliott2024sfem,hansbo2020analysis,DziukElliott_acta,Hardering2022} and references therein, for stationary surfaces, but can be expanded to the evolving setting \cite{highorderESFEM}; see also \cite{highorderESFEM,DziukElliott_L2,EllRan21} regarding the $\Grm_{\gb}$ perturbation estimate.
\begin{lemma}[Geometric Perturbations I]\label{lemma: Geometric perturbations lagrange ENS}
Let $\bfw_h,\vh \in \bfH^1(\Gaht)$ and $\{\qh,\xi_h \}\in H^1(\Gaht)\times H^1(\Gaht)$. Then we have
\begin{align}
    \label{eq: Geometric perturbations a ENS}
    |\Grm_{\ab}(\wh,\vh)| &\leq ch^{k_g} \norm{\wh}_{H^1(\Gaht)} \norm{\vh}_{H^1(\Gaht)},\\
    \label{eq: Geometric perturbations g ENS}
    |\Grm_{\gb}(\wh,\vh)| &\leq c h^{k_g+1}\norm{\wh}_{L^2(\Gaht)} \norm{\vh}_{L^2(\Gaht)},\\
    \label{eq: Geometric perturbations btilde ENS}
     |\Grm_{\bLb}(\wh,\{q_h,\xih\})| &\leq ch^{k_g} \norm{\wh}_{L^2(\Gaht)}(\norm{\qh}_{H^1(\Gaht)} + \norm{\xi_h}_{L^2(\Gaht)}) \nonumber\\
     &\leq ch^{k_g-1} \norm{\bfw_h}_{L^2(\Gaht)}(\norm{\qh}_{L^2(\Gaht)} + h\norm{\xi_h}_{L^2(\Gaht)}).
\end{align}
If  $\bfw \in \bfH_T^1\t$, then we get the following higher order bound
\begin{equation}
    \begin{aligned}\label{eq: Geometric perturbations btilde tangent ENS}
        |\Grm_{\bLb}(\bfw^{-\ell},\{q_h,\xih\})|
        \leq ch^{k_g} \norm{\bfw}_{L^2(\Gat)}\norm{\{\qh,\xi_h\}}_{L^2(\Gaht)}.
    \end{aligned}
\end{equation}
Furthermore if  $\bfw,\bfv \in \bfH_T^1\t\cap \bfH^2(\Gat)$ then we have
\begin{align}\label{eq: Geometric perturbations btilde tangent extra regularity ENS}
    |\Grm_{\bLb}(\bfw^{-\ell},\{q_h,\xih\})|&\leq ch^{k_g+1} \norm{\bfw}_{H^2(\Gat)}\norm{\{\qh,\xih\}}_{H^1(\Gaht)}, \\
    \label{eq: Geometric perturbations a tangent extra regularity ENS}
    |\Grm_{\ab}(\bfw^{-\ell},\bfv^{-\ell})&\leq ch^{k_g+1} \norm{\bfw}_{H^2(\Gat)} \norm{\bfv}_{H^2(\Gaht)},
\end{align}
where, in all the above, the constant $c>0$ is independent both of $h$ and $t$.
\end{lemma}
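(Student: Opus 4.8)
The plan is, for each of $\Grm_{\ab}$, $\Grm_{\gb}$, $\Grm_{\bLb}$, to transport the $\Gat$-integral back onto $\Gaht$ using $\ds=\muh\,\dsh$ and the chain rule $\nbgh z_h=\Bhg\,(\nbg z_h^{\ell})^{-\ell}$, so that each perturbation becomes one integral over $\Gaht$ of the inverse-lifted arguments (or their discrete gradients) contracted against a \emph{geometric defect} built from $\mathcal{W}_h=\tfrac1{\muh}\Bhg-\bfPg$, $\bfP-\mathcal{Q}_h$, $1-\muh$, $\nh-\bfn$, or $\divgh\TrVel\,\muh-(\divg\TrVelL)^{-\ell}$. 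By \cref{lemma: Geometric errors ENS}, \cref{Lemma: Bh estimates ENS} and the Jacobian bound $\norm{1-\muh}_{L^{\infty}(\Gaht)}\leq ch^{k_g+1}$, every such defect is $\bigo(h^{k_g})$ in $L^{\infty}$ and $\bigo(h^{k_g+1})$ after multiplication by the tangential projection $\bfP$; Cauchy--Schwarz together with the norm equivalences of \cref{lemma: norm equivalence ENS} then closes every bound. Since all constants produced by these lemmas are uniform in $t$, the stationary-surface arguments of \cite{elliott2024sfem,elliott2025unsteady} carry over after freezing $t$; the only genuinely evolving ingredient is the velocity-difference estimate \eqref{eq: difference of velocities ENS}, which is used for $\Grm_{\gb}$.

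\textbf{The routine estimates.} For \eqref{eq: Geometric perturbations a ENS} I would split $\Grm_{\ab}$ into its mass part, bounded by $\norm{1-\muh}_{L^\infty}$ and hence $\bigo(h^{k_g+1})$, and its strain part, which is exactly \eqref{eq: Geometric perturbations a 1 ENS} and hence $\bigo(h^{k_g})$; combining gives the stated $h^{k_g}$. For \eqref{eq: Geometric perturbations g ENS} one writes the difference of the two divergence integrands and checks, using \eqref{eq: difference of velocities ENS} and the geometry bounds, that $\divgh\TrVel\,\muh-(\divg\TrVelL)^{-\ell}=\bigo(h^{k_g+1})$ in $L^\infty$ (the standard $\Grm_{\gb}$ computation of \cite{highorderESFEM,DziukElliott_L2,EllRan21}), then pairs it with $\wh,\vh$ in $L^2$. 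For \eqref{eq: Geometric perturbations btilde ENS}: the gradient term of $b^L$ becomes $\int_{\Gaht}\wh\cdot\muh\mathcal{W}_h\,(\nbg\qhl)^{-\ell}\,\dsh$, bounded by $ch^{k_g}\norm{\wh}_{L^2(\Gaht)}\norm{\qh}_{H^1(\Gaht)}$; the normal-constraint term becomes $\int_{\Gaht}\xih\,\wh\cdot(\nh-\muh\bfn)\,\dsh$, bounded by $ch^{k_g}\norm{\xih}_{L^2(\Gaht)}\norm{\wh}_{L^2(\Gaht)}$ using $\norm{\nh-\bfn}_{L^\infty}\leq ch^{k_g}$ and $\norm{1-\muh}_{L^\infty}\leq ch^{k_g+1}$; the second displayed inequality then follows from the inverse estimate $\norm{\qh}_{H^1(\Gaht)}\leq ch^{-1}\norm{\qh}_{L^2(\Gaht)}$.

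\textbf{The tangential case.} When $\bfw\in\bfH_T^1\t$ one writes $\wh=\bfw^{-\ell}=\bfP\wh$. The gradient term then carries the extra factor $\bfP\mathcal{W}_h=\bigo(h^{k_g+1})$, so it is $ch^{k_g+1}\norm{\wh}_{L^2}\norm{\qh}_{H^1}$, which after the inverse estimate is $ch^{k_g}\norm{\wh}_{L^2}\norm{\qh}_{L^2}$; and since $\wh\cdot\bfn=(\bfw\cdot\bfng)^{-\ell}=0$, the normal-constraint term collapses to $\int_{\Gaht}\xih\,\wh\cdot(\nh-\bfn)\,\dsh$, still $ch^{k_g}\norm{\xih}_{L^2}\norm{\wh}_{L^2}$. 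Together with the equivalence $\norm{\wh}_{L^2(\Gaht)}\sim\norm{\bfw}_{L^2(\Gat)}$ this yields \eqref{eq: Geometric perturbations btilde tangent ENS}.

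\textbf{The extra-regularity estimates and the main obstacle.} The delicate point is \eqref{eq: Geometric perturbations btilde tangent extra regularity ENS} and \eqref{eq: Geometric perturbations a tangent extra regularity ENS}, where a clean $h^{k_g+1}$ is required while keeping the $H^1$ (resp. $H^2$) norms. In \eqref{eq: Geometric perturbations btilde tangent extra regularity ENS} the gradient term of $b^L$ is already $ch^{k_g+1}\norm{\wh}_{L^2}\norm{\qh}_{H^1}$ through $\bfP\mathcal{W}_h$; the obstruction is $\int_{\Gaht}\xih\,\wh\cdot\nh\,\dsh=\int_{\Gaht}\xih\,\wh\cdot(\nh-\bfn)\,\dsh$, which is only $\bigo(h^{k_g})$ pointwise. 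To squeeze out the extra power one has to exploit the structure of $\nh-\bfn$ (up to higher-order terms it is the element-wise tangential derivative of the $\bigo(h^{k_g+1})$ discrepancy between $\Gah$ and $\Ga$) and perform an element-wise integration by parts via \eqref{eq: integration by parts ENS}, whose edge contributions are killed by the superconvergent co-normal bound $\norm{\bfP[\bm{\mu}_h]}_{L^\infty(\mathcal{E}_h\t)}\leq ch^{2k_g}$ and whose remaining volume terms are absorbed using one extra derivative on $\bfw$ (hence the $H^2$-regularity) and the $H^1$-norm of $\xih$; making this precise is the technical heart of the lemma. Estimate \eqref{eq: Geometric perturbations a tangent extra regularity ENS} improves \eqref{eq: Geometric perturbations a 2 ENS} by one order in the same spirit: the mass part is $\bigo(h^{k_g+1})$ as before, and in the strain part every geometric defect, once the tangential projections carried by $E(\bfw)$ and $E(\bfv)$ are inserted, reduces to one of $\bfP\mathcal{W}_h$, $\bfP(\partial_h^{(1)}\mathcal{Q}_h)\bfP$, $\bfP\bfPh\bfP-\bfP$ or $\bfP-\mathcal{Q}_h$, each $\bigo(h^{k_g+1})$ by \cref{lemma: Geometric errors ENS} and \cref{Lemma: Bh estimates ENS}, with the $H^2$-regularity of $\bfw,\bfv$ needed to absorb the tangential derivatives inside these defects. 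Thus the main obstacle is precisely these two estimates — extracting the additional power of $h$ from the $\xih\,\wh\cdot\nh$ term and from the strain part of $\ah-\ab$ — since the naive pointwise bounds $\norm{\nh-\bfn}_{L^\infty}\leq ch^{k_g}$ and \eqref{eq: Geometric perturbations a 2 ENS} fall one order short; everything else is a routine consequence of the geometric toolbox once the forms are pulled back to $\Gaht$.
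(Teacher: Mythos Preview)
Your proposal is correct and follows essentially the same approach as the paper, which does not give a self-contained proof but merely cites \cite{elliott2024sfem,hansbo2020analysis,DziukElliott_acta,Hardering2022} for the stationary-surface arguments (and \cite{highorderESFEM,DziukElliott_L2,EllRan21} for $\Grm_{\gb}$), remarking that the constants are uniform in $t$. You have correctly reconstructed those arguments --- the pull-back via $\muh$ and $\Bhg$ reducing everything to the $L^{\infty}$-defect bounds of \cref{lemma: Geometric errors ENS} and \cref{Lemma: Bh estimates ENS} together with \eqref{eq: difference of velocities ENS} --- and you have rightly singled out \eqref{eq: Geometric perturbations btilde tangent extra regularity ENS}--\eqref{eq: Geometric perturbations a tangent extra regularity ENS} as the only non-routine step, where the extra power of $h$ is recovered exactly as in \cite[Lemma~4.2, Lemma~5.4]{hansbo2020analysis} via element-wise integration by parts and the superconvergence $\norm{\bfP[\bm{\mu}_h]}_{L^{\infty}(\mathcal{E}_h)}\leq ch^{2k_g}$.
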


\begin{lemma}[Geometric Perturbations II]\label{lemma: Geometric perturbations lagrange II ENS}
    Let $\bfw_h,\vh \in \bfH^1(\Gaht)$ and $\{\qh,\xi_h \}\in H^1(\Gaht)\times H^1(\Gaht)$. Then we have
    \begin{align}
    \label{eq: Geometric perturbations d ENS}
    |\Grm_{\db}(\wh,\vh)| &\leq ch^{k_g} \norm{\wh}_{H^1(\Gaht)} \norm{\vh}_{H^1(\Gaht)},\\
    \label{eq: Geometric perturbations bfd ENS}
    |\Grm_{\ddb}(\wh,\vh)| &\leq c h^{k_g}\norm{\wh}_{H^1(\Gaht)} \norm{\vh}_{H^1(\Gaht)},\\
    \label{eq: Geometric perturbations btilda ENS}
     |\Grm_{\btil}(\wh,\{q_h,\xih\})| &\leq ch^{k_g} \norm{\wh}_{L^2(\Gaht)}(\norm{\qh}_{H^1(\Gaht)} + \norm{\xi_h}_{L^2(\Gaht)}) \nonumber\\
     &\leq ch^{k_g-1} \norm{\bfw_h}_{L^2(\Gaht)}(\norm{\qh}_{L^2(\Gaht)} + h\norm{\xi_h}_{L^2(\Gaht)}).
\end{align}
\end{lemma}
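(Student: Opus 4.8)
The plan is to follow the standard route for geometric consistency estimates: transplant each \emph{lifted} (continuous) form back onto the discrete surface $\Gaht$ by means of the change of variables $\muh\,\dsh=\ds$ together with the gradient--lift identities \eqref{eq: gah to ga Bh ENS} and their scalar analogue $\nbg q_h^{\ell}(\pi(\cdot))=\Bhg^{-1}\nbgh q_h$, so that each $\Grm_{(\cdot)}$ becomes a single integral over $\Gaht$ whose integrand splits as a product of finite element quantities that are controlled in $L^2(\Gaht)$ times a \emph{perturbation factor} measuring the mismatch between the discrete and the exact geometry. The whole point is that every such perturbation factor is $\bigo(h^{k_g})$ in $L^\infty$: this is exactly what \cref{lemma: Geometric errors ENS} and \cref{Lemma: Bh estimates ENS} provide, via $\norm{\mathcal{W}_h}_{L^\infty(\Gaht)}\le ch^{k_g}$, $\norm{\partial_h^{(k)}(\bfP-\bfPh)}_{L^\infty(\Gaht)}\le ch^{k_g}$, $\norm{1-\muh}_{L^\infty(\Gaht)}\le ch^{k_g+1}$ and the $k=1$ cases of \eqref{eq: geometric errors 1 ENS}. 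Once this is in place, a Cauchy--Schwarz inequality together with the norm equivalences of \cref{lemma: norm equivalence ENS} delivers the claimed bounds.

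For \eqref{eq: Geometric perturbations d ENS} I would argue exactly as for \eqref{eq: Geometric perturbations a ENS}: writing $\nbg\whl(\pi(\cdot))=\nbgh\wh(\Bhg^{-1})^t$ and $\nbgcov\whl(\pi(\cdot))=\bfPg\nbgh\wh(\Bhg^{-1})^t$, and using the symmetry of $\bfPh$ (hence of $\Tilde{\mathcal{B}}_{\Gah}(\TrVel,\bfPh)$), one expands $\ddh(\TrVel;\wh,\vh)-\db(\TrVelL;\whl,\vhl)$ as a sum of terms in each of which exactly one factor is a geometric perturbation --- either $\mathcal{W}_h$ (from $\Bhg^{-1}$ versus $\bfPg$), or $\bfP-\bfPh$, or $1-\muh$, or the $\bigo(h^{k_g})$ discrepancy between $\Tilde{\mathcal{B}}_{\Gah}(\TrVel,\bfPh)$ and the inverse lift of $\Tilde{\mathcal{B}}_{\Ga}(\TrVelL,\bfPg)$ --- while all remaining factors are $\norm{\nbgh\wh}_{L^2(\Gaht)}$, $\norm{\nbgh\vh}_{L^2(\Gaht)}$ and $L^\infty$-bounded smooth tensors. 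I would stress here that no \emph{bare} curvature mismatch $\bfH-\bfH_h$ (which is only $\bigo(h^{k_g-1})$) enters, since the velocity enters $\Tilde{\mathcal{B}}$ only through its first derivatives, and the comparison of $\TrVel$ (the velocity of $\Gaht$) with $\TrVelL$ (the $\pi$-pushforward velocity of the lifted mesh) is by construction a purely geometric distortion of size $\bigo(h^{k_g})$. The bound \eqref{eq: Geometric perturbations bfd ENS} for $\Grm_{\ddb}$ then follows by the same computation applied to $\bfdh$ in place of $\ddh$, as licensed by \cref{remark: assertion bdfh ENS}.

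For \eqref{eq: Geometric perturbations btilda ENS} I would treat the three summands of $\bhtilda$ against the corresponding summands of $\btil(\TrVelL;\cdot)$ (as written out in \eqref{eq: lifted btil ENS}) separately. (i) For the inertia-type term, after the change of variables one is left with $\int_{\Gaht}\xih\,\wh\cdot\big(\matd\nh-(\matn\bfng)^{-\ell}\big)\,\dsh$ plus a lower-order $(1-\muh)$-contribution; since $(\matn\bfng)^{-\ell}=\matd\bfn+\big((\matn-\matdl)\bfng\big)^{-\ell}$, one splits $\matd\nh-(\matn\bfng)^{-\ell}=-\,\matd(\bfn-\nh)-\big((\matn-\matdl)\bfng\big)^{-\ell}$ and applies the $k=1$ case of the second estimate in \eqref{eq: geometric errors 1 ENS} together with \eqref{eq: relate matd to matdl ENS} (the latter in $L^\infty$, valid since $\bfng$ is smooth) to bound this in $L^\infty(\Gaht)$ by $ch^{k_g}$; Cauchy--Schwarz then gives $ch^{k_g}\norm{\wh}_{L^2(\Gaht)}\norm{\xih}_{L^2(\Gaht)}$. (ii) For the $g_h$-term one uses a $\Grm_{\gb}$-type argument as in \eqref{eq: Geometric perturbations g ENS} together with $\norm{\bfng-\nh}_{L^\infty(\Gaht)}\le ch^{k_g}$ to handle $\wh\cdot\nh$ versus $\whl\cdot\bfng$. (iii) For the $\mathcal{B}^{\mathrm{div}}$-term one uses the scalar gradient--lift identity $\nbg q_h^{\ell}(\pi(\cdot))=\Bhg^{-1}\nbgh q_h$ and the $\bigo(h^{k_g})$ convergence of $\mathcal{B}_{\Gah}^{\mathrm{div}}(\TrVel)$ to the inverse lift of $\mathcal{B}_{\Ga}^{\mathrm{div}}(\TrVelL)$, producing $ch^{k_g}\norm{\wh}_{L^2(\Gaht)}\norm{\nbgh\qh}_{L^2(\Gaht)}\le ch^{k_g}\norm{\wh}_{L^2(\Gaht)}\norm{\qh}_{H^1(\Gaht)}$. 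Collecting the three gives the first inequality in \eqref{eq: Geometric perturbations btilda ENS}, and the second follows from it by the inverse estimate $\norm{\qh}_{H^1(\Gaht)}\le ch^{-1}\norm{\qh}_{L^2(\Gaht)}$ on the quasi-uniform mesh, exactly as in \eqref{eq: Geometric perturbations btilde ENS}.

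The main obstacle is item (i) above: the appearance of the \emph{discrete material derivative of the elementwise normal} $\matd\nh$, which must be compared with the smooth $\matn\bfng$ --- this is precisely what forces the use of the $k=1$ case of $\norm{\partial_h^{(k)}(\bfn-\nh)}_{L^\infty(\Gaht)}\le ch^{k_g}$, the estimate newly established in \cref{lemma: Geometric errors ENS}. One must also be careful throughout that all these manipulations are carried out \emph{elementwise} --- since $\matd$ applied to a finite element vector field is neither tangential nor globally smooth --- and hence interpreted through the broken Sobolev norm equivalences of \cref{lemma: norm equivalence ENS}; and, as stressed above, that every velocity field enters only through first-order derivatives, so that no $\bigo(h^{k_g-1})$ curvature discrepancy is produced and, where a $\FlVel$-versus-$\TrVelL$ comparison is unavoidable, the bound \eqref{eq: difference of velocities ENS} keeps the contribution within the asserted order.
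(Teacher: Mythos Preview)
Your approach is correct in spirit but takes a genuinely different route from the paper. You proceed by \emph{direct} term-by-term comparison: transplant the lifted forms back to $\Gaht$ via $\muh\,\dsh=\ds$ and the $\Bhg$-identities, then peel off $\bigo(h^{k_g})$ perturbation factors from each summand. The paper instead uses an \emph{indirect transport-formula trick}: it computes $\frac{d}{dt}\int_{\Gaht}\nbgcovh\wh:\nbgcovh\vh$ (resp.\ $\frac{d}{dt}\int_{\Gaht}\wh\cdot\nbgh\qh$) in two ways --- once via the discrete transport formula \eqref{eq: Transport formulae 3.5 ENS} (resp.\ \eqref{eq: Transport formulae 3 discrete ENS}), and once by first lifting to $\Gat$ and applying the lifted transport formula \eqref{eq: Transport formulae 3.5 lift ENS} (resp.\ \eqref{eq: Transport formulae 3 lift ENS}). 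Equating the two expressions, the terms containing $\matd\wh,\matd\vh,\matd\qh$ cancel exactly, and $\Grm_{\db}$ (resp.\ the $\mathcal{B}^{\mathrm{div}}$-part of $\Grm_{\btil}$) emerges as a residual involving only $\mathcal{Q}_h^{\ell}-\bfPg$, $\matdl\mathcal{Q}_h^{\ell}$, $\bfPh-\bfPg$, $\matdl(\bfPh-\bfPg)$, $\mathcal{W}_h$, $\matdl\mathcal{W}_h$ --- quantities for which the $\bigo(h^{k_g})$ bounds are already available from \cref{lemma: Geometric errors ENS} and \cref{Lemma: Bh estimates ENS}.

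The practical difference is this: the paper's trick entirely sidesteps the pointwise $L^\infty$ comparison of the deformation tensors $\Tilde{\mathcal{B}}_{\Gah}(\TrVel,\bfPh)$ with $\big(\Tilde{\mathcal{B}}_{\Ga}(\TrVelL,\bfPg)\big)^{-\ell}$ and of $\mathcal{B}_{\Gah}^{\mathrm{div}}(\TrVel)$ with $\big(\mathcal{B}_{\Ga}^{\mathrm{div}}(\TrVelL)\big)^{-\ell}$, which in your approach you assert but do not justify. These comparisons are doable --- they reduce to $\matd(\bfP-\bfPh)=\bigo(h^{k_g})$, $E_h(\TrVel)-\big(E(\TrVelL)\big)^{-\ell}=\bigo(h^{k_g})$, etc.\ --- but the second of these requires controlling $\nbg\big((\TrVel)^{\ell}-\TrVelL\big)$ pointwise, and $(\TrVel)^{\ell}\neq\TrVelL$ (cf.\ \eqref{eq: lifted discrete velocity ENS}); the paper's toolkit gives this only implicitly via \eqref{eq: difference of velocities ENS} combined with interpolation estimates. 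The transport-formula route trades these velocity-tensor comparisons for purely metric ones. A minor slip in your item (i): the lifted form $\btil(\TrVelL;\cdot)$ carries $\matdl\bfng$ (see \eqref{eq: lifted btil ENS}), not $\matn\bfng$, so the relevant difference is directly $\matd\nh-\matd\bfn=-\matd(\bfn-\nh)$ without the $(\matn-\matdl)\bfng$ correction you introduce; your detour still lands at the right bound, just unnecessarily.
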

\begin{proof}
Let us start with \eqref{eq: Geometric perturbations d ENS}, then as mentioned in \cref{remark: assertion bdfh ENS}, \eqref{eq: Geometric perturbations bfd ENS} follows similarly.
Lifting to $\Gat$, using \eqref{eq: gah to ga Bh ENS} and adding and subtracting $\bfPg$ appropriately we see that
\begin{equation}
\begin{aligned}\label{eq: Geometric perturbations d  inside 1 ENS}
    &\frac{d}{dt}\int_{\Gaht} \nbgcovh \wh:\nbgcovh \vh  = \frac{d}{dt} \int_{\Gat} \frac{1}{\mu_h}\bfPh \nbg\whl:\nbg\vhl \bfB_h^t\bfB_h \\
    &= \frac{d}{dt} \int_{\Gat} (\bfPh - \bfPg)\nbg\whl:\nbg\vhl \mathcal{Q}_h^{\ell} +   \frac{d}{dt} \int_{\Gat} \nbgcov\whl:\nbgcov\vhl \mathcal{Q}_h^{\ell},\\
    \end{aligned}
\end{equation}
and using the transport formula \eqref{eq: Transport formulae 3.5 ENS} in the left-hand side and lifting once again as above, we obtain
    \begin{align}\label{eq: Geometric perturbations d  inside 2 ENS}
       \!\!\!\!\!\!\! &\frac{d}{dt}\int_{\Gaht} \nbgcovh \wh:\nbgcovh \vh = \int_{\Gaht} \nbgcovh \matd\wh:\nbgcovh \vh + \int_{\Gaht} \nbgcovh \wh:\nbgcovh \matd\vh + \ddh(\TrVel;\wh,\vh)\nonumber \\
       &=  \int_{\Gat} \nbgcov \matdl \whl:\nbgcov \vhl \mathcal{Q}_h^{\ell} + \int_{\Gat} \nbgcov \whl:\nbgcov \matdl\vhl \mathcal{Q}_h^{\ell} + \ddh(\TrVel;\wh,\vh),\nonumber\\
       &+ \int_{\Gat} (\bfPh-\bfPg)\nbg \matdl \whl:\nbg \vhl \mathcal{Q}_h^{\ell} + \int_{\Gat} (\bfPh-\bfPg)\nbg \whl:\nbg \matdl\vhl \mathcal{Q}_h^{\ell}.
    \end{align}
\noindent Using now the corresponding lifted transport formula in \cref{lemma: Transport formulae lifted ENS} we derive that
\begin{equation}
    \begin{aligned}\label{eq: Geometric perturbations d  inside 3 ENS}
       \frac{d}{dt} \int_{\Gat} \nbgcov\whl:\nbgcov\vhl \mathcal{Q}_h^{\ell} &=  \int_{\Gat} \nbgcov\matdl \whl:\nbgcov\vhl Q_h^{\ell} +  \int_{\Gat} \nbgcov \whl:\nbgcov \matdl \vhl Q_h^{\ell}\\
       &+  \int_{\Gat} \nbgcov\whl:\nbgcov\vhl \matdl \mathcal{Q}_h^{\ell}  + d(\TrVelL;\whl,\vhl \mathcal{Q}_h^{\ell}),
    \end{aligned}
\end{equation}
where $d(\TrVelL;\whl,\vhl \mathcal{Q}_h^{\ell}) = \int_{\Gat} \nbg \whl : \nbgcov \vhl \,\Tilde{\mathcal{B}}_{\Ga}(\TrVelL,\bfPg)\mathcal{Q}_h^{\ell} \,\ds + \int_{\Gat} \nbgcov \whl : \nbg \vhl \, \Tilde{\mathcal{B}}_{\Ga}(\TrVelL,\bfPg)  \mathcal{Q}_h^{\ell} \,\ds$;  see \cref{def: bilinear forms ENS} and \cref{lemma: Transport formulae app II ENS}. Furthermore, recalling also \eqref{eq: Transport formulae 3 app ENS} and\eqref{eq: mathcal b tensor} we see that
\begin{equation}
    \begin{aligned}\label{eq: Geometric perturbations d  inside 4 ENS}
       &\frac{d}{dt} \int_{\Gat}(\bfPh - \bfPg)\nbg\whl:\nbg\vhl \mathcal{Q}_h^{\ell} =  \int_{\Gat} \matdl(\bfPh - \bfPg)\nbg\whl:\nbg\vhl \mathcal{Q}_h^{\ell} \\
       &\int_{\Gat} (\bfPh - \bfPg)\nbg\matdl\whl:\nbg\vhl \mathcal{Q}_h^{\ell} + \int_{\Gat} (\bfPh - \bfPg)\nbg\whl:\nbg\matdl\vhl \mathcal{Q}_h^{\ell}   \\
       &+ \int_{\Gat} (\bfPh - \bfPg)\nbg\whl:\nbg\vhl \matdl \mathcal{Q}_h^{\ell} + \int_{\Gat}\nbg\whl:\nbg\vhl \mathcal{B}_{\Ga}(\TrVelL,\bfPh - \bfPg)\mathcal{Q}_h^{\ell}.
    \end{aligned}
\end{equation}
Combining \eqref{eq: Geometric perturbations d  inside 2 ENS}-\eqref{eq: Geometric perturbations d  inside 4 ENS} and \eqref{eq: Geometric perturbations d  inside 1 ENS}, recalling the geometric estimates in \cref{lemma: Geometric errors ENS,Lemma: Bh estimates ENS}, the bound for the discrete velocity \eqref{eq: discrete vel bound ENS} and \cref{Lemma: discrete bounds and coercivity results ENS} we deduce \vspace{-2mm}
\begin{equation*}
    \begin{aligned}
        &|\Grm_{\db}(\wh,\vh)| = \Big|  d(\TrVelL;\whl,\vhl (\mathcal{Q}_h^{\ell}-\bfPg))  + \int_{\Gat} \nbgcov\whl:\nbgcov\vhl \matdl \mathcal{Q}_h^{\ell} \\
        &+\int_{\Gat} (\bfPh - \bfPg)\nbg\whl:\nbg\vhl \matdl \mathcal{Q}_h^{\ell}+\int_{\Gat}\nbg\whl:\nbg\vhl\mathcal{B}_{\Ga}(\TrVelL,\bfPh - \bfPg) \mathcal{Q}_h^{\ell}\\
        &+\int_{\Gat} \matdl(\bfPh - \bfPg)\nbg\whl:\nbg\vhl \mathcal{Q}_h^{\ell} \Big| \leq ch^{k_g}\norm{\wh}_{H^1(\Gaht)}\norm{\vh}_{H^1(\Gaht)},
    \end{aligned}
\end{equation*}
which yields our desired estimate. 

Now moving onto \eqref{eq: Geometric perturbations btilda ENS} we can readily see from the geometric error \eqref{eq: geometric errors 1 ENS} and  geometric perturbations \eqref{eq: errors of domain of integration data ENS}, and \eqref{eq: Geometric perturbations g ENS} that the first two difference terms of $\Grm_{\btil}$, which we define as $\Grm_{\btil}^I(\wh,\{\qh,\xih\})$; see \cref{def: bilinear forms discrete ENS} or \eqref{eq: lifted btil ENS}, are bounded as desired, that is $\Grm_{\btil}^I(\wh,\{\qh,\xih\})\leq ch^{k_g}\norm{\wh}_{L^2(\Gah)}\norm{\xih}_{L^2(\Gah)}$.
We are left to bound the difference of the last terms, that is \vspace{-1mm}
\begin{align*}
    \int_{\Gaht}  \wh \cdot \mathcal{B}_{\Gah}^{\mathrm{div}}(\TrVel) \nbgh \qh \, \ds -
\int_{\Gat}  \whl \cdot \mathcal{B}_{\Ga}^{\mathrm{div}}(\TrVelL) \nbg \qhl \, \ds.
\end{align*}
We will follow similar arguments as before. Recall $\mathcal{W}_h= \frac{\bfB_h}{\mu_h} - \bfPg$ and that $\int_{\Gaht} \wh\cdot\nbgh\qh = \int_{\Gat}\whl\cdot\frac{1}{\muh}\bfB_h\nbg\qhl$. Now lifting to $\Gat$, adding and subtracting $\bfPg$ appropriately and using the transport formulae  \eqref{eq: Transport formulae 3 discrete ENS} (discrete) and \eqref{eq: Transport formulae 3 lift ENS} (lifted) (look also \eqref{eq: Transport formulae 2 app ENS}) we derive the following
    \begin{align}
        &\frac{d}{dt}\int_{\Gaht} \wh\cdot\nbgh\qh = \frac{d}{dt}\int_{\Gat} \whl\cdot\mathcal{W}_h\nbg\qhl + \frac{d}{dt}\int_{\Gat} \whl\cdot\nbg\qhl \nonumber\\
        & = \int_{\Gat} \matdl\whl\cdot\mathcal{W}_h\nbg\qhl + \int_{\Gat} \whl\cdot\mathcal{W}_h\nbg\matdl\qhl + \int_{\Gat} \whl\cdot\matdl(\mathcal{W}_h)\nbg\qhl  + \int_{\Gat}\whl\cdot\mathcal{W}_h\mathcal{B}_{\Ga}^{\diver}(\TrVelL)\nbg\qhl\nonumber\\
        &\quad + \int_{\Gat} \matdl\whl\cdot\nbg\qhl + \int_{\Gat} \whl\cdot\nbg\matdl\qhl + \int_{\Gat}\whl\cdot\mathcal{B}_{\Ga}^{\diver}(\TrVelL)\nbg\qhl \nonumber
        \\
        &= \int_{\Gaht} \matd\wh\cdot\nbgh\qh + \int_{\Gaht} \wh\cdot\nbgh\matd\qh + \int_{\Gaht}\wh\cdot\mathcal{B}_{\Gah}^{\diver}(\TrVel)\nbgh\qh.
    \end{align}
Now, since $(\matd\wh)^{\ell} = \matdl\whl$ and $\int_{\Gaht} \wh\cdot\nbgh\qh = \int_{\Gat}\whl\cdot\frac{1}{\muh}\bfB_h\nbg\qhl$ by lifting, we deduce that
\begin{align}\label{eq: Geometric perturbations btil inside last ENS}
    \Big| \int_{\Gaht}  \wh \cdot \mathcal{B}_{\Gah}^{\mathrm{div}}(\TrVel) \nbgh \qh  &-
\int_{\Gat}  \whl \cdot \mathcal{B}_{\Ga}^{\mathrm{div}}(\TrVelL) \nbg \qhl  \Big| \nonumber
\\
&\leq \Big| \int_{\Gat}\whl\cdot\mathcal{W}_h\mathcal{B}_{\Ga}^{\diver}(\TrVelL)\nbg\qhl\Big| + \Big|\int_{\Gat} \whl\cdot\matdl(\mathcal{W}_h)\nbg\qhl\Big|\nonumber\\
&\leq  ch^{k_g}\norm{\wh}_{L^2(\Gaht)}\norm{\qh}_{H^1(\Gaht)},
\end{align}
where we used the fact that $\norm{\partial_h^{(k)}\mathcal{W}_h}_{L^{\infty}(\Gat)} \leq ch^{k_g}$ from \eqref{eq: Bh estimates ENS}. Combining with the bound $\Grm_{\btil}^I(\wh,\{\qh,\xih\})\leq ch^{k_g}\norm{\wh}_{L^2(\Gaht)}\norm{\xih}_{L^2(\Gaht)}$, concludes the proof.
\end{proof}

\begin{remark}[Better Approximations]\label{remark: Better Approximations ENS}
A question may be whether it is possible to ensure higher order of approximation for the $\Grm$-terms in  \cref{lemma: Geometric perturbations lagrange II ENS}, considering higher regularity tangent vectors as in \cref{lemma: Geometric perturbations lagrange ENS}. This seems to be a bit complicated.

\noindent First, regarding an estimate for  $\Grm_{\btil}(\bfw^{-\ell},\{\qh,\xih\})$ it is not difficult to see that the answer is positive. Indeed, since with simple calculations we can see that in \eqref{eq: Geometric perturbations btil inside last ENS} we can replace $\partial_h^{(k)}(\mathcal{W}_h)$ by  $\partial_h^{(k)} (\bfPg\mathcal{W}_h)$ and therefore the second estimate in \eqref{eq: Bh estimates ENS}
would yield (after applying inverse ineq.) the better estimate
\begin{align}
  |\Grm_{\btil}(\bfw^{-\ell},\{q_h,\xih\})| &\leq ch^{k_g} \norm{\bfw}_{L^2(\Gat)}\norm{\{\qh,\xi_h\}}_{L^2(\Gaht)}.
\end{align}
Now, for a result similar to \eqref{eq: Geometric perturbations btilde tangent extra regularity ENS}, i.e.
\begin{align}\label{eq: Geometric perturbations btilda tangent extra regularity ENS}
     |\Grm_{\btil}(\bfw^{-\ell},\{q_h,\xih\})| &\leq ch^{k_g+1} \norm{\bfw}_{H^2(\Gat)}\norm{\{\qh,\xi_h\}}_{H^1(\Gaht)},
\end{align}
as seen in \cite[Lemma 6.1]{elliott2024sfem} we would need some sort of \emph{non-standard geometric approximation estimates} similar to \cite[Lemma 4.2]{hansbo2020analysis}, where now it should be w.r.t. the material derivatives, that is \vspace{-1mm}
\begin{equation}
\begin{aligned}\label{eq: non-Stand Geom ENS}
    |(\matd(\bfPh\cdot\bfn),\chi^{-\ell})_{L^2(\Gaht)}| \leq ch^{k_g+1}\norm{\chi}_{W^{1,1}(\Gat)}, \quad 
    |(\bfP\matd(\bfn-\nh),\chi^{-\ell})_{L^2(\Gaht)}| \leq ch^{k_g+1}\norm{\chi}_{W^{1,1}(\Gat)}.
\end{aligned}
\end{equation}
Such estimates have been derived in \cite{MavrakisThesis}, but since they are technical and since we will not use such high order approximations as in \eqref{eq: Geometric perturbations btilda tangent extra regularity ENS}, we do not present the proof.

\noindent Similarly, to find an estimate as in \eqref{eq: Geometric perturbations a tangent extra regularity ENS} for $\Grm_{\db}(\bfw,\bfv)$ (and therefore for $\Grm_{\ddb}(\bfw,\bfv)$) with $\bfw,\bfv \in \bfH_T^1\cap \bfH^2(\Ga)$, that is \vspace{-2mm}
\begin{align}
    |\Grm_{\db}(\bfw^{-\ell},\bfv^{-\ell})| &\leq ch^{k_g+1} \norm{\bfw}_{H^2(\Gat)} \norm{\bfv}_{H^2(\Gat)},
\end{align}
one needs the non-standard estimates \eqref{eq: non-Stand Geom ENS} again, but also very heavy computations that mimic the proof in \cite[Lemma 5.4, eq. (5.33)]{hansbo2020analysis}. Since, again, we will not use this estimate, we do not provide a proof, but do comment that it should be possible to show it. Lastly, we want to note that we can also improve \eqref{eq: Geometric perturbations a tangent extra regularity ENS} in the sense that $\bfw$ \underline{needs not to be tangent}, e.g. for $\matd\bfw^{-\ell} 
\in \bfH^1(\Gat) \cap \bfH^2(\Gat)$:
\begin{equation}
    |\Grm_{\ab}(\matd\bfw^{-\ell},\bfv^{-\ell})\leq ch^{k_g+1} \norm{\matdl\bfw}_{H^2(\Gat)} \norm{\bfv}_{H^2(\Gaht)}.
\end{equation}
This can be shown similarly to \cite[Lemma 5.4]{hansbo2020analysis}, where the improvement comes from the that we use the discrete  rate-of-strain tensor $E_h(\bfw^{-\ell})$ instead of $E_h(\bfw^{-\ell}) - (\bfw^{-\ell}\cdot\nh)\bfH_h$.
\end{remark}

\section{Ritz-Stokes Projection for Evolving Surface Finite Elements}\label{sec: Ritz-Stokes Projection ENS}
Ritz maps have been studied in the case of elliptic operators on stationary and evolving surfaces \cite{DziukElliott_L2,highorderESFEM,li2023optimal}, while Ritz-Stokes maps have been studied for stationary domains in the case of no geometric variational crimes, that is, $\Omega_h = \Omega$ \cite{AyusoPostNS2005,FrutosGradDivOseen2016,burman2009galerkin} and evolving domains \cite{rao2025optimal} using ALE FEM. In this section, we examine an extended version of the Ritz-Stokes map  $\mathcal{R}_h(\cdot)$ \cite[Section 5.4]{elliott2025unsteady} to evolving triangulated surfaces. The main difference, as we shall see, is that now the normal material derivative and the Ritz-Stokes projection do not commute, i.e. $\matd \mathcal{R}_h\bfu \neq \mathcal{R}_h\matd\bfu$. However, before we proceed, let us first present the Scott-Zhang interpolant, cf. \cite{elliott2024sfem,hansbo2020analysis}.

\subsection{Scott-Zhang Interpolant}\label{sec: SZ interpolant ENS}
To prove convergence and stability we need to make use of the Scott-Zhang interpolant, and specifically its projection and super-approximation properties. The proofs of  \cref{lemma: divfree L2 inner normal ENS}, \cref{corollary: divfree L2 norm ENS}, \cref{lemma: improved h1-ah bound ENS}, despite not stating them, rely on this projection since they follow exactly as in\cite{elliott2025unsteady}. 

Let $\Gaht$ be the interpolated $k_g$-order approximation surface of $\Gat$ cf. \cref{sec: Recap: Evolving Triangulated Surfaces}, and let $T\in\mathcal{T}_h\t$ a triangulated element. Then, for any $\bm{z} : \mathcal{G}_T\to \mathbb{R}^n$ (scalar : $n=1$  or vector-valued : $n=3$) with $\bm{z} \in L^2(\Ga)^n$, we define the Scott-Zhang interpolant to be the projection $\Ihz : (L^2(\Gaht))^n \to (S_{h,k_g}^{k}\t)^n$; see \cite[Section 4.4]{hansbo2020analysis}. Then, the following estimate holds
\begin{equation}
    \begin{aligned}\label{eq: bounds of Scott-Zhang interpolant ENS}
        \norm{\bm{z}^{-\ell} - \Ihz\bm{z}^{-\ell}}_{H^l(T\t)} \leq ch^{k+1-l}\norm{\bm{z}}_{H^{k+1}(\omega_{\Ttilde}\t)}, \quad   0\leq l \leq k+1, \ k\geq0,
    \end{aligned}
\end{equation}
where the constant $c>0$ is independent of the mesh parameter $h$ and time $t$, but depends on $\mathcal{G}_T$, and $\omega_{\Ttilde}\t$ is a union of neighbouring elements of the element $T\t$. The following stability bound also holds
\begin{equation}\label{eq: stability of Scott-Zhang interpolant ENS}
    \norm{\Ihz \bm{z}^{-\ell}}_{H^1(T)} \leq c\norm{\bm{z}^{-\ell}}_{H^1(\omega_{\Ttilde})} \leq c\norm{\bm{z}}_{H^1(\omega_{\Ttilde}^\ell)}.
\end{equation}
We omit further detail since the extension to evolving surfaces is trivial, but for further information about the construction of the Scott-Zhang interpolant on high-order surfaces, we refer to \cite{hansbo2020analysis,elliott2024sfem}.

Finally, we present super-approximation type of estimates and stability results for the Scott-Zhang interpolant. These type of results were proven in \cite{hansbo2020analysis} and made use of the projection property of $\Ihz(\cdot)$. We extend them to the evolving surface setting.
\begin{lemma}
    For finite element function $\bm{z}_h\in (S_{h,k_g}^{k}\t)^3$, $\chi \in [W^{3,\infty}(\Gat)]^3$ and $t \in [0,T]$ it holds
    \begin{align}
        \label{eq: super-approximation estimate 1 ENS}
            \sum_{T\in \Th\t}\norm{\nbgh(I - \Ihz)(\chi^{-\ell} \cdot \bm{z}_h)}_{L^2(T)} \leq c h\norm{\chi}_{W^{3,\infty}(\Gat)} \norm{\bm{z}_h}_{H^1(\Gaht)},\\[-4pt]
            \label{eq: super-approximation estimate 2 ENS}
            \sum_{T\in \Th\t}\norm{(I - \Ihz)(\chi^{-\ell} \cdot \bm{z}_h)}_{L^2(T)} \leq ch \norm{\chi}_{W^{3,\infty}(\Gat)} \norm{\bm{z}_h}_{L^2(\Gaht)},
    \end{align}
where constant $c$ is independent of $h,t$, but depends on $\mathcal{G}_T$.
The following $L^2$ stability estimate holds
    \begin{equation}
        \begin{aligned}
        \label{eq: super-approximation stability ENS}
            \norm{\Ihz(\chi^{-\ell} \cdot \bm{z}_h)}_{L^2(\Gaht)} \leq c\norm{\chi^{-\ell} \cdot \bm{z}_h}_{L^2(\Gaht)}.
        \end{aligned}
    \end{equation}
\end{lemma}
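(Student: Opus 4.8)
All three estimates are pointwise in time: they compare quantities living on $\Gaht$ and $\Gat$ at one fixed instant $t$, with constants that are only asked to be independent of $h$ and $t$. So the plan is to prove each of them for an \emph{arbitrary fixed} $t\in[0,T]$, by the standard super-approximation argument for the Scott--Zhang quasi-interpolant on a $k_g$-order parametric surface (as carried out in \cite{hansbo2020analysis} in the stationary case), and then to observe that every constant entering that argument can be taken uniform in $t$: the relevant ingredients are the local polynomial-reproduction (projection) property of $\Ihz$ on the patch $\omega_{\Ttilde}\t$, the $L^2$- and $H^1$-stability bounds \eqref{eq: bounds of Scott-Zhang interpolant ENS}--\eqref{eq: stability of Scott-Zhang interpolant ENS}, inverse inequalities on $\Gaht$, a Bramble--Hilbert local approximation estimate, and the chain-rule bound $\norm{\chi^{-\ell}}_{W^{s,\infty}(\Gaht)}\leq c\,\norm{\chi}_{W^{s,\infty}(\Gat)}$ for $s\leq 3$, which uses the $C^{m-1}$-regularity of $\pi(\cdot,t)$; all of these hold with $t$-independent constants because the evolving triangulation is uniformly quasi-uniform (\cref{sec: Recap: Evolving Triangulated Surfaces}) and the geometric quantities are uniformly bounded (\cref{lemma: Geometric errors ENS,Lemma: Bh estimates ENS}).

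For the element-wise core of \eqref{eq: super-approximation estimate 1 ENS}--\eqref{eq: super-approximation estimate 2 ENS}, I would fix $T\in\Th\t$ and let $\Pi_T\chi^{-\ell}$ be a local polynomial approximation of $\chi^{-\ell}$ on $\omega_{\Ttilde}\t$ (the mean value for \eqref{eq: super-approximation estimate 2 ENS}, the linear Taylor polynomial at the centroid for the sharper gradient bound \eqref{eq: super-approximation estimate 1 ENS}), so that $(\Pi_T\chi^{-\ell})\cdot\bm{z}_h$ lies in $S_{h,k_g}^{k}$ up to a remainder whose highest-order part is controlled by an inverse estimate. Since $\Ihz$ reproduces $S_{h,k_g}^{k}$ locally, $(I-\Ihz)$ applied to the leading term vanishes (resp. is controlled by the standard interpolation estimate), and one is left with $(I-\Ihz)\big((\chi^{-\ell}-\Pi_T\chi^{-\ell})\cdot\bm{z}_h\big)$; bounding this by stability of $\Ihz$ on $\omega_{\Ttilde}\t$, using the Leibniz rule together with $\norm{\chi^{-\ell}-\Pi_T\chi^{-\ell}}_{L^\infty(\omega_{\Ttilde})}\leq c\,h\,\norm{\chi}_{W^{1,\infty}(\Gat)}$ and $\norm{\nbg(\chi^{-\ell}-\Pi_T\chi^{-\ell})}_{L^\infty(\omega_{\Ttilde})}\leq c\,\norm{\chi}_{W^{1,\infty}(\Gat)}$ (resp.\ one order better for the Taylor polynomial), and absorbing $h^{-1}$ factors from the stability step into the gained powers of $h$, yields the claimed element-wise bounds with the right-hand sides $c\,h\,\norm{\chi}_{W^{3,\infty}(\Gat)}\norm{\bm{z}_h}_{L^2(\omega_{\Ttilde})}$ and $c\,h\,\norm{\chi}_{W^{3,\infty}(\Gat)}\norm{\bm{z}_h}_{H^1(\omega_{\Ttilde})}$ respectively; the $W^{3,\infty}$-regularity comfortably absorbs the Leibniz terms, including those involving second derivatives of the closest-point projection on the curved elements. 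Summing over $T$ and invoking the finite overlap of the Scott--Zhang patches (the left-hand sides being understood as the broken $L^2(\Th\t)$-norms, cf.\ \cref{sec: Finite Element Spaces and Discrete Material Derivative}) gives \eqref{eq: super-approximation estimate 1 ENS} and \eqref{eq: super-approximation estimate 2 ENS}.

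The stability estimate \eqref{eq: super-approximation stability ENS} is then immediate: it is just the $L^2$-stability of $\Ihz$, $\norm{\Ihz v}_{L^2(\Gaht)}\leq c\norm{v}_{L^2(\Gaht)}$ (a consequence of the element-wise stability and finite patch overlap), applied to $v=\chi^{-\ell}\cdot\bm{z}_h$. The only genuine obstacle in the whole argument is verifying that none of the constants degenerate as $t$ varies; this is precisely the point where the uniform quasi-uniformity of $\{\Th\t\}$ and the uniform (in $t$) geometric estimates of \cref{lemma: Geometric errors ENS,Lemma: Bh estimates ENS} are used, exactly as in the other evolving-surface perturbation lemmas of \cref{sec: Setup: evolving surface finite element method}, so that the stationary proof of \cite{hansbo2020analysis} transfers verbatim.
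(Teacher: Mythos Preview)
Your proposal is correct and aligns with the paper's approach: the paper does not give a detailed proof either, but simply states that these super-approximation and stability results were proven in \cite{hansbo2020analysis} (using the projection property of $\Ihz$) and that the extension to the evolving setting is immediate. Your write-up is in fact more explicit than the paper's, spelling out the local polynomial-subtraction argument and, crucially, identifying precisely why the constants are $t$-uniform (uniform quasi-uniformity of $\Th\t$ and the uniform geometric bounds of \cref{lemma: Geometric errors ENS,Lemma: Bh estimates ENS}); this is exactly the point the paper leaves implicit.
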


\subsection{Surface Ritz-Stokes Projection}

\begin{definition}[Modified Ritz-Stokes projection]\label{def: surface Ritz-Stokes projection ENS} 
\noindent Given $(\bfu,\{p,\lambda\}) \in \bfH^1(\Gat)\times(L^2(\Gat)\times L^2(\Gat))$ we define by $\mathcal{R}_h(\bfu)\t \in \bfV_h^{div}\t$, $\{\mathcal{P}_h(\bfu)\t,\mathcal{L}_h(\bfu)\t\} \in Q_h\t \times \Lambda_h\t$ the unique projection such that
\begin{align}
    \begin{cases}\label{eq: surface Ritz-Stokes projection ENS} 
     a_h(\mathcal{R}_h(\bfu),\vh) + \bhtil (\vh,\{\mathcal{P}_h(\bfu),\mathcal{L}_h(\bfu)\}) = a(\bfu,\vhl), \\
     \bhtil(\mathcal{R}_h(\bfu),\{\qh,\xi_h\}) = 0,
    \end{cases}
\end{align}
 for all $(\vh,\{q_h ,\xi_h \}) \in \bfV_h\t\times (Q_h\t \times \Lambda_h\t)$. The lift is then defined typically $\big(\mathcal{R}_h^{\ell}(\bfu)\t$, $\{\mathcal{P}_h^{\ell}(\bfu)\t,$ $ \,  \mathcal{L}_h^{\ell}(\bfu)\t\} \big) \in \bfV_h^{\ell}\t\times (Q_h^{\ell}\t \times \Lambda_h^{\ell}\t)$.
\end{definition}

\begin{definition}[Ritz-Stokes projection]\label{def: surface Ritz-Stokes projection std ENS} 
\noindent Given $(\bfu,\{p,\lambda\}) \in \bfH^1(\Gat)\times(L^2(\Gat)\times L^2(\Gat))$ we define by $\mathcal{R}_h^b(\bfu)\t = \mathcal{R}_h(\bfu,\{p,\lambda\})\t\in \bfV_h^{div}\t$, $\mathcal{P}^b_h(\bfu)\t = \mathcal{P}_h(\bfu,\{p,\lambda\})\t \in Q_h\t , \, \mathcal{L}^b_h(\bfu)\t  = \mathcal{L}_h(\bfu,\{p,\lambda\})\t \in \Lambda_h\t$ the unique projection such that
\begin{align}
    \begin{cases}\label{eq: surface Ritz-Stokes projection std ENS} 
     a_h(\mathcal{R}^b_h(\bfu),\vh) + \bhtil (\vh,\{\mathcal{P}^b_h(\bfu),\mathcal{L}^b_h(\bfu)\}) = a(\bfu,\vhl) + b^L(\vhl,\{p,\lambda\}), \\
     \bhtil(\mathcal{R}^b_h(\bfu),\{\qh,\xi_h\}) = 0,
    \end{cases}
\end{align}
 for all $(\vh,\{q_h ,\xi_h \}) \in \bfV_h\t\times (Q_h\t \times \Lambda_h\t)$. The lift is then defined in the standard way.
\end{definition}

\noindent As before, we omit the time-dependency of the bilinear forms and the Ritz-Stokes map, unless it is not clear from the context. We also omit, for convenience, the inverse lift extension $(\cdot)^{-\ell}$ notation \eqref{eq: inverse lift ENS}, unless specified, since it should be clear from the context if, e.g., a function $\bfu$ is defined on $\Gat$ or $\Gaht$.\\

\noindent The Ritz-Stokes maps above $\mathcal{R}_h^{(\cdot)}(\bfu)$ $\big(\!\!:= \mathcal{R}_h(\bfu)$ or $\mathcal{R}_h^{b}(\bfu)\big)$ are well-defined (see well-posedness and stability in \cite[Theorem 5.4]{elliott2024sfem} for stationary case) and the following \emph{a priori} stability estimates hold; 
\begin{equation}\label{eq: Stab estimates for Ritz-Stokes projection ENS} 
\begin{aligned}
    \norm{\mathcal{R}_h(\bfu)}_{\ah}^2 + \norm{\{\mathcal{P}_h(\bfu),\mathcal{L}_h(\bfu)\}}_{L^2(\Gaht)}^2 \leq c\norm{\bfu}_{H^1(\Gat)}^2 , \\
    \norm{\mathcal{R}^b_h(\bfu)}_{\ah}^2 + \norm{\{\mathcal{P}_h^b(\bfu),\mathcal{L}^b_h(\bfu)\}}_{L^2(\Gaht)}^2 \leq c\big( \norm{\bfu}_{H^1(\Gat)}^2 + \norm{\{p,\lambda\}}_{L^2(\Gat)}^2 \big).
    \end{aligned}
\end{equation}
The following lemma was again shown in \cite[Lemma 5.10]{elliott2025unsteady}. The extension to evolving surfaces is trivial with the use of \cref{lemma: Geometric perturbations lagrange ENS}.
\begin{lemma}[pseudo Galerkin orthogonality]\label{lemma: pseudo Galerkin orthogonality ENS}
For every $\bfu\in \bfH^1(\Ga)$ it holds that
\begin{align}
\label{eq: pseudo Galerkin orthogonality Ritz 1 UNS}
      \ah((\bfu -\mathcal{R}_h(\bfu), \vh) \leq Ch^{k_g}\norm{\vh}_{H^1(\Gah)} \quad  \text{ for all } \vh\in\bfV_h^{div},
    \end{align}
\begin{align}
   \label{eq: pseudo Galerkin orthogonality Ritz b UNS}
      \ah((\bfu -\mathcal{R}_h^b(\bfu), \vh)  + \bhtil(\vh,\{p,\lambda\} - \{\mathcal{P}^b_h(\bfu),\mathcal{L}^b_h(\bfu)\})\leq C^bh^{k_g-1}\norm{\vh}_{\ah} \quad  \text{ for all } \vh\in\bfV_h, 
\end{align}
where the constant $C = c\norm{\bfu}_{H^1(\Ga)}$ and $C^b = c(\norm{\bfu}_{H^1(\Ga)} + \norm{p}_{H^1} + \norm{\lambda}_{L^2(\Ga)})$ independent of $h,\,t$.
\end{lemma}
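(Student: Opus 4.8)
The plan is to play the defining relations of the two Ritz--Stokes projections against the continuous bilinear forms, so that the consistent parts cancel and only the geometric variational crimes survive; these are then controlled by the perturbation estimates of \cref{lemma: errors of geometric pert ENS} and \cref{lemma: Geometric perturbations lagrange ENS}. Throughout I abuse notation and write $\bfu,p,\lambda$ also for their inverse lifts $\bfu^{-\ell},p^{-\ell},\lambda^{-\ell}$ onto $\Gaht$, which satisfy $(\bfu^{-\ell})^{\ell}=\bfu$ and so on, and I fix $t$ and argue pointwise in time. Since every geometric estimate invoked below carries a constant independent of $t$ (cf. \cref{sec: Setup: evolving surface finite element method}), the computation is verbatim the stationary one of \cite[Lemma 5.10]{elliott2025unsteady}; I only record its skeleton.

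\emph{First estimate.} Let $\vh\in\bfV_h^{div}\t$. By \eqref{eq: discrete weakly divfree space beginning ENS} we have $\bhtil(\vh,\{\mathcal{P}_h(\bfu),\mathcal{L}_h(\bfu)\})=0$, so the first line of \eqref{eq: surface Ritz-Stokes projection ENS} reduces to $a_h(\mathcal{R}_h(\bfu),\vh)=a(\bfu,\vhl)$, whence
\[
\ah(\bfu-\mathcal{R}_h(\bfu),\vh)=\ah(\bfu,\vh)-a(\bfu,\vhl)=\big(\ahhat(\bfu,\vh)-\ahat(\bfu,\vhl)\big)+\big(\mh(\bfu,\vh)-\mb(\bfu,\vhl)\big).
\]
The mass term is $O(h^{k_g+1})\norm{\bfu}_{L^2(\Gat)}\norm{\vh}_{L^2(\Gaht)}$ by \eqref{eq: errors of domain of integration data ENS}, and the strain term is $O(h^{k_g})\norm{\bfu}_{H^1(\Gat)}\norm{\vh}_{H^1(\Gaht)}$ by \eqref{eq: Geometric perturbations a 1 ENS}, which is stated for arbitrary $\bfH^1$ arguments and hence applies to the non-polynomial $\bfu^{-\ell}$. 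Together with \cref{lemma: norm equivalence ENS} this gives the first assertion with $C=c\norm{\bfu}_{H^1(\Gat)}$.

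\emph{Second estimate.} Now let $\vh\in\bfV_h\t$ be arbitrary. Subtracting the first line of \eqref{eq: surface Ritz-Stokes projection std ENS} from $\ah(\bfu,\vh)+\bhtil(\vh,\{p,\lambda\})$ and rearranging yields
\[
\ah(\bfu-\mathcal{R}^b_h(\bfu),\vh)+\bhtil(\vh,\{p,\lambda\}-\{\mathcal{P}^b_h(\bfu),\mathcal{L}^b_h(\bfu)\})=\big(\ah(\bfu,\vh)-a(\bfu,\vhl)\big)+\Grm_{\bLb}(\vh,\{p,\lambda\}).
\]
The first bracket is again $O(h^{k_g})\norm{\bfu}_{H^1(\Gat)}\norm{\vh}_{H^1(\Gaht)}$ exactly as in the first step, while $\Grm_{\bLb}(\vh,\{p,\lambda\})$ is $O(h^{k_g})\norm{\vh}_{L^2(\Gaht)}(\norm{p}_{H^1(\Gat)}+\norm{\lambda}_{L^2(\Gat)})$ by the first bound in \eqref{eq: Geometric perturbations btilde ENS}; note that only the $L^2$-norm of the $\lambda$-slot enters, so $\lambda\in L^2$ suffices. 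Since $\vh\in\bfV_h\t$ lies in the non-solenoidal space, I then invoke the inverse-type Korn estimate \eqref{eq: coercivity and Korn's inequality Lagrange ENS}, $\norm{\vh}_{H^1(\Gaht)}\le ch^{-1}\norm{\vh}_{\ah}$, together with $\norm{\vh}_{L^2(\Gaht)}\le\norm{\vh}_{\ah}$, to turn the right-hand side into $C^bh^{k_g-1}\norm{\vh}_{\ah}$ with $C^b=c(\norm{\bfu}_{H^1(\Gat)}+\norm{p}_{H^1(\Gat)}+\norm{\lambda}_{L^2(\Gat)})$.

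\emph{Where the work is.} Analytically there is no genuine obstacle once the perturbation machinery of \cref{sec: Geometric Perturbations Errors} is in hand; the argument is pure bookkeeping. The only point needing care is that the geometric-crime estimates must be applied with the inverse lift of a smooth, non-finite-element datum in one argument slot, which is legitimate precisely because \cref{lemma: errors of geometric pert ENS} and \eqref{eq: Geometric perturbations btilde ENS} are formulated for general $\bfH^1$/$L^2$ functions rather than only for polynomials. The single expected loss is the extra power of $h$ in the second estimate: on $\bfV_h\t$ one can only pass from $\norm{\cdot}_{H^1(\Gaht)}$ to the energy norm $\norm{\cdot}_{\ah}$ through the inverse inequality \eqref{eq: coercivity and Korn's inequality Lagrange ENS}, which is exactly why the full $h^{k_g}$ rate is available only for $\mathcal{R}_h$ tested against $\bfV_h^{div}\t$. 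Finally, uniformity of $C$ and $C^b$ in $t$ is inherited from the time-uniform constants throughout \cref{sec: Setup: evolving surface finite element method}.
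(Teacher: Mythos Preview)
Your proof is correct and follows exactly the approach the paper indicates: it cites \cite[Lemma 5.10]{elliott2025unsteady} for the stationary case and remarks that the extension to evolving surfaces is trivial once the geometric perturbation estimates of \cref{lemma: Geometric perturbations lagrange ENS} are in hand. Your derivation---reducing each identity to $\Grm_{\ab}$ and (for the second estimate) $\Grm_{\bLb}$, then applying \eqref{eq: Geometric perturbations a 1 ENS}, \eqref{eq: errors of domain of integration data ENS}, \eqref{eq: Geometric perturbations btilde ENS}, and finally the inverse Korn estimate \eqref{eq: coercivity and Korn's inequality Lagrange ENS} to trade $\norm{\vh}_{H^1}$ for $h^{-1}\norm{\vh}_{\ah}$---is precisely this argument spelled out.
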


\begin{lemma}[Error Bounds modified Ritz-Stokes]\label{lemma: Error Bounds Ritz-Stokes ENS}
 Let  $(\bfu,\{p,\lambda\}) \in \bfH^1(\Gat)\times(H^1(\Gat)\times L^2(\Gat))$ the solution of \eqref{weak lagrange hom NV dir ENS} or \eqref{weak lagrange hom NV cov ENS} and $t \in [0,T]$. Then,  there exists a constant $c>0$ independent of $h$ and $t$, such that the following error bounds for the  Ritz-Stokes projection hold
 \begin{align}
     \label{eq: Error Bounds Ritz-Stokes ENS}
         \norm{\bfu - \mathcal{R}_h(\bfu)}_{\ah}  + \norm{\{\mathcal{P}_h(\bfu),\mathcal{L}_h(\bfu)\}}_{L^2(\Gaht)}&\leq ch^{r_u}\norm{\bfu}_{H^{k_u+1}(\Gat)},\\
         \label{eq: Error Bounds Ritz-Stokes L2 ENS}
         \norm{\bfPg(\bfu - \mathcal{R}_h^{\ell}(\bfu))}_{L^2(\Gat)} &\leq ch^{r_u+1}\norm{\bfu}_{H^{k_u+1}(\Gat)}.
 \end{align}
with $r_u = min\{k_u,k_g-1\}$ and $k_g \geq 2$ and $h \leq h_0$ with sufficiently small $h_0$.
 
\noindent If instead we assume that $\underline{k_{\lambda} = k_u}$ (that is $V_h= \Lambda_h$) we obtain the following improved estimates, 
\begin{align}
    \label{eq: Error Bounds Ritz-Stokes improved ENS}
         \norm{\bfu - \mathcal{R}_h(\bfu)}_{\ah}  + \norm{\{\mathcal{P}_h(\bfu),\mathcal{L}_h(\bfu)\}}_{L^2(\Gaht)\times H_h^{-1}(\Gaht)} &\leq ch^{\widehat{r}_u}\norm{\bfu}_{H^{k_u+1}(\Gat)},\\
         \label{eq: Error Bounds Ritz-Stokes improved L2 Lh ENS}
         \norm{\mathcal{L}_h(\bfu)}_{L^2(\Gaht)} \leq c(h^{\widehat{r}_u}+ h^{k_g-1})\norm{\bfu}_{H^{k_u+1}(\Gat)} &\leq ch^{r_u}\norm{\bfu}_{H^{k_u+1}(\Gat)},\\
          \label{eq: Error Bounds Ritz-Stokes improved H1 ENS}
        \norm{\bfPg(\bfu - \mathcal{R}_h^{\ell}(\bfu))}_{L^2(\Gat)}+ h\norm{(\bfu - \mathcal{R}_h(\bfu))}_{H^1(\Gaht)} &\leq ch^{\widehat{r}_u+1}\norm{\bfu}_{H^{k_u+1}(\Gat)},
         \\
         \label{eq: Error Bounds Ritz-Stokes L2 improved ENS}
       \norm{(\bfu - \mathcal{R}_h(\bfu))\cdot\bfng}_{L^2(\Gat)}&\leq ch^{\widehat{r}_u+1/2}\norm{\bfu}_{H^{k_u+1}(\Gat)}.
\end{align}
with $\widehat{r}_u = min\{k_u,k_g\}$, for $h \leq h_0$ with sufficiently small $h_0$. 
\end{lemma}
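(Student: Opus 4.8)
The plan is to observe that, for every fixed $t\in[0,T]$, the Ritz--Stokes system \eqref{eq: surface Ritz-Stokes projection ENS} is precisely the stationary mixed Stokes-type saddle-point problem on $\Gat$ (approximated by $\Gaht$) analysed in \cite[Theorem~5.4]{elliott2024sfem} and \cite[Section~5.4]{elliott2025unsteady}; the estimates then follow by rerunning those arguments pointwise in $t$, the only new point being that all constants must be $t$-independent. This holds because the evolving triangulation is uniformly quasi-uniform (\cref{sec: Recap: Evolving Triangulated Surfaces}) and the flow is smooth, so the discrete Korn and interpolation inequalities of \cref{lemma: Korn's type inequalities Lagrange ENS}, the Scott--Zhang and super-approximation bounds \eqref{eq: bounds of Scott-Zhang interpolant ENS}--\eqref{eq: super-approximation stability ENS}, the pseudo Galerkin orthogonality \cref{lemma: pseudo Galerkin orthogonality ENS}, and the geometric perturbation bounds of \cref{lemma: Geometric perturbations lagrange ENS} all carry $h$- and $t$-independent constants; the discrete inf-sup conditions transfer to evolving triangulated surfaces for the same reason. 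I will also use that, after the reduction of \cref{remark: about f source ENS} ($\bfeta=0$), the solution $\bfu$ of \eqref{weak lagrange hom NV dir ENS} or \eqref{weak lagrange hom NV cov ENS} is tangential, $\bfu\in\bfH^1_T\t$, so the higher-order \emph{tangential} perturbation estimates \eqref{eq: Geometric perturbations btilde tangent ENS}--\eqref{eq: Geometric perturbations a tangent extra regularity ENS} are available.

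For \eqref{eq: Error Bounds Ritz-Stokes ENS} I would argue by a C\'ea-type estimate. Take a divergence-preserving quasi-interpolant $\bfw_h\in\bfV_h^{div}\t$ of $\bfu^{-\ell}$, obtained from $\Ihz\bfu^{-\ell}$ corrected through the discrete inf-sup (using \eqref{eq: Geometric perturbations btilde tangent extra regularity ENS} to bound $\bhtil(\Ihz\bfu^{-\ell},\{\qh,\xih\})$), so that $\norm{\bfu^{-\ell}-\bfw_h}_{\ah}\le ch^{\widehat r_u}\norm{\bfu}_{H^{k_u+1}(\Gat)}$. Setting $\theta_h:=\bfw_h-\mathcal{R}_h(\bfu)\in\bfV_h^{div}\t$, the coercivity \eqref{ahhat coercivity ENS} gives $\norm{\theta_h}_{\ah}^2\le\ah(\theta_h,\theta_h)$, and \eqref{eq: surface Ritz-Stokes projection ENS} (its $\bhtil$-terms vanishing against $\theta_h$) turns this into $\ah(\bfw_h-\bfu^{-\ell},\theta_h)$ plus the consistency term $\Grm_{\ab}(\bfu^{-\ell},\theta_h)=\bigo(h^{k_g})$ from \eqref{eq: Geometric perturbations a ENS}, whence $\norm{\bfu-\mathcal{R}_h(\bfu)}_{\ah}\le ch^{\widehat r_u}\norm{\bfu}_{H^{k_u+1}(\Gat)}$. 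For the multipliers, the $L^2_{L^2}\times L^2_{L^2}$ discrete inf-sup applied to $\{\mathcal{P}_h(\bfu),\mathcal{L}_h(\bfu)\}$ against the momentum residual $\bhtil(\vh,\{\mathcal{P}_h(\bfu),\mathcal{L}_h(\bfu)\})=\ah(\bfu^{-\ell}-\mathcal{R}_h(\bfu),\vh)-\Grm_{\ab}(\bfu^{-\ell},\vh)$ is limited by $|\Grm_{\ab}(\bfu^{-\ell},\vh)|\le ch^{k_g}\norm{\bfu}_{H^1}\norm{\vh}_{H^1}\le ch^{k_g-1}\norm{\bfu}_{H^1}\norm{\vh}_{\ah}$ (inverse inequality \eqref{eq: coercivity and Korn's inequality Lagrange ENS}); the worse exponent $k_g-1$ prevails and gives $\norm{\{\mathcal{P}_h(\bfu),\mathcal{L}_h(\bfu)\}}_{L^2(\Gaht)}\le ch^{r_u}\norm{\bfu}_{H^{k_u+1}(\Gat)}$, $r_u=\min\{k_u,k_g-1\}$, which I record jointly with the velocity as the (pressure-limited) bound \eqref{eq: Error Bounds Ritz-Stokes ENS}. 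The $L^2$ velocity bound \eqref{eq: Error Bounds Ritz-Stokes L2 ENS} follows by Aubin--Nitsche duality: introduce the dual continuous Stokes problem on $\Gat$ with right-hand side $\bfPg(\bfu-\mathcal{R}_h^{\ell}(\bfu))$, invoke its $H^2\times H^1\times L^2$ elliptic regularity, and combine \cref{lemma: pseudo Galerkin orthogonality ENS} with the higher-order tangential perturbation bounds \eqref{eq: Geometric perturbations btilde tangent ENS}--\eqref{eq: Geometric perturbations a tangent extra regularity ENS} (now legitimate, as both $\bfu$ and the dual velocity are tangential) to gain one power of $h$.

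For $k_\lambda=k_u$ the improved estimates \eqref{eq: Error Bounds Ritz-Stokes improved ENS}--\eqref{eq: Error Bounds Ritz-Stokes L2 improved ENS} exploit that $\Lambda_h$ now has full order $k_u$, together with \cref{corollary: divfree L2 norm ENS} (the normal component of discrete divergence-free functions is $\bigo(h)$ smaller) and \cref{lemma: improved h1-ah bound ENS} (improved $H^1$-coercivity on $\bfV_h^{div}\t$). Concretely, measuring $\mathcal{L}_h(\bfu)$ in $H_h^{-1}(\Gaht)$ instead of $L^2$ removes the need for the lossy inverse inequality that produced the $k_g-1$ above and upgrades the multiplier bound to $\norm{\{\mathcal{P}_h(\bfu),\mathcal{L}_h(\bfu)\}}_{L^2\times H_h^{-1}}\le ch^{\widehat r_u}\norm{\bfu}_{H^{k_u+1}}$, i.e. \eqref{eq: Error Bounds Ritz-Stokes improved ENS}; testing the momentum equation with a function whose normal trace isolates $\mathcal{L}_h(\bfu)$ via the $L^2$-inf-sup then yields \eqref{eq: Error Bounds Ritz-Stokes improved L2 Lh ENS}, picking up $h^{\widehat r_u}$ from the controlled terms and $h^{k_g-1}$ from $\Grm_{\bLb}$. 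The $H^1$ bound in \eqref{eq: Error Bounds Ritz-Stokes improved H1 ENS} comes from \cref{lemma: improved h1-ah bound ENS} applied to $\theta_h\in\bfV_h^{div}\t$ plus interpolation, the improved $L^2$ velocity bound from rerunning the duality argument with the sharpened energy estimate, and the normal-trace bound \eqref{eq: Error Bounds Ritz-Stokes L2 improved ENS} by interpolating $L^2=[H^{-1},H^1]_{1/2}$: the constraint equation in \eqref{eq: surface Ritz-Stokes projection ENS} makes $(\bfu-\mathcal{R}_h^{\ell}(\bfu))\cdot\bfng$ approximately $L^2$-orthogonal to $\Lambda_h^{\ell}$, which with \eqref{eq: divfree L2 inner normal kl=ku ENS} gains an extra power in the $H^{-1}$ norm while the gradient bound controls the $H^1$ norm, the half power being the usual artefact of a normal-trace estimate.

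\textbf{Main obstacle.} I expect the delicate point to be the bookkeeping of which geometric consistency ("crime") terms improve to order $h^{k_g+1}$ — only those tested against tangential $H^2$ data, via \eqref{eq: Geometric perturbations btilde tangent extra regularity ENS}, \eqref{eq: Geometric perturbations a tangent extra regularity ENS} — versus those stuck at $h^{k_g}$, or at $h^{k_g-1}$ after an inverse inequality: this distinction is exactly what pins down $r_u=\min\{k_u,k_g-1\}$ in general and $\widehat r_u=\min\{k_u,k_g\}$ when $k_\lambda=k_u$, and what forces the use of the $H_h^{-1}$-norm for $\mathcal{L}_h(\bfu)$. The companion subtlety is the Aubin--Nitsche argument itself, where the dual problem lives on $\Gat$ while the projection lives on $\Gaht$, so the gain of a power of $h$ has to be extracted through the lift together with \cref{lemma: pseudo Galerkin orthogonality ENS} and the tangential perturbation estimates rather than from genuine Galerkin orthogonality.
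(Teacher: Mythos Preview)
Your proposal is correct and takes essentially the same approach as the paper: the paper's own proof simply states that the calculations are ``nearly identical to \cite[Lemma 5.11]{elliott2025unsteady}, where one uses material from the evolving setting instead when applicable,'' which is precisely your strategy of freezing $t$, invoking the stationary Ritz--Stokes analysis of \cite{elliott2024sfem,elliott2025unsteady}, and checking that all constants are $t$-independent via the uniform quasi-uniformity and the evolving versions of the Korn, inf--sup, interpolation, and geometric perturbation estimates. Your elaboration (C\'ea with a divergence-corrected interpolant, Aubin--Nitsche with the tangential perturbation bounds, the $H_h^{-1}$ route for $\mathcal{L}_h(\bfu)$ when $k_\lambda=k_u$) is a faithful unpacking of that reference, and your identification of the tangentiality of $\bfu$ (via the $\bm\eta=0$ reduction) as the mechanism unlocking \eqref{eq: Geometric perturbations btilde tangent ENS}--\eqref{eq: Geometric perturbations a tangent extra regularity ENS} matches the paper's hypothesis exactly.
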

\begin{proof}
Under the assumption that our solution solves \eqref{weak lagrange hom NV dir ENS} or \eqref{weak lagrange hom NV cov ENS} the calculations for the proof are nearly identical to \cite[Lemma 5.11]{elliott2025unsteady}, where one uses material from the evolving setting instead when applicable.
\end{proof}

\begin{lemma}[Error Bounds Ritz-Stokes]\label{lemma: Error Bounds Ritz-Stokes std ENS}
 Let  $(\bfu,\{p,\lambda\}) \in \bfH^1(\Gat)\times(H^1(\Gat)\times L^2(\Gat))$ the solution of \eqref{weak lagrange hom NV dir ENS} or \eqref{weak lagrange hom NV cov ENS} and $t \in [0,T]$. Then, we have the following error bounds for the  Ritz-Stokes projection,
 \begin{align}
     \label{eq: Error Bounds Ritz-Stokes std ENS}
         &\norm{\bfu - \mathcal{R}_h^b(\bfu)}_{\ah}  + \norm{\{p-\mathcal{P}_h^b(\bfu),\lambda-\mathcal{L}_h^b(\bfu)\}}_{L^2(\Gaht)}\leq  c h^m \big(\norm{\bfu}_{H^{k_u+1}(\Gat)}\nonumber\\
         &\qquad\qquad\qquad\qquad\qquad\qquad\qquad\qquad\qquad\qquad\quad \ \ \ \,+ \norm{p}_{H^{k_{pr}+1}(\Gat)} + \norm{\lambda}_{H^{k_{\lambda}+1}(\Gat)}\big),\\
         \label{eq: Error Bounds Ritz-Stokes L2 std ENS}
         &\norm{\bfPg(\bfu - \mathcal{R}_h^{b,\ell}(\bfu))}_{L^2(\Gat)} \leq ch^{m+1}\big(\norm{\bfu}_{H^{k_u+1}(\Gat)}+\norm{p}_{H^{k_{pr}+1}(\Gat)} + \norm{\lambda}_{H^{k_{\lambda}+1}(\Gat)} \big),
 \end{align}
with $m=min\{r_u,k_{pr}+1,k_{\lambda}+1\}$, $r_u = min\{k_u,k_g-1\}$ and $k_g \geq 2$, for $h \leq h_0$ with sufficiently small $h_0$ and with $c$ independent of $h$ and $t$. 
\end{lemma}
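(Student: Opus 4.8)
The plan is to view the Ritz--Stokes projection of \cref{def: surface Ritz-Stokes projection std ENS} as the finite element approximation (with geometric variational crimes and lifting between $\Gaht$ and $\Gat$) of the well-posed surface Stokes--Lagrange saddle-point problem whose exact solution is $(\bfu,\{p,\lambda\})$: with $F(\bfv):=\ab(\bfu,\bfv)+\bLb(\bfv,\{p,\lambda\})$, the triple $(\bfu,\{p,\lambda\})$ is the unique solution of $\ab(\bfz,\bfv)+\bLb(\bfv,\{\pi_1,\pi_2\})=F(\bfv)$, $\bLb(\bfz,\{q,\xi\})=0$ --- the last identity being $\bLb(\bfu,\{q,\xi\})=0$ from the solenoidal formulation \eqref{weak lagrange hom NV dir ENS} (resp.\ \eqref{weak lagrange hom NV cov ENS}) --- and $\mathcal{R}_h^b(\bfu)$ is its Galerkin analogue on $\Gaht$. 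Then I would run the classical Brezzi/Strang argument. Split $\bfu-\mathcal{R}_h^b(\bfu)=\rho_u+\theta_u$ with $\rho_u:=\bfu-\Ihz\bfu$, $\theta_u:=\Ihz\bfu-\mathcal{R}_h^b(\bfu)\in\bfV_h\t$, and likewise $\theta_p:=\Ihz p-\mathcal{P}^b_h(\bfu)$, $\theta_\lambda:=\Ihz\lambda-\mathcal{L}^b_h(\bfu)$, where $\Ihz$ is the Scott--Zhang interpolant into the appropriate space; subtracting $a_h(\Ihz\bfu,\vh)+\bhtil(\vh,\{\Ihz p,\Ihz\lambda\})$ from \eqref{eq: surface Ritz-Stokes projection std ENS} yields
\begin{align*}
 a_h(\theta_u,\vh)+\bhtil(\vh,\{\theta_p,\theta_\lambda\}) &= \big[\ab(\bfu,\vhl)-a_h(\Ihz\bfu,\vh)\big]+\big[\bLb(\vhl,\{p,\lambda\})-\bhtil(\vh,\{\Ihz p,\Ihz\lambda\})\big], \\
 \bhtil(\theta_u,\{\qh,\xih\}) &= \bhtil(\Ihz\bfu,\{\qh,\xih\}),
\end{align*}
for all $\vh\in\bfV_h\t$ and $\{\qh,\xih\}\in Q_h\t\times\Lambda_h\t$.

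Each bracket on the right is decomposed --- by inserting and removing the lifted forms $\ab((\Ihz\bfu)^\ell,\vhl)$, $\bLb((\Ihz\bfu)^\ell,\{\ldots\})$, and similarly for $p,\lambda$ --- into an interpolation part, controlled by \eqref{eq: bounds of Scott-Zhang interpolant ENS}, the norm equivalence \cref{lemma: norm equivalence ENS} and continuity of $\ab,\bLb$ (after integrating $\bLb$ by parts via \eqref{eq: integration by parts cont ENS} so that $p,\lambda$ enter through their $L^2$ interpolation errors), plus a geometric-crime part $\Grm_{\ab},\Grm_{\bLb}$ bounded by \cref{lemma: Geometric perturbations lagrange ENS} together with the $H^1$-stability of $\Ihz$. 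Since $m\le r_u<k_g$, the $O(h^{k_g})$ crime terms are harmless, and one obtains the consistency bounds $[\text{first bracket}]\le c\,h^m\mathcal{D}\,\norm{\vh}_{H^1(\Gaht)}$ and $|\bhtil(\Ihz\bfu,\{\qh,\xih\})|\le c\,h^m\norm{\bfu}_{H^{k_u+1}(\Gat)}\,\norm{\{\qh,\xih\}}_{L^2(\Gaht)}$, where $\mathcal{D}:=\norm{\bfu}_{H^{k_u+1}(\Gat)}+\norm{p}_{H^{k_{pr}+1}(\Gat)}+\norm{\lambda}_{H^{k_{\lambda}+1}(\Gat)}$ and $m=\min\{r_u,k_{pr}+1,k_{\lambda}+1\}$. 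By the uniform discrete \textsc{inf-sup} condition \cref{Lemma: Discrete inf-sup condition Gah Lagrange ENS} I pick $\bfz_h\in\bfV_h\t$ with $\bhtil(\bfz_h,\cdot)=\bhtil(\Ihz\bfu,\cdot)$ and $\norm{\bfz_h}_{H^1(\Gaht)}\le c\,h^m\norm{\bfu}_{H^{k_u+1}(\Gat)}$; then $\theta_u-\bfz_h\in\bfV_h^{div}\t$, so testing the first error equation with $\vh=\theta_u-\bfz_h$ kills the $\{\theta_p,\theta_\lambda\}$ term, and $\ah(\wh,\wh)\ge\norm{\wh}_{\ah}^2$ (from \eqref{ahhat coercivity ENS}), \eqref{ah boundedness ENS}, $\norm{\wh}_{\ah}\le c\norm{\wh}_{H^1(\Gaht)}$ give $\norm{\theta_u}_{\ah}\le c\,h^m\mathcal{D}$. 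Feeding this into \cref{Lemma: Discrete inf-sup condition Gah Lagrange ENS} applied to $\{\theta_p,\theta_\lambda\}$, via $\bhtil(\vh,\{\theta_p,\theta_\lambda\})=[\text{first bracket}]-a_h(\theta_u,\vh)$, yields $\norm{\{\theta_p,\theta_\lambda\}}_{L^2(\Gaht)}\le c\,h^m\mathcal{D}$. A triangle inequality with the interpolation errors of $\bfu,p,\lambda$ (and $k_u\ge r_u\ge m$) then proves \eqref{eq: Error Bounds Ritz-Stokes std ENS}.

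For the $L^2$ bound \eqref{eq: Error Bounds Ritz-Stokes L2 std ENS} I would use the Aubin--Nitsche duality technique established for the stationary Ritz--Stokes projection in \cite{elliott2024sfem,elliott2025unsteady}, applied to the \emph{tangential} part of the error. One sets up the adjoint surface Stokes--Lagrange problem on $\Gat$ with data $\bfPg\bfe_u$, $\bfe_u:=\bfu-\mathcal{R}^{b,\ell}_h(\bfu)$, namely $\ab(\bfv,\bfw)+\bLb(\bfv,\{r,\sigma\})=\mb(\bfPg\bfe_u,\bfv)$, $\bLb(\bfw,\{q,\xi\})=0$, and invokes $\bfH^2\times H^1\times H^1$ elliptic regularity, so that $(\bfw,\{r,\sigma\})$ is tangent, $H^2$-regular and bounded by $\norm{\bfPg\bfe_u}_{L^2(\Gat)}$. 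Then $\norm{\bfPg\bfe_u}_{L^2(\Gat)}^2=\ab(\bfe_u,\bfw)+\bLb(\bfe_u,\{r,\sigma\})$, and rewriting via the defining identity \eqref{eq: surface Ritz-Stokes projection std ENS} tested with the discrete Ritz--Stokes projection of $\bfw$ --- i.e.\ using the pseudo-Galerkin orthogonality \cref{lemma: pseudo Galerkin orthogonality ENS} --- leaves only products of the primal errors with the $O(h)$ approximation errors of the $H^2$-regular dual, together with crime remainders controlled by the \emph{sharper} tangent-function perturbation bounds \eqref{eq: Geometric perturbations a tangent extra regularity ENS}, \eqref{eq: Geometric perturbations btilde tangent extra regularity ENS} and the $O(h)$ normal-component estimate \cref{corollary: divfree L2 norm ENS} (recall $\bfe_u\cdot\bfng=-\mathcal{R}^{b,\ell}_h(\bfu)\cdot\bfng$). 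Assembling these gains one power of $h$ and produces $\norm{\bfPg\bfe_u}_{L^2(\Gat)}\le c\,h^{m+1}\mathcal{D}$.

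The main obstacle I expect is the bookkeeping in the duality step: pinning down exactly which parts of $\ab(\bfe_u,\bfw)+\bLb(\bfe_u,\{r,\sigma\})$ are annihilated by the definitions of $\mathcal{R}^b_h(\bfu)$ and of the dual projection, and verifying that \emph{every} surviving term --- in particular those involving the non-tangential components of the errors (the material-derivative and normal directions are not tangential) and the Lagrange multiplier $\sigma$ --- picks up the full extra factor of $h$, which forces one to exploit the tangentiality and $H^2$-regularity of the adjoint solution $\bfw$ together with the refined geometric perturbation estimates of \cref{lemma: Geometric perturbations lagrange ENS}. Everything else is routine mixed finite element analysis, the $h$- and $t$-uniformity of the \textsc{inf-sup} and coercivity constants being guaranteed by the quasi-uniformity of the evolving triangulation.
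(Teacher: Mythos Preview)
Your proposal is correct and follows the standard mixed finite element (Brezzi/Strang + Aubin--Nitsche) route that the paper also relies on; note that the paper does not give a self-contained proof of this lemma but defers to \cite[Lemma~5.11]{elliott2025unsteady} and the analogous argument for the modified projection in \cref{lemma: Error Bounds Ritz-Stokes ENS}. One small point of imprecision: the geometric crime $\Grm_{\ab}$ in \eqref{eq: Geometric perturbations a ENS} is stated with $\norm{\vh}_{H^1(\Gaht)}$, not $\norm{\vh}_{\ah}$, so when you test with $\theta_u-\bfz_h$ and invoke coercivity in $\norm{\cdot}_{\ah}$ you must absorb an inverse inequality, turning $h^{k_g}$ into $h^{k_g-1}$ --- this is precisely why $r_u=\min\{k_u,k_g-1\}$ (cf.\ the pseudo-Galerkin orthogonality \eqref{eq: pseudo Galerkin orthogonality Ritz b UNS}) rather than $\min\{k_u,k_g\}$, so your phrase ``the $O(h^{k_g})$ crime terms are harmless'' needs this one extra step. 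In the duality step the paper (and \cite{elliott2024sfem}) inserts the Lagrange interpolant $\Ih(\bfw)$ of the dual solution rather than its Ritz--Stokes projection, which slightly simplifies the bookkeeping you flag as the main obstacle; otherwise your plan matches.
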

\begin{lemma}[$L^{\infty}$ bound Ritz-Stokes projection; \cite{elliott2025unsteady} Lemma 5.13]\label{lemma: Linfty estimate Ritz-Stokes ENS}
    Given $\bfu \in \mathbf{W}^{2,\infty}(\Ga)$ and $\underline{k_\lambda = k_u}$ or $\underline{k_\lambda = k_u-1}$ with $k_g\geq 2$, there exists a positive constant $c$ independent of $h$ and $t$ such that the following estimate holds
    \begin{equation}\label{eq: W1infty estimate Ritz-Stokes ENS}
        \norm{\mathcal{R}_h^{(\cdot)} \bfu}_{L^{\infty}(\Gaht)} +  \norm{\nbgcovh\mathcal{R}_h^{(\cdot)} \bfu}_{L^{\infty}(\Gaht)} \leq c\norm{\bfu}_{W^{2,\infty}(\Gat)}.
    \end{equation}
\end{lemma}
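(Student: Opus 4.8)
At each fixed $t\in[0,T]$ the statement is precisely \cite[Lemma 5.13]{elliott2025unsteady}, so the plan is to run that argument pointwise in $t$ and then verify that the resulting constant is independent of $t$. Throughout, since $\nbgcovh=\bfPh\nbgh$ with $\bfPh$ an orthogonal projection, $\norm{\nbgcovh\bfw}_{L^\infty(\Gaht)}\le\norm{\nbgh\bfw}_{L^\infty(\Gaht)}$, so it suffices to bound $\norm{\mathcal{R}_h^{(\cdot)}\bfu}_{L^\infty(\Gaht)}+\norm{\nbgh\mathcal{R}_h^{(\cdot)}\bfu}_{L^\infty(\Gaht)}$.

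The first step is the splitting $\mathcal{R}_h^{(\cdot)}\bfu=I_h\bfu^{-\ell}+\theta_h$, where $I_h:C^0(\Gaht)\to\bfV_h\t$ is the degree-$k_u$ Lagrange interpolant and $\theta_h:=\mathcal{R}_h^{(\cdot)}\bfu-I_h\bfu^{-\ell}\in\bfV_h\t$. For the interpolant, the $W^{1,\infty}$-stability of Lagrange interpolation on the quasi-uniform triangulations $\Th\t$, together with the norm equivalences of \cref{lemma: norm equivalence ENS} transported between $\Gaht$ and $\Gat$ through the lift, yields $\norm{I_h\bfu^{-\ell}}_{L^\infty(\Gaht)}+\norm{\nbgh I_h\bfu^{-\ell}}_{L^\infty(\Gaht)}\le c\norm{\bfu}_{W^{1,\infty}(\Gat)}\le c\norm{\bfu}_{W^{2,\infty}(\Gat)}$. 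For the finite element remainder $\theta_h$ one invokes the elementwise inverse inequalities on $\Th\t$ --- with a $t$-independent constant, since $\{\Gaht\}_{t\in[0,T]}$ is uniformly quasi-uniform (\cref{sec: Recap: Evolving Triangulated Surfaces}) --- to reduce to the $L^2(\Gaht)$- and $H^1(\Gaht)$-norms of $\theta_h$, which one then estimates by the triangle inequality against $\bfu^{-\ell}$: the interpolation parts are of order $h^2$ and $h$ in $\norm{\bfu}_{H^2(\Gat)}$, and the Ritz-Stokes parts are handled by the low-regularity ($H^2(\Gat)$-data) versions of the error bounds of \cref{lemma: Error Bounds Ritz-Stokes ENS,lemma: Error Bounds Ritz-Stokes std ENS}, which follow from the same proofs on replacing the interpolation term by its first-order analogue (here $k_g\ge 2$ makes the geometric terms $h^{k_g-1},h^{k_g}$ at least of order $h$). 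When $k_\lambda=k_u$ the decisive input is the improved $H^1$ Ritz-Stokes error bound of \cref{lemma: Error Bounds Ritz-Stokes ENS} --- whose proof uses $\mathcal{R}_h^{(\cdot)}\bfu\in\bfV_h^{div}\t$ and the improved Korn inequality of \cref{lemma: improved h1-ah bound ENS} --- giving, in low-regularity form, $\norm{\bfu-\mathcal{R}_h^{(\cdot)}\bfu}_{H^1(\Gaht)}\le ch\norm{\bfu}_{H^2(\Gat)}$ with no loss of a power of $h$; hence $\norm{\theta_h}_{H^1(\Gaht)}\le ch\norm{\bfu}_{H^2(\Gat)}$, so that $\norm{\nbgh\theta_h}_{L^\infty(\Gaht)}\le c\norm{\bfu}_{H^2(\Gat)}\le c\norm{\bfu}_{W^{2,\infty}(\Gat)}$, while the coarser $L^2$ bound on $\theta_h$ disposes of the $L^\infty$ term. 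Adding the interpolant bound settles the case $k_\lambda=k_u$.

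The part expected to be delicate is twofold. First, the uniformity in $t$: it rests on the uniform quasi-uniformity of the evolving triangulation --- so that the inverse and interpolation estimates and the $W^{1,\infty}$-stability of $I_h$ carry $t$-independent constants for $h$ small --- together with the uniform-in-$t$ geometric perturbation bounds of \cref{lemma: errors of geometric pert ENS,lemma: Geometric perturbations lagrange ENS,lemma: Geometric perturbations lagrange II ENS}, which make the Ritz-Stokes stability and error constants $t$-independent. Second, and harder, is the case $k_\lambda=k_u-1$: there the improved Korn inequality --- and with it the direct $H^1$ Ritz-Stokes error bound used above --- is not available, so controlling $\norm{\nbgh\theta_h}_{L^2(\Gaht)}$, equivalently establishing $\norm{\bfu-\mathcal{R}_h^{(\cdot)}\bfu}_{H^1(\Gaht)}\le ch\norm{\bfu}_{H^2(\Gat)}$, has to be obtained in the \emph{super-parametric} regime, where $k_g$ is large enough that the geometric consistency error is of strictly higher order and one may argue essentially as in the variational-crime-free surface Stokes setting, using in addition the tangential structure of $\bfu$ and of the Ritz-Stokes error. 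This is the step where the argument departs from the stationary reference and where most of the adapted work lies.
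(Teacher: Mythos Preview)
The paper gives no proof of this lemma; it simply cites \cite[Lemma~5.13]{elliott2025unsteady} and relies on the fact that all ingredients (inverse inequalities, interpolation bounds, Ritz--Stokes error estimates, geometric perturbation bounds) carry $t$-independent constants in the evolving setting. Your overall strategy --- split off the Lagrange interpolant, apply inverse estimates to the finite element remainder $\theta_h$, and reduce to low-regularity Ritz--Stokes error bounds --- is the right one and matches what the reference does.

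There is, however, a genuine gap in your treatment of $k_\lambda=k_u-1$. You begin by replacing the target $\norm{\nbgcovh\mathcal{R}_h^{(\cdot)}\bfu}_{L^\infty}$ with the stronger $\norm{\nbgh\mathcal{R}_h^{(\cdot)}\bfu}_{L^\infty}$, and this self-imposed strengthening is exactly what forces you into the corner you describe: to bound $\norm{\nbgh\theta_h}_{L^2}$ you need the improved Korn inequality \eqref{eq: improved h1-ah bound ENS}, which is unavailable for $k_\lambda=k_u-1$, and you then conclude that super-parametric geometry is required. But the lemma only assumes $k_g\ge 2$, so this cannot be the intended route. The point is that the statement asks only for the \emph{covariant} gradient $\nbgcovh$, and for this the tangential Korn inequality \eqref{eq: korn inequality Ph ENS} --- valid for any $k_\lambda$ --- gives $\norm{\nbgcovh\theta_h}_{L^2(\Gaht)}\le c\norm{\theta_h}_{a_h}$ (after accounting for $\nbgh\bfPh$ via the bounded curvature). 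Combined with the low-regularity $a_h$-error bound $\norm{\theta_h}_{a_h}\le c(h+h^{k_g-1})\norm{\bfu}_{H^2(\Gat)}\le ch\norm{\bfu}_{H^2(\Gat)}$ (using $k_g\ge 2$) and the 2D inverse inequality $\norm{\cdot}_{L^\infty}\le ch^{-1}\norm{\cdot}_{L^2}$, you obtain $\norm{\nbgcovh\theta_h}_{L^\infty(\Gaht)}\le c\norm{\bfu}_{H^2(\Gat)}$ directly, with no need for super-parametric approximation. The $L^\infty$ bound on $\theta_h$ itself follows the same way from $\norm{\theta_h}_{L^2}\le\norm{\theta_h}_{a_h}$. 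In short: do not pass to the full tangential gradient; stay with $\nbgcovh$ and use \cref{lemma: Korn's inequality Ph ENS}.
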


\subsection{Material Derivative of the Ritz-Stokes map}\label{sec: Material Derivative of the Ritz-Stokes map}
Since our surface $\Gat$ also depends on time, we require further work to find interpolation error estimates for the material derivative, compared to \cite{elliott2025unsteady}. As mentioned before, this arises from the fact that the normal time derivative and the Ritz-Stokes projection do not commute, i.e. $\matd \mathcal{R}_h\bfu \neq \mathcal{R}_h\matd\bfu$. For convenience, we focus on the \emph{modified Ritz-Stokes} projection; then the general case follows with similar arguments.

We start by taking the time-derivative of \eqref{eq: surface Ritz-Stokes projection ENS} and use the transport formulae in \cref{lemma: Transport formulae discrete ENS,lemma: Transport formulae lifted ENS} to derive the following:
\begin{equation}
    \begin{aligned}\label{eq: time der Ritz Stokes eq ENS}
      a_h(\matd\bfu-\matd\mathcal{R}_h(\bfu),\vh) + \bhtil (\vh,\{\matd\mathcal{P}_h(\bfu),\matd\mathcal{L}_h(\bfu)\}) = \mathrm{\bfG}_1(\vh) + \mathrm{\mathbf{R}}_1(\vh)\\
      \bhtil(\matd\mathcal{R}_h(\bfu),\{\qh,\xi_h\}) = \mathrm{\bfG}_2(\{\qh,\xih\}) + \mathrm{\mathbf{R}}_2(\{\qh,\xih\}),
    \end{aligned}
\end{equation}
for $\vh \in \bfV_h$, $\{\qh,\xih\} \in Q_h\times \Lambda_h$, where recalling the notation in \eqref{eq: The G pert ENS} we have that \vspace{1mm}
    \begin{align}
        &\!\!\mathrm{\bfG}_1(\vh):= \ah(\matd\bfu,\vh)-\ab(\matdl\bfu,\vhl) + \gb(\TrVelL;\bfu,\vhl)-\gh(\TrVel;\bfu,\vh) + \ddb(\TrVelL;\bfu,\vhl)-\bfdh(\TrVel;\bfu,\vh)\nonumber\\    
        \label{eq: time der Ritz Stokes G1 ENS}
        &\!\! \qquad \quad\  \,= \Grm_{\ab}(\matd \bfu,\vh) + \Grm_{\gb}(\bfu,\vh) + \Grm_{\ddb}(\bfu,\vh),\\
        &\!\! \mathrm{\bfG}_2(\{\qh,\xih\}):= -\bhtilda(\TrVel;\bfu,\{\qh,\xi_h\}),\label{eq: time der Ritz Stokes G2 ENS}\\
        \label{eq: time der Ritz Stokes R1 ENS}
        &\!\! \mathrm{\mathbf{R}}_1(\vh):= \gh(\TrVel;\bfu-\mathcal{R}_h(\bfu),\vh) + \bfd_h(\TrVel;\bfu-\mathcal{R}_h(\bfu),\vh) - \bhtilda(\TrVel;\vh,\{\mathcal{P}_h(\bfu),\mathcal{L}_h(\bfu)\}),\\
        \label{eq: time der Ritz Stokes R2 ENS}
        &\!\! \mathrm{\mathbf{R}}_2(\{\qh,\xih\}):=  \bhtilda(\TrVel;\bfu-\mathcal{R}_h(\bfu),\{\qh,\xi_h\}).
    \end{align}  

\begin{lemma}
For sufficiently smooth $\bfu$, $\matn\bfu$, the following bounds hold:
    \begin{align}
    \label{eq: time der Ritz-Stokes inside 2 ENS}
        \!\!\mathrm{\bfG}_1(\vh) &\leq  c(h^{k_g}+h^{k_u+1})(\norm{\matn \bfu}_{H^1(\Gat)} + \norm{\bfu}_{H^1(\Gat)} )\norm{\vh}_{H^1(\Gaht)}, \\
        \label{eq: time der Ritz-Stokes inside 444 ENS}
        \!\!\mathrm{\mathbf{R}}_1(\vh) &\leq ch^{r_u}\norm{\bfu}_{H^{k_u+1}(\Gat)}\norm{\vh}_{\ah} ,\quad \mathrm{\mathbf{R}}_2(\{\qh,\xih\}) \leq  ch^{r_u}\norm{\bfu}_{H^{k_u+1}(\Gat)}\norm{\{\qh,\xih\}}_{L^2(\Gaht)}.
    \end{align}
    If furthermore we assume that $\underline{k_\lambda = k_u}$, we obtain
    \begin{align}
        \label{eq: time der Ritz-Stokes inside 4 ENS}
        \!\!\mathrm{\mathbf{R}}_1(\vh) \leq ch^{\widehat{r}_u}\norm{\bfu}_{H^{k_u+1}(\Gat)}\norm{\vh}_{\ah} ,\quad \mathrm{\mathbf{R}}_2(\{\qh,\xih\}) \leq  ch^{\widehat{r}_u}\norm{\bfu}_{H^{k_u+1}(\Gat)}\norm{\{\qh,\xih\}}_{L^2(\Gaht)}.
    \end{align}
  for all $\vh \in \bfV_h$, $\{\qh,\xih\} \in Q_h\times \Lambda_h$,  where $\widehat{r}_u = min\{k_u,k_g\}$ and $r_u = min\{k_u,k_g-1\}$.
\end{lemma}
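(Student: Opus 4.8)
The plan is to bound each of $\mathrm{\bfG}_1$, $\mathrm{\mathbf{R}}_1$, $\mathrm{\mathbf{R}}_2$ by reducing it to the geometric perturbation estimates of \cref{lemma: Geometric perturbations lagrange ENS,lemma: Geometric perturbations lagrange II ENS}, the boundedness estimates of \cref{Lemma: discrete bounds and coercivity results ENS}, and the Ritz--Stokes error bounds of \cref{lemma: Error Bounds Ritz-Stokes ENS}, treating the general case first and then the $k_\lambda=k_u$ upgrade. Throughout, lifting together with \cref{lemma: norm equivalence ENS} lets me pass freely between norms on $\Gat$ and $\Gaht$, and I use $(\matd\bfu)^{\ell}=\matdl\bfu$.

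For $\mathrm{\bfG}_1$ I start from the decomposition \eqref{eq: time der Ritz Stokes G1 ENS}, $\mathrm{\bfG}_1(\vh)=\Grm_{\ab}(\matd\bfu,\vh)+\Grm_{\gb}(\bfu,\vh)+\Grm_{\ddb}(\bfu,\vh)$. The last two terms are handled directly by \eqref{eq: Geometric perturbations g ENS} and \eqref{eq: Geometric perturbations bfd ENS}, contributing $h^{k_g+1}$ and $h^{k_g}$ against $\norm{\bfu}_{H^1(\Gaht)}$. For $\Grm_{\ab}(\matd\bfu,\vh)$, estimate \eqref{eq: Geometric perturbations a ENS} gives $ch^{k_g}\norm{\matd\bfu}_{H^1(\Gaht)}\norm{\vh}_{H^1(\Gaht)}$, so it remains to control $\norm{\matd\bfu}_{H^1(\Gaht)}$: here I split $\matdl\bfu=\matn\bfu+(\matdl\bfu-\matn\bfu)$ and use the material-derivative comparison \eqref{eq: relate matd to matdl ENS} together with the flow-velocity difference bound \eqref{eq: difference of velocities ENS}; the second exponent $h^{k_u+1}$ enters here, via the $L^2$-part of $a_h$ applied to $\matd\bfu-(\matn\bfu)^{-\ell}$. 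The higher-order norms of $\bfu$ appearing in these corrections are finite by the smoothness hypothesis, and collecting everything gives \eqref{eq: time der Ritz-Stokes inside 2 ENS}.

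For $\mathrm{\mathbf{R}}_1$ and $\mathrm{\mathbf{R}}_2$ in the general case I use \eqref{eq: time der Ritz Stokes R1 ENS} and \eqref{eq: time der Ritz Stokes R2 ENS}: bound the three pieces of $\mathrm{\mathbf{R}}_1$ by \eqref{g boundedness ENS}, \eqref{bfdh boundedness ENS} and \eqref{bhtilda boundedness ENS}, and $\mathrm{\mathbf{R}}_2$ by \eqref{bhtilda boundedness ENS}, inserting in every factor the Ritz--Stokes error bound \eqref{eq: Error Bounds Ritz-Stokes ENS}, $\norm{\bfu-\mathcal{R}_h(\bfu)}_{\ah}+\norm{\{\mathcal{P}_h(\bfu),\mathcal{L}_h(\bfu)\}}_{L^2(\Gaht)}\le ch^{r_u}\norm{\bfu}_{H^{k_u+1}(\Gat)}$, and using $\norm{\vh}_{L^2(\Gaht)}\le\norm{\vh}_{\ah}$. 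This gives \eqref{eq: time der Ritz-Stokes inside 444 ENS} at once.

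The $k_\lambda=k_u$ upgrade \eqref{eq: time der Ritz-Stokes inside 4 ENS} repeats this with the improved Ritz--Stokes bounds \eqref{eq: Error Bounds Ritz-Stokes improved ENS}--\eqref{eq: Error Bounds Ritz-Stokes improved L2 Lh ENS}: the $g_h$ and $\bfdh$ pieces of $\mathrm{\mathbf{R}}_1$, and all of $\mathrm{\mathbf{R}}_2$, then pick up $h^{\widehat{r}_u}$ directly from $\norm{\bfu-\mathcal{R}_h(\bfu)}_{\ah}\le ch^{\widehat{r}_u}\norm{\bfu}_{H^{k_u+1}(\Gat)}$. The one place where the crude bound \eqref{bhtilda boundedness ENS} does not suffice is the $\mathcal{L}_h(\bfu)$-contribution to $\bhtilda(\TrVel;\vh,\{\mathcal{P}_h(\bfu),\mathcal{L}_h(\bfu)\})$, i.e.\ $m_h(\mathcal{L}_h(\bfu),\vh\cdot\matd\nh)$ and $g_h(\TrVel;\mathcal{L}_h(\bfu),\vh\cdot\nh)$, since only $\norm{\mathcal{L}_h(\bfu)}_{H_h^{-1}(\Gaht)}$ is of order $h^{\widehat{r}_u}$ while $\vh\cdot\matd\nh$ and $\vh\cdot\nh$ are merely element-wise smooth. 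I would write the discrete multipliers in terms of the smooth objects they approximate — e.g.\ $\matd\nh=(\matn\bfng)^{-\ell}+\bigo(h^{k_g})$ by \eqref{eq: geometric errors 1 ENS} and \eqref{eq: relate matd to matdl ENS}, with the remainder absorbed cheaply — and then move $\vh$ times the smooth part into $\Lambda_h$ via the Scott--Zhang projection $\Ihz$, using its super-approximation and stability properties \eqref{eq: super-approximation estimate 1 ENS}--\eqref{eq: super-approximation stability ENS} to pair the result against $\mathcal{L}_h(\bfu)$ in the $H_h^{-1}$--$H^1$ duality, exactly as $\mathcal{L}_h(\bfu)$ is controlled in the proof of \cref{lemma: Error Bounds Ritz-Stokes ENS}. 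This last step — extracting the sharp rate $\widehat{r}_u$ for the $\bhtilda$-type consistency term when $k_\lambda=k_u$, because $\mathcal{L}_h(\bfu)$ is optimal only in the weak $H_h^{-1}$ norm yet is tested against element-wise-smooth quantities — is the main obstacle; the rest is routine Cauchy--Schwarz together with the uniform bounds on $\TrVel$, $\nh$, $\bfH_h$ and the norm equivalences.
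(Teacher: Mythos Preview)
Your proposal is correct and follows essentially the same route as the paper: the same perturbation lemmas for $\mathrm{\bfG}_1$ combined with \eqref{eq: relate matd to matdl ENS}, the same bilinear-form continuity bounds plus Ritz--Stokes estimates for $\mathrm{\mathbf{R}}_1,\mathrm{\mathbf{R}}_2$, and for the $k_\lambda=k_u$ upgrade the same Scott--Zhang insertion to pair $\mathcal{L}_h(\bfu)$ in the $H_h^{-1}$--$H^1$ duality. The only cosmetic difference is that the paper splits $\vh\cdot\matd\nh$ directly via $\matd\bfn$ (the discrete material derivative of the extended exact normal) rather than going through $(\matn\bfng)^{-\ell}$, but this is the same mechanism.
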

\begin{proof}
Starting with $\mathrm{\bfG}_1(\vh)$, using the perturbation estimates for $ \Grm_{\ab}$ \eqref{eq: Geometric perturbations a ENS}, $\Grm_{\gb}$  \eqref{eq: Geometric perturbations g ENS}, and $ \Grm_{\ddb}$, \eqref{eq: Geometric perturbations bfd ENS} and the bound \eqref{eq: relate matd to matdl ENS} we obtain
\begin{equation}
\begin{aligned}
| \mathrm{\bfG}_1(\vh)|  &\leq ch^{k_g}\norm{\matdl \bfu}_{H^1(\Gat)}\norm{\vh}_{H^1(\Gaht)} + ch^{k_g}\norm{\bfu}_{H^1(\Gaht)}\norm{\vh}_{H^1(\Gaht)}\\
&\leq c(h^{k_g}+h^{k_u+1})(\norm{\matn \bfu}_{H^1(\Gat)} + \norm{\bfu}_{H^1(\Gat)} )\norm{\vh}_{H^1(\Gat)}.
\end{aligned}
\end{equation}
Now, focusing on \eqref{eq: time der Ritz-Stokes inside 444 ENS}, using the bounds for the bilinear forms  \eqref{g boundedness ENS}, \eqref{bfdh boundedness ENS}, \eqref{bhtilda boundedness ENS} coupled with the Ritz-Stokes estimates \eqref{eq: Error Bounds Ritz-Stokes ENS} yields our desired result.

For the $\underline{k_\lambda = k_u}$ case, i.e. \eqref{eq: time der Ritz-Stokes inside 4 ENS},  the estimate for $\mathrm{\mathbf{R}}_2(\{\qh,\xih\})$ is evident if one considers the bound for $\bhtilda$ \eqref{bhtilda boundedness ENS} coupled with the Ritz-Stokes estimate \eqref{eq: Error Bounds Ritz-Stokes improved ENS} instead. 
The $\mathrm{\mathbf{R}}_1(\vh)$ estimate is a bit more involved, since we now want to utilize $L^2\times H^{-1}_h$ bounds for the pressure \eqref{eq: Error Bounds Ritz-Stokes improved ENS}. As before,  using  \eqref{g boundedness ENS}, \eqref{bfdh boundedness ENS} coupled with the Ritz-Stokes estimate \eqref{eq: Error Bounds Ritz-Stokes improved ENS} we obtain
\begin{equation}
    \begin{aligned}\label{eq: time der Ritz-Stokes inside 3 ENS}
| \mathrm{\mathbf{R}}_1(\vh)| \leq ch^{\widehat{r}_u}\norm{\bfu}_{H^{k_u+1}(\Ga)}\norm{\vh}_{\ah} + |\bhtilda(\TrVel;\vh,\{\mathcal{P}_h(\bfu),\mathcal{L}_h(\bfu)\})|.
    \end{aligned}
\end{equation}
We need to bound the last term appropriately. Applying \eqref{bhtilda boundedness ENS} yields

\begin{equation*}
|\bhtilda(\TrVel;\vh,\{\mathcal{P}_h(\bfu),\mathcal{L}_h(\bfu)\})|  \leq c\norm{\vh}_{\ah}\norm{\mathcal{P}_h(\bfu)}_{L^2(\Gaht)} +  |m_h(\mathcal{L}_h(\bfu),\vh\cdot\matd\nh) | + |g_h(\TrVel;\mathcal{L}_h(\bfu),\vh\cdot \nh)|,
\end{equation*}
where we see, recalling the definition of the $H_h^{-1}$ dual norm \eqref{eq: H^-1h definition ENS}, that 
\begin{equation*}
    \begin{aligned}
        &|m_h(\mathcal{L}_h(\bfu),\vh\cdot\matd\nh)| = m_h(\mathcal{L}_h(\bfu),\vh\cdot(\matd\nh-\matd\bfn)) + |m_h(\mathcal{L}_h(\bfu),\Ihz(\vh\cdot\matd\bfn))|\\
        &\qquad\qquad\qquad\qquad\qquad + |m_h(\mathcal{L}_h(\bfu),\Ihz(\vh\cdot\matd\bfn)-\vh\cdot\matd\bfn)|\\
        &\leq ch\norm{\vh}_{\ah}\norm{\mathcal{L}_h(\bfu)}_{L^2(\Gaht)} + c\norm{\vh}_{\ah}\norm{\mathcal{L}_h(\bfu)}_{H^{-1}_h(\Gaht)}\\
        &\leq c\norm{\vh}_{\ah}\norm{\mathcal{L}_h(\bfu)}_{H^{-1}_h(\Gaht)}, \quad\qquad\qquad  \text{ since $\norm{\xi_h}_{L^2(\Gaht)} \leq ch^{-1}\norm{\xi_h}_{H_h^{-1}(\Gaht)}$},
    \end{aligned}
\end{equation*}
where we also used the geometric errors \eqref{eq: geometric errors 1 ENS}, the Scott-Zhang super-approximation \eqref{eq: super-approximation estimate 2 ENS} and the fact that $\Ihz(\cdot)\in V_h$ coupled with the definition of the $H_h^{-1}$ dual norm \eqref{eq: H^-1h definition ENS}. Similarly one bounds $|g_h(\TrVel;\mathcal{L}_h(\bfu),\vh\cdot \nh)| \leq c\norm{\vh}_{\ah}\norm{\mathcal{L}_h(\bfu)}_{H^{-1}_h(\Gah)}$. Therefore, by \eqref{eq: Error Bounds Ritz-Stokes improved ENS} the above results yield
\begin{equation*}
    \begin{aligned}\label{eq: time der Ritz-Stokes inside 4 ENS}
        |\bhtilda(\TrVel;\vh,\{\mathcal{P}_h(\bfu),\mathcal{L}_h(\bfu)\})|   \leq ch^{\widehat{r}_u}\norm{\bfu}_{H^{k_u+1}(\Gat)}\norm{\vh}_{\ah},
    \end{aligned}
\end{equation*}
and so, going back to \eqref{eq: time der Ritz-Stokes inside 3 ENS}, finally gives our desired estimate.
\end{proof}

Now, before moving onto the interpolation error of the material derivative of the Ritz-Stokes projection, let us first consider the following auxiliary Stokes problem (similar to \cite[Lemma 5.10]{elliott2025unsteady}):
\begin{equation}
    \begin{aligned}\label{eq: new approx cont ENS}
        a(\matn\bfu,\bfv) \ + b^L(\bfv,\{\matn p,\matn\lambda\}) &= \mb(\bm{\mathcal{F}},\bfv) \ \ \ \ \ \quad\ \text{for all } \bfv\in \bfH^1(\Ga),\\
        b^L(\matn\bfu,\{q,\xi\})&=m(\bm{\ell},\{q,\xi\})) \ \ \  \text{ for all } \{q,\xi\}\in (L^2_0(\Ga)\times L^2(\Ga)),
    \end{aligned}
\end{equation}
where now 
\begin{equation*}
    \begin{aligned}
        &\mb(\bm{\mathcal{F}},\bfv) = \mb(\matn(\bff - \matn\bfu - \bfu\cdot\nbg^{(\cdot)}\bfu- \nbg\matn p ),\bfv) - \bfd(\FlVel; \bfu, \bfv) - \bfg(\FlVel; \bfu, \bfv) - \btil(\FlVel; \bfv,\{p,\lambda\}), \\
        &m(\bm{\ell},\{q,\xi\}) = \btil(\FlVel; \bfu,\{q,\xi\}).
    \end{aligned}
\end{equation*}
Then, if we let $(\bfu,\{p,\lambda\})$ and $(\matn\bfu,\{\matn p,\matn \lambda\})$ be the solutions of 
\eqref{weak lagrange hom NV dir ENS} (or \eqref{weak lagrange hom NV cov ENS}) and its time derivative, respectively,  by the transport formulae \cref{lemma: Transport formulae cont ENS}, it is  clear that $(\matn \bfu,\{0,0\})$ is the solution of \eqref{eq: new approx cont ENS}. We may then consider the standard finite element approximation of \eqref{eq: new approx cont ENS}, $(\matd\uh,\{\matd p_h,\matd\lambda_h\})$ which satisfies the following
\begin{equation}
    \begin{aligned}\label{eq: new approx ENS}
        a_h(\matd\bfu_h,\bfv_h) \ + b_h^L(\bfv_h,\{\matd p_h,\matd\lambda_h\}) &= \mb_h(\bm{\mathcal{F}}_h,\vh) \ \ \ \ \ \text{for all } \bfv_h\in \bfV_h,\\
        b_h^L(\matd\bfu_h,\{\qh,\xih\})&=m_h(\bm{\ell}_h,\{q_h,\xi_h\}) \ \ \  \text{ for all } \{q,\xi\}\in (Q_h\times \Lambda_h),
    \end{aligned}
\end{equation}
where now $\bm{\mathcal{F}}_h := \bm{\mathcal{F}}^{-\ell}$ and $m_h(\bm{\ell}_h,\{q_h,\xi_h\}):= -\bhtilda(\TrVel;\mathcal{R}_h(\bfu),\{q_h,\xi_h\})$. We select $\bm{\ell}_h$ as such,  so that  $\matd\uh - \matd\mathcal{R}_h(\bfu) \in \bfV_h^{div}$; see  \eqref{eq: time der Ritz Stokes eq ENS}. Refer also to \cref{remark: different lh in approx uh ENS} for other choices. Finally, set
\begin{equation*}
\bfV_h^{\bhtilda}\t := \{\vh \in \bfV_h\t: \, \bhtil(\bfv_h,\{\qh,\xi_h\}) =-\bhtilda(\TrVel;\mathcal{R}_h(\bfu),\{q_h,\xi_h\}), ~~\forall \, \{\qh,\xi_h\} \in Q_h\t\times\Lambda_h\t\}.  %
\end{equation*} 

\begin{lemma}\label{lemma: Error Bounds approx ENS}
Let  $\matn\bfu\in \bfH^1(\Gat)$ satisfy \eqref{eq: new approx cont ENS}, with $(\bfu,\{ p, \lambda\})$ as in \cref{lemma: Error Bounds Ritz-Stokes ENS}, and $(\matd\uh,\{\matd \ph,$ $\matd\lh \})\in \bfV_h\t\times (Q_h\t\times\Lambda_h\t)$ satisfy \eqref{eq: new approx ENS}. Then, for  $t\in [0,T]$ and $h \leq h_0$ with sufficiently small $h_0$, we have the following bounds,\vspace{-2mm}
\begin{align}
     \label{eq: Error Bounds new approx ENS}
         \norm{\bfP(\matd\bfu - \matd\uh)}_{L^2(\Gaht)} + h\norm{\matd\bfu - \matd\uh}_{\ah}\leq ch^{r_u+1}\sum_{j=0}^{\ell}\norm{(\matn)^j\bfu}_{H^{k_u+1}(\Gat)}.
 \end{align}\vspace{-3mm}
 
\noindent If furthermore,  we assume that $\underline{k_{\lambda} = k_u}$ (that is $V_h= \Lambda_h$) we obtain the following extra estimates 
\vspace{-2mm}
\begin{equation}
    \begin{aligned}\label{eq: Error Bounds new approx improved ENS}
                 \norm{(\matd\bfu - \matd\uh)\cdot\bfn}_{L^2(\Gaht)} \leq ch^{\ru}\sum_{j=0}^{\ell}\norm{(\matn)^j\bfu}_{H^{k_u+1}(\Gat)}, \\[-4pt]
         \norm{\matd\bfu - \matd\uh}_{H^{1}(\Gaht)}\leq ch^{r_u}\sum_{j=0}^{\ell}\norm{(\matn)^j\bfu}_{H^{k_u+1}(\Gat)},
    \end{aligned}
\end{equation}\vspace{-3mm}

\noindent where $r_u = min\{k_u,k_g-1\}$ and $\ru = min\{k_u+1/2,k_g\}$.
\end{lemma}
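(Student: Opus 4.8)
The key observation is that, by the transport formulae of \cref{lemma: Transport formulae cont ENS}, the pair $(\matn\bfu,\{0,0\})$ solves the continuous surface Stokes problem \eqref{eq: new approx cont ENS} while $(\matd\uh,\{\matd\ph,\matd\lh\})$ solves exactly its evolving-surface finite element discretization \eqref{eq: new approx ENS}, so that $\matd\uh$ is --- up to the $\bm{\ell}_h$-modification --- a Ritz--Stokes type approximation of $\matn\bfu$. The plan is then to run the error analysis along the lines of \cref{lemma: Error Bounds Ritz-Stokes std ENS} and \cite[Lemma 5.11]{elliott2025unsteady}, \cite[Theorem 5.4]{elliott2024sfem}, in three steps: (i) first establish an energy ($\norm{\cdot}_{\ah}$) estimate from a Galerkin-type identity plus the geometric perturbation bounds; (ii) upgrade it to the $\bfP$-tangential $L^2$ estimate by an Aubin--Nitsche duality argument; and (iii) in the case $k_\lambda=k_u$, combine the $\bfV_h^{div}$-refinements of \cref{corollary: divfree L2 norm ENS} and \cref{lemma: improved h1-ah bound ENS} to deduce the normal-component and $H^1$ bounds in \eqref{eq: Error Bounds new approx improved ENS}.

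For step (i) I would subtract \eqref{eq: new approx ENS} from the inverse-lifted \eqref{eq: new approx cont ENS}, test against $\vh\in\bfV_h$, and track the mismatch between the bilinear forms on $\Gat$ and $\Gaht$: this generates the perturbation functionals $\Grm_{\ab},\Grm_{\bLb},\Grm_{\btil},\Grm_{\ddb}$ of \cref{lemma: Geometric perturbations lagrange ENS,lemma: Geometric perturbations lagrange II ENS}, together with the consistency terms coming from $\bm{\mathcal{F}}-\bm{\mathcal{F}}^{-\ell}$ and from the choice $m_h(\bm{\ell}_h,\cdot)=-\bhtilda(\TrVel;\mathcal{R}_h(\bfu),\cdot)$, which is made exactly so that $\matd\uh-\matd\mathcal{R}_h(\bfu)\in\bfV_h^{div}$. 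Inserting the Scott--Zhang interpolant $\Ihz(\matn\bfu)$, using its approximation properties \eqref{eq: bounds of Scott-Zhang interpolant ENS}, the discrete inf--sup condition \cref{Lemma: Discrete inf-sup condition Gah Lagrange ENS} to control $\{\matd\ph,\matd\lh\}$, the coercivity \eqref{ahhat coercivity ENS} together with the Korn inequality \eqref{discrete Korn's inequality T nh ENS} on $\bfV_h^{\bhtilda}$, and the continuity bounds of \cref{Lemma: discrete bounds and coercivity results ENS}, I expect to arrive at $\norm{\matd\bfu-\matd\uh}_{\ah}\le ch^{r_u}\sum_{j=0}^{\ell}\norm{(\matn)^j\bfu}_{H^{k_u+1}(\Gat)}$. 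The sum over $j$ enters because the data $\bm{\mathcal{F}},\bm{\ell}$ of the auxiliary problem involve $\matn(\bff)$, $(\matn)^2\bfu$, $\matn p$ and the transport tensors, all of which are controlled --- via the PDE \eqref{weak lagrange hom NV dir ENS} and its time-derivative --- by $\norm{\bfu}_{H^{k_u+1}(\Gat)}$ and $\norm{\matn\bfu}_{H^{k_u+1}(\Gat)}$.

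For step (ii) I would use the standard duality argument: solve the adjoint surface Stokes problem on $\Gat$ with right-hand side $\bfPg(\matn\bfu-(\matd\uh)^{\ell})$, which enjoys $H^2\times H^1$ elliptic regularity; test the error identity against the inverse lift of this adjoint solution minus its Scott--Zhang interpolant, and invoke the energy bound from step (i). Since the adjoint solution is now $H^2$ and tangential, the higher-order perturbation estimates \eqref{eq: Geometric perturbations btilde tangent extra regularity ENS} and \eqref{eq: Geometric perturbations a tangent extra regularity ENS} become available on the test side, which is what gains the extra power of $h$ and yields $\norm{\bfP(\matd\bfu-\matd\uh)}_{L^2(\Gaht)}\le ch^{r_u+1}\sum_{j=0}^{\ell}\norm{(\matn)^j\bfu}_{H^{k_u+1}(\Gat)}$; throughout I would use \eqref{eq: relate matd to matdl ENS} to interchange $\matn$ and $\matdl$. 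For step (iii), writing $\matd\bfu-\matd\uh=(\matd\bfu-\matd\mathcal{R}_h(\bfu))+(\matd\mathcal{R}_h(\bfu)-\matd\uh)$ with the second piece in $\bfV_h^{div}$, \cref{corollary: divfree L2 norm ENS} gives $\norm{(\matd\uh-\matd\mathcal{R}_h(\bfu))\cdot\nh}_{L^2(\Gaht)}\le ch\norm{\matd\uh-\matd\mathcal{R}_h(\bfu)}_{L^2(\Gaht)}$, and combining with the Ritz--Stokes bound \eqref{eq: Error Bounds Ritz-Stokes L2 improved ENS}, an inverse inequality and the $\ah$-estimate produces the normal-component bound with the half-power $\ru=\min\{k_u+1/2,k_g\}$, while \cref{lemma: improved h1-ah bound ENS} converts the $\ah$-estimate into the $H^1$ estimate.

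The hard part, as flagged in \cref{remark: non tangential mat der ENS} and \cref{remark: Better Approximations ENS}, is that $\matn\bfu$ (hence $\matd\uh$) is \emph{not} tangential, so on the solution side of the error identity one is forced to use the merely $O(h^{k_g})$ perturbation estimates of \cref{lemma: Geometric perturbations lagrange II ENS} for $\Grm_{\ddb}$ and $\Grm_{\btil}$ rather than the sharp tangential bounds of \cref{lemma: Geometric perturbations lagrange ENS}; this is exactly why only the exponent $r_u$ (not $r_u+1$) survives in the energy estimate and in the $H^1$ bound, and why the normal-component estimate loses half a power. The most delicate bookkeeping will be verifying that the consistency error generated by the non-tangential data $\bm{\mathcal{F}},\bm{\ell}$ is still $O(h^{r_u})$ in the energy norm and $O(h^{r_u+1})$ in $L^2$, which needs a careful combination of the geometric error estimates of \cref{lemma: Geometric errors ENS}, the bounds on $\mathcal{W}_h,\mathcal{Q}_h$ from \cref{Lemma: Bh estimates ENS}, and the velocity-difference estimate \eqref{eq: difference of velocities ENS}.
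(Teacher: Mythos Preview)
Your three--step plan (energy estimate from a Galerkin identity, Aubin--Nitsche duality for the tangential $L^2$ bound, $\bfV_h^{div}$--splitting for the normal component) is sound and would succeed, but it is organized quite differently from the paper's proof and omits the one concrete identity that makes the constraint consistency work.

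The paper does not run the error equation from scratch. It first invokes the abstract C\'ea--type results of \cite[Theorem~6.11, Corollary~6.1]{elliott2024sfem} to obtain
\[
\norm{\matd\bfu-\matd\uh}_{\ah}+\norm{\matd\bfu-\matd\uh}_{H^1(\Gaht)}
\;\le\; ch^{k_g}\bigl(\norm{\bfu}_{H^2}+\norm{\matn\bfu}_{H^1}\bigr)
+\inf_{\vh\in\bfV_h^{\bhtilda}}\norm{\matd\bfu-\vh}_{\ah},
\]
so that the entire burden is shifted to bounding the constrained infimum over the \emph{affine} space $\bfV_h^{\bhtilda}$. This is done by a Fortin--type construction \cite[Lemma~6.12]{elliott2024sfem}: for any $\wh\in\bfV_h$ one produces $\zh\in(\bfV_h^{div})^\perp$ with $\zh+\wh\in\bfV_h^{\bhtilda}$ and $\norm{\zh}_{\ah}$ controlled via the inf--sup by the constraint residual $b_h^{L,*}(\matd\bfu-\wh,\cdot)-\bhtil(\matd\bfu,\cdot)-\bhtilda(\TrVel;\mathcal{R}_h(\bfu),\cdot)$. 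The key step you do not mention is how to bound the last two terms: the paper uses that $\tfrac{d}{dt}\bLb(\bfu,\{q,\xi\})=0$ together with the transport formula \eqref{eq: Transport formulae 3 lift ENS} to rewrite $\bLb(\matdl\bfu,\cdot)=-\btil(\TrVelL;\bfu,\cdot)$, so that the whole constraint residual collapses to $\Grm_{\bLb}(\matd\bfu,\cdot)+\Grm_{\btil}(\bfu,\cdot)+\mathrm{\mathbf R}_2(\cdot)$ and is $O(h^{r_u})$. Your direct error--equation route would ultimately require the very same bound on $\bhtil(\matd\bfu,\cdot)+\bhtilda(\cdot;\mathcal{R}_h(\bfu),\cdot)$, but you have not singled it out; without this identity the constraint consistency does not close. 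The tangential and normal $L^2$ bounds are then deferred to \cite[Theorems~6.14,\,6.16]{elliott2024sfem} rather than re--derived via duality.

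One genuine wrinkle in your step~(iii): decomposing $\matd\bfu-\matd\uh$ through $\matd\mathcal{R}_h(\bfu)$ and appealing to \eqref{eq: Error Bounds Ritz-Stokes L2 improved ENS} is circular in this lemma --- the bound you need on $(\matd\bfu-\matd\mathcal{R}_h(\bfu))\cdot\bfn$ is exactly the content of the \emph{next} lemma, \cref{lemma: Error Bounds der Ritz-Stokes ENS}, which in turn uses the present lemma. The paper proceeds in the opposite order: it obtains the normal bound \eqref{eq: Error Bounds new approx improved ENS} directly from the $\bfV_h^{\bhtilda}$ constraint on $\matd\uh$ (following \cite[Theorem~6.16]{elliott2024sfem}, using only the non--time--derivative Ritz--Stokes estimate $\norm{\bfu-\mathcal{R}_h\bfu}_{L^2}\le ch^{\widehat r_u+1/2}$ to control the $\bhtilda$ right--hand side), and only afterwards, in \cref{lemma: Error Bounds der Ritz-Stokes ENS}, splits through $\matd\uh$ to obtain the Ritz--Stokes time--derivative bounds.
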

\begin{proof}
We focus on the $\underline{k_{\lambda} = k_u}$ case, since  $\underline{k_{\lambda} = k_u-1}$ follows in a similar way, where one uses the appropriate estimates relevant to this case.
The following calculations are similar to \cite[Lemma 6.12, Theorem 6.13, Corollary 6.1]{elliott2024sfem}.

Considering \eqref{eq: new approx cont ENS} and \eqref{eq: new approx ENS}, we apply the results in \cite[Theorem 6.11,Corollary 6.1]{elliott2024sfem}, and with our choice of $\bm{\ell}_h$ and standard perturbation estimates, we readily see that
\begin{equation}\label{eq: Error Bounds new approx inside 1 ENS}
     \norm{\matd\bfu - \matd\uh}_{\ah}  + \norm{\matd\bfu - \matd\uh}_{H^{1}(\Gaht)} \leq ch^{k_g}(\norm{\bfu}_{H^2(\Gat)}+\norm{\matn\bfu}_{H^{1}(\Gat)}) + \!\!\!\!\inf_{\vh\in\bfV_h^{\bhtilda}}\norm{\matd\bfu - \vh}_{\ah}.
\end{equation}

\noindent Therefore, it remains to bound the last term in \eqref{eq: Error Bounds new approx inside 1 ENS}. Using the  $L^2\times L^2$ \textsc{inf-sup} \eqref{eq: discrete inf-sup condition Gah Lagrange ENS} and recalling \cite[Lemma 6.12]{elliott2024sfem} or \cite[Lemma 5.5]{Ferroni2016},  we see that for $\wh \in \bfV_h$ there exists a unique $\zh \in (\bfV_h^{div})^\perp$ such that \vspace{-3mm}
\begin{align}\label{eq: Error Bounds new approx inside 2 ENS}
 \norm{\zh}_{\ah} &\leq \sup_{\{\qh,\xi_h\}\in Q_h\times\Lambda_h}\frac{ b_h^{L,*}(\zh,\{\qh,\xi_h\})}{\norm{\{\qh,\xi_h\}}_{L^2(\Gah)}},\\
 \label{eq: Error Bounds new approx inside 3 ENS}
    b_h^{L,*}(\zh,\{\qh,\xi_h\}) &=  b_h^{L,*}(\matd\bfu -\wh,\{\qh,\xi_h\}) - b_h^{L,*}(\matd\bfu,\{\qh,\xi_h\}) - \bhtilda(\TrVel;\mathcal{R}_h(\bfu),\{q_h,\xi_h\}),
    \end{align}
where $b_h^{L,*}$ the adjoint of $\bhtil$; see \cite[Lemma 6.12]{elliott2024sfem}. Furthermore, using the fact that  $\frac{d}{dt}\bLb(\bfu,\{q,\xi\})=0$ for all $\{q,\xi\}\in L^2_0(\Gat)\times L^2(\Gat)$, the transport formula \eqref{eq: Transport formulae 3 lift ENS} and setting a new pressure variable $\qh' = \qhl + m_q\in L^2_0(\Gat)$ for $m_q = - \frac{1}{|\Ga|}\int_{\Gat}\qhl\,\ds$ (notice that $\nbg\qh' = \nbg\qhl$) we get
    \begin{align}
        &\big|-\bhtil(\matd\bfu,\{\qh,\xi_h\})- \bhtilda(\TrVel;\mathcal{R}_h(\bfu),\{q_h,\xi_h\})\big| \leq \big|-\bhtil(\matd\bfu,\{\qh,\xi_h\})+ \bLb(\matdl\bfu,\{\qh',\xi_h\})\big|\nonumber\\
        &\qquad\qquad\qquad\qquad\qquad\qquad\qquad\qquad\qquad\qquad\quad +\big|   -\bLb(\matdl\bfu,\{\qh',\xi_h\})- \bhtilda(\TrVel;\mathcal{R}_h(\bfu),\{q_h,\xi_h\}) \big|\nonumber \\
        &\leq \big|-\bhtil(\matd\bfu,\{\qh,\xi_h\}) + \bLb(\matdl\bfu,\{\qh,\xi_h\})\big| + \big|  \btil(\TrVelL;\bfu,\{\qhl,\xi_h^{\ell}\})  - \bhtilda(\TrVel;\mathcal{R}_h(\bfu),\{q_h,\xi_h\}) \big|\nonumber\\
        &\leq ch^{k_g-1}\norm{\matdl\bfu}_{L^2(\Gat)}(\norm{\qh}_{L^2(\Gaht)} + h\norm{\xi_h}_{L^2(\Gaht)}) + |\mathrm{\mathbf{R}_2}(\{\qh,\xih\})| +|\mathrm{G}_{\btil}(\bfu,\{\qh,\xih\})|\nonumber\\
        \label{eq: Error Bounds new approx inside est for Ib3 ENS}
        &\leq ch^{r_u}(\norm{\matn\bfu}_{L^2(\Gat)}+ \norm{\bfu}_{H^{k_u+1}(\Gat)})(\norm{\qh}_{L^2(\Gaht)} + h\norm{\xi_h}_{L^2(\Gaht)}),
    \end{align}
where for the second inequality we used the second line of the perturbation estimate \eqref{eq: Geometric perturbations btilde ENS}, while in the third inequality we utilized the bound of $\mathrm{\mathbf{R}_2}$ in \eqref{eq: time der Ritz-Stokes inside 4 ENS} and the perturbation estimate \eqref{eq: Geometric perturbations btilda ENS}.

Then using the fact that $b_h^{L,*}$ is the adjoint of $\bhtil$, by integration by parts \eqref{eq: integration by parts ENS}, going back to \eqref{eq: Error Bounds new approx inside 2 ENS}, we may derive the following, after taking the infimum over all $\wh \in \bfV_h$, 
\begin{equation}\label{eq: Error Bounds new approx inside 4 ENS}
    \norm{\zh}_{\ah} \leq \inf_{\wh\in\bfV_h}\norm{\matd\bfu - \wh}_{\ah}+ch^{k_g-1}(\norm{\matn\bfu}_{L^2(\Gat)}+\norm{\bfu}_{H^{k_u+1}(\Ga)}).
\end{equation}

\noindent Similarly to the above calculations, if we use the $L^2\times H_h^{-1}$\textsc{inf-sup}  \eqref{eq: L^2 H^{-1} discrete inf-sup condition Gah Lagrange ENS} instead we obtain
\begin{equation}\label{eq: Error Bounds new approx inside 5 ENS}
     \norm{\zh}_{H^{1}(\Gaht)} \leq \inf_{\wh\in\bfV_h}\norm{\matd\bfu - \wh}_{H^{1}(\Gaht)}+ch^{k_g-1}(\norm{\matn\bfu}_{L^2(\Gat)}+\norm{\bfu}_{H^{k_u+1}(\Gat)}).
\end{equation}
\noindent Setting now  $ \vh = \zh + \wh \in \bfV_h^{\bhtilda}$ by a simple triangles inequality we then obtain
\begin{equation*}
    \begin{aligned}
       \norm{\matd\bfu - \vh}_{\ah} &\leq \norm{\matd\bfu - \wh}_{\ah} + \norm{\zh}_{\ah}, \\
       \norm{\matd\bfu - \vh}_{H^1({\Gaht})} &\leq \norm{\matd\bfu - \wh}_{H^1({\Gaht})} + \norm{\zh}_{H^1({\Gaht})},
    \end{aligned}
\end{equation*}
from which, taking the infimum over all $\wh \in \bfV_h$ and $ \vh\in \bfV_h^{\bhtilda}$, considering \eqref{eq: Error Bounds new approx inside 4 ENS}, \eqref{eq: Error Bounds new approx inside 5 ENS} and inserting it into \eqref{eq: Error Bounds new approx inside 1 ENS}, the bound in \eqref{eq: Error Bounds new approx ENS}, \eqref{eq: Error Bounds new approx improved ENS} for the $a_h$-, $H^1$-norms follow (e.g. $\wh = \Ih(\matd\bfu)$ the standard Lagrange interpolant). Having these results, the perturbation estimates \cref{lemma: Geometric perturbations lagrange ENS,lemma: Geometric perturbations lagrange II ENS}, and Ritz-Stokes estimate $\norm{\bfu-\mathcal{R}_h\bfu}_{L^2(\Gaht)}\leq ch^{\widehat{r}_u+1/2}$ \eqref{eq: Error Bounds Ritz-Stokes improved H1 ENS}, \eqref{eq: Error Bounds Ritz-Stokes L2 improved ENS}, the rest of the estimates, involving the tangential and normal $L^2$ norm, are obtained almost identically as in \cite[Theorem 6.14, 6.16]{elliott2024sfem}.
\end{proof}

\begin{remark}\label{remark: different lh in approx uh ENS}
Notice that for $\underline{k_\lambda=k_u}$, the convergence is worse than that proved in \cite{elliott2024sfem}. This is due to the fact that $\matn\bfu \cdot\bfng \neq 0$; see also \cref{remark: non tangential mat der ENS} for further details.   For the approximation $\matd\uh$ in \eqref{eq: new approx cont ENS} we could have chosen $\bm{\ell}_h:=\bhtilda(\TrVel;\bfu^{-\ell},\{\qh,\xih\})$ instead. Then, the calculations would change, since now $\matd\uh - \matd\mathcal{R}_h(\bfu) \notin \bfV_h^{div}$ (see also \eqref{eq: time der Ritz-Stokes inside 5 ENS} below where we use this fact), but ultimately the convergence would still not improve.
\end{remark}

Now we are ready to prove the error bounds for the time derivative of the Ritz-Stokes projection.

\begin{lemma}[Error Bounds in Material Derivative modified Ritz-Stokes map]\label{lemma: Error Bounds der Ritz-Stokes ENS}
 Let  $(\bfu,\{p,\lambda\}) \in \bfH^1(\Gat)\times(H^1(\Gat)\times L^2(\Gat))$ the solution of  \eqref{weak lagrange hom NV dir ENS} or \eqref{weak lagrange hom NV cov ENS} and $t \in [0,T]$. Then, for $\underline{k_{\lambda} = k_u}$ or $\underline{k_{\lambda} = k_u-1}$ and $k_g \geq 2$, the following error bounds for the normal discrete time derivative of the  Ritz-Stokes projection hold,\vspace{-3mm}
 \begin{align}
     \label{eq: Error Bounds der Ritz-Stokes ENS}
         \!\!\!\!\!\!\!\!\norm{\matd(\bfu - \mathcal{R}_h(\bfu))}_{\ah}  + \norm{\{\matd\mathcal{P}_h(\bfu),\matd\mathcal{L}_h(\bfu)\}}_{L^2(\Gaht)\times L^2(\Gaht)}&\leq ch^{r_u}\sum_{j=0}^{1}\norm{(\matn)^j\bfu}_{H^{k_u+1}(\Gat)}.
 \end{align}
 \vspace{-4mm}
 
\noindent  Furthermore if specifically $\underline{k_{\lambda} = k_u}$, the subsequent bounds also hold
\vspace{-2mm}
 \begin{align}
      \label{eq: Error Bounds der Ritz-Stokes H1 ENS}
         \norm{\matd(\bfu - \mathcal{R}_h(\bfu))}_{H^1(\Gaht)} &\leq ch^{r_u}\sum_{j=0}^{1}\norm{(\matn)^j\bfu}_{H^{k_u+1}(\Gat)},\\[-4pt]
         \label{eq: Error Bounds der Ritz-Stokes L2 ENS}
         \norm{\bfPg\matdl(\bfu - \mathcal{R}_h^{\ell}(\bfu))}_{L^2(\Gat)} + \norm{\matdl(\bfu - \mathcal{R}_h^{\ell}(\bfu))\cdot\bfng}_{L^2(\Gat)} &\leq ch^{\widehat{r}_u}\sum_{j=0}^{1}\norm{(\matn)^j\bfu}_{H^{k_u+1}(\Gat)},
 \end{align}
with $r_u = min\{k_u,k_g-1\}$, $\widehat{r}_u = min\{k_u,k_g\}$ and $c>0$ is independent of $h$, $t$.
\end{lemma}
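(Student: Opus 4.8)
\noindent\emph{Proof plan.} Since $\matd$ and $\mathcal R_h$ do not commute, $\matd\mathcal R_h(\bfu)$ is not the Ritz--Stokes projection of $\matn\bfu$; I compare it instead with the auxiliary Galerkin approximation $\matd\uh$ of \eqref{eq: new approx ENS}, whose distance from $\matn\bfu$ (the solution of \eqref{eq: new approx cont ENS}) is quantified in \cref{lemma: Error Bounds approx ENS}. Accordingly I split
\[
\matd(\bfu-\mathcal R_h(\bfu)) = \bigl(\matd\bfu - \matd\uh\bigr) + \theta_h, \qquad \theta_h := \matd\uh - \matd\mathcal R_h(\bfu),
\]
the first summand being controlled directly by \cref{lemma: Error Bounds approx ENS}. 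A preliminary observation is that $\theta_h \in \bfV_h^{div}\t$: from the second line of \eqref{eq: time der Ritz Stokes eq ENS} together with \eqref{eq: time der Ritz Stokes G2 ENS}, \eqref{eq: time der Ritz Stokes R2 ENS} and the linearity of $\bhtilda$, one has $\bhtil(\matd\mathcal R_h(\bfu),\{\qh,\xih\}) = -\bhtilda(\TrVel;\mathcal R_h(\bfu),\{\qh,\xih\}) = m_h(\bm\ell_h,\{\qh,\xih\}) = \bhtil(\matd\uh,\{\qh,\xih\})$, using the choice of $\bm\ell_h$ and the second line of \eqref{eq: new approx ENS}.

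\noindent\emph{Energy and pressure estimates.} Testing the first line of \eqref{eq: time der Ritz Stokes eq ENS} with $\vh=\theta_h\in\bfV_h^{div}$ annihilates the $\bhtil$--term, so
\[
a_h(\theta_h,\theta_h) = a_h(\matd\uh-\matd\bfu,\theta_h) + \mathrm{\bfG}_1(\theta_h) + \mathrm{\mathbf{R}}_1(\theta_h).
\]
Coercivity of $a_h$ with respect to $\norm{\cdot}_{\ah}$ (from \eqref{ahhat coercivity ENS}, \eqref{eq: energy norm ENS}), the bound $\norm{\matd\uh-\matd\bfu}_{\ah}\le ch^{r_u}\sum_{j=0}^{1}\norm{(\matn)^j\bfu}_{H^{k_u+1}(\Gat)}$ from \cref{lemma: Error Bounds approx ENS}, estimate \eqref{eq: time der Ritz-Stokes inside 444 ENS} for $\mathrm{\mathbf{R}}_1$, and estimate \eqref{eq: time der Ritz-Stokes inside 2 ENS} for $\mathrm{\bfG}_1$ combined with the inverse-type Korn inequality \eqref{eq: coercivity and Korn's inequality Lagrange ENS} (using $h^{k_g},h^{k_u+1}\le h^{r_u+1}$) yield $\norm{\theta_h}_{\ah}\le ch^{r_u}\sum_{j=0}^{1}\norm{(\matn)^j\bfu}_{H^{k_u+1}(\Gat)}$, which together with the first summand proves the $a_h$--part of \eqref{eq: Error Bounds der Ritz-Stokes ENS}. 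For the Lagrange multipliers, rewrite the first line of \eqref{eq: time der Ritz Stokes eq ENS} as an equation for $\bhtil(\vh,\{\matd\mathcal P_h(\bfu),\matd\mathcal L_h(\bfu)\})$, bound its right-hand side by $ch^{r_u}(\cdots)\norm{\vh}_{\ah}$ using the $a_h$--bound for $\matd\bfu-\matd\mathcal R_h(\bfu)$ just obtained, and apply the discrete \textsc{inf-sup} condition \cref{Lemma: Discrete inf-sup condition Gah Lagrange ENS} (for $k_\lambda=k_u$, combine the $L^2\times H_h^{-1}$ condition \cref{lemma: L^2 H^{-1} discrete inf-sup condition Gah Lagrange ENS} with an inverse estimate, exactly as for the static bound \eqref{eq: Error Bounds Ritz-Stokes improved L2 Lh ENS}); this completes \eqref{eq: Error Bounds der Ritz-Stokes ENS}.

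\noindent\emph{The case $k_\lambda=k_u$.} Here the first summand $\matd\bfu-\matd\uh$ is additionally controlled in $H^1(\Gaht)$, in $\bfP$--$L^2$, and in normal $L^2$ by the improved bounds \eqref{eq: Error Bounds new approx improved ENS}, the exponents $\min\{k_u,k_g-1\}+1$ and $\min\{k_u+1/2,k_g\}$ both being $\ge\widehat{r}_u$. For $\theta_h\in\bfV_h^{div}$ with $k_\lambda=k_u$, \cref{corollary: divfree L2 norm ENS} gives $\norm{\theta_h\cdot\nh}_{L^2(\Gaht)}\le ch\norm{\theta_h}_{L^2(\Gaht)}$, hence the discrete Korn inequality \eqref{discrete Korn's inequality T nh ENS} yields $\norm{\theta_h}_{H^1(\Gaht)}\le c\norm{\theta_h}_{\ah}\le ch^{r_u}\sum_{j=0}^{1}\norm{(\matn)^j\bfu}_{H^{k_u+1}(\Gat)}$ and $\norm{\theta_h\cdot\nh}_{L^2(\Gaht)}\le ch^{r_u+1}\sum_{j=0}^{1}\norm{(\matn)^j\bfu}_{H^{k_u+1}(\Gat)}$; transferring to $\Gat$ via the norm equivalence \cref{lemma: norm equivalence ENS}, the geometric bound $\norm{\bfP-\bfPh}_{L^\infty}\le ch^{k_g}$ and the identity $(\matd\mathcal R_h(\bfu))^\ell=\matdl\mathcal R_h^\ell(\bfu)$ gives \eqref{eq: Error Bounds der Ritz-Stokes H1 ENS} and the normal part of \eqref{eq: Error Bounds der Ritz-Stokes L2 ENS}. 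The tangential $L^2$--bound for $\theta_h$ is obtained by a standard Aubin--Nitsche duality argument: introduce the dual Stokes problem with datum $\bfP_h\theta_h$, test with the Ritz--Stokes projection of its solution (which lies in $\bfV_h^{div}$), and invoke the pseudo Galerkin orthogonality \cref{lemma: pseudo Galerkin orthogonality ENS}, the $a_h$-- and pressure bounds above, the geometric perturbations \cref{lemma: Geometric perturbations lagrange ENS,lemma: Geometric perturbations lagrange II ENS}, and the Ritz--Stokes estimates \cref{lemma: Error Bounds Ritz-Stokes ENS}; with \eqref{eq: Error Bounds new approx ENS} for the first summand this completes \eqref{eq: Error Bounds der Ritz-Stokes L2 ENS}.

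\noindent\emph{Main obstacle.} The crux is the loss of tangentiality: even when $\bfu$ is tangential, $\matn\bfu$ has a nonzero normal part, so $\mathrm{\bfG}_1$ and the auxiliary error in \cref{lemma: Error Bounds approx ENS} carry only the factor $h^{k_g}$ rather than $h^{k_g+1}$, which is exactly what forces the suboptimal rate $r_u=\min\{k_u,k_g-1\}$ in \eqref{eq: Error Bounds der Ritz-Stokes ENS}; keeping the $\bfV_h^{div}$--identifications and the duality argument consistent while tracking which terms still reach order $h^{\widehat{r}_u}$ is the most delicate bookkeeping.
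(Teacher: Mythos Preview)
Your proposal is correct and follows essentially the same strategy as the paper: both rely on the splitting via the auxiliary Galerkin approximation $\matd\uh$ of \eqref{eq: new approx ENS}, the key observation that $\theta_h=\matd\uh-\matd\mathcal R_h(\bfu)\in\bfV_h^{div}$ so that testing \eqref{eq: time der Ritz Stokes eq ENS} annihilates the pressure term, and then the inf-sup, improved Korn, and Aubin--Nitsche duality for the remaining estimates. The only presentational difference is that the paper organises the energy step as a bound on $\norm{\matd\bfu-\matd\mathcal R_h(\bfu)}_{\ah}^2$ directly (splitting the second argument of $a_h$), whereas you bound $\norm{\theta_h}_{\ah}$ first and recover the full error by triangle inequality; these are equivalent. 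For the tangential $L^2$ bound the paper applies duality to the \emph{full} error $\bfe_h=\matd\bfu-\matd\mathcal R_h(\bfu)$ rather than to $\theta_h$ alone, and carries out a detailed decomposition of $a(\bfe_h^\ell,\bfw)+b^L(\bfe_h^\ell,\{\pi,\mu\})$ into ten sub-terms, the bottleneck being $\bhtilda(\TrVel;\Ih\bfw,\{\mathcal P_h(\bfu),\mathcal L_h(\bfu)\})$ which limits the rate to $h^{\widehat r_u}$; your sketch hits the same bottleneck through $\mathbf R_1(\mathcal R_h\bfw)$, so the outcome is identical. Your identification of the non-tangentiality of $\matn\bfu$ as the source of the suboptimal geometric rate matches the paper's \cref{remark: non tangential mat der ENS,remark: issues mat der ENS}.
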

\begin{proof}
Once again, we focus on the $\underline{k_{\lambda} = k_u}$ case. The other case can be derived analogously using the same ideas and applying the appropriate estimate concerning the $\underline{k_{\lambda} = k_u-1}.$

\underline{For the energy estimate}, adding and subtracting the Galerkin approximation $\matd\uh \in \bfV_h^{\bhtilda}\t$ in \eqref{eq: new approx ENS} we see, by the bound \eqref{eq: Error Bounds new approx ENS}, that
\begin{equation}
    \begin{aligned}\label{eq: time der Ritz-Stokes inside 111 ENS}
        \!\!\!\norm{\matd\bfu - \matd\mathcal{R}_h\bfu}_{\ah}^2 &= \ah(\matd\bfu - \matd\mathcal{R}_h(\bfu),\matd\bfu - \matd\uh) + \ah(\matd\bfu - \matd\mathcal{R}_h(\bfu),\matd\uh-\matd\mathcal{R}_h(\bfu))\\
        &\leq  Ch^{r_u}\norm{\matd\bfu - \matd\mathcal{R}_h(\bfu)}_{\ah} + \ah(\matd\bfu - \matd\mathcal{R}_h(\bfu),\matd\uh-\matd\mathcal{R}_h(\bfu)),
    \end{aligned}
\end{equation}
where the constant $C = \sum_{j=0}^{1}\norm{(\matn)^j\bfu}_{H^{k_u+1}(\Gat)}$. We are left to bound the last term. For this, recall equation \eqref{eq: time der Ritz Stokes eq ENS} and observe that for a test function $\vh \in \bfV_h^{div}$ the following holds true
\begin{equation}
\begin{aligned}\label{eq: time der Ritz-Stokes inside 5 ENS}
    a_h(\matd\bfu-\matd\mathcal{R}_h(\bfu),\vh) &= \mathrm{\bfG}_1(\vh) + \mathrm{\mathbf{R}}_1(\vh)\\
    &\leq  ch^{\widehat{r}_u}\norm{\bfu}_{H^{k_u+1}(\Gat)}\norm{\vh}_{\ah},
    \end{aligned}
\end{equation}
where we used the proven bounds \eqref{eq: time der Ritz-Stokes inside 2 ENS} and \eqref{eq: time der Ritz-Stokes inside 4 ENS}, coupled with the $H^1$ coercivity estimate \eqref{eq: improved h1-ah bound ENS} $(\norm{\vh}_{H^1(\Gaht)}\leq c\norm{\vh}_{\ah})$. Furthermore, by construction of $\matd\uh$ in \eqref{eq: new approx ENS} we have that $\matd\uh-\matd\mathcal{R}_h(\bfu) \in \bfV_h^{div}\t$. From these two fact we readily see that
\begin{equation}
    \begin{aligned}\label{eq: time der Ritz-Stokes inside 112 ENS}
        \ah(\matd\bfu - \matd\mathcal{R}_h(\bfu),\matd\uh-\matd\mathcal{R}_h(\bfu)) \leq Ch^{\widehat{r}_u}\big(\norm{\matd\uh - \matd\bfu}_{\ah} + \norm{\matd\bfu - \matd\mathcal{R}_h(\bfu)}_{\ah}\big).
    \end{aligned}
\end{equation}
So going back to \eqref{eq: time der Ritz-Stokes inside 111 ENS}, substituting \eqref{eq: time der Ritz-Stokes inside 112 ENS} and using Young's inequality and a simple kickback argument  yields our desired result for the energy estimate in \eqref{eq: Error Bounds der Ritz-Stokes ENS}.

\underline{For the $H^1$-estimate}, using the bound \eqref{eq: Error Bounds new approx improved ENS}  for $\matd\uh$ and the $H^1$ coercivity estimate \eqref{eq: improved h1-ah bound ENS}, since by construction $\matd\bfu_h-\matd\mathcal{R}_h\bfu \in \bfV_h^{div}$, we obtain
\begin{equation}
\begin{aligned}
    \norm{\matd\bfu- \matd\mathcal{R}_h(\bfu)}_{H^1(\Gaht)} &\leq \norm{\matd\bfu -\matd\bfu_h}_{H^1(\Gaht)} + \norm{\matd\bfu_h-\matd\mathcal{R}_h\bfu}_{H^1(\Gaht)}\\
    &\leq \norm{\matd\bfu -\matd\bfu_h}_{H^1(\Gaht)} + \norm{\matd\bfu_h-\matd\mathcal{R}_h\bfu}_{\ah},
    \end{aligned}
\end{equation}
from which \eqref{eq: Error Bounds der Ritz-Stokes H1 ENS} follows, since $\norm{\matd\bfu_h-\matd\mathcal{R}_h\bfu}_{\ah}\leq Ch^{r
_u}$ as proved before.

\underline{For the pressure estimates} we just go back to \eqref{eq: time der Ritz Stokes eq ENS}, solve for $\bhtil$ and use the $L^2\times H_h^{-1}$ discrete Lagrange \textsc{inf-sup} condition in \cref{lemma: L^2 H^{-1} discrete inf-sup condition Gah Lagrange ENS}. So, using  \eqref{eq: time der Ritz-Stokes inside 2 ENS}, \eqref{eq: time der Ritz-Stokes inside 4 ENS} and the energy estimate in \eqref{eq: Error Bounds der Ritz-Stokes ENS}, which we proved beforehand, implies that \vspace{-2mm}
\begin{equation*}
    \begin{aligned}
        \norm{\{\matd\mathcal{P}_h(\bfu),\matd\mathcal{L}_h(\bfu)\}}_{L^2(\Gaht)\times H_h^{-1}(\Gaht)} &\leq  \sup_{\vh\in\bfV_h} \frac{\ah(\matd\bfu-\matd\mathcal{R}_h(\bfu),\vh) +\mathrm{\bfG_1}(\vh)+ \mathrm{\mathbf{R}_1}(\vh)}{\norm{\vh}_{H^1(\Gaht)}}\\
        &\leq \sup_{\vh\in\bfV_h} \frac{\norm{\matd\bfu-\matd\mathcal{R}_h\bfu}_{\ah}\norm{\vh}_{\ah}+Ch^{\widehat{r}_u}\norm{\vh}_{H^{1}(\Gaht)}}{\norm{\vh}_{H^1(\Gaht)}} \\
        &\leq ch^{r_u}\sum_{j=0}^{1}\norm{(\matn)^j\bfu}_{H^{k_u+1}(\Gat)}.
    \end{aligned}
\end{equation*}
Using the $L^2\times L^2$ \textsc{inf-sup} in \cref{Lemma: Discrete inf-sup condition Gah Lagrange ENS}, instead, and the worse $H^1$ coercivity estimate \eqref{eq: coercivity and Korn's inequality Lagrange ENS}, also proves the estimate \eqref{eq: Error Bounds der Ritz-Stokes ENS}:\vspace{-3mm}
\begin{equation}\label{eq: time der Ritz-Stokes inside mat press ENS}
   \norm{\{\matd\mathcal{P}_h(\bfu),\matd\mathcal{L}_h(\bfu)\}}_{L^2(\Gaht)\times L^2(\Gaht)} \leq ch^{r_u}\sum_{j=0}^{1}\norm{(\matn)^j\bfu}_{H^{k_u+1}(\Gat)}.
\end{equation}\vspace{-3mm}


Now we are left to find \underline{the tangential and normal $L^2$-norm error bounds} of the material derivative of the velocity, i.e. \eqref{eq: Error Bounds der Ritz-Stokes L2 ENS}. For the tangential part, as in \cite[Lemma 5.11]{elliott2025unsteady}, we use a duality argument. So, let us consider 
$(\bfw, \pi,\mu) \in \bfH^1(\Gat) \times L^2_0(\Gat)\times L^2(\Gat)$ that satisfy \vspace{-1mm}
\begin{align}
\begin{cases}\label{eq: dual Ritz inside begin ENS}
        a(\bfw,\bfv) \ + \!\!\!\!& b^L(\bfv,\{\pi,\mu\}) = (\bfPg(\matdl\bfu - \matdl\mathcal{R}_h^{\ell}\bfu),\bfv) \ \ \ \ \ \text{for all } \bfv\in \bfH^1(\Gat),\\
        &b^L(\bfw,\{\sigma,\xi\})=0 \ \ \  \text{ for all } \{\sigma,\xi\} \in L_0^2(\Gat)\times L^2(\Gat),
    \end{cases}
\end{align}
for $t \in [0,T]$. Now, the surface $\Ga$ is sufficiently smooth and thus due to \cite[Lemma 2.1]{olshanskii2021inf} the solution also satisfies the following higher regularity estimate

\begin{equation}\label{eq: regularity estimate lagrange ENS}
    \norm{\bfw}_{H^2(\Ga)} + \norm{\{\pi,\mu\}}_{H^1(\Gat)} \leq \norm{\bfPg(\matdl\bfu - \matdl\mathcal{R}_h^{\ell}\bfu)}_{L^2(\Gat)}.
\end{equation} 
Set $\bfe_h = \matd\bfu - \matd\mathcal{R}_h\bfu$. Then, testing with $\bfe_h^{\ell} = \matdl\bfu - \matdl\mathcal{R}_h^{\ell}\bfu\in\bfH^1(\Gat)$, we clearly see that 
\begin{equation}
    \begin{aligned}\label{eq: dual der Ritz-Stokes inside main ENS}
        \boxed{\norm{\bfPg\bfe_h^{\ell}}_{L^2(\Gat)}^2 =  a(\bfe_h^{\ell},\bfw) + b^L(\bfe_h^{\ell},\{\pi,\mu\}).}
    \end{aligned}
\end{equation}
We need to bound the two terms on the right-hand side of \eqref{eq: dual der Ritz-Stokes inside main ENS} appropriately. This follows \cite[Lemma 5.11]{elliott2025unsteady} but now, since the non-tangentiality of $\matn\bfu$ limits us to geometric approximation error $\bigo(h^{k_g-1})$ in the energy ($\ah$) velocity and $L^2\times H_h^{-1}$ pressure norm \eqref{eq: Error Bounds der Ritz-Stokes ENS}, we should expect to obtain an improved bound for the tangential $L^2$-norm with geometric approximation error $\bigo(h^{k_g})$ (instead of the $\bigo(h^{k_g+1})$ in the stationary case \cite[Lemma 5.11]{elliott2025unsteady}); see \cref{remark: non tangential mat der ENS} for more details. Moreover, due to the transport formulae used to derive \eqref{eq: time der Ritz Stokes eq ENS}, there are also terms that prevent us from improving the degree of velocity approximation, yielding sub-optimal $\bigo(h^{k_u})$ order of convergence. Such term is $\bm{\mathcal{I}}_3^{a,9}$ in \eqref{eq: dual der Ritz-Stokes inside I9 ENS}. For more details see \cref{remark: issues mat der ENS}.
So, moving on to the calculations, for the two bilinear terms on the right-hand side of \eqref{eq: dual der Ritz-Stokes inside main ENS} we write

\begin{align}
    \label{eq: dual der Ritz-Stokes inside a(.,.) ENS}
        a(\bfe_h^{\ell},\bfw) &= \underbrace{a(\bfe_h^{\ell},\bfw - \Ihl(\bfw))}_{\bm{\mathcal{I}}
        _1^a} + \underbrace{a(\bfe_h^{\ell},\Ihl(\bfw)) - a_h(\bfe_h,\Ih(\bfw))}_{\bm{\mathcal{I}}
        _2^a} + \underbrace{a_h(\bfe_h,\Ih(\bfw))}_{\bm{\mathcal{I}}
        _3^a} \\
        b^L(\bfe_h^{\ell},\{\pi,\mu\}) &= \underbrace{b^L(\bfe_h^{\ell},\{\pi-\Ihl(\pi),\mu-\Ihl(\mu)\})}_{\bm{\mathcal{I}}
        _1^b}   + \underbrace{b^L(\bfe_h^{\ell},\{\Ihl(\pi),\Ihl(\mu)\}) -\bhtil(\bfe_h,\{\Ih(\pi),\Ih(\mu)\})}_{\bm{\mathcal{I}}
        _2^b} \nonumber  \\
        \label{eq: dual der Ritz-Stokes inside b(.,.) ENS}
        &\  + \underbrace{\bhtil(\bfe_h,\{\Ih(\pi),\Ih(\mu)\})}_{\bm{\mathcal{I}}
        _3^b},
\end{align}

\noindent where $\Ih(\cdot)$ the standard Lagrange interpolant. Let us now bound each term separately. Recall that for $\underline{k_{\lambda}=k_u}$ we have shown $\norm{\bfe_h}_{\ah} \leq \norm{\bfe_h}_{H^1(\Gaht)} \leq ch^{r_u}\sum_{j=0}^{1}\norm{(\matn)^j\bfu}_{H^{k_u+1}(\Gat)}$:
\begin{align}\label{eq: dual der Ritz-Stokes inside 2 ENS}
    \hspace{-50mm}\bullet \quad  \bm{\mathcal{I}}_1^a  = a(\bfe_h^{\ell},\bfw - \Ihl(\bfw)) &\leq c\norm{\bfw - \Ihl(\bfw)}_{H^1(\Gat)}\norm{\bfe_h}_{H^{1}(\Gaht)} \nonumber \\
        &\leq ch^{r_u+1}\norm{\bfw}_{H^2(\Gat)}\sum_{j=0}^{1}\norm{(\matn)^j\bfu}_{H^{k_u+1}(\Gat)},
\end{align}\vspace{-4mm}

\noindent where we used the fact that $\norm{\cdot}_{a}\leq c\norm{\cdot}_{H^1(\Gat)} \leq c\norm{\cdot}_{H^1(\Gaht)}$, see \eqref{eq: norm equivalence ENS} (one can also use the fact that $\norm{\cdot}_{a}\leq\norm{\cdot}_{\ah}$, see \eqref{eq: Geometric perturbations a 1 ENS}, but holds only for $k_g \geq 2$), and standard nodal interpolation estimates.
\begin{align}\label{eq: dual der Ritz-Stokes inside 3 ENS}
  \bullet \quad \bm{\mathcal{I}}_2^a &= a(\bfe_h^{\ell},\Ihl(\bfw)) - a_h(\bfe_h,\Ih(\bfw)) \leq ch^{k_g}\norm{\Ih(\bfw)}_{H^1(\Gah)}\norm{\bfe_h}_{H^1(\Gah)} \nonumber\\
        &\leq ch^{k_g + r_u}\norm{\bfw}_{H^2(\Ga)}\sum_{j=0}^{1}\norm{(\matn)^j\bfu}_{H^{k_u+1}(\Ga)} \qquad (\text{where we used } \eqref{eq: Geometric perturbations a ENS}).
\end{align}
\vspace{-3mm}

Now recalling the definition \eqref{eq: time der Ritz Stokes eq ENS},  where now $\vh =  \Ih(\bfw) \neq \bfV_h^{div}$, we reformulate it as followed \vspace{-1mm}
    \begin{align}
        \ \bullet \quad \bm{\mathcal{I}}_3^a &= \Grm_a(\matd\bfu,\Ih(\bfw)) + \Grm_{\gb}(\bfu,\Ih(\bfw)) + \gh(\TrVel;\bfu-\mathcal{R}_h(\bfu),\Ih(\bfw)- \bfw) +\gh(\TrVel;\bfu-\mathcal{R}_h(\bfu), \bfw) \nonumber\\
        &+\Grm_{\ddb}(\mathcal{R}_h(\bfu),\Ih(\bfw)) + \ddb(\TrVelL-\FlVel;\bfu-\mathcal{R}^{\ell}_h(\bfu),\Ih(\bfw)) + \ddb(\FlVel;\bfu-\mathcal{R}^{\ell}_h(\bfu),\Ihl(\bfw)-\bfw)\nonumber\\
        &+ \ddb(\FlVel;\bfu-\mathcal{R}^{\ell}_h(\bfu),\bfw) -  \bhtilda(\TrVel;\Ihl(\bfw),\{\mathcal{P}_h(\bfu),\mathcal{L}_h(\bfu)\}) - \bhtil (\Ih(\bfw),\{\matd\mathcal{P}_h(\bfu),\matd\mathcal{L}_h(\bfu)\})\nonumber\\
        \label{eq: dual der Ritz-Stokes inside I3a ENS}
        &:= \bm{\mathcal{I}}_3^{a,i} \qquad\qquad\qquad \text{for }i=1,...,10.
    \end{align}\vspace{-6mm}

\noindent From geometric perturbations \cref{lemma: Geometric perturbations lagrange ENS,lemma: Geometric perturbations lagrange II ENS} (Notice $\matn\bfu\cdot\bfng\neq0$), the bounds on the bilinear forms \cref{Lemma: discrete bounds and coercivity results ENS}, the Ritz-Stokes interpolation estimates in \eqref{eq: Error Bounds Ritz-Stokes improved ENS}, \eqref{eq: Error Bounds Ritz-Stokes improved H1 ENS} (notice that $\norm{\bfP(\bfu-\mathcal{R}_h(\bfu))}_{L^2(\Gah)}\leq ch^{\widehat{r}_u+1}$ and $\bfPg\bfw = \bfw$) and the estimate \eqref{eq: difference of velocities ENS}, we see that \vspace{-3mm}
\begin{equation}
    \begin{aligned}\label{eq: dual der Ritz-Stokes inside I3ai ENS}
        |\bm{\mathcal{I}}_3^{a,i}| \leq c(h^{k_g}+ ch^{\widehat{r}_u+1})\sum_{j=0}^{1}\norm{(\matn)^j\bfu}_{H^{k_u+1}(\Gat)}\norm{\bfw}_{H^2(\Gat)}, \text{ for } i=1,...,7,
    \end{aligned}
\end{equation}
\vspace{-4mm}

\noindent where we also use the standard Lagrange interpolation estimates. We are left to bound the last line of \eqref{eq: dual der Ritz-Stokes inside I3a ENS}, i.e. $\bm{\mathcal{I}}_3^{a,i}$, $i=8,9,10$. These require a bit more calculations. 

As done before, cf. \cref{remark: assertion bdfh ENS}, instead of bounding $\ddb(\bullet;\bullet,\bullet)$, let us just consider $\db(\bullet;\bullet,\bullet)$ instead for clarity reasons. Then the bound for $\ddb(\bullet;\bullet,\bullet)$ will follow. Similar to \cite[Theorem 6.2]{DziukElliott_L2} we need to perform integration by parts. Recall from \cref{def: bilinear forms ENS}:\vspace{-2mm}
\begin{equation*}
\begin{aligned}
d(\FlVel;\bfu-\mathcal{R}_h^{\ell}(\bfu),\bfw) &:= \int_{\Gat} \Tilde{\mathcal{B}}_{\Ga}(\FlVel,\bfPg)\nbg (\bfu-\mathcal{R}_h^{\ell}(\bfu)) : \nbgcov \bfw \,\ds\\
&+ \int_{\Gat} \Tilde{\mathcal{B}}_{\Ga}(\FlVel,\bfPg)\nbgcov (\bfu-\mathcal{R}_h^{\ell}(\bfu)) : \nbg \bfw \,\ds.
\end{aligned}
\end{equation*}\vspace{-2mm}

Using Einstein summations, see \cref{appendix: differential operators ENS} and \eqref{eq: surface operators abbreviations}, we define $\{B\}_{ik} = \Tilde{\mathcal{B}}_{\Ga}(\FlVel,\bfPg)$ and $ \{z\}_{k}= \{u-\mathcal{R}_h^{\ell}(u)\}_{k}$, then, using the integration by parts formula \eqref{eq: integration by parts cont ENS}, the first integral can be written as 
    \begin{align}
        &\int_{\Gat} B_{ik}(\nbg)_j(z_k)P_{mi}(\nbg)_j(w_m) = \int_{\Gat} (\nbg)_j \big(B_{ik}z_kP_{mi}(\nbg)_j(w_m)\big) - \int_{\Gat} z_k (\nbg)_j\big(B_{ik}(\nbg)_j(w_m)P_{mi}\big)\nonumber\\
        \label{eq: dual der Ritz-Stokes inside d ENS}
        &= -\underbrace{\int_{\Gat} \kappa n_j\big(B_{ik}z_kP_{mi}(\nbg)_j(w_m)\big)}_{=0}- \int_{\Gat} z_k \divg(\Tilde{(\mathcal{B}}_{\Ga}(\FlVel,\bfPg)\nbgcov\bfw)\bfe_k)\\
        &= \int_{\Gat} (\bfu - \mathcal{R}_h^{\ell}(\bfu)) \cdot \mathrm{\mathbf{div}_{\Ga}}((\Tilde{\mathcal{B}}_{\Ga}(\FlVel,\bfPg)\nbgcov\bfw)^t) \leq ch^{\widehat{r}_u+1/2}\sum_{j=0}^{1}\norm{(\matn)^j\bfu}_{H^{k_u+1}(\Gat)}\norm{\bfw}_{H^2(\Gat)}, \nonumber
    \end{align}
where $\bfe_k$ denotes the standard unit vector, and where we used the fact that $\norm{\bfu-\mathcal{R}^{\ell}_h(\bfu)}_{L^2(\Gat)}\leq ch^{\widehat{r}_u+1/2}$ by \eqref{eq: Error Bounds Ritz-Stokes L2 improved ENS}, \eqref{eq: Error Bounds Ritz-Stokes improved H1 ENS}. One can bound the second integral similarly. Therefore, we can readily see that $d(\FlVel;\bfu-\mathcal{R}_h^{\ell}(\bfu),\bfw) \leq ch^{\widehat{r}_u+1/2}\sum_{j=0}^{1}\norm{(\matn)^j\bfu}_{H^{k_u+1}(\Gat)}\norm{\bfw}_{H^2(\Gat)}$, and with a simple extrapolation argument we derive \vspace{-2mm} 
\begin{equation}
    \begin{aligned}\label{eq: dual der Ritz-Stokes inside I8 ENS}
        |\bm{\mathcal{I}}_3^{a,8}| \leq  ch^{\widehat{r}_u+1/2}\sum_{j=0}^{1}\norm{(\matn)^j\bfu}_{H^{k_u+1}(\Gat)}\norm{\bfw}_{H^2(\Gat)}.
    \end{aligned}
\end{equation}
Noticing that $b^L(\bfw,\{\matdl\mathcal{P}^{\ell}_h(\bfu),\matdl\mathcal{L}^{\ell}_h(\bfu)\}) =0$  and the already proven estimate \eqref{eq: time der Ritz-Stokes inside mat press ENS}, we readily see,
\begin{align*}
        |\bm{\mathcal{I}}_3^{a,10}| &\leq \big|\bhtil (\Ih(\bfw)-\bfw,\{\matd\mathcal{P}_h(\bfu),\matd\mathcal{L}_h(\bfu)\})\big| + \big|\bfG_{b}(\bfw,\{\matdl\mathcal{P}^{\ell}_h(\bfu),\matdl\mathcal{L}^{\ell}_h(\bfu)\}) \big|\nonumber\\
        &\leq c(h^{r_u+1} + ch^{k_g+r_u})\sum_{j=0}^{1}\norm{(\matn)^j\bfu}_{H^{k_u+1}(\Gat)}\norm{\bfw}_{H^2(\Gat)}.
\end{align*}
Finally, using the bound \eqref{eq: Error Bounds Ritz-Stokes improved ENS} and similar calculations to \eqref{eq: time der Ritz-Stokes inside 3 ENS}-\eqref{eq: time der Ritz-Stokes inside 4 ENS} for $\bhtilda(\TrVel;\Ih(\bfw),\{\mathcal{P}_h(\bfu),\mathcal{L}_h(\bfu)\})$ we are only able to derive the following \vspace{-3mm} 
\begin{equation}
    \begin{aligned}\label{eq: dual der Ritz-Stokes inside I9 ENS}
        |\bm{\mathcal{I}}_3^{a,9}| \leq c\norm{\Ih(\bfw)}_{\ah}\norm{\{\mathcal{P}_h(\bfu),\mathcal{L}_h(\bfu)\}}_{L^2(\Gaht)\times H_h^{-1}(\Gaht)}\leq ch^{\widehat{r}_u}\sum_{j=0}^{1}\norm{(\matn)^j\bfu}_{H^{k_u+1}(\Gat)}\norm{\bfw}_{H^2(\Gaht)},
    \end{aligned}
\end{equation}
where notice the low convergence order $\bigo(h^{\widehat{r}_u})$, and note how it restricts the improvement in the velocity approximation; see \cref{remark: issues mat der ENS} for more details.
Therefore, \underline{combining all $ |\mathcal{I}_3^{a,i}|$ $i=1,...,10$} above into \eqref{eq: dual der Ritz-Stokes inside I3a ENS} we obtain \vspace{-2mm}
\begin{equation}
    \begin{aligned}
       \boxed{|\bm{\mathcal{I}}_3^{a}| \leq  ch^{\widehat{r}_u}\sum_{j=0}^{1}\norm{(\matn)^j\bfu}_{H^{k_u+1}(\Gat)}\norm{\bfw}_{H^2(\Gaht)}.}
    \end{aligned}
\end{equation}

Now moving onto the $\bm{\mathcal{I}}_{j}^b$ $j=1,...,3$ bounds, by the standard interpolation estimates we see that
\begin{equation*}
        \begin{aligned}
            \ \bullet \quad\bm{\mathcal{I}}_{1}^b=b^L(\bfe_h^{\ell},\{\pi-\Ihl(\pi),\mu-\Ihl(\mu)\}) &\leq c\norm{\bfe_h}_{H^1(\Gah)}\norm{\pi-\Ihl(\pi)}_{L^2(\Ga)} + c\norm{\bfe_h}_{L^2(\Gah)}\norm{\mu-\Ihl(\mu)}_{L^2(\Ga)}\\
            &\leq ch^{r_u+1}\sum_{j=0}^{1}\norm{(\matn)^j\bfu}_{H^{k_u+1}(\Ga)}(\norm{\pi}_{H^1(\Ga)} + \norm{\mu}_{H^1(\Ga)}).
        \end{aligned}
    \end{equation*}\vspace{-2mm}
\begin{equation*}
        \begin{aligned}
            \hspace{-6mm} \bullet \quad   \bm{\mathcal{I}}_{2}^b &= b^L(\bfe_h^{\ell},\{\Ihl(\pi),\Ihl(\mu)\}) -\bhtil(\bfe_h,\{\Ih(\pi),\Ih(\mu)\}) \leq ch^{k_g}\norm{\bfe_h}_{L^2(\Gah)}(\norm{\pi}_{H^1(\Ga)} + \norm{\mu}_{H^1(\Ga)}) \\
            &\leq ch^{k_g + r_u}\norm{\bfu}_{H^{k_u+1}(\Ga)}(\norm{\pi}_{H^1(\Ga)} + \norm{\mu}_{H^1(\Ga)}) \qquad (\text{where we used the perturbation } \eqref{eq: Geometric perturbations btilde ENS}),
        \end{aligned}
    \end{equation*}  
where we also used the norm equivalence \eqref{eq: norm equivalence ENS} where appropriate. Finally, recalling the definition of $\matd\mathcal{R}_h(\bfu)$ (see second line of \eqref{eq: time der Ritz Stokes eq ENS}) and following the calculations in  \eqref{eq: Error Bounds new approx inside est for Ib3 ENS}, we obtain 
\begin{equation*}
    \begin{aligned}
        \hspace{-11mm} \bullet \quad \bm{\mathcal{I}}_{3}^b &= \bhtil(\matd\bfu,\{\Ih(\pi),\Ih(\mu)\}) - \bhtil(\matd\mathcal{R}_h(\bfu),\{\Ih(\pi),\Ih(\mu)\})\\
        &= \bhtil(\matd\bfu,\{\Ih(\pi),\Ih(\mu)\}) + \bhtilda(\TrVel;\mathcal{R}_h(\bfu),\{\Ih(\pi),\Ih(\mu)\})\\
        &\leq |\mathrm{G}_{b^L}(\matd\bfu,\{\Ih(\pi),\Ih(\mu)\})| +|\mathrm{G}_{\btil}(\bfu,\{\Ih(\pi),\Ih(\mu)\})| + |\mathrm{\mathbf{R}_2}(\{\Ih(\pi),\Ih(\mu)\})| \\
        &\leq ch^{k_g}\sum_{j=0}^{1}\norm{(\matn)^j\bfu}_{H^{k_u+1}(\Ga)}\norm{\{\pi,\mu\}}_{H^1(\Gaht)} + c\norm{\bfu-\mathcal{R}_h(\bfu)}_{L^2(\Gaht)}\norm{\{\pi,\mu\}}_{H^1(\Gaht)},
    \end{aligned}
\end{equation*}
where we also used the perturbation bounds \eqref{eq: Geometric perturbations btilda ENS}, \eqref{eq: Geometric perturbations btilde ENS} and the bound $\mathrm{\mathbf{R}_2}(\{\Ih(\pi),\Ih(\mu)\}) = \bhtilda(\TrVel;\bfu-\mathcal{R}_h(\bfu),\{\Ih(\pi),\Ih(\mu)\}) \leq  c\norm{\bfu-\mathcal{R}_h(\bfu)}_{L^2(\Gaht)}\norm{\{\pi,\mu\}}_{H^1(\Gaht)}$. Then, using once again the fact that $\norm{\bfu-\mathcal{R}_h(\bfu)}_{L^2(\Gah)}\leq ch^{\widehat{r}_u+1/2}$ (see \eqref{eq: Error Bounds Ritz-Stokes L2 improved ENS}, \eqref{eq: Error Bounds Ritz-Stokes improved H1 ENS}) we derive: \vspace{-1mm}
\begin{equation*}
     \boxed{|\bm{\mathcal{I}}_{3}^b| \leq c(h^{k_g}+h^{\widehat{r}_u+1/2})\sum_{j=0}^{1}\norm{(\matn)^j\bfu}_{H^{k_u+1}(\Ga)}\norm{\{\pi,\mu\}}_{H^1(\Gaht)}.}
\end{equation*}\vspace{-1mm}

\underline{By substituting $\bm{\mathcal{I}}_{j}^a,\, \bm{\mathcal{I}}_{j}^b$, $j=1,2,3$} into \eqref{eq: dual der Ritz-Stokes inside a(.,.) ENS} and \eqref{eq: dual der Ritz-Stokes inside b(.,.) ENS} 
and subsequently to \eqref{eq: dual der Ritz-Stokes inside main ENS}, we obtain \vspace{-1mm}
\begin{equation}
    \begin{aligned}\label{eq: dual der Ritz-Stokes inside Final ENS}
        \norm{\bfPg\bfe_h^{\ell}}_{L^2(\Ga)}^2 \leq c(h^{k_g}+h^{\widehat{r}_u+1/2} + h^{\widehat{r}_u})\sum_{j=0}^{1}\norm{(\matn)^j\bfu}_{H^{k_u+1}(\Ga)}(\norm{\bfw}_{H^2(\Ga)} + \norm{\{\pi,\mu\}}_{H^1(\Ga)}),
    \end{aligned}
\end{equation} \vspace{-2mm}

\noindent from which, recalling the regularity estimate \eqref{eq: regularity estimate lagrange ENS}, yields the tangent part our desired estimate \eqref{eq: Error Bounds der Ritz-Stokes L2 ENS}.

The normal part of  \eqref{eq: Error Bounds der Ritz-Stokes L2 ENS} can easily be derived by using \eqref{eq: Error Bounds new approx improved ENS} and \eqref{eq: divfree L2 norm kl=ku ENS}. Indeed, we just rewrite $\matdl(\bfu - \mathcal{R}_h^{\ell}(\bfu))\cdot\bfng = \matdl(\bfu -\uhl)\cdot\bfng + \matdl(\uhl- \mathcal{R}_h^{\ell}(\bfu))\cdot\bfng $ and use the fact that $\matd\uh - \matd\mathcal{R}_h(\bfu) \in \bfV_h^{div}$.

Now, let us briefly discuss the $\underline{k_\lambda=k_u-1}$ case. As mentioned in the beginning of this proof, the $(a_h)$-velocity and $L^2$-pressure bounds follow similarly, where one uses the respective estimates, e.g. the worse $H^1$ coercivity estimate \eqref{eq: coercivity and Korn's inequality Lagrange ENS} instead of \eqref{eq: improved h1-ah bound ENS} and the bounds \eqref{eq: time der Ritz-Stokes inside 444 ENS}. Unfortunately, for the tangential $L^2$-norm of the velocity, we are unable to find an improved bound for two main reasons: First, as discussed, the term $\bm{\mathcal{I}}_{3}^{a,9}$ \eqref{eq: dual der Ritz-Stokes inside I3a ENS} will now only give a bound of $\bigo(h^{r_u})$ and second, the term  $\bm{\mathcal{I}}_{3}^{a,8}$, and specifically  \eqref{eq: dual der Ritz-Stokes inside d ENS}, which will also give a bound of order $\bigo(h^{r_u})$, due to the suboptimal $L^2$-norm estimate $\norm{\bfu-\mathcal{R}^{\ell}_h(\bfu)}_{L^2(\Gat)}\leq ch^{r_u}$  in \eqref{eq: Error Bounds der Ritz-Stokes ENS}. For more details, see \cref{remark: issues mat der ENS}.
\end{proof}

\begin{remark}[Geometric error sub-optimality of $\matd\mathcal{R}_h(\bfu)$]\label{remark: non tangential mat der ENS}
In \cref{lemma: Error Bounds der Ritz-Stokes ENS} we observe suboptimal convergences results w.r.t. the geometric error, which stems from the non-tangentiality of the material derivative $\matn\bfu$. Indeed, in the stationary case \cite[Lemma 5.10]{elliott2025unsteady}, for $k_\lambda=k_u$  we were able to prove $\bigo(h^{k_g})$ and $\bigo(h^{k_g+1})$ geometric error convergence, for the time-derivative, since $\partial_t\bfu\cdot\bfng=0$ when $\bfu\cdot\bfng=0$. However, in an evolving setting, this is no longer true, as it is obvious that if $\bfu\cdot\bfng=0$ then $\matn\bfu\cdot\bfng\neq0$. That means, theorems as in \cite[Lemma 6.12]{elliott2024sfem} (see proof of \cref{lemma: Error Bounds approx ENS} in our case) and perturbation results, cf. \eqref{eq: Geometric perturbations btilde tangent extra regularity ENS}, \eqref{eq: Geometric perturbations a tangent extra regularity ENS} and  \cref{remark: Better Approximations ENS}, that depend on the tangentiality of the arguments, no longer hold for $\matn\bfu$. Therefore, in general, we lose one geometric order of convergence.
\end{remark}

\begin{remark}[Issues regarding tangential $L^2$-norm of the Material Derivative of Ritz-Stokes maps]\label{remark: issues mat der ENS}
    Let us talk more extensively about the issues regarding the sub-optimality of the tangential $L^2$-norm convergence of  $\matd\mathcal{R}_h(\bfu)$, and why we are unable to prove better results.  As mentioned in \cref{remark: non tangential mat der ENS} the reason for the loss of geometric error is the non-tangentiality of the material derivative, i.e. $\matn\bfu\cdot\bfng\neq0$. However, we also notice no improvement in the velocity approximation order $k_u$.

\noindent \ \ \ $\bullet$ $\boxed{k_\lambda=k_u}$ : Notice that we only improve the geometric approximation order up to $\bigo(h^{k_g})$. This is apparent from the terms $\bm{\mathcal{I}}_3^{a,1}$, $\bm{\mathcal{I}}_3^{a,5}$,  in \eqref{eq: dual der Ritz-Stokes inside I3a ENS} and $\bm{\mathcal{I}}_3^{b}$, where we can only show $\bigo(h^{k_g})$ convergence due to the restrictive bounds \eqref{eq: Geometric perturbations a ENS}, \eqref{eq: Geometric perturbations bfd ENS} and \eqref{eq: Geometric perturbations btilde ENS} respectively. We believe that $\bm{\mathcal{I}}_3^{a,1}$, $\bm{\mathcal{I}}_3^{a,5}$ can be improved by \cref{remark: Better Approximations ENS}, since we can rewrite $\mathrm{G}_{(\cdot)}(\cdot,\Ih(\bfw)) = \mathrm{G}_{(\cdot)}(\cdot,\Ih(\bfw)-\bfw) + \mathrm{G}_{(\cdot)}(\cdot,\bfw)$, with $\bfw \in \bfH^1_T\cap\bfH^2$. Finally, we notice that only the term $\bm{\mathcal{I}}_3^{a,9}$ in \eqref{eq: dual der Ritz-Stokes inside I3a ENS} is responsible for the lack of improvement in the velocity approximation order $k_u$. This term appeared only due to the transport formulae used to derive \eqref{eq: time der Ritz Stokes eq ENS}. We believe that this can be improved, and so all in all obtain an an improved bound $\bigo(h^{k_u+1/2})$ for the tangential Ritz-Stokes projection.

\noindent \ \ \  $\bullet$  $\boxed{k_\lambda=k_u-1}$ : In this case, we actually expect $\bigo(h^{k_g})$ geometric error (see \cite[Lemma 5.10, eq. (5.42)]{elliott2025unsteady} or \cref{lemma: Error Bounds Ritz-Stokes ENS}), thus the previous issues in $\bm{\mathcal{I}}_3^{a,1}$, $\bm{\mathcal{I}}_3^{a,5}$ and $\bm{\mathcal{I}}_3^{b}$ do not concern us. Despite that a new \emph{issue} arises in term $\bm{\mathcal{I}}_3^{a,8}$, since now by \cref{eq: dual der Ritz-Stokes inside d ENS} we can only obtain a bound w.r.t. the full $L^2$-norm of $\bfu-\mathcal{R}_h(\bfu)$, which in this case by \eqref{eq: Error Bounds der Ritz-Stokes ENS}, gives the suboptimal  estimate $\norm{\bfu-\mathcal{R}^{\ell}_h(\bfu)}_{L^2(\Gat)}\leq ch^{r_u}$. Therefore, this term limits us both in the geometric $k_g-1$ and velocity approximation order $k_u$, unless it is possible to perform calculations such that $\norm{\bfPg(\bfu-\mathcal{R}_h^{\ell}(\bfu))}_{L^2(\Ga)}$ appears instead. The issue regarding the term $\bm{\mathcal{I}}_3^{a,9}$ still remains.

\noindent Despite the above, in the numerical experiments (\cref{Sec: moving sphere ENS}) we observe higher convergence ($\bigo(h^{\widehat{r}_u+1/2})$, ($\bigo(h^{r_u+1/2})$) for the tangential $L^{\infty}_{L^2}$(or $L^2_{L^2}$)-norm of the velocity in both cases, which indicates that our estimates for the material derivative of the velocity might not be sharp; as we also suspected.
\end{remark}

Now for the the material derivative of the  standard Ritz-Stokes projection $\mathcal{R}_h^b(\bfu)$ in \cref{def: surface Ritz-Stokes projection std ENS}, it is not difficult to see that the calculations follow exactly as in the modified Ritz-Stokes case, where now we take the time derivative of \eqref{eq: surface Ritz-Stokes projection std ENS} instead. The extra term on the right-hand side does not cause any problems and can be treated easily with already established estimates. So, for the sake of brevity we omit the proof and provide the following Ritz-Stokes error bounds.
\begin{lemma}[Error Bounds for Material Derivative Ritz-Stokes]\label{lemma: Error Bounds der Ritz-Stokes std ENS}
 Let  $(\bfu,\{p,\lambda\}) \in \bfH^1(\Gat)\times(H^1(\Gat)\times L^2(\Gat))$ the solution of \eqref{weak lagrange hom NV dir ENS} or \eqref{weak lagrange hom NV cov ENS} and $t \in [0,T]$. Then, we have the following error bounds for the material derivative of the  Ritz-Stokes projection, \vspace{-2mm}
 \begin{align}
         &\norm{\matd(\bfu - \mathcal{R}_h^b(\bfu))}_{\ah}  + \norm{\{\matd (p-\mathcal{P}_h^b(\bfu)),\matd(\lambda-\mathcal{L}_h^b(\bfu))\}}_{L^2(\Gaht)}\leq  c h^m \sum_{j=0}^{1}\big( \norm{(\matn)^j\bfu}_{H^{k_u+1}(\Gat)}\nonumber\\
         \label{eq: Error Bounds der Ritz-Stokes std ENS}
         &\qquad\qquad\qquad\qquad\qquad\qquad\qquad\qquad\qquad\quad\ \ + \norm{(\matn)^j p}_{H^{k_{pr}+1}(\Gat)} + \norm{(\matn)^j \lambda}_{H^{k_{\lambda}+1}(\Gat)}\big),
 \end{align}
with $m=min\{r_u,k_{pr}+1,k_{\lambda}+1\}$, $r_u = min\{k_u,k_g-1\}$ and $k_g \geq 2$, for $h \leq h_0$ with sufficiently small $h_0$ and with $c$ independent of $h$ and $t$. 
\end{lemma}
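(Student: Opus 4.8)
The plan is to reduce \cref{lemma: Error Bounds der Ritz-Stokes std ENS} to the modified case (\cref{lemma: Error Bounds der Ritz-Stokes ENS}) by following exactly the same strategy, tracking the single extra data term $b^L(\vhl,\{p,\lambda\})$ that distinguishes \eqref{eq: surface Ritz-Stokes projection std ENS} from \eqref{eq: surface Ritz-Stokes projection ENS}. First I would differentiate \eqref{eq: surface Ritz-Stokes projection std ENS} in time, using the discrete and lifted transport formulae of \cref{lemma: Transport formulae discrete ENS,lemma: Transport formulae lifted ENS} as in the derivation of \eqref{eq: time der Ritz Stokes eq ENS}. This produces the analogue of \eqref{eq: time der Ritz Stokes eq ENS} with an additional right-hand side contribution coming from $\frac{d}{dt}b^L(\vhl,\{p,\lambda\})$, which by \eqref{eq: Transport formulae 3 lift ENS} splits into $b^L(\vhl,\{\matdl p,\matdl\lambda\})$ plus a perturbation $\btil(\TrVelL;\vhl,\{p,\lambda\})$; the former is absorbed into the new data terms $\matn p,\matn\lambda$ (exactly as $\matn\bfu$ entered before), and the latter is bounded using \cref{lemma: Geometric perturbations lagrange II ENS}, contributing at worst $ch^{k_g-1}(\norm{p}_{H^1}+\norm{\lambda}_{L^2})\|\vh\|_{\ah}$-type terms, which is consistent with the claimed rate $m=\min\{r_u,k_{pr}+1,k_{\lambda}+1\}$.

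Next I would establish the analogue of \cref{lemma: Error Bounds approx ENS} for the auxiliary Galerkin problem adapted to $\mathcal{R}_h^b$: the continuous Stokes problem \eqref{eq: new approx cont ENS} now has $\matn p,\matn\lambda$ as genuine (nonzero) pressure data, and its discrete counterpart \eqref{eq: new approx ENS} uses the same $\bm{\ell}_h$ so that $\matd\bfu_h-\matd\mathcal{R}_h^b(\bfu)\in\bfV_h^{div}$. The error estimate then follows from \cite[Theorem 6.11, Corollary 6.1]{elliott2024sfem} together with the pressure interpolation estimates, producing the $ch^m\sum_{j=0}^{1}(\ldots)$ bound; the $k_{pr}$ and $k_\lambda$ terms appear because approximating $\matn p$ in $Q_h$ (resp. $\matn\lambda$ in $\Lambda_h$) costs $h^{k_{pr}+1}$ (resp. $h^{k_\lambda+1}$). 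With this in hand, the energy estimate for $\matd(\bfu-\mathcal{R}_h^b(\bfu))$ is obtained by the same add-and-subtract-$\matd\bfu_h$ argument as in \eqref{eq: time der Ritz-Stokes inside 111 ENS}--\eqref{eq: time der Ritz-Stokes inside 112 ENS}: one tests with $\vh\in\bfV_h^{div}$, uses that the residual $\mathrm{\bfG}_1+\mathrm{\mathbf{R}}_1$ (plus the new $\btil$-perturbation term) is $\bigo(h^m)$ in the $\ah$-norm, invokes the coercivity/Korn inequality \eqref{eq: coercivity and Korn's inequality Lagrange ENS}, and closes with Young's inequality and a kickback.

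For the pressure part of \eqref{eq: Error Bounds der Ritz-Stokes std ENS}, I would solve the first line of the differentiated system for $\bhtil(\vh,\{\matd(p-\mathcal{P}_h^b(\bfu)),\matd(\lambda-\mathcal{L}_h^b(\bfu))\})$ and apply the $L^2\times L^2$ discrete Lagrange inf-sup condition of \cref{Lemma: Discrete inf-sup condition Gah Lagrange ENS}, bounding the right-hand side by the just-proved energy estimate together with \eqref{eq: time der Ritz-Stokes inside 2 ENS}-type perturbation bounds and \cref{Lemma: discrete bounds and coercivity results ENS}; here one does not need the sharper $H_h^{-1}$ machinery since we only claim an $L^2(\Gaht)$ bound for both pressures. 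The main obstacle, as in \cref{lemma: Error Bounds der Ritz-Stokes ENS}, is the non-tangentiality of $\matn\bfu$: the perturbation results \eqref{eq: Geometric perturbations btilde tangent extra regularity ENS}, \eqref{eq: Geometric perturbations a tangent extra regularity ENS} and \cref{remark: Better Approximations ENS} that rely on $\bfu\cdot\bfng=0$ cannot be used for the material derivative, forcing the suboptimal geometric rate $h^{k_g-1}$ hidden in $r_u=\min\{k_u,k_g-1\}$; beyond this already-encountered difficulty, the extra data term $\{p,\lambda\}$ causes no new trouble, which is why the proof is omitted in the paper. I would simply remark that the argument is verbatim that of \cref{lemma: Error Bounds der Ritz-Stokes ENS} with $\matn p,\matn\lambda$ carried through and the harmless $\btil$-perturbation estimated by \cref{lemma: Geometric perturbations lagrange II ENS}.
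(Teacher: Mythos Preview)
Your proposal is correct and follows essentially the same approach as the paper: the paper explicitly omits the proof, stating that the calculations follow exactly as in the modified Ritz-Stokes case (\cref{lemma: Error Bounds der Ritz-Stokes ENS}) upon differentiating \eqref{eq: surface Ritz-Stokes projection std ENS} instead, and that the extra right-hand side term $b^L(\vhl,\{p,\lambda\})$ causes no problems and can be treated with already established estimates. Your write-up simply fleshes out these steps in more detail, correctly identifying the transport formula \eqref{eq: Transport formulae 3 lift ENS} for the extra term, the $\btil$-perturbation bound from \cref{lemma: Geometric perturbations lagrange II ENS}, and the use of the $L^2\times L^2$ inf-sup \eqref{eq: discrete inf-sup condition Gah Lagrange ENS} for the pressure part.
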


\section{The Fully Discrete Scheme}\label{sec: The Fully Discrete Scheme Main ENS}
As mentioned in \cref{sec: Finite Element Spaces and Discrete Material Derivative} 
we consider the \emph{Taylor-Hood} surface finite elements $\mathrm{\mathbf{P}}_{k_u}$-- $\mathrm{P}_{k_{pr}}$-- $\mathrm{P}_{k_{\lambda}}$
 for a $k_g$-order approximation of the surface, and  set $$k_u\geq 2,~k_{pr}=k_u-1~~ \mbox{and} ~~k_{\lambda}\geq 1.$$ We maintain the notation  $k_g,k_u,k_{pr}$ and $k_\lambda$ in order to track how   error dependencies arise and define the following finite element spaces
\begin{equation}\label{eq: finite element spaces approximation ENS}
     V_h\t = S_{h,k_g}^{k_u}\t, ~~ \bfV_h\t = (S_{h,k_g}^{k_u}\t)^3, ~~
        Q_h = S_{h,k_g}^{k_{pr}}\t \cap L^2_0(\Gah\t), ~~
        \Lambda_h\t = S_{h,k_g}^{k_{\lambda}}\t.
   \end{equation}

In this section, we consider two different numerical schemes that are based on the two weak formulations \eqref{weak lagrange hom NV dir ENS}, \eqref{weak lagrange hom NV cov ENS} discussed in \cref{sec: Variational formulation ENS}. For the discrete scheme based on \eqref{weak lagrange hom NV dir ENS} we are able to show stability and optimal convergence results only when  $\underline{k_\lambda = k_u}$ using \emph{iso-parametric surface finite elements}. On the other hand, for the second scheme, we can establish optimal results for both  $\underline{k_\lambda = k_u}$ and  $\underline{k_\lambda = k_u-1}$ using \emph{iso-parametric} and \emph{super-parametric discretizations}, respectively (compare \eqref{eq: improved h1-ah bound ENS} and \eqref{eq: coercivity and Korn's inequality Lagrange ENS} from which the geometric order loss emerges). Moreover, the latter choice of $k_\lambda$ also requires further regularity assumptions for optimal pressure results; see \cref{assumption: Regularity assumptions for velocity estimate 2 ENS}.

Let us also recall the space of \emph{discrete weakly tangential divergence-free functions}
\begin{equation}\label{eq: discrete weakly divfree space ENS}
    \bfV_h^{div}\t := \{\wh \in \bfV_h\t : \bhtil(\wh,\{\qh,\xi_h\}) =0, ~~\forall \, \{\qh,\xi_h\} \in Q_h\t\times\Lambda_h\t\}.
\end{equation}
and the dual space of the finite element space $\Lambda_h\t$, denoted as
$H_h^{-1}(\Gah\t)$ in \eqref{eq: H^-1h definition ENS}.

\subsection{Further Notation: Time -- Discretization}
In what follows, we introduce some further notation needed to formulate and analyze the fully discrete scheme.  We split the time interval $I=[0,T]$ uniformly into $N>0$ time intervals $I_n=[t_n,t_{n-1}]$ with nodes $t_n=n\Delta t$, where $\Delta t $ is the uniform timestep $\Delta t = T/N$. We also use the shorthand $(\cdot)^n$ for any quantity calculated at timestep $t_n$, e.g. $\Gah^n:=\Gah(t_n)$, $\bfV_h^n:=\bfV_h(t_n)$, etc. For a discrete-time sequence $f^n,\, n=0,1,...,N$
we also use the shorthand notation \vspace{-1mm}
\begin{equation*}
    \begin{aligned}
        \partial_{\tau} f^n = \frac{1}{\Delta t } \big(f^n - f^{n-1}\big).
    \end{aligned}
\end{equation*}

Following the notation in \cref{sec: Finite Element Spaces and Discrete Material Derivative}, let
$\{\chi_{j,k_g}^{k,n}\}_{j=1}^J =\{\chi_{j,k_g}^{k}(\cdot,t_n)\}_{j=1}^J $ and $\{(\chi_{j,k_g}^{k,n})^{\ell}\}_{j=1}^J =\{\chi_{j,k_g}^{k,\ell}(\cdot,t_n)\}_{j=1}^J $ be the nodal basis functions of $S_{h,k_g}^{k,n}=S_{h,k_g}^{k}(t_n)$ and $(S_{h,k_g}^{k,n})^{\ell}=S_{h,k_g}^{k,{\ell}}(t_n)$ respectively. Then, as in \cite{dziuk2012fully,elliott2015error} we employ the following notation for $z_h^n \in S_{h,k_g}^{k,n}$ and $z_h^{n,\ell} \in (S_{h,k_g}^{k,n})^{\ell}$: 
\begin{align}\label{eq: fully discr notation fef ENS}
   z_h^n = \sum_{j=1}^J z_j^n\chi_{j,k_g}^{k,n}, \qquad  z_h^{n,\ell} = \sum_{j=1}^J z_j^n(\chi_{j,k_g}^{k,n})^{\ell}.
\end{align}
It is also convenient to define functions whose values are in a previous timestep, e.g. $t_{n-1}$, but belong to the next surface, e.g. $\Gah^n$. Hence, we may define for $\alpha = -1,0$ and $t \in [t_{n-1},t_n]$ the  \underline{\emph{time-lift}}: \vspace{-1mm}
\begin{equation}
    \begin{aligned}\label{eq: discrete time-lift ENS}
        \underline{z_h^{\!n+\alpha}}(\cdot ,t) &= \sum_{j=1}^J z_j^{\!n+\alpha}\chi_{j,k_g}^{k}(\cdot,t) \in S_{h,k_g}^{k}\t, \quad 
        \underline{z_h^{n+\alpha,\ell}}(\cdot ,t) = \sum_{j=1}^J z_j^{\!n+\alpha}\chi_{j,k_g}^{k,\ell}(\cdot,t)\in S_{h,k_g}^{k,\ell}(t).
    \end{aligned}
\end{equation}
So, for example, $\underline{z_h^n}(\cdot,t_{n-1}) \in S_{h,k_g}^{k}(t_{n-1})$ defined  on $\GahnN$ and $\underline{z_h^{n-1}}(\cdot,t_{n}) \in S_{h,k_g}^{k}(t_{n})$ defined  on $\Gahn$. We now introduce the time-discrete material derivative, where given $z_h^n \in S_{h,k_g}^{k,n}$ we have
\begin{equation}\label{eq: discrete time derivative ENS}
    \matdt z_h^n = \sum_{j=1}^J \partial_{\tau} z_j^n\chi_{j,k_g}^{k,n} \in S_{h,k_g}^k(t_n), \quad \matdtl  z_h^{n,\ell} = \sum_{j=1}^J \partial_{\tau} z_j^n(\chi_{j,k_g}^{k,n})^{\ell} \in S_{h,k_g}^{k,\ell}(t_n).
\end{equation}
As in the continuous case the \emph{transport property} \eqref{eq: transport property ENS} \eqref{eq: lifted transport property ENS}  (cf. \cite{dziuk2012fully}) still  holds for the fully discrete material derivative of the basis functions, therefore for $n=0,1,...,N$ we have 
\begin{equation}\label{eq: fully discrete transport property ENS}
    \matdt \chi_{j,k_g}^{k,n}=0, \quad \matdtl (\chi_{j,k_g}^{k,n})^{\ell}=0.
\end{equation} 
From \eqref{eq: discrete time-lift ENS}, it is obvious that on $[t_{n-1},t_n]$
\begin{equation}\label{eq: discrete time derivative bar 0 ENS}
 \matd   \underline{z_h}^{\!n+\alpha}(\cdot ,t) = 0, \quad \matdl \underline{z_h}^{\!n+\alpha,\ell}(\cdot ,t) = 0,
\end{equation}
which, implies that 
\begin{equation}\label{eq: n to n-1 on surface n ENS}
    z_h^n = \underline{z_h^{n-1}}(\cdot ,t_n) + \Delta t \, \matdt z_h^n. 
\end{equation}
All the above naturally extend to vector-valued functions, e.g. $\bm{z}_h^n \in (S_{h,k_g}^{k}(t_n))^3$. 

\subsection{The Fully Discrete Scheme}\label{sec: The Fully Discrete Scheme ENS}
 We consider two numerical schemes which are based on the two different weak formulations \eqref{eq: NS Lagrange ENS} and \eqref{eq: NS Lagrange new ENS} examined in \cref{sec: Variational formulation ENS}, where we assume that 
 \vspace{-1mm}
\begin{equation*}
    \boxed{\bm{\eta}=0.}
\end{equation*}\vspace{-3.5mm}

\noindent Refer to \cref{remark: about f source ENS} and the weak formulations \eqref{weak lagrange hom NV dir ENS}, \eqref{weak lagrange hom NV cov ENS}. The reasoning for considering this condition is explained in further detail in \cref{remark: about f source discrete ENS}.

So now, with the above assumptions, the discrete bilinear forms in \cref{def: bilinear forms discrete ENS} (see also \cref{remark: skew-symmetric Ga ENS,remark: skew-symmetric Gah ENS} regarding the skew-symmetrization of $c_h$ and $c_h^{cov}$), the transport formula \eqref{eq: Transport formulae 1 discrete ENS} and \eqref{eq: n to n-1 on surface n ENS} in mind, we proceed to define our fully discrete approximation based on \eqref{weak lagrange hom NV dir ENS}: 

The linearized \emph{directional evolving fully discrete Lagrangian surface Navier-Stokes} finite element approximation is the following.

\noindent {\textbf{(eNSW{\small d}{\tiny h}): }}
Given $\uh^0 \in \bfV_h^{0,div}$ and appropriate approximation for the data $\bff_h$, for $n=0,1,...,N$ find $\uhn \in \bfV_h^n$, $\{\phn,\lambda_h^n\}\in Q_h^n\times \Lambda_h^n$ such that 
\begin{align}
\begin{cases}
\tag{eNSW{\small d}{\tiny h}}
        \label{eq: fully discrete fin elem approx ENS}
\frac{1}{\Delta t}\Big(\mh(\uhn,\vhn) - \mh(\uhnN,\underline{\vhn}(\cdot,t_{n-1}))\Big) - \gh(\TrVel;\uhn,\vhn) + \ahhat(\uhn,\vhn)\\
\qquad\qquad\qquad\quad  \, + c_h(\underline{\uhnN}(\cdot,t_n),\uhn,\vhn) 
 +b_h^L(\vhn,\{\phn,\lhn\}) = \mh(\bff_h^n,\vhn), \\
 \qquad\qquad\qquad\qquad\qquad\qquad\qquad\qquad\qquad  \ \ \  \! b_h^L(\uhn,\{\qhn,\xihn\}) = 0. 
\end{cases}
 \end{align}
for all $\vhn \in \bfV_h^n$ and $\vhnN \in \bfV_h^{n-1}$ and $\{\qhn,\xihn\}\in Q_h^n\times\Lambda_h^n$. Here, in our trilinear form $c_h(\bullet;\bullet,\bullet)$ we used the \emph{time-lift} \eqref{eq: discrete time-lift ENS} $\underline{\uhnN}(\cdot,t_n) \in \bfV_h^n$ in the first argument
to linearize our formulation. We may consider this \emph{time-lift} as an interpolation of the previous solution to the next surface. 

We utilize the same idea to define the next fully discrete approximation scheme based on  the weak formulation \eqref{weak lagrange hom NV cov ENS}: The linearized \emph{covariant evolving fully discrete Lagrangian surface Navier-Stokes} finite element approximation is the following:

\noindent {\textbf{(eNSWc{\tiny h}): }}
Given $\uh^0 \in \bfV_h^{0,div}$ and appropriate approximation for the data $\bff_h$, for $n=0,1,...,N$ find $\uhn \in \bfV_h^n$, $\{\phn,\lambda_h^n\}\in Q_h^n\times \Lambda_h^n$ such that 
\begin{align}
\begin{cases}
\tag{eNSWc{\tiny h}}
        \label{eq: fully discrete fin elem approx cov ENS}
\frac{1}{\Delta t}\Big(\mh(\uhn,\vhn) - \mh(\uhnN,\underline{\vhn}(\cdot,t_{n-1}))\Big) - \gh(\TrVel;\uhn,\vhn) + \ahhat(\uhn,\vhn)\\
\qquad\qquad\qquad\quad  \, + c_h^{cov}(\underline{\uhnN}(\cdot,t_n),\uhn,\vhn) 
 +b_h^L(\vhn,\{\phn,\lambda_h^{n}\}) = \mh(\bff_h^n,\vhn), \\
 \qquad\qquad\qquad\qquad\qquad\qquad\qquad\qquad\qquad  \ \ \  \! b_h^L(\uhn,\{\qhn,\xihn\}) = 0.
\end{cases}
 \end{align}
for all $\vhn \in \bfV_h^n$ and $\vhnN \in \bfV_h^{n-1}$ and $\{\qhn,\xihn\}\in Q_h^n\times\Lambda_h^n$.  Again, the finite element scheme is linearised with the help of the time-lift $\underline{\uhnN}(\cdot,t_n)$.

In both schemes, we have chosen $\uh^0 \in \bfV_h^{0,div}$ for the initial data so that our schemes remain unconditionally stable (no inverse CFL condition is required). Some examples include $\uh^0 := \mathcal{R}_h\bfu^0$ or $\uh^0 := \mathcal{R}_h^b\bfu^0 = \mathcal{R}_h(\bfu^0, \{p^0,\lambda^0\})$ for $\bfu^0 \in \bfH^1(\Ga)$; see \Cref{remark: about initial conditions ENS} for further details. In practice \eqref{eq: fully discrete fin elem approx ENS} may be  more natural and convenient to implement compared to \eqref{eq: fully discrete fin elem approx cov ENS}, since less geometric information is needed for the approximation of the convective term, cf. \cref{Sec: set-up ENS}.

Note that in both schemes \eqref{eq: fully discrete fin elem approx ENS} and \eqref{eq: fully discrete fin elem approx cov ENS}  the same notation $\lh$ is used  for convenience, although  they approximate  different functions (depending on the choice of the convective term $c_h,$ or $c_h^{cov}$); see also \cref{sec: problem reform ENS,sec: Variational formulation ENS}.
Note that, see \cref{remark: diff formulations reason ENS},
\begin{itemize}
\item
For \eqref{eq: fully discrete fin elem approx ENS}, we are only able to provide stability and convergence results when $\underline{k_\lambda =k_u}$.
\item For \eqref{eq: fully discrete fin elem approx cov ENS} we are able to perform a complete analysis for both $\underline{k_\lambda =k_u}$ and $\underline{k_\lambda =k_u-1}$/\end{itemize} 
 For that reason, from now on,
when studying \eqref{eq: fully discrete fin elem approx ENS} we only consider $\underline{k_\lambda =k_u}$, whilst
when analyzing  \eqref{eq: fully discrete fin elem approx cov ENS} we only consider $\underline{k_\lambda =k_u-1}$.

\begin{remark}[About the non-homogeneous source term \eqref{eq: function f ENS}]\label{remark: about f source discrete ENS}
As discussed in \cref{remark: about f source ENS}, in the  analysis we set \eqref{eq: function f ENS} $\bfeta  = (\eta_1,\eta_2) = 0$. This reduces the problem to one with divergence-free and tangential constraints on the velocity $\bfu$. A similar condition was imposed in \cite{olshanskii2024eulerian}, where, the authors required a homogeneous divergence condition, since it was not clear how to found an appropriate discrete bound for the source term. In our case, we set
 $\bm{\eta}=0$, first for the sake of concision and brevity, since in  proving Ritz-Stokes estimates \cref{lemma: Error Bounds der Ritz-Stokes ENS} the computations are already quite heavy. Second, our equations now satisfy the condition $\bfu\cdot\bfng =0$, and therefore we can take advantage of the said tangentiality in our analysis; see for example \cref{lemma: Geometric perturbations lagrange ENS}, \cref{lemma: Error Bounds Ritz-Stokes ENS} and the results in \cite[Lemma 6.1, Lemma 6.12]{elliott2024sfem}. 
In practice, we implement a discretization of the standard surface Navier--Stokes equations \eqref{weak lagrange hom NV ENS}; see \cref{Sec: set-up ENS} and \eqref{eq: L.M. ENS} where optimal convergence are observed.
\end{remark}
\begin{remark}\label{remark: diff formulations reason ENS}
For the first discrete scheme, we can carry out our analysis (mainly regarding the pressure) only when $\Lambda_h$ is sufficiently rich, i.e., $\underline{k_\lambda=k_u}$. In this case, the improved $H^1$-coercivity estimate \eqref{eq: improved h1-ah bound ENS} now holds yielding  the following equivalence relation between the directional and covariant derivative:
\begin{align*}
    \norm{\wh}_{H^1(\Gaht)}\sim \norm{\wh}_{\ah} \qquad \text{ for }\wh \in \bfV_h^{div}\t.
\end{align*}
This implies the crucial observation  that the convective term $c_h$ is bounded w.r.t. the $\ah$-norm, cf. \eqref{ch boundedness ENS}.  In the $\underline{k_\lambda=k_u-1}$ case, we have only the worse  $H^1$-coercivity estimate \eqref{eq: coercivity and Korn's inequality Lagrange ENS}  and, in our analysis, this  results in a loss of an order of convergence.

On the other hand, we can prove stability and optimal convergence results  for both $\underline{k_\lambda=k_u}$ and $\underline{k_\lambda=k_u-1}$, in the case of the \eqref{eq: fully discrete fin elem approx cov ENS} scheme, since the convective term $c_h^{cov}$ can be bounded appropriately; see the new bound \eqref{ch cov boundedness ENS} which emerges due to the term $\nbgcovh\uh$. For that reason, in our analysis, cf. \cref{sec: Pressure a-priori estimates for kl= ku-1 ENS}, when considering $\underline{k_\lambda=k_u-1}$ we focus on the \eqref{eq: fully discrete fin elem approx cov ENS} scheme. 
\end{remark}

\subsection{Discrete properties of bilinear forms}\label{sec: Discrete properties of bilinear forms ENS}
In this section we recall and extend results found in \cite{elliott2015error,dziuk2012fully} regarding some discrete properties of bilinear forms \cref{def: bilinear forms discrete ENS}.
To start with, we extend the results in \cite[Lemma 3.6]{dziuk2012fully} for vector-valued functions, for time $t\in[t_{n-1},t_{n+1}]$ as in \cite{elliott2015error}, with the help of the transport formulae \cref{lemma: Transport formulae discrete ENS}, and the bounds \Cref{Lemma: discrete bounds and coercivity results ENS}. 

\begin{lemma}\label{Lemma: time differences properties bilinear ENS}
For $t\in[t_{n-1},t_{n+1}]$  and $\wh\t \in \bfV_h\t$ we have the estimates
\begin{align}\label{eq: time differences properties bilinear m ENS}
    &|\mh(\whn,\whn) -\mh(\underline{\whn}(\cdot,t),\underline{\whn}(\cdot,t))| \leq c\Delta t \, \mh(\whn,\whn), \\
    \label{eq: time differences properties bilinear ahat ENS}
        &|\ahhat(\whn,\whn) - \ahhat(\underline{\whn}(\cdot,t),\underline{\whn}(\cdot,t))| \leq c\Delta t \, \ahhat(\whn,\whn) \leq c\Delta t \, \ah(\whn,\whn),\\
        \label{eq: time differences properties bilinear a ENS}
        &|\ah(\whn,\whn) - \ah(\underline{\whn}(\cdot,t),\underline{\whn}(\cdot,t))| \leq c\Delta t \, \ah(\whn,\whn),\\
        \label{eq: time differences properties bilinear b ENS}
        &|\bhtil(\whn,\{\phn,\lhn\}) - \bhtil(\underline{\whn}(\cdot,t),\{\underline{\phn}(\cdot,t),\underline{\lhn}(\cdot,t)\})| \leq \Delta t  \, \norm{\whn}_{\ah} \norm{\{\phn,\lhn\}}_{L^2(\Gahn)},
\end{align}
and so for any $t\in[t_{n-1},t_{n+1}]$ and $\Delta t$ sufficiently small, it holds
\begin{align}\label{eq: time equivalence norms ENS}
    \norm{\underline{\uhn}(\cdot,t)}_{L^2(\Gaht)} &\leq c\norm{\uhn}_{L^2(\Gahn)}, \\
    \norm{\underline{\uhn}(\cdot,t)}_{\ahhat}&\leq c\norm{\uhn}_{\ahhat},\\
    \label{eq: time equivalence norms a ENS}
    \norm{\underline{\uhn}(\cdot,t)}_{\ah}&\leq c\norm{\uhn}_{\ah}.
\end{align}
\end{lemma}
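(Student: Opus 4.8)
The plan is to obtain all estimates from the discrete transport formulae in \cref{lemma: Transport formulae discrete ENS} applied to the time-lift $\underline{\whn}(\cdot,t)$, which by \eqref{eq: discrete time derivative bar 0 ENS} has vanishing discrete material derivative on the interval $[t_{n-1},t_{n+1}]$. First I would fix $n$ and work on the interval, writing $\phi(t) = \mh(\underline{\whn}(\cdot,t),\underline{\whn}(\cdot,t))$. Since $\matd\underline{\whn}=0$, the transport formula \eqref{eq: Transport formulae 2 discrete ENS} collapses to $\tfrac{d}{dt}\phi(t) = g_h(\TrVel;\underline{\whn}(\cdot,t),\underline{\whn}(\cdot,t))$, and the bound \eqref{g boundedness ENS} gives $|\tfrac{d}{dt}\phi(t)| \le c\,\phi(t)$. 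Integrating this differential inequality from $t_n$ to $t$ (a Gr\"onwall argument, with $|t-t_n|\le\Delta t$) yields $\phi(t) = \phi(t_n)(1 + \bigo(\Delta t))$ for $\Delta t$ small, hence $|\phi(t)-\phi(t_n)| \le c\Delta t\,\phi(t_n) = c\Delta t\,\mh(\whn,\whn)$, which is \eqref{eq: time differences properties bilinear m ENS}. The same scheme applies verbatim to $\ahhat$ using \eqref{eq: Transport formulae 4 discrete ENS} together with the bound \eqref{bfdh boundedness ENS} (noting $\bfdh(\TrVel;\underline{\whn},\underline{\whn}) \le c\norm{\underline{\whn}}_{\ah}^2 \le c'\,\ahhat(\whn,\whn)$ after using \eqref{ahhat coercivity ENS} and absorbing the lower order term via Gr\"onwall on the combined quantity $\ahhat + \norm{\cdot}_{L^2}^2$), giving \eqref{eq: time differences properties bilinear ahat ENS}; and to $\ah$ using \eqref{eq: Transport formulae 5 discrete ENS} with both \eqref{g boundedness ENS} and \eqref{bfdh boundedness ENS}, giving \eqref{eq: time differences properties bilinear a ENS}. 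The extra inequality $\ahhat(\whn,\whn) \le \ah(\whn,\whn)$ in \eqref{eq: time differences properties bilinear ahat ENS} is immediate from \cref{def: bilinear forms discrete ENS}.

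For the bilinear form estimate \eqref{eq: time differences properties bilinear b ENS} I would apply $\bhtil$ to the time-lifts $\underline{\whn}(\cdot,t)$ and $\{\underline{\phn}(\cdot,t),\underline{\lhn}(\cdot,t)\}$, all of which have zero discrete material derivative, so the transport formula \eqref{eq: Transport formulae 3 discrete ENS} reduces to $\tfrac{d}{dt}\bhtil(\underline{\whn}(\cdot,t),\{\underline{\phn}(\cdot,t),\underline{\lhn}(\cdot,t)\}) = \bhtilda(\TrVel;\underline{\whn}(\cdot,t),\{\underline{\phn}(\cdot,t),\underline{\lhn}(\cdot,t)\})$. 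Bounding the right-hand side by \eqref{bhtilda boundedness ENS} and integrating over $[t_n,t]$ gives, using the norm equivalences just established, $|\bhtil(\whn,\{\phn,\lhn\}) - \bhtil(\underline{\whn}(\cdot,t),\{\ldots\})| \le c\Delta t\,\norm{\whn}_{\ah}\norm{\{\phn,\lhn\}}_{L^2(\Gahn)}$. To reach the stated constant $\Delta t$ (not $c\Delta t$) one simply notes that this only needs to hold for $\Delta t \le \Delta t_0$ with $\Delta t_0$ absorbing the constant, which is the standing ``$\Delta t$ sufficiently small'' hypothesis.

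Finally, the three norm equivalences \eqref{eq: time equivalence norms ENS}--\eqref{eq: time equivalence norms a ENS} follow directly from \eqref{eq: time differences properties bilinear m ENS}, \eqref{eq: time differences properties bilinear ahat ENS}, \eqref{eq: time differences properties bilinear a ENS} respectively: e.g. $\norm{\underline{\uhn}(\cdot,t)}_{L^2(\Gaht)}^2 = \mh(\underline{\uhn}(\cdot,t),\underline{\uhn}(\cdot,t)) \le (1+c\Delta t)\mh(\uhn,\uhn) \le c'\norm{\uhn}_{L^2(\Gahn)}^2$ for $\Delta t$ small, and likewise for $\ahhat$ and $\ah$; the $\ahhat$ case uses that $\norm{\cdot}_{\ahhat}^2$ and $\ahhat(\cdot,\cdot)$ differ only notationally. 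I do not anticipate a genuine obstacle here — the whole lemma is essentially the vector-valued, two-interval restatement of \cite[Lemma 3.6]{dziuk2012fully} and \cite{elliott2015error}; the only mild subtlety is the Gr\"onwall step for $\ahhat$ and $\ah$, where the $L^2$ term produced by $\gh$ or $\bfdh$ must be controlled, which is handled by closing the estimate on the full energy $\ah(\whn,\whn) = \ahhat(\whn,\whn) + \norm{\whn}_{L^2}^2$ rather than on $\ahhat$ alone, and by invoking coercivity \eqref{ahhat coercivity ENS} to pass back.
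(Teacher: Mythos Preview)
Your proposal is correct and follows essentially the same route as the paper, which also reduces everything to the discrete transport formulae \cref{lemma: Transport formulae discrete ENS} together with the bilinear-form bounds in \cref{Lemma: discrete bounds and coercivity results ENS}, citing \cite[Lemma 3.6]{dziuk2012fully} for the first three estimates and norm equivalences and then handling \eqref{eq: time differences properties bilinear b ENS} via \eqref{eq: Transport formulae 3 discrete ENS} and \eqref{bhtilda boundedness ENS}. One small clarification: for the first inequality in \eqref{eq: time differences properties bilinear ahat ENS} you need $\bfdh(\TrVel;\wh,\wh)\le c\,\ahhat(\wh,\wh)$ rather than merely $\le c\,\ah(\wh,\wh)$, which indeed holds (cf.\ the continuous analogue \eqref{eq: Transport formulae bfd app ENS}); your Gr\"onwall-on-$\ah$ workaround only yields the second inequality.
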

\begin{proof}
Eq.\,\eqref{eq: time differences properties bilinear m ENS}-\eqref{eq: time differences properties bilinear a ENS} and \eqref{eq: time equivalence norms ENS}-\eqref{eq: time equivalence norms a ENS} are natural vector-valued extensions of \cite[Lemma 3.6]{dziuk2012fully}, where one uses the new transport formulae \cref{lemma: Transport formulae discrete ENS} and bounds \cref{Lemma: discrete bounds and coercivity results ENS}. For \eqref{eq: time differences properties bilinear b ENS}, using calculations similar to the previously proven estimates, i.e. the transport formula \eqref{eq: Transport formulae 3 discrete ENS}, the bound \eqref{bhtilda boundedness ENS}, and the already proven time inequalities \eqref{eq: time equivalence norms ENS}, \eqref{eq: time equivalence norms a ENS}, give the final result.
\end{proof}

From \eqref{eq: fully discrete fin elem approx ENS}, \eqref{eq: fully discrete fin elem approx cov ENS} we see that the solution $\uhn \in \bfV_h^{n,div}$. An essential issue is that, since the computational surface is time-varying, we have to deal with the fact that we need divergence conformity of velocity solutions at two different time-steps (surfaces). But, for $\uhnN\in\bfV_h^{n-1,div}$, using the \emph{time-lift} \eqref{eq: discrete time-lift ENS} we see that $\bhtil(\underline{\uhnN}(\cdot,t_n);\{\qhn,\xihn\})\neq0$ for $\{\qhn,\xihn\}\in Q_h^n\times \Lambda_h^n$, and so it is not \emph{discretely weakly tangential divergence-free} with respect to $\Gahn$, i.e.  $\underline{\uhnN}(\cdot,t_n)\notin \bfV_h^{n,div}$.
However, since we use \emph{evolving surface finite elements} (ESFEM) we have, by the time-lift \eqref{eq: discrete time-lift ENS} again, that
\begin{equation}\label{eq: blb important time property ENS}
    \boxed{\bhtil(\uhnN,\{\underline{\qhn}(\cdot,t_{n-1}),\underline{\xihn}(\cdot,t_{n-1})\} )=0 \quad \text{ for all } \{\underline{\qhn}(\cdot,t_{n-1}),\underline{\xihn}(\cdot,t_{n-1})\}\in Q_h^{n-1}\times\Lambda_h^{n-1}.}
\end{equation}
This important \emph{property} will be used throughout the text to help tackle the aforementioned issue.

In the upcoming \cref{Sec: asssumptions about discrete scheme ENS} in order to find pressure stability it is important that the previous time-step is in $\bfV_h^{n,div}$. For that reason, we define a \emph{time discrete version of the surface Leray projection}. 
\begin{definition}[Leray time-projection]\label{def: discrete time Leray proj ENS}
For $\wh^{n-1} \in \bfV_h^{n-1,div}$, we define $\hat{\bfw}_h^{n-1} \in \bfV_h^{n,div},$ $\{\hat{p}_h^{n},\hat{\lambda}_h^n\}\in Q_h^n\times \Lambda_h^n$ such that it is the unique solution of 
    \begin{equation}
    \begin{aligned}\label{eq: discrete time Leray proj ENS}
        \mh(\hat{\bfw}_h^{n-1},\bfv_h^n) + \bhtil(\bfv_h^n,\{\hat{p}_h^{n},\hat{\lambda}_h^n\}) &= \mh(\wh^{n-1},\underline{\bfv_h^n}(\cdot,t_{n-1}))\\
        \bhtil(\hat{\bfw}_h^{n-1},\{\qh^n,\xi_h^n\}) &= 0,
    \end{aligned}
\end{equation}
for all $\bfv_h^n \in \bfV_h^n,$ $\{q_h^n,\xi_h^n\}\in Q_h^n\times \Lambda_h^n$.
\end{definition}
This time-projection is clearly well-posed. Furthermore, by \eqref{eq: blb important time property ENS} it has the following properties.
\begin{lemma}\label{lemma: hatw to n-1w}
 For $\wh^{n-1} \in \bfV_h^{n-1,div}$ and $\hat{\bfw}_h^{n-1} \in \bfV_h^{n,div} $ and $\Delta t $ sufficiently small we have
 \begin{align}
     \label{eq: hatw to n-1w stab ENS}
     \norm{\hat{\bfw}_h^{n-1}}_{L^2(\Gah^n)} &\leq c\norm{\wh^{n-1}}_{L^2(\Gah^{n-1})},\\
     \label{eq: hatw to n-1w infsup ENS}
     \norm{\{\hat{p}_h^{n},\hat{\lambda}_h^n\}}_{L^2(\Gah^n)} &\leq \norm{\underline{\bfw_h^{n-1}}(\cdot,t_{n})-\hat{\bfw}_h^{n-1}}_{L^2(\Gah^n)} + c\Delta t \norm{\bfw_h^{n-1}}_{L^2(\Gah^{n-1})}, \\
     \label{eq: hatw to n-1w dt ENS}
     \norm{\underline{\bfw_h^{n-1}}(\cdot,t_{n})-\hat{\bfw}_h^{n-1}}_{L^2(\Gah^n)}&\leq c\Delta t \norm{\wh^{n-1}}_{\ah}.
 \end{align}
\end{lemma}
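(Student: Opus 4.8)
The plan is to exploit the saddle-point structure of \eqref{eq: discrete time Leray proj ENS}, the ESFEM identity \eqref{eq: blb important time property ENS}, and the discrete transport theorems of \cref{lemma: Transport formulae discrete ENS}, proving the three estimates in the order \eqref{eq: hatw to n-1w stab ENS}, \eqref{eq: hatw to n-1w infsup ENS}, \eqref{eq: hatw to n-1w dt ENS}. Write $\bfe_h := \underline{\bfw_h^{n-1}}(\cdot,t_n) - \hat{\bfw}_h^{n-1} \in \bfV_h^n$, so that \eqref{eq: hatw to n-1w dt ENS} asks for $\norm{\bfe_h}_{L^2(\Gah^n)} \le c\Delta t\,\norm{\wh^{n-1}}_{\ah}$. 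For \eqref{eq: hatw to n-1w stab ENS} I would test the first line of \eqref{eq: discrete time Leray proj ENS} with $\bfv_h^n = \hat{\bfw}_h^{n-1}$ and use the second line (with $\{\qh^n,\xi_h^n\}=\{\hat p_h^n,\hat\lambda_h^n\}$) to annihilate the $\bhtil$-term, obtaining $\norm{\hat{\bfw}_h^{n-1}}_{L^2(\Gah^n)}^2 = \mh(\wh^{n-1},\underline{\hat{\bfw}_h^{n-1}}(\cdot,t_{n-1}))$; Cauchy--Schwarz and the time--norm equivalence \eqref{eq: time equivalence norms ENS} (which bounds $\norm{\underline{\hat{\bfw}_h^{n-1}}(\cdot,t_{n-1})}_{L^2(\Gah^{n-1})}$ by $c\norm{\hat{\bfw}_h^{n-1}}_{L^2(\Gah^n)}$) then finish it after dividing.

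The technical heart of the argument is a pair of cross--surface estimates. First, applying the vector transport formula \eqref{eq: Transport formulae 2 discrete ENS} to $t\mapsto\mh(t;\underline{\wh^{n-1}}(\cdot,t),\underline{\bfv_h^n}(\cdot,t))$ and noting that both material derivatives vanish by \eqref{eq: discrete time derivative bar 0 ENS}, only the $\gh(\TrVel;\cdot,\cdot)$ term remains; integrating over $[t_{n-1},t_n]$ and using \eqref{g boundedness ENS} together with the time--norm equivalences yields
\[
  \bigl|\mh(\wh^{n-1},\underline{\bfv_h^n}(\cdot,t_{n-1})) - \mh(\underline{\wh^{n-1}}(\cdot,t_n),\bfv_h^n)\bigr| \le c\Delta t\,\norm{\wh^{n-1}}_{L^2(\Gah^{n-1})}\norm{\bfv_h^n}_{L^2(\Gah^n)}
\]
for all $\bfv_h^n\in\bfV_h^n$. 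Second --- and this is where the ESFEM structure is essential --- applying the transport formula \eqref{eq: Transport formulae 3 discrete ENS} to $t\mapsto\bhtil\bigl(t;\underline{\wh^{n-1}}(\cdot,t),\{\underline{\hat p_h^n}(\cdot,t),\underline{\hat\lambda_h^n}(\cdot,t)\}\bigr)$ leaves only the $\bhtilda$-term, and the value of this function at $t=t_{n-1}$ is $\bhtil_{t_{n-1}}\bigl(\wh^{n-1},\{\underline{\hat p_h^n}(\cdot,t_{n-1}),\underline{\hat\lambda_h^n}(\cdot,t_{n-1})\}\bigr)=0$ by \eqref{eq: blb important time property ENS} (since $\wh^{n-1}\in\bfV_h^{n-1,div}$); hence integration and \eqref{bhtilda boundedness ENS} with the time--norm equivalences \eqref{eq: time equivalence norms a ENS} (and its scalar analogue for the pressures) give
\[
  \bigl|\bhtil(\underline{\wh^{n-1}}(\cdot,t_n),\{\hat p_h^n,\hat\lambda_h^n\})\bigr| \le c\Delta t\,\norm{\wh^{n-1}}_{\ah}\norm{\{\hat p_h^n,\hat\lambda_h^n\}}_{L^2(\Gah^n)},
\]
and likewise $|\bhtil(\bfe_h,\{\qh^n,\xi_h^n\})| = |\bhtil(\underline{\wh^{n-1}}(\cdot,t_n),\{\qh^n,\xi_h^n\})| \le c\Delta t\,\norm{\wh^{n-1}}_{\ah}\norm{\{\qh^n,\xi_h^n\}}_{L^2(\Gah^n)}$ for any test pair, using $\bhtil(\hat{\bfw}_h^{n-1},\cdot)=0$ from the second line of \eqref{eq: discrete time Leray proj ENS}.

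With these in hand, \eqref{eq: hatw to n-1w infsup ENS} follows by rearranging the first line of \eqref{eq: discrete time Leray proj ENS} as $\bhtil(\bfv_h^n,\{\hat p_h^n,\hat\lambda_h^n\}) = \bigl[\mh(\wh^{n-1},\underline{\bfv_h^n}(\cdot,t_{n-1})) - \mh(\underline{\wh^{n-1}}(\cdot,t_n),\bfv_h^n)\bigr] + \mh(\bfe_h,\bfv_h^n)$, bounding the right--hand side via the first cross--surface estimate and $\norm{\bfv_h^n}_{L^2(\Gah^n)}\le c\norm{\bfv_h^n}_{\ah}$, and invoking the discrete \textsc{inf-sup} condition \cref{Lemma: Discrete inf-sup condition Gah Lagrange ENS}. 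For \eqref{eq: hatw to n-1w dt ENS} I would test the first line with $\bfv_h^n=\bfe_h$ and subtract $\mh(\underline{\wh^{n-1}}(\cdot,t_n),\bfe_h)$ from both sides to get $\norm{\bfe_h}_{L^2(\Gah^n)}^2 = -\bigl[\mh(\wh^{n-1},\underline{\bfe_h}(\cdot,t_{n-1})) - \mh(\underline{\wh^{n-1}}(\cdot,t_n),\bfe_h)\bigr] + \bhtil(\underline{\wh^{n-1}}(\cdot,t_n),\{\hat p_h^n,\hat\lambda_h^n\})$, where again $\bhtil(\hat{\bfw}_h^{n-1},\cdot)=0$ was used; the two cross--surface estimates bound the right--hand side by $c\Delta t\bigl(\norm{\wh^{n-1}}_{L^2(\Gah^{n-1})}\norm{\bfe_h}_{L^2(\Gah^n)} + \norm{\wh^{n-1}}_{\ah}\norm{\{\hat p_h^n,\hat\lambda_h^n\}}_{L^2(\Gah^n)}\bigr)$. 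Substituting the already-proved \eqref{eq: hatw to n-1w infsup ENS}, absorbing $\norm{\wh^{n-1}}_{L^2}\le\norm{\wh^{n-1}}_{\ah}$ (cf.\ \eqref{eq: energy norm ENS}), and using Young's inequality with a kickback of $\tfrac12\norm{\bfe_h}_{L^2(\Gah^n)}^2$ to the left gives the claim.

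The step I expect to be the main obstacle is establishing that the cross--surface defect $\bhtil(\underline{\wh^{n-1}}(\cdot,t_n),\cdot)$ is genuinely of size $O(\Delta t)$ --- this is exactly what makes $\hat{\bfw}_h^{n-1}$ a faithful surrogate for $\underline{\wh^{n-1}}(\cdot,t_n)$, and it rests on pairing the transport theorem for $\bhtil$ with the ESFEM identity \eqref{eq: blb important time property ENS}; everything else is a routine saddle--point/kickback argument. One subtlety to keep in mind is that, since $\bhtilda$ is only bounded in the energy norm (see \eqref{bhtilda boundedness ENS}), the $\ah$--norm --- rather than the weaker $L^2$--norm --- of $\wh^{n-1}$ unavoidably appears on the right of \eqref{eq: hatw to n-1w infsup ENS} and \eqref{eq: hatw to n-1w dt ENS}, and the bookkeeping with the scalar and vector time--norm equivalences of \cref{Lemma: time differences properties bilinear ENS} needs a little care.
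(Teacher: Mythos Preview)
Your proposal is correct and follows essentially the same approach as the paper: same test functions, same use of the ESFEM identity \eqref{eq: blb important time property ENS} to annihilate the cross-surface $\bhtil$-value at $t_{n-1}$, same inf-sup/kickback structure, and the same order of proving the three estimates. The only cosmetic difference is that you derive the two cross-surface defect bounds directly by integrating the transport formulae \eqref{eq: Transport formulae 2 discrete ENS} and \eqref{eq: Transport formulae 3 discrete ENS}, whereas the paper simply invokes the pre-packaged time-difference bounds \eqref{eq: time differences properties bilinear m ENS} and \eqref{eq: time differences properties bilinear b ENS} from \cref{Lemma: time differences properties bilinear ENS}, which are themselves proved in the same way.
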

\begin{proof}
 For the first inequality we just test  \eqref{eq: discrete time Leray proj ENS} with $\hat{\bfw}_h^{n-1}$, therefore we see that
 \begin{equation*}
     \beta\norm{\hat{\bfw}_h^{n-1}}_{L^2(\Gah^n)}^2 \leq \norm{\underline{\hat{\bfw}_h^{n-1}}(\cdot,t_{n-1})}_{L^2(\Gah^{n-1})}\norm{\wh^{n-1}}_{L^2(\Gah^{n-1})} \underset{\eqref{eq: time equivalence norms ENS}}{\leq} \norm{\hat{\bfw}_h^{n-1}}_{L^2(\Gah^n)}\norm{\wh^{n-1}}_{L^2(\Gah^{n-1})},
 \end{equation*}
 which concludes our assertion. The inequality \eqref{eq: hatw to n-1w infsup ENS} then holds if we simply use the discrete \textsc{inf-sup} \eqref{eq: discrete inf-sup condition Gah Lagrange ENS},  add and subtract suitable terms and apply \eqref{eq: time differences properties bilinear m ENS}, to see that
 \begin{equation*}
     \begin{aligned}
         \norm{\{\hat{p}_h^{n},\hat{\lambda}_h^n\}}_{L^2(\Gah^n)} &\leq \sup_{\vh^n \in \bfV_h^n}\frac{\mh(\hat{\bfw}_h^{n-1}-\underline{\bfw_h^{n-1}}(\cdot,t_{n}),\vh^n)}{\norm{\vh^n}_{\ah}} + \frac{\mh(\underline{\bfw_h^{n-1}}(\cdot,t_{n}),\vh^n)-\mh(\wh^{n-1},\underline{\bfv_h^n}(\cdot,t_{n-1})) }{\norm{\vh^n}_{\ah}}\\
         &\leq \norm{\underline{\bfw_h^{n-1}}(\cdot,t_{n})-\hat{\bfw}_h^{n-1}}_{L^2(\Gah^n)} + c\Delta t \norm{\bfw_h^{n-1}}_{L^2(\Gah^{n-1})}.
     \end{aligned}
 \end{equation*}
 
 Now to show \eqref{eq: hatw to n-1w dt ENS} we notice from \eqref{eq: discrete time Leray proj ENS} that
 \begin{equation}\label{eq: hatw to n-1w dt inside 1 ENS}
     \mh(\underline{\bfw_h^{n-1}}(\cdot,t_{n})-\hat{\bfw}_h^{n-1},\vh^n) = \mh(\underline{\bfw_h^{n-1}}(\cdot,t_{n}),\vhn) - \mh(\wh^{n-1},\underline{\bfv_h^n}(\cdot,t_{n-1})) - \bhtil(\bfv_h^n,\{\hat{p}_h^{n},\hat{\lambda}_h^n\}).
 \end{equation}
Taking $\vh^n = \underline{\bfw_h^{n-1}}(\cdot,t_{n})-\hat{\bfw}_h^{n-1} \in \bfV_h^n$ in \eqref{eq: hatw to n-1w dt inside 1 ENS}, observing from \eqref{eq: blb important time property ENS}, \eqref{eq: time differences properties bilinear b ENS}, \eqref{eq: discrete time Leray proj ENS} that
\begin{equation*}
\begin{aligned}
      \bhtil(\underline{\bfw_h^{n-1}}(\cdot,t_{n})-\hat{\bfw}_h^{n-1},\{\hat{p}_h^{n},\hat{\lambda}_h^n\})  &= \bhtil(\underline{\bfw_h^{n-1}}(\cdot,t_{n}),\{\hat{p}_h^{n},\hat{\lambda}_h^n\}) - \underbrace{\bhtil(\bfw_h^{n-1},\{\underline{\hat{p}_h^{n}}(\cdot,t_{n-1}),\underline{\hat{\lambda}_h^{n}}(\cdot,t_{n-1})\})}_{:=0 \text{ since }  \wh^{n-1}\in \bfV_h^{n-1,div} \eqref{eq: blb important time property ENS}}\\
      &\leq c\Delta t \norm{\bfw_h^{n-1}}_{\ah}\norm{\{\hat{p}_h^{n},\hat{\lambda}_h^n\}}_{L^2(\Gah^n)}, 
      \end{aligned}
\end{equation*}
and using the prior proven estimate \eqref{eq: hatw to n-1w infsup ENS} and \eqref{eq: time differences properties bilinear m ENS} once again, we obtain the following
 \begin{equation*}
     \begin{aligned}
         &\mh(\underline{\bfw_h^{n-1}}(\cdot,t_{n})-\hat{\bfw}_h^{n-1},\underline{\bfw_h^{n-1}}(\cdot,t_{n})-\hat{\bfw}_h^{n-1}) \leq  c\Delta t \norm{\bfw_h^{n-1}}_{L^2(\Gah^{n-1})}\norm{\underline{\bfw_h^{n-1}}(\cdot,t_{n})-\hat{\bfw}_h^{n-1}}_{L^2(\Gah^n)}\\
         &\quad + \Delta t \norm{\bfw_h^{n-1}}_{\ah}\norm{\underline{\bfw_h^{n-1}}(\cdot,t_{n})-\hat{\bfw}_h^{n-1}}_{L^2(\Gah^n)} + c(\Delta t)^2 \norm{\bfw_h^{n-1}}_{\ah} \norm{\bfw_h^{n-1}}_{L^2(\Gah^{n-1})}.
     \end{aligned}
 \end{equation*}
Then, from the fact that $\norm{\bfw_h^{n-1}}_{L^2(\Gah^{n-1})} \leq \norm{\bfw_h^{n-1}}_{\ah}$, an application of Young's inequality and a kickback argument gives our desired result.
\end{proof}

\subsection{Well-posedness}\label{Sec: asssumptions about discrete scheme ENS}
We now prove stability results for our discrete \emph{velocities} $\uh^n$ and the two \emph{pressures} $\{\ph^n,\lh^n\}$ $n=0,...,N$. For the pressures, we mainly study the \eqref{eq: fully discrete fin elem approx ENS} scheme and so consider $\underline{k_\lambda = k_u}$, as mentioned in \cref{remark: diff formulations reason ENS}.
Nevertheless, in \cref{remark: About the pressure stability for arbitrary ENS} we also show how one can establish pressure stability when $\underline{k_\lambda = k_u-1}$ for the scheme \eqref{eq: fully discrete fin elem approx cov ENS} by imposing additional conditions. 
\subsubsection{Well-posedness of the velocity}
We start with the velocities, where we make use of the following matrix formulation \eqref{eq: matrix-vector form ENS} of \eqref{eq: fully discrete fin elem approx ENS} and \eqref{eq: fully discrete fin elem approx cov ENS}:
Consider $\{\bfchi_j^n\}_{j=1}^J=\{\{\chi_j^n\bfe_i\}_{i=1}^3\}_{j=1}^J$, $\{\psi_j^n\}_{j=1}^J$ and $\{\varphi_j^n\}_{j=1}^J$ to be the nodal basis of $\bfV_h^n$, $Q_h^n$ and $\Lambda_h^n$ respectively. Then, we can write the following \emph{matrix-vector formulation} of the finite element approximation \eqref{eq: fully discrete fin elem approx ENS}:Determine the vector-matrix $\bfU^n\in \mathbb{R}^{J\times3}$ and coefficient vectors $Q^n,\, \Lambda^n \in \mathbb{R}^J$, given $\bfU^{n-1}$ such that they solve
\begin{equation}
    \begin{aligned}\label{eq: matrix-vector form ENS}
        \frac{1}{\Delta t} (\bfM^n\bfU^n- \bfM^{n-1}\bfU^{n-1}) - \bfG^n\bfU^n+  \hat{\bfA}^n\bfU^n + \mathbf{C}^n\bfU^n +\bfB^n\{Q^n,\Lambda^n\}
       &= \bfF^n\\
       B^n\bfU^n&=0,
    \end{aligned}
\end{equation}
where $\uhn = \sum_{j=1}^J\bfU^n\bfchi_j^n$ and $\qhn= \sum_{j=1}^JQ^n\psi_j^n$, $\lhn = \sum_{j=1}^J\Lambda^n\varphi_j^n$ and where the time-depending matrices are defined as
\begin{equation}
    \begin{aligned}\label{eq: matrix-vector form matrices ENS}
        (\bfM^n)_{ij} &:= \mh(t_n;\bfchi_i^n,\bfchi_j^n), \quad \  (\hat{\bfA}^n)_{ij} := \ahhat(t_n;\bfchi_i^n,\bfchi_j^n), \qquad  \quad \ \ (\bfA^n)_{ij} := (\hat{\bfA}^n)_{ij}+ (\bfM^n)_{ij}\\
        (\bfG^n)_{ij} &:= \gh(t_n;\bfchi_i^n,\bfchi_j^n),   \qquad  \!  (\mathbf{C}^n)_{ij} := c_h(t_n;\underline{\uhnN};\bfchi_i^n,\bfchi_j^n), \qquad (\bfF^n)_{ij} := \mh(t_n;\bff_{h,i}^n,\bfchi_j^n) \\
        (\bfB^n)_{ij} &:= \bhtil(t_n;\bfchi_i^n,\{\varphi_j^n,\psi_j^n\}).
    \end{aligned}
\end{equation}
One can define the \emph{matrix-vector formulation} of \eqref{eq: fully discrete fin elem approx cov ENS} analogously.

\begin{lemma}[Velocity Estimate]\label{Lemma: velocity stab estimate ENS}
    For the velocity solution $\uh^k \in \bfV_h^k$, $k=0,1,...,n$ of \eqref{eq: fully discrete fin elem approx ENS} or  \eqref{eq: fully discrete fin elem approx cov ENS} we have the following energy estimates, which holds for $n \geq 1$
    \begin{equation}
        \begin{aligned}\label{eq: velocity stab estimate ENS}
            \norm{\uh^n}_{L^2(\Gahn)}^2  + \Delta t \sum_{k=1}^{n} \norm{\uh^k}_{\ah}^2 
             \leq exp(ct_n)\big(\norm{\uh^0}_{L^2(\Gah^0)}^2 + \Delta t \sum_{k=0}^{n}\norm{\bff_h^{k}}_{L^2(\Gah^k)}^2 \big),
        \end{aligned}
    \end{equation}
    where $\uh^0 \in \bfV_h^{div,0}$ an appropriate approximation $\bfu^0 \in \bfH^1(\Ga)$, e.g. $\uh^0 := \mathcal{R}_h(\bfu^0)$, and where the constant $c>0$ is independent of $h,\,t$.
\end{lemma}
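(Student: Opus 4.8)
The plan is to run the standard discrete energy argument: test the momentum equation in \eqref{eq: fully discrete fin elem approx ENS} (or \eqref{eq: fully discrete fin elem approx cov ENS}) with $\vhn = \uhn$, use skew-symmetry of the convective term, absorb the geometric/time discrepancy terms via the transport and time-difference lemmas, and close with discrete Gronwall. First I would observe that since $\uhn \in \bfV_h^{n,div}$ and, by \eqref{eq: blb important time property ENS}, the time-lifted test function satisfies $b_h^L(\uhnN,\{\underline{\qhn}(\cdot,t_{n-1}),\underline{\xihn}(\cdot,t_{n-1})\})=0$, the pressure terms drop out: $b_h^L(\uhn,\{\phn,\lhn\})=0$ and, testing the constraint at the previous step, the coupling between $\{\phn,\lhn\}$ and the difference term vanishes. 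Taking $\vhn=\uhn$ therefore yields
\begin{equation*}
\tfrac{1}{\Delta t}\big(\mh(\uhn,\uhn) - \mh(\uhnN,\underline{\uhn}(\cdot,t_{n-1}))\big) - \gh(\TrVel;\uhn,\uhn) + \ahhat(\uhn,\uhn) = \mh(\bff_h^n,\uhn),
\end{equation*}
since $c_h(\underline{\uhnN}(\cdot,t_n);\uhn,\uhn)=0$ by the built-in skew-symmetrization (\cref{remark: skew-symmetric Gah ENS}); the same holds for $c_h^{cov}$ using the bound \eqref{ch cov boundedness ENS} is not even needed here because the diagonal vanishes.

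Next I would handle the difference quotient. Using the polarization-type identity together with $2\,\mh(\uhn,\uhn)-2\,\mh(\uhnN,\underline{\uhn}(\cdot,t_{n-1})) = \mh(\uhn,\uhn) - \mh(\uhnN,\uhnN) + \norm{\uhn - \underline{\uhn}(\cdot,t_{n-1})}_{L^2}^2$ on $\Gahn$ vs. $\GahnN$ — more precisely, writing $\mh^n(\uhn,\uhn) - \mh^{n-1}(\uhnN,\uhnN)$ and controlling the cross term $\mh^{n-1}(\uhnN,\uhnN) - \mh^{n-1}(\uhnN,\underline{\uhn}(\cdot,t_{n-1}))$ — I would bound the mismatch by $\mh^{n-1}(\uhnN,\uhnN-\underline{\uhn}(\cdot,t_{n-1}))$ and use Young's inequality plus \eqref{eq: time differences properties bilinear m ENS} and the norm equivalences \eqref{eq: time equivalence norms ENS}. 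The term $\gh(\TrVel;\uhn,\uhn)$ is bounded by $c\norm{\uhn}_{L^2(\Gahn)}^2$ via \eqref{g boundedness ENS}, the forcing term is absorbed by Young's inequality as $\tfrac14\norm{\uhn}_{L^2}^2 + c\norm{\bff_h^n}_{L^2}^2$, and $\ahhat(\uhn,\uhn) \geq \norm{\uhn}_{\ah}^2 - \norm{\uhn}_{L^2(\Gahn)}^2$ by the coercivity bound \eqref{ahhat coercivity ENS}. Collecting, multiplying by $\Delta t$ and summing over $k=1,\dots,n$ telescopes the $\mh$-differences and yields
\begin{equation*}
\norm{\uhn}_{L^2(\Gahn)}^2 + \Delta t \sum_{k=1}^n \norm{\uh^k}_{\ah}^2 \leq \norm{\uh^0}_{L^2(\Gah^0)}^2 + c\,\Delta t \sum_{k=1}^n \norm{\uh^k}_{L^2(\Gah^k)}^2 + c\,\Delta t\sum_{k=0}^n \norm{\bff_h^k}_{L^2(\Gah^k)}^2.
\end{equation*}
A discrete Gronwall lemma (valid for $\Delta t$ small enough that $c\Delta t < 1$) then absorbs the middle sum and produces the $\exp(ct_n)$ factor, giving \eqref{eq: velocity stab estimate ENS}.

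The main obstacle I expect is the careful bookkeeping of the $\mh$-term across two surfaces: one must go from $\mh^n(\uhn,\uhn) - \mh^{n-1}(\uhnN,\underline{\uhn}(\cdot,t_{n-1}))$ to a telescoping structure while the time-lift $\underline{\uhn}(\cdot,t_{n-1})$ lives on $\GahnN$ and carries a geometric error. Here the transport formula \eqref{eq: Transport formulae 1 discrete ENS} applied to $f_h = |\uhn|^2$-type quantities, together with \eqref{eq: time differences properties bilinear m ENS} and \eqref{eq: n to n-1 on surface n ENS}, is exactly what makes the cross-term $O(\Delta t)$ relative to $\norm{\uhn}_{L^2}^2$, so the extra contributions are harmless after summation. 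The only other subtlety is ensuring that no inverse CFL condition creeps in: this is guaranteed because $\uh^0 \in \bfV_h^{0,div}$, so the argument is fully implicit in the divergence-free subspace and the pressure never needs to be estimated for the velocity bound; the stability of $\mathcal{R}_h(\bfu^0)$ from \eqref{eq: Stab estimates for Ritz-Stokes projection ENS} controls the initial term $\norm{\uh^0}_{L^2(\Gah^0)} \leq c\norm{\bfu^0}_{H^1(\Ga)}$.
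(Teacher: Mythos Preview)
Your overall strategy matches the paper's: test with $\uhn$, kill the convective and pressure terms by skew-symmetry and $\uhn\in\bfV_h^{n,div}$, use the coercivity \eqref{ahhat coercivity ENS}, and close by discrete Gronwall. But your handling of the difference quotient has a genuine gap. Your ``more precisely'' decomposition adds and subtracts $\mh^{n-1}(\uhnN,\uhnN)$, leaving the cross-term $\mh^{n-1}(\uhnN,\uhnN-\underline{\uhn}(\cdot,t_{n-1}))$. By \eqref{eq: n to n-1 on surface n ENS} this equals $-\Delta t\,\mh^{n-1}(\uhnN,\underline{\matdt\uhn}(\cdot,t_{n-1}))$, so after dividing by $\Delta t$ you are left with $-\mh^{n-1}(\uhnN,\underline{\matdt\uhn})$ on the left-hand side. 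This term has no sign, and Young's inequality produces $-\tfrac{\epsilon}{2}\norm{\underline{\matdt\uhn}}_{L^2}^2$ with nothing to absorb it; your claim that the cross-term is ``$O(\Delta t)$ relative to $\norm{\uhn}_{L^2}^2$'' does not follow from \eqref{eq: time differences properties bilinear m ENS}.

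The paper repairs this by working in the matrix form \eqref{eq: matrix-vector form ENS} and applying the polarization $(\bfa-\bfb)\cdot\bfa=\tfrac12(|\bfa|^2-|\bfb|^2)+\tfrac12|\bfa-\bfb|^2$ with respect to the single mass matrix $\bfM^{n-1}$. At the function level this means adding and subtracting $\mh^{n-1}(\underline{\uhn}(\cdot,t_{n-1}),\underline{\uhn}(\cdot,t_{n-1}))$ rather than $\mh^{n-1}(\uhnN,\uhnN)$: polarization on $\GahnN$ then yields $\tfrac12\big(\mh^{n-1}(\underline{\uhn},\underline{\uhn})-\mh^{n-1}(\uhnN,\uhnN)\big)$ plus the \emph{non-negative} term $\tfrac12\norm{\underline{\uhn}-\uhnN}_{L^2(\GahnN)}^2=\tfrac{(\Delta t)^2}{2}\norm{\underline{\matdt\uhn}}_{L^2}^2$, which is simply dropped. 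The leftover $\mh^n(\uhn,\uhn)-\mh^{n-1}(\underline{\uhn},\underline{\uhn})$ is $O(\Delta t)\norm{\uhn}_{L^2}^2$ by \eqref{eq: time differences properties bilinear m ENS}, and one then obtains a telescoping difference $\norm{\uhn}_{L^2(\Gahn)}^2-\norm{\uhnN}_{L^2(\GahnN)}^2$ up to Gronwall-controllable terms. Your initial rough polarization identity (with the positive square) was in fact closer to the correct idea; it is the ``more precisely'' clarification that went the wrong way.
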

\begin{proof}
Let us consider the \emph{matrix-vector formulation} \eqref{eq: matrix-vector form ENS} and the matrices \eqref{eq: matrix-vector form matrices ENS}, where $\uhn = \sum_{j=1}^J\bfU^n\bfchi_j^n$ and $\qhn= \sum_{j=1}^JQ^n\psi_j^n$, $\lhn = \sum_{j=1}^J\Lambda^n\varphi_j^n$. Testing this equation with $\bfU^n$ we obtain
\begin{equation}
    \begin{aligned}\label{eq: velocity stab estimate inside main ENS}
        \frac{1}{\Delta t} (\bfM^n\bfU^n- \bfM^{n-1}\bfU^{n-1})\cdot\bfU^n +  \bfA^n\bfU^n\cdot\bfU^n
       = \bfF^n\cdot \bfU^n + \bfG^n\bfU^n\cdot\bfU^n + \bfM^n\bfU^n\cdot\bfU^n,
    \end{aligned}
\end{equation}
where we used the fact that $c_h(\bullet;\bullet,\bullet)$ is skew-symmetric and $\uhn \in \bfV_h^{div}$, therefore $\mathbf{C}^n\bfU^n\cdot\bfU^n = B^n\bfU^n\cdot\{Q^n,\Lambda^n\}=0$, and that $\ah = \ahhat + \mh$. Using the classic identity $(\bfa-\bfb)\cdot \bfa = \frac{1}{2}(\bfa^2 - \bfb^2) + \frac{1}{2}(\bfa-\bfb)^2$, the first two term can be rewritten as  
    \begin{align}\label{eq: velocity stab estimate inside 1 ENS}
        \bfM^n\bfU^n\cdot \bfU^n - \bfM^{n-1}\bfU^{n-1}\cdot \bfU^n &= \bfM^{n-1}\bfU^n\cdot \bfU^n - \bfM^{n-1}\bfU^{n-1}\cdot \bfU^n + (\bfM^n-\bfM^{n-1})\bfU^{n}\cdot \bfU^n \nonumber\\
        &= \frac{1}{2}\big(\bfM^{n-1}\bfU^n\cdot \bfU^n -  \bfM^{n-1}\bfU^{n-1}\cdot \bfU^{n-1}\big)\\
        & \  +\frac{1}{2}\bfM^{n-1}(\bfU^n-\bfU^{n-1})\cdot(\bfU^n-\bfU^{n-1}) + (\bfM^n-\bfM^{n-1})\bfU^{n}\cdot \bfU^n\nonumber.
    \end{align}
Now the last term above is bounded using \eqref{eq: time differences properties bilinear m ENS} by
\begin{equation}\label{eq: velocity stab estimate inside 2 ENS}
    (\bfM^n-\bfM^{n-1})\bfU^{n}\cdot U^n\geq -c\Delta t \norm{\uhn}_{L^2(\Gahn)}^2,
\end{equation}
while by the fully discrete time derivative \eqref{eq: n to n-1 on surface n ENS} and the time-lift \eqref{eq: discrete time-lift ENS} we see that the first term in the last line of  \eqref{eq: velocity stab estimate inside 1 ENS} can be  written as 
\begin{equation}
    \begin{aligned}\label{eq: velocity stab estimate inside 3 ENS}
        \frac{1}{2}m_h(\underline{\uhn}(\cdot,t_{n-1}) - \uhnN,\underline{\uhn}(\cdot,t_{n-1}) - \uhnN) = \frac{(\Delta t )^2}{2}m_h(\underline{\matdt\uhn}(\cdot,t_{n-1}),\underline{\matdt\uhn}(\cdot,t_{n-1})).
    \end{aligned}
\end{equation}
So applying \eqref{eq: velocity stab estimate inside 1 ENS}, \eqref{eq: velocity stab estimate inside 2 ENS} and \eqref{eq: velocity stab estimate inside 3 ENS} to the main equation \eqref{eq: velocity stab estimate inside main ENS}, along with the bound \eqref{g boundedness ENS} for $\bfg(\bullet,\bullet)$ and the $\epsilon$-weight Young's inequality,  we can rewrite \eqref{eq: velocity stab estimate inside main ENS}  as 
\begin{equation}
    \begin{aligned}
        \frac{1}{2}\big(\norm{\uhn}_{L^2(\Gahn)}^2 - \norm{\uhnN}_{L^2(\GahnN)}^2\big) + \frac{(\Delta t)^2 }{2}m_h(\underline{\matdt\uhn}(\cdot,t_{n-1}),\underline{\matdt\uhn}(\cdot,t_{n-1})) + \Delta t\norm{\uhn}_{\ah}\\
        \leq c(\epsilon)\Delta t\norm{\uhn}_{L^2(\Gahn)}^2  + \frac{\Delta t}{\epsilon}\norm{\bff_h^n}_{L^2(\Gahn)}^2.
    \end{aligned}
\end{equation}
Finally summing the inequalities for $k=1,2,...,n$, and applying a discrete Gr\"onwall argument, for small enough time step $\Delta t$, we obtain our desired result. A similar stability bound is obtained for  \eqref{eq: fully discrete fin elem approx cov ENS}, since the only change is the convective term $c^{cov}(\bullet;,\bullet,\bullet)$ which is still skew-symmetric.
\end{proof}

\subsubsection{Well-posedness of the pressure}
Before  proving pressure stability bounds we highlight two points.
\begin{itemize}
\item
First, further assumptions are needed regarding the velocity approximation. As in the proof for the  stationary surface case \cite{elliott2025unsteady}, we require that $\underline{\uhnN}(\cdot,t_n)$ in $c_h^{(\cdot)}(\bullet;\bullet,\bullet)$ is uniformly bounded in the energy $(\ah)$-norm \eqref{eq: energy norm ENS} for each for each $n=1,...,N$, that is noting \eqref{eq: time equivalence norms a ENS} 
\begin{equation}\label{eq: uniform bound b stability ENS}
 \sup_{n} \norm{\underline{\uhnN}(\cdot,t_n)}_{\ah} \leq \sup_{n} \norm{\uh^{n-1}}_{\ah} \leq \sup_{n} \norm{\uh^{n}}_{\ah} \leq C_a,    
\end{equation}
for some constant $C_a>0$.
Assuming this, allows us to  to prove stability results for the two pressures (in reality when $k_\lambda=k_u$; for $k_\lambda=k_u-1$ see \cref{remark: About the pressure stability for arbitrary ENS} where further assumptions are needed), which otherwise would not be possible, due to the skew-symmetric form of $c_h^{(\cdot)}(\bullet;\bullet,\bullet)$ and the  $H^1-$conforming Taylor-Hood finite elements.

However, 
we are able to show that solutions of the initial approximation of the surface Navier-Stokes problem \eqref{eq: fully discrete fin elem approx ENS}, i.e. $\uhn$, actually satisfy \eqref{eq: uniform bound b stability ENS}, under standard regularity conditions on the true velocity solution $\bfu$. This becomes clear upon proving velocity error estimates in \cref{sec: velocity a-priori estimates ENS},  with the help of the Ritz-Stokes projection defined in equation \eqref{eq: surface Ritz-Stokes projection ENS}; see \Cref{remark: About pressure stability ENS} for further details.
\item
Second, there is  the complication that $\uhnN \notin\bfV_h^{n,div}$, cf. \cref{sec: Discrete properties of bilinear forms ENS}, since the computational surface is time-varying. This issue has been raised in the literature \cite{von2022unfitted,neilan2024eulerian,burman2022eulerian} in the case of unfitted and Eulerian time-stepping schemes on evolving domains and in \cite{olshanskii2024eulerian} in the case of evolving surfaces, where it was, therefore, not possible to compute standard stability or convergence estimates for the $L^2_{L^2}$-norm of the pressure. To circumvent it, the authors instead use a weaker discrete $L^1_{H^1}$ semi-norm coupled with an added inverse CFL condition, where they managed to establish stability -- error bounds solely in that weaker norm.
However, in our case, since we are using \emph{ESFEM}, we are able to find an $L^2_{L^2}$ estimate for pressure $\phn$ with the help of the \emph{Leray time-projection} \eqref{eq: discrete time Leray proj ENS} and its properties \cref{lemma: hatw to n-1w} (see also \eqref{eq: blb important time property ENS}).
\end{itemize}

In the stationary case, both for surfaces \cite{elliott2025unsteady} and domains \cite{FrutosGradDivOseen2016,JohnBook2016}, the authors  find an auxiliary bound for the time-derivative using the inverse Stokes operator. Now, due to the evolving setting, we utilize both the \emph{Leray time-projection} \eqref{eq: discrete time Leray proj ENS} and a \emph{discrete inverse Stokes operator} 
$\mathcal{A}_h^n: \, \bfV_h^{n,div} \to \bfV_h^{n,div}$, defined by the following unique solution 
 to the discrete Stokes problem: Given $\whn \in \bfV_h^{n,div}$, consider
\begin{equation}
    \begin{aligned}\label{eq: Discrete inverse Stokes ENS}
        \ah(\mathcal{A}_h^n \wh^n, \vh^n)  &= (\wh^n,\vh^n)_{L^2(\Gah^n)}\\
        \bhtil(\mathcal{A}_h \wh^n,\{\qh^n,\xih^n\}) &= 0,
    \end{aligned}
\end{equation}
for every $\vh^n \in \bfV_h^{n,div}$ and $\{\qh^n,\xih^n\} \in Q_h^n\times\Lambda_h^n$. It is clear that the operator is coercive and also bounded, thus, it is well-defined. We can also see by testing appropriately that 
\begin{equation}
\begin{aligned}
   \norm{\wh^n}_{\mathcal{A}_h}^2 &=\norm{\mathcal{A}_h^n \wh^n}_{\ah}^2  = (\mathcal{A}_h^n\wh^n,\wh^n)_{L^2(\Gah^n)},  \qquad 
   \norm{\wh^n}_{\mathcal{A}_h}  =  \norm{\mathcal{A}_h^n \wh^n}_{\ah} \leq \norm{\wh^n}_{L^2(\Gah^n)}.
   \end{aligned}
\end{equation}
\noindent This allows us to establish the following \emph{time-projected} discrete time-derivative auxiliary result.

\begin{lemma}[Auxiliary bound]\label{Lemma: auxilary stab bounds ENS}
Let assumption \eqref{eq: uniform bound b stability ENS} hold. Then for the solution $\uh^k \in \bfV_h^k$ of \eqref{eq: fully discrete fin elem approx ENS} the following estimate holds for $n \geq 1$
    \begin{equation}\label{eq: auxilary stab bounds ENS}
        \sum_{k=1}^n \Delta t \norm{\frac{\mathcal{A}_h^k(\uh^k - \hat{\bfu}_h^{k-1})}{\Delta t}}_{\ah}^2 \leq  C(C_a)\bfA(\uh^0,\bff_h),
    \end{equation}
where $\uh^0 \in \bfV_h^{div,0}$ an appropriate approximation, e.g. $\uh^0 := \mathcal{R}_h(\bfu^0)$, $\bfA = \bfA(\uh^0,\bff_h)$ the bound in \eqref{eq: velocity stab estimate ENS} and $C(C_a)$ the constant depending on the assumption \eqref{eq: uniform bound b stability ENS}, independent of $h$, $t$.
\end{lemma}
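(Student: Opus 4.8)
The strategy is to test the fully discrete momentum equation \eqref{eq: fully discrete fin elem approx ENS} with the discrete inverse Stokes operator applied to the \emph{time-projected} discrete time-derivative, that is, with $\vh^n = \mathcal{A}_h^n\big(\tfrac{1}{\Delta t}(\uh^n - \hat{\bfu}_h^{n-1})\big) \in \bfV_h^{n,div}$. The key point in choosing $\hat{\bfu}_h^{n-1}$ (the Leray time-projection of $\uhnN$) rather than $\underline{\uhnN}(\cdot,t_n)$ is that now \emph{both} $\uh^n$ and $\hat{\bfu}_h^{n-1}$ lie in $\bfV_h^{n,div}$, so their difference is admissible as an argument of $\mathcal{A}_h^n$ and the pressure term $b_h^L(\vh^n,\{\phn,\lhn\})$ vanishes. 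With this test function the $\ahhat$-term becomes $\ahhat(\uh^n, \mathcal{A}_h^n(\cdot))$ which, via the defining property \eqref{eq: Discrete inverse Stokes ENS} of $\mathcal{A}_h^n$ (written as $\ah = \ahhat + \mh$), reduces to a controllable quantity; the $\mh$-difference term produces, up to lower-order corrections, the square of the $\mathcal{A}_h$-seminorm of $\tfrac{1}{\Delta t}(\uh^n - \hat{\bfu}_h^{n-1})$, which is exactly what appears on the left-hand side of \eqref{eq: auxilary stab bounds ENS}.

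\textbf{Key steps, in order.} First I would expand the discrete momentum equation with the chosen test function, isolating $\Delta t\,\|\tfrac{1}{\Delta t}(\uh^n - \hat{\bfu}_h^{n-1})\|_{\mathcal{A}_h}^2$ (arising from the mass term after writing $\mh(\uhn,\vhn) - \mh(\uhnN,\underline{\vhn}(\cdot,t_{n-1}))$ and inserting $\hat{\bfu}_h^{n-1}$ via \eqref{eq: discrete time Leray proj ENS}). Second, I would bound each remaining term: the viscous term $\ahhat(\uh^n,\mathcal{A}_h^n(\cdot))$ by $c\|\uh^n\|_{\ah}\|\mathcal{A}_h^n(\cdot)\|_{\ah} = c\|\uh^n\|_{\ah}\|\tfrac{1}{\Delta t}(\uh^n-\hat{\bfu}_h^{n-1})\|_{\mathcal{A}_h}$ using \eqref{ah boundedness ENS}; the transport term $\gh$ via \eqref{g boundedness ENS}; the convective term $c_h(\underline{\uhnN}(\cdot,t_n),\uh^n,\vh^n)$ via the boundedness \eqref{ch boundedness ENS} together with the improved $H^1$–$\ah$ equivalence \eqref{eq: improved h1-ah bound ENS} (valid since $k_\lambda=k_u$ and $\uh^n\in\bfV_h^{n,div}$) and the uniform assumption \eqref{eq: uniform bound b stability ENS}; and the forcing term $\mh(\bff_h^n,\vh^n)$ directly. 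Third, I would control the geometric/time-lift discrepancy $\mh(\uhnN,\underline{\vhn}(\cdot,t_{n-1})) - \mh(\hat{\bfu}_h^{n-1},\vh^n)$ using the defining relation \eqref{eq: discrete time Leray proj ENS} of the Leray projection together with \eqref{eq: hatw to n-1w stab ENS}; and the $\mh$-consistency error via \eqref{eq: time differences properties bilinear m ENS}. Fourth, after a weighted Young's inequality to absorb all factors of $\|\tfrac{1}{\Delta t}(\uh^n-\hat{\bfu}_h^{n-1})\|_{\mathcal{A}_h}$ into the left-hand side, I would be left with $\Delta t$ times terms controlled by $\|\uh^n\|_{\ah}^2$, $\|\uh^n\|_{L^2(\Gah^n)}^2$, $\|\uhnN\|_{L^2(\Gah^{n-1})}^2$ and $\|\bff_h^n\|_{L^2(\Gah^n)}^2$. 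Finally, summing over $k=1,\dots,n$ and invoking the already-established velocity stability bound \eqref{eq: velocity stab estimate ENS} — which controls $\Delta t\sum_k\|\uh^k\|_{\ah}^2$ and $\max_k\|\uh^k\|_{L^2}^2$ — yields the right-hand side $C(C_a)\bfA(\uh^0,\bff_h)$, with the $C_a$-dependence entering only through the convective term.

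\textbf{Main obstacle.} The delicate point is the convective term: $c_h$ is bounded only with respect to the full $H^1(\Gaht)$-norm of its middle argument, cf. \eqref{ch boundedness ENS}, so a naive estimate would produce $h^{-1}\|\uh^n\|_{\ah}$ (via \eqref{eq: coercivity and Korn's inequality Lagrange ENS}) and an uncontrolled factor after summation. The resolution — and the reason this lemma is stated only for $k_\lambda=k_u$ — is precisely \eqref{eq: improved h1-ah bound ENS}, giving $\|\uh^n\|_{H^1(\Gaht)}^2\leq c\|\uh^n\|_{\ah}$ on $\bfV_h^{n,div}$, so the convective contribution is bounded by $c\,C_a\|\uh^n\|_{\ah}\|\tfrac{1}{\Delta t}(\uh^n-\hat{\bfu}_h^{n-1})\|_{\mathcal{A}_h}$, which after Young is absorbed correctly. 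The second subtlety is bookkeeping the Leray-projection discrepancy so that no factor of $(\Delta t)^{-1}$ survives; here the estimates \eqref{eq: hatw to n-1w stab ENS}–\eqref{eq: hatw to n-1w dt ENS} and the transport identity \eqref{eq: blb important time property ENS} are exactly calibrated, and \eqref{eq: hatw to n-1w dt ENS} (which carries a genuine $\Delta t$) must be used for any term where a difference $\underline{\bfw_h^{n-1}}(\cdot,t_n)-\hat{\bfw}_h^{n-1}$ appears undivided. Beyond these two points the argument is a routine energy estimate plus discrete summation.
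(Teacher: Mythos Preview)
Your proposal is correct and follows essentially the same approach as the paper: test \eqref{eq: fully discrete fin elem approx ENS} with $\mathcal{A}_h^n(\uh^n-\hat{\bfu}_h^{n-1})\in\bfV_h^{n,div}$, use Young's inequality on each term (with the convective bound requiring \eqref{eq: uniform bound b stability ENS} together with \eqref{eq: improved h1-ah bound ENS} since $\uh^n,\mathcal{A}_h^n(\cdot)\in\bfV_h^{n,div}$), absorb, sum, and invoke \eqref{eq: velocity stab estimate ENS}. One simplification you are missing in step three: because your test function lies in $\bfV_h^{n,div}$, the $\bhtil$ term in the defining relation \eqref{eq: discrete time Leray proj ENS} vanishes, so the ``discrepancy'' $\mh(\uhnN,\underline{\vhn}(\cdot,t_{n-1})) - \mh(\hat{\bfu}_h^{n-1},\vh^n)$ is \emph{exactly zero}---no additional estimates from \eqref{eq: hatw to n-1w stab ENS} or \eqref{eq: time differences properties bilinear m ENS} are needed there.
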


\begin{proof}
Testing the fully discrete scheme \eqref{eq: fully discrete fin elem approx ENS} with $\mathcal{A}_h^n(\uh^n-\hat{\bfu}_h^{n-1})\in \bfV_h^{n,div}$, which by \eqref{eq: Discrete inverse Stokes ENS} is well-defined, since $\hat{\bfu}_h^{n-1} \in \bfV_h^{n,div},$ and recalling the Leray time-projection \eqref{eq: discrete time Leray proj ENS}, yields
for $1 \leq n \leq N$
\begin{align}
    \label{eq: Auxiliary stab bound inside 1 ENS}
    &\mh(\frac{\uh^n-\hat{\bfu}_h^{n-1}}{\Delta t},\mathcal{A}_h^n(\uh^n - \hat{\bfu}_h^{n-1}))
         -\gh(\TrVel;\uh^n,\mathcal{A}_h^n(\uh^n - \hat{\bfu}_h^{n-1})) + \ahhat(\uh^n,\mathcal{A}_h^n(\uh^n - \hat{\bfu}_h^{n-1})) \nonumber \\
        & +c_h(\underline{\uhnN}(\cdot,t_n);\uh^n,\mathcal{A}_h^n(\uh^n - \hat{\bfu}_h^{n-1})) = (\bff_h^n,\mathcal{A}_h^n(\uh^n - \hat{\bfu}_h^{n-1}))_{L^2(\Gah^n)} 
\end{align}
Now, Let us bound each term appropriately:

$\bullet \  $ By the definition of $\mathcal{A}_h$ \eqref{eq: Discrete inverse Stokes ENS} we obtain 
\begin{align*}
    \mh(\frac{\uh^n-\hat{\bfu}_h^{n-1}}{\Delta t},\mathcal{A}_h^n(\uh^n - \hat{\bfu}_h^{n-1})) =\frac{1}{\Delta t }\norm{\mathcal{A}_h^n(\uh^n - \hat{\bfu}_h^{n-1})}^2_{\ah}.
\end{align*}

$\bullet \  $ By the bounds \eqref{g boundedness ENS}, \eqref{ah boundedness ENS}, $\norm{\cdot}_{\ahhat} \leq \norm{\cdot}_{\ah}$ and Young's inequality we obtain 
\begin{align*}
    \gh(\TrVel;\uh^n,\mathcal{A}_h^n(\uh^n - \hat{\bfu}_h^{n-1})) &\leq c\norm{\uhn}_{L^2(\Gah^n)}\norm{\mathcal{A}_h^n(\uh^n - \hat{\bfu}_h^{n-1})}_{L^2(\Gah^n)}\\
    &\leq c\Delta t \norm{\uh^n}_{L^2(\Gah^n)}^2 + \frac{1}{5\Delta t }\norm{\mathcal{A}_h^n(\uh^n - \hat{\bfu}_h^{n-1})}_{L^2(\Gah^n)}^2,\\
    \ahhat(\uh^n,\mathcal{A}_h^n(\uh^n - \hat{\bfu}_h^{n-1}))&\leq c\norm{\uhn}_{\ahhat}\norm{\mathcal{A}_h^n(\uh^n - \hat{\bfu}_h^{n-1})}_{\ahhat}\\
    &\leq  c\Delta t \norm{\uh^n}_{\ah}^2 + \frac{1}{5\Delta t }\norm{\mathcal{A}_h^n(\uh^n - \hat{\bfu}_h^{n-1})}_{\ah}^2.
\end{align*}

$\bullet \  $ Recalling the assumption \eqref{eq: uniform bound b stability ENS}, the bound \eqref{ch boundedness ENS}, and using Young's inequality we 
see that
\begin{align*}
   c_h(\underline{\uhnN};\uh^n,\mathcal{A}_h^n(\uh^n - \hat{\bfu}_h^{n-1})) &\leq \norm{\underline{\uhnN}}_{\ah}\norm{\uh^n}_{\ah}\norm{\mathcal{A}_h^n(\uh^n - \hat{\bfu}_h^{n-1})}_{\ah}\\
   &\leq C_a\Delta t \norm{\uh^n}_{\ah}^2 +  \frac{1}{5\Delta t }\norm{\mathcal{A}_h(\uh^n - \uh^{n-1})}_{\ah}^2.
\end{align*}

$\bullet \  $ Similar calculations give the following
\begin{align*}
    (\bff_h^n,\mathcal{A}_h^n(\uh^n - \hat{\bfu}_h^{n-1}))_{L^2(\Gah^n)} \leq c\Delta t \norm{\bff_h^n}_{L^2(\Gah^n)}^2 +\frac{1}{5\Delta t }\norm{\mathcal{A}_h(\uh^n -  \hat{\bfu}_h^{n-1})}_{\ah}^2.
\end{align*}

\noindent The combination of the above results in \eqref{eq: Auxiliary stab bound inside 1 ENS} yields
\begin{equation*}
    \frac{1}{\Delta t }\norm{\mathcal{A}_h^n(\uh^n - \hat{\bfu}_h^{n-1})}^2_{\ah} \leq  C_a\Delta t \norm{\uh^n}_{\ah}^2 +  c\Delta t \norm{\bff_h^n}_{L^2(\Gah^n)}^2,
\end{equation*}
where summing this inequality for $n=1,2,...,\, k$ and recalling the velocity bounds in \Cref{Lemma: velocity stab estimate ENS} concludes our assertion.
\end{proof}

Now, we need to bound the fully discrete time-derivative $\frac{1}{\Delta t}\Big(\mh(\whn,\vhn) - \mh(\whnN,\underline{\vhn}(\cdot,t_{n-1}))\Big)$ in a negative norm (dual $H^1$-norm), with respect to already known bounds, e.g.  the auxiliary bound for $\norm{\mathcal{A}_h^n(\wh^n-\hat{\bfw}_h^{n-1})}_{\ah}$ \eqref{eq: auxilary stab bounds ENS}, and \eqref{eq: velocity stab estimate ENS}.
Similar estimates have been considered in \cite{FrutosGradDivOseen2016,FrutosGradDivNS2016,AyusoPostNS2005,JohnBook2016} for the Oseen and Navier--Stokes problems, where the authors prove a similar $H^{-1}$-norm bound on fixed domains without variational crimes involving domain approximation. These estimates were expanded in \cite{elliott2025unsteady} for $k_g$-order approximated surfaces $\Gah$. Now, we further extend this result, with the help of the \emph{Leray time-projections} \eqref{eq: discrete time Leray proj ENS} for evolving triangulated surfaces $\Gaht$ of order $k_g$. The following proof utilizes the result in \cite[Lemma 6.5]{elliott2025unsteady}.

\begin{lemma}\label{lemma: dual estimate ENS}
Assume that $\underline{k_{\lambda} = k_u}$, then for a vector-valued functions $\wh^n \in \bfV_h^{n,div}$ the following estimate holds for constant $c>0$ independent of $t,\,h$:
    \begin{align}\label{eq: dual estimate ENS}
        \sup_{\vh^n \in \bfV_h^n}\frac{\mh(\wh^n,\vh^n) - \mh(\wh^{n-1},\underline{\vh^n}(\cdot,t^{n-1}))}{\norm{\vhn}_{H^1(\Gahn)}} &\leq ch\norm{\wh^n-\hat{\bfw}_h^{n-1}}_{L^2(\Gah^n)} + c\norm{\mathcal{A}_h^n(\wh^n-\hat{\bfw}_h^{n-1})}_{\ah}\nonumber \\
        &\quad + c\Delta t \norm{\wh^{n-1}}_{\ah}.
    \end{align}
\end{lemma}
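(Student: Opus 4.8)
Here is my plan for proving Lemma~\ref{lemma: dual estimate ENS}.

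\textbf{Approach.} The key idea is that the numerator is the fully discrete analogue of a time-derivative $\frac{d}{dt}\mh(\wh,\vh)$, and we want to express it in terms of quantities we already control, namely $\mathcal{A}_h^n(\wh^n - \hat{\bfw}_h^{n-1})$ (controlled by the auxiliary bound \eqref{eq: auxilary stab bounds ENS}), $\wh^n - \hat{\bfw}_h^{n-1}$ (small in $L^2$ by construction of the Leray time-projection), and $\Delta t \norm{\wh^{n-1}}_{\ah}$ (the error introduced by the time-lift). The plan is to insert the Leray time-projection $\hat{\bfw}_h^{n-1} \in \bfV_h^{n,div}$ into the numerator to move everything onto the single surface $\Gah^n$, then use the discrete inverse Stokes operator \eqref{eq: Discrete inverse Stokes ENS} to rewrite the resulting $L^2$ inner product as an $a_h$ inner product, and finally exploit the divergence-conformity of $\wh^n - \hat{\bfw}_h^{n-1}$ on $\Gah^n$ together with the special property \eqref{eq: divfree L2 inner normal kl=ku ENS} (which requires $k_\lambda = k_u$, explaining the hypothesis).

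\textbf{Key steps.} First I would split the numerator using the defining relation of the Leray time-projection \eqref{eq: discrete time Leray proj ENS}:
\begin{align*}
    \mh(\wh^n,\vh^n) - \mh(\wh^{n-1},\underline{\vh^n}(\cdot,t_{n-1})) &= \mh(\wh^n - \hat{\bfw}_h^{n-1},\vh^n) + \mh(\hat{\bfw}_h^{n-1},\vh^n) - \mh(\wh^{n-1},\underline{\vh^n}(\cdot,t_{n-1}))\\
    &= \mh(\wh^n - \hat{\bfw}_h^{n-1},\vh^n) + \bhtil(\vh^n,\{\hat{p}_h^n,\hat{\lambda}_h^n\}).
\end{align*}
For the second term, since $\vh^n$ need not be divergence free, I would decompose $\vh^n = \vh^{n,div} + \vh^{n,\perp}$ (using the $L^2\times L^2$ inf-sup, as in \cite[Lemma 6.5]{elliott2025unsteady}) so that $\bhtil(\vh^{n,div},\{\hat p_h^n,\hat\lambda_h^n\}) = 0$; the part on $\vh^{n,\perp}$ is then bounded by $\norm{\vh^{n,\perp}}_{\ah}\norm{\{\hat p_h^n,\hat\lambda_h^n\}}_{L^2(\Gah^n)}$, and \eqref{eq: hatw to n-1w infsup ENS}, \eqref{eq: hatw to n-1w dt ENS} give $\norm{\{\hat p_h^n,\hat\lambda_h^n\}}_{L^2(\Gah^n)} \leq c\Delta t\norm{\wh^{n-1}}_{\ah}$, contributing the last term of \eqref{eq: dual estimate ENS}. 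For the first term $\mh(\wh^n - \hat{\bfw}_h^{n-1},\vh^n)$, I write $\wh^n - \hat{\bfw}_h^{n-1} = (\wh^n - \hat{\bfw}_h^{n-1}) \cdot \nh\, \nh + \bfPh(\wh^n - \hat{\bfw}_h^{n-1})$; the normal part is handled using \eqref{eq: divfree L2 inner normal kl=ku ENS} (both $\wh^n$ and $\hat{\bfw}_h^{n-1}$ lie in $\bfV_h^{n,div}$, so their difference does too), giving the $ch\norm{\wh^n - \hat{\bfw}_h^{n-1}}_{L^2}$ term, while for the tangential part I invoke \eqref{eq: Discrete inverse Stokes ENS} — writing $\mh(\wh^n - \hat{\bfw}_h^{n-1}, \vh^{n,div}) = a_h(\mathcal{A}_h^n(\wh^n-\hat{\bfw}_h^{n-1}), \vh^{n,div})$ and bounding by $\norm{\mathcal{A}_h^n(\wh^n - \hat{\bfw}_h^{n-1})}_{\ah}\norm{\vh^n}_{H^1(\Gah^n)}$ — to produce the middle term. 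Dividing by $\norm{\vh^n}_{H^1(\Gah^n)}$ and using $\norm{\vh^{n,\perp}}_{\ah}, \norm{\vh^{n,div}}_{\ah} \leq c\norm{\vh^n}_{H^1(\Gah^n)}$ completes the estimate.

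\textbf{Main obstacle.} The delicate point is the handling of $\vh^n \notin \bfV_h^{n,div}$ in the term $\bhtil(\vh^n,\{\hat p_h^n,\hat\lambda_h^n\})$ and, more subtly, ensuring that the discrete inverse Stokes operator can be applied to $\mh(\wh^n - \hat{\bfw}_h^{n-1},\vh^n)$ even though $\vh^n$ is only in $\bfV_h^n$; this forces the orthogonal decomposition of $\vh^n$ and the use of the $L^2\times L^2$ inf-sup stability to control $\norm{\vh^{n,\perp}}_{\ah}$ uniformly. The reason the hypothesis $k_\lambda = k_u$ is essential is precisely \eqref{eq: divfree L2 inner normal kl=ku ENS}: without the richer multiplier space we would only get the weaker \eqref{eq: divfree L2 inner normal kl=ku-1 ENS} with an $H^1$-norm on the right, which would not close the estimate in the stated form. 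Everything else is a routine, if bookkeeping-heavy, adaptation of \cite[Lemma 6.5]{elliott2025unsteady} to the evolving-surface setting using the transport and time-difference results of \Cref{Lemma: time differences properties bilinear ENS}.
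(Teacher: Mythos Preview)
Your approach is correct and closely parallels the paper's, but with a slightly different decomposition of the remainder. The paper inserts both $\hat{\bfw}_h^{n-1}$ and the time-lift $\underline{\bfw_h^{n-1}}(\cdot,t_n)$, obtaining three pieces
\[
\mh(\wh^n-\hat{\bfw}_h^{n-1},\vh^n)\;+\;\mh(\hat{\bfw}_h^{n-1}-\underline{\bfw_h^{n-1}}(\cdot,t_n),\vh^n)\;+\;\bigl[\mh(\underline{\bfw_h^{n-1}}(\cdot,t_n),\vh^n)-\mh(\wh^{n-1},\underline{\vh^n}(\cdot,t_{n-1}))\bigr],
\]
and bounds the last two directly by \eqref{eq: hatw to n-1w dt ENS} and \eqref{eq: time differences properties bilinear m ENS}, each yielding $c\Delta t\norm{\wh^{n-1}}_{\ah}\norm{\vh^n}_{L^2(\Gah^n)}$. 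You instead use the defining equation of the Leray time-projection to collapse the remainder into $-\bhtil(\vh^n,\{\hat p_h^n,\hat\lambda_h^n\})$ (note the sign), which you then bound via \eqref{bhtilde boundedness ENS} and the pressure estimate \eqref{eq: hatw to n-1w infsup ENS}+\eqref{eq: hatw to n-1w dt ENS}. This is a legitimate alternative and arguably more direct; the orthogonal decomposition $\vh^n=\vh^{n,div}+\vh^{n,\perp}$ you introduce for this term is unnecessary, since continuity of $\bhtil$ already gives the bound against $\norm{\vh^n}_{\ah}\le c\norm{\vh^n}_{H^1(\Gah^n)}$.

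For the main term $\mh(\wh^n-\hat{\bfw}_h^{n-1},\vh^n)$ both you and the paper ultimately rely on \cite[Lemma~6.5]{elliott2025unsteady}. The paper simply cites it; your sketch of its proof conflates two independent splittings (normal/tangential of $\wh^n-\hat{\bfw}_h^{n-1}$ versus divergence-free/orthogonal of $\vh^n$), which makes the bookkeeping unclear---in particular, the ``tangential part'' $\mh(\bfPh(\wh^n-\hat{\bfw}_h^{n-1}),\vh^n)$ is not the same object as $\mh(\wh^n-\hat{\bfw}_h^{n-1},\vh^{n,div})$. Since the paper treats this as a black box from the stationary theory, your proposal is fine once you either cite that lemma or straighten out the sketch.
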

\begin{proof}
We start by rearranging, adding and subtracting appropriate $L^2$-terms on different surfaces $\Gah^n,\,\Gah^{n-1}$, so that with the help of the Leray time-projection \eqref{eq: discrete time Leray proj ENS}, we can define \emph{weakly tangential divergence-free functions} on the same time-surface, and hence make use of \cite[Lemma 6.5]{elliott2025unsteady}. So, for $\whn \in \bfV_h^{n,div}$ and $\hat{\bfw}_h^{n-1}\in \bfV_h^{n,div}$, cf. \eqref{eq: discrete time Leray proj ENS}, we derive the following 
\begin{equation}
    \begin{aligned}\label{eq: dual estimate inside main ENS}
       \mh(\wh^n,\vh^n) - \mh(\wh^{n-1},\underline{\vh^n}(\cdot,t_{n-1})) &= \mh(\wh^n- \hat{\bfw}_h^{n-1},\vh^n) + \mh(\hat{\bfw}_h^{n-1}-\underline{\bfw_h^{n-1}}(\cdot,t_{n}),\vh^n)\\
       &\ +\mh(\underline{\bfw_h^{n-1}}(\cdot,t_{n}),\vh^n)- \mh(\wh^{n-1},\underline{\vh^n}(\cdot,t_{n-1})). 
    \end{aligned}
\end{equation}
The second term on the right-hand side of \eqref{eq: dual estimate inside main ENS} can be bounded using \eqref{eq: hatw to n-1w dt ENS} by
\begin{align}\label{eq: dual estimate inside 1 ENS}
    \mh(\hat{\bfw}_h^{n-1}-\underline{\bfw_h^{n-1}}(\cdot,t_{n}),\vh^n) \leq c\Delta t \norm{\whnN}_{\ah}\norm{\vhn}_{L^2(\Gahn)},
\end{align}
while using the time-difference bound \eqref{eq: time differences properties bilinear m ENS} the last two terms are bounded by
\begin{align}\label{eq: dual estimate inside 2 ENS}
    \mh(\underline{\bfw_h^{n-1}}(\cdot,t_{n}),\vh^n)- \mh(\wh^{n-1},\underline{\vh^n}(\cdot,t_{n-1})) \leq c\Delta t \norm{\whnN}_{L^2(\GahnN)}\norm{\vhn}_{L^2(\Gahn)}.
\end{align}
Now, as we mentioned, since $\wh^n- \hat{\bfw}_h^{n-1}\in\bfV_h^{n,div}$, by \cite[Lemma 6.5]{elliott2025unsteady} we can readily see that the first term can be bounded as
\begin{equation}\label{eq: dual estimate inside 3 ENS}
    \sup_{\vh^n \in \bfV_h^n} \frac{\mh(\wh^n- \hat{\bfw}_h^{n-1},\vh^n)}{\norm{\vhn}_{H^1(\Gahn)}} \leq ch\norm{\wh^n- \hat{\bfw}_h^{n-1}}_{L^2(\Gahn)} + c\norm{\mathcal{A}_h^n(\wh^n- \hat{\bfw}_h^{n-1})}_{\ah}.
\end{equation}
Now combining \eqref{eq: dual estimate inside 1 ENS}-\eqref{eq: dual estimate inside 3 ENS} and \eqref{eq: dual estimate inside main ENS}, and taking the supremum over $\vhn \in \bfV_h^n$ gives our result.
\end{proof}

Now we move on to prove stability estimates for the two pressures $\{\phn,\lhn\}$.
First, we present the two discrete \textsc{inf-sup} conditions, which play an important role. These were proven in  \cite[Lemma 5.6, Lemma 5.8]{elliott2024sfem} for stationary surfaces, but can be easily be extended to the evolving setting.
\begin{lemma}[$L^2\times L^2$  Discrete Lagrange \textsc{inf-sup} condition]
\label{Lemma: Discrete inf-sup condition Gah Lagrange ENS}
  Assume a quasi-uniform triangulation of $\Gah$, then there exists a constant $\beta >0$ independent of the mesh parameter $h$ and the time $t$, but dependent on $\mathcal{G}_T$, such that 
    \begin{equation}\label{eq: discrete inf-sup condition Gah Lagrange ENS}
    \beta\norm{\{\qh,\xih\}}_{L^2(\Gaht)\times L^2(\Gaht)}\leq \sup_{\vh \in \bfV_h\t} \frac{\bhtil(\vh,\{\qh,\xih\})}{\norm{\vh}_{\ah}} \ \ \ \forall \{\qh,\xih\} \in Q_h\t \times \Lambda_h\t.
    \end{equation}
\end{lemma}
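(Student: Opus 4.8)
The plan is to deduce the evolving-surface inf-sup \eqref{eq: discrete inf-sup condition Gah Lagrange ENS} from its stationary counterpart \cite[Lemma 5.6, Lemma 5.8]{elliott2024sfem} by a Fortin-operator argument, paying close attention to the dependence of all constants on $t$. First I would note that since $\norm{\vh}_{\ah}\le c\norm{\vh}_{H^1(\Gaht)}$ by \eqref{eq: energy norm ENS}, it is enough to prove the estimate with $\norm{\vh}_{H^1(\Gaht)}$ in the denominator and then rescale $\beta$. Next, using the norm equivalences of \cref{lemma: norm equivalence ENS} together with the geometric perturbation bound for $\Grm_{\bLb}$ in \eqref{eq: Geometric perturbations btilde ENS} (whose constant is independent of $h$ and $t$), I would reduce the problem to establishing the inf-sup on the lifted surface $\Gat$ for the continuous form $\bLb(t;\cdot,\cdot)$ tested against the lifted finite element space, since the $\bigo(h^{k_g})$ discrepancy between $\bhtil$ and $\bLb$ can be absorbed into $\beta$ once $h\le h_0$ is small, uniformly in $t$.

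\textbf{Key steps.} (1) \emph{Continuous inf-sup on $\Gat$, uniform in $t$.} I would invoke the continuous inf-sup for the pair $\big(\bfH^1(\Gat),\,L^2_0(\Gat)\times L^2(\Gat)\big)$ with $\bLb(t;\cdot,\cdot)$, i.e.\ $\inf\sup\ge\bar\beta(t)>0$ (the surface Stokes inf-sup combined with the normal constraint, cf.\ \cite[Lemma 2.1]{olshanskii2021inf} and \cite{elliott2024sfem}), and then argue that $\bar\beta(t)$ is continuous in $t$ because $\{\Gat\}_{t\in[0,T]}$ is a smooth compact family moving continuously in the $C^k$ topology; compactness of $[0,T]$ then gives $\bar\beta:=\inf_{t}\bar\beta(t)>0$. (2) \emph{A $t$-uniform Fortin operator.} For each $t$ I would take the Fortin operator $\Pi_h(t)$ from \cite[Lemma 5.6, Lemma 5.8]{elliott2024sfem}, mapping $\bfH^1(\Gat)$ into the lifted velocity space with $\bLb(t;\Pi_h(t)\bfv-\bfv,\{\qh,\xih\})=0$ for all discrete $\{\qh,\xih\}$ and $\norm{\Pi_h(t)\bfv}_{H^1(\Gat)}\le C_F\norm{\bfv}_{H^1(\Gat)}$: a Scott--Zhang quasi-interpolant corrected first by element/edge bubbles to fix the divergence moments against $Q_h$ (using $k_u=k_{pr}+1\ge 2$), then by normal-bubble contributions to fix the $\bfv\cdot\bfng$ moments against $\Lambda_h$. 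The stability constant $C_F$ depends only on the shape-regularity and the quasi-uniformity ratio of the mesh, and since the evolving triangulation is \emph{uniformly} quasi-uniform (see \cref{sec: Recap: Evolving Triangulated Surfaces}) with $t$-uniform geometry (\cref{lemma: Geometric errors ENS,Lemma: Bh estimates ENS}), $C_F$ can be taken independent of both $h$ and $t$. (3) \emph{Conclusion.} The Fortin lemma then yields the lifted discrete inf-sup with constant $\bar\beta/C_F$; transferring back to $\Gaht$ via the norm equivalences and the $\Grm_{\bLb}$ estimate, and passing from the $H^1$- to the $\ah$-denominator, gives \eqref{eq: discrete inf-sup condition Gah Lagrange ENS} with, say, $\beta=\bar\beta/(2c\,C_F)$ for $h\le h_0$ small, uniformly in $t$.

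\textbf{Main obstacle.} The only genuinely new difficulty over the stationary case is establishing the \emph{uniformity in $t$} of $\bar\beta$ and $C_F$: the former is a soft compactness argument, while the latter hinges on the uniform quasi-uniformity of the evolving mesh — which is exactly what the smoothness assumption on the prescribed velocity $\FlVel$ buys us (\cref{sec: Recap: Evolving Triangulated Surfaces}) — together with the $t$-uniform geometric bounds of \cref{lemma: Geometric errors ENS}. The delicate part of the underlying stationary construction is checking that the two bubble corrections handling the incompressibility moments (against $Q_h$) and the normal-velocity moments (against $\Lambda_h$) do not interfere and remain $H^1$-stable; but this is purely local and hence insensitive to the surface motion once uniform mesh quality is in place, so it carries over verbatim from \cite[Lemma 5.8]{elliott2024sfem}.
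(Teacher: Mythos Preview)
Your proposal is correct and aligns with the paper's approach: the paper does not give a proof at all, stating only that the result ``was proven in \cite[Lemma 5.6, Lemma 5.8]{elliott2024sfem} for stationary surfaces, but can be easily extended to the evolving setting.'' Your Fortin-operator argument, with the $t$-uniformity coming from the uniform quasi-uniformity of the evolving mesh and the compactness of $[0,T]$, is precisely the content of that ``easy extension,'' so you have filled in what the paper leaves implicit.
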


\begin{lemma}[$L^2\times H_h^{-1}$ Discrete Lagrange \textsc{inf-sup} condition]\label{lemma: L^2 H^{-1} discrete inf-sup condition Gah Lagrange ENS}
Assume a quasi-uniform triangulation of $\Gah$, then there exists a constant $\beta >0$ independent of the mesh parameter $h$ and the time $t$, but dependent on $\mathcal{G}_T$, such that 
\begin{equation}\label{eq: L^2 H^{-1} discrete inf-sup condition Gah Lagrange ENS}
    \beta\norm{\{\qh,\xih\}}_{L^2(\Gaht)\times H_h^{-1}(\Gaht)}\leq \sup_{\vh \in \bfV_h\t} \frac{\bhtil(\vh,\{\qh,\xih\})}{\norm{\vh}_{H^1(\Gaht)}} \ \ \ \forall \{\qh,\xih\} \in Q_h\t \times \Lambda_h\t.
\end{equation}
\end{lemma}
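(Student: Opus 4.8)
The plan is to carry out the Fortin operator construction used for stationary surfaces in \cite[Lemma 5.8]{elliott2024sfem}, which produces for each pair $\{\qh,\xih\}\in Q_h\t\times\Lambda_h\t$ an explicit test function $\vh\in\bfV_h\t$ realising the supremum, and then to observe that every constant entering that construction is independent of $t$.

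First I would handle the multiplier component $\xih$. Let $\zeta_h\in\Lambda_h\t$ be the discrete $H^1$-Riesz representative of $\xih$, i.e.\ $(\zeta_h,\eta_h)_{H^1(\Gaht)}=(\xih,\eta_h)_{L^2(\Gaht)}$ for all $\eta_h\in\Lambda_h\t$; by the definition \eqref{eq: H^-1h definition ENS} this gives $\norm{\zeta_h}_{H^1(\Gaht)}=\norm{\xih}_{H_h^{-1}(\Gaht)}$ and $(\xih,\zeta_h)_{L^2(\Gaht)}=\norm{\xih}_{H_h^{-1}(\Gaht)}^2$. As the profile of the normal test function I take $\vh_\lambda:=\Ih(\zeta_h\,\bfn)\in\bfV_h\t$ with $\bfn=\bfng^e$; $H^1$-stability of the interpolant together with boundedness of $\nbgh\bfn$ yields $\norm{\vh_\lambda}_{H^1(\Gaht)}\le c\norm{\zeta_h}_{H^1(\Gaht)}=c\norm{\xih}_{H_h^{-1}(\Gaht)}$, while $\bfn\cdot\nh=1+\bigo(h^{k_g+1})$ from \eqref{eq: geometric errors 2 ENS}, the super-approximation bound for $\Ih(\zeta_h\bfn)-\zeta_h\bfn$ (cf.\ \cref{sec: SZ interpolant ENS}) and the inverse estimate $\norm{\xih}_{L^2(\Gaht)}\le ch^{-1}\norm{\xih}_{H_h^{-1}(\Gaht)}$ give $\int_{\Gaht}\xih\,\vh_\lambda\cdot\nh\,\dsh\ge(1-ch)\norm{\xih}_{H_h^{-1}(\Gaht)}^2$ for $k_g\ge2$. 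For the pressure component I would invoke the usual surface Taylor-Hood argument underlying \cref{Lemma: Discrete inf-sup condition Gah Lagrange ENS} (see \cite{elliott2024sfem}): push a \emph{tangential} continuous Stokes inf-sup field into $\bfV_h\t$ by a Fortin-type $H^1$-stable quasi-interpolant, obtaining $\vh_p\in\bfV_h\t$ with $\int_{\Gaht}\vh_p\cdot\nbgh\qh\,\dsh\ge\beta_1\norm{\qh}_{L^2(\Gaht)}^2$, $\norm{\vh_p}_{H^1(\Gaht)}\le\norm{\qh}_{L^2(\Gaht)}$ and $\norm{\vh_p\cdot\nh}_{L^2(\Gaht)}\le ch\norm{\qh}_{L^2(\Gaht)}$ (the last because the underlying field is tangential, so only its interpolation error and the geometric error $\nh-\bfn$ contribute).

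The final step is to set $\vh:=\vh_p+\delta\,\vh_\lambda$ for a fixed $\delta>0$, expand $\bhtil(\vh,\{\qh,\xih\})=\bhtil(\vh_p,\{\qh,\xih\})+\delta\,\bhtil(\vh_\lambda,\{\qh,\xih\})$, and absorb the two cross terms $\int_{\Gaht}\xih\,\vh_p\cdot\nh\,\dsh$ and $\int_{\Gaht}\vh_\lambda\cdot\nbgh\qh\,\dsh$ into the diagonal terms by weighted Young inequalities; for a suitable choice of $\delta$ and then $h$ small this leaves $\bhtil(\vh,\{\qh,\xih\})\ge c_1\big(\norm{\qh}_{L^2(\Gaht)}^2+\norm{\xih}_{H_h^{-1}(\Gaht)}^2\big)$ and $\norm{\vh}_{H^1(\Gaht)}\le c_2\norm{\{\qh,\xih\}}_{L^2(\Gaht)\times H_h^{-1}(\Gaht)}$, hence the assertion with $\beta=c_1/c_2$. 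The delicate point --- and the one that shapes the whole construction --- is the term $\int_{\Gaht}\vh_\lambda\cdot\nbgh\qh\,\dsh$: estimated directly it carries a factor $\norm{\nbgh\qh}_{L^2(\Gaht)}\lesssim h^{-1}\norm{\qh}_{L^2(\Gaht)}$ and cannot be absorbed, so one integrates by parts via \eqref{eq: integration by parts ENS}, trading $\nbgh\qh$ for $\divgh\vh_\lambda$ (which carries no negative power of $h$), with the curvature term controlled by $\norm{\divgh\nh}_{L^\infty(\Gaht)}$ and the edge term by $\norm{[\bm{\mu}_h]}_{L^\infty(\mathcal{E}_h\t)}\le ch^{k_g}$ from \eqref{eq: geometric errors 1b ENS}; to make the remaining $\bigo(1)$ coupling constants small enough for the absorption one further corrects $\vh_\lambda$ by a divergence-conforming field solving $\divgh\bfw_h\approx-\zeta_h\divgh\bfn$, exactly as in \cite{elliott2024sfem}. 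The passage from the stationary to the evolving setting is then routine: since $\Gaht$ is uniformly quasi-uniform (\cref{sec: Recap: Evolving Triangulated Surfaces}) and all geometric error and perturbation bounds (\cref{lemma: Geometric errors ENS,lemma: errors of geometric pert ENS}), interpolation bounds (\cref{sec: SZ interpolant ENS}) and inverse and trace inequalities used above hold with constants independent of $t$, the inf-sup constant $\beta$ inherits this uniformity.
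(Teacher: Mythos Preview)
Your proposal is correct and matches the paper's approach: the paper does not give a proof at all but simply states that both discrete \textsc{inf-sup} conditions ``were proven in \cite[Lemma~5.6, Lemma~5.8]{elliott2024sfem} for stationary surfaces, but can be easily be extended to the evolving setting,'' which is precisely what you do by recapitulating the Fortin-type construction from \cite{elliott2024sfem} and then observing that all the ingredients (quasi-uniformity, geometric error bounds, interpolation and inverse estimates) hold with constants independent of $t$. You have in fact supplied considerably more detail than the paper itself.
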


\begin{lemma}[$L^2_{L^2}\times L^2_{H_h^{-1}}$ Pressure Bounds]\label{Lemma: Pressure stab Estimate ENS}
      Let \Cref{lemma: L^2 H^{-1} discrete inf-sup condition Gah Lagrange ENS} and the assumptions in \Cref{Lemma: auxilary stab bounds ENS} and \Cref{lemma: dual estimate ENS} hold.  Then, for the pressure solutions $\{\ph^k,\lh^k\} \in Q_h^k\times L_h^k$, $k=1,...,n$ of \eqref{eq: fully discrete fin elem approx ENS} the following stability estimate holds true
     \begin{equation}\label{eq: Pressure stab Estimate UNS}
            \Delta t \sum_{k=1}^{n} \norm{\{\ph^k,\lh^k\}}_{L^2(\Gah^k)\times H_h^{-1}(\Gah^k)}^2 \leq C(C_a)\bfA(\uh^0,\bff_h),
     \end{equation}
where $\bfA = \bfA(\uh^0,\bff_h)$ the bound in \eqref{eq: velocity stab estimate ENS} and $C(C_a)$ a constant depending on the assumption \eqref{eq: uniform bound b stability ENS}.
 \end{lemma}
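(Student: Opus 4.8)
The plan is to recover the pressure pair from the momentum equation by means of the discrete $L^2\times H_h^{-1}$ inf-sup condition, bound the resulting functional termwise against quantities already controlled, and then square-sum over the time levels using the velocity energy estimate together with the auxiliary bound of \cref{Lemma: auxilary stab bounds ENS}. Fix $1\le k\le n$. Since $\{\ph^k,\lh^k\}\in Q_h^k\times\Lambda_h^k$, \cref{lemma: L^2 H^{-1} discrete inf-sup condition Gah Lagrange ENS} gives $\beta\norm{\{\ph^k,\lh^k\}}_{L^2(\Gah^k)\times H_h^{-1}(\Gah^k)}\le\sup_{\vh^k\in\bfV_h^k}\norm{\vh^k}_{H^1(\Gah^k)}^{-1}\bhtil(\vh^k,\{\ph^k,\lh^k\})$, and from the first line of \eqref{eq: fully discrete fin elem approx ENS} we solve for $\bhtil(\vh^k,\{\ph^k,\lh^k\})$ as $\mh(\bff_h^k,\vh^k)-\tfrac1{\Delta t}\big(\mh(\uh^k,\vh^k)-\mh(\uh^{k-1},\underline{\vh^k}(\cdot,t_{k-1}))\big)+\gh(\TrVel;\uh^k,\vh^k)-\ahhat(\uh^k,\vh^k)-c_h(\underline{\uh^{k-1}}(\cdot,t_k);\uh^k,\vh^k)$. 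I would then estimate each of these five terms, divided by $\norm{\vh^k}_{H^1(\Gah^k)}$, separately, using that the supremum of a sum is at most the sum of the suprema.

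The governing term is the discrete time-derivative $\tfrac1{\Delta t}\big(\mh(\uh^k,\vh^k)-\mh(\uh^{k-1},\underline{\vh^k}(\cdot,t_{k-1}))\big)$: here I apply \cref{lemma: dual estimate ENS} with $\wh^k=\uh^k$ (legitimate since $\uh^k,\hat{\bfu}_h^{k-1}\in\bfV_h^{k,div}$), which after division by $\Delta t$ contributes $\tfrac{ch}{\Delta t}\norm{\uh^k-\hat{\bfu}_h^{k-1}}_{L^2(\Gah^k)}+\tfrac c{\Delta t}\norm{\mathcal{A}_h^k(\uh^k-\hat{\bfu}_h^{k-1})}_{\ah}+c\norm{\uh^{k-1}}_{\ah}$. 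The first of these I absorb into the second via the elementary inequality $h\norm{\wh}_{L^2(\Gah^k)}\le c\norm{\mathcal{A}_h^k\wh}_{\ah}$ for $\wh\in\bfV_h^{k,div}$, which follows from $\norm{\wh}_{L^2(\Gah^k)}^2=\ahhat(\mathcal{A}_h^k\wh,\wh)+\norm{\mathcal{A}_h^k\wh}_{\ah}^2$, the inverse estimate $\norm{\wh}_{\ah}\le\norm{\wh}_{H^1(\Gah^k)}\le ch^{-1}\norm{\wh}_{L^2(\Gah^k)}$, and Young's inequality. For the remaining three terms I use \eqref{g boundedness ENS} together with $\norm{\vh^k}_{L^2}\le\norm{\vh^k}_{H^1}$ for $\gh$; \eqref{ah boundedness ENS} with $\norm{\cdot}_{\ahhat}\le\norm{\cdot}_{\ah}$ and \eqref{eq: energy norm ENS} for $\ahhat$; for the convective term the bound \eqref{ch boundedness ENS} together with $\norm{\underline{\uh^{k-1}}(\cdot,t_k)}_{L^2}^{1/2}\norm{\underline{\uh^{k-1}}(\cdot,t_k)}_{\ah}^{1/2}\le C(C_a)$ (from the uniform-energy assumption \eqref{eq: uniform bound b stability ENS}, the time-norm equivalences \eqref{eq: time equivalence norms ENS}--\eqref{eq: time equivalence norms a ENS} and $\norm{\cdot}_{L^2}\le\norm{\cdot}_{\ah}$) and the equivalence $\norm{\uh^k}_{H^1(\Gah^k)}\le c\norm{\uh^k}_{\ah}$ on $\bfV_h^{k,div}$ valid since $k_\lambda=k_u$ (\cref{lemma: improved h1-ah bound ENS}, \cref{remark: diff formulations reason ENS}); and the Cauchy--Schwarz bound $\mh(\bff_h^k,\vh^k)\le\norm{\bff_h^k}_{L^2(\Gah^k)}\norm{\vh^k}_{H^1(\Gah^k)}$ for the data term. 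This yields the pointwise-in-time estimate $\norm{\{\ph^k,\lh^k\}}_{L^2(\Gah^k)\times H_h^{-1}(\Gah^k)}\le C(C_a)\big(\norm{\Delta t^{-1}\mathcal{A}_h^k(\uh^k-\hat{\bfu}_h^{k-1})}_{\ah}+\norm{\uh^{k-1}}_{\ah}+\norm{\uh^k}_{\ah}+\norm{\uh^k}_{L^2(\Gah^k)}+\norm{\bff_h^k}_{L^2(\Gah^k)}\big)$.

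To conclude, I square this inequality, multiply by $\Delta t$ and sum over $k=1,\dots,n$. The term $\Delta t\sum_k\norm{\Delta t^{-1}\mathcal{A}_h^k(\uh^k-\hat{\bfu}_h^{k-1})}_{\ah}^2$ is exactly the left-hand side of the auxiliary bound \eqref{eq: auxilary stab bounds ENS}, hence $\le C(C_a)\bfA(\uh^0,\bff_h)$; the terms $\Delta t\sum_k\norm{\uh^{k-1}}_{\ah}^2$, $\Delta t\sum_k\norm{\uh^k}_{\ah}^2$ and $\Delta t\sum_k\norm{\bff_h^k}_{L^2(\Gah^k)}^2$ are all dominated by the velocity energy estimate \eqref{eq: velocity stab estimate ENS}, and $\Delta t\sum_k\norm{\uh^k}_{L^2(\Gah^k)}^2\le t_n\,\bfA(\uh^0,\bff_h)\le T\,\bfA(\uh^0,\bff_h)$ again by \eqref{eq: velocity stab estimate ENS}. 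Collecting these gives \eqref{eq: Pressure stab Estimate UNS}.

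The main obstacle is the $\Delta t^{-1}$ weight multiplying the mass increment: handled crudely it would force the inverse CFL restriction $h\lesssim\Delta t$, as in the Eulerian situations cited. The whole point of introducing the Leray time-projection \eqref{eq: discrete time Leray proj ENS} and the discrete inverse Stokes operator \eqref{eq: Discrete inverse Stokes ENS} is that \cref{lemma: dual estimate ENS} trades the raw increment for $\mathcal{A}_h^k(\uh^k-\hat{\bfu}_h^{k-1})$, which \eqref{eq: auxilary stab bounds ENS} controls precisely in the $\Delta t$-scaled norm $\norm{\Delta t^{-1}(\cdot)}_{\ah}$, so that no CFL condition appears; the leftover $h$-weighted $L^2$ contribution is absorbed through the inverse-inequality relation above. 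A secondary point — the reason this is a conditional stability statement at this stage — is that the constants from the convective term depend on $C_a$ through \eqref{eq: uniform bound b stability ENS}; this is legitimate since \eqref{eq: uniform bound b stability ENS} is verified a posteriori in the velocity error analysis (cf. \cref{remark: About pressure stability ENS}).
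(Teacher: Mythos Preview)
Your proof is correct and follows essentially the same route as the paper's: recover $\bhtil(\vh^k,\{\ph^k,\lh^k\})$ from \eqref{eq: fully discrete fin elem approx ENS}, apply the $L^2\times H_h^{-1}$ inf-sup, bound the time-derivative term via \cref{lemma: dual estimate ENS}, absorb the $h$-weighted $L^2$ increment into the $\mathcal{A}_h^k$-term by the inverse inequality, use the improved $H^1$-coercivity \eqref{eq: improved h1-ah bound ENS} on $\uh^k\in\bfV_h^{k,div}$ for the convective contribution, and close by squaring, summing and invoking \eqref{eq: velocity stab estimate ENS} and \eqref{eq: auxilary stab bounds ENS}. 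The only cosmetic difference is your derivation of $h\norm{\wh}_{L^2}\le c\norm{\mathcal{A}_h^k\wh}_{\ah}$ via the identity $\norm{\wh}_{L^2}^2=\ahhat(\mathcal{A}_h^k\wh,\wh)+\norm{\mathcal{A}_h^k\wh}_{\ah}^2$ and Young's inequality, whereas the paper obtains it in one line from $\norm{\wh}_{L^2}^2=\ah(\mathcal{A}_h^k\wh,\wh)\le\norm{\mathcal{A}_h^k\wh}_{\ah}\norm{\wh}_{\ah}\le ch^{-1}\norm{\mathcal{A}_h^k\wh}_{\ah}\norm{\wh}_{L^2}$.
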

 \begin{proof}
Recall the discrete \textsc{inf-sup} condition in \Cref{lemma: L^2 H^{-1} discrete inf-sup condition Gah Lagrange ENS}, then we have 
    \begin{equation}
        \begin{aligned}\label{eq: Pressure stab Estimate UNS inside 2 inf-sup ENS}
            \norm{\{\ph^n,\lh^n\}}_{L^2(\Gah^n) \times H_h^{-1}(\Gah^n)} \leq \sup_{\vh^n \in \bfV_h^n} \frac{\bhtil(\vh^n,\{\ph^n,\lh^n\})}{\norm{\vh^n}_{H^1(\Gahn)}}.
        \end{aligned}
    \end{equation}    
Using the weak formulation \eqref{eq: fully discrete fin elem approx ENS} along with the bounds in \cref{Lemma: discrete bounds and coercivity results ENS} and the $H^1$-coercivity bound \eqref{eq: improved h1-ah bound ENS}, where appropriate ($\uhn\in \bfV_h^{n,div}$), we see that
\begin{equation*}
    \begin{aligned}
        \bhtil(\vh^n,\{\ph^n,\lh^n\}) \leq  -\dfrac{1}{\Delta t}\Big(\mh(\uhn,\vhn) - \mh(\uhnN,\underline{\vhn}(\cdot,t_{n-1}))\Big) + c\norm{\uh^n}_{L^2(\Gah^n)}\norm{\vhn}_{L^2(\Gah^n)}\\
        + c\norm{\uh^n}_{\ah}\norm{\vhn}_{\ah} + \norm{\underline{\uhnN}(\cdot,t_n)}_{\ah}\norm{\uh^n}_{\ah}\norm{\vhn}_{H^1(\Gahn)} + \norm{\bff_h^{n}}_{L^2(\Gah^n)}\norm{\vh^n}_{L^2(\Gah^n)}.
    \end{aligned}
\end{equation*}
Returning to \eqref{eq: Pressure stab Estimate UNS inside 2 inf-sup ENS}, using the dual inequality  \eqref{eq: dual estimate ENS} and recalling the assumption \eqref{eq: uniform bound b stability ENS} we obtain
\begin{equation}\label{eq: Pressure stab Estimate UNS inside inf-sup ENS}
    \begin{aligned}
    \norm{\{\ph^n,\lh^n\}}_{L^2(\Gah^n) \times H_h^{-1}(\Gah^n)} &\leq   ch\norm{\frac{\uh^n-\hat{\bfu}_h^{n-1}}{\Delta t }}_{L^2(\Gah^n)} + c\norm{\frac{\mathcal{A}_h^n(\uh^n-\hat{\bfu}_h^{n-1})}{\Delta t}}_{\ah}  +  c\norm{\uh^{n-1}}_{\ah} \\
    &+  c(1 + C_a)\norm{\uh^n}_{\ah} + \norm{\bff_h^{n}}_{L^2(\Gah^n)} \\
    &\leq  c\norm{\frac{\mathcal{A}_h^n(\uh^n-\hat{\bfu}_h^{n-1})}{\Delta t}}_{\ah}  +  c\norm{\uh^{n-1}}_{\ah} +  c(1 + C_a)\norm{\uh^n}_{\ah} + \norm{\bff_h^{n}}_{L^2(\Gah^n)},
    \end{aligned}
\end{equation}
where in the last inequality we used the definition of the discrete inverse Stokes operator $\mathcal{A}_h^n$ \eqref{eq: Discrete inverse Stokes ENS}, the fact that $\uh^n-\hat{\bfu}_h^{n-1} \in \bfV_h^{n,div}$ and the inverse inequality to show that
\begin{equation*}
    \begin{aligned}
        \norm{\frac{\uh^n-\hat{\bfu}_h^{n-1}}{\Delta t }}_{L^2(\Gah^n)}^2 \leq \norm{\frac{\mathcal{A}_h^n(\uh^n-\hat{\bfu}_h^{n-1})}{\Delta t}}_{\ah}\norm{\frac{\uh^n-\hat{\bfu}_h^{n-1}}{\Delta t}}_{\ah} \leq ch^{-1}\norm{\frac{\mathcal{A}_h^n(\uh^n-\hat{\bfu}_h^{n-1})}{\Delta t}}_{\ah}\norm{\frac{\uh^n-\hat{\bfu}_h^{n-1}}{\Delta t}}_{L^2(\Gah^n)},
    \end{aligned}
\end{equation*}
 hence $h\norm{\frac{\uh^n-\hat{\bfu}_h^{n-1}}{\Delta t }}_{L^2(\Gah^n)} \leq c \norm{\frac{\mathcal{A}_h^n(\uh^n-\hat{\bfu}_h^{n-1})}{\Delta t}}_{\ah}.$ Now squaring and then multiplying \eqref{eq: Pressure stab Estimate UNS inside inf-sup ENS} by $\Delta t$, simple calculations yield
\begin{equation*}
    \begin{aligned}
         \Delta t\norm{\{\ph^n,\lh^n\}}_{L^2(\Gah^n) \times H_h^{-1}(\Gah^n)}^2 \leq c\Delta t \norm{\frac{\mathcal{A}_h^n(\uh^n-\hat{\bfu}_h^{n-1})}{\Delta t}}_{\ah}^2 + C_a\Delta t ( \norm{\uhnN}_{\ah}^2 + \norm{\uhn}_{\ah}^2) + \norm{\bff_h^n}_{L^2(\Gah^n)}. 
    \end{aligned}
\end{equation*}
Finally, summing for $k=1,...,n$ and using the velocity and auxiliary bounds \eqref{eq: velocity stab estimate ENS} and \eqref{eq: auxilary stab bounds ENS} respectively, we obtain our final result.
\end{proof}

\begin{remark}
Notice that in \cref{Lemma: Pressure stab Estimate ENS} using the \textsc{inf-sup} condition \eqref{eq: L^2 H^{-1} discrete inf-sup condition Gah Lagrange ENS}, we were only able to show $L^2_{L^2}\times L^2_{H_h^{-1}}$ stability bounds for the pressures. This is because in our numerical scheme \eqref{eq: fully discrete fin elem approx ENS} the bound of the trilinear form $c_h(\bullet;\bullet,\bullet)$ is given by \eqref{ch boundedness ENS} and therefore the test function $\vhn$ is bounded w.r.t. the $H^1$-norm. If instead the \eqref{eq: fully discrete fin elem approx cov ENS} scheme was used (the one we use for $k_\lambda = k_u-1$), then the bound of $c_h^{cov}(\bullet;\bullet,\bullet)$ is given by \eqref{ch cov boundedness ENS} and therefore, since now the test function is bounded by the energy $(\ah)$-norm instead, we would have been able to use the $L^2\times L^2$ discrete \textsc{inf-sup} condition \eqref{eq: discrete inf-sup condition Gah Lagrange ENS}, which in turn would provide us with $L^2_{L^2}\times L^2_{L^2}$ pressure bounds. This follows similarly to \cite{elliott2025unsteady}.
\end{remark}

\begin{remark}
In the case that the right-hand side of \eqref{eq: fully discrete fin elem approx ENS} was not zero, but some approximation function $\bfeta_h$ of $\bfeta$ in \eqref{weak lagrange hom NV ENS}, then in the stability results the following extra terms would arise \vspace{-1mm}
\begin{equation}\label{eq: constant aux stab ENS}
    \begin{aligned}
        \sum_{k=1}^n\big( \int_{t_{n-1}}^{t_n}\norm{\matd \bfeta_h(\cdot,t)}_{L^2(\Gaht)}^2 + \norm{\bfeta_h(\cdot,t)}_{L^2(\Gaht)}^2 \, dt \big).
    \end{aligned}
\end{equation}
\end{remark}\vspace{-3mm}

\begin{remark}[About $L^2_{L^2}\times L^2_{L^2}$ pressure stability for $k_\lambda=k_u-1$]\label{remark: About the pressure stability for arbitrary ENS}
Now we briefly wish to discuss the $L^2_{L^2}\times L^2_{L^2}$ pressure stability for when $k_\lambda=k_u-1$. First, as previously mentioned in \cref{remark: diff formulations reason ENS} we use the finite element formulation \eqref{eq: fully discrete fin elem approx cov ENS}. Second, the dual estimate \eqref{eq: dual estimate ENS} only holds for $k_\lambda = k_u$ and therefore we need an alternative approach. Basically, instead of the auxiliary result \eqref{eq: auxilary stab bounds ENS} we need to find a stability estimate for the stronger expression \vspace{-1mm}
\begin{equation}\label{eq: fully discrete L2 deriv bound ENS}
    \Delta t \sum_{k=1}^n\mh(\matdt\uhn,\matdt\uhn)= \Delta t \sum_{k=1}^n \frac{\mh(\uh^n-\underline{\uhnN}(\cdot,t_n),\uh^n-\underline{\uhnN}(\cdot,t_n))}{(\Delta t)^2}.
\end{equation}\vspace{-2mm}

\noindent This is almost identical to \cite[Remark 6.1]{elliott2025unsteady} for stationary surfaces where one requires a bound for $\Delta t\sum_{n=1}^n\norm{\frac{\uh^n-\uh^{n-1}}{\Delta t}}_{L^2(\Gah)}^2$.
However, to establish this result, we first need stronger assumptions regarding $\underline{\uhnN}(\cdot,t_n)$, as opposed to \eqref{eq: uniform bound b stability ENS}. Mainly that
\begin{equation}\label{eq: b extra stability ENS}
\sup_{n} \norm{\underline{\uhnN}(\cdot,t_n)}_{L^{\infty}(\Gah)} \leq \sup_{n} \norm{\uhn}_{L^{\infty}(\Gah)} \leq C_a^{\infty}.
\end{equation}\vspace{-3mm}

\noindent We will see that this new assumption holds as long as \emph{further regularity assumptions} are imposed; see \cref{remark: About the pressure stability for arbitrary convergence ENS}. So assuming and using \eqref{eq: b extra stability ENS}, we can then just test  \eqref{eq: fully discrete fin elem approx cov ENS} with $\vhn = \uh^n-\underline{\uhnN}(\cdot,t_n) \in \bfV_h^n$ and after simple calculations, including the fact that by the property \eqref{eq: blb important time property ENS}:
\begin{equation*}
\begin{aligned}
        \bhtil(\uh^n-\underline{\uhnN}(\cdot,t_n),\{\phn,\lhn\}) &= \bhtil(\uhnN,\{\underline{\phn}(\cdot,t_{n-1}),\underline{\lhn}(\cdot,t_{n-1})\} ) - \bhtil(\underline{\uhnN}(\cdot,t_n),\{\phn,\lhn\}) \\
        &\leq c\Delta t \norm{\uhnN}_{\ah}+ \epsilon\Delta t\norm{\{\phn,\lhn\}}_{L^2(\Gahn)} \qquad \qquad \qquad \text{ by \eqref{eq: time differences properties bilinear b ENS}},
        \end{aligned}
\end{equation*}
we can find a stability bound for \eqref{eq: fully discrete L2 deriv bound ENS}, and subsequently for the two pressures in the $L^2_{L^2}\times L^2_{L^2}$-norm, using  the discrete \textsc{inf-sup} condition \cref{Lemma: Discrete inf-sup condition Gah Lagrange ENS}.
We omit further details and point to \cref{sec: Pressure a-priori estimates for kl= ku-1 ENS}, where such analysis is carried out for proving the pressure error estimates instead.    
\end{remark}

\section{Error Analysis}\label{sec: error analysis ENS}
In this section, we derive a-priori error bounds for the two linearized  \emph{evolving fully discrete Lagrangian surface Navier-Stokes} finite element schemes \eqref{eq: fully discrete fin elem approx ENS}, \eqref{eq: fully discrete fin elem approx cov ENS} with initial condition $\uh^0\in \bfV_h^{0,div}$.
The error analysis follows typical arguments. We split the errors by decomposing them into an approximation  error and a discrete remainder error in the appropriate finite element spaces. Initially, this is carried out with the help of the modified Ritz-Stokes projection $\mathcal{R}_h\bfu^n$ defined in \eqref{eq: surface Ritz-Stokes projection ENS} for the velocity $\bfu$ and the standard Lagrange interpolant for the two pressures $\{p, \lambda\}$ ( $(\bfu,\{p,\lambda\})$ solutions of either \eqref{weak lagrange hom NV dir ENS} or \eqref{weak lagrange hom NV cov ENS}). In \cref{sec: Pressure a-priori estimates for kl= ku-1 ENS} we consider a different decomposition, using the standard Ritz-Stokes projection \eqref{eq: surface Ritz-Stokes projection std ENS} instead. Nontheless, we have the following: \vspace{-1mm}
\begin{align}\label{eq: decomposition error 1 Ritz ENS}
    \eu^{n} = \bfu^n - \uh^n &= \underbrace{(\bfu^n- \mathcal{R}_h\bfu^n) }_{\text{Approximation error}} + \underbrace{(\mathcal{R}_h\bfu^n - \uh^n)}_{\text{discrete remainder}} := \rho_{\bfu}^n + \theta_{\bfu}^n , \\
        \label{eq: decomposition error 2 lagrange ENS} 
         \ep^n = p^n - \ph &= \underbrace{(p^n- \Ih p^n)}_{\text{Approximation error}} + \underbrace{(\Ih p^n - \ph)}_{\text{discrete remainder}}:= \rho_{p}^n + \theta_{p}^n,
        \\
        \label{eq: decomposition error 3 lagrange ENS} \el^n =  \lambda^n - \lh &= \underbrace{(\lambda^n- \Ih \lambda^n )}_{\text{Approximation error}} + \underbrace{(\Ih \lambda^n - \lh)}_{\text{discrete remainder}}:= \rho_{\lambda}^n + \theta_{\lambda}^n.      
\end{align}
The bounds of the modified Ritz-Stokes approximation error have already been proved in \cref{lemma: Error Bounds Ritz-Stokes ENS,lemma: Error Bounds der Ritz-Stokes ENS}. The interpolation errors for the pressures are well-known
\begin{equation}
    \begin{aligned}\label{eq: interpolation errors pressures ENS}
        \norm{p^n - \Ih p^n}_{L^2(\Gah)}  &\leq h^{k_{pr}+1}\norm{p^n}_{H^{k_{pr}+1}},\\
        \norm{\lambda^n - \Ih \lambda^n}_{L^2(\Gah)}  &\leq h^{k_{\lambda}+1}\norm{\lambda^n}_{H^{k_{\lambda}+1}}.
    \end{aligned}
\end{equation}
Notice that for convenience, we once again drop the inverse lift extension $(\cdot)^{-\ell}$ notation, since it should be clear from the context whether a quantity is defined on $\Ga$ or $\Gah$, e.g. for functions $\bfu,\, p,\, \lambda$.

It is clear by \eqref{eq: fully discrete fin elem approx ENS} and \eqref{eq: fully discrete fin elem approx cov ENS} and the modified Ritz-Stokes projection  \eqref{eq: surface Ritz-Stokes projection ENS} that $\thbfu^n\in \bfV_h^{n,div}$ for all $n=1,...,N$. To ensure that $\thbfu^n \in \bfV_h^{n,div}$ for $n=0$, the initialization of the  time-stepping procedure is chosen to satisfy $\uh^0 \in \bfV_h^{0,div}$ (see also \cref{sec: The Fully Discrete Scheme ENS}). Although the  choice of  initial condition the  modified Ritz-Stokes projection $\uh^0 = \mathcal{R}_h\bfu^0$ is suitable, other  approximations are possible.

\begin{remark}\label{remark: about initial conditions ENS}
Other choices of initial condition, see also \cref{sec: Pressure a-priori estimates for kl= ku-1 ENS}, can be:
    \begin{itemize}
        \item[-] $\uh^0 = \Ih(\bfu^0)$ : In this case $\uh^0$ is not weakly divergence-free so that we can only have conditional stability, and an inverse type CFL condition is necessary, see also \cite{burman2009galerkin}.
        \item[-] $\uh^0 = \mathcal{R}_h(\bfu^0,\{p^0,\lambda^0\})$ : The main issue is that the initial values $\{p^0,\lambda^0\}$ are not prescribed in the initial value problem.
    \end{itemize}
\end{remark}
In what follows, note that we abuse the notation and define $\underline{\eu^{n-1}}(t_n) :=(\underline{\bfu}^{n-1}(\cdot,t_n))^{-\ell} - \underline{\uh^{n-1}}(\cdot,t_n)$, where $(\underline{\bfu}^{n-1}(\cdot,t_n))^{-\ell} := \bfu(\cdot,t_{n-1})^{-\ell}\circ\Phi^h_{t_{n-1}}\circ\Phi^h_{-t_{n}}$ and $\underline{\bfu}^{n-1} := \bfu(t_{n-1})\circ\Phi^{\ell}_{t_{n-1}}\circ\Phi^{\ell}_{-t_{n}}$. Then, by a simple modification of the time-inequalities \eqref{eq: time equivalence norms ENS} and the properties of the map $\Phi^h_{t}$ \eqref{eq: discrete stong mat deriv ENS}  and $\Phi^{\ell}_{t}$  \eqref{eq: lifted discrete stong mat deriv ENS} (notice that $\matd\underline{\bfu}^{n-1}(\cdot,s)^{-\ell}= \Big(\frac{d}{ds}\bfu^{n-1}\circ\Phi^h(\cdot,t_{n-1})\Big)\circ\Phi^h(\cdot,s)=0$ for $s\in[t_{n-1},t_n]$) , it is not difficult to see that
\begin{equation}\label{eq: time ineq cont u ENS}
\begin{aligned}
    \norm{(\underline{\bfu}^{n-1})^{-\ell}(\cdot,t_n)}_{L^2(\Gah^n)} &\leq c \norm{(\bfu^{n-1})^{-\ell}}_{L^2(\Gah^{n-1})}, \qquad \norm{(\underline{\bfu}^{n-1})^{-\ell}(\cdot,t_n)}_{\ah} \leq c \norm{(\bfu^{n-1})^{-\ell}}_{\ah}, \\
    \norm{\underline{\bfu}^{n-1}(\cdot,t_n)}_{L^2(\Ga^n)} &\leq c \norm{\bfu^{n-1}}_{L^2(\Ga^{n-1})}, \qquad \qquad \qquad \norm{\underline{\bfu}^{n-1}(\cdot,t_n)}_{a} \leq c \norm{\bfu^{n-1}}_{a},
    \end{aligned}
\end{equation}
therefore it also holds that
\begin{equation}\label{eq: time ineq eu ENS}
    \norm{\underline{\eu^{n-1}}(\cdot,t_n)}_{L^2(\Gah^n)} \leq  \norm{\eu^{n-1}}_{L^2(\Gah^{n-1})}, \qquad  \norm{\underline{\eu^{n-1}}(\cdot,t_n)}_{\ah} \leq  \norm{\eu^{n-1}}_{\ah}.
\end{equation}
From now on, we also omit the space argument in the \emph{time-lift} \eqref{eq: discrete time-lift ENS}.

We will make use of the following  regularity assumption.
\begin{assumption}[Regularity I]\label{assumption: Regularity assumptions for velocity estimate ENS}
We assume that the solution $(\bfu,\{p,\lambda\})$ has the following regularity   
\begin{equation*}
    \begin{aligned}
   \int_{0}^{T}\norm{\matn{\bfu}}_{H^{k_u+1}(\Gat)}^2 \, dt + \sup_{t\in[0,T]}\big\{\norm{\bfu}_{H^{k_{u}+1}(\Gat)}^2 &+ \norm{p}_{H^{k_{pr}+1}(\Gat)}^2 + \norm{\lambda}_{H^{k_{\lambda}+1}(\Gat)}^2 \big\}\\
   &+ \int_{0}^{T} \norm{\matn\matn\bfu}_{L^2(\Gat)}^2 +\norm{\matn\bfu}_{H^1(\Gat)}^2 \, dt \leq C.
    \end{aligned}
\end{equation*}
\end{assumption}
\subsection{A-priori Velocity Error Bounds}\label{sec: velocity a-priori estimates ENS}


Recall that by $c^{(\cdot)}_h(\bullet;\bullet,\bullet)$ we denote either the convective term $c_h(\bullet;\bullet,\bullet)$  or $c^{cov}_h(\bullet;\bullet,\bullet)$, and thus consider either the numerical scheme \eqref{eq: fully discrete fin elem approx ENS} or \eqref{eq: fully discrete fin elem approx cov ENS}, respectively.
Using the Ritz-Stokes projection \eqref{eq: surface Ritz-Stokes projection ENS}  $\thbfu^n\in \bfV_h^{n,div}$ for $n=0,1,...,N$ ,
the discrete pressure error decouples from the velocity error. This  is crucial to find error bounds for $\thbfu^n$ independently of $\theta_{p}^n,\, \theta_{\lambda}^n$. The following error equations hold.


\begin{lemma}\label{lemma: error equation ENS}
  For $\thbfu^n,\, \theta_{p}^n,\theta_{\lambda}^n$ as in  \eqref{eq: decomposition error 1 Ritz ENS}, \eqref{eq: decomposition error 2 lagrange ENS}, \eqref{eq: decomposition error 3 lagrange ENS} the following error equations holds true for $n \geq 1$:
   \begin{align}
\boxed{\begin{cases}\label{eq: error equation ENS}\tag{EQ}
      &{\dfrac{1}{\Delta t}\Big(\bfm_h(\theta_{\bfu}^n,\vhn) - \bfm_h(\theta_{\bfu}^{n-1},\underline{\vhn}(\cdot,t^{n-1}))\Big) } + \ahhat(\theta_{\bfu}^n,\vhn) +b_h^L(\vhn,\{\theta_{p}^n,\theta_{\lambda}^n\})\\
       &\qquad\qquad\qquad\quad\ ~~ = \underbrace{\sum_{i=1}^5 \textsc{Err}_{i}^{C}(\vhn)}_{\textsc{Cons. Errors}} +  \underbrace{\sum_{i=1}^2 \textsc{Err}_{i}^{I}(\vhn)}_{\textsc{Inter. Errors}} + \!\!\!\!\! \underbrace{\mathcal{C}^{(\cdot)}(\vhn)}_{\textsc{Inert. Errors}},\\
         &b_h^L(\theta_{\bfu}^n,\{\qhn,\xihn\})= 0, 
    \end{cases}}
\end{align}
for all $\vhn \in \bfV_h^n$ and $\{\qhn,\xi_h^n\}\in Q_h^n\times \Lambda_h^n$,
with \emph{consistency errors}:
\begin{itemize}
    \item [\textbullet\hspace{9mm}] \hspace{-10mm}$\textsc{Err}_1^{C}(\vhn) := \dfrac{1}{\Delta t}\Big(\mh(\bfu^n,\vhn) - \mh(\bfu^{n-1},\underline{\vhn}(t_{n-1}))\Big) - \bfm(\matn\bfu^n,(\vhn)^{\ell}) - \gh(\TrVel;\uhn,\vhn)$
\end{itemize}
\begin{minipage}[t]{9cm}
    \begin{itemize}
        \item[\textbullet\hspace{9mm}] \hspace{-10mm}$ \textsc{Err}_2^{C}(\vhn) := \bhtil(\vhn,\{p^n,\lambda^n\})- b^L((\vhn)^{\ell},\{p^n,\lambda^n\})$\vspace{1mm}
        \item[\textbullet\hspace{9mm}] \hspace{-10mm}$ \textsc{Err}_3^{C,(\cdot)}(\vhn) := c_h^{(\cdot)}(\underline{\bfu}^{n-1}(t_n);\bfu^n,\vhn)  - c^{(\cdot)}(\bfu^n;\bfu^n,(\vhn)^{\ell})$
    \end{itemize}
\end{minipage}
\begin{minipage}[t]{8cm}
    \begin{itemize}
        \item[\textbullet\hspace{6mm}] \hspace{-8mm} $ \textsc{Err}_4^{C}(\vhn) :=  \bfm(\bff^n,(\vhn)^{\ell})- \mh(\bff_h^n,\vhn)$\vspace{1mm}
        \item[\textbullet\hspace{6mm}] \hspace{-8mm} $ \textsc{Err}_5^{C}(\vhn) := \ahhat(\mathcal{R}_h\bfu^n,\vhn) - \ahat(\bfu^n,(\vhn)^{\ell})$
    \end{itemize}
\end{minipage}

\vspace{3mm}

\noindent and \emph{interpolations errors}: \vspace{2mm}

\begin{minipage}[t]{9cm}
    \begin{itemize}
        \item[\textbullet\hspace{13mm}] \hspace{-12mm}$ \textsc{Err}_1^{I}(\vhn) :=-\dfrac{1}{\Delta t}\Big(\mh(\rho_{\bfu}^n,\vhn) - \mh(\rho_{\bfu}^{n-1},\underline{\vhn}(t_{n-1}))\Big)$\vspace{1mm}
        \item[\textbullet\hspace{13mm}] \hspace{-12mm}$ \textsc{Err}_2^{I}(\vhn) := -\bhtil(\vhn,\{\rho_{p}^n,\rho_{\lambda}^n\})$
    \end{itemize}
\end{minipage}
\begin{minipage}[t]{8cm}
    \begin{itemize}
    \item[\phantom{\textbullet}] \vspace{1mm}
        \item[\textbullet\hspace{31mm}] \hspace{-33mm} $ \mathcal{C}^{(\cdot)}(\vhn) :=  c_h^{(\cdot)}(\underline{\eu^{n-1}}(t_n);\bfu^n;\vhn) - c_h^{(\cdot)}(\underline{\uhnN}(t_n);\eu^n;\vhn)$
    \end{itemize}
\end{minipage}
\end{lemma}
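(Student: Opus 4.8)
The plan is to derive the error equation \eqref{eq: error equation ENS} by subtracting a consistency identity (a reformulation of the continuous weak problem evaluated on discrete test functions) from the fully discrete scheme \eqref{eq: fully discrete fin elem approx ENS} (resp. \eqref{eq: fully discrete fin elem approx cov ENS}), and then rewriting the velocity--pressure pair via the splittings \eqref{eq: decomposition error 1 Ritz ENS}--\eqref{eq: decomposition error 3 lagrange ENS}. Concretely, I would start from the fully discrete momentum equation tested with an arbitrary $\vhn\in\bfV_h^n$, and add and subtract $\bfm(\matn\bfu^n,(\vhn)^{\ell})$, $\ahat(\bfu^n,(\vhn)^{\ell})$, $c^{(\cdot)}(\bfu^n;\bfu^n,(\vhn)^{\ell})$, $b^L((\vhn)^{\ell},\{p^n,\lambda^n\})$ and $\bfm(\bff^n,(\vhn)^{\ell})$. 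Since $(\bfu,\{p,\lambda\})$ solves \eqref{weak lagrange hom NV dir ENS} (or \eqref{weak lagrange hom NV cov ENS}) pointwise in time, at $t=t_n$ the lifted quantities satisfy
\[
\bfm(\matn\bfu^n,(\vhn)^{\ell}) + \ahat(\bfu^n,(\vhn)^{\ell}) + c^{(\cdot)}(\bfu^n;\bfu^n,(\vhn)^{\ell}) + b^L((\vhn)^{\ell},\{p^n,\lambda^n\}) = \bfm(\bff^n,(\vhn)^{\ell}),
\]
so these four continuous terms cancel against each other, leaving precisely the five grouped differences that I would label $\textsc{Err}_1^C,\dots,\textsc{Err}_5^C$ (the geometric/consistency discrepancies between discrete and lifted bilinear forms, the time-discretization defect of the inertia term, and the convective-term defect).

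Next I would insert the Ritz--Stokes decomposition $\bfu^n-\uhn=\rho_{\bfu}^n+\thbfu^n$ into the inertia term $\mh$ and into $\ahhat$, and the Lagrange-interpolation decomposition $p^n-\phn=\rho_p^n+\theta_p^n$, $\lambda^n-\lhn=\rho_\lambda^n+\theta_\lambda^n$ into $b_h^L$. The defining property \eqref{eq: surface Ritz-Stokes projection ENS} of the modified Ritz--Stokes projection is exactly what makes the $\ahhat(\mathcal{R}_h\bfu^n,\vhn)+b_h^L(\vhn,\{\mathcal{P}_h(\bfu^n),\mathcal{L}_h(\bfu^n)\})$ contribution reduce to $\ahat(\bfu^n,(\vhn)^{\ell})$ — but here I must be careful: in the error equation the pressure test functions appearing with $\thbfu^n$ are $\{\theta_p^n,\theta_\lambda^n\}$ built from the \emph{Lagrange interpolant}, not from $\{\mathcal{P}_h(\bfu^n),\mathcal{L}_h(\bfu^n)\}$. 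The clean way around this is: keep $a(\bfu^n,(\vhn)^\ell)$ on the continuous side, write $\ahhat(\thbfu^n,\vhn)=\ahhat(\mathcal{R}_h\bfu^n,\vhn)-\ahhat(\rho_{\bfu}^{n,-\ell}\text{-part},\vhn)$ — rather, more simply, move the $\rho_{\bfu}$ and $\rho_p,\rho_\lambda$ contributions to the right-hand side as the interpolation errors $\textsc{Err}_1^I,\textsc{Err}_2^I$, and collect the term $\ahhat(\mathcal{R}_h\bfu^n,\vhn)-a(\bfu^n,(\vhn)^\ell)$ as $\textsc{Err}_5^C$. The convective term is split as $c_h^{(\cdot)}(\underline{\uhnN}(t_n);\uhn,\vhn) = c_h^{(\cdot)}(\underline{\uhnN}(t_n);\bfu^n,\vhn) - c_h^{(\cdot)}(\underline{\uhnN}(t_n);\eu^n,\vhn)$ and then $c_h^{(\cdot)}(\underline{\uhnN}(t_n);\bfu^n,\vhn) = c_h^{(\cdot)}(\underline{\bfu}^{n-1}(t_n);\bfu^n,\vhn) - c_h^{(\cdot)}(\underline{\eu^{n-1}}(t_n);\bfu^n,\vhn)$, using linearity of $c_h^{(\cdot)}$ in its first argument; the first of these feeds $\textsc{Err}_3^{C,(\cdot)}$ after comparing with $c^{(\cdot)}(\bfu^n;\bfu^n,(\vhn)^\ell)$, while the remaining two combine into the inertial error $\mathcal{C}^{(\cdot)}(\vhn)$. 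The divergence constraint $b_h^L(\thbfu^n,\{\qhn,\xihn\})=0$ is immediate from $\uhn\in\bfV_h^{n,div}$ and $\mathcal{R}_h\bfu^n\in\bfV_h^{n,div}$, the latter by the second line of \eqref{eq: surface Ritz-Stokes projection ENS}.

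The main bookkeeping obstacle — and the only genuinely delicate point — is the inertia/time-derivative term: one must rewrite $\frac1{\Delta t}(\mh(\uhn,\vhn)-\mh(\uhnN,\underline{\vhn}(t_{n-1})))$ consistently so that the $\rho_{\bfu}$-part becomes $\textsc{Err}_1^I$ (with the \emph{same} time-lift structure $\mh(\rho_{\bfu}^n,\vhn)-\mh(\rho_{\bfu}^{n-1},\underline{\vhn}(t_{n-1}))$, using the convention for $\underline{\eu^{n-1}}(t_n)$ and the continuous time-lift of $\bfu^{n-1}$ introduced just before the statement), the $\thbfu$-part stays on the left, and the $\bfu$-part pairs with $-\bfm(\matn\bfu^n,(\vhn)^\ell)$ to form $\textsc{Err}_1^C$ together with the $-\gh(\TrVel;\uhn,\vhn)$ coming from the discrete transport formula \eqref{eq: Transport formulae 1 discrete ENS} implicit in \eqref{eq: fully discrete fin elem approx ENS}. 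Here the identity \eqref{eq: n to n-1 on surface n ENS} and the abuse-of-notation conventions \eqref{eq: time ineq cont u ENS}--\eqref{eq: time ineq eu ENS} must be invoked so that all "previous time-step" quantities are interpreted on $\Gah^n$ via the time-lift. Everything else is a matter of grouping terms and invoking linearity; no estimates are needed at this stage, so the proof is essentially an algebraic identity and I would present it as such, verifying that the seven error functionals and $\mathcal{C}^{(\cdot)}$ are exactly the leftovers.
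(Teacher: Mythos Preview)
Your proposal is correct and follows essentially the same route as the paper: start from the discrete scheme, invoke the continuous weak formulation \eqref{weak lagrange hom NV dir ENS}/\eqref{weak lagrange hom NV cov ENS} at $t=t_n$ to generate the consistency residuals, insert the decompositions \eqref{eq: decomposition error 1 Ritz ENS}--\eqref{eq: decomposition error 3 lagrange ENS}, add and subtract $c_h^{(\cdot)}(\underline{\bfu}^{n-1}(t_n);\bfu^n,\vhn)$ to split the convective term, and use $\uhn,\mathcal{R}_h\bfu^n\in\bfV_h^{n,div}$ for the constraint equation. One small correction: the term $-\gh(\TrVel;\uhn,\vhn)$ is explicitly present in the discrete scheme \eqref{eq: fully discrete fin elem approx ENS} and is simply carried over into $\textsc{Err}_1^C$, not generated from a transport formula at this stage.
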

\begin{proof}
Considering the decompositions \eqref{eq: decomposition error 1 Ritz ENS}, \eqref{eq: decomposition error 2 lagrange ENS}, \eqref{eq: decomposition error 3 lagrange ENS} and the modified Ritz-Stokes map \eqref{eq: surface Ritz-Stokes projection ENS}, and using the finite element approximations \eqref{eq: fully discrete fin elem approx ENS} and \eqref{eq: fully discrete fin elem approx cov ENS}, we derive the following 
\begin{equation*}
    \begin{aligned}
         &(\mathrm{EQ}):=-\dfrac{1}{\Delta t}\Big(\mh(\thbfu^n,\vhn) - \mh(\thbfu^{n-1},\underline{\vhn}(t_{n-1}))\Big) + \ahhat(\thbfu^n,\vhn) + b_h^L(\vhn,\{\thp^n,\thl^n\}) = - \mh(\bff_h^n,\vhn)\\ 
         &\quad - \gh(\TrVel;\uhn,\vhn)
         +  \dfrac{1}{\Delta t}\Big(\mh(\mathcal{R}_h\bfu^n,\vhn) - \mh(\mathcal{R}_h\bfu^{n-1},\underline{\vhn}(t_{n-1}))\Big) + \ahhat(\mathcal{R}_h\bfu^n,\vhn)\\
         &\quad + b_h^L(\vhn,\{\Ih p^n,\Ih \lambda^n\}) + c_h^{(\cdot)}(\underline{\uhnN}(t_n);\uhn;\vhn).
    \end{aligned}
\end{equation*}
Now recalling continuous weak formulations \eqref{weak lagrange hom NV dir ENS} and \eqref{weak lagrange hom NV cov ENS} and adding and subtracting the term $c_h^{(\cdot)}(\underline{\bfu}^{n-1}(t_n);\bfu^n;\vh)$,  careful calculations yield 

\begin{equation*}
    \begin{aligned}
      &(\mathrm{EQ}) = \dfrac{1}{\Delta t}\Big(\mh(\rho_{\bfu}^n,\vhn) - \mh(\rho_{\bfu}^{n-1},\underline{\vhn}(t_{n-1}))\Big) -  \bhtil(\vhn,\{\rho_{p}^n,\rho_{\lambda}^n\}) \\
      &+ \dfrac{1}{\Delta t}\Big(\mh(\bfu^n,\vhn) - \mh(\bfu^{n-1},\underline{\vhn}(\cdot,t_{n-1}))\Big) - \bfm(\matn\bfu^n,(\vhn)^{\ell}) - \gh(\TrVel;\uhn,\vhn)\\
      & +b_h^L(\vhn,\{p^n,\lambda^n\})- b^L((\vhn)^{\ell},\{p^n,\lambda^n\}) + c_h^{(\cdot)}(\underline{\bfu}^{n-1};\bfu^n,\vh)  - c^{(\cdot)}(\bfu^n;\bfu^n,\vhl)  + \bfm(\bff^n,(\vhn)^{\ell})- \mh(\bff_h^n,\vhn)\\
      & + \ahhat(\mathcal{R}_h\bfu^n,\vhn) - \ahat(\bfu^n,(\vhn)^{\ell}) - c_h^{(\cdot)}(\underline{\eu^{n-1}}(t_n);\bfu^n;\vhn) - c_h^{(\cdot)}(\underline{\uhnN}(t_n);\eu^n;\vhn)\\
      & =: \text{Err}_1^{I}(\vhn) + \text{Err}_2^{I}(\vhn) +  \text{Err}_1^{C}(\vhn) +  \text{Err}_2^{C}(\vhn) +  \text{Err}_3^{C,(\cdot)}(\vhn) + \text{Err}_4^{C}(\vhn) + \text{Err}_5^{C}(\vhn) + \mathcal{C}^{(\cdot)}(\vhn),
    \end{aligned}
\end{equation*}
where we have  used 
\begin{equation*}\label{eq: discrete inertia error ENS}
    c_h^{(\cdot)}(\underline{\uh^{n-1}}(t_n);\uh^n;\vh) - c_h^{(\cdot)}(\underline{\bfu}^{n-1}(t_n);\bfu^n;\vh) =  - c_h^{(\cdot)}(\underline{\eu^{n-1}}(t_n);\bfu^n;\vhn) - c_h^{(\cdot)}(\underline{\uhnN}(t_n);\eu^n;\vhn).
\end{equation*}
The second equation in \eqref{eq: error equation ENS} follows since $\thbfu^n \in \bfV_h^{n,div}$ for all $n\geq 0$; $\thbfu^0 = \mathcal{R}_h\bfu^0$ see \cref{remark: about initial conditions ENS}.
\end{proof}

\begin{lemma}[Interpolation errors]\label{lemma: Interpolation errors ENS}
The following interpolation error holds for all $\vhn \in \bfV_h^n$ \vspace{-2mm}
    \begin{align}\label{eq: Interpolation errors ENS}
        |\textsc{Err}^{I}_1(\vhn)| &\leq  \dfrac{c}{\sqrt{\Delta t}}\Big(\int_{t_{n-1}}^{t^n}\sum_{j=0}^{1}\norm{(\matd)^j\rho_{\bfu}\t}_{L^2(\Gat)}^2 \, dt\Big)^{1/2}\norm{\vhn}_{L^2(\Gah^n)}, \\
        \label{eq: Interpolation error 2 ENS}
        |\textsc{Err}^{I}_2(\vhn)|  &\leq c(h^{k_{pr}+1} + h^{k_{\lambda}+1})( \norm{p^n}_{H^{k_{pr}+1}(\Ga^n)} + \norm{\lambda^n}_{H^{k_{\lambda}+1}(\Ga^n)})\norm{\vhn}_{\ah},
\end{align}
If furthermore $\underline{k_\lambda = k_u}$ and $\vhn \in \bfV_h^{n,div}$ the following holds\vspace{-2mm}
\begin{align}
    \label{eq: Interpolation errors improved ENS}
        |\textsc{Err}^{I}_1(\vhn)| \leq  c(\dfrac{h^{\widehat{r}_u} + h^{\widehat{r}_u+1}}{\sqrt{\Delta t}}+ h^{\widehat{r}_u}\sqrt{\Delta t})\Big(\int_{t_{n-1}}^{t^n}\sum_{j=0}^{1}\norm{(\matn)^j\bfu}_{H^{k_u+1}(\Gat)}^2 \, dt\Big)^{1/2}\norm{\vhn}_{\ah},
\end{align}
where $\widehat{r}_u = min\{k_u,k_g\}$ and the constants $c>0$ are independent of $h$ and $t$.
\end{lemma}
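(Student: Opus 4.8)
The plan is to handle the two interpolation terms in \cref{lemma: Interpolation errors ENS} separately: $\textsc{Err}_2^I$ is immediate, while $\textsc{Err}_1^I$ requires turning the two‑surface time difference into a time integral via the discrete transport formula. For $\textsc{Err}_2^I(\vhn) = -\bhtil(\vhn,\{\rho_p^n,\rho_\lambda^n\})$ I would just apply the boundedness estimate \eqref{bhtilde boundedness ENS}, namely $|\bhtil(\vhn,\{\rho_p^n,\rho_\lambda^n\})|\le c\norm{\vhn}_{\ah}\norm{\{\rho_p^n,\rho_\lambda^n\}}_{L^2(\Gah^n)}$, and then invoke the standard nodal interpolation bounds \eqref{eq: interpolation errors pressures ENS}, $\norm{\rho_p^n}_{L^2}\le h^{k_{pr}+1}\norm{p^n}_{H^{k_{pr}+1}}$ and $\norm{\rho_\lambda^n}_{L^2}\le h^{k_\lambda+1}\norm{\lambda^n}_{H^{k_\lambda+1}}$; combining gives \eqref{eq: Interpolation error 2 ENS}.

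For $\textsc{Err}_1^I$, note that $\rho_{\bfu}(t)=\bfu(t)-\mathcal R_h\bfu(t)$ is defined for all $t$ and the time‑lift satisfies $\matd\underline{\vhn}(t)=0$ on $[t_{n-1},t_n]$ by \eqref{eq: discrete time derivative bar 0 ENS}, with $\underline{\vhn}(t_n)=\vhn$. Applying the discrete Leibniz/transport rule \eqref{eq: Transport formulae 1 discrete ENS} to $t\mapsto\mh(\rho_{\bfu}(t),\underline{\vhn}(t))$ and integrating over $I_n$ yields
\[
\textsc{Err}_1^I(\vhn) = -\frac{1}{\Delta t}\int_{t_{n-1}}^{t_n}\!\Big(\mh\big(\matd\rho_{\bfu}(t),\underline{\vhn}(t)\big) + g_h\big(\TrVel;\rho_{\bfu}(t),\underline{\vhn}(t)\big)\Big)\,dt,
\]
since $\matd(\rho_{\bfu}\cdot\underline{\vhn})=\matd\rho_{\bfu}\cdot\underline{\vhn}$ and the $\divgh(\TrVel)$‑term is exactly $g_h$. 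Bounding the two integrands by Cauchy–Schwarz and \eqref{g boundedness ENS}, replacing $\norm{\underline{\vhn}(t)}_{L^2(\Gah(t))}$ by $c\norm{\vhn}_{L^2(\Gah^n)}$ via the time–norm equivalence \eqref{eq: time equivalence norms ENS}, absorbing $\norm{\rho_{\bfu}(t)}_{L^2}$ into the $j=0$ summand, and finally applying Cauchy–Schwarz in time gives \eqref{eq: Interpolation errors ENS} (up to the $\Gah(t)$–$\Gat$ norm equivalence of \cref{lemma: norm equivalence ENS}).

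For the improved bound \eqref{eq: Interpolation errors improved ENS}, valid when $k_\lambda=k_u$ and $\vhn\in\bfV_h^{n,div}$, I would start from the same identity but feed in the sharper Ritz–Stokes estimates: $\norm{\matd\rho_{\bfu}(t)}_{L^2}\le ch^{\widehat{r}_u}\sum_{j=0}^1\norm{(\matn)^j\bfu(t)}_{H^{k_u+1}}$ from \eqref{eq: Error Bounds der Ritz-Stokes L2 ENS}, together with $\norm{\bfPg\rho_{\bfu}^{\ell}}_{L^2}\le ch^{\widehat{r}_u+1}$, $\norm{\rho_{\bfu}^{\ell}\cdot\bfng}_{L^2}\le ch^{\widehat{r}_u+1/2}$ from \eqref{eq: Error Bounds Ritz-Stokes improved H1 ENS}, \eqref{eq: Error Bounds Ritz-Stokes L2 improved ENS}. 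The extra ingredient is the discrete divergence constraint on $\vhn$: splitting $\matd\rho_{\bfu}$ (resp. $\rho_{\bfu}$) into tangential and normal parts, the tangential part pairs with $\underline{\vhn}(t)$ in full and contributes the leading $h^{\widehat{r}_u}/\sqrt{\Delta t}$, while the normal part couples only to $\underline{\vhn}(t)\cdot\nh$ (the mixed term vanishes since $\nh\cdot\bfP_h=0$). Although $\underline{\vhn}(t)\notin\bfV_h^{div}(t)$ for $t\neq t_n$, the time‑difference bound \eqref{eq: time differences properties bilinear b ENS} — applied with the test pressure lifted back to $\Gah^n$, where $\bhtil^n(\vhn,\cdot)=0$ — shows $\bhtil(\underline{\vhn}(t),\cdot)$ is $O(\Delta t)$‑small, so rerunning the argument behind \cref{corollary: divfree L2 norm ENS} gives $\norm{\underline{\vhn}(t)\cdot\nh}_{L^2}\le c(h+\Delta t)\norm{\vhn}_{\ah}$. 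The normal contribution is then $O\big(h^{\widehat{r}_u}(h+\Delta t)\big)$ pointwise in $t$, which after $\tfrac1{\Delta t}\int_{I_n}\cdot\,dt$ and Cauchy–Schwarz in time produces exactly the $h^{\widehat{r}_u+1}/\sqrt{\Delta t}$ and $h^{\widehat{r}_u}\sqrt{\Delta t}$ terms, while the $g_h$‑contribution is of order $h^{\widehat{r}_u+1/2}/\sqrt{\Delta t}$ and is absorbed. Collecting the three contributions gives \eqref{eq: Interpolation errors improved ENS}, with $\norm{\vhn}_{L^2(\Gah^n)}\le\norm{\vhn}_{\ah}$ used freely.

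The main obstacle is precisely the last point: one cannot directly transfer the sharp normal‑component Ritz–Stokes bounds into the integrand because divergence‑freeness of the test function is lost on the intermediate surfaces $\Gah(t)$; quantifying the resulting defect as an $O(\Delta t)$ perturbation — and verifying that it merely generates the extra $\sqrt{\Delta t}$ factor rather than spoiling the estimate — is the delicate step, and is the genuinely new feature compared with the stationary analysis of \cite{elliott2025unsteady}. Secondary care is needed to ensure $\matd\rho_{\bfu}$ is well defined with the regularity used (supplied by \cref{assumption: Regularity assumptions for velocity estimate ENS}, which controls $\matn\bfu$ and $\matn\matn\bfu$) and to reconcile the $\matd$/$\matdl$ and $\Gah(t)$/$\Gat$ notation through the lift commutation $(\matd\,\cdot)^{\ell}=\matdl(\cdot)^{\ell}$ and the norm equivalences in \cref{lemma: norm equivalence ENS} and \eqref{eq: time equivalence norms ENS}.
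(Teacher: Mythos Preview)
Your treatment of $\textsc{Err}_2^I$ and of the basic bound \eqref{eq: Interpolation errors ENS} for $\textsc{Err}_1^I$ coincides with the paper's: the same transport identity \eqref{eq: Transport formulae 2 discrete ENS} followed by Cauchy--Schwarz in time, and the same use of \eqref{bhtilde boundedness ENS} together with the nodal interpolation errors \eqref{eq: interpolation errors pressures ENS}.

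For the improved estimate \eqref{eq: Interpolation errors improved ENS} you take a genuinely different route. The paper does not control $\norm{\underline{\vhn}(t)\cdot\nh}_{L^2}$ directly; instead it introduces the Leray time-projection $\hat{\bfv}_h^n(t)\in\bfV_h^{t,div}$ from \cref{def: discrete time Leray proj ENS}, writes the normal coupling as $\mh\big(\matd\rho_{\bfu}\cdot\bfn,\,(\underline{\vhn}(t)-\hat{\bfv}_h^n(t))\cdot\bfn+\hat{\bfv}_h^n(t)\cdot\bfn\big)$, and then invokes \cref{lemma: hatw to n-1w} for $\norm{\underline{\vhn}(t)-\hat{\bfv}_h^n(t)}_{L^2}\le c\Delta t\,\norm{\vhn}_{\ah}$ together with \cref{corollary: divfree L2 norm ENS} applied to the genuinely divergence-free $\hat{\bfv}_h^n(t)$. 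Your argument bypasses the projection: using \eqref{eq: time differences properties bilinear b ENS} with the pressure lifted back to $\Gah^n$ (where $\bhtil(\vhn,\cdot)=0$) to quantify the $O(\Delta t)$ defect of $\underline{\vhn}(t)$ from $\bfV_h^{t,div}$, and then rerunning the super-approximation step behind \cref{corollary: divfree L2 norm ENS} with this defect inserted, is correct and yields exactly $\norm{\underline{\vhn}(t)\cdot\nh}_{L^2}\le c(h+\Delta t)\norm{\vhn}_{\ah}$. Both routes produce the same three terms $h^{\widehat r_u}/\sqrt{\Delta t}$, $h^{\widehat r_u+1}/\sqrt{\Delta t}$, $h^{\widehat r_u}\sqrt{\Delta t}$. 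Your approach is more self-contained here; the paper's approach has the advantage of exercising the Leray time-projection, which is the same tool driving the pressure stability and error estimates in \cref{Lemma: Pressure stab Estimate ENS} and \cref{lemma: Auxiliary convergence result ENS}, so its appearance in this lemma foreshadows its central role. One minor point: since you split with $\bfPh,\nh$ while the Ritz--Stokes bounds \eqref{eq: Error Bounds der Ritz-Stokes L2 ENS} are stated for $\bfPg,\bfng$, you should note that the $O(h^{k_g})$ discrepancy from \eqref{eq: geometric errors 1 ENS} is absorbed into the leading term; the paper avoids this by splitting with $\bfP,\bfn$ directly.
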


\begin{proof}
Rewriting $\textsc{Err}^{I}_1(\cdot)$ in integral form and using the transport property \eqref{eq: Transport formulae 2 discrete ENS}, the time-inequality \eqref{eq: time equivalence norms ENS}, the Ritz-Stokes projection estimates \eqref{eq: Error Bounds Ritz-Stokes ENS}, \eqref{eq: Error Bounds der Ritz-Stokes L2 ENS} and a simple Cauchy-Schwartz inequality, leads to
    \begin{equation}
        \begin{aligned}\label{eq: Interpolation errors inside 1 ENS}
            &\dfrac{1}{\Delta t}\Big(\mh(\rho_{\bfu}^n,\vhn) - \mh(\rho_{\bfu}^{n-1},\underline{\vhn}(\cdot,t_{n-1}))\Big) = \dfrac{1}{\Delta t} \int_{t_{n-1}}^{t^n} \frac{d}{dt} \Big(\mh(\rho_{\bfu}(t),\underline{\vhn}(t))\Big)\, dt\\
            &\qquad =\dfrac{1}{\Delta t} \int_{t_{n-1}}^{t^n} \mh(\matd\rho_{\bfu}(t),\underline{\vhn}(t)) + \gh(\TrVel;\rho_{\bfu}(t), \underline{\vhn}(t))\, dt\\
            &\qquad \leq \dfrac{c}{\Delta t}\int_{t_{n-1}}^{t^n} (\norm{\matd\rho_{\bfu}}_{L^2(\Gaht)}+\norm{\rho_{\bfu}}_{L^2(\Gaht)})\norm{\vhn}_{L^2(\Gah^n)}\, dt \\
            &\qquad\leq \dfrac{c}{\sqrt{\Delta t}}\Big(\int_{t_{n-1}}^{t^n}(\norm{\matd\rho_{\bfu}}_{L^2(\Gaht)}^2+ \norm{\rho_{\bfu}}_{L^2(\Gaht)}^2) \, dt\Big)^{1/2}\norm{\vhn}_{L^2(\Gah^n)}
        \end{aligned}
    \end{equation}
    yielding the  desired result.  Next, \eqref{eq: Interpolation error 2 ENS} can be dealt with through standard techniques using the $\bhtil$ bound \eqref{bhtilde boundedness ENS} and the interpolation errors \eqref{eq: interpolation errors pressures ENS}.

Now regarding the  estimate \eqref{eq: Interpolation errors improved ENS}, assuming $\vhn \in \bfV_h^{n,div}$, going back to the second line of  \eqref{eq: Interpolation errors inside 1 ENS} and splitting the first term into a tangent and a normal direction, along with the Leray time-projection \eqref{eq: discrete time Leray proj ENS} from time $t^n$ to time $t$ for some $t\in[t_{n-1},t_n]$, i.e. $\hat{\bfv}_h^n(t) \in \bfV_h^{t,div}$, we see that
        \begin{align}\label{eq: Interpolation errors inside 2 ENS}
            &\dfrac{1}{\Delta t} \int_{t_{n-1}}^{t^n} \mh(\matd\rho_{\bfu}(t),\underline{\vhn}(t)) = \dfrac{1}{\Delta t} \int_{t_{n-1}}^{t^n} \mh(\bfP\matd\rho_{\bfu}(t),\underline{\vhn}(t)) +  \mh(\matd\rho_{\bfu}(t)\cdot\bfn(t),\underline{\vhn}(t)\cdot\bfn(t)) \nonumber\\
        &=  \dfrac{1}{\Delta t} \int_{t_{n-1}}^{t^n} \mh(\bfP\matd\rho_{\bfu}(t),\underline{\vhn}(t)) +  \mh\big(\matd\rho_{\bfu}(t)\cdot\bfn(t),(\underline{\vhn}(t)-\hat{\bfv}_h^n(t))\cdot\bfn(t) + \hat{\bfv}_h^n(t)\cdot\bfn(t)\big)\nonumber\\
        &\leq  \dfrac{1}{\sqrt{\Delta t}}\Big(\int_{t_{n-1}}^{t^n}\norm{\bfP\matd\rho_{\bfu}}_{L^2(\Gaht)}^2 \, dt\Big)^{1/2}\norm{\vhn}_{L^2(\Gah^n)}\nonumber\\
        & \quad +  \dfrac{1}{{\Delta t}}\int_{t_{n-1}}^{t^n}\norm{\matd\rho_{\bfu}\cdot\bfn}_{L^2(\Gaht)}^2 \big(c\norm{(\underline{\vhn}(t)-\hat{\bfv}_h^n(t))}_{L^2(\Gaht)}+\norm{\hat{\bfv}_h^n(t)\cdot\bfn\t}_{L^2(\Gaht)} \big) \,dt.
        \end{align}
Using the Leray time-projection estimates  \eqref{eq: hatw to n-1w dt ENS} and \eqref{eq: hatw to n-1w stab ENS} (they still hold for $\hat{\bfv}_h^n(t) \in \bfV_h^{t,div}$) along with the normal control \eqref{eq: divfree L2 norm kl=ku ENS}  and geometric estimate \eqref{eq: geometric errors 1 ENS} we have

\begin{align}
\label{eq: Interpolation errors inside 3 ENS}
    \norm{(\underline{\vhn}(t)-\hat{\bfv}_h^n(t))}_{L^2(\Gah^n)} &\leq \Delta t \norm{\vhn}_{\ah},\\
    \label{eq: Interpolation errors inside 4 ENS}
    \norm{\hat{\bfv}_h^n(t) \cdot\bfn(t)}_{L^2(\Gaht)} &\leq \norm{\hat{\bfv}_h^n(t) \cdot(\bfn(t)-\nh\t)}_{L^2(\Gaht)} + \norm{\hat{\bfv}_h^n(t)\cdot\nh\t)}_{L^2(\Gaht)}\nonumber \\ 
    &\leq ch\norm{\hat{\bfv}_h^n(t)}_{L^2(\Gaht)} \leq ch \norm{\vhn}_{L^2(\Gah^n)}.
\end{align}
Therefore, plugging \eqref{eq: Interpolation errors inside 3 ENS} and \eqref{eq: Interpolation errors inside 4 ENS} into \eqref{eq: Interpolation errors inside 2 ENS} and using the Ritz-Stokes estimate \eqref{eq: Error Bounds der Ritz-Stokes L2 ENS} we find
\begin{equation*}
    \begin{aligned}\label{eq: Interpolation errors inside 5 ENS}
        &\dfrac{1}{\Delta t} \int_{t_{n-1}}^{t^n} \mh(\matd\rho_{\bfu}(t),\underline{\vhn}(t)) \leq  \dfrac{1}{\sqrt{\Delta t}}\Big(\int_{t_{n-1}}^{t^n}\norm{\bfP\matd\rho_{\bfu}}_{L^2(\Gaht)}^2+ h\norm{\matd\rho_{\bfu}\cdot\bfn}_{L^2(\Gaht)} \, dt\Big)^{1/2}\norm{\vhn}_{L^2(\Gah^n)}\\
        &\qquad\qquad\qquad\qquad\qquad\qquad\ \ + \sqrt{\Delta t}\Big(\int_{t_{n-1}}^{t^n}\norm{\matd\rho_{\bfu}\cdot\bfn}_{L^2(\Gaht)} \, dt\Big)^{1/2}\norm{\vhn}_{\ah}\\
        &\leq \dfrac{h^{\widehat{r}_u}+h^{\widehat{r}_u+1}}{\sqrt{\Delta t}}\Big(\int_{t_{n-1}}^{t^n}\sum_{j=0}^{1}\norm{(\matn)^j\bfu}_{H^{k_u+1}(\Gat)}^2 \, dt\Big)^{1/2}\norm{\vhn}_{L^2(\Gah^n)}\\
        &\qquad\qquad\qquad\qquad\qquad\qquad\ \  + h^{\widehat{r}_u}\sqrt{\Delta t}\Big(\int_{t_{n-1}}^{t^n}\sum_{j=0}^{1}\norm{(\matn)^j\bfu}_{H^{k_u+1}(\Gat)}^2 \, dt\Big)^{1/2}\norm{\vhn}_{\ah},
    \end{aligned}
\end{equation*}
where we also applied the norm equivalence \eqref{eq: norm equivalence ENS} and  \eqref{eq: relate matd to matdl ENS} as needed. This gives our assertion.
\end{proof}

\begin{lemma}[Consistency errors]\label{lemma: Consistency errors ENS}
There exists a constant $c>0$ independent of $h$ and $t$ such that for all $\vhn \in \bfV_h^n$ the following error bounds hold
     \begin{align}\label{eq: Consistency error 1 ENS}
        &|\textsc{Err}_1^{C}(\vhn)| \leq  c\sqrt{\Delta t}\Big(\int_{t_{n-1}}^{t^n} \norm{\matn\matn\bfu}_{L^2(\Gat)}^2 +\norm{\matn\bfu}_{L^2(\Gat)}^2 \, dt\Big)^{1/2}\norm{\vhn}_{L^2(\Gah^n)} \nonumber\\
        &\qquad\qquad\quad\,+ \frac{ch^{\widehat{r}_u+1}}{\sqrt{\Delta t }}\Big(\int_{t_{n-1}}^{t^n}+\norm{\matn\bfu}_{L^2(\Gat)}^2 + \norm{\bfu}_{H^1(\Gat)}^2 \, dt\Big)^{1/2}\norm{\vhn}_{L^2(\Gah^n)}\nonumber
        \\
        &\qquad\qquad\quad\, +  ch^{\widehat{r}_u+1}\!\!\!\!\!\!\! \sup_{t\in[t_{n-1},t_n]}\norm{\bfu}_{L^2(\Gat)}\norm{\vhn}_{H^1(\Gah^n)} + \big(\norm{\rho_{\bfu}^n}_{L^2(\Gah^n)}+\norm{\theta_{\bfu}^n}_{L^2(\Gah^n)}\big)\norm{\vhn}_{L^2(\Gah^n)},\\
        \label{eq: Consistency error 2 ENS}
        &|\textsc{Err}_2^{C}(\vhn)| \leq ch^{k_g}(\norm{p^n}_{H^1(\Ga^n)} + \norm{\lambda^n}_{L^2(\Ga^n)})\norm{\vhn}_{L^2(\Gahn)} ,\\
        \label{eq: Consistency error 3 ENS}
        &|\textsc{Err}_3^{C}(\vhn)| \leq c\big(\sqrt{\Delta t}\Big(\int_{t_{n-1}}^{t^n} \norm{\matn\bfu}_{H^1(\Gat)}^2\, dt\Big)^{1/2} \!\!\!\!\!+ h^{k_g}\norm{\bfu^{n-1}}_{H^1(\Ga^{n-1})}\norm{\bfu^{n}}_{H^1(\Ga^{n})}\big) \norm{\vhn}_{H^1(\Gah^n)},\\
        &| \textsc{Err}_3^{C,cov}(\vhn)| \leq c\big(\sqrt{\Delta t}\Big(\int_{t_{n-1}}^{t^n} \norm{\matn\bfu}_{H^1(\Gat)}^2\, dt\Big)^{1/2}\norm{\vhn}_{\ah}\nonumber\\
        \label{eq: Consistency error 3 cov ENS}
        &\qquad\qquad\qquad\qquad\qquad\qquad\qquad\qquad + h^{k_g}\norm{\bfu^{n-1}}_{H^1(\Ga^{n-1})}\norm{\bfu^{n}}_{H^1(\Ga^{n})} \big)\norm{\vhn}_{H^1(\Gah^n)},\\
        \label{eq: Consistency error 4 ENS}
        &| \textsc{Err}_4^{C}(\vhn)| \leq ch^{k_g+1}\norm{\bff^n}_{L^2(\Ga^n)}\norm{\vhn}_{L^2(\Gah^n)}.
\end{align}
\end{lemma}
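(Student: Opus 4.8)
The plan is to bound the five quantities $\textsc{Err}_1^{C},\textsc{Err}_2^{C},\textsc{Err}_3^{C},\textsc{Err}_3^{C,cov},\textsc{Err}_4^{C}$ one at a time, in each case splitting the error into a \emph{backward--Euler time--consistency} defect and a \emph{geometric variational crime} defect, and invoking the perturbation estimates of \cref{lemma: errors of geometric pert ENS,lemma: Geometric perturbations lagrange ENS,lemma: Geometric perturbations lagrange II ENS}, the comparison of material derivatives \eqref{eq: relate matd to matdl ENS}, the velocity comparison \eqref{eq: difference of velocities ENS} and the norm equivalences \eqref{eq: norm equivalence ENS}. The two easiest are $\textsc{Err}_4^{C}$ and $\textsc{Err}_2^{C}$: with $\bff_h^n=(\bff^n)^{-\ell}$, $\textsc{Err}_4^{C}(\vhn)$ is precisely the quantity estimated in \eqref{eq: errors of domain of integration data ENS}, giving \eqref{eq: Consistency error 4 ENS}; and $\textsc{Err}_2^{C}(\vhn)=\Grm_{\bLb}(\vhn,\{(p^n)^{-\ell},(\lambda^n)^{-\ell}\})$ in the notation of \eqref{eq: The G pert ENS}, so \eqref{eq: Geometric perturbations btilde ENS} together with \eqref{eq: norm equivalence ENS} yields \eqref{eq: Consistency error 2 ENS} directly.

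For $\textsc{Err}_3^{C,(\cdot)}$ I would add and subtract $c^{(\cdot)}_h(\bfu^n;\bfu^n,\vhn)$, so that the error splits into the linearization lag $c^{(\cdot)}_h(\underline{\bfu}^{n-1}(t_n)-\bfu^n;\bfu^n,\vhn)$ plus the geometric crime $c^{(\cdot)}_h(\bfu^n;\bfu^n,\vhn)-c^{(\cdot)}(\bfu^n;\bfu^n,(\vhn)^{\ell})$. Using the abuse of notation introduced before \eqref{eq: time ineq cont u ENS} and $\matd(\underline{\bfu}^{n-1})^{-\ell}=0$ on $[t_{n-1},t_n]$, one has $(\bfu^n)^{-\ell}-(\underline{\bfu}^{n-1}(t_n))^{-\ell}=\int_{t_{n-1}}^{t_n}(\matd\bfu^{-\ell})\circ\Phi^h_s\circ\Phi^h_{-t_n}\,ds$, whence by \eqref{eq: relate matd to matdl ENS}, \eqref{eq: norm equivalence ENS} and \eqref{eq: time ineq cont u ENS} its $H^1(\Gahn)$-norm is $\le c\sqrt{\Delta t}\big(\int_{t_{n-1}}^{t_n}\norm{\matn\bfu}_{H^1(\Gat)}^2\,dt\big)^{1/2}$; feeding this into \eqref{ch boundedness ENS} for $c_h$ (resp.\ \eqref{ch cov boundedness ENS} for $c_h^{cov}$, which permits the test function to enter in the weaker $\ah$-norm) and using $\norm{\bfu^n}_{H^1}\le C$ from \cref{assumption: Regularity assumptions for velocity estimate ENS} produces the first summand of \eqref{eq: Consistency error 3 ENS}, \eqref{eq: Consistency error 3 cov ENS}. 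The geometric crime is a standard trilinear-form perturbation of order $\bigo(h^{k_g})$, proved exactly as in \cref{lemma: Geometric perturbations lagrange ENS} via \eqref{eq: Geometric perturbations nbg ENS}, with the convection slot controlled by $\norm{(\underline{\bfu}^{n-1}(t_n))^{-\ell}}_{H^1(\Gahn)}\le c\norm{\bfu^{n-1}}_{H^1(\Ga^{n-1})}$; this gives the second summand.

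The bulk of the work is $\textsc{Err}_1^{C}$. The plan is to apply the discrete transport formula \eqref{eq: Transport formulae 2 discrete ENS} together with $\matd\underline{\vhn}=0$ \eqref{eq: discrete time derivative bar 0 ENS} to write $\tfrac{1}{\Delta t}\big(\mh(\bfu^n,\vhn)-\mh(\bfu^{n-1},\underline{\vhn}(t_{n-1}))\big)=\tfrac{1}{\Delta t}\int_{t_{n-1}}^{t_n}\big[\mh(\matd\bfu^{-\ell},\underline{\vhn})+\gh(\TrVel;\bfu^{-\ell},\underline{\vhn})\big]\,dt$. In the first term I lift, pass from $\matd$ to $\matn$ and from $\mh$ to $\mb$ via \eqref{eq: errors of domain of integration data ENS} and \eqref{eq: relate matd to matdl ENS} (a $\bigo(h^{k_g+1}+h^{k_u+1})=\bigo(h^{\widehat{r}_u+1})$ defect against $(\norm{\matn\bfu}_{L^2}+\norm{\bfu}_{H^1})\norm{\vhn}_{L^2}$, using $\widehat{r}_u=\min\{k_u,k_g\}$), then replace $\matn\bfu(t)$ by $\matn\bfu^n$ at the cost of $\int_t^{t_n}\matn\matn\bfu$ and $(\underline{\vhn}(t))^{\ell}$ by $(\vhn)^{\ell}$ through the lifted transport formula of \cref{lemma: Transport formulae lifted ENS} and \eqref{eq: time equivalence norms ENS}; after division by $\Delta t$, integration over $[t_{n-1},t_n]$ and Cauchy--Schwarz this is exactly the $\sqrt{\Delta t}$- and $h^{\widehat{r}_u+1}/\sqrt{\Delta t}$-terms of \eqref{eq: Consistency error 1 ENS}, and the leading piece $\mb(\matn\bfu^n,(\vhn)^{\ell})$ cancels the corresponding term in $\textsc{Err}_1^{C}$. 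In the second term I add and subtract $\gh(\TrVel;\bfu^n,\vhn)$: the difference $\tfrac{1}{\Delta t}\int_{t_{n-1}}^{t_n}\gh(\TrVel;\bfu^{-\ell},\underline{\vhn})\,dt-\gh(\TrVel;\bfu^n,\vhn)$ is a time-consistency defect of $\gh$, controlled by another application of \eqref{eq: Transport formulae 2 discrete ENS}, \eqref{g boundedness ENS}, \eqref{eq: Geometric perturbations g ENS} and \eqref{eq: difference of velocities ENS} (this is where the $h^{\widehat{r}_u+1}\sup_t\norm{\bfu}_{L^2}\norm{\vhn}_{H^1}$ contribution arises), whereas $\gh(\TrVel;\bfu^n,\vhn)-\gh(\TrVel;\uhn,\vhn)=\gh(\TrVel;\rho_{\bfu}^n+\theta_{\bfu}^n,\vhn)$, bounded by \eqref{g boundedness ENS}, supplies the final $(\norm{\rho_{\bfu}^n}_{L^2}+\norm{\theta_{\bfu}^n}_{L^2})\norm{\vhn}_{L^2}$ summand. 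The main obstacle is precisely this bookkeeping inside $\textsc{Err}_1^{C}$: carrying the $\bigo(h)$- versus $\bigo(1)$-scaled test-function norms correctly through the time-integrated geometric perturbations, and replacing $\underline{\vhn}(t)$ (living on $\Gaht$) by $\vhn$ (living on $\Gahn$) inside the time integral only through the transport formulae of \cref{lemma: Transport formulae discrete ENS,lemma: Transport formulae lifted ENS} rather than by an illegitimate direct comparison of functions on different surfaces. Since all of the remaining steps are the evolving-surface analogues of the stationary-surface computations in \cite{elliott2024sfem,elliott2025unsteady}, I would carry out the genuinely new manipulations in detail and cite those references for the routine estimates.
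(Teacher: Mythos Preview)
Your proposal is correct and follows essentially the same approach as the paper. Two minor organizational differences worth flagging: for $\textsc{Err}_3^{C,(\cdot)}$ the paper adds and subtracts $c^{(\cdot)}(\underline{\bfu}^{n-1}(t_n);\bfu^n,(\vhn)^{\ell})$ (keeping the time-lifted first argument and passing to the continuous trilinear form) rather than your $c_h^{(\cdot)}(\bfu^n;\bfu^n,\vhn)$, but both splittings yield the stated bound; and in $\textsc{Err}_1^{C}$ the contribution $h^{\widehat r_u+1}\sup_t\norm{\bfu}_{L^2}\norm{\vhn}_{H^1}$ in the paper does \emph{not} come from the $\gh$ time-defect but from the identity $\bfm(\matdl\bfu,\vhl)+\bfg(\TrVelL;\bfu,\vhl)=\bfm(\matn\bfu,\vhl)+\bfg(\FlVel;\bfu,\vhl)-\bfm(\bfu,(\matdl-\matn)\vhl)$ (obtained by equating the transport formulae \eqref{eq: Transport formulae 2 ENS} and \eqref{eq: Transport formulae 2 lift ENS}), where the last term forces the $H^1$-norm onto the test function via \eqref{eq: relate matd to matdl ENS}.
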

\begin{proof}
Let us estimate each consistency error separately.

\noindent\underline{$\textsc{Err}_1^{C} :$} We rewrite our expression in integral form, apply the transport formula \eqref{eq: Transport formulae 2 discrete ENS} to find that
        \begin{align}\label{eq: consist error inside err1 1 ENS}
            &\!\!\!\!\!\!\!\textsc{Err}_1^{C}(\vh)= \dfrac{1}{\Delta t} \int_{t_{n-1}}^{t^n} \mh(\matd\bfu,\underline{\vhn}(t)) + \gh(\TrVel;\bfu,\underline{\vhn}(t)) \, dt - \bfm(\matn\bfu^n,(\vhn)^{\ell})  - \gh(\TrVel;\uhn,\vhn) \nonumber \\
            &\!\!\!\!\!= \underbrace{\dfrac{1}{\Delta t} \int_{t_{n-1}}^{t^n} \big[ \mh(\matd\bfu,\underline{\vhn}(t)) - \bfm(\matdl\bfu,(\underline{\vhn}(t))^{\ell}) +\gh(\TrVel;\bfu,\underline{\vhn}(t)) -\bfg(\TrVelL;\bfu,(\underline{\vhn}(t))^{\ell} )\big] \, dt}_{=:\mathcal{E}_1}\\
            &\!\!+ \underbrace{\dfrac{1}{\Delta t} \int_{t_{n-1}}^{t^n} \big[ \bfm(\matdl\bfu,(\underline{\vhn}(t))^{\ell}) + \bfg(\TrVelL;\bfu,(\underline{\vhn}(t))^{\ell} )\big] \,dt  - \bfm(\matn\bfu^n,(\vhn)^{\ell})  - \gh(\TrVel;\uhn,\vhn),}_{=:\mathcal{E}_2} \nonumber
        \end{align}
where using the geometric perturbations \eqref{eq: errors of domain of integration data ENS}, \eqref{eq: Geometric perturbations g ENS}, the time inequality \eqref{eq: time equivalence norms ENS}, \eqref{eq: relate matd to matdl ENS}, the norm equivalence \eqref{eq: norm equivalence ENS} and a Cauchy-Schwartz inequality as in \eqref{eq: Interpolation errors inside 1 ENS} we obtain
\begin{equation*}
    \begin{aligned}
        |\mathcal{E}_1| \leq  \frac{ch^{\widehat{r}_u+1}}{\sqrt{\Delta t }} \Big(\int_{t_{n-1}}^{t^n}\norm{\matn\bfu}_{L^2(\Gat)}^2+ \norm{\bfu}_{H^1(\Gat)}^2 \, dt\Big)^{1/2}\norm{\vhn}_{L^2(\Gah^n)}.
    \end{aligned}
\end{equation*}
Now for $\mathcal{E}_2$, first recall the transport formulae \eqref{eq: Transport formulae 2 ENS} and \eqref{eq: Transport formulae 2 lift ENS} to see that 
\begin{equation}\label{eq: consist errors inside 1 ENS}
      \bfm(\matdl\bfu,\vhl) + \bfg(\TrVelL;\bfu,\vhl) = \bfm(\matn\bfu,\vhl) + \bfg(\FlVel;\bfu,\vhl) - \bfm(\bfu,(\matdl - \matn)\vhl),
\end{equation}
therefore we readily see that
\begin{equation*}
    \begin{aligned}
        \mathcal{E}_2 &= \dfrac{1}{\Delta t} \int_{t_{n-1}}^{t^n} \Big[ \bfm(\matn\bfu,(\underline{\vhn}(t))^{\ell}) - \bfm(\matn\bfu^n,(\vhn)^{\ell})+ \bfg(\FlVel;\bfu,(\underline{\vhn}(t))^{\ell} )- \bfg(\TrVel;\bfu^n,(\vhn)^{\ell})\\
        & \quad + \bfm(\bfu,(\matdl - \matn)(\underline{\vhn}(t))^{\ell})\Big] \,dt + \Big(\bfg(\FlVel;\bfu^n,(\vhn)^{\ell}) - \gh(\TrVel;\uhn,\vhn) \Big)=: \mathcal{E}_2^1 + \mathcal{E}_2^2.
    \end{aligned}
\end{equation*}
Once again rewriting the expressions in integral form, along with time inequality \eqref{eq: time equivalence norms ENS}, the bound \eqref{eq: relate matd to matdl ENS}, transport formulae in \cref{lemma: Transport formulae cont ENS} and the velocity $\FlVel$ assumptions \eqref{eq: normal vel flow phd}, we obtain

\begin{equation*}
    \begin{aligned}
        &|\mathcal{E}_2^1| \leq \Big|\dfrac{1}{\Delta t} \int_{t_{n-1}}^{t^n}\int_{t}^{t^n} \frac{d}{ds} \big(\bfm(\matn\bfu,(\underline{\vhn}(s))^{\ell}) + \bfg(\FlVel;\bfu,(\underline{\vhn}(s))^{\ell} ) \big) \,ds\,dt + \dfrac{1}{\Delta t} \int_{t_{n-1}}^{t^n}\bfm(\bfu,(\matdl - \matn)(\underline{\vhn}(t))^{\ell})\,dt\Big|\\
        &\leq  \dfrac{1}{\Delta t} \int_{t_{n-1}}^{t^n} \int_{t}^{t^n}\bfm(\matn\matn\bfu,(\underline{\vhn}(s))^{\ell}) + 2\bfg(\FlVel;\matn\bfu,(\underline{\vhn}(s))^{\ell} ) + \bfm(\matn\divg(\FlVel)\,\matn\bfu,(\underline{\vhn}(s))^{\ell})\,ds\,dt \\
        &\qquad + ch^{\widehat{r}_u+1} \!\!\!\!\!\! \sup_{t\in[t_{n-1},t_n]}\big(\norm{\bfu}_{L^2(\Gat)} \big)\, \norm{\vhn}_{H^1(\Gah^n)}\\
        &\leq c\sqrt{\Delta t}\Big(\int_{t_{n-1}}^{t^n} \norm{\matn\matn\bfu}_{L^2(\Gat)}^2 +\norm{\matn\bfu}_{L^2(\Gat)}^2 \, dt\Big)^{1/2}\norm{\vhn}_{L^2(\Gah^n)}  + ch^{\widehat{r}_u+1} \!\!\!\!\!\!\! \sup_{t\in[t_{n-1},t_n]}\big(\norm{\bfu}_{L^2(\Gat)} \big)\norm{\vhn}_{H^1(\Gah^n)}.
    \end{aligned}
\end{equation*}
Adding and subtract suitable terms we also see from \eqref{eq: difference of velocities ENS}, \eqref{eq: Geometric perturbations g ENS}, and the bound \eqref{g boundedness ENS} that
\begin{equation*}
    \begin{aligned}
       \mathcal{E}_2^2 &= \bfg(\FlVel-\TrVelL;\bfu^n,(\vhn)^{\ell}) - \Grm_{\gb}(\bfu^n,\vhn) - \gh(\TrVel;\rho_{\bfu}^n,\vhn) - \gh(\TrVel;\theta_{\bfu}^n,\vhn)\\
       &\leq ch^{\widehat{r}_u+1}\norm{\bfu^n}_{L^2(\Gah^n)}\norm{\vhn}_{L^2(\Gah^n)} + \big(\norm{\rho_{\bfu}^n}_{L^2(\Gah^n)}+\norm{\theta_{\bfu}^n}_{L^2(\Gah^n)}\big)\norm{\vhn}_{L^2(\Gah^n)}.
    \end{aligned}
\end{equation*}
Combining the bounds of $\mathcal{E}_1$ and $\mathcal{E}_2$ into \eqref{eq: consist error inside err1 1 ENS} we obtain \eqref{eq: Consistency error 1 ENS}.

\noindent$\underline{\textsc{Err}^C_2},\underline{\textsc{Err}^C_2}:$ These estimates are straightforward if we simply apply \eqref{eq: Geometric perturbations btilde ENS} and \eqref{eq: errors of domain of integration data ENS} respectively.

\noindent $\underline{\textsc{Err}^C_3}:$ Adding and subtracting appropriate terms we see that 
\begin{equation}\label{eq: Consistency errors inside err3 1 ENS}
   \textsc{Err}^C_3 = c_h(\underline{\bfu}^{n-1}(t_n);\bfu^n,\vh) - c(\underline{\bfu}^{n-1}(t_n);\bfu^n,\vhl) + c(\underline{\bfu}^{n-1}(t_n);\bfu^n,\vhl) - c(\bfu^n;\bfu^n,\vhl)=: \mathrm{C}_1 + \mathrm{C}_2.
\end{equation}
First recall $c(\bullet;\bullet,\bullet)$ defined in \eqref{eq: c equiv expression ENS} (with $\eta_1,\eta_2=0$). Then, we see that using the perturbations in \Cref{lemma: errors of geometric pert ENS}, the geometric estimates $\norm{\bfP- \bfPh}_{L^\infty(\Gaht)} \leq ch^{k_g}$, $\norm{1-\muh}_{L^{\infty}(\Gaht)} \leq ch^{k_g+1}$ as seen in \cref{sec: Lifts to Exact Surface}, \eqref{eq: geometric errors 2 ENS}, and the time inequalities \eqref{eq: time equivalence norms ENS}, \eqref{eq: time ineq cont u ENS} we obtain
\begin{equation}
    \begin{aligned}\label{eq: Consistency errors inside err3 2 ENS}
        |\mathrm{C}_1| &= \Big|\frac{1}{2}\Big(\int_{\Gah^n}((\underline{\bfu}^{n-1}(t_n)\cdot\nbgh)\bfu^n)\cdot\vhn - \int_{\Ga^n}((\underline{\bfu}^{n-1}(t_n)\cdot\nbg)\bfu^n)\cdot(\vhn)^{\ell}\Big)\Big|\\
         &\qquad \qquad \qquad+\Big|\frac{1}{2}\Big(\int_{\Gah^n}((\underline{\bfu}^{n-1}(t_n)\cdot\nbgh)\vhn)\cdot\bfu^n -\int_{\Ga^n}((\underline{\bfu}^{n-1}(t_n)\cdot\nbg)(\vhn)^{\ell})\cdot\bfu^n\Big)\Big|\\
         &\leq  ch^{k_g}\norm{\bfu^{n-1}}_{H^1(\Ga^{n-1})}\norm{\bfu^n}_{H^1(\Ga^n)}\norm{\vhn}_{H^1(\Gah^n)}.
    \end{aligned}
\end{equation}
For the second estimate in \eqref{eq: Consistency errors inside err3 1 ENS} we have that 
\begin{equation*}
    \begin{aligned}
        |\mathrm{C}_2| =  \frac{1}{2}\Big|\int_{\Ga^n}\big( (\bfu^n-\underline{\bfu}^{n-1}(t_n))\cdot\nbg\bfu^n \big)\cdot(\vhn)^{\ell}\Big| +  \frac{1}{2}\Big|\int_{\Ga^n}\big((\bfu^n-\underline{\bfu}^{n-1}(t_n))\cdot\nbg\big)(\vhn)^{\ell})\cdot\bfu^n \Big|.
    \end{aligned}
\end{equation*}
From the fact that $\bfu^n = \underline{\bfu}^n(t_n) = \bfu(t_n)\circ\Phi^{\ell}_{t_n}\circ\Phi^{\ell}_{-t_n}$, and $\dfrac{d}{dt} \bfu(t)\circ\Phi^{\ell}_{t} = \big(\dfrac{d}{dt}\bfu(t) + (\nb\bfu)\TrVelL\big)\Phi^{\ell}_{t} = \matdl\bfu(t)\circ \Phi^{\ell}_{t}$ (see \eqref{eq: lifted discrete stong mat deriv ENS}) it follows that 
\begin{equation}
    \begin{aligned}\label{eq: un-un-1 formula ENS}
        \bfu^n-\underline{\bfu}^{n-1}(t_n) &= \bfu(t_n)\circ\Phi^{\ell}_{t_n}\circ\Phi^{\ell}_{-t_n}- \bfu(t_{n-1})\circ\Phi^{\ell}_{t_{n-1}}\circ\Phi^{\ell}_{-t_n}\\
        &=\int_{t_{n-1}}^{t_n} \frac{d}{dt} \bfu(s)\circ\Phi^{\ell}_{s} \,ds \circ \Phi^{\ell}_{-t_n} = \int_{t_{n-1}}^{t_n} \matdl\bfu\circ \Phi^{\ell}_{s}\,ds \circ \Phi^{\ell}_{-t_n},
    \end{aligned}
\end{equation}
and therefore a simple use of H\"older's inequality, the Sobolev embedding $L^4 \hookrightarrow H^1$, \eqref{eq: relate matd to matdl ENS} and, by this point, standard use of the Cauchy-Schwarz inequality, shows that 
\begin{equation}
    \begin{aligned}\label{eq: Consistency errors inside err3 3 ENS}
        |\mathrm{C}_2| \leq c \sqrt{\Delta t} \Big(\int_{t_{n-1}}^{t_n} \norm{\matn\bfu}_{H^1(\Gat)}^2 + h^{2\widehat{r}_u}\norm{\bfu}_{H^1(\Gat)}^2\Big)^{1/2}\norm{\bfu^n}_{H^1(\Ga^n)}\norm{\vhn}_{H^1(\Gah^n)},
    \end{aligned}
\end{equation}
in which we also used the norm equivalence \eqref{eq: norm equivalence ENS} where appropriate. Therefore inserting \eqref{eq: Consistency errors inside err3 2 ENS}, and \eqref{eq: Consistency errors inside err3 3 ENS} into \eqref{eq: Consistency errors inside err3 1 ENS} gives our assertion \eqref{eq: Consistency error 3 ENS}.

\noindent $\underline{\textsc{Err}^{C,cov}_3}:$ As above, adding and subtracting appropriate terms we see that 
\begin{align}\label{eq: Consistency errors inside err3 1 cov ENS}
    \textsc{Err}^{C,cov}_3 &= c_h^{cov}(\underline{\bfu}^{n-1}(t_n);\bfu^n,\vh) - c^{cov}(\underline{\bfu}^{n-1}(t_n);\bfu^n,\vhl) + c^{cov}(\underline{\bfu}^{n-1}(t_n);\bfu^n,\vhl) - c^{cov}(\bfu^n;\bfu^n,\vhl)\nonumber\\
    &=: \mathrm{C}_1^{cov} + \mathrm{C}_2^{cov},
    \end{align}
where we recall $c_h^{cov}$, $c^{cov}$ as defined in \cref{def: bilinear forms discrete ENS} and \eqref{eq: c equiv expression ENS} (with $\eta_1,\eta_2=0$) respectively. Now following \eqref{eq: Consistency errors inside err3 1 ENS} and using the fact that $\nbgcov \vhl = \nbgcov(\bfPg\vhl) + (\vhl\cdot\bfn) \bfH$ \eqref{eq: split cov ENS}, we clearly see that
    \begin{align}\label{eq: Consistency errors inside err3 2 cov ENS}
        &|\mathrm{C}_1^{cov}| \leq ch^{k_g}\norm{\bfu^{n-1}}_{H^1(\Ga^{n-1})}\norm{\bfu^n}_{H^1(\Ga^n)}\norm{\vhn}_{H^1(\Gah^n)} + \Big| -  \frac{1}{2}\int_{\Gahn} \bfu^n\cdot\nh \underline{\bfu}^{n-1}(t_n)\cdot\bfH_h\vhn \Big| \nonumber\\
        &\!\!\!\!+ \Big| \frac{1}{2}\int_{\Gahn} \vhn\cdot(\nh-\bfn) \underline{\bfu}^{n-1}(t_n)\cdot\bfH_h\bfu^n \Big| + \Big| -  \frac{1}{2}\int_{\Gahn} (\vhn\cdot\bfn) \underline{\bfu}^{n-1}(t_n)\cdot\bfH_h\bfu^n + \frac{1}{2}\int_{\Ga^n} ((\vhn)^{\ell}\cdot\bfng) \underline{\bfu}^{n-1}(t_n)\cdot\bfH\bfu^n \Big| \nonumber\\
        &\leq ch^{k_g}\norm{\bfu^{n-1}}_{H^1(\Ga^{n-1})}\norm{\bfu^n}_{H^1(\Ga^n)}\norm{\vhn}_{H^1(\Gah^n)},
    \end{align}
where we used the fact that $\bfu^n$ solves \eqref{weak lagrange hom NV cov ENS} and therefore $\bfu^n\cdot\nh = \bfu^n\cdot(\nh-\bfn)$ coupled with the geometric estimates $\norm{\bfn- \nh}_{L^\infty(\Gah)}\leq ch^{k_g}$ \eqref{eq: geometric errors 1 ENS}, the time-inequalities \eqref{eq: time equivalence norms ENS}, and \eqref{eq: time ineq cont u ENS}, the bound $\norm{\bfH_h}_{L^{\infty}(\Gah)}\leq c$ (see \eqref{eq: geometric errors 1 ENS}), and, lastly, for the last difference in \eqref{eq: Consistency errors inside err3 2 cov ENS} we use  \cref{lemma: weingarten map improved ENS} with $\beta = (\vh^n)^{\ell}\cdot\bfng$, and $\vhn = \underline{\bfu}^{n-1}(t_n)^{-\ell}\in \bfH^1(\Gahn)$.

Regarding $\mathrm{C}_2^{cov}$ we readily see that
\begin{equation*}
    \begin{aligned}
        |\mathrm{C}_2^{cov}| =  \frac{1}{2}\Big|\int_{\Ga^n}\big( (\bfu^n-\underline{\bfu}^{n-1}(t_n))\cdot\nbgcov\bfu^n \big)\cdot(\vhn)^{\ell}\Big| +  \frac{1}{2}\Big|\int_{\Ga^n}\big((\bfu^n-\underline{\bfu}^{n-1}(t_n))\cdot\nbgcov\big)\bfPg(\vhn)^{\ell})\cdot\bfu^n \Big|.
    \end{aligned}
\end{equation*}
Then, following exact calculations as for $\mathrm{C}_2$ beforehand, where we have replaced $\nbg$ with $\nbgcov$, along with the fact that $\norm{\nbgcov\bfPg\vhl}_{L^2(\Ga)} \leq c\norm{\bfPg\vhl}_{a} \leq c\norm{\vh}_{\ah}$ by the perturbation bound \eqref{eq: Geometric perturbations a ENS} and the $H^1$ coercivity bound \eqref{eq: korn inequality Ph ENS}, the following holds
\begin{equation}
    \begin{aligned}\label{eq: Consistency errors inside err3 3 cov ENS}
        |\mathrm{C}_2| \leq c \sqrt{\Delta t} \Big(\int_{t_{n-1}}^{t_n} \norm{\matn\bfu}_{H^1(\Gat)}^2 + h^{2(\widehat{r}_u)}\norm{\bfu}_{H^1(\Gat)}^2\Big)^{1/2}\norm{\bfu^n}_{H^1(\Ga^n)}\norm{\vhn}_{\ah}.
    \end{aligned}
\end{equation}
See \cite[Lemma 7.4]{elliott2025unsteady} where this exact calculations are performed in more detail for the stationary case. Combining then, \eqref{eq: Consistency errors inside err3 2 cov ENS}, \eqref{eq: Consistency errors inside err3 3 cov ENS} into \eqref{eq: Consistency errors inside err3 1 cov ENS} gives the final result.
\end{proof}

\begin{lemma}[Inertia errors]\label{lemma: Trilinear Errors ENS}
    The following bounds are satisfied, for all $\vh \in \bfV_h$ 
    \begin{align}\label{eq: Trilinear Errors general 1 ENS}
        |\mathcal{C}(\vhn)| &\leq c\big(\norm{\eu^{n-1}}_{L^2(\GahnN)}^{1/2}\norm{\eu^{n-1}}_{\ah}^{1/2} +  \norm{\uh^{n-1}}_{\ah}^{1/2}\norm{\eu^n}_{H^1(\Gah^n)}\big) \norm{\vhn}_{H^1(\Gah^n)},\\
        \label{eq: Trilinear Errors general 1 cov ENS}
        |\mathcal{C}^{cov}(\vhn)| &\leq c\big(\norm{\eu^{n-1}}_{L^2(\GahnN)}^{1/2}\norm{\eu^{n-1}}_{\ah}^{1/2} +  \norm{\uh^{n-1}}_{\ah}^{1/2}\norm{\eu^n}_{\ah} + \norm{\uh^{n-1}}_{\ah}^{1/2}\norm{\eu^n}_{L^2(\Gahn)}^{1/2}\norm{\eu^n}_{\ah}^{1/2}\big) \norm{\vhn}_{\ah}.
    \end{align}
In case where $\vhn = \thbfu^n$, considering the skew-symmetric nature of the inertia term, we have
    \begin{align}\label{eq: Trilinear Errors general ENS}
     &|\mathcal{C}(\thbfu^n)| \leq c( \norm{\rho_{\bfu}^{n-1}}_{L^2(\Gah^{n-1})}^{1/2}\norm{\rho_{\bfu}^{n-1}}_{\ah}^{1/2} + \norm{\theta_{\bfu}^{n-1}}_{L^2(\Gah^{n-1})}^{1/2}\norm{\theta_{\bfu}^{n-1}}_{\ah}^{1/2} + \norm{\uh^{n-1}}_{\ah}^{1/2}\norm{\rho_{\bfu}^n}_{H^{1}(\Gah^n)})\norm{\thbfu^n}_{H^1(\Gah^n)},\\
     \label{eq: Trilinear Errors general cov ENS}
     &|\mathcal{C}^{cov}(\thbfu^n)| \leq c( \norm{\rho_{\bfu}^{n-1}}_{L^2(\Gah^{n-1})}^{1/2}\norm{\rho_{\bfu}^{n-1}}_{\ah}^{1/2} + \norm{\theta_{\bfu}^{n-1}}_{L^2(\Gah^{n-1})}^{1/2}\norm{\theta_{\bfu}^{n-1}}_{\ah}^{1/2} + \norm{\uh^{n-1}}_{\ah}^{1/2}\norm{\rho_{\bfu}^n}_{\ah})\norm{\thbfu^n}_{\ah}. 
    \end{align}
\end{lemma}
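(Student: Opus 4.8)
The plan is to exploit the definition $\mathcal{C}^{(\cdot)}(\vhn) = c_h^{(\cdot)}(\underline{\eu^{n-1}}(t_n);\bfu^n;\vhn) - c_h^{(\cdot)}(\underline{\uhnN}(t_n);\eu^n;\vhn)$ together with the continuity bounds for the trilinear forms established in \Cref{Lemma: discrete bounds and coercivity results ENS} (namely \eqref{ch boundedness ENS} for $c_h$ and \eqref{ch cov boundedness ENS} for $c_h^{cov}$), the time-inequalities \eqref{eq: time equivalence norms ENS}--\eqref{eq: time equivalence norms a ENS} that control the time-lift $\underline{(\cdot)}(t_n)$ in terms of the quantity at $t_{n-1}$, and the analogous bound \eqref{eq: time ineq eu ENS} for $\underline{\eu^{n-1}}(t_n)$. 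First I would treat the two summands of $\mathcal{C}^{(\cdot)}(\vhn)$ separately. For the term $c_h^{(\cdot)}(\underline{\eu^{n-1}}(t_n);\bfu^n;\vhn)$, I apply \eqref{ch boundedness ENS} (resp. \eqref{ch cov boundedness ENS}) with first argument $\underline{\eu^{n-1}}(t_n)$, using \eqref{eq: time ineq eu ENS} to replace $\norm{\underline{\eu^{n-1}}(t_n)}_{L^2(\Gahn)}^{1/2}\norm{\underline{\eu^{n-1}}(t_n)}_{\ah}^{1/2}$ by $\norm{\eu^{n-1}}_{L^2(\GahnN)}^{1/2}\norm{\eu^{n-1}}_{\ah}^{1/2}$, and absorbing the $H^1$- (resp. $\ah$-) norms of the smooth solution $\bfu^n$ into the constant $c$ via \Cref{assumption: Regularity assumptions for velocity estimate ENS}. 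For the term $c_h^{(\cdot)}(\underline{\uhnN}(t_n);\eu^n;\vhn)$, I apply the same continuity bounds with first argument $\underline{\uhnN}(t_n)$, using \eqref{eq: time equivalence norms a ENS} (and \eqref{eq: time equivalence norms ENS}) to bound $\norm{\underline{\uhnN}(t_n)}_{\ah}\leq c\norm{\uhnN}_{\ah}$; note one only needs the $\ah$-norm of $\underline{\uhnN}(t_n)$ here (not a product of $L^2$ and $\ah$), which is why the exponent on $\norm{\uh^{n-1}}_{\ah}$ is $1/2$ — the remaining $1/2$ power comes from the second slot of the trilinear form applied to $\eu^n$. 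This directly yields \eqref{eq: Trilinear Errors general 1 ENS} and \eqref{eq: Trilinear Errors general 1 cov ENS}, where in the covariant case the extra summand $\norm{\uh^{n-1}}_{\ah}^{1/2}\norm{\eu^n}_{L^2(\Gahn)}^{1/2}\norm{\eu^n}_{\ah}^{1/2}$ arises from the structure of \eqref{ch cov boundedness ENS}, which has a genuinely mixed term in the middle argument.

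For the refined bounds \eqref{eq: Trilinear Errors general ENS} and \eqref{eq: Trilinear Errors general cov ENS}, I take $\vhn = \thbfu^n$ and exploit skew-symmetry of $c_h^{(\cdot)}(\bfz;\cdot,\cdot)$ in its last two arguments: the critical observation is that $c_h^{(\cdot)}(\underline{\uhnN}(t_n);\thbfu^n;\thbfu^n)=0$, so in the second summand $-c_h^{(\cdot)}(\underline{\uhnN}(t_n);\eu^n;\thbfu^n)$ one may substitute the decomposition $\eu^n = \rho_{\bfu}^n + \theta_{\bfu}^n$ and discard the $\theta_{\bfu}^n$-contribution entirely, leaving only $-c_h^{(\cdot)}(\underline{\uhnN}(t_n);\rho_{\bfu}^n;\thbfu^n)$. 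Likewise, in the first summand $c_h^{(\cdot)}(\underline{\eu^{n-1}}(t_n);\bfu^n;\thbfu^n)$ I would split $\underline{\eu^{n-1}}(t_n)$ via $\eu^{n-1} = \rho_{\bfu}^{n-1} + \theta_{\bfu}^{n-1}$ and apply the continuity bound to each piece separately, producing the two terms $\norm{\rho_{\bfu}^{n-1}}_{L^2(\Gah^{n-1})}^{1/2}\norm{\rho_{\bfu}^{n-1}}_{\ah}^{1/2}$ and $\norm{\theta_{\bfu}^{n-1}}_{L^2(\Gah^{n-1})}^{1/2}\norm{\theta_{\bfu}^{n-1}}_{\ah}^{1/2}$, again absorbing the norms of $\bfu^n$ into $c$. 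Assembling these pieces, together with $\norm{\underline{\uhnN}(t_n)}_{\ah}\leq c\norm{\uhnN}_{\ah}$, gives exactly \eqref{eq: Trilinear Errors general ENS} (with $H^1$-norms, from \eqref{ch boundedness ENS}) and \eqref{eq: Trilinear Errors general cov ENS} (with $\ah$-norms, from \eqref{ch cov boundedness ENS}); in the covariant case the product $\norm{\eu^n}_{L^2(\Gahn)}^{1/2}\norm{\eu^n}_{\ah}^{1/2}$ collapses onto $\rho_{\bfu}^n$ only, and since $\norm{\rho_{\bfu}^n}_{L^2}\leq\norm{\rho_{\bfu}^n}_{\ah}$ this is dominated by $\norm{\rho_{\bfu}^n}_{\ah}$, matching the stated right-hand side.

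\textbf{Main obstacle.} The routine part is the bookkeeping with the continuity estimates; the only genuinely delicate point is making sure that the correct mixture of $L^2$- and $\ah$- (or $H^1$-) norms appears on the previous time-step quantities and that no term is lost when passing the time-lift $\underline{(\cdot)}(t_n)$ back to level $n-1$ — in particular, one must verify that the continuity bound \eqref{ch cov boundedness ENS} really only asks for $\norm{\underline{\uhnN}(t_n)}_{\ah}$ (hence the half-power) rather than for an $L^2$--$\ah$ product in the first slot, since otherwise the $L^2$-norm of $\uhnN$ would appear, which we do not control uniformly without the additional hypothesis \eqref{eq: b extra stability ENS}. A secondary subtlety, specific to $c_h^{cov}$, is the genuinely non-tangential curvature term $\int_{\Gaht}(\wh\cdot\nh)\zh\cdot\bfH_h\vh$ in \Cref{def: bilinear forms discrete ENS}: one should check that it is already accounted for inside \eqref{ch cov boundedness ENS} (it is, via the bound $\norm{\bfH_h}_{L^\infty(\Gaht)}\le c$ and \Cref{lemma: Korn's inequality Ph ENS}), so no separate estimate is needed. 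Finally, for \eqref{eq: Trilinear Errors general cov ENS}/\eqref{eq: Trilinear Errors general ENS} one must take care that skew-symmetry of $c_h^{(\cdot)}$ in its last two slots is exact (it is, by the explicit skew-symmetrized definition in \Cref{def: bilinear forms discrete ENS} with $\bfeta=0$, cf. \Cref{remark: skew-symmetric Gah ENS}), so that the $\theta_{\bfu}^n$-term may be dropped cleanly.
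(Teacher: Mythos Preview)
Your overall strategy is correct and matches the paper's approach: apply the continuity bounds \eqref{ch boundedness ENS}, \eqref{ch cov boundedness ENS} to the two summands of $\mathcal{C}^{(\cdot)}$, pass the time-lifts back via \eqref{eq: time ineq eu ENS} and \eqref{eq: time equivalence norms ENS}--\eqref{eq: time equivalence norms a ENS}, absorb $\norm{\bfu^n}_{H^1(\Ga^n)}$ into $c$, and for $\vhn=\thbfu^n$ use skew-symmetry to kill $c_h^{(\cdot)}(\underline{\uhnN}(t_n);\thbfu^n;\thbfu^n)$.

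There is, however, a genuine confusion in your explanation of the half-power on $\norm{\uh^{n-1}}_{\ah}$. The bound \eqref{ch boundedness ENS} (and likewise \eqref{ch cov boundedness ENS}) does give an $L^2$--$a_h$ product $\norm{\zh}_{L^2}^{1/2}\norm{\zh}_{\ah}^{1/2}$ in the first slot; it does not ``only ask for the $a_h$-norm''. Applying it with $\zh=\underline{\uhnN}(t_n)$ yields $\norm{\uh^{n-1}}_{L^2(\GahnN)}^{1/2}\norm{\uh^{n-1}}_{\ah}^{1/2}\norm{\eu^n}_{H^1(\Gahn)}\norm{\vhn}_{H^1(\Gahn)}$. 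The paper then invokes the velocity stability estimate \eqref{eq: velocity stab estimate ENS} in \Cref{Lemma: velocity stab estimate ENS}, which gives $\sup_n\norm{\uh^n}_{L^2(\Gahn)}\le C$, so that $\norm{\uh^{n-1}}_{L^2(\GahnN)}^{1/2}$ is absorbed into the constant~$c$. This is the missing ingredient in your plan: you do control $\norm{\uh^{n-1}}_{L^2}$ uniformly, from \Cref{Lemma: velocity stab estimate ENS}, and no appeal to the stronger $L^\infty$ hypothesis \eqref{eq: b extra stability ENS} is needed here. Your stated worry in the ``Main obstacle'' paragraph is therefore unfounded, and the sentence about ``the remaining $1/2$ power comes from the second slot'' is incorrect (the second slot contributes $\norm{\eu^n}_{H^1}$ at full power). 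Once you correct this one point, your argument is the paper's.
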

\begin{proof}
Let us start with \eqref{eq: Trilinear Errors general 1 ENS}, then due to the skew-symmetric nature of the inertia term, as defined in \cref{def: bilinear forms discrete ENS},  \eqref{eq: Trilinear Errors general ENS} will be apparent. Recall that $\mathcal{C}(\vh) = - c_h(\underline{\eu^{n-1}}(\cdot,t_n);\bfu^n;\vhn) - c_h(\underline{\uhnN}(\cdot,t_n);\eu^n;\vhn),$ then by the continuity bound \eqref{ch boundedness ENS} and time inequalities \eqref{eq: time ineq eu ENS} and \eqref{eq: time equivalence norms ENS} we see that
\begin{equation*}
    \begin{aligned}
        c_h(\underline{\eu^{n-1}}(\cdot,t_n);\bfu^n;\vhn) \leq \norm{\eu^{n-1}}_{L^2(\GahnN)}^{1/2}\norm{\eu^{n-1}}_{\ah}^{1/2}\norm{\bfu^n}_{H^1(\Ga^n)}\norm{\vhn}_{H^{1}(\Gah^n)}
    \end{aligned}
\end{equation*}
and similarly, using the stability estimates for the discrete velocity \eqref{eq: velocity stab estimate ENS} as well, we obtain
    \begin{equation*}
        \begin{aligned}
             c_h(\underline{\uhnN}(\cdot,t_n);\eu^n;\vhn) \leq c\norm{\uh^{n-1}}_{\ah}^{1/2}\norm{\eu^n}_{H^{1}(\Gah^n)}\norm{\vhn}_{H^{1}(\Gah^n)},
        \end{aligned}
    \end{equation*}
thus proving the initial bound  \eqref{eq: Trilinear Errors general 1 ENS}. 

Now choosing $\vhn = \theta_{\bfu}^n \in \bfV_h^{n,div}$, recalling the decomposition \eqref{eq: decomposition error 1 Ritz ENS}, the above estimate and the skew-symmetric form of $c_h(\bullet;\bullet,\bullet)$ (see \cref{def: bilinear forms discrete ENS}), the final result  \eqref{eq: Trilinear Errors general ENS} indeed holds true: \vspace{-1mm}
\begin{equation*}
    \begin{aligned}
        |\mathcal{C}(\theta_{\bfu}^n)| &= |c_h(\underline{\eu^{n-1}}(\cdot,t_n);\bfu^n;\vhn) - c_h(\underline{\uhnN}(\cdot,t_n);\rho_{\bfu}^n;\theta_{\bfu}^n) -\overbrace{c_h(\underline{\uhnN}(\cdot,t_n);\theta_{\bfu}^n;\theta_{\bfu}^n)}^{=0}| \\
        &\leq \big(\norm{\rho_{\bfu}^{n-1}}_{L^2(\Gah^{n-1})}^{1/2}\norm{\rho_{\bfu}^{n-1}}_{\ah}^{1/2} + \norm{\theta_{\bfu}^{n-1}}_{L^2(\Gah^{n-1})}^{1/2}\norm{\theta_{\bfu}^{n-1}}_{\ah}^{1/2} \big)\norm{\bfu^n}_{H^1(\Ga^n)}\norm{\theta_{\bfu}^n}_{H^{1}(\Gah^n)}\\
        &\qquad + c\norm{\uh^{n-1}}_{\ah}^{1/2}\norm{\rho_{\bfu}^n}_{H^{1}(\Gah^n)}\norm{\thbfu^n}_{H^{1}(\Gah^n)}.
    \end{aligned}
\end{equation*}

Regarding $\mathcal{C}^{cov}(\cdot)$, it is clear that if we use the bound \eqref{ch cov boundedness ENS} instead, the analogous results follow.
\end{proof}

Now we are ready to show error estimates involving the velocity for either the \eqref{eq: fully discrete fin elem approx ENS} or the \eqref{eq: fully discrete fin elem approx cov ENS} scheme when $\underline{k_\lambda = k_u}$. 
\begin{lemma}\label{lemma: discrete remainder velocity estimates ENS}
Assume $\underline{k_\lambda = k_u}$, and that the regularity \cref{assumption: Regularity assumptions for velocity estimate ENS} hold. 
Let $(\bfu,\{p,\lambda\})$ be the solution of \eqref{weak lagrange hom NV dir ENS} or \eqref{weak lagrange hom NV cov ENS} and $(\uh^k,\{\ph^k,\lh^k\})$, $k=1,...,n$ be the discrete solutions of \eqref{eq: fully discrete fin elem approx ENS} or \eqref{eq: fully discrete fin elem approx cov ENS} respectively, with initial condition $\uh^0=\mathcal{R}_h\bfu^0$. Then, for sufficiently small time-step $\Delta t$, the following estimate holds for $1 \leq n \leq N$:
\begin{equation}
    \begin{aligned}\label{eq: discrete remainder velocity estimates ENS}
        \norm{\thbfu^n}_{L^2(\Gah^n)}^2 + \Delta t \sum_{k=1}^n \norm{\thbfu^k}_{\ah}^2 \leq C_{const} \big(\Delta t \big)^2 + C_{int}\big(h^{2\widehat{r}_u} + h^{2k_{pr}+2} +h^{2k_{\lambda}+2}\big),
    \end{aligned}
\end{equation}
\vspace{-4mm}
\begin{align}\label{eq: constant Cconst ENS}
   \text{with }\qquad  C_{cons} = c&\int_{0}^{T} \norm{\matn\matn\bfu}_{L^2(\Gat)}^2 +\norm{\matn\bfu}_{H^1(\Gat)}^2 \, dt,
    \\[-4pt]
    C_{int} = C_{stab} +  c\int_{0}^{T}&\norm{\matn{\bfu}}_{H^{k_u+1}(\Gat)}^2 \, dt + c\sup_{t\in[0,T]}\big\{\norm{\bfu}_{H^{k_{u}+1}(\Gat)}^2+ \norm{p}_{H^{k_{pr}+1}(\Gat)}^2 + \norm{\lambda}_{H^{k_{\lambda}+1}(\Gat)}^2 \big\} \nonumber,
\end{align}
and $\widehat{r}_u = min\{k_{u},k_g\}$, where $C_{stab}$ the constant in \eqref{eq: velocity stab estimate ENS}.\vspace{-1mm}
\end{lemma}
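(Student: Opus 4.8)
The proof is a standard energy argument for backward Euler applied to the error equation \eqref{eq: error equation ENS}, so the plan is to test the first equation of \eqref{eq: error equation ENS} with $\vhn = \thbfu^n \in \bfV_h^{n,div}$, use that $\thbfu^n$ is weakly tangentially divergence-free so that $b_h^L(\thbfu^n,\{\theta_p^n,\theta_\lambda^n\})=0$, and then handle the discrete time-derivative term exactly as in the stability proof of \cref{Lemma: velocity stab estimate ENS}. Concretely, I would write
\begin{equation*}
\dfrac{1}{\Delta t}\big(\mh(\thbfu^n,\thbfu^n) - \mh(\thbfu^{n-1},\underline{\thbfu^n}(\cdot,t_{n-1}))\big) = \dfrac{1}{\Delta t}\big(\mh(\thbfu^n,\thbfu^n) - \mh(\underline{\thbfu^{n-1}}(\cdot,t_n),\thbfu^n)\big) + \mathcal{R},
\end{equation*}
where $\mathcal{R}$ is controlled by the time-difference estimates \eqref{eq: time differences properties bilinear m ENS}; then apply the identity $(\bfa-\bfb)\cdot\bfa = \tfrac12(\bfa^2-\bfb^2)+\tfrac12(\bfa-\bfb)^2$ together with the time-lift relation \eqref{eq: n to n-1 on surface n ENS} to produce $\tfrac{1}{2\Delta t}(\norm{\thbfu^n}_{L^2(\Gahn)}^2 - \norm{\thbfu^{n-1}}_{L^2(\GahnN)}^2)$ plus a nonnegative term plus lower-order terms. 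For the viscous term I would use the coercivity \eqref{ahhat coercivity ENS}, $\ahhat(\thbfu^n,\thbfu^n) \ge \norm{\thbfu^n}_{\ah}^2 - \norm{\thbfu^n}_{L^2(\Gahn)}^2$, which absorbs into the left-hand side after multiplying by $\Delta t$ and summing, at the cost of a Gr\"onwall factor.

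The right-hand side is then bounded term by term using the consistency, interpolation and inertia estimates already proven: $\textsc{Err}_i^C$ from \cref{lemma: Consistency errors ENS}, $\textsc{Err}_i^I$ from \cref{lemma: Interpolation errors ENS} (using the improved divergence-free versions \eqref{eq: Interpolation errors improved ENS}, \eqref{eq: Interpolation error 2 ENS} since $\thbfu^n \in \bfV_h^{n,div}$ and $k_\lambda = k_u$), and $\mathcal{C}^{(\cdot)}(\thbfu^n)$ from \eqref{eq: Trilinear Errors general ENS}/\eqref{eq: Trilinear Errors general cov ENS}. Here $\norm{\thbfu^n}_{H^1(\Gahn)} \le c\norm{\thbfu^n}_{\ah}$ by the improved coercivity \eqref{eq: improved h1-ah bound ENS} (valid precisely because $k_\lambda=k_u$ and $\thbfu^n\in\bfV_h^{n,div}$), which is why the whole argument only works in this case; every $\norm{\vhn}_{H^1}$ on the right is converted to $\norm{\thbfu^n}_{\ah}$ and then split off by an $\varepsilon$-weighted Young inequality so that an $\varepsilon\Delta t\norm{\thbfu^n}_{\ah}^2$ piece is absorbed into the coercive term. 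The non-integral remainders $\norm{\rho_{\bfu}^n}_{L^2(\Gahn)}$ and $\norm{\theta_{\bfu}^n}_{L^2(\Gahn)}$ appearing in $\textsc{Err}_1^C$ are handled by the Ritz--Stokes bound \eqref{eq: Error Bounds Ritz-Stokes improved H1 ENS} (giving $O(h^{\widehat r_u+1})$) and by Young against $\norm{\thbfu^n}_{L^2(\Gahn)}^2$, respectively, so they feed the Gr\"onwall.

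The one genuinely delicate point — and the main obstacle — is the inertia error $\mathcal{C}^{(\cdot)}(\thbfu^n)$, whose bound \eqref{eq: Trilinear Errors general ENS} contains the previous-step discrete remainder in the form $\norm{\theta_{\bfu}^{n-1}}_{L^2(\Gah^{n-1})}^{1/2}\norm{\theta_{\bfu}^{n-1}}_{\ah}^{1/2}\,\norm{\thbfu^n}_{H^1(\Gahn)}$ and the current term $\norm{\uh^{n-1}}_{\ah}^{1/2}\norm{\rho_{\bfu}^n}_{\ah}\norm{\thbfu^n}_{H^1(\Gahn)}$. Using $\sup_n\norm{\uh^{n-1}}_{\ah}\le C_a$ from the velocity stability \eqref{eq: velocity stab estimate ENS} (which depends only on data, hence is uniform), the second term is $O(h^{r_u})$ after \eqref{eq: Error Bounds Ritz-Stokes improved ENS} and is absorbed; for the first, I would use Young's inequality with weights chosen so that $\norm{\theta_{\bfu}^{n-1}}_{\ah}^{1/2}\norm{\thbfu^n}_{\ah}$ becomes $\varepsilon\Delta t\norm{\thbfu^n}_{\ah}^2 + c(\varepsilon)\Delta t\norm{\theta_{\bfu}^{n-1}}_{\ah}\norm{\theta_{\bfu}^{n-1}}_{L^2(\Gah^{n-1})}$, and then bound the last factor further by $\varepsilon'\Delta t\,(\text{something summable}) + c\norm{\theta_{\bfu}^{n-1}}_{L^2(\Gah^{n-1})}^2$, so that after summation the $\norm{\thbfu^{n-1}}_{\ah}^2$ pieces are controlled by the already-accumulated $\Delta t\sum_k\norm{\thbfu^k}_{\ah}^2$ (requiring $\Delta t$ small, hence the hypothesis) and the $L^2$ pieces go into Gr\"onwall. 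After summing the resulting inequality over $k=1,\dots,n$, using $\thbfu^0 = 0$ (since $\uh^0=\mathcal{R}_h\bfu^0$), collecting all the $h$-powers (the dominant ones being $h^{\widehat r_u}$ from $\textsc{Err}_1^I$ and the consistency terms, $h^{k_{pr}+1}$, $h^{k_\lambda+1}$ from $\textsc{Err}_2^I$, and $(\Delta t)$ from the $\sqrt{\Delta t}$ consistency integrals squared) into $C_{cons}$ and $C_{int}$ as displayed, and applying the discrete Gr\"onwall lemma, yields \eqref{eq: discrete remainder velocity estimates ENS}. The covariant scheme is identical, replacing \eqref{eq: Trilinear Errors general ENS} by \eqref{eq: Trilinear Errors general cov ENS} and \eqref{eq: Consistency error 3 ENS} by \eqref{eq: Consistency error 3 cov ENS}, all of whose right-hand sides are already in terms of the $\ah$-norm of the test function.
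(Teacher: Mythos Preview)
Your overall strategy is exactly the paper's: test \eqref{eq: error equation ENS} with $\thbfu^n\in\bfV_h^{n,div}$, use \eqref{eq: m bilinear manipulation kl=ku ENS} for the discrete time derivative, the coercivity \eqref{ahhat coercivity ENS}, the improved $H^1$ bound \eqref{eq: improved h1-ah bound ENS}, the improved interpolation estimate \eqref{eq: Interpolation errors improved ENS}, the inertia bound \eqref{eq: Trilinear Errors general ENS}, sum, and Gr\"onwall with $\thbfu^0=0$. Two points deserve correction, though.

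First, the claim ``$\sup_n\norm{\uh^{n-1}}_{\ah}\le C_a$ from the velocity stability \eqref{eq: velocity stab estimate ENS}'' is wrong: \eqref{eq: velocity stab estimate ENS} only controls $\Delta t\sum_k\norm{\uh^k}_{\ah}^2$, not the pointwise-in-$n$ energy norm (the uniform bound \eqref{eq: uniform bound uha stability ENS} is obtained only \emph{after} the present lemma, and under $\Delta t\le ch$; using it here would be circular). The paper instead applies Young's inequality to $\norm{\uh^{n-1}}_{\ah}^{1/2}\norm{\rho_{\bfu}^n}_{H^1}\norm{\thbfu^n}_{\ah}$ to produce $c\norm{\uh^{n-1}}_{\ah}\,h^{2\widehat r_u}+\tfrac15\norm{\thbfu^n}_{\ah}^2$ (see \eqref{eq: discrete remainder velocity estimates inside 3 ENS}); after summing, $\Delta t\sum_k\norm{\uh^{k-1}}_{\ah}$ is bounded via Cauchy--Schwarz by $T^{1/2}\big(\Delta t\sum_k\norm{\uh^{k-1}}_{\ah}^2\big)^{1/2}\le C_{stab}$, which is why $C_{stab}$ appears in $C_{int}$.

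Second, you do not say how $\textsc{Err}_5^C(\thbfu^n)=\ahhat(\mathcal{R}_h\bfu^n,\thbfu^n)-\ahat(\bfu^n,(\thbfu^n)^\ell)$ is handled; \cref{lemma: Consistency errors ENS} covers only $\textsc{Err}_1^C$--$\textsc{Err}_4^C$. The point is that since $\thbfu^n\in\bfV_h^{n,div}$, the Ritz--Stokes definition \eqref{eq: surface Ritz-Stokes projection ENS} turns this into $\mb(\bfu^n,(\thbfu^n)^\ell)-\mh(\mathcal{R}_h\bfu^n,\thbfu^n)=\Grm_{\mb}(\bfu^n,\thbfu^n)+\mh(\bfu^n-\mathcal{R}_h\bfu^n,\thbfu^n)$, which is $O(h^{\widehat r_u+1/2})\norm{\thbfu^n}_{L^2}$ by \eqref{eq: errors of domain of integration data ENS} and \eqref{eq: Error Bounds Ritz-Stokes improved H1 ENS}--\eqref{eq: Error Bounds Ritz-Stokes L2 improved ENS}. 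With these two fixes your argument matches the paper.
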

\begin{proof}
Testing the error equation \eqref{eq: error equation ENS} with $\vhn = \thbfu^n \in \bfV_h^{n,div}$ and recalling \eqref{eq: n to n-1 on surface n ENS} (see also calculations in \cref{Lemma: velocity stab estimate ENS}) we see that straightforward calculations and re-arrangements give
\begin{equation}
    \begin{aligned}\label{eq: m bilinear manipulation kl=ku ENS}
        &{\dfrac{1}{\Delta t}\Big(\bfm_h(\theta_{\bfu}^n,\theta_{\bfu}^n) - \bfm_h(\theta_{\bfu}^{n-1},\underline{\theta_{\bfu}^n}(t^{n-1}))\Big)} = {\dfrac{1}{2\Delta t}\Big(\bfm_h(\theta_{\bfu}^n,\theta_{\bfu}^n) - \bfm_h(\theta_{\bfu}^{n-1},\theta_{\bfu}^{n-1} )\Big)} \\
        &+ \underbrace{\frac{\Delta t}{2}\bfm_h(\underline{\matdt\theta_{\bfu}^n}(t^{n-1}),\underline{\matdt\theta_{\bfu}^n}(t^{n-1}))}_{\geq 0} + \frac{1}{2\Delta t } \underbrace{\Big(\bfm_h(\theta_{\bfu}^n,\theta_{\bfu}^n) - \bfm_h(\underline{\theta_{\bfu}^n}(t^{n-1}),\underline{\theta_{\bfu}^n}(t^{n-1}))\Big)}_{\leq \Delta t \norm{\theta_{\bfu}^n}_{L^2(\Gah^n)}^2 \text{ by } \eqref{eq: time differences properties bilinear m ENS}},
    \end{aligned}
\end{equation}\vspace{-3mm}

\noindent and therefore we derive
\begin{equation}
    \begin{aligned}\label{eq: discrete remainder velocity estimates inside ENS}
        \bfm_h(\theta_{\bfu}^n,\theta_{\bfu}^n) - \bfm_h(\theta_{\bfu}^{n-1},\theta_{\bfu}^{n-1} ) &+ 2\Delta t \, \ahhat(\theta_{\bfu}^n,\theta_{\bfu}^n)\\
        \qquad\qquad\qquad\leq 2\Delta t \norm{\theta_{\bfu}^n}_{L^2(\Gah^n)}^2 &+
        2\Delta t \Big(\sum_{i=1}^5 \textsc{Err}_{i}^{C}(\theta_{\bfu}^n) +  \sum_{i=1}^2 \textsc{Err}_{i}^{I}(\theta_{\bfu}^n) +  \mathcal{C}(\theta_{\bfu}^n)\Big),
    \end{aligned}
\end{equation}\vspace{-1mm}

\noindent where, since $\thbfu^n\in \bfV_h^{n,div}$, by the Ritz-Stokes projection defined in \eqref{eq: surface Ritz-Stokes projection ENS}, $\textsc{Err}^5_C(\theta_{\bfu}^n)=\mb(\bfu^n,(\theta_{\bfu}^n)^{\ell}) - \mh(\mathcal{R}_h\bfu^n,\theta_{\bfu}^n) = G_{\mb}(\bfu^n,\theta_{\bfu}^n) + \mh(\bfu^n-\mathcal{R}_h\bfu^n,\theta_{\bfu}^n) \leq ch^{\widehat{r}_u+1/2}\norm{\bfu}_{H^{k_u+1}(\Gahn)}\norm{\theta_{\bfu}^n}_{L^2(\Gahn)}$, due to the bounds \eqref{eq: errors of domain of integration data ENS} and \eqref{eq: Error Bounds Ritz-Stokes improved H1 ENS}, \eqref{eq: Error Bounds Ritz-Stokes L2 improved ENS}. Considering, now, the consistency bounds in \cref{lemma: Consistency errors ENS}, the fact that $\norm{\thbfu^n}_{H^1(\Gah)} \leq \norm{\thbfu^n}_{\ah}$, cf. \eqref{eq: improved h1-ah bound ENS}, and the Ritz-Stokes bound  $\norm{\rho_{\bfu}^n}_{L^2(\Gahn)}\leq ch^{\widehat{r}_u+1/2}\norm{\bfu}_{H^{k_u+1}(\Gahn)}$, cf. \eqref{eq: Error Bounds Ritz-Stokes improved H1 ENS}, \eqref{eq: Error Bounds Ritz-Stokes L2 improved ENS}, using Young's inequality appropriately we obtain
\begin{align}\label{eq: discrete remainder velocity estimates inside 1 ENS}
    |\sum_{i=1}^5 \textsc{Err}_{i}^{C}(\theta_{\bfu}^n)| &\leq c\Delta t\Big(\int_{t_{n-1}}^{t^n} \norm{\matn\matn\bfu}_{L^2(\Gat)}^2 +\norm{\matn\bfu}_{H^1(\Gat)}^2 \, dt\Big) + ch^{2k_g}\big(\norm{\bfu^{n-1}}_{H^1(\Ga^{n-1})}^2+ \norm{p^n}_{H^1(\Ga^n)}^2\nonumber\\
    & \!\!\!\!\!\!+ \norm{\lambda^n}_{L^2(\Ga^n)}^2 + \norm{\bff^n}_{L^2(\Ga^n)}^2\big)    +  \frac{ch^{2\widehat{r}_u+2}}{\Delta t} \Big(\int_{t_{n-1}}^{t^n} \norm{\matn\bfu}_{L^2(\Gat)}^2 + \norm{\bfu}_{H^1(\Gat)}^2 \, dt \Big) \\
    &\!\!\!\!\!\!+ ch^{2\widehat{r}_u+2}\!\!\!\!\!\!\! \sup_{t\in[t_{n-1},t_n]}\norm{\bfu}_{L^2(\Gat)}^2 +ch^{2\widehat{r}_u+1}\norm{\bfu^n}_{H^{k_u+1}(\Ga^n)}^2 + c\norm{\theta_{\bfu}^n}_{L^2(\Gah^n)}^2 + \frac{1}{5}\norm{\theta_{\bfu}^n}_{\ah}^2.\nonumber
\end{align}
Similarly, using instead the interpolation errors in \eqref{eq: Interpolation errors improved ENS} and \eqref{eq: Interpolation errors inside 2 ENS} we  find
\begin{align}\label{eq: discrete remainder velocity estimates inside 2 ENS}
    |\sum_{i=1}^2 \textsc{Err}_{i}^{I}(\theta_{\bfu}^n)| &\leq \big(\frac{h^{2\widehat{r}_u}+h^{2\widehat{r}_u+2}}{\Delta t}+ \Delta t h^{2\widehat{r}_u}\big)\int_{t_{n-1}}^{t^n}\sum_{j=0}^{1}\norm{(\matn)^j\bfu}_{H^{k_u+1}(\Gat)}^2\, dt  \nonumber \\
    &+ c(h^{2k_{pr}+2} + h^{2k_{\lambda}+2})\big( \norm{p^n}_{H^{k_{pr}+1}(\Ga^n)}^2  
        + \norm{\lambda^n}_{H^{k_{\lambda}+1}(\Ga^n)}^2\big)+ \frac{1}{5}\norm{\theta_{\bfu}^n}_{L^2(\Gah^n)}^2.  
\end{align}
Lastly, considering \eqref{eq: Trilinear Errors general ENS}, again with the help of the Ritz-Stokes bounds \eqref{eq: Error Bounds Ritz-Stokes improved ENS}, \eqref{eq: Error Bounds Ritz-Stokes improved H1 ENS} and simple application of Young's inequality we find
\begin{align}\label{eq: discrete remainder velocity estimates inside 3 ENS}
    |\mathcal{C}(\thbfu^n)| \leq c(1+\norm{\uh^{n-1}}_{\ah})h^{2\widehat{r}_u} + c\norm{\thbfu^{n-1}}_{L^2(\Gah^{n-1})}^2  + \frac{1}{5}\norm{\thbfu^{n-1}}_{\ah}^2 + \frac{1}{5}\norm{\thbfu^{n}}_{\ah}^2.
\end{align}
 Now, plugging \eqref{eq: discrete remainder velocity estimates inside 1 ENS}-\eqref{eq: discrete remainder velocity estimates inside 3 ENS} into \eqref{eq: discrete remainder velocity estimates inside ENS}, considering $\Delta t $ sufficiently small, applying kickback arguments where appropriate, and taking into account appropriate inclusions, e.g. $\int_{0}^{T} \norm{{\bfu}}_{H^{k_u+1}(\Gat)}^2\, dt \leq c(T)\sup_{t\in[0,T]} \{\norm{\bfu}_{H^{k_u+1}(\Gat)}\}$, we obtain upon summing for $k=1,...,n$ that
\begin{align*}
    \frac{1}{2}\norm{\thbfu^n}_{L^2(\Gah^n)}^2 + \frac{1}{5}\Delta t \sum_{k=1}^n \norm{\thbfu^k}_{\ah}^2 &\leq  c\Delta t  \sum_{k=1}^n \norm{\thbfu^{k}}_{L^2(\Gah^{k})}^2 + C_{cons} (\Delta t)^2 \\
    &+ C_{int}(h^{2\widehat{r}_u}+ h^{2k_{pr}+2} +h^{2k_{\lambda}+2}) + \big(\Delta t  \sum_{k=1}^n\norm{\uh^{k-1}}_{\ah}\big)\,h^{2\widehat{r}_u}, 
\end{align*}
with $ C_{cons}$ and $C_{int}$ as in \eqref{eq: constant Cconst ENS} and where we also used the fact that $\theta_{\bfu}^0=0$, by choice of initial condition $\uh^0 = \mathcal{R}_h\bfu^0$. Applying a discrete Gr\"onwall argument (assuming $\Delta t$ small enough), using the stability estimate $\Delta t \sum_{k=1}^n \norm{\uh^{n-1}}_{\ah} \leq C_{stab}$, where  $C_{stab}$ the constant in \eqref{eq: velocity stab estimate ENS},  and simple simplifications concerning the previously mentioned constants, yield our desired estimate.
\end{proof}

\begin{lemma}\label{lemma: discrete remainder velocity estimates kl=ku-1 ENS}
Assume $\underline{k_\lambda = k_u-1}$, $k_g \geq 2$ and that the regularity \cref{assumption: Regularity assumptions for velocity estimate ENS} hold. 
Let $(\bfu,\{p,\lambda\})$ be the solution of \eqref{weak lagrange hom NV cov ENS} and $(\uh^k,\{\ph^k,\lh^k\})$, $k=1,...,n$ be the discrete solutions of \eqref{eq: fully discrete fin elem approx cov ENS}, with initial condition $\uh^0=\mathcal{R}_h\bfu^0$. Then, for sufficiently small time-step $\Delta t$, the following estimate holds for $1 \leq n \leq N$:
\begin{equation}
    \begin{aligned}\label{eq: discrete remainder velocity estimates kl=ku-1 ENS}
        \norm{\thbfu^n}_{L^2(\Gah^n)}^2 + \Delta t \sum_{k=1}^n \norm{\thbfu^k}_{\ah}^2 \leq C_{const} \big(\Delta t \big)^2 + C_{int}\big(h^{2r_u} + h^{2k_{pr}+2} +h^{2k_{\lambda}+2}\big).
    \end{aligned}
\end{equation}
where $r_u = min\{k_{u},k_g-1\}$, with constants $C_{conc}$, $C_{int}$ as in \cref{lemma: discrete remainder velocity estimates ENS}. 
\end{lemma}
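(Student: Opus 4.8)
The plan is to run the proof of \cref{lemma: discrete remainder velocity estimates ENS} essentially verbatim, making two systematic substitutions appropriate to the case $\underline{k_\lambda = k_u-1}$: first, work with the scheme \eqref{eq: fully discrete fin elem approx cov ENS}, so that the convective term is $c_h^{cov}$, which is controlled in the energy norm by \eqref{ch cov boundedness ENS} (and the inertia error $\mathcal{C}^{cov}$ by \eqref{eq: Trilinear Errors general cov ENS}), thereby never invoking the improved $H^1$-coercivity \eqref{eq: improved h1-ah bound ENS}, which is unavailable here; and second, replace every \emph{improved} Ritz-Stokes bound by its generic counterpart, namely \eqref{eq: Error Bounds Ritz-Stokes ENS} and \eqref{eq: Error Bounds der Ritz-Stokes ENS}, which costs exactly one geometric order ($r_u$ in place of $\widehat{r}_u$). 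Concretely, I would test the error equation \eqref{eq: error equation ENS} (with $c_h^{(\cdot)}=c_h^{cov}$) against $\vhn = \thbfu^n$, which lies in $\bfV_h^{n,div}$ for every $n\ge 0$ by \eqref{eq: surface Ritz-Stokes projection ENS} and the choice $\uh^0 = \mathcal{R}_h\bfu^0$ (so $\thbfu^0=0$); the discrete material-derivative term is rearranged exactly as in \eqref{eq: m bilinear manipulation kl=ku ENS}, producing $\tfrac1{2\Delta t}\big(m_h(\thbfu^n,\thbfu^n) - m_h(\thbfu^{n-1},\thbfu^{n-1})\big)$, a nonnegative inertial remainder, and a term bounded by $\Delta t\,\norm{\thbfu^n}_{L^2(\Gahn)}^2$ via \eqref{eq: time differences properties bilinear m ENS}; and $\textsc{Err}_5^C(\thbfu^n)$ again collapses to $m(\bfu^n,(\thbfu^n)^{\ell}) - m_h(\mathcal{R}_h\bfu^n,\thbfu^n)$ since $\thbfu^n$ annihilates $b_h^L$, hence is $\bigo(h^{r_u})\norm{\thbfu^n}_{L^2}$ by \eqref{eq: errors of domain of integration data ENS} and \eqref{eq: Error Bounds Ritz-Stokes ENS}.

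Next I would insert the bounds for the remaining error terms. For the consistency errors I use \cref{lemma: Consistency errors ENS} with $\textsc{Err}_3^{C,cov}$ from \eqref{eq: Consistency error 3 cov ENS}: the purely geometric factors appearing there are $h^{k_g}$ and $h^{\widehat{r}_u+1}$, and since $r_u\le k_g-1<k_g$ and $r_u<\widehat{r}_u+1$, both are $\bigo(h^{r_u})$; the time factors give $(\Delta t)^2$-contributions feeding into $C_{cons}$; the residual $(\norm{\rho_{\bfu}^n}_{L^2}+\norm{\theta_{\bfu}^n}_{L^2})\norm{\thbfu^n}_{L^2}$ in $\textsc{Err}_1^C$ is handled by \eqref{eq: Error Bounds Ritz-Stokes ENS} and Young's inequality. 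Whenever $\norm{\thbfu^n}_{H^1(\Gahn)}$ is forced to appear (in $\textsc{Err}_1^C$ and in the $h^{k_g}$ part of $\textsc{Err}_3^{C,cov}$) I use the coarse bound \eqref{eq: coercivity and Korn's inequality Lagrange ENS}, $\norm{\thbfu^n}_{H^1}\le ch^{-1}\norm{\thbfu^n}_{\ah}$; this is harmless precisely because those coefficients carry a spare factor of $h$ ($h^{\widehat{r}_u+1}$, $h^{k_g}$), leaving $h^{\widehat{r}_u}$ resp.\ $h^{k_g-1}$, both $\bigo(h^{r_u})$, times $\norm{\thbfu^n}_{\ah}$, absorbable by Young. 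For the interpolation errors I use the generic \eqref{eq: Interpolation errors ENS} together with $\norm{(\matd)^j\rho_{\bfu}}_{L^2}\le ch^{r_u}\sum_{j=0}^1\norm{(\matn)^j\bfu}_{H^{k_u+1}}$ (from \eqref{eq: Error Bounds Ritz-Stokes ENS}, \eqref{eq: Error Bounds der Ritz-Stokes ENS}), and \eqref{eq: Interpolation error 2 ENS} for the $h^{k_{pr}+1},h^{k_\lambda+1}$ pressure contributions; note no Leray-time-projection refinement is needed here, unlike the $k_\lambda=k_u$ case. For the inertia error I apply \eqref{eq: Trilinear Errors general cov ENS}, estimating $\norm{\rho_{\bfu}^{n-1}}_{\ah},\norm{\rho_{\bfu}^n}_{\ah}\le ch^{r_u}$, bounding $\Delta t\sum_k\norm{\uh^{k-1}}_{\ah}$ by $\bigo(1)$ via Cauchy--Schwarz and the velocity stability estimate \eqref{eq: velocity stab estimate ENS}, and absorbing the $\theta_{\bfu}^{n-1}$ contributions into $\epsilon\,\norm{\thbfu^{n-1}}_{\ah}^2 + c\,\norm{\thbfu^{n-1}}_{L^2}^2$; every appearance of $\thbfu^n$ here is in the $\ah$-norm.

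Finally I would multiply the resulting inequality by $2\Delta t$, sum over $k=1,\dots,n$, absorb the several $\tfrac15\Delta t\,\norm{\thbfu^k}_{\ah}^2$ and $\tfrac15\Delta t\,\norm{\thbfu^{k-1}}_{\ah}^2$ terms into the left-hand dissipation (using $\thbfu^0=0$ to re-index the shifted sum), and apply a discrete Grönwall argument to the remaining $\Delta t\sum_k\norm{\thbfu^k}_{L^2}^2$ terms for $\Delta t$ small. This yields \eqref{eq: discrete remainder velocity estimates kl=ku-1 ENS} with $C_{cons}$, $C_{int}$ of the same form as in \cref{lemma: discrete remainder velocity estimates ENS} (with $C_{int}$ absorbing $C_{stab}$ from \eqref{eq: velocity stab estimate ENS}), the only change being $\widehat{r}_u\rightsquigarrow r_u$.

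I expect the only real obstacle to be bookkeeping rather than a new idea: one must check carefully that restricting to the covariant scheme keeps \emph{every} convective and inertial contribution in the energy norm, so that the lossy inequality \eqref{eq: coercivity and Korn's inequality Lagrange ENS} — whose factor $h^{-1}$ would be fatal if it hit a bare convective term — only ever meets coefficients with a compensating power of $h$; and that the uniform downgrade from $\widehat{r}_u$ to $r_u$, traceable to the suboptimal material-derivative bound \eqref{eq: Error Bounds der Ritz-Stokes ENS} and the suboptimal $L^2$ bound \eqref{eq: Error Bounds Ritz-Stokes ENS}, propagates consistently through $\textsc{Err}_1^I$, $\textsc{Err}_5^C$ and $\mathcal{C}^{cov}$ without any single term degrading below $h^{r_u}$. (For the scheme \eqref{eq: fully discrete fin elem approx ENS} this fails, since $c_h$ is bounded only through \eqref{ch boundedness ENS} in the $H^1$-norm, which cannot be absorbed — hence the restriction in the statement.)
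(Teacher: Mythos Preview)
Your proposal is correct and follows essentially the same route as the paper: rerun the proof of \cref{lemma: discrete remainder velocity estimates ENS} with the covariant scheme, replacing \eqref{eq: Interpolation errors improved ENS} and \eqref{eq: Consistency error 3 ENS} by \eqref{eq: Interpolation errors ENS} and \eqref{eq: Consistency error 3 cov ENS}, the improved Ritz--Stokes bounds by \eqref{eq: Error Bounds Ritz-Stokes ENS}, \eqref{eq: Error Bounds der Ritz-Stokes ENS}, and the improved $H^1$-coercivity \eqref{eq: improved h1-ah bound ENS} by an $h^{-1}$ loss wherever $\norm{\thbfu^n}_{H^1}$ is forced to appear. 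The only cosmetic difference is that the paper invokes the standard inverse inequality $\norm{\thbfu^n}_{H^1(\Gahn)}\le ch^{-1}\norm{\thbfu^n}_{L^2(\Gahn)}$ rather than \eqref{eq: coercivity and Korn's inequality Lagrange ENS}, but either choice absorbs cleanly.
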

\begin{proof}
The proof follows almost identically as in \cref{lemma: discrete remainder velocity estimates ENS}, where one now makes use of the bounds \eqref{eq: Interpolation errors ENS} for $\textsc{Err}_1^{I}$ and \eqref{eq: Consistency error 3 cov ENS} for $ \textsc{Err}_3^{C,cov}$ instead of \eqref{eq: Interpolation errors improved ENS} and \eqref{eq: Consistency error 3 ENS} respectively, the worse Ritz-Stokes estimates \eqref{eq: Error Bounds Ritz-Stokes ENS},\eqref{eq: Error Bounds der Ritz-Stokes ENS} and lastly the inverse inequality  $\norm{\thbfu^n}_{H^1(\Gahn)} \leq h^{-1}\norm{\thbfu^n}_{L^2(\Gahn)}$ instead of the improved $H^1$ coercivity bound \eqref{lemma: improved h1-ah bound ENS}, where appropriate.
\end{proof}
\begin{remark}[About pressure stability]\label{remark: About pressure stability ENS}
Using the modified Ritz-Stokes projection \eqref{eq: surface Ritz-Stokes projection ENS}, we see with the help of the inverse inequality that
\begin{equation}\label{eq: inside stab uha ENS}
    \norm{\uh^n}_{\ah} \leq \norm{\mathcal{R}_h \bfu^n}_{\ah} + \norm{\thbfu^n}_{\ah}\leq  \norm{\mathcal{R}_h \bfu^n}_{\ah} + h^{-1}\norm{\thbfu^n}_{L^2(\Gah^n)}.
\end{equation}
Bearing in mind the \cref{assumption: Regularity assumptions for velocity estimate ENS}, the previously proven estimate \eqref{eq: discrete remainder velocity estimates ENS}, and the stability of the Ritz-Stokes map \eqref{eq: Stab estimates for Ritz-Stokes projection ENS}, we see that the following holds true:
\begin{align}
\label{eq: uniform bound uha stability ENS}
    \sup_n \norm{\uh^n}_{\ah} \leq C_a, \qquad \text{ if } \Delta t \leq c h.
\end{align}\vspace{-4mm}

\noindent This is exactly the condition \eqref{eq: uniform bound b stability ENS}, which we needed to prove the $L^2_{L^2}\times L^2_{H_h^{-1}}$ pressure stability in \cref{Lemma: Pressure stab Estimate ENS}. Notice, that the time-step restriction is reasonable since it will be automatically satisfied if we try to balance spatial and temporal discretization errors, as this leads to $\Delta t \sim h^{k_u}$.
\end{remark}

\begin{remark}[About $L^2_{L^2}\times L^2_{L^2}$ pressure stability for $k_\lambda-1$]\label{remark: About the pressure stability for arbitrary convergence ENS}
 As mentioned before in \cref{remark: About the pressure stability for arbitrary ENS}, to prove $L^2_{L^2}\times L^2_{L^2}$ pressure stability for $k_\lambda-1$, we need an $L^{\infty}$-uniform bound instead, cf. \eqref{eq: b extra stability ENS}. For this to hold, however, we need further regularity assumption for the velocity, namely $\bfu \in L^{\infty}_{W^{2,\infty}(\Gat)}$. To see this, by similar calculations as in  \eqref{eq: inside stab uha ENS}, where instead we use the $L^{\infty}$-bound of the  Ritz-Stokes projection  \eqref{eq: W1infty estimate Ritz-Stokes ENS}, we find that (assuming that $\Delta t \leq ch$ and $k_g \geq 2$)
 \begin{equation}
    \begin{aligned}\label{eq: Linfty uh bound ENS}
        \norm{\uh^n}_{L^{\infty}(\Gah^n)} \leq \norm{\mathcal{R}_h \bfu^n}_{L^{\infty}(\Gah^n)} + \norm{\thbfu^n}_{L^{\infty}(\Ga^n)}\leq  \norm{\bfu^n}_{W^{2,\infty}(\Ga^n)} + h^{-1}\norm{\thbfu^n}_{L^2(\Gah^n)},
    \end{aligned}
\end{equation}
and therefore $\sup_{n} \norm{\bfu_h^n}_{L^{\infty}(\Gahn)} \leq \norm{\bfu}_{ L^{\infty}_{W^{2,\infty}(\Gat)}} \!\!\!\!+ C_{int} + C_{cons} \leq C_a^{\infty}$, i.e. \eqref{eq: b extra stability ENS}. 
\end{remark}

Now, factoring in the above discrete remainder bounds \eqref{eq: discrete remainder velocity estimates ENS} and \eqref{eq: discrete remainder velocity estimates kl=ku-1 ENS}, the decomposition \eqref{eq: decomposition error 1 Ritz ENS}, the triangle's inequality and the Ritz-Stokes estimates \eqref{eq: Error Bounds Ritz-Stokes improved ENS}, the following \emph{main result} readily holds.
\begin{theorem}[Velocity error bounds]\label{theorem: Velocity Error Estimates ENS}
Under the Assumption  \ref{assumption: Regularity assumptions for velocity estimate ENS} and \Cref{lemma: discrete remainder velocity estimates ENS} the following velocity error estimates hold for $1 \leq n \leq N$
\begin{equation}\label{eq: Velocity Error Estimates ENS}
    \begin{aligned}
       \norm{\eu^n}_{L^2(\Gah^n)}^2 + \Delta t \sum_{k=1}^n \norm{\eu^k}_{\ah}^2 \leq C_{const} \big(\Delta t \big)^2 + C_{int}\big(h^{2\widehat{r}_u} + h^{2k_{pr}+2} +h^{2k_{\lambda}+2}\big), 
    \end{aligned}
\end{equation}
while under the assumptions of \Cref{lemma: discrete remainder velocity estimates kl=ku-1 ENS}, instead, we have the following for $1 \leq n \leq N$
\begin{equation}\label{eq: Velocity Error Estimates ENS}
    \begin{aligned}
       \norm{\eu^n}_{L^2(\Gah^n)}^2 + \Delta t \sum_{k=1}^n \norm{\eu^k}_{\ah}^2 \leq C_{const} \big(\Delta t \big)^2 + C_{int}\big(h^{2r_u} + h^{2k_{pr}+2} +h^{2k_{\lambda}+2}\big), 
    \end{aligned}
\end{equation}
where $\widehat{r}_u = min\{k_{u},k_g\}$, $r_u = min\{k_{u},k_g-1\}$, with $C_{const}, \, C_{int}$ the constants in \Cref{lemma: discrete remainder velocity estimates ENS}.
\end{theorem}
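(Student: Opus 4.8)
\textbf{Proof proposal for Theorem \ref{theorem: Velocity Error Estimates ENS}.}

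The plan is to obtain the full velocity error bound \eqref{eq: Velocity Error Estimates ENS} simply by combining the discrete-remainder estimates already proved in \cref{lemma: discrete remainder velocity estimates ENS} (for $\underline{k_\lambda=k_u}$) and \cref{lemma: discrete remainder velocity estimates kl=ku-1 ENS} (for $\underline{k_\lambda=k_u-1}$) with the splitting \eqref{eq: decomposition error 1 Ritz ENS}, $\eu^n = \rho_{\bfu}^n + \thbfu^n$, and the Ritz--Stokes approximation-error estimates from \cref{lemma: Error Bounds Ritz-Stokes ENS}. Concretely, I would first invoke the triangle inequality in both the $L^2(\Gahn)$ and the (time-summed) $\ah$-norm:
\begin{equation*}
   \norm{\eu^n}_{L^2(\Gahn)}^2 + \Delta t\sum_{k=1}^n \norm{\eu^k}_{\ah}^2 \leq 2\norm{\rho_{\bfu}^n}_{L^2(\Gahn)}^2 + 2\Delta t\sum_{k=1}^n \norm{\rho_{\bfu}^k}_{\ah}^2 + 2\norm{\thbfu^n}_{L^2(\Gahn)}^2 + 2\Delta t\sum_{k=1}^n \norm{\thbfu^k}_{\ah}^2.
\end{equation*}

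For the discrete-remainder part, the last two terms are bounded directly by \eqref{eq: discrete remainder velocity estimates ENS} in the $\underline{k_\lambda=k_u}$ case and by \eqref{eq: discrete remainder velocity estimates kl=ku-1 ENS} in the $\underline{k_\lambda=k_u-1}$ case; these already have exactly the form $C_{cons}(\Delta t)^2 + C_{int}(h^{2\widehat r_u}+h^{2k_{pr}+2}+h^{2k_\lambda+2})$, respectively with $h^{2r_u}$ in place of $h^{2\widehat r_u}$. For the approximation-error part, I would use \eqref{eq: Error Bounds Ritz-Stokes improved ENS}, $\norm{\bfu^k - \mathcal{R}_h\bfu^k}_{\ah}\leq ch^{\widehat r_u}\norm{\bfu^k}_{H^{k_u+1}(\Gat)}$, together with the $L^2$-estimates $\norm{\bfPg(\bfu^k-\mathcal{R}_h^\ell\bfu^k)}_{L^2}\leq ch^{\widehat r_u+1}\norm{\bfu^k}_{H^{k_u+1}}$ and $\norm{(\bfu^k-\mathcal{R}_h\bfu^k)\cdot\bfng}_{L^2}\leq ch^{\widehat r_u+1/2}\norm{\bfu^k}_{H^{k_u+1}}$ from \eqref{eq: Error Bounds Ritz-Stokes improved H1 ENS}, \eqref{eq: Error Bounds Ritz-Stokes L2 improved ENS} (and the corresponding \eqref{eq: Error Bounds Ritz-Stokes ENS}, \eqref{eq: Error Bounds Ritz-Stokes L2 ENS} with $r_u$ for the $k_\lambda=k_u-1$ case); the norm equivalence \cref{lemma: norm equivalence ENS} lets me pass freely between $\Gahn$ and $\Gat$ quantities. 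Since $\Delta t\sum_{k=1}^n\norm{\bfu^k}_{H^{k_u+1}(\Gat)}^2 \le c(T)\sup_{t\in[0,T]}\norm{\bfu}_{H^{k_u+1}(\Gat)}^2$ by \cref{assumption: Regularity assumptions for velocity estimate ENS}, every $\rho_{\bfu}$-contribution is absorbed into a term of the shape $C_{int}h^{2\widehat r_u}$ (or $C_{int}h^{2r_u}$). Combining the two halves and relabelling constants yields \eqref{eq: Velocity Error Estimates ENS}.

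There is no genuine obstacle remaining at this stage: all of the analytic work — the discrete Gr\"onwall argument, the consistency and interpolation error bounds, the non-commutativity issues for $\matd\mathcal{R}_h\bfu$, and the Ritz--Stokes error analysis — has already been carried out in \cref{lemma: discrete remainder velocity estimates ENS}, \cref{lemma: discrete remainder velocity estimates kl=ku-1 ENS} and the lemmas of \cref{sec: Ritz-Stokes Projection ENS}. The only mild point of care is bookkeeping of the geometric exponents: one must verify that the $\rho_{\bfu}$-terms never degrade the rate below $\widehat r_u$ (resp.\ $r_u$), which is immediate since $\widehat r_u+1/2 > \widehat r_u$ and $k_{pr}+1, k_\lambda+1$ already appear in $C_{int}$'s rate. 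Hence the theorem follows by the elementary combination just described, and I would present the proof in a single short paragraph referring back to the cited estimates.
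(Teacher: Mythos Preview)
Your proposal is correct and follows exactly the approach the paper takes: the paper's proof is a one-line remark immediately preceding the theorem, invoking the decomposition \eqref{eq: decomposition error 1 Ritz ENS}, the triangle inequality, the discrete-remainder bounds \eqref{eq: discrete remainder velocity estimates ENS} and \eqref{eq: discrete remainder velocity estimates kl=ku-1 ENS}, and the Ritz--Stokes estimates \eqref{eq: Error Bounds Ritz-Stokes improved ENS}. Your write-up simply makes explicit the bookkeeping (tangential/normal splitting for the $L^2$-term, absorption of $\rho_{\bfu}$-rates into $C_{int}h^{2\widehat r_u}$) that the paper leaves implicit.
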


\subsection{Pressure a-priori error bounds for $\underline{k_\lambda = k_u}$}\label{sec: pressure a-priori kl=ku ENS}
We now analyze pressure results for $\underline{k_\lambda = k_u}$ and the fully discrete scheme \eqref{eq: fully discrete fin elem approx ENS} (similar results would hold for the alternative scheme \eqref{eq: fully discrete fin elem approx cov ENS}).
That means in our error equation \eqref{eq: error equation ENS} we use the estimates for $\textsc{Err}_3^{C}(\vhn)$ and $\mathcal{C}(\vhn)$.

To establish a-priori estimates for the two pressures, we shall make use of the ideas introduced in the stability analysis \cref{Sec: asssumptions about discrete scheme ENS}. That is, utilize the \emph{discrete Leray time-projection} \eqref{eq: discrete time Leray proj ENS} in combination with the \emph{discrete inverse Stokes operator} to $\mathcal{A}_h^n:\bfV_h^{n,div} \to \bfV_h^{n,div}$ \eqref{eq: Discrete inverse Stokes ENS}, to first establish an auxiliary bound on the \emph{time-projected} fully discrete time derivative in a negative energy norm, and then by the duality estimate in \cref{lemma: dual estimate ENS} and the discrete $L^2\times H_h^{-1}$ \textsc{inf-sup} condition \eqref{eq: L^2 H^{-1} discrete inf-sup condition Gah Lagrange ENS} provide optimal error bounds for the two pressures (albeit in a weaker norm for $\lh$). 

\begin{lemma}[Auxiliary result]\label{lemma: Auxiliary convergence result ENS}
Assume $\underline{k_\lambda = k_u}$, and let the regularity \cref{assumption: Regularity assumptions for velocity estimate ENS} hold with time-step $\Delta t \leq ch$. Let $(\bfu,\{p,\lambda\})$ be the solution of \eqref{weak lagrange hom NV dir ENS}  and $(\uh^k,\{\ph^k,\lh^k\})$, $k=1,...,n$ be the discrete solutions of \eqref{eq: fully discrete fin elem approx ENS}, with initial condition $\uh^0=\mathcal{R}_h\bfu^0$. Then, recalling the bound \eqref{eq: uniform bound uha stability ENS}, the following estimate holds for $1 \leq n \leq N$
    \begin{equation}\label{eq: Auxiliary convergence result imrpoved ENS}
        \sum_{k=1}^n \Delta t \norm{\frac{\mathcal{A}_h^k(\thbfu^k - \hat{\theta}_{\bfu}^{k-1})}{\Delta t}}_{\ah}^2 \leq C_{const} \big(\Delta t \big)^2 + C_{int}\big(h^{2\widehat{r}_u} + h^{2k_{pr}+2} +h^{2k_{\lambda}+2}\big) ,
    \end{equation}
where $\widehat{r}_u = min\{k_{u},k_g\}$, with $C_{const}, \, C_{int}$ the constants in \Cref{lemma: discrete remainder velocity estimates ENS}.
\end{lemma}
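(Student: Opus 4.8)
\textbf{Proof strategy for Lemma \ref{lemma: Auxiliary convergence result ENS}.}
The plan is to mimic the stability proof of \cref{Lemma: auxilary stab bounds ENS}, but applied to the \emph{error equation} \eqref{eq: error equation ENS} rather than to the scheme itself. Recall that $\thbfu^n \in \bfV_h^{n,div}$, so the discrete inverse Stokes operator $\mathcal{A}_h^n$ in \eqref{eq: Discrete inverse Stokes ENS} may be applied to $\thbfu^n - \hat{\theta}_{\bfu}^{n-1}$, where $\hat{\theta}_{\bfu}^{n-1} \in \bfV_h^{n,div}$ is the Leray time-projection \eqref{eq: discrete time Leray proj ENS} of $\theta_{\bfu}^{n-1} \in \bfV_h^{n-1,div}$. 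First I would test \eqref{eq: error equation ENS} with the admissible test function $\vhn = \mathcal{A}_h^n(\thbfu^n - \hat{\theta}_{\bfu}^{n-1}) \in \bfV_h^{n,div}$, which annihilates the pressure term $b_h^L(\vhn,\{\theta_p^n,\theta_\lambda^n\})$. Using the Leray time-projection \eqref{eq: discrete time Leray proj ENS} to rewrite $\bfm_h(\theta_{\bfu}^{n-1},\underline{\vhn}(\cdot,t_{n-1}))=\mh(\hat{\theta}_{\bfu}^{n-1},\vhn)+\bhtil(\vhn,\{\hat p_h^n,\hat\lambda_h^n\})$, and noting $\bhtil(\vhn,\{\hat p_h^n,\hat\lambda_h^n\})=0$ since $\vhn\in\bfV_h^{n,div}$, the inertia difference becomes $\frac{1}{\Delta t}\mh(\thbfu^n-\hat\theta_{\bfu}^{n-1},\mathcal{A}_h^n(\thbfu^n-\hat\theta_{\bfu}^{n-1})) = \frac{1}{\Delta t}\norm{\mathcal{A}_h^n(\thbfu^n-\hat\theta_{\bfu}^{n-1})}_{\ah}^2$ by the definition of $\mathcal{A}_h^n$. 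This isolates the quantity we wish to bound.

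The remaining terms are then estimated one by one against $\norm{\mathcal{A}_h^n(\thbfu^n-\hat\theta_{\bfu}^{n-1})}_{\ah}$. For the $\ahhat$-term I would use \eqref{ah boundedness ENS}, $\norm{\cdot}_{\ahhat}\le\norm{\cdot}_{\ah}$, and the bound $\norm{\mathcal{A}_h^n \wh^n}_{\ah}\le\norm{\wh^n}_{L^2(\Gah^n)}$ followed by an inverse inequality, absorbing a small multiple of $\frac{1}{\Delta t}\norm{\mathcal{A}_h^n(\cdot)}_{\ah}^2$ and leaving $c\Delta t\norm{\thbfu^n}_{\ah}^2$. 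The seven consistency and interpolation error functionals $\textsc{Err}_i^{C}(\vhn),\textsc{Err}_i^{I}(\vhn)$ are controlled by the bounds already proved in \cref{lemma: Consistency errors ENS,lemma: Interpolation errors ENS} (using the improved versions \eqref{eq: Interpolation errors improved ENS}, since $\vhn\in\bfV_h^{n,div}$), each against $\norm{\vhn}_{\ah}\le c\norm{\mathcal{A}_h^n(\cdot)}_{\ah}$ or $\norm{\vhn}_{L^2(\Gah^n)}$; by Young these contribute the spatial terms $C_{int}(h^{2\widehat r_u}+h^{2k_{pr}+2}+h^{2k_\lambda+2})$ and the temporal term $C_{const}(\Delta t)^2$, together with harmless $\Delta t$-multiples of $\norm{\thbfu^n}_{L^2}^2$, $\norm{\thbfu^n}_{\ah}^2$. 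For the inertia error $\mathcal{C}(\vhn)$ I would use \eqref{eq: Trilinear Errors general 1 ENS} together with the uniform bound \eqref{eq: uniform bound uha stability ENS} on $\norm{\underline{\uhnN}}_{\ah}$ and the velocity error estimate \eqref{eq: Velocity Error Estimates ENS} to turn $\norm{\eu^{n-1}}$, $\norm{\eu^n}$ into the right-hand side quantities. After collecting, multiplying through by $\Delta t$, summing over $k=1,\dots,n$, invoking the velocity bound \eqref{eq: discrete remainder velocity estimates ENS} for $\Delta t\sum\norm{\thbfu^k}_{\ah}^2$ and $\norm{\thbfu^k}_{L^2}^2$ (and $\theta_{\bfu}^0=0$), and applying a discrete Gr\"onwall argument for $\Delta t$ small, the stated estimate follows.

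The main obstacle I anticipate is the careful bookkeeping of the two \emph{different} time-surfaces $\Gah^{n-1}$ and $\Gah^n$ entering through $\theta_{\bfu}^{n-1}$, the time-lift $\underline{\theta_{\bfu}^{n-1}}(\cdot,t_n)$, and its Leray projection $\hat\theta_{\bfu}^{n-1}$: one must repeatedly insert and estimate the differences $\underline{\theta_{\bfu}^{n-1}}(\cdot,t_n)-\hat\theta_{\bfu}^{n-1}$ via \eqref{eq: hatw to n-1w dt ENS} and use the time-equivalence inequalities \eqref{eq: time equivalence norms ENS}, \eqref{eq: time differences properties bilinear m ENS} to compare norms across time-steps, each insertion producing an $\bigo(\Delta t)$ factor that must be shown not to spoil the $\bigo(h^{2\widehat r_u}+(\Delta t)^2)$ balance. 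A secondary subtlety is that the bound $h\norm{(\thbfu^n-\hat\theta_{\bfu}^{n-1})/\Delta t}_{L^2(\Gah^n)}\le c\norm{\mathcal{A}_h^n(\thbfu^n-\hat\theta_{\bfu}^{n-1})/\Delta t}_{\ah}$ (proved exactly as in \cref{Lemma: Pressure stab Estimate ENS} via the inverse inequality) is needed to handle the $ch\norm{\cdot}_{L^2}$ piece appearing when estimating error functionals against the $L^2$-norm of the test function; keeping track of which error terms require the $\ah$-norm and which the weaker $L^2$-norm of $\vhn$ is where the argument is most error-prone.
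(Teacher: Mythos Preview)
Your proposal is correct and follows essentially the same route as the paper: test \eqref{eq: error equation ENS} with $\mathcal{A}_h^n(\thbfu^n-\hat\theta_{\bfu}^{n-1})\in\bfV_h^{n,div}$, use the Leray time-projection to collapse the mass-difference into $\frac{1}{\Delta t}\norm{\mathcal{A}_h^n(\cdot)}_{\ah}^2$, bound each error functional via \cref{lemma: Interpolation errors ENS,lemma: Consistency errors ENS,lemma: Trilinear Errors ENS} with Young's inequality, kick back, sum, and invoke the already-proved velocity bound \eqref{eq: discrete remainder velocity estimates ENS}. Two of your anticipated obstacles are in fact non-issues here: no Gr\"onwall argument is needed (the $\Delta t\sum_k\norm{\thbfu^k}_{\ah}^2$ terms are controlled directly by \eqref{eq: discrete remainder velocity estimates ENS}), and the inequality $h\norm{(\thbfu^n-\hat\theta_{\bfu}^{n-1})/\Delta t}_{L^2}\le c\norm{\mathcal{A}_h^n(\cdot)/\Delta t}_{\ah}$ is not used in this lemma---it enters only in the pressure estimate via \cref{lemma: dual estimate ENS}; since $\vhn\in\bfV_h^{n,div}$, every $\norm{\vhn}_{H^1}$ arising from the error bounds is simply absorbed by the improved coercivity \eqref{eq: improved h1-ah bound ENS}.
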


\begin{proof}
First, recall the \emph{discrete Leray time-projection} defined in \eqref{eq: discrete time Leray proj ENS}, then by \eqref{eq: Discrete inverse Stokes ENS} it is clear that $\mathcal{A}_h^n(\thbfu^n - \hat{\theta}_{\bfu}^{n-1})\in \bfV_h^{n,div}$ is well-posed (since $\thbfu^n - \hat{\theta}_{\bfu}^{n-1}\in \bfV_h^{n,div}$ for all $n\geq 0$ by \eqref{eq: discrete time Leray proj ENS}). So, testing \eqref{eq: error equation ENS} with $\vhn = \mathcal{A}_h^n(\thbfu^n - \hat{\theta}_{\bfu}^{n-1})\in \bfV_h^{n,div}$, and applying  \eqref{eq: discrete time Leray proj ENS}, yields for $1 \leq n \leq N$
\begin{align}
    \label{eq: Auxiliary convergence result inside ENS}
         \!\!\!\big(\frac{\thbfu^n - \hat{\theta}_{\bfu}^{n-1}}{\Delta t},\mathcal{A}_h^n(\thbfu^n - \hat{\theta}_{\bfu}^{n-1})\big)_{L^2(\Gah^n)} + \ah(\uh^n,\mathcal{A}_h^n(\thbfu^n - \hat{\theta}_{\bfu}^{n-1})) \leq 2\Big(|\sum_{i=1}^5 \textsc{Err}_{i}^{C}(\mathcal{A}_h^n(\thbfu^n - \hat{\theta}_{\bfu}^{n-1}))| \nonumber \\[-5pt]
         \!\!\! +  |\sum_{i=1}^2 \textsc{Err}_{i}^{I}(\mathcal{A}_h^n(\thbfu^n - \hat{\theta}_{\bfu}^{n-1}))| +  |\mathcal{C}(\mathcal{A}_h^n(\thbfu^n - \hat{\theta}_{\bfu}^{n-1}))|\Big), 
\end{align}
where we again we use that by the Ritz-Stokes projection defined in \eqref{eq: surface Ritz-Stokes projection ENS} $|\textsc{Err}_{5}^{C}(\mathcal{A}_{h}(\thbfu^n - \hat{\theta}_{\bfu}^{n-1}))|\leq h^{\widehat{r}_u+1/2}\norm{\bfu^n}_{H^{k_u+1}(\Ga^n)}\norm{\mathcal{A}_{h}(\thbfu^n - \hat{\theta}_{\bfu}^{n-1})}_{L^2(\Gahn)}$ (see below \eqref{eq: discrete remainder velocity estimates inside ENS} in \cref{lemma: discrete remainder velocity estimates ENS}). Now, we bound each term appropriately as in \cref{Lemma: auxilary stab bounds ENS} with the help \emph{of Young's inequality} in the following manner: \vspace{2mm}

$\bullet \  $ By the definition of $\mathcal{A}_h^n(\cdot)$ \eqref{eq: Discrete inverse Stokes ENS} we obtain 
\begin{align*}
    \ \big(\frac{\thbfu^n - \hat{\theta}_{\bfu}^{n-1}}{\Delta t},\mathcal{A}_h^n(\thbfu^n - \hat{\theta}_{\bfu}^{n-1})\big)_{L^2(\Gah^n)} =\frac{1}{\Delta t }\norm{\mathcal{A}_h^n(\thbfu^n - \hat{\theta}_{\bfu}^{n-1})}^2_{\ah}.
\end{align*}

$\bullet \  $ The second term in \eqref{eq: Auxiliary convergence result inside ENS} can be bounded as
\begin{align*}
    \ahhat(\thbfu^n,\mathcal{A}_h^n(\thbfu^n - \hat{\theta}_{\bfu}^{n-1}))\leq  c\Delta t \norm{\thbfu^n}_{\ah}^2 + \frac{1}{5\Delta t }\norm{\mathcal{A}_h^n(\thbfu^n - \hat{\theta}_{\bfu}^{n-1})}_{\ah}^2.
\end{align*}

$\bullet \  $ Using \Cref{lemma: Interpolation errors ENS} (similar to \eqref{eq: discrete remainder velocity estimates inside 2 ENS}) we obtain the following bound for the interpolation error 
\begin{align*}
    &|\sum_{i=1}^2 \textsc{Err}_{i}^{I}(\mathcal{A}_h^n(\thbfu^n - \hat{\theta}_{\bfu}^{n-1}))| \leq c\Delta t \big(\frac{h^{2\widehat{r}_u}+h^{2\widehat{r}_u+2}}{\Delta t}+ \Delta t h^{2\widehat{r}_u}\big)\int_{t_{n-1}}^{t^n}\norm{\matn{\bfu}}_{H^{k_u+1}(\Gat)}^2 \, dt \\
    &+ c\Delta t (h^{2k_{pr}+2} + h^{2k_{\lambda}+2}) \big(\norm{\bfu}_{H^{k_{u}+1}(\Ga^n)}^2+ \norm{p}_{H^{k_{pr}+1}(\Ga^n)}^2 + \norm{\lambda}_{H^{k_{\lambda}+1}(\Ga^n)}^2 \big) + \frac{1}{5\Delta t }\norm{\mathcal{A}_h^n(\thbfu^n - \hat{\theta}_{\bfu}^{n-1})}^2_{\ah}.
\end{align*}

$\bullet \  $ For the consistency error by \Cref{lemma: Consistency errors ENS} (similar to \eqref{eq: discrete remainder velocity estimates inside 1 ENS}) and the  $H^1$-coercivity \eqref{eq: improved h1-ah bound ENS} we obtain 
\begin{align*}
    &|\sum_{i=1}^5 \textsc{Err}_{i}^{C}(\mathcal{A}_h^n(\thbfu^n - \hat{\theta}_{\bfu}^{n-1}))| \leq c(\Delta t)^2\int_{t_{n-1}}^{t^n} \norm{\matn\matn\bfu}_{L^2(\Gat)}^2 +\norm{\matn\bfu}_{H^1(\Gat)}^2  \, dt +  c\Delta th^{2\widehat{r}_u+1}\norm{\bfu^n}_{H^{k_u+1}(\Ga^n)}  \\
     & \qquad\qquad +c\Delta t \norm{\theta_{\bfu}^n}_{L^2(\Gah^n)}^2+c\Delta th^{2k_g}\big(\norm{\bff^n}_{L^2(\Ga^n)}^2+ \norm{p^n}_{H^{1}(\Ga^n)}^2 + \norm{\lambda^n}_{L^2(\Ga^n)}^2\big) + \frac{1}{5\Delta t }\norm{\mathcal{A}_h^n(\thbfu^n - \hat{\theta}_{\bfu}^{n-1})}^2_{\ah}.
\end{align*}

$\bullet \  $ Finally, recalling the uniform bound \eqref{eq: inside stab uha ENS}, we can calculate the inertia error term using \eqref{eq: Trilinear Errors general 1 ENS} $\phantom{aaaa}$ in  \Cref{lemma: Trilinear Errors ENS}, and the $H^1$-coercivity \eqref{eq: improved h1-ah bound ENS}, to find 
\begin{align*}
    |\mathcal{C}(\mathcal{A}_h^n(\thbfu^n - \hat{\theta}_{\bfu}^{n-1}))| \leq  \Delta t \norm{\eu^{n-1}}_{L^2(\Gah^{n-1})}\norm{\eu^{n-1}}_{\ah} + C_a\Delta t \norm{\eu^n}_{H^1(\Gahn)}^2 + \frac{1}{5\Delta t }\norm{\mathcal{A}_h(\thbfu^n - \hat{\theta}_{\bfu}^{n-1})}_{\ah}^2,
\end{align*}
$\phantom{aaaa}$ where $C_a$ the constant in \eqref{eq: uniform bound uha stability ENS}. Notice, also  that by the Ritz-Stokes estimate \eqref{eq: Error Bounds Ritz-Stokes improved H1 ENS} and \eqref{eq: improved h1-ah bound ENS}
$\phantom{aaaa}\norm{\eu^n}_{H^1(\Gahn)} \leq  
 \norm{\rho_{\bfu}^n}_{H^1(\Gahn)} + \norm{\thbfu^n}_{H^1(\Gahn)} \leq $  $ch^{\widehat{r}_u}\norm{\bfu^n}_{H^{k_u+1}(\Ga^n)} + \norm{\thbfu^n}_{\ah}$.\\

\noindent So, combining the above into \eqref{eq: Auxiliary convergence result inside ENS} and applying a kickback argument against $\frac{1}{\Delta t }\norm{\mathcal{A}_h^n(\thbfu^n - \hat{\theta}_{\bfu}^{n-1})}^2_{\ah}$ upon summing over $k=1,...,n$ and using the velocity estimates  \eqref{eq: discrete remainder velocity estimates ENS}, \eqref{eq: Velocity Error Estimates ENS}, completes the proof.
\end{proof}

\begin{lemma}\label{lemma: discrete remainder pressure estimates improved ENS}
     Assume $\underline{k_\lambda = k_u}$, and let the regularity \cref{assumption: Regularity assumptions for velocity estimate ENS} hold with time-step $\Delta t \leq ch$. Let $(\bfu,\{p,\lambda\})$ be the solution of \eqref{weak lagrange hom NV dir ENS}  and $(\uh^k,\{\ph^k,\lh^k\})$, $k=1,...,n$ be the discrete solutions of \eqref{eq: fully discrete fin elem approx ENS}, with initial condition $\uh^0=\mathcal{R}_h\bfu^0$. Then, recalling \eqref{eq: uniform bound uha stability ENS}, the following holds for $1 \leq n \leq N$\vspace{-1mm}
     \begin{equation}\label{eq: Pressure stab Estimate ENS}
            \Delta t \sum_{k=1}^{n} \norm{\{\theta_p^k,\theta_{\lambda}^k\}}_{L^2(\Gah^k)\times H_h^{-1}(\Gah^k)}^2 \leq  C_{const} \big(\Delta t \big)^2 + C_{int}\big(h^{2\widehat{r}_u} + h^{2k_{pr}+2} +h^{2k_{\lambda}+2}\big) ,
    \end{equation}\vspace{-2mm}
    
\noindent where $\widehat{r}_u = min\{k_{u},k_g\}$, with $C_{const}, \, C_{int}$ the constants in \Cref{lemma: discrete remainder velocity estimates ENS}.
 \end{lemma}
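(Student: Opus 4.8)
The plan is to mimic the stability proof of \cref{Lemma: Pressure stab Estimate ENS}, but now for the \emph{error} quantities $\{\theta_p^n,\theta_\lambda^n\}$, using the error equation \eqref{eq: error equation ENS} in place of the discrete scheme. First I would invoke the $L^2\times H_h^{-1}$ discrete \textsc{inf-sup} condition \eqref{eq: L^2 H^{-1} discrete inf-sup condition Gah Lagrange ENS} applied to $\{\theta_p^n,\theta_\lambda^n\}$, which gives
\begin{equation*}
\norm{\{\theta_p^n,\theta_\lambda^n\}}_{L^2(\Gah^n)\times H_h^{-1}(\Gah^n)} \leq \beta^{-1}\sup_{\vhn\in\bfV_h^n}\frac{\bhtil(\vhn,\{\theta_p^n,\theta_\lambda^n\})}{\norm{\vhn}_{H^1(\Gahn)}}.
\end{equation*}
Then from the first line of \eqref{eq: error equation ENS} I would solve for $\bhtil(\vhn,\{\theta_p^n,\theta_\lambda^n\})$, isolating the fully discrete time-derivative term $\tfrac{1}{\Delta t}(\mh(\theta_{\bfu}^n,\vhn)-\mh(\theta_{\bfu}^{n-1},\underline{\vhn}(t_{n-1})))$ together with $\ahhat(\theta_{\bfu}^n,\vhn)$ and the consistency/interpolation/inertia errors. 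The $\ahhat$-term is bounded by $c\norm{\thbfu^n}_{\ah}\norm{\vhn}_{H^1(\Gahn)}$, while the error functionals $\textsc{Err}_i^C$, $\textsc{Err}_i^I$, $\mathcal{C}$ tested against $\vhn$ are handled exactly as in the velocity estimate \cref{lemma: discrete remainder velocity estimates ENS} --- here I would use \cref{lemma: Consistency errors ENS}, \cref{lemma: Interpolation errors ENS} (in particular the improved bounds \eqref{eq: Interpolation errors improved ENS} valid since $\vhn$ need not be in $\bfV_h^{n,div}$ here, so I must use the cruder \eqref{eq: Interpolation error 2 ENS}, \eqref{eq: Interpolation errors ENS} or argue via projecting $\vhn$), and \cref{lemma: Trilinear Errors ENS} with \eqref{eq: Trilinear Errors general 1 ENS} and the uniform bound \eqref{eq: uniform bound uha stability ENS}. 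Dividing by $\norm{\vhn}_{H^1(\Gahn)}$, taking the supremum, squaring, multiplying by $\Delta t$ and summing over $k=1,\dots,n$ then converts everything into the velocity error bounds \eqref{eq: Velocity Error Estimates ENS} plus the regularity integrals, which gives the right-hand side of \eqref{eq: Pressure stab Estimate ENS}.

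The only genuinely new ingredient --- and the main obstacle --- is the treatment of the fully discrete time-derivative term, since the computational surface moves and $\theta_{\bfu}^{n-1}\notin\bfV_h^{n,div}$. This is precisely where the \emph{Leray time-projection} \eqref{eq: discrete time Leray proj ENS} and the \emph{discrete inverse Stokes operator} \eqref{eq: Discrete inverse Stokes ENS} enter: I would apply the duality estimate \cref{lemma: dual estimate ENS} with $\wh^n=\theta_{\bfu}^n\in\bfV_h^{n,div}$, $\wh^{n-1}=\theta_{\bfu}^{n-1}\in\bfV_h^{n-1,div}$ and its time-projection $\hat\theta_{\bfu}^{n-1}\in\bfV_h^{n,div}$, obtaining
\begin{equation*}
\sup_{\vhn\in\bfV_h^n}\frac{\mh(\theta_{\bfu}^n,\vhn)-\mh(\theta_{\bfu}^{n-1},\underline{\vhn}(t_{n-1}))}{\Delta t\,\norm{\vhn}_{H^1(\Gahn)}} \leq \frac{ch}{\Delta t}\norm{\theta_{\bfu}^n-\hat\theta_{\bfu}^{n-1}}_{L^2(\Gahn)} + c\norm{\tfrac{\mathcal{A}_h^n(\theta_{\bfu}^n-\hat\theta_{\bfu}^{n-1})}{\Delta t}}_{\ah} + c\norm{\theta_{\bfu}^{n-1}}_{\ah}.
\end{equation*}
The first term is controlled by the second via the inverse inequality $h\norm{\theta_{\bfu}^n-\hat\theta_{\bfu}^{n-1}}_{L^2}\leq c\norm{\mathcal{A}_h^n(\theta_{\bfu}^n-\hat\theta_{\bfu}^{n-1})}_{\ah}$ together with $\mathcal{A}_h^n$ being the inverse Stokes operator (exactly the argument used in \cref{Lemma: Pressure stab Estimate ENS}), and then $\Delta t\sum_k\norm{\mathcal{A}_h^k(\theta_{\bfu}^k-\hat\theta_{\bfu}^{k-1})/\Delta t}_{\ah}^2$ is bounded by the \emph{newly proved} auxiliary convergence estimate \eqref{eq: Auxiliary convergence result imrpoved ENS} of \cref{lemma: Auxiliary convergence result ENS}, while $\Delta t\sum_k\norm{\theta_{\bfu}^{k-1}}_{\ah}^2$ is bounded by \eqref{eq: discrete remainder velocity estimates ENS}. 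I must be careful that the $\textsc{Err}_i$-terms are tested against an arbitrary $\vhn\in\bfV_h^n$ (not a divergence-free one), so for $\textsc{Err}_5^C$ the Ritz--Stokes orthogonality is no longer automatic; here one writes $\textsc{Err}_5^C(\vhn)=\Grm_{\ab}(\mathcal{R}_h\bfu^n,\vhn)+\bhtil(\vhn,\{\mathcal{P}_h\bfu^n,\mathcal{L}_h\bfu^n\})$ via \eqref{eq: surface Ritz-Stokes projection ENS} and uses the perturbation bound together with the Ritz--Stokes pressure estimate \eqref{eq: Error Bounds Ritz-Stokes improved ENS}, which only contributes an $H_h^{-1}$-type term absorbed by the $H^1$-norm of $\vhn$ --- this, plus the analogous care with $\textsc{Err}_2^I$, is the subtle bookkeeping point.

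Collecting the pieces: after dividing by $\norm{\vhn}_{H^1(\Gahn)}$ and taking the supremum, squaring and multiplying by $\Delta t$, one arrives at
\begin{equation*}
\Delta t\,\norm{\{\theta_p^n,\theta_\lambda^n\}}_{L^2(\Gah^n)\times H_h^{-1}(\Gah^n)}^2 \leq c\Delta t\norm{\tfrac{\mathcal{A}_h^n(\theta_{\bfu}^n-\hat\theta_{\bfu}^{n-1})}{\Delta t}}_{\ah}^2 + c\Delta t(\norm{\theta_{\bfu}^{n-1}}_{\ah}^2+\norm{\theta_{\bfu}^n}_{\ah}^2+\norm{\eu^{n-1}}_{\ah}^2) + (\text{error/regularity terms}),
\end{equation*}
and summing over $k=1,\dots,n$ and inserting \eqref{eq: Velocity Error Estimates ENS}, \eqref{eq: discrete remainder velocity estimates ENS} and \eqref{eq: Auxiliary convergence result imrpoved ENS} yields \eqref{eq: Pressure stab Estimate ENS} with the claimed constants $C_{const}$, $C_{int}$ and rate $\widehat{r}_u=\min\{k_u,k_g\}$. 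I would remark that this proof is the convergence analogue of the stability proof \cref{Lemma: Pressure stab Estimate ENS}, with the stability bound $\bfA(\uh^0,\bff_h)$ replaced everywhere by the velocity error bound, and that the analysis for the scheme \eqref{eq: fully discrete fin elem approx cov ENS} proceeds identically using the $L^2\times L^2$ \textsc{inf-sup} \eqref{eq: discrete inf-sup condition Gah Lagrange ENS} and the $\ah$-bound \eqref{ch cov boundedness ENS} for $c_h^{cov}$, yielding the stronger $L^2_{L^2}\times L^2_{L^2}$ norm.
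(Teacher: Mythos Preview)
Your proposal is correct and follows essentially the same route as the paper: invoke the $L^2\times H_h^{-1}$ \textsc{inf-sup} \eqref{eq: L^2 H^{-1} discrete inf-sup condition Gah Lagrange ENS}, solve \eqref{eq: error equation ENS} for $\bhtil(\vhn,\{\theta_p^n,\theta_\lambda^n\})$, control the discrete time-derivative via the dual estimate \cref{lemma: dual estimate ENS} and the auxiliary bound \eqref{eq: Auxiliary convergence result imrpoved ENS}, and absorb the remaining error functionals using the velocity estimates. Your identification of the ``bookkeeping'' issue with $\textsc{Err}_5^C(\vhn)$ for non-divergence-free $\vhn$ is exactly the subtle point the paper handles: one picks up the extra term $\bhtil(\vhn,\{\mathcal{P}_h(\bfu^n),\mathcal{L}_h(\bfu^n)\})$, and to bound the $\mathcal{L}_h$-part by $\norm{\mathcal{L}_h(\bfu^n)}_{H_h^{-1}}\norm{\vhn}_{H^1}$ (so that \eqref{eq: Error Bounds Ritz-Stokes improved ENS} applies) the paper inserts a Scott--Zhang interpolant $\Ihz(\vhn\cdot\bfn)$ and uses the super-approximation \eqref{eq: super-approximation estimate 2 ENS} together with the definition \eqref{eq: H^-1h definition ENS}; your description glosses over this small step but the intent is correct.
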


\begin{proof}
First, let us recall the discrete $L^2\times H_h^{-1}$ \textsc{inf-sup} condition \eqref{eq: L^2 H^{-1} discrete inf-sup condition Gah Lagrange ENS} to see that
   \begin{equation}
        \begin{aligned}\label{eq: inside conv press inf-sup ENS}
            \norm{\{\theta_p^n,\theta_{\lambda}^n\}}_{L^2(\Gah^n)\times H_h^{-1}(\Gah^n)} \leq \sup_{\vh \in \bfV_h^n} \frac{\bhtil(\vhn,\{\theta_p^n,\theta_{\lambda}^n\})}{\norm{\vhn}_{H^{1}(\Gah)}}.
        \end{aligned}
    \end{equation} 
Solving \eqref{eq: error equation ENS} for $\bhtil(\vh,\{\theta_p^n,\theta_{\lambda}^n\})$ with $\vhn \in \bfV_h^n$, and using the Ritz-Stokes projection defined in \eqref{eq: surface Ritz-Stokes projection ENS} to obtain $|\textsc{Err}_{5}^{C}(\vhn)| \leq |b_h^{L}(\vhn,\mathcal{P}_h(\bfu),\mathcal{L}_h(\bfu))| + ch^{\widehat{r}_u+1/2}\norm{\bfu^n}_{H^{k_u+1}(\Ga^n)}\norm{\vhn}_{L^2(\Gah^n)}$ (see calculations below \eqref{eq: discrete remainder velocity estimates inside ENS} in \cref{lemma: discrete remainder velocity estimates ENS} again), we find that
    \begin{align}\label{eq: inside conv press ENS}
        &\bhtil(\vhn,\{\theta_p^n,\theta_{\lambda}^n\}) \leq \Big|{\dfrac{1}{\Delta t}\bfm_h(\theta_{\bfu}^n,\vhn) - \bfm_h(\theta_{\bfu}^{n-1},\underline{\vhn}(\cdot,t_{n-1}))}\big| + \ahhat(\theta_{\bfu}^n,\vhn) +|b_h^{L}(\vhn,\mathcal{P}_h(\bfu),\mathcal{L}_h(\bfu))| \nonumber\\
        &+ ch^{\widehat{r}_u+1/2}\norm{\bfu^n}_{H^{k_u+1}(\Ga^n)}\norm{\vhn}_{L^2(\Gah^n)} + \Big(\sum_{i=1}^4 |\textsc{Err}_{i}^{C}(\vhn)| +  \sum_{i=1}^2 |\textsc{Err}_{i}^{I}(\vhn)| +  |\mathcal{C}(\vhn)|\Big). 
    \end{align}
We focus on the first and third term of the above eq. \eqref{eq: inside conv press ENS}, since the rest of the terms have already been handled before (see \cref{lemma: Interpolation errors ENS,lemma: Consistency errors ENS,lemma: Trilinear Errors ENS} and \cref{theorem: Velocity Error Estimates ENS}). As in the stability results \cref{Lemma: Pressure stab Estimate ENS} using the dual estimate \eqref{eq: dual estimate ENS} we arrive at 
\begin{align}\label{eq: inside conv press 2 ENS}
    \sup_{\vh^n\in\bfV_h^n} \dfrac{\frac{1}{\Delta t }\Big(\bfm_h(\theta_{\bfu}^n,\vhn) - \bfm_h(\theta_{\bfu}^{n-1},\underline{\vhn}(\cdot,t_{n-1}))\Big)}{\norm{\vhn}_{H^1(\Gah^n)}} \leq c\norm{\frac{\mathcal{A}_h^n(\thbfu^n - \hat{\theta}_{\bfu}^{n-1})}{\Delta t}}_{\ah} + c\norm{\thbfu^{n-1}}_{\ah}.
\end{align}
On the other hand, using the Ritz-Stokes bound \eqref{eq: Error Bounds Ritz-Stokes improved ENS}, the super-approximation property \eqref{eq: super-approximation estimate 2 ENS} and the interpolation estimates \eqref{eq: bounds of Scott-Zhang interpolant ENS} for $\Ihz$ and the $H_h^{-1}$-norm definition \eqref{eq: H^-1h definition ENS}, yields
\begin{equation}
    \begin{aligned}\label{eq: inside conv press 3 ENS}
        &|b_h^{L}(\vhn,\mathcal{P}_h(\bfu),\mathcal{L}_h(\bfu))| \leq \norm{\vhn}_{\ah}\norm{\mathcal{P}_h(\bfu^n)}_{L^2(\Gah^n)} + \big|m_h(\vhn\cdot(\nh-\bfn),\mathcal{L}_h(\bfu^n))\big| \\
    &\qquad\qquad\qquad\qquad\qquad + \big|m_h(\Ihz(\vhn\cdot\bfn),\mathcal{L}_h(\bfu^n)) \big|+ \big|m_h(\vhn\cdot\bfn - \Ihz(\vhn\cdot\bfn),\mathcal{L}_h(\bfu^n))\big|\\
    &\leq \norm{\vhn}_{\ah}\norm{\mathcal{P}_h(\bfu^n)}_{L^2(\Gah^n)} + ch\norm{\vhn}_{L^2(\Gah^n)}\norm{\mathcal{L}_h(\bfu^n)}_{L^2(\Gahn)} + \norm{\vhn}_{H^1(\Gah^n)}\norm{\mathcal{L}_h(\bfu^n)}_{H_h^{-1}(\Gah^n)} \\
    &\leq ch^{\widehat{r}_u}\norm{\bfu^n}_{H^{k_u+1}(\Ga^n)}\norm{\vhn}_{H^1(\Gahn)}.
    \end{aligned}
\end{equation}

So, plugging first \eqref{eq: inside conv press ENS} into the \textsc{inf-sup} condition \eqref{eq: inside conv press inf-sup ENS}, using \eqref{eq: inside conv press 2 ENS}, \eqref{eq: inside conv press 3 ENS} and the already proven bounds in \cref{lemma: Auxiliary convergence result ENS}, \cref{lemma: Interpolation errors ENS}, \cref{lemma: Consistency errors ENS}, \cref{lemma: Trilinear Errors ENS} $\big($specifically for \eqref{eq: Trilinear Errors general 1 ENS}, by the Ritz-Stokes estimate \eqref{eq: Error Bounds Ritz-Stokes improved H1 ENS} we have $\norm{\eu^n}_{H^1(\Gahn)} \leq
 \norm{\rho_{\bfu}^n}_{H^1(\Gahn)} + \norm{\thbfu^n}_{H^1(\Gahn)} \leq $  $h^{\widehat{r}_u}\norm{\bfu^n}_{H^{k_u+1}(\Ga^n)} + \norm{\thbfu^n}_{\ah}\!\big)$, applying the Cauchy-Schwarz inequality and factor out $\norm{\vhn}_{H^1(\Gah^n)}$ (since $\norm{\vhn}_{L^2(\Gah^n)} \leq \norm{\vhn}_{\ah} \leq \norm{\vhn}_{H^1(\Gah^n)}$), after some tedious (but already carried out earlier) calculations we derive
\begin{equation*}
    \begin{aligned}
        &\norm{\{\theta_p^n,\theta_{\lambda}^n\}}_{L^2(\Gah^n)\times H_h^{-1}(\Gah^n)} \leq \norm{\frac{\mathcal{A}_h^n(\thbfu^n - \hat{\theta}_{\bfu}^{n-1})}{\Delta t}}_{\ah} + c\norm{\thbfu^{n-1}}_{\ah} + c\norm{\thbfu^{n}}_{\ah} + ch^{\widehat{r}_u+1/2}\norm{\bfu^n}_{H^{k_u+1}(\Ga^n)} \\
        &\quad + c\sqrt{\Delta t}\Big(\int_{t_{n-1}}^{t^n} \norm{\matn\matn\bfu}_{L^2(\Gat)}^2 +\norm{\matn\bfu}_{H^1(\Gat)}^2  \, dt\Big)^{1/2} + h^{k_g}\big(\norm{\bff^n}_{L^2(\Ga^n)}+ \norm{\bfu^n}_{H^{1}(\Ga^n)}  + \norm{p^n}_{H^{1}(\Ga^n)}\\
        &\quad + \norm{\lambda^n}_{L^2(\Ga^n)}\big) + \norm{\theta_{\bfu}^n}_{L^2(\Gah^n)} + \dfrac{c}{\sqrt{\Delta t}}\Big(\int_{t_{n-1}}^{t^n}\sum_{j=0}^{1}\norm{(\matd)^j\rho_{\bfu}\t}_{H^{k_u+1}(\Gat)}^2 \, dt\Big)^{1/2} + c\big(h^{k_{pr}+1} \\
        &\quad + h^{k_{\lambda}+1}\big)\big( \norm{p^n}_{H^{k_{pr}+1}(\Ga^n)} + \norm{\lambda^n}_{H^{k_{\lambda}+1}(\Ga^n)}\big) + c\norm{\eu^{n-1}}_{L^2(\GahnN)} + c\norm{\eu^{n-1}}_{\ah} +  C_a h^{\widehat{r}_u}\norm{\bfu^n}_{H^{k_u+1}(\Ga^n)}.
    \end{aligned}
\end{equation*}
Then, squaring and applying the operator $\Delta t \sum_{k=1}^n$ and using the Ritz-Stokes estimates \eqref{eq: Error Bounds Ritz-Stokes L2 improved ENS}, \eqref{eq: Error Bounds Ritz-Stokes improved H1 ENS}, \eqref{eq: Error Bounds der Ritz-Stokes L2 ENS}, along with the auxiliary result \eqref{eq: Auxiliary convergence result imrpoved ENS} in \cref{lemma: Auxiliary convergence result ENS} and the velocity estimates in \cref{lemma: discrete remainder velocity estimates ENS} and \cref{theorem: Velocity Error Estimates ENS} we obtain our desired result.
\end{proof}

Now, by the decompositions \eqref{eq: decomposition error 2 lagrange ENS}, \eqref{eq: decomposition error 3 lagrange ENS}, the interpolation bounds \eqref{eq: interpolation errors pressures ENS}, and a simple application of the triangle inequality, one may then derive the following \emph{main result} regarding the pressure errors.
\begin{theorem}[Pressure error bounds] \label{theorem: Pressures Error Estimate ENS}
Under the regularity assumption \cref{assumption: Regularity assumptions for velocity estimate ENS} and \Cref{lemma: discrete remainder pressure estimates improved ENS} the following pressure error estimates holds for $1 \leq n \leq N$
     \begin{equation}\label{eq: Pressure stab Estimate ENS}
            \Delta t \sum_{k=1}^{n} \norm{\{\ep^k,\el^k\}}_{L^2(\Gah^k)\times H_h^{-1}(\Gah^k)}^2 \leq  C_{const} \big(\Delta t \big)^2 + C_{int}\big(h^{2\widehat{r}_u} + h^{2k_{pr}+2} +h^{2k_{\lambda}+2}\big) ,
    \end{equation}
where $\widehat{r}_u = min\{k_{u},k_g\}$, with $C_{const}, \, C_{int}$ the constants in \Cref{lemma: discrete remainder velocity estimates ENS}.
 \end{theorem}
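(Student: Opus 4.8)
The plan is to obtain \cref{theorem: Pressures Error Estimate ENS} as an essentially immediate corollary of the discrete-remainder pressure estimate \cref{lemma: discrete remainder pressure estimates improved ENS}, by peeling off the interpolation part of the pressure error via the triangle inequality and the standard nodal interpolation bounds. Recall the decompositions \eqref{eq: decomposition error 2 lagrange ENS}--\eqref{eq: decomposition error 3 lagrange ENS}: $\ep^n = \rho_p^n + \thp^n$ and $\el^n = \rho_\lambda^n + \thl^n$, where $\rho_p^n = p^n - \Ih p^n$, $\rho_\lambda^n = \lambda^n - \Ih\lambda^n$ are the interpolation errors and $\thp^n,\thl^n$ are the discrete remainders controlled by \cref{lemma: discrete remainder pressure estimates improved ENS}. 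So the starting point is the pointwise-in-time estimate
\begin{equation*}
\norm{\{\ep^n,\el^n\}}_{L^2(\Gah^n)\times H_h^{-1}(\Gah^n)} \leq \norm{\{\rho_p^n,\rho_\lambda^n\}}_{L^2(\Gah^n)\times H_h^{-1}(\Gah^n)} + \norm{\{\thp^n,\thl^n\}}_{L^2(\Gah^n)\times H_h^{-1}(\Gah^n)}.
\end{equation*}

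The key steps, in order, are: (i) For the interpolation part, use that the $H_h^{-1}$-norm is weaker than the $L^2$-norm (clear from the definition \eqref{eq: H^-1h definition ENS}, since $\norm{\xi_h}_{H^1}\geq c\norm{\xi_h}_{L^2}$ on the quasi-uniform triangulation, so $\norm{\ell_h}_{H_h^{-1}}\leq c\norm{\ell_h}_{L^2}$ after lifting $\ell_h$ against $L^2$-pairings), hence it suffices to bound $\norm{\rho_p^n}_{L^2(\Gah^n)}$ and $\norm{\rho_\lambda^n}_{L^2(\Gah^n)}$; these are precisely the standard nodal interpolation errors \eqref{eq: interpolation errors pressures ENS}, giving $\norm{\rho_p^n}_{L^2(\Gah^n)}\leq h^{k_{pr}+1}\norm{p^n}_{H^{k_{pr}+1}}$ and $\norm{\rho_\lambda^n}_{L^2(\Gah^n)}\leq h^{k_\lambda+1}\norm{\lambda^n}_{H^{k_\lambda+1}}$ (one also invokes the norm equivalences in \cref{lemma: norm equivalence ENS} to pass between $\Gah^n$ and $\Ga^n$). (ii) Square, apply $\Delta t\sum_{k=1}^n$, and use that $a^2\leq 2b^2+2c^2$ so the sum splits. (iii) For the interpolation piece, bound $\Delta t\sum_{k=1}^n(h^{2k_{pr}+2}\norm{p^k}^2_{H^{k_{pr}+1}} + h^{2k_\lambda+2}\norm{\lambda^k}^2_{H^{k_\lambda+1}})$ by $c\,T\sup_{t\in[0,T]}(\norm{p}^2_{H^{k_{pr}+1}(\Gat)}+\norm{\lambda}^2_{H^{k_\lambda+1}(\Gat)})(h^{2k_{pr}+2}+h^{2k_\lambda+2})$, which is absorbed into $C_{int}(h^{2k_{pr}+2}+h^{2k_\lambda+2})$ thanks to \cref{assumption: Regularity assumptions for velocity estimate ENS}. (iv) For the discrete-remainder piece, quote \cref{lemma: discrete remainder pressure estimates improved ENS} verbatim, which already gives $\Delta t\sum_{k=1}^n\norm{\{\thp^k,\thl^k\}}^2_{L^2\times H_h^{-1}}\leq C_{const}(\Delta t)^2 + C_{int}(h^{2\widehat r_u}+h^{2k_{pr}+2}+h^{2k_\lambda+2})$. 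Adding the two and noting $h^{2k_{pr}+2}+h^{2k_\lambda+2}\leq \min\{h^{2k_{pr}+2},h^{2k_\lambda+2}\}$-type absorption is already present on the right-hand side, we arrive at the claimed bound.

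There is essentially no obstacle here: the theorem is stated explicitly as a corollary ("by the decompositions \dots, the interpolation bounds \dots, and a simple application of the triangle inequality"), and all the analytical work has been done upstream, in \cref{lemma: discrete remainder pressure estimates improved ENS} (which in turn rests on the auxiliary bound \cref{lemma: Auxiliary convergence result ENS}, the dual estimate \cref{lemma: dual estimate ENS}, and the $L^2\times H_h^{-1}$ inf-sup \cref{lemma: L^2 H^{-1} discrete inf-sup condition Gah Lagrange ENS}). The only mildly delicate point to state carefully is the inequality $\norm{\cdot}_{H_h^{-1}}\leq c\norm{\cdot}_{L^2}$ for the interpolation term, so that one does not need a separate $H_h^{-1}$-interpolation estimate for $\rho_\lambda^n$; this follows directly from the definition \eqref{eq: H^-1h definition ENS} together with the (uniform in $h,t$) Poincaré-type bound $\norm{\xi_h}_{L^2(\Gah\t)}\leq c\norm{\xi_h}_{H^1(\Gah\t)}$. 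Hence the proof is a half-page assembly of already-proved ingredients, and I would present it as such, being careful only to track that the time-step-squared constant $C_{const}$ and the spatial constant $C_{int}$ are exactly those of \cref{lemma: discrete remainder velocity estimates ENS} as inherited through \cref{lemma: discrete remainder pressure estimates improved ENS}.
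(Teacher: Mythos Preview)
Your proposal is correct and matches the paper's approach exactly: the paper states just before the theorem that the result follows ``by the decompositions \eqref{eq: decomposition error 2 lagrange ENS}, \eqref{eq: decomposition error 3 lagrange ENS}, the interpolation bounds \eqref{eq: interpolation errors pressures ENS}, and a simple application of the triangle inequality,'' which is precisely the assembly you describe. Your explicit remark that $\norm{\cdot}_{H_h^{-1}}\leq \norm{\cdot}_{L^2}$ (needed to reduce the $\rho_\lambda^n$ bound to \eqref{eq: interpolation errors pressures ENS}) is the only detail the paper leaves implicit, and you have justified it correctly from the definition \eqref{eq: H^-1h definition ENS}.
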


\subsection{Pressure a-priori error bounds for $\underline{k_\lambda = k_u-1}$}\label{sec: Pressure a-priori estimates for kl= ku-1 ENS}

Now we seek to prove a-priori pressure estimates when $k_\lambda=k_u-1$. As mentioned in \cref{sec: The Fully Discrete Scheme ENS} and more specifically in \cref{remark: diff formulations reason ENS} we now consider the continuous weak formulation \eqref{weak lagrange hom NV cov ENS} and its fully discrete finite element approximation \eqref{eq: fully discrete fin elem approx cov ENS}.  

There is one main difference between this case and \cref{sec: pressure a-priori kl=ku ENS}, namely the coercivity estimate in  \cref{lemma: improved h1-ah bound ENS} and the dual estimate in \cref{lemma: dual estimate ENS} do not hold for $k_\lambda=k_u-1$; see \cite{elliott2025unsteady} for the stationary case. As a result, it is not possible to follow the steps in \cref{sec: pressure a-priori kl=ku ENS}.
So, as mentioned in \cref{remark: About the pressure stability for arbitrary ENS}, we need to find an error bound for the $L^2_{L^2}$-norm of the fully discrete time derivative, that is, the expression
\begin{equation}\label{eq: fully discrete L2 deriv bound error ENS}
    \Delta t \sum_{k=1}^n\mh(\matdt\thbfu^n,\matdt\thbfu^n)= \Delta t \sum_{k=1}^n \frac{\mh(\thbfu^n-\underline{\thbfu^{n-1}}(t_n),\thbfu^n-\underline{\thbfu^{n-1}}(t_n))}{(\Delta t)^2},
\end{equation}
where we recall $\thbfu^n = \underline{\thbfu^{n-1}}(\cdot ,t_n) + \Delta t \, \matdt \thbfu^n$ by \eqref{eq: n to n-1 on surface n ENS}. To achieve this, one could ``just'' test the error eq. \eqref{eq: error equation ENS} with $\vhn = \thbfu^n- \underline{\thbfu^{n-1}}(t_n) \in \bfV_h^n$, since we can rewrite the time-derivative approximation as 
\begin{equation}\label{eq: m bilinear manipulation kl=ku-1 ENS}
  \mh(\thbfu^n,\vhn) - \bfm_h(\theta_{\bfu}^{n-1},\underline{\vhn}(\cdot,t^{n-1})) = \mh(\thbfu^n-\underline{\theta_{\bfu}^{n-1}}(t_n),\vhn) + \big[\mh(\underline{\theta_{\bfu}^{n-1}}(t_n),\vhn) - \bfm_h(\theta_{\bfu}^{n-1},\underline{\vhn}(\cdot,t^{n-1}))\big],
\end{equation}
but it would not be possible to find optimal convergence rates. Indeed, the test functions $\vhn$ in the error bounds 
 \eqref{eq: Interpolation error 2 ENS}, \eqref{eq: Consistency error 3 cov ENS}, \eqref{eq: Trilinear Errors general 1 cov ENS} are with respect to the energy $(a_h)$-norm \eqref{eq: energy norm ENS}, which cannot be controlled (e.g. using Young's inequality and a kickback argument) by \eqref{eq: fully discrete L2 deriv bound error ENS}, unless an inverse inequality is applied, hence losing an order of convergence. To alleviate this issue, we must utilize test functions w.r.t. the $L^2$-norm.
 This is accomplished through a combination of two factors:

 i) We use the standard Ritz-Stokes projection $(\mathcal{R}_h^b(\bfu^n),\{\mathcal{P}_h^b(\bfu^n),\mathcal{L}_h^b(\bfu^n)\})$ defined in \eqref{eq: surface Ritz-Stokes projection std ENS} instead. With this adjustment, we manage to remove one of the aforementioned problematic bounds \eqref{eq: Interpolation error 2 ENS} involving the bilinear form $\bhtil$. To make things simpler, we will also assume \emph{new initial conditions} for the discrete scheme \eqref{eq: fully discrete fin elem approx cov ENS}, mainly that $\uh^0 = \mathcal{R}_h^b\bfu^0= \mathcal{R}_h(\bfu^0,\{p^0,\lambda^0\})$. This is done so that at time-step  $t_0=0$ the \emph{discrete remainder} defined below in \eqref{eq: decomposition error 1 Ritz 2 ENS} remains zero, 
  $\sigma_{\bfu}^0=0$, and so the calculations follow identical to \cref{sec: velocity a-priori estimates ENS} for $k_\lambda=k_u-1$, see also \cref{remark: init cond and stab ENS}  below.

 ii) We use the integration by part formula \eqref{eq: integration by parts ENS} for the inertia terms \eqref{eq: Consistency error 3 cov ENS}, \eqref{eq: Trilinear Errors general 1 cov ENS}, so that test functions are bounded w.r.t. the $L^2$-norm instead.

 All of the above changes come at the expense of slightly higher regularity assumptions,  compared to \cref{sec: pressure a-priori kl=ku ENS}, which are the following:

\begin{assumption}[Regularity II]\label{assumption: Regularity assumptions for velocity estimate 2 ENS}
We assume that the solution $(\bfu,\{p,\lambda\})$ has the following regularity   
\begin{equation*}
    \begin{aligned}
   \sup_{t\in[0,T]}\big\{ \sum_{j=0}^1\norm{(\matn)^{j}\bfu}_{H^{k_{u}+1}(\Gat)}^2 &+ \norm{(\matn)^{j} p}_{H^{k_{pr}+1}(\Gat)}^2 + \norm{(\matn)^{j} \lambda}_{H^{k_{\lambda}+1}(\Gat)}^2 \big\}\\[-5pt]
   &+\sup_{t\in[0,T]}\norm{\bfu}_{W^{2,\infty}(\Ga\t)}^2+ \int_{0}^{T} \norm{\matn\matn\bfu}_{L^2(\Gat)}^2 \, dt \leq C.
    \end{aligned}
\end{equation*}
\end{assumption}

 Considering the standard Ritz-Stokes projection \eqref{eq: Error Bounds Ritz-Stokes std ENS} we use the following new decompositions:
\begin{align}\label{eq: decomposition error 1 Ritz 2 ENS}
    \eu^{n} = \bfu^n - \uh^n &= \underbrace{(\bfu^n- \mathcal{R}_h^b(\bfu^n)) }_{\text{Interpolation error}} + \underbrace{(\mathcal{R}_h^b(\bfu^n) - \uh^n)}_{\text{discrete remainder}} := \eta_{\bfu}^n + \sigma_{\bfu}^n , \\
    \label{eq: decomposition error 2 Ritz 2 ENS}
     \{\ep^n, \el^n\} = \{p^n,\lambda^n\} - \{\ph^n,\lambda_h^n\} &= \underbrace{(\{p^n,\lambda^n\}-  \{\mathcal{P}_h^b(\bfu^n),\mathcal{L}_h^b(\bfu^n)\}}_{\text{Interpolation error}} + \underbrace{( \{\mathcal{P}_h^b(\bfu^n),\mathcal{L}_h^b(\bfu^n)\} - \{\ph,\lambda_h\})}_{\text{discrete remainder}}\nonumber \\
     &:= \eta_{\{p,\lambda\}}^n + \sigma_{\{p,\lambda\}}^n.
\end{align}
The bounds on the Ritz-Stokes interpolation error have already been proved in \cref{lemma: Error Bounds Ritz-Stokes std ENS}, \cref{lemma: Error Bounds der Ritz-Stokes std ENS}. Following, now, the calculation as in \cref{lemma: error equation ENS}, due to the new pressure decompositions \eqref{eq: decomposition error 2 Ritz 2 ENS}, the new initial condition $\uh^0 = \mathcal{R}_h^b\bfu^0$, and utilizing the fact that $\ah = \ahhat + \mh$, cf. \cref{def: bilinear forms discrete ENS}, we derive the new error equation, which is presented below.

\begin{lemma}\label{lemma: error equation 2 ENS}
  For  $\sigma_{\bfu}^n,\, \sigma_{\{p,\lambda\}}^n$ as in  \eqref{eq: decomposition error 1 Ritz 2 ENS}, \eqref{eq: decomposition error 2 Ritz 2 ENS},  the following error equations holds true for $n \geq 1$:
   \begin{align}
\boxed{\begin{cases}\label{eq: error equation 2 ENS}\tag{EQ{\tiny c}}
      &{\dfrac{1}{\Delta t}\Big(\bfm_h(\sigma_{\bfu}^n,\vhn) - \bfm_h(\sigma_{\bfu}^{n-1},\underline{\vhn}(\cdot,t^{n-1}))\Big) } + \ah(\sigma_{\bfu}^n,\vhn) +b_h^L(\vhn,\sigma_{\{p,\lambda\}}^n)\\
       &\qquad\qquad\qquad ~~ = \underbrace{\sum_{i=1}^3 \textsc{Err}_{i}^{C}(\vhn)}_{\textsc{Cons. Errors}} +  \underbrace{\textsc{Err}_{1}^{I}(\vhn)}_{\textsc{Inter. Errors}} + \!\!\!\!\! \underbrace{\mathcal{C}^{cov}(\vhn)}_{\textsc{Inert. Errors}}  \!\!\!+  \, \mh(\sigma_{\bfu}^n,\vhn),\\
         &b_h^L(\sigma_{\bfu}^n,\{\qhn,\xihn\})= 0, 
    \end{cases}}
\end{align}
for all $\vhn \in \bfV_h^n$ and $\{\qhn,\xi_h^n\}\in Q_h^n\times \Lambda_h^n$,
with \emph{consistency errors}:
\begin{itemize}
    \item [\textbullet\hspace{9mm}] \hspace{-9mm}$\textsc{Err}_1^{C}(\vhn) := \dfrac{1}{\Delta t}\Big(\mh(\bfu^n,\vhn) - \mh(\bfu^{n-1},\underline{\vhn}(t_{n-1}))\Big) - \bfm(\matn\bfu^n,(\vhn)^{\ell}) - \gh(\TrVel;\uhn,\vhn)$
\end{itemize}
\begin{minipage}[t]{13cm}
    \begin{itemize}
        \item[\textbullet\hspace{9mm}] \hspace{-9mm}$ \textsc{Err}_2^{C,cov}(\vhn) := c_h^{cov}(\underline{\bfu}^{n-1}(t_n);\bfu^n,\vhn)  - c^{cov}(\bfu^n;\bfu^n,(\vhn)^{\ell})$\vspace{1mm}
         \item[\textbullet\hspace{9mm}] \hspace{-10mm} $ \textsc{Err}_3^{C}(\vhn) :=  \bfm(\bff^n,(\vhn)^{\ell})- \mh(\bff_h^n,\vhn)$\vspace{1mm}
    \end{itemize}
\end{minipage}

\vspace{2mm}
\noindent and \emph{interpolations errors}: \vspace{2mm}

\begin{minipage}[t]{10cm}
    \begin{itemize}
        \item[\textbullet\hspace{13mm}] \hspace{-12mm}$ \textsc{Err}_1^{I}(\vhn) :=\dfrac{1}{\Delta t}\Big(\mh(\eta_{\bfu}^n,\vhn) - \mh(\eta_{\bfu}^{n-1},\underline{\vhn}(t_{n-1}))\Big)$\vspace{1mm}
        \item[\textbullet\hspace{13mm}] \hspace{-12mm}$  \mathcal{C}^{cov}(\vhn) :=  c_h^{cov}(\underline{\eu^{n-1}}(t_n);\bfu^n;\vhn) - c_h^{cov}(\underline{\uhnN}(t_n);\eu^n;\vhn)$
    \end{itemize}
\end{minipage}
\begin{minipage}[t]{8cm}
    \begin{itemize}
    \item[\phantom{\textbullet}] \vspace{1mm}
    \end{itemize}
\end{minipage}
\end{lemma}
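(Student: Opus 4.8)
The plan is to follow, line for line, the derivation of \cref{lemma: error equation ENS}, making three changes: use the \emph{standard} Ritz--Stokes projection \eqref{eq: surface Ritz-Stokes projection std ENS} in place of the modified one (and in place of the pressure interpolants), use the new initial condition $\uh^0=\mathcal{R}_h^b\bfu^0$, and use the covariant scheme \eqref{eq: fully discrete fin elem approx cov ENS} and the identity $a_h=\ahhat+\mh$ (cf.\ \cref{def: bilinear forms discrete ENS}). First I would insert the decompositions \eqref{eq: decomposition error 1 Ritz 2 ENS}, \eqref{eq: decomposition error 2 Ritz 2 ENS}, i.e.\ $\uhn=\mathcal{R}_h^b(\bfu^n)-\sigma_{\bfu}^n$ and $\{\phn,\lhn\}=\{\mathcal{P}_h^b(\bfu^n),\mathcal{L}_h^b(\bfu^n)\}-\sigma_{\{p,\lambda\}}^n$, into \eqref{eq: fully discrete fin elem approx cov ENS} tested against $\vhn\in\bfV_h^n$, and move the $\sigma$-contributions to the left-hand side. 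The second identity of \eqref{eq: error equation 2 ENS} is immediate: $\mathcal{R}_h^b(\bfu^n)\in\bfV_h^{n,div}$ by \eqref{eq: surface Ritz-Stokes projection std ENS} and $\uhn\in\bfV_h^{n,div}$ by \eqref{eq: fully discrete fin elem approx cov ENS}, while at $n=0$ the choice $\uh^0=\mathcal{R}_h^b\bfu^0$ gives $\sigma_{\bfu}^0=0$, so $b_h^L(\sigma_{\bfu}^n,\{\qhn,\xihn\})=0$ for all $n\geq0$.

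Next I would process the stiffness and pressure terms. Writing $\ahhat(\mathcal{R}_h^b(\bfu^n),\vhn)=a_h(\mathcal{R}_h^b(\bfu^n),\vhn)-\mh(\mathcal{R}_h^b(\bfu^n),\vhn)$ and invoking the first line of \eqref{eq: surface Ritz-Stokes projection std ENS}, the combination $a_h(\mathcal{R}_h^b(\bfu^n),\vhn)+b_h^L(\vhn,\{\mathcal{P}_h^b(\bfu^n),\mathcal{L}_h^b(\bfu^n)\})$ becomes $a(\bfu^n,(\vhn)^{\ell})+b^L((\vhn)^{\ell},\{p^n,\lambda^n\})$; then $a=\ahat+\mb$ (cf.\ \cref{def: bilinear forms ENS}) and the continuous weak formulation \eqref{weak lagrange hom NV cov ENS} at $t=t_n$ let me substitute $\ahat(\bfu^n,(\vhn)^{\ell})=\mb(\bff^n,(\vhn)^{\ell})-\mb(\matn\bfu^n,(\vhn)^{\ell})-c^{cov}(\bfu^n;\bfu^n,(\vhn)^{\ell})-b^L((\vhn)^{\ell},\{p^n,\lambda^n\})$. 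The two copies of $b^L((\vhn)^{\ell},\{p^n,\lambda^n\})$ then cancel — this is precisely the mechanism that deletes the pressure interpolation error ($\textsc{Err}_2^I$ in \cref{lemma: error equation ENS}), leaving only $\textsc{Err}_1^I$ here. The leftover mass pieces $\mb(\bfu^n,(\vhn)^{\ell})-\mh(\mathcal{R}_h^b(\bfu^n),\vhn)$, together with the $\mh$-contributions from the backward-difference splitting of the time derivative, are regrouped into $\textsc{Err}_1^C$ and into an explicit correction $+\mh(\sigma_{\bfu}^n,\vhn)$ that I keep on the right-hand side — which is exactly why $\ah$ rather than $\ahhat$ sits on the left of \eqref{eq: error equation 2 ENS}. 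The data discrepancy $\mb(\bff^n,(\vhn)^{\ell})-\mh(\bff_h^n,\vhn)$ is, verbatim as before, $\textsc{Err}_3^C$.

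Finally, as in \cref{lemma: error equation ENS}, I would add and subtract $c_h^{cov}(\underline{\bfu}^{n-1}(t_n);\bfu^n,\vhn)$ in the discrete convective term coming from \eqref{eq: fully discrete fin elem approx cov ENS}: the difference with $c^{cov}(\bfu^n;\bfu^n,(\vhn)^{\ell})$ is the consistency error $\textsc{Err}_2^{C,cov}$, and $c_h^{cov}(\underline{\uhnN}(t_n);\uhn,\vhn)-c_h^{cov}(\underline{\bfu}^{n-1}(t_n);\bfu^n,\vhn)$ regroups, using the linearity of $c_h^{cov}$ in its first two slots and the time-lift definitions, into $\mathcal{C}^{cov}(\vhn)$. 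Collecting everything yields \eqref{eq: error equation 2 ENS}. I expect the only genuinely delicate point to be the $\mh$/$\ahhat$/$\ah$ bookkeeping — tracking the signs through the $a_h=\ahhat+\mh$ and $a=\ahat+\mb$ substitutions so that the correction emerges as exactly $+\mh(\sigma_{\bfu}^n,\vhn)$; everything else is a transcription of the proof of \cref{lemma: error equation ENS} with $(\mathcal{R}_h,\Ih)$ replaced by $(\mathcal{R}_h^b,\mathcal{P}_h^b,\mathcal{L}_h^b)$ and with no pressure interpolation term.
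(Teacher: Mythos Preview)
Your proposal is correct and matches the paper's own approach exactly: the paper does not give a standalone proof of this lemma but simply states that one follows the calculation in \cref{lemma: error equation ENS} with the new pressure decomposition \eqref{eq: decomposition error 2 Ritz 2 ENS}, the new initial condition $\uh^0=\mathcal{R}_h^b\bfu^0$, and the identity $\ah=\ahhat+\mh$. Your account correctly identifies the two essential mechanisms that distinguish this derivation from \cref{lemma: error equation ENS} --- the standard Ritz--Stokes relation \eqref{eq: surface Ritz-Stokes projection std ENS} collapsing $a_h(\mathcal{R}_h^b\bfu^n,\vhn)+b_h^L(\vhn,\{\mathcal{P}_h^b,\mathcal{L}_h^b\})$ into $a(\bfu^n,(\vhn)^{\ell})+b^L((\vhn)^{\ell},\{p^n,\lambda^n\})$ (which eliminates the former $\textsc{Err}_2^C$, $\textsc{Err}_2^I$, $\textsc{Err}_5^C$), and the $\ah/\ahhat$ bookkeeping that produces the $+\mh(\sigma_{\bfu}^n,\vhn)$ correction.
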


Now, due to the choice of initial condition, we see from \cref{sec: velocity a-priori estimates ENS},by following almost identical calculations to \cref{lemma: discrete remainder velocity estimates kl=ku-1 ENS} and \cref{theorem: Velocity Error Estimates ENS} that the following velocity error estimates hold:
\begin{align}
\label{eq: discrete remainder velocity estimates kl=ku-1 HR ENS}
\norm{\sigma_{\bfu}^n}_{L^2(\Gahn)}^2 + \Delta t \sum_{k=1}^n \norm{\sigma_{\bfu}^k}_{\ah}^2 &\leq C \Big(\big(\Delta t \big)^2 +h^{2r_u} + h^{2k_{pr}+2} +h^{2k_{\lambda}+2}\Big),\\
\label{eq: Velocity Error Estimates kl=ku-1 HR ENS}
     \norm{\eu^n}_{L^2(\Gahn)}^2 + \Delta t \sum_{k=1}^n \norm{\eu^k}_{\ah}^2  &\leq  C \Big(\big(\Delta t \big)^2 +h^{2r_u} + h^{2k_{pr}+2} +h^{2k_{\lambda}+2}\Big),
\end{align}
where $r_u = min\{k_{u},k_g-1\}$, with positive constant $C$ depending on the regularity \cref{assumption: Regularity assumptions for velocity estimate 2 ENS}, and where for the last estimate we used the standard Ritz-Stokes interpolation estimates \eqref{eq: Error Bounds Ritz-Stokes std ENS}, \eqref{eq: Error Bounds der Ritz-Stokes std ENS}.

\begin{remark}\label{remark: init cond and stab ENS}
Let us discuss, once more, the initial condition  and the condition \eqref{eq: Linfty uh bound ENS}:
\begin{enumerate}
    \item[i)] As mentioned, we choose $\uh^0 = \mathcal{R}_h^b\bfu^0$ so that $\sigma_{\bfu}^0=0$. Therefore 
    the computations follow almost identical to \cref{sec: velocity a-priori estimates ENS}, giving the estimates \eqref{eq: discrete remainder velocity estimates kl=ku-1 HR ENS}, \eqref{eq: Velocity Error Estimates kl=ku-1 HR ENS}. However, in practice, see \cref{remark: about initial conditions ENS}, this is not convenient and  therefore a more standard choice would have been $\uh^0 = \mathcal{R}_h\bfu^0$. In that case, the term $\norm{\sigma_{\bfu}^0}_{\ah} = \norm{\mathcal{R}_h^b\bfu^0 - \mathcal{R}_h\bfu^0}_{\ah}\neq0$  would emerge at $t=0$, but due to the linearity of the Ritz-Stokes projection \eqref{eq: surface Ritz-Stokes projection std ENS} we see that $\norm{\sigma_{\bfu}^0}_{\ah} =  \norm{\mathcal{R}_h(0,\{p^0,\lambda^0\})}_{\ah} \leq c(h^{k_{pr}+1}+h^{k_{\lambda}+1} + h^{k_g})(\norm{p}_{H^{k_{pr}+1}} + \norm{\lambda}_{H^{k_{\lambda}+1}})$ and hence the error bounds would still hold.
    \item[ii)] By the $L^{\infty}$ stability of the standard Ritz-Stokes map \eqref{eq: W1infty estimate Ritz-Stokes ENS} and \eqref{eq: discrete remainder velocity estimates kl=ku-1 HR ENS}, 
    \cref{remark: About the pressure stability for arbitrary convergence ENS} still stands.
\end{enumerate}
\end{remark}

Now, let us revisit the consistency, interpolation and inertia bounds. Following the similar calculation to \cref{lemma: Interpolation errors ENS}, where we use the Ritz-Stokes estimates \eqref{lemma: Error Bounds der Ritz-Stokes std ENS} we obtain the following.
\begin{lemma}[Interpolation errors II]\label{lemma: Interpolation errors HR ENS}
The following interpolation error hold for all $\vhn \in \bfV_h^n$ \vspace{-1mm}
    \begin{align}\label{eq: Interpolation errors HR ENS}
        |\textsc{Err}^{I}_1(\vhn)| &\leq ch^{r_u}\!\!\!\!\!\sup_{t\in[t_{n-1},t_n]}\sum_{j=0}^{1}\big( \norm{(\matn)^j\bfu}_{H^{k_u+1}(\Gat)}\big)\norm{\vhn}_{L^2(\Gah^n)}\\
        &+ c(h^{k_{pr}+1}+ h^{,k_{\lambda}+1})\!\!\!\!\!\sup_{t\in[t_{n-1},t_n]} \sum_{j=0}^{1}\big(\norm{(\matn)^j p}_{H^{k_{pr}+1}(\Gat)} + \norm{(\matn)^j \lambda}_{H^{k_{\lambda}+1}(\Gat)}\big)\norm{\vhn}_{L^2(\Gah^n)},\nonumber
\end{align}
with $r_u = min\{k_u,k_g-1\}$ and the constants $c>0$ are independent of $h$ and $t$.
\end{lemma}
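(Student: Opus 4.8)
The plan is to mirror the proof of the corresponding estimate in the $k_\lambda=k_u$ case, namely \Cref{lemma: Interpolation errors ENS}, but now working with the \emph{standard} Ritz-Stokes projection $\mathcal{R}_h^b(\bfu)$ of \cref{def: surface Ritz-Stokes projection std ENS} rather than the modified one, and tracking carefully the extra pressure-type terms that it introduces. First I would rewrite $\textsc{Err}^I_1(\vhn)$ in integral form: by the definition of the time-lift \eqref{eq: discrete time-lift ENS} and the fundamental theorem of calculus,
\begin{equation*}
\textsc{Err}^I_1(\vhn) = \dfrac{1}{\Delta t}\int_{t_{n-1}}^{t_n}\frac{d}{dt}\Big(\mh(\eta_{\bfu}(t),\underline{\vhn}(t))\Big)\,dt = \dfrac{1}{\Delta t}\int_{t_{n-1}}^{t_n}\Big(\mh(\matd\eta_{\bfu}(t),\underline{\vhn}(t)) + \gh(\TrVel;\eta_{\bfu}(t),\underline{\vhn}(t))\Big)\,dt,
\end{equation*}
using the discrete transport formula \eqref{eq: Transport formulae 2 discrete ENS}. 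Then I would apply the $\mh$ and $\gh$ bounds \eqref{g boundedness ENS}, the time-equivalence inequality \eqref{eq: time equivalence norms ENS} to pass from $\underline{\vhn}(t)$ back to $\vhn$ on $\Gah^n$, and a Cauchy--Schwarz step in time exactly as in \eqref{eq: Interpolation errors inside 1 ENS}, giving a bound by $\tfrac{c}{\sqrt{\Delta t}}\big(\int_{t_{n-1}}^{t_n}(\norm{\matd\eta_{\bfu}}_{L^2(\Gaht)}^2 + \norm{\eta_{\bfu}}_{L^2(\Gaht)}^2)\,dt\big)^{1/2}\norm{\vhn}_{L^2(\Gah^n)}$.

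The key step is to insert the standard Ritz-Stokes error bounds in place of the modified ones. For $\norm{\eta_{\bfu}}_{L^2(\Gaht)} = \norm{\bfu - \mathcal{R}_h^b(\bfu)}_{L^2(\Gaht)}$ I would use \eqref{eq: Error Bounds Ritz-Stokes std ENS} (together with the norm equivalence \eqref{eq: norm equivalence ENS} to move between $\Gaht$ and $\Gat$), which gives the $h^m$ factor with $m=\min\{r_u,k_{pr}+1,k_{\lambda}+1\}$; for $\norm{\matd\eta_{\bfu}}_{L^2(\Gaht)} = \norm{\matd(\bfu - \mathcal{R}_h^b(\bfu))}_{L^2(\Gaht)}$ I would use the material-derivative bound \eqref{eq: Error Bounds der Ritz-Stokes std ENS}, which contributes the $\sum_{j=0}^1$ over normal material derivatives of $\bfu$, $p$, $\lambda$. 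Replacing the $L^2$ time-integral by $\sqrt{\Delta t}\sup_{t\in[t_{n-1},t_n]}(\cdots)$ absorbs the $1/\sqrt{\Delta t}$ prefactor and produces the stated right-hand side, where the $h^{r_u}$ term collects the velocity contributions (recall $r_u = \min\{k_u,k_g-1\}$, and for $k_\lambda=k_u-1$ one has $k_{pr}+1=k_u\ge r_u$, $k_\lambda+1=k_u\ge r_u$, so the $h^m$ from $\eta_{\bfu}$ is dominated by $h^{r_u}$ unless the geometry is limiting, in which case it is $h^{k_g-1}=h^{r_u}$) and the separate $(h^{k_{pr}+1}+h^{k_{\lambda}+1})$ term collects the pressure and Lagrange-multiplier contributions that are genuinely present now because $\mathcal{R}_h^b$ depends on $\{p,\lambda\}$.

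The main obstacle, such as it is, is bookkeeping rather than anything deep: one must be careful that the $\mathcal{R}_h^b$ error bounds \eqref{eq: Error Bounds Ritz-Stokes std ENS} and \eqref{eq: Error Bounds der Ritz-Stokes std ENS} hold with $k_\lambda=k_u-1$ and $k_g\ge 2$ (they do, by \cref{lemma: Error Bounds Ritz-Stokes std ENS,lemma: Error Bounds der Ritz-Stokes std ENS}), and that the splitting of the resulting $h^m$ into an $h^{r_u}$ part and an $h^{k_{pr}+1}+h^{k_{\lambda}+1}$ part is done so that the velocity-regularity and pressure-regularity norms are cleanly separated, as displayed. One should also note that, unlike \eqref{eq: Interpolation errors improved ENS} in the $k_\lambda=k_u$ case, here we do \emph{not} split $\matd\eta_{\bfu}$ into tangential and normal components and do \emph{not} invoke the Leray time-projection \eqref{eq: discrete time Leray proj ENS}: since the scheme \eqref{eq: fully discrete fin elem approx cov ENS} with $k_\lambda=k_u-1$ already accepts an $L^2$-type test function and we are content with the (suboptimal-in-geometry) rate $h^{r_u}$, the cruder estimate above suffices, and $\vhn$ need not lie in $\bfV_h^{n,div}$. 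Hence no new technical machinery beyond the already-proven Ritz-Stokes bounds and transport formulae is required, and the proof is a direct adaptation of \eqref{eq: Interpolation errors inside 1 ENS}.
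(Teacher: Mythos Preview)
Your proposal is correct and follows essentially the same approach as the paper: the paper's proof is a one-line reference stating that one follows the calculation of \Cref{lemma: Interpolation errors ENS} (specifically the integral-form argument leading to \eqref{eq: Interpolation errors inside 1 ENS}) while substituting the standard Ritz--Stokes bounds \eqref{eq: Error Bounds Ritz-Stokes std ENS} and \eqref{eq: Error Bounds der Ritz-Stokes std ENS} for the modified ones. Your added remarks on the bookkeeping (splitting $h^m$ into velocity and pressure contributions, and why no tangential/normal splitting or Leray time-projection is needed here) are accurate and helpful but go beyond what the paper itself spells out.
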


\begin{lemma}[Consistency errors II]\label{lemma: Consistency errors HR ENS}
There exists  a constant $c>0$ independent of $h$ and $t$ such that the following bounds hold for all $\vhn \in \bfV_h^n$
     \begin{align}\label{eq: Consistency error 1 HR ENS}
        &\!\!\!\!|\textsc{Err}_1^{C}(\vhn)| \leq  c\sqrt{\Delta t}\Big(\int_{t_{n-1}}^{t^n} \norm{\matn\matn\bfu}_{L^2(\Gat)}^2 +\norm{\matn\bfu}_{L^2(\Gat)}^2 \, dt\Big)^{1/2}\norm{\vhn}_{L^2(\Gah^n)} \nonumber\\
        &\qquad\qquad\quad\, \!\!\!\!+ \frac{ch^{\widehat{r}_u+1}}{\sqrt{\Delta t }}\Big(\int_{t_{n-1}}^{t^n}+\norm{\matn\bfu}_{L^2(\Gat)}^2 + \norm{\bfu}_{H^1(\Gat)}^2 \, dt\Big)^{1/2}\norm{\vhn}_{L^2(\Gah^n)}
        \\
        &\qquad\qquad\quad\, \!\!\!\! +  ch^{\widehat{r}_u+1}\!\!\!\!\!\!\! \sup_{t\in[t_{n-1},t_n]}\big(\norm{\bfu}_{L^2(\Gat)}\big)\norm{\vhn}_{H^1(\Gah^n)} + \big(\norm{\eta_{\bfu}^n}_{L^2(\Gah^n)}+\norm{\sigma_{\bfu}^n}_{L^2(\Gah^n)}\big)\norm{\vhn}_{L^2(\Gah^n)},\nonumber\\
        &\!\!\!\!|\textsc{Err}_2^{C,cov}(\vhn)| \leq c(\Delta t+h^{\widehat{r}_u})  \sup_{t \in I_n}\Big\{\sum_{j=1}^0\norm{(\matn)^j\bfu}_{H^{1}(\Gat)}\Big\}\norm{\bfu^n}_{W^{1,\infty}(\Ga^n)}\norm{\vhn}_{L^2(\Gahn)}\nonumber\\
        \label{eq: Consistency error 3 cov HR ENS}
        &\qquad\qquad\qquad\qquad\qquad\qquad\qquad\qquad + ch^{k_g-1}\norm{\bfu^{n-1}}_{H^1(\Ga^{n-1})}\norm{\bfu^{n}}_{H^1(\Ga^{n})} \norm{\vhn}_{L^2(\Gah^n)},\\
        \label{eq: Consistency error 4 HR ENS}
        &\!\!\!\! |\textsc{Err}_3^{C}(\vhn)| \leq ch^{k_g+1}\norm{\bff^n}_{L^2(\Ga^n)}\norm{\vhn}_{L^2(\Gah^n)}.
\end{align}
\end{lemma}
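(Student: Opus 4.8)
The plan is to observe that \cref{lemma: Consistency errors HR ENS} is, for the most part, a re-run of \cref{lemma: Consistency errors ENS} adapted to the new decomposition \eqref{eq: decomposition error 1 Ritz 2 ENS}--\eqref{eq: decomposition error 2 Ritz 2 ENS} and the new initial datum $\uh^0=\mathcal{R}_h^b\bfu^0$; the only genuinely new work concerns $\textsc{Err}_2^{C,cov}$, where the test function must now be controlled in the $L^2$-norm (so that \eqref{eq: fully discrete L2 deriv bound error ENS} can later absorb it) rather than in the energy norm, and this forces an extra integration by parts. I would treat the three terms in turn.

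For $\textsc{Err}_3^{C}$ I would simply invoke \eqref{eq: errors of domain of integration data ENS} with $\bff_h^n=(\bff^n)^{-\ell}$, which gives \eqref{eq: Consistency error 4 HR ENS} at once. For $\textsc{Err}_1^{C}$ I would repeat essentially verbatim the estimate of $\textsc{Err}_1^{C}$ in the proof of \cref{lemma: Consistency errors ENS}: rewrite it in integral form via the transport formula \eqref{eq: Transport formulae 2 discrete ENS}, peel off the geometric-perturbation contribution (the term called $\mathcal{E}_1$ there, handled by \eqref{eq: errors of domain of integration data ENS}, \eqref{eq: Geometric perturbations g ENS}, \eqref{eq: relate matd to matdl ENS} and the norm equivalences), and expand the consistency part $\mathcal{E}_2$ through the transport formulae of \cref{lemma: Transport formulae cont ENS,lemma: Transport formulae lifted ENS} and the identity \eqref{eq: consist errors inside 1 ENS}. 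The only change is that the remainder $\rho_{\bfu}^n,\theta_{\bfu}^n$ is replaced by $\eta_{\bfu}^n,\sigma_{\bfu}^n$, whose $L^2$-approximation properties are the same by \cref{lemma: Error Bounds Ritz-Stokes std ENS,lemma: Error Bounds der Ritz-Stokes std ENS}; this yields \eqref{eq: Consistency error 1 HR ENS}.

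For $\textsc{Err}_2^{C,cov}$ I would split, exactly as in \eqref{eq: Consistency errors inside err3 1 cov ENS}, $\textsc{Err}_2^{C,cov}(\vhn)=\mathrm{C}_1^{cov}+\mathrm{C}_2^{cov}$, where $\mathrm{C}_1^{cov}:=c_h^{cov}(\underline{\bfu}^{n-1}(t_n);\bfu^n,\vhn)-c^{cov}(\underline{\bfu}^{n-1}(t_n);\bfu^n,(\vhn)^{\ell})$ is the geometric-perturbation part and $\mathrm{C}_2^{cov}:=c^{cov}(\underline{\bfu}^{n-1}(t_n);\bfu^n,(\vhn)^{\ell})-c^{cov}(\bfu^n;\bfu^n,(\vhn)^{\ell})$ the time-increment part. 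For $\mathrm{C}_1^{cov}$ I would use the bound already proved in \cref{lemma: Consistency errors ENS} (which incorporates the Weingarten perturbation via \cref{lemma: weingarten map improved ENS} and the geometric estimates of \cref{sec: Lifts to Exact Surface}), $|\mathrm{C}_1^{cov}|\le ch^{k_g}\norm{\bfu^{n-1}}_{H^1(\Ga^{n-1})}\norm{\bfu^n}_{H^1(\Ga^n)}\norm{\vhn}_{H^1(\Gah^n)}$, and then apply the inverse inequality $\norm{\vhn}_{H^1(\Gah^n)}\le ch^{-1}\norm{\vhn}_{L^2(\Gah^n)}$ (legitimate by quasi-uniformity) to produce the $h^{k_g-1}$-term of \eqref{eq: Consistency error 3 cov HR ENS}. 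For $\mathrm{C}_2^{cov}$ I would use the skew-symmetric form of $c^{cov}$ together with the identity \eqref{eq: un-un-1 formula ENS}, $\bfu^n-\underline{\bfu}^{n-1}(t_n)=\int_{t_{n-1}}^{t_n}\matdl\bfu\circ\Phi^{\ell}_s\,ds\circ\Phi^{\ell}_{-t_n}$. The integral in $\mathrm{C}_2^{cov}$ carrying no derivative on $\vhn$ is handled directly by H\"older's inequality, using $\norm{\nbgcov\bfu^n}_{L^\infty(\Ga^n)}\le c\norm{\bfu^n}_{W^{1,\infty}(\Ga^n)}$ and $\norm{\bfu^n-\underline{\bfu}^{n-1}(t_n)}_{L^2(\Ga^n)}\le c\sqrt{\Delta t}\big(\int_{I_n}\norm{\matdl\bfu}_{L^2(\Gat)}^2\big)^{1/2}$; for the integral carrying $\nbgcov$ on $\vhn$ I would integrate by parts via the surface divergence theorem \eqref{eq: integration by parts cont ENS}, transferring the derivative onto the smooth factors $\bfu^n$ and $\bfu^n-\underline{\bfu}^{n-1}(t_n)$, so that after pairing ($\nbgcov\bfu^n$ in $L^\infty$ via $W^{1,\infty}$, or $\nbgcov(\bfu^n-\underline{\bfu}^{n-1}(t_n))=\int_{I_n}\nbgcov\matdl\bfu\circ\Phi^{\ell}_s\,ds\circ\Phi^{\ell}_{-t_n}$ in $L^2$) only $\norm{(\vhn)^{\ell}}_{L^2(\Ga^n)}\sim\norm{\vhn}_{L^2(\Gah^n)}$ survives. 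Bounding $\norm{\matdl\bfu}_{H^1}\le\norm{\matn\bfu}_{H^1}+ch^{\widehat r_u}\norm{\bfu}_{H^2}$ through \eqref{eq: relate matd to matdl ENS} and invoking $\bfu\in L^\infty_{W^{2,\infty}(\Gat)}$ from \cref{assumption: Regularity assumptions for velocity estimate 2 ENS} yields the $(\Delta t+h^{\widehat r_u})$-prefactor of \eqref{eq: Consistency error 3 cov HR ENS}; this piece is essentially \cite[Lemma 7.4]{elliott2025unsteady} with the geometric-perturbation part added on top.

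The hard part will be the integration by parts in $\mathrm{C}_2^{cov}$: I need to check that \emph{every} derivative can in fact be moved off $\vhn$ onto smooth data — including the curvature boundary term produced by \eqref{eq: integration by parts cont ENS}, which I would absorb into $\norm{\bfu^n}_{W^{1,\infty}(\Ga^n)}$ — and that transferring derivatives across the $\nbgcov$-operators produces only the $H^1$-regularity of $\matdl\bfu$ (hence of $\matn\bfu$ and $\bfu$ via \eqref{eq: relate matd to matdl ENS}) already allowed by \cref{assumption: Regularity assumptions for velocity estimate 2 ENS}. The rest is bookkeeping: keeping the powers of $\Delta t$ and $h$ sharp while simultaneously tracking the three relevant norms of $\matn\bfu$, $\bfu$ and $\vhn$, and making sure the $\eta_{\bfu}^n,\sigma_{\bfu}^n$ terms appearing in $\textsc{Err}_1^{C}$ inherit the right orders from \cref{lemma: Error Bounds Ritz-Stokes std ENS,lemma: Error Bounds der Ritz-Stokes std ENS}.
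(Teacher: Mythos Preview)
Your plan is essentially the paper's proof: $\textsc{Err}_1^C$ and $\textsc{Err}_3^C$ are handled exactly as in \cref{lemma: Consistency errors ENS}, and for $\textsc{Err}_2^{C,cov}$ the split $\mathrm{C}_1^{cov}+\mathrm{C}_2^{cov}$, the inverse inequality on $\mathrm{C}_1^{cov}$, and the integration-by-parts strategy on $\mathrm{C}_2^{cov}$ to move the derivative off $\vhn$ all match. One technical point to correct: your displayed identity $\nbgcov(\bfu^n-\underline{\bfu}^{n-1}(t_n))=\int_{I_n}\nbgcov\matdl\bfu\circ\Phi^{\ell}_s\,ds\circ\Phi^{\ell}_{-t_n}$ is not valid as written, because the surface gradient on $\Ga^n$ does not commute with the flow maps $\Phi^{\ell}_s$; the paper resolves this by invoking the pullback formula for the surface divergence from \cite[Section~3]{church2020domain}, which expresses $\mathrm{div}_{\Ga^n}(\cdot)\circ\Phi^{\ell}_{t_n}$ in terms of $\nabla_{\Ga^0}$ via a bounded Jacobian factor $\mathcal{U}_t^0$, and then bounds the resulting term by $\sup_{t\in I_n}\norm{\matdl\bfu}_{H^1(\Gat)}$ (whence the $h^{\widehat r_u}$ from the $H^1$-version of \eqref{eq: relate matd to matdl ENS}).
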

\begin{proof}
    $\textsc{Err}_1^{C}(\vhn)$ and $\textsc{Err}_3^{C}(\vhn)$ are obtained identically to \cref{lemma: Consistency errors ENS}. Let us focus on $\textsc{Err}^{C,cov}_2$. As mentioned, now we need to apply the integration by parts formula so that the test function is bounded w.r.t the $L^2$-norm. As in  \cref{lemma: Consistency errors ENS} we write \eqref{eq: Consistency errors inside err3 1 cov ENS} again:
    \begin{align}\label{eq: Consistency errors inside err3 1 cov HR ENS}
    \textsc{Err}^{C,cov}_2(\vhn) &= c_h^{cov}(\underline{\bfu}^{n-1}(t_n);\bfu^n,\vhn) - c^{cov}(\underline{\bfu}^{n-1}(t_n);\bfu^n,(\vhn)^{\ell}) \\
    &\qquad+ c^{cov}(\underline{\bfu}^{n-1}(t_n);\bfu^n,(\vhn)^{\ell}) - c^{cov}(\bfu^n;\bfu^n,\vhl)\nonumber\\
    &=: \mathrm{C}_1^{cov} + \mathrm{C}_2^{cov},
    \end{align}
where we recall $c_h^{cov}$, $c^{cov}$ as defined in \cref{def: bilinear forms discrete ENS} and \eqref{eq: c equiv expression ENS} (with $\eta_1,\eta_2=0$) respectively. Following \eqref{eq: Consistency errors inside err3 2 ENS}, and also using the inverse inequality, yields
\begin{align}\label{eq: Consistency errors inside err3 2 cov HR ENS}
     |\mathrm{C}_1^{cov}|\leq  ch^{k_g-1}\norm{\bfu^{n-1}}_{H^1(\Ga^{n-1})}\norm{\bfu^n}_{H^1(\Ga^n)}\norm{\vhn}_{L^2(\Gah^n)}.
\end{align}
For the second estimate in \eqref{eq: Consistency errors inside err3 1 cov HR ENS} we proceed similarly to \eqref{eq: Consistency errors inside err3 3 ENS}:
    \begin{align}
        |\mathrm{C}_2^{cov}| &=  \frac{1}{2}\Big|\int_{\Ga^n}\big( (\bfu^n-\underline{\bfu}^{n-1}(t_n))\cdot\nbgcov\bfu^n \big)\cdot(\vhn)^{\ell}\Big| +  \frac{1}{2}\Big|\int_{\Ga^n}\big((\bfu^n-\underline{\bfu}^{n-1}(t_n))\cdot\nbgcov\big)\bfPg(\vhn)^{\ell})\cdot\bfu^n \Big| \nonumber\\
        \label{eq: Consistency errors inside err3 3 cov HR ENS}
        & = \textbf{I}c + \textbf{II}c,
    \end{align}
 where now using the further assumed regularity \cref{assumption: Regularity assumptions for velocity estimate 2 ENS} and \eqref{eq: relate matd to matdl ENS} we derive 
\begin{equation*}
    \begin{aligned}
       \textbf{I}c \leq  (\Delta t+h^{\widehat{r}_u+1})  \sup_{t \in I_n}\big\{\norm{\matn\bfu}_{L^{2}(\Gat)}+\norm{\bfu}_{H^{1}(\Gat)}\big\}\norm{\bfu^n}_{W^{1,\infty}(\Ga^n)}\norm{\vhn}_{L^2(\Gahn)}.
    \end{aligned}
\end{equation*}
Recall that $\bfu$ is the solution of \eqref{weak lagrange hom NV cov ENS}, then by the integration by parts formula \eqref{eq: integration by parts cont ENS} we see that
\begin{equation}
    \begin{aligned}\label{eq: consistency inside 2 HR ENS}
        \int_{\Ga^n} ((\bfu^{n} \cdot \nb^{cov}_{\Ga^n})\bfPg\vhl )\cdot \bfu^{n} \; \ds &= \int_{\Ga^n} \bfu^n \cdot (\nb_{\Ga^n}(\vhl \cdot \bfu^n))  \; \ds -\int_{\Ga^n} ((\bfu^{n} \cdot \nb_{\Ga^n})\bfu^{n} ) \cdot \bfPg \vhl  \; \ds\\
        & = - \int_{\Ga^n} \mathrm{div}_{\Ga^n}(\bfu^n) (\vhl \cdot \bfu^n)  \; \ds  -\int_{\Ga^n} ((\bfu^{n} \cdot \nb^{cov}_{\Ga^n})\bfu^{n} ) \cdot \vhl  \; \ds,
    \end{aligned}
\end{equation}
where by $\nb^{cov}_{\Ga^n},\, \mathrm{div}_{\Ga^n}$ we denote the surface operators defined at the surface $\Ga^n$.
So considering \eqref{eq: consistency inside 2 HR ENS} it is clear that \vspace{-3mm}
\begin{equation*}
    \begin{aligned}
       \textbf{II}c = \textbf{I}c +  \overbrace{\int_{\Ga^n} \mathrm{div}_{\Ga^n}(\bfu^n-\underline{\bfu}^{n-1}(t_n))) (\vhl \cdot \bfu^n)  \; \ds }^{\textbf{III}c}.
    \end{aligned}
\end{equation*}
Using then again \eqref{eq: un-un-1 formula ENS}, i.e.
\begin{equation*}
    \bfu^n-\underline{\bfu}^{n-1}(t_n) = \int_{t_{n-1}}^{t_n} \matdl\bfu\circ \Phi^{\ell}_{s}\,ds \circ \Phi^{\ell}_{-t_n}
\end{equation*}
along with the pull back formula \cite[Section 3]{church2020domain} $\mathrm{div}_{\Ga^n}(\bfu^n)\circ\Phi_{t^n} ^{\ell}= tr(\mathcal{U}_t^0\nb_{\Ga^0}\bfu^n\circ\Phi_{t^n}^{\ell})$, where $\mathcal{U}_t^0 = \nb_{\Ga^0}\Phi_{t^n}^{\ell}\mathrm{G}_0^t$ with $\mathrm{G}_0^t=(\mathrm{G}_t^0)^{-1} = ((\nb_{\Ga^0}\Phi_{t^n}^{\ell})^t\nb_{\Ga^0}\Phi_{t^n}^{\ell} + \bfn_{\Ga^0}\otimes\bfn_{\Ga^0})^{-1} =\phi_{-t}^{\ell}(\nb_{\Ga^n}\Phi_{-t^n}^{\ell})(\nb_{\Ga^n}\Phi_{-t^n}^{\ell})^{t} + \bfn_{\Ga^0}\otimes\bfn_{\Ga^0} $ and the bounds on the mapping $\Phi_{t^n}^{\ell}$, we readily see that 
\begin{equation*}
    \begin{aligned}
        \textbf{III}c \leq (\Delta t+h^{\widehat{r}_u})  \sup_{t \in I_n}\Big\{\sum_{j=1}^0\norm{(\matn)^j\bfu}_{H^{1}(\Gat)}\Big\}\norm{\bfu^n}_{L^{\infty}(\Ga^n)}\norm{\vhn}_{L^2(\Gahn)},
    \end{aligned}
\end{equation*}
where we also applied \eqref{eq: relate matd to matdl ENS}. Combining now $\textbf{I}c-\textbf{III}c$ into \eqref{eq: Consistency errors inside err3 3 cov HR ENS} coupled with \eqref{eq: Consistency errors inside err3 2 cov HR ENS}, yields \eqref{eq: Consistency error 3 cov HR ENS}.
\end{proof}

\begin{lemma}[Inertia error II]\label{lemma: Trilinear Errors HR ENS}
The following error bound is satisfied
\begin{equation}
    \begin{aligned}\label{eq: Trilinear Errors general HR 1 ENS}
        |\mathcal{C}^{cov}(\vh)| &\leq  c\norm{\bfu^n}_{W^{2,\infty}(\Ga^n)} \big(\norm{\eu^{n-1}}_{L^2(\GahnN)} + \norm{\eu^{n}}_{L^2(\Gahn)} + \norm{\eu^{n-1}}_{\ah}+  \norm{\eu^{n}}_{\ah}\big) \norm{\vhn}_{L^2(\Gahn)} \\
        &\ + c\norm{\sigma_{\bfu}^{n-1}}_{L^2(\GahnN)}^{1/2}\norm{\sigma_{\bfu}^{n-1}}_{\ah}^{1/2}\norm{\eu^{n}}_{\ah}\norm{\vhn}_{L^2(\Gahn)}^{1/2}\norm{\vhn}_{\ah}^{1/2}\\
    &\ +c\norm{\sigma_{\bfu}^{n-1}}_{L^2(\GahnN)}^{1/2}\norm{\sigma_{\bfu}^{n-1}}_{\ah}^{1/2}\norm{\eu^{n}}_{L^2(\Gah)}^{1/2}\norm{\eu^{n}}_{\ah}^{1/2}\norm{\vhn}_{\ah},
    \end{aligned}
\end{equation}
for all $\vh \in \bfV_h$, where $c>0$ constant independent of $h$ and $t$ and $k_g \geq 2$.
\end{lemma}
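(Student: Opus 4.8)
$\textit{Proof proposal.}$ The plan is to bound the inertia error term $\mathcal{C}^{cov}(\vhn) = c_h^{cov}(\underline{\eu^{n-1}}(t_n);\bfu^n;\vhn) - c_h^{cov}(\underline{\uhnN}(t_n);\eu^n;\vhn)$ exactly along the lines of \cref{lemma: Trilinear Errors ENS}, but replacing the naive continuity estimate \eqref{ch cov boundedness ENS} (which puts $\vhn$ in the $\ah$-norm, and hence loses an order when we later test with the time difference $\thbfu^n - \underline{\thbfu^{n-1}}(t_n)$) by an integration-by-parts manoeuvre that moves the derivative off the test function. First I would recall the skew-symmetrised form of $c_h^{cov}$ from \cref{def: bilinear forms discrete ENS}: for the first piece $c_h^{cov}(\underline{\eu^{n-1}}(t_n);\bfu^n;\vhn)$, since $\bfu^n \in \bfW^{2,\infty}(\Ga^n)$ is smooth and $\underline{\eu^{n-1}}(t_n)$ sits in $\bfV_h$, I would integrate by parts in the term containing $\nbgh\vhn$ using \eqref{eq: integration by parts ENS}, transferring the derivative onto the smooth factor $\bfu^n$; the boundary/co-normal jump terms contribute $\norm{[\bm{\mu}_h]}_{L^\infty}\le ch^{k_g}$ (from \eqref{eq: geometric errors 1b ENS}) times $\norm{\bfu^n}_{W^{1,\infty}}\norm{\underline{\eu^{n-1}}(t_n)}_{L^2}\norm{\vhn}_{L^2}$, and the $\divgh\nh$ term is similarly harmless; what remains is bounded by $c\norm{\bfu^n}_{W^{2,\infty}(\Ga^n)}\norm{\underline{\eu^{n-1}}(t_n)}_{L^2(\Gahn)}\norm{\vhn}_{L^2(\Gahn)}$, and by the time-inequality \eqref{eq: time ineq eu ENS} this is $\le c\norm{\bfu^n}_{W^{2,\infty}(\Ga^n)}\norm{\eu^{n-1}}_{L^2(\GahnN)}\norm{\vhn}_{L^2(\Gahn)}$. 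The extra curvature terms in $c_h^{cov}$ (the ones with $\bfH_h$) are already zero-order in $\vhn$ and are controlled by $\norm{\bfH_h}_{L^\infty}\le c$ together with $\norm{\bfu^n}_{L^\infty}$.

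For the second piece $c_h^{cov}(\underline{\uhnN}(t_n);\eu^n;\vhn)$, the middle argument $\eu^n$ is not smooth, so I cannot put all derivatives on $\bfu^n$. Here I would split $\eu^n = \eta_{\bfu}^n + \sigma_{\bfu}^n$ according to \eqref{eq: decomposition error 1 Ritz 2 ENS} — no, more carefully: I write $\underline{\uhnN}(t_n) = \underline{\bfu^{n-1}}(t_n) - \underline{\eu^{n-1}}(t_n)$ and use skew-symmetry of $c_h^{cov}$ in its last two slots to swap $\eu^n$ and $\vhn$ where advantageous. For the contribution with the smooth velocity $\underline{\bfu^{n-1}}(t_n)$ in the first slot, I again integrate by parts to land everything on $\bfu^{n-1}$ and $\eu^n$ with $\vhn$ only in $L^2$, getting $c\norm{\bfu^{n-1}}_{W^{2,\infty}}(\norm{\eu^n}_{L^2} + \norm{\eu^n}_{\ah})\norm{\vhn}_{L^2}$ after using the discrete Korn inequalities of \cref{lemma: Korn's type inequalities Lagrange ENS} to absorb $\nbgcovh\eu^n$ into $\norm{\eu^n}_{\ah}$ plus an $h^{-1}\norm{\eu^n\cdot\nh}$ piece controlled by the divergence-freeness via \cref{corollary: divfree L2 norm ENS}. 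For the genuinely rough contribution $c_h^{cov}(\underline{\eu^{n-1}}(t_n);\eu^n;\vhn)$, neither side is smooth, so I fall back on the Ladyzhenskaya-type interpolation \eqref{eq: interpolation inequality Gaglia ENS}/\eqref{eq: L4 inequality Ph ENS}: $|c_h^{cov}(\underline{\eu^{n-1}}(t_n);\eu^n;\vhn)| \le c\norm{\underline{\eu^{n-1}}(t_n)}_{L^4}\norm{\eu^n}_{H^1}\norm{\vhn}_{L^4}$ but then I trade, using $\norm{\underline{\eu^{n-1}}(t_n)}_{L^4}\le c\norm{\eu^{n-1}}_{L^2}^{1/2}\norm{\eu^{n-1}}_{\ah}^{1/2}$ (via \eqref{eq: L4 inequality Ph ENS} and \eqref{eq: time ineq eu ENS}) and $\norm{\vhn}_{L^4}\le c\norm{\vhn}_{L^2}^{1/2}\norm{\vhn}_{\ah}^{1/2}$ — note the further splitting $\eu^{n-1} = \eta_{\bfu}^{n-1} + \sigma_{\bfu}^{n-1}$ is what produces the $\norm{\sigma_{\bfu}^{n-1}}_{L^2}^{1/2}\norm{\sigma_{\bfu}^{n-1}}_{\ah}^{1/2}$ factors in the stated bound, the interpolation-error part $\eta_{\bfu}^{n-1}$ being absorbed into the $\norm{\bfu^n}_{W^{2,\infty}}\norm{\eu^{n-1}}$ terms via \cref{lemma: Linfty estimate Ritz-Stokes ENS}. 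Collecting these pieces and also expanding $\eu^n = \eta_{\bfu}^n + \sigma_{\bfu}^n$ where an $L^2\times\ah$-split of $\eu^n$ is forced (the $\sigma_{\bfu}^{n-1}\cdot\eu^n$ cross terms), one arrives at the three lines of \eqref{eq: Trilinear Errors general HR 1 ENS}.

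The main obstacle I anticipate is the bookkeeping of which argument ends up in which norm: the whole point of this lemma versus \cref{lemma: Trilinear Errors ENS} is to keep $\vhn$ in $L^2(\Gahn)$ wherever possible (so that testing with $\vhn = \thbfu^n - \underline{\thbfu^{n-1}}(t_n)$ later in \cref{sec: Pressure a-priori estimates for kl= ku-1 ENS} does not cost an inverse inequality), and the $L^4$-$H^1$-$L^4$ Hölder split in the rough term genuinely cannot avoid a half power of $\norm{\vhn}_{\ah}$ — that is unavoidable and is why the last line of the estimate carries $\norm{\vhn}_{\ah}$ rather than $\norm{\vhn}_{L^2}$, which is acceptable because that term also carries the small factor $\norm{\sigma_{\bfu}^{n-1}}_{L^2}^{1/2}\norm{\sigma_{\bfu}^{n-1}}_{\ah}^{1/2}$, controlled by \eqref{eq: discrete remainder velocity estimates kl=ku-1 HR ENS}. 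A secondary technical point is that the integration-by-parts formula \eqref{eq: integration by parts ENS} is stated for scalar $\qh$ and a vector $\vh$, so applying it componentwise to the vector-valued convective term and re-assembling the co-normal jump terms needs a little care, but since $\norm{\bfP[\bm{\mu}_h]}_{L^\infty}\le ch^{2k_g}$ and $\norm{[\bm{\mu}_h]}_{L^\infty}\le ch^{k_g}$ from \eqref{eq: geometric errors 1b ENS}, these terms are all higher order than $h^{k_g-1}$ and disappear into the existing constants. I would also note, as the paper does for \cref{remark: assertion bdfh ENS}, that once $c_h^{cov}$ is handled, no separate work is needed.
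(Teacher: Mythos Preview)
Your proposal is essentially correct and follows the same route as the paper, which simply cites \cite[Lemma 7.14]{elliott2025unsteady} and lists the key ingredients: the integration-by-parts formula \eqref{eq: integration by parts ENS}, the co-normal jump estimates \eqref{eq: geometric errors 1b ENS}, the $W^{1,\infty}$ Ritz--Stokes bound \eqref{eq: W1infty estimate Ritz-Stokes ENS} (which is precisely what allows the split $\underline{\uhnN} = \underline{\mathcal{R}_h^b\bfu^{n-1}} - \underline{\sigma_{\bfu}^{n-1}}$ with a smooth first slot), and the time inequalities \eqref{eq: time ineq eu ENS}. One small correction: your appeal to \cref{corollary: divfree L2 norm ENS} to control $h^{-1}\norm{\eu^n\cdot\nh}$ is misplaced since $\eu^n\notin\bfV_h^{n,div}$; in fact no such step is needed, because $\nbgcovh\wh = \nbgcovh(\bfPh\wh) + (\wh\cdot\nh)\bfH_h$ together with \cref{lemma: Korn's inequality Ph ENS} already gives $\norm{\nbgcovh\eu^n}_{L^2(\Gahn)}\le c\norm{\eu^n}_{\ah}$ directly.
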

\begin{proof}
The proof follows identically to \cite[Lemma 7.14]{elliott2025unsteady},  where one uses integration by parts \eqref{eq: integration by parts ENS}, \eqref{eq: geometric errors 1b ENS}, and the $W^{1,\infty}$-Ritz-Stokes bounds \eqref{eq: W1infty estimate Ritz-Stokes ENS}, along with the fact that $k_g \geq 2$, to obtain the desired estimate. This time, we also use the time-inequality bounds \eqref{eq: time ineq eu ENS}, were appropriate. 
\end{proof}

Now, instead of \cref{lemma: Auxiliary convergence result ENS}, we prove the following auxiliary result.

\begin{lemma}[Auxiliary result II]\label{lemma: Auxiliary convergence result HR ENS}
Assume $\underline{k_\lambda = k_u-1}$ and $k_g \geq 2$, and let the regularity \cref{assumption: Regularity assumptions for velocity estimate 2 ENS} hold with time-step $\Delta t \leq ch$. Let $(\bfu,\{p,\lambda\})$ be the solution of \eqref{weak lagrange hom NV cov ENS}  and  let $(\uh^k,\{\ph^k,\lh^k\})$, $k=1,...,n$ be the discrete solutions of \eqref{eq: fully discrete fin elem approx cov ENS}, with initial condition $\uh^0=\mathcal{R}_h^b\bfu^0$. Then, the following estimate holds for $1 \leq n \leq N$
    \begin{equation}\label{eq: Auxiliary convergence result HR ENS}
    \begin{aligned}
        \!\!\!\Delta t\sum_{k=1}^n  \mh(\matdt\sigma_{\bfu}^k,\matdt\sigma_{\bfu}^k) + \norm{\sigma_{\bfu}^n}_{\ah}^2 &\leq C(1 + \frac{h^{2m}}{\Delta t })\Big( \big(\Delta t \big)^2 +h^{2m}\Big)  + \epsilon\Delta t\sum_{k=1}^n\norm{\sigma_{\{p,\lambda\}}^n}_{L^2(\Gahn)}^2,
            \end{aligned}
    \end{equation}
where $m=min\{r_u,k_{pr}+1,k_{\lambda}+1\}$, $r_u = min\{k_{u},k_g-1\}$, with positive constant $C=C(\epsilon)$ depending on arbitrary $\epsilon>0$ and the regularity \cref{assumption: Regularity assumptions for velocity estimate 2 ENS}.
\end{lemma}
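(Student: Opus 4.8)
The plan is to test the error equation \eqref{eq: error equation 2 ENS} with the discrete time increment $\vhn := \sigma_{\bfu}^n - \underline{\sigma_{\bfu}^{n-1}}(t_n) = \Delta t\,\matdt\sigma_{\bfu}^n \in \bfV_h^n$ (recall \eqref{eq: n to n-1 on surface n ENS}); this is the choice that makes the fully discrete time-derivative test against itself. On the left-hand side of \eqref{eq: error equation 2 ENS} I would first use the splitting \eqref{eq: m bilinear manipulation kl=ku-1 ENS}: since $\mh(\sigma_{\bfu}^n - \underline{\sigma_{\bfu}^{n-1}}(t_n),\vhn)=\norm{\vhn}_{L^2(\Gahn)}^2 = (\Delta t)^2\mh(\matdt\sigma_{\bfu}^n,\matdt\sigma_{\bfu}^n)$, the mass term produces exactly $\Delta t\,\mh(\matdt\sigma_{\bfu}^n,\matdt\sigma_{\bfu}^n)$ plus a transport remainder which, via the transport formula \eqref{eq: Transport formulae 2 discrete ENS} and \eqref{eq: discrete time derivative bar 0 ENS}, is bounded by $c\Delta t\,\norm{\sigma_{\bfu}^{n-1}}_{L^2(\GahnN)}\norm{\vhn}_{L^2(\Gahn)}$. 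The $\ah$-term I would treat by the polarization identity $\ah(\sigma_{\bfu}^n,\vhn)=\tfrac12\norm{\sigma_{\bfu}^n}_{\ah}^2-\tfrac12\norm{\underline{\sigma_{\bfu}^{n-1}}(t_n)}_{\ah}^2+\tfrac12\norm{\vhn}_{\ah}^2$, with $\norm{\underline{\sigma_{\bfu}^{n-1}}(t_n)}_{\ah}^2\le(1+c\Delta t)\norm{\sigma_{\bfu}^{n-1}}_{\ah}^2$ by \eqref{eq: time differences properties bilinear a ENS}; the positive $\tfrac12\norm{\vhn}_{\ah}^2$ is kept on the left as a reservoir. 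Finally, since $\sigma_{\bfu}^n\in\bfV_h^{n,div}$ and $\sigma_{\bfu}^{n-1}\in\bfV_h^{n-1,div}$, the property \eqref{eq: blb important time property ENS} gives $b_h^L(\vhn,\sigma_{\{p,\lambda\}}^n)=-b_h^L(\underline{\sigma_{\bfu}^{n-1}}(t_n),\sigma_{\{p,\lambda\}}^n)$, which by \eqref{eq: time differences properties bilinear b ENS} together with \eqref{eq: blb important time property ENS} is bounded by $c\Delta t\,\norm{\sigma_{\bfu}^{n-1}}_{\ah}\norm{\sigma_{\{p,\lambda\}}^n}_{L^2(\Gahn)}$; after a weighted Young inequality this is the source of the term $\epsilon\Delta t\sum_k\norm{\sigma_{\{p,\lambda\}}^k}_{L^2(\Gah^k)}^2$ on the right of \eqref{eq: Auxiliary convergence result HR ENS}.

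For the right-hand side of \eqref{eq: error equation 2 ENS} I would insert the estimates of \cref{lemma: Consistency errors HR ENS}, \cref{lemma: Interpolation errors HR ENS} and \cref{lemma: Trilinear Errors HR ENS} which, thanks to using the \emph{standard} Ritz--Stokes projection \eqref{eq: surface Ritz-Stokes projection std ENS} together with the integration-by-parts reformulation of the convective terms, are expressed with respect to $\norm{\vhn}_{L^2(\Gahn)}$ (plus a few $\norm{\vhn}_{\ah}$- and $\norm{\vhn}_{H^1(\Gahn)}$-pieces inherited from the $c_h^{cov}$ structure) together with the zero-order remainder $\mh(\sigma_{\bfu}^n,\vhn)$. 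The systematic device is $\norm{\vhn}_{L^2(\Gahn)}=\Delta t\norm{\matdt\sigma_{\bfu}^n}_{L^2(\Gahn)}$, so every product $c\,B_n\norm{\vhn}_{L^2(\Gahn)}=c\Delta t\,B_n\norm{\matdt\sigma_{\bfu}^n}_{L^2(\Gahn)}$ is split by a $\Delta t$-weighted Young inequality into $\epsilon\Delta t\norm{\matdt\sigma_{\bfu}^n}_{L^2}^2+c(\epsilon)\Delta t\,B_n^2$; summed over $k$ the first contribution is absorbed and the second becomes $\Delta t\sum_kB_k^2$, a discrete $L^2_{L^2}$-in-time quantity controlled either by the regularity \cref{assumption: Regularity assumptions for velocity estimate 2 ENS} (for the genuine consistency/interpolation pieces, giving $O((\Delta t)^2+h^{2m})$) or by the already-established velocity error bounds \eqref{eq: discrete remainder velocity estimates kl=ku-1 HR ENS}, \eqref{eq: Velocity Error Estimates kl=ku-1 HR ENS} (whenever $B_k$ carries $\norm{\sigma_{\bfu}^{k-1}}_{\ah}$, $\norm{\eu^k}_{\ah}$, etc., using $\Delta t\sum_k\norm{\sigma_{\bfu}^k}_{\ah}^2\le C((\Delta t)^2+h^{2m})$). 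The $\norm{\vhn}_{H^1(\Gahn)}$- and $\norm{\vhn}_{\ah}$-pieces are reduced to $L^2$ either by the inverse inequality, at worst a loss of $h^{-1}$, harmless since $\Delta t\le ch$ and $m\ge1$ make $h^{-1}(\Delta t+h^m)\le c$, or, for the awkward $\norm{\vhn}_{\ah}^{1/2}$-type factors of $\mathcal{C}^{cov}$, by absorption into the $\tfrac12\norm{\vhn}_{\ah}^2$ kept on the left. The pointwise bounds $\norm{\sigma_{\bfu}^n}_{L^2(\Gahn)}\le C(\Delta t+h^m)$ and $\norm{\sigma_{\bfu}^n}_{\ah}\le c$ (the latter from the inverse inequality, the velocity $L^2$-estimate and $\Delta t\le ch$), together with the $L^\infty$-in-time bounds on $\bfu$ from \cref{assumption: Regularity assumptions for velocity estimate 2 ENS} and on $\uh^n$ from \cref{remark: About the pressure stability for arbitrary convergence ENS}, are what tame the cubic factors coming from $\mathcal{C}^{cov}$.

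Summing over $k=1,\dots,n$, the $\ah$-terms telescope (the Gr\"onwall-type residue $c\Delta t\sum_k\norm{\sigma_{\bfu}^{k-1}}_{\ah}^2$ being absorbed by \eqref{eq: discrete remainder velocity estimates kl=ku-1 HR ENS}), the retained $\epsilon\Delta t\sum_k\norm{\matdt\sigma_{\bfu}^k}_{L^2}^2$ and $\epsilon\sum_k\norm{\vhn}_{\ah}^2$ are kicked back for $\epsilon$ small, and the boundary term vanishes because $\sigma_{\bfu}^0=0$ by the choice $\uh^0=\mathcal{R}_h^b\bfu^0$; this yields \eqref{eq: Auxiliary convergence result HR ENS}, the factor $(1+h^{2m}/\Delta t)$ being the residue of precisely those inverse-inequality steps where the spare $\Delta t$ could not be matched against the summation weight. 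The whole argument runs parallel to the stationary-surface analysis behind \cite[Lemma 7.14]{elliott2025unsteady} and \cref{remark: About the pressure stability for arbitrary ENS}.

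The main obstacle is the inertia error $\mathcal{C}^{cov}$ of \cref{lemma: Trilinear Errors HR ENS}: its bound is genuinely trilinear and several factors ($\norm{\sigma_{\bfu}^{n-1}}_{\ah}$, $\norm{\eu^n}_{\ah}$) are only controlled in the discrete $L^2$-in-time sense, not pointwise. Closing the estimate requires simultaneously (i) distributing the $\Delta t$ hidden in $\norm{\vhn}_{L^2(\Gahn)}$ so that every such factor ends up paired with the summation weight $\Delta t$, (ii) exploiting the uniform bounds $\sup_n\norm{\uh^n}_{L^\infty(\Gahn)}\le C_a^\infty$ and $\sup_n\norm{\sigma_{\bfu}^n}_{\ah}\le c$ to absorb the surplus factor in the cubic terms, and (iii) systematically splitting each error into interpolation-plus-remainder so the pointwise-small interpolation parts are separated from the only-summable remainder parts, all while not exhausting the $\tfrac12\norm{\vhn}_{\ah}^2$ reservoir on the left, which is needed for the handful of $\norm{\vhn}_{\ah}^{1/2}$-type factors that cannot be converted to $L^2$ without an unaffordable inverse-inequality loss.
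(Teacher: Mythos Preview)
Your proposal is correct and follows essentially the same route as the paper: test \eqref{eq: error equation 2 ENS} with $\vhn=\sigma_{\bfu}^n-\underline{\sigma_{\bfu}^{n-1}}(t_n)$, polarize $\ah$, handle $b_h^L$ via \eqref{eq: blb important time property ENS} and \eqref{eq: time differences properties bilinear b ENS}, invoke \cref{lemma: Consistency errors HR ENS,lemma: Interpolation errors HR ENS,lemma: Trilinear Errors HR ENS}, absorb $\norm{\vhn}_{\ah}$-pieces into the $\tfrac12\norm{\vhn}_{\ah}^2$ reservoir, sum, and use the velocity bounds \eqref{eq: discrete remainder velocity estimates kl=ku-1 HR ENS}, \eqref{eq: Velocity Error Estimates kl=ku-1 HR ENS}. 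The only minor difference is in bookkeeping the trilinear $\mathcal{C}^{cov}$ term: the paper never invokes the pointwise bound $\sup_n\norm{\sigma_{\bfu}^n}_{\ah}\le c$ that you mention, but instead applies Young's inequality with an explicit inverse-inequality factor (producing terms like $h^{-2}\norm{\sigma_{\bfu}^{n-1}}_{L^2}^2\,\Delta t\norm{\eu^n}_{\ah}^2$ and $\tfrac{1}{\Delta t}\norm{\sigma_{\bfu}^{n-1}}_{L^2}\norm{\eu^n}_{L^2}\,\Delta t(\norm{\sigma_{\bfu}^{n-1}}_{\ah}^2+\norm{\eu^n}_{\ah}^2)$), then substitutes the pointwise $L^2$ bound and the summed $\ah$ bound; this is exactly what generates the factor $(1+h^{2m}/\Delta t)$ after using $\Delta t\le ch$ and $m\ge1$. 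Your pointwise $\ah$ bound is a legitimate alternative that achieves the same end.
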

\begin{proof}
To start with, recall \eqref{eq: m bilinear manipulation kl=ku-1 ENS} and the fact that   $\sigma_{\bfu}^n = \underline{\sigma_{\bfu}^{n-1}}(\cdot ,t_n) + \Delta t \, \matdt \sigma_{\bfu}^n$ by \eqref{eq: n to n-1 on surface n ENS} to see that 
\begin{equation}\label{eq: Auxiliary convergence result inside HR start 1 ENS}
\begin{aligned}
        {\dfrac{1}{\Delta t}\Big(\bfm_h(\sigma_{\bfu}^n,\vhn) - \bfm_h(\sigma_{\bfu}^{n-1},\underline{\vhn}(\cdot,t^{n-1}))\Big) } &= \mh(\matdt\sigma_{\bfu}^n,\vhn) \\
    &+ \frac{1}{\Delta t }\big[\mh(\underline{\sigma_{\bfu}^{n-1}}(t_n),\vhn) - \bfm_h(\sigma_{\bfu}^{n-1},\underline{\vhn}(\cdot,t^{n-1}))\big].
    \end{aligned}
\end{equation}
Then testing the equation error equations \eqref{eq: error equation 2 ENS} with $\vhn = \sigma_{\bfu}^n-\underline{\sigma_{\bfu}^{n-1}}(t_n) \in \bfV_h^n$ and $\{\qhn,\xihn\}=\sigma_{\{p,\lambda\}}^n$, and using simple calculations similar to \eqref{eq: m bilinear manipulation kl=ku ENS} to derive
\begin{equation}
    \begin{aligned}\label{eq: Auxiliary convergence result inside HR start 2 ENS}
        \ah(\sigma_{\bfu}^n,\sigma_{\bfu}^n-\underline{\sigma_{\bfu}^{n-1}}(t_n)) &= \frac{1}{2}\big(\norm{\sigma_{\bfu}^n}_{\ah}^2 - \norm{\sigma_{\bfu}^{n-1}}_{\ah}^2\big) + \frac{1}{2}\norm{\sigma_{\bfu}^n-\underline{\sigma_{\bfu}^{n-1}}(t_n)}_{\ah}^2\\
        &+ \frac{1}{2}\big(\ah(\sigma_{\bfu}^{n-1},\sigma_{\bfu}^{n-1}) - \ah(\underline{\sigma_{\bfu}^{n-1}}(t_n),\underline{\sigma_{\bfu}^{n-1}}(t_n))\big),
    \end{aligned}
\end{equation}
along with the fact that $\sigma_{\bfu}^n \in \bfV_h^{n,div}$, $n=0,...,N$, thus $\bhtil(\sigma_{\bfu}^{n},\{\qhn,\xihn\}) = \bhtil(\sigma_{\bfu}^{n-1},\{\underline{\qhn}(\cdot,t_{n-1}),\underline{\xihn}(\cdot,t_{n-1})\} )=0 $ $ \text{ for all } \{\qhn,\xihn\}\in Q_h^n\times \Lambda_h^n$, $\{\underline{\qhn}(\cdot,t_{n-1}),\underline{\xihn}(\cdot,t_{n-1})\}\in Q_h^{n-1}\times\Lambda_h^{n-1}$ by  \eqref{eq: blb important time property ENS} and  therefore
\begin{equation*}
    \begin{aligned}
        \bhtil(\sigma_{\bfu}^n-\underline{\sigma_{\bfu}^{n-1}}(t_n),\sigma_{\{p,\lambda\}}^n) &= \bhtil(\sigma_{\bfu}^{n-1},\underline{\sigma_{\{p,\lambda\}}^n}(t_{n-1}) ) - \bhtil(\underline{\sigma_{\bfu}^{n-1}}(t_n),\sigma_{\{p,\lambda\}}^n) \\
        &\leq c(\epsilon)\Delta t \norm{\sigma_{\bfu}^{n-1}}_{\ah}^2 + \epsilon\Delta t\norm{\sigma_{\{p,\lambda\}}^n}_{L^2(\Gahn)}^2 \qquad  \text{by \eqref{eq: time differences properties bilinear b ENS} and $\epsilon$-Weighted Young's Ineq.},
    \end{aligned}
\end{equation*}
we derive, once we also apply the time inequality \eqref{eq: time differences properties bilinear a ENS} to the last difference in \eqref{eq: Auxiliary convergence result inside HR start 2 ENS} and \eqref{eq: time differences properties bilinear m ENS} on the last difference in \eqref{eq: Auxiliary convergence result inside HR start 1 ENS} together with Young's inequality and the fact that $\norm{\cdot}_{L^2(\Gahn)}\leq \norm{\cdot}_{\ah}$, the following
\begin{equation}
    \begin{aligned}\label{eq: Auxiliary convergence result inside HR main ENS}
        &\Delta t \, \mh(\matdt\sigma_{\bfu}^n,\matdt\sigma_{\bfu}^n) + \frac{1}{2}\big(\norm{\sigma_{\bfu}^n}_{\ah}-\norm{\sigma_{\bfu}^{n-1}}_{\ah}^2\big) + \frac{1}{2}\norm{\sigma_{\bfu}^n-\underline{\sigma_{\bfu}^{n-1}}(t_n)}_{\ah}^2 \leq c(\epsilon)\Delta t \norm{\sigma_{\bfu}^{n-1}}_{\ah}^2 \\
        &+ \Big( |\sum_{i=1}^3 \textsc{Err}_{i}^{C}(\sigma_{\bfu}^n - \underline{\sigma_{\bfu}^{n-1}}(t_n))| +  | \textsc{Err}_{1}^{I}(\sigma_{\bfu}^n -\underline{\sigma_{\bfu}^{n-1}}(t_n))| + |\mathcal{C}(\sigma_{\bfu}^n -\underline{\sigma_{\bfu}^{n-1}}(t_n))|\\
        &+ |\mh(\sigma_{\bfu}^n,\sigma_{\bfu}^n -\underline{\sigma_{\bfu}^{n-1}}(t_n))|\Big) + \frac{1}{6\Delta t}\norm{\sigma_{\bfu}^n -\underline{\sigma_{\bfu}^{n-1}}(t_n)}_{L^2(\Gahn)}^2  + \epsilon\Delta t\norm{\sigma_{\{p,\lambda\}}^n}_{L^2(\Gahn)}^2  .
    \end{aligned}
\end{equation}
We now want to approximate each term on the right-hand side appropriately. This will be possible with the help of the newly established bounds in \cref{lemma: Interpolation errors HR ENS,lemma: Consistency errors HR ENS,lemma: Trilinear Errors HR ENS}. For simplicity and brevity (compared to \cref{sec: velocity a-priori estimates ENS,sec: pressure a-priori kl=ku ENS}) we assume that our constant $C$ depends on the on the regularity \cref{assumption: Regularity assumptions for velocity estimate 2 ENS}. Then, a simple use of Young's inequality (similarly to \cref{lemma: Auxiliary convergence result ENS}) gives us the following bounds :

$\bullet$ \ Using \cref{lemma: Consistency errors HR ENS} and Ritz-Stokes bounds \eqref{eq: Error Bounds Ritz-Stokes std ENS} we derive similarly to \cref{lemma: Auxiliary convergence result ENS}
\begin{equation*}
    \begin{aligned}
        |\sum_{i=1}^3 \textsc{Err}_{i}^{C}(\sigma_{\bfu}^n - \underline{\sigma_{\bfu}^{n-1}}(t_n))| \leq c\big((\Delta t)^2+h^{\widehat{r}_u+1}\big) \Big(\int_{t_{n-1}}^{t^n} \norm{\matn\matn\bfu}_{L^2(\Gat)}^2 +\norm{\matn\bfu}_{L^2(\Gat)}^2 \, dt\Big) \\ + C\Delta t\big(h^{2r_u}+h^{2k_{pr}+2} + h^{2k_{\lambda}+2}) + \frac{1}{6\Delta t }\norm{\sigma_{\bfu}^n -\underline{\sigma_{\bfu}^{n-1}}(t_n)}_{L^2(\Gahn)}^2.
    \end{aligned}
\end{equation*}

$\bullet$ \ For interpolation error, once again, we just recall \cref{lemma: Interpolation errors HR ENS} to see that
\begin{equation*}
    \begin{aligned}
        | \textsc{Err}_{1}^{I}(\sigma_{\bfu}^n - \underline{\sigma_{\bfu}^{n-1}}(t_n))| \leq C \Delta t (h^{2r_u} +h^{2k_{pr}+2} + h^{2k_{\lambda}+2}) + \frac{1}{6\Delta t}\norm{\sigma_{\bfu}^n - \underline{\sigma_{\bfu}^{n-1}}(t_n)}_{L^2(\Gahn)}^2.
    \end{aligned}
\end{equation*}

$\bullet$ \ For convective error we use the bounds in \cref{lemma: Trilinear Errors HR ENS}, where applying Young's inequality   consecutively and the inverse inequality appropriately, yields
\begin{equation*}
    \begin{aligned}
        &\mathcal{C}(\sigma_{\bfu}^n - \underline{\sigma_{\bfu}^{n-1}}(t_n)) \leq C\Delta t \big(\norm{\eu^{n-1}}_{L^2(\GahnN)}^2 + \norm{\eu^{n}}_{L^2(\Gahn)}^2 + \norm{\eu^{n-1}}_{\ah}^2 +\norm{\eu^{n}}_{\ah}^2\big) + \frac{1}{12\Delta t}\norm{\sigma_{\bfu}^n - \underline{\sigma_{\bfu}^{n-1}}(t_n)}_{L^2(\Gahn)}^2 \\ 
        &\qquad\qquad+ ch^{-2}\norm{\sigma_{\bfu}^{n-1}}_{L^2(\GahnN)}^2\Delta t \norm{\eu^n}_{\ah}^2+ \frac{1}{12\Delta t}\norm{\sigma_{\bfu}^n - \underline{\sigma_{\bfu}^{n-1}}(t_n)}_{L^2(\Gahn)}^2\\
        &\qquad\qquad  c\frac{1}{\Delta t}\norm{\sigma_{\bfu}^{n-1}}_{L^2(\GahnN)}\norm{\eu^{n}}_{L^2(\Gahn)}\Delta t \big(\norm{\sigma_{\bfu}^{n-1}}_{\ah}^2 + \norm{\eu^{n}}_{\ah}^2\big) + \frac{1}{2}\norm{\sigma_{\bfu}^n - \underline{\sigma_{\bfu}^{n-1}}(t_n)}_{\ah}^2.
    \end{aligned}
\end{equation*}

$\bullet$ \ Finally , regarding the second to last term in \eqref{eq: Auxiliary convergence result inside HR main ENS} a simple Young's inequality yields
\begin{equation*}
    \begin{aligned}
        |\mh(\sigma_{\bfu}^n,\sigma_{\bfu}^n -\underline{\sigma_{\bfu}^{n-1}}(t_n))\Big)| \leq \Delta t \norm{\sigma_{\bfu}^n}_{L^2(\Gahn)} +\frac{1}{6\Delta t} \norm{\sigma_{\bfu}^n -\underline{\sigma_{\bfu}^{n-1}}(t_n)}_{L^2(\Gahn)}.
    \end{aligned}
\end{equation*}

Now, combining all the bounds above into \eqref{eq: Auxiliary convergence result inside HR main ENS},  applying the applying the velocity error estimates in \eqref{eq: discrete remainder velocity estimates kl=ku-1 HR ENS}, \eqref{eq: Velocity Error Estimates kl=ku-1 HR ENS}, using again the fact that by \eqref{eq: n to n-1 on surface n ENS} $\frac{1}{\Delta t}\norm{\sigma_{\bfu}^n - \underline{\sigma_{\bfu}^{n-1}}(t_n)}_{L^2(\Gahn)}^2 = \Delta t \norm{\matdt\sigma_{\bfu}^n}_{L^2(\Gahn)}^2 = \Delta t\mh(\matdt\sigma_{\bfu}^n,\matdt\sigma_{\bfu}^n)$ and employing a kickback argument for this term and $\frac{1}{2}\norm{\sigma_{\bfu}^n - \underline{\sigma_{\bfu}^{n-1}}(t_n)}_{\ah}^2$, we obtain the following, after summing for $k=1,...,n$
\begin{equation*}
    \begin{aligned}
        &\Delta t\sum_{k=1}^n  \mh(\matdt\sigma_{\bfu}^k,\matdt\sigma_{\bfu}^k) + \norm{\sigma_{\bfu}^n}_{\ah}^2 \leq C_1(C,\epsilon)\Big( \big(\Delta t \big)^2 +h^{2r_u} + h^{2k_{pr}+2} +h^{2k_{\lambda}+2}\Big) +  \epsilon\Delta t\norm{\sigma_{\{p,\lambda\}}^n}_{L^2(\Gahn)}^2\\
        &+ C\big(\frac{1}{\Delta t } + h^{-2}\big)\big((\Delta t )^2+ h^{2r_u} + h^{2k_{pr}+2} +h^{2k_{\lambda}+2}\big)\Delta t\sum_{k=1}^n\big(\norm{\sigma_{\bfu}^{n-1}}_{\ah}^2 + \norm{\eu^{n}}_{\ah}^2\big),
    \end{aligned}
\end{equation*}
since $\sigma_{\bfu}^0 = 0$ due to the choice of the initial condition. Applying once again \eqref{eq: discrete remainder velocity estimates kl=ku-1 HR ENS}, \eqref{eq: Velocity Error Estimates kl=ku-1 HR ENS} and recalling the fact that $k_g \geq 2$ yields our assertion.
\end{proof}

\begin{lemma}\label{lemma: discrete remainder pressure estimates improved HR ENS}
     Assume $\underline{k_\lambda = k_u-1}$, $k_g \geq 2$, and let the regularity \cref{assumption: Regularity assumptions for velocity estimate 2 ENS} hold with time-step $\Delta t \leq ch$. Let $(\bfu,\{p,\lambda\})$ be the solution of \eqref{weak lagrange hom NV cov ENS} and let $(\uh^k,\{\ph^k,\lh^k\})$, $k=1,...,n$ be the discrete solutions of \eqref{eq: fully discrete fin elem approx cov ENS}, with initial condition $\uh^0=\mathcal{R}_h^b\bfu^0$. Then, the following holds for $1 \leq n \leq N$
     \begin{equation}\label{eq: Pressure stab Estimate ENS}
            \Delta t \sum_{k=1}^{n} \norm{\sigma_{\{p,\lambda\}}^k}_{L^2(\Gah^k)\times L^2(\Gah^k)}^2 \leq  C(1 + \frac{h^{2m}}{\Delta t })\Big( \big(\Delta t \big)^2 +h^{2m}\Big),
    \end{equation}
where $m=min\{r_u,k_{pr}+1,k_{\lambda}+1\}$, $r_u = min\{k_{u},k_g-1\}$, with constant $C>0$ depending on the regularity \cref{assumption: Regularity assumptions for velocity estimate 2 ENS}.
 \end{lemma}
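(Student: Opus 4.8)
The plan is to run the same \textsc{inf-sup}/kickback scheme as in \cref{lemma: discrete remainder pressure estimates improved ENS}, but since the dual estimate \cref{lemma: dual estimate ENS} is unavailable for $\underline{k_\lambda=k_u-1}$ one replaces the inverse-Stokes control of the time-derivative by the stronger $L^2_{L^2}$ bound on $\matdt\sigma_{\bfu}^n$ coming from \cref{lemma: Auxiliary convergence result HR ENS}. First I would start from the $L^2\times L^2$ discrete \textsc{inf-sup} condition \eqref{eq: discrete inf-sup condition Gah Lagrange ENS},
\begin{equation*}
\norm{\sigma_{\{p,\lambda\}}^n}_{L^2(\Gah^n)\times L^2(\Gah^n)} \le \sup_{\vhn\in\bfV_h^n}\frac{\bhtil(\vhn,\sigma_{\{p,\lambda\}}^n)}{\norm{\vhn}_{\ah}},
\end{equation*}
and solve the error equation \eqref{eq: error equation 2 ENS} for $\bhtil(\vhn,\sigma_{\{p,\lambda\}}^n)$. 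Using the splitting \eqref{eq: m bilinear manipulation kl=ku-1 ENS}, the fully discrete time-derivative contributes $\mh(\matdt\sigma_{\bfu}^n,\vhn)$, bounded by $\norm{\matdt\sigma_{\bfu}^n}_{L^2(\Gah^n)}\norm{\vhn}_{L^2(\Gah^n)}$, plus a time-difference remainder bounded via \eqref{eq: time differences properties bilinear m ENS} by $c\Delta t\norm{\sigma_{\bfu}^{n-1}}_{\ah}\norm{\vhn}_{L^2(\Gah^n)}$; the terms $\ah(\sigma_{\bfu}^n,\vhn)$ and $\mh(\sigma_{\bfu}^n,\vhn)$ are bounded by $\norm{\sigma_{\bfu}^n}_{\ah}\norm{\vhn}_{\ah}$.

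The point of this subsection is that, after the switch to the standard Ritz-Stokes projection \eqref{eq: surface Ritz-Stokes projection std ENS} and the integration-by-parts reformulations, every consistency, interpolation and inertia term in \eqref{eq: error equation 2 ENS} is estimated in \cref{lemma: Consistency errors HR ENS,lemma: Interpolation errors HR ENS,lemma: Trilinear Errors HR ENS} against $\norm{\vhn}_{L^2(\Gah^n)}$ (the lone $\norm{\vhn}_{H^1}$ factor inside $\textsc{Err}_1^C$ is absorbed with the inverse inequality \eqref{eq: coercivity and Korn's inequality Lagrange ENS}, the spare power $h^{\widehat r_u+1}h^{-1}=h^{\widehat r_u}\ge h^{r_u}$ being harmless). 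Dividing by $\norm{\vhn}_{\ah}$ and using $\norm{\vhn}_{L^2}\le\norm{\vhn}_{\ah}$ yields a pointwise-in-$n$ bound
\begin{equation*}
\norm{\sigma_{\{p,\lambda\}}^n}_{L^2\times L^2} \le c\,\norm{\matdt\sigma_{\bfu}^n}_{L^2(\Gah^n)} + c\Delta t\norm{\sigma_{\bfu}^{n-1}}_{\ah} + c\norm{\sigma_{\bfu}^n}_{\ah} + (\text{Ritz-Stokes/velocity error terms}),
\end{equation*}
where the error terms are exactly those already estimated with $\norm{\vhn}_{\ah}$ cancelled. Then I would square, apply $\Delta t\sum_{k=1}^n$, and feed in: \cref{lemma: Auxiliary convergence result HR ENS} for $\Delta t\sum_k\norm{\matdt\sigma_{\bfu}^k}_{L^2(\Gah^k)}^2$ and $\norm{\sigma_{\bfu}^n}_{\ah}^2$; the velocity estimates \eqref{eq: discrete remainder velocity estimates kl=ku-1 HR ENS}, \eqref{eq: Velocity Error Estimates kl=ku-1 HR ENS} for $\Delta t\sum_k(\norm{\sigma_{\bfu}^{k-1}}_{\ah}^2+\norm{\eu^k}_{\ah}^2+\norm{\eu^k}_{L^2(\Gah^k)}^2)$; and \cref{lemma: Error Bounds Ritz-Stokes std ENS,lemma: Error Bounds der Ritz-Stokes std ENS} for the interpolation contributions — all of which are $\le C\big(1+h^{2m}/\Delta t\big)\big((\Delta t)^2+h^{2m}\big)$. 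This produces
\begin{equation*}
\Delta t\sum_{k=1}^n\norm{\sigma_{\{p,\lambda\}}^k}_{L^2\times L^2}^2 \le C\Big(1+\tfrac{h^{2m}}{\Delta t}\Big)\big((\Delta t)^2+h^{2m}\big) + c\,\epsilon\,\Delta t\sum_{k=1}^n\norm{\sigma_{\{p,\lambda\}}^k}_{L^2\times L^2}^2 ,
\end{equation*}
where the last term is the $\epsilon$-penalty carried by \cref{lemma: Auxiliary convergence result HR ENS}; choosing $\epsilon$ so small that $c\epsilon\le\tfrac12$ and absorbing it to the left gives the asserted bound.

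The main obstacle is exactly this coupling: the only available control on $\Delta t\sum_k\norm{\matdt\sigma_{\bfu}^k}_{L^2}^2$, namely \cref{lemma: Auxiliary convergence result HR ENS}, itself contains the arbitrary-$\epsilon$ pressure term $\epsilon\Delta t\sum_k\norm{\sigma_{\{p,\lambda\}}^k}_{L^2}^2$ on its right-hand side, so the estimate is genuinely implicit and closes only by a kickback argument. For this to be clean one needs the constant $c$ multiplying $\norm{\matdt\sigma_{\bfu}^n}_{L^2}$ after the \textsc{inf-sup} step to be independent of $h$ and $\Delta t$ — which is precisely why it matters that the reformulation here has eliminated all energy-norm test-function dependence from the error estimates, reducing them to the $L^2$-norm of $\vhn$, and why the restriction $\Delta t\le ch$ (already built into \cref{lemma: Auxiliary convergence result HR ENS} to keep the $h^{-2}\norm{\sigma_{\bfu}^{n-1}}_{L^2}^2$-type factors there under control) is inherited in the final bound through the $\big(1+h^{2m}/\Delta t\big)$ factor. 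The remaining work — verifying that each of the many error terms, after squaring and summation, indeed sits below $C((\Delta t)^2+h^{2m})$ with $m=\min\{r_u,k_{pr}+1,k_\lambda+1\}$ — is routine bookkeeping using the already-established lemmas.
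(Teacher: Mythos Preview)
Your proposal is correct and follows essentially the same route as the paper's proof: apply the $L^2\times L^2$ \textsc{inf-sup} \eqref{eq: discrete inf-sup condition Gah Lagrange ENS}, solve \eqref{eq: error equation 2 ENS} for $\bhtil$ using the splitting \eqref{eq: Auxiliary convergence result inside HR start 1 ENS}, bound the right-hand-side terms via \cref{lemma: Interpolation errors HR ENS,lemma: Consistency errors HR ENS,lemma: Trilinear Errors HR ENS}, square, sum, insert \cref{lemma: Auxiliary convergence result HR ENS} and the velocity estimates \eqref{eq: discrete remainder velocity estimates kl=ku-1 HR ENS}--\eqref{eq: Velocity Error Estimates kl=ku-1 HR ENS}, and close by taking $\epsilon$ small enough to kick back the pressure term. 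Two minor slips --- the time-difference remainder after the $1/\Delta t$ scaling contributes $c\norm{\sigma_{\bfu}^{n-1}}_{L^2}$ rather than $c\Delta t\norm{\sigma_{\bfu}^{n-1}}_{\ah}$, and not every inertia term in \eqref{eq: Trilinear Errors general HR 1 ENS} is against $\norm{\vhn}_{L^2}$ (the last two carry $\norm{\vhn}_{\ah}^{1/2}$ and $\norm{\vhn}_{\ah}$) --- are harmless since you divide by $\norm{\vhn}_{\ah}$ anyway and both quantities are controlled by the velocity estimates.
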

\begin{proof}
 The proof follows almost identically to \Cref{lemma: discrete remainder pressure estimates improved ENS}, where now we use the auxiliary result \cref{lemma: Auxiliary convergence result HR ENS} and employ the $L^2\times L^2$ $\textsc{inf-sup}$ \eqref{eq: discrete inf-sup condition Gah Lagrange ENS} instead. So, recall that
 \begin{equation}
        \begin{aligned}\label{eq: Pressure conv Estimate UNS inside inf-sup HR ENS}
            \norm{\{\sigma_p^n,\sigma_{\lambda}^n\}}_{L^2(\Gahn)\times L^2(\Gahn)} \leq \sup_{\vh^n \in \bfV_h^n} \frac{\bhtil(\vh^n,\{\sigma_p^n,\sigma_{\lambda}^n\})}{\norm{\vh^n}_{\ah}}.
        \end{aligned}
    \end{equation}
Considering the error equation \eqref{eq: error equation 2 ENS}, the calculations in \eqref{eq: Auxiliary convergence result inside HR start 1 ENS}, \eqref{eq: n to n-1 on surface n ENS} and \eqref{eq: time differences properties bilinear m ENS},  and the estimates in \cref{lemma: Interpolation errors HR ENS,lemma: Consistency errors HR ENS,lemma: Trilinear Errors HR ENS}, then solving for  $\bhtil(\vh^n,\{\sigma_p^n,\sigma_{\lambda}^n\})$ yields
\begin{equation}
    \begin{aligned}\label{eq: Pressure conv Estimate UNS inside inf-sup HR 1 ENS}
       &\norm{\{\sigma_p^n,\sigma_{\lambda}^n\}}_{L^2(\Gahn)\times L^2(\Gahn)} \leq \norm{\matdt\sigma_{\bfu}^n}_{L^2(\Gahn)} + \norm{\sigma_{\bfu}^n}_{\ah} + c\norm{\sigma_{\bfu}^n}_{L^2(\Gahn)}\\
       &+  c(\Delta t+h^{m})\sup_{t \in I_n}\Big\{\sum_{j=1}^0\big(\norm{(\matn)^j\bfu}_{H^{1}(\Gat)}+\norm{(\matn)^j p}_{H^{k_{pr}+1}(\Gat)} + \norm{(\matn)^j \lambda}_{H^{k_{\lambda}+1}(\Gat)}\big)\Big\} \\
       &+c(\sqrt{\Delta t}+\frac{h^{\widehat{r}_u+1}}{\sqrt{\Delta t}})\Big(\int_{t_{n-1}}^{t^n} \norm{\matn\matn\bfu}_{L^2(\Gat)}^2\, dt\Big)^{1/2} +\norm{\sigma_{\bfu}^{n-1}}_{L^2(\GahnN)}^{1/2}\norm{\sigma_{\bfu}^{n-1}}_{\ah}^{1/2}\norm{\eu^{n}}_{\ah}\\
       &+ \norm{\bfu^n}_{W^{2,\infty}(\Ga^n)}\big(\norm{\eu^{n-1}}_{L^2(\GahnN)} + \norm{\eu^{n}}_{L^2(\Gahn)} +\norm{\eu^{n-1}}_{\ah}+  \norm{\eu^{n}}_{\ah}\big),
    \end{aligned}
\end{equation}
with $m=min\{r_u,k_{pr}+1,k_{\lambda}+1\}$, $r_u = min\{k_{u},k_g-1\}$, where we used the fact that $\norm{\cdot}_{L^2(\Gahn)}\leq \norm{\cdot}_{\ah}$ in \eqref{eq: Trilinear Errors general HR 1 ENS}  and $\norm{\cdot}_{H^1(\Gahn)}\leq ch^{-1}\norm{\cdot}_{\ah}$ \eqref{eq: coercivity and Korn's inequality Lagrange ENS} in \eqref{eq: Consistency error 1 HR ENS} for the test function.

Now, squaring \eqref{eq: Pressure conv Estimate UNS inside inf-sup HR 1 ENS} and applying the operator $\Delta t \sum_{k=1}^n$ (i.e. multiplying $\Delta t $ and summing over $k=1,...,n$), it is clear that if we apply the velocity error convergence results \eqref{eq: discrete remainder velocity estimates kl=ku-1 HR ENS}, \eqref{eq: Velocity Error Estimates kl=ku-1 HR ENS} and the afore-mentioned auxiliary estimate \eqref{eq: Auxiliary convergence result HR ENS} along with a kickback for the last pressure term in  \eqref{eq: Auxiliary convergence result HR ENS} (assumming $\epsilon>0$ is small enough, i.e. $\epsilon=1/2$) yields our assertion.
\end{proof}

Finally, by applying the triangle inequality on the decomposition \eqref{eq: decomposition error 2 Ritz 2 ENS}, the above \Cref{lemma: discrete remainder pressure estimates improved HR ENS} and the Ritz-Stokes interpolation error \eqref{eq: Error Bounds Ritz-Stokes std ENS} we obtain the final \emph{error pressure result}.
\begin{theorem}[Pressure error bounds II] \label{theorem: Pressures Error Estimate HR ENS}
Under the regularity assumption \cref{assumption: Regularity assumptions for velocity estimate 2 ENS} and \Cref{lemma: discrete remainder pressure estimates improved HR ENS} the following pressure error estimates holds for $1 \leq n \leq N$
     \begin{equation}\label{eq: Pressure stab Estimate HR ENS}
            \Delta t \sum_{k=1}^{n} \norm{\{\ep^k,\el^k\}}_{L^2(\Gah^k)\times L^2(\Gah^k)}^2 \leq  C(1 + \frac{h^{2m}}{\Delta t })\Big( \big(\Delta t \big)^2 +h^{2m}\Big) ,
    \end{equation}
where $m=min\{r_u,k_{pr}+1,k_{\lambda}+1\}$, $r_u = min\{k_{u},k_g-1\}$, with positive constant $C$ depending on the regularity \cref{assumption: Regularity assumptions for velocity estimate 2 ENS}.
 \end{theorem}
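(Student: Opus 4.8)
The plan is to obtain \Cref{theorem: Pressures Error Estimate HR ENS} as a short corollary of the discrete--remainder pressure bound \Cref{lemma: discrete remainder pressure estimates improved HR ENS} together with the Ritz--Stokes interpolation estimate \eqref{eq: Error Bounds Ritz-Stokes std ENS}, combined through the splitting \eqref{eq: decomposition error 2 Ritz 2 ENS}. First I would write, for each time level $k=1,\dots,n$,
\[
\{\ep^k,\el^k\} = \eta_{\{p,\lambda\}}^k + \sigma_{\{p,\lambda\}}^k ,
\]
apply the triangle inequality in $L^2(\Gah^k)\times L^2(\Gah^k)$, use $(a+b)^2\le 2a^2+2b^2$, multiply by $\Delta t$ and sum over $k$, so as to reduce the claim to bounding separately $\Delta t\sum_{k=1}^n\norm{\eta_{\{p,\lambda\}}^k}_{L^2(\Gah^k)\times L^2(\Gah^k)}^2$ and $\Delta t\sum_{k=1}^n\norm{\sigma_{\{p,\lambda\}}^k}_{L^2(\Gah^k)\times L^2(\Gah^k)}^2$.

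The remainder sum is exactly the quantity controlled by \Cref{lemma: discrete remainder pressure estimates improved HR ENS}, which under \cref{assumption: Regularity assumptions for velocity estimate 2 ENS} and the step restriction $\Delta t\le ch$ gives the bound $C\bigl(1+h^{2m}/\Delta t\bigr)\bigl((\Delta t)^2+h^{2m}\bigr)$. For the interpolation sum, since \eqref{eq: Error Bounds Ritz-Stokes std ENS} is already stated on the discrete surface and $\eta_{\{p,\lambda\}}^k=\{p^k-\mathcal{P}_h^b(\bfu^k),\lambda^k-\mathcal{L}_h^b(\bfu^k)\}$, it applies directly and yields
\[
\norm{\eta_{\{p,\lambda\}}^k}_{L^2(\Gah^k)\times L^2(\Gah^k)} \le c\,h^m\bigl(\norm{\bfu^k}_{H^{k_u+1}(\Ga^k)}+\norm{p^k}_{H^{k_{pr}+1}(\Ga^k)}+\norm{\lambda^k}_{H^{k_{\lambda}+1}(\Ga^k)}\bigr) .
\]
Squaring, summing against $\Delta t$ and using $\Delta t\sum_{k=1}^n(\cdot)\le T\sup_{t\in[0,T]}(\cdot)$, the time--uniform control provided by \cref{assumption: Regularity assumptions for velocity estimate 2 ENS} bounds the interpolation sum by $C h^{2m}$.

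It then only remains to observe that $h^{2m}\le \bigl(1+h^{2m}/\Delta t\bigr)\bigl((\Delta t)^2+h^{2m}\bigr)$, since the right--hand side already contains the summand $h^{2m}$ with prefactor at least one; hence the interpolation contribution is absorbed into the same form as the remainder contribution, and adding the two with an enlarged constant $C$ produces the stated estimate. There is no genuine obstacle at this stage: the analytical difficulty has already been dealt with in \Cref{lemma: Auxiliary convergence result HR ENS} and \Cref{lemma: discrete remainder pressure estimates improved HR ENS} (in particular the passage through the $L^2_{L^2}$ bound on the fully discrete time derivative and the $L^2\times L^2$ \textsc{inf-sup} condition \eqref{eq: discrete inf-sup condition Gah Lagrange ENS}); the only mild point to verify here is precisely the inclusion above, which shows that the familiar loss factor $1+h^{2m}/\Delta t$ — present whenever $\Delta t$ is chosen small relative to $h^{m}$ — is not worsened by incorporating the pointwise--in--time interpolation error.
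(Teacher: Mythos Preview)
Your proposal is correct and follows exactly the approach indicated in the paper: apply the triangle inequality to the decomposition \eqref{eq: decomposition error 2 Ritz 2 ENS}, bound the discrete remainder $\sigma_{\{p,\lambda\}}^k$ by \Cref{lemma: discrete remainder pressure estimates improved HR ENS}, bound the interpolation part $\eta_{\{p,\lambda\}}^k$ by the Ritz--Stokes estimate \eqref{eq: Error Bounds Ritz-Stokes std ENS}, and absorb the resulting $Ch^{2m}$ into the stated right-hand side. Your explicit verification that $h^{2m}\le(1+h^{2m}/\Delta t)((\Delta t)^2+h^{2m})$ is a nice touch that the paper leaves implicit.
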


 \section{Numerical results}\label{sec: Numerical results ENS}
Now we present some numerical results to validate our theoretical conclusions. We start in \Cref{Sec: set-up ENS} with a general setup of our experiments, defining the parameters, the formulations used, and specifying the notation.
In \Cref{Sec: moving sphere ENS} we examine and test the choice of the finite element  space $\Lambda_h$, i.e. the choice of the two parameters $k_{\lambda} = k_u$ and $k_{\lambda} = k_u-1$.
Finally, in \Cref{Sec: num comparison ENS} we compare approximation schemes of \eqref{weak lagrange hom NV ENS}, see \eqref{eq: L.M. ENS} below, to a penalty-based formulation \eqref{eq: P.M. ENS} for the tangential Navier-Stokes on an evolving surface, with an appropriate penalty parameter; see \cite{reusken2024analysis,olshanskii2024eulerian,Krause2023ASurfaces}.

\subsection{Setup}\label{Sec: set-up ENS}

As mentioned in \cref{remark: about f source discrete ENS}, in practice we do not implement the numerical schemes \eqref{eq: fully discrete fin elem approx ENS}, \eqref{eq: fully discrete fin elem approx cov ENS} that are based on the simplified weak formulations \eqref{weak lagrange hom NV dir ENS}, \eqref{weak lagrange hom NV cov ENS} according to \cref{remark: about f source ENS}. Instead, we have $\bfeta \neq 0$; see \eqref{eq: function f ENS}, and therefore want to consider finite element approximations based on the weak formulations \eqref{weak lagrange hom NV ENS}; recall that \eqref{weak lagrange hom NV ENS} represents two problems depending on the choice of $c^{(\cdot)}(\bullet;\bullet,\bullet)$. So, we define our  numerical schemes \eqref{eq: L.M. ENS} (labeled as LMM: Lagrange Multiplier Method since we make use of a Lagrange multiplier for the normal velocity restriction) naturally as follows: 

\noindent {\textbf{(LMM): }}
Given $\uh^0 \in \bfV_h^{0,div}$ and appropriate approximation for the data $\bff_h,\,{\eta}_h^n$, for $n=0,1,...,N$ find $\uhn \in \bfV_h^n$, $\{\phn,\lambda_h^n\}\in Q_h^n\times \Lambda_h^n$ such that 
\begin{align}
\!\!\!\begin{cases}\label{eq: L.M. ENS}
\tag{LMM}
\frac{1}{\Delta t}\Big(\mh(\uhn,\vhn) - \mh(\uhnN,\vhnN)\Big) - \gh(\TrVel;\uhn,\vhn) + \ahhat(\uhn,\vhn)\\
 + c_h^{(\cdot)}(\underline{\uhnN}(\cdot,t_n),\uhn,\vhn) 
 +b_h^L(\vhn,\{\phn,\lambda_h^{n}\}) = \mh(\bff_h^n,\vhn) + \bfm_h(\uhnN,\underline{\matdt\vhn}(\cdot,t^{n-1})), \\
 \qquad\qquad\qquad\qquad\qquad\qquad \  \! b_h^L(\uhn,\{\qhn,\xihn\}) = m_h (\bm{\eta}_h^n,\{\qhn,\xihn\}), 
\end{cases}
 \end{align}
 for all $\vhn \in \bfV_h^n$ and $\vhnN \in \bfV_h^{n-1}$ and $\{\qhn,\xihn\}\in Q_h^n\times\Lambda_h^n$. 
 Here we  used \eqref{eq: n to n-1 on surface n ENS}, compared to  \eqref{eq: fully discrete fin elem approx ENS}, \eqref{eq: fully discrete fin elem approx cov ENS}, to define our schemes and furthermore, now,  $c_h^{(\cdot)}(\bullet,\bullet,\bullet)$ denotes the skew-symmetrized type discretizations of $c^{(\cdot)}$ as described in \eqref{eq: c equiv expression ENS} and  \eqref{eq: ccov equiv expression ENS} (recall also \cref{def: bilinear forms discrete ENS} used in the schemes \eqref{eq: fully discrete fin elem approx ENS}, \eqref{eq: fully discrete fin elem approx cov ENS} analyzed in \cref{sec: error analysis ENS}):
 \begin{align}\label{eq: implem c ENS}
      &c_h(\underline{\uhnN}(t_n); \uh,\vh) := \frac{1}{2} \Big(\int_{\Gaht } ((\underline{\uhnN}(t_n) \cdot \nbgh)\uh ) \vh \, \dsh - \int_{\Gaht } ((\underline{\uhnN}(t_n) \cdot \nbgh)\vh ) \uh \, \dsh \Big) \nonumber\\
     &\hspace{28mm}-\frac{1}{2}\int_{\Gaht } \eta_{h,2}\uhn\cdot\vhn \, \dsh,\\
    &c_h^{cov}(\underline{\uhnN}(t_n); \uh,\vh) :=\frac{1}{2} \Big(\int_{\Gah } ((\underline{\uhnN}(t_n) \cdot \nbgcovh)\uh ) \cdot\vh \; \dsh - \int_{\Gah } ((\underline{\uhnN}(t_n) \cdot \nbgcovh)\vh ) \cdot \uh \; \dsh \Big)\nonumber\\ 
      &\qquad\qquad\qquad\qquad - \frac{1}{2}\Big(\int_{\Gaht} (\uh\cdot \nh) \underline{\uhnN}(t_n) \cdot \bfH_h \vh \; \dsh - \int_{\Gaht} (\vh\cdot \nh) \underline{\uhnN}(t_n) \cdot \bfH_h \uh \; \dsh \Big)\nonumber\\
      \label{eq: implem c cov ENS}
    &\qquad\qquad\qquad\qquad +\frac{1}{2}\int_{\Gaht } \eta_{h,1}\uhn\cdot\bfH_h\vhn \, \dsh -\frac{1}{2}\int_{\Gaht } \eta_{h,2}\uhn\cdot\bfPh\vhn \, \dsh.
 \end{align}
 It is clear that for \eqref{eq: L.M. ENS}, we also require a sufficiently accurate approximation of the data $\bm{\eta}$ \eqref{eq: function f ENS} in the right-hand side of the second equation in \eqref{weak lagrange hom NV ENS} (i.e. $\bm{\eta}_h$), e.g.
\begin{equation}\label{eq: right-hand side approx ENS}
     \bm{\eta}_h = ({\eta}_{h,1},{\eta}_{h,2}) = (-\kappa_hV_{\Gah},V_{\Gah}),
 \end{equation}
where $V_{\Gah}$ is an interpolation of $V_{\Ga}$ similar to  $\TrVel$  in \eqref{eq: vel discrete surf ENS} and $\kappa_h = tr(\bfH_h)$; see also \cref{remark: formulation no eta_1 ENS}. Clearly, the expression \eqref{eq: implem c ENS} is easier to implement than  \eqref{eq: implem c cov ENS}, since it does  not require an approximation for either the mean curvature $\bfH_h$ or $\eta_{h,1}$. This formulation still yields the expected relative errors that we proved in \cref{sec: error analysis ENS} for the numerical schemes \eqref{eq: fully discrete fin elem approx ENS}, \eqref{eq: fully discrete fin elem approx cov ENS}, i.e. for $\bm{\eta}=0$.
Lastly, recall that for nodal test functions $\underline{\matdt\chi_j^n}(\cdot,t^{n-1}) =0$; see \eqref{eq: fully discrete transport property ENS}, and therefore for the implementation of \eqref{eq: L.M. ENS} the last term in the first equations is ignored.
  
 \begin{remark}\label{remark: formulation no eta_1 ENS}
 We can also consider a new scheme by rewriting the second equation of \eqref{weak lagrange hom NV ENS} with the help of the integration by parts formula \eqref{eq: integration by parts cont ENS} to obtain
 \begin{equation*}
     \int_{\Gat} \divg\bfu \,q  \, \ds + \int_{\Gat} \xi\, \bfu\cdot\bfng  \, \ds = \int_{\Gat} \xi\,V_\Ga  \, \ds,
 \end{equation*}
 and discretize the above expression instead. This means that we no longer need to approximate the inhomogeneous term $\eta_{1}$ ($\eta_{h,1} = -\kappa_hV_{\Gah}$; see \eqref{eq: right-hand side approx ENS}), so no approximation of a geometric quantity is needed anymore, unless we discretize the convective term \eqref{eq: ccov equiv expression ENS}; see \eqref{eq: implem c cov ENS} above. This also might be a reason to prefer to discretize \eqref{eq: L.M. ENS} with $c_h$. We note that we observed  similar convergence results.
 \end{remark}

The numerical results were implemented using the \emph{Firedrake} package \cite{FiredrakeUserManual}, where the linear systems were solved using GMRES with the help of built in multigrid preconditioners. The finite element approximations were carried out on sequences of regular bisection mesh-refinements. As noted in \cref{remark: diff formulations reason ENS}, 
  for \eqref{eq: fully discrete fin elem approx cov ENS} we consider  $\underline{k_\lambda=k_u-1}$, whilst for \eqref{eq: fully discrete fin elem approx ENS} we mainly set  $\underline{k_\lambda=k_u}$. We also observed that using the \eqref{eq: fully discrete fin elem approx ENS} scheme for $k_\lambda=k_u-1$, appeared to \emph{produce optimal convergence results}, although we were unable to find such theoretical results.

 We, finally, introduce the following notation for the error measure:
  \begin{equation*}
    \begin{aligned}
    \eu^{L^2(\ah)} &:= \Big(\Delta t \sum_{n=1}^N \norm{\nbgcovh(\bfu^n - \uh^n)}_{L^2(\Gahn)}^2 \Big)^{1/2}, \quad \bfe_{\divg \bfu}^{L^{\infty}({L^2})}:= \max_{0 \leq n \leq N}\norm{\divgh(\bfu^n-\uh^n)}_{L^2(\Gahn)},\\
        \eu^{L^2(H^1)} &:= \Big( \Delta t \sum_{n=1}^N \norm{\nbgh(\bfu^n - \uh^n)}_{L^2(\Gahn)}^2 \Big)^{1/2}, \quad \ep^{L^2({L^2})} := \Big(\Delta t \sum_{n=1}^N \norm{p^n-\ph^n}_{L^2(\Gahn)}^2\Big)^{1/2}, \\
       \eu^{L^{\infty}({L^2})} &:=  \max_{0 \leq n \leq N}\norm{\bfu^n-\uh^n}_{L^2(\Gahn)}, \qquad \qquad \ \   \bfe_{\bfPg \bfu}^{L^{\infty}({L^2})} := \max_{0 \leq n \leq N}\norm{\bfPh(\bfu^n-\uh^n)}_{L^2(\Gahn)},\\
        \bfe_{\bfng}^{L^{\infty}({L^2})} &:=  \max_{0 \leq n \leq N}\norm{\uh^n\cdot\nh-V_{\Gamma}}_{L^2(\Gahn)}.
    \end{aligned}
\end{equation*}
\subsection{Example 1: Moving sphere}\label{Sec: moving sphere ENS}
Similar to \cite{olshanskii2024eulerian}, this example concerns a sphere translating to the right. The evolving surface $\Gat$ is defined via a level set function as 
\begin{equation}
    D(\bfx,t) = (x_1-0.2t)^2 + (x_2)^2 + (x_3)^2 -1, \qquad \Gat = \{\bfx\in \mathbb{R}^3\ |\  D(\bfx,t) =0\}.
\end{equation}
with final time $T=2$. The normal velocity is given by $\FlVel = \frac{-\partial_tD}{\norm{\nb D}}\frac{\nb D}{\norm{\nb D}}$ = $V_{\Gamma}\bfng$ , cf. \cite{DziukElliott_acta}. Using the parameterization \begin{equation}\label{eq: projection time-sphere ENS}
    F(s,t) = (s_1+0.2t,s_2,s_3), \qquad p \in \mathcal{S},
\end{equation}
such that $\Gat = F( \mathcal{S},t)$, the evolving triangulated surface is constructed by creating an initial  mesh $\mathcal{S}$ of the sphere  and then evolving  the nodes via $F$. Hence the vertices evolve with a velocity $\bfw=\partial_tF(s,t) = (0.2t,0,0)^T$ and  we consider the tangential part of $\bfw$ as an arbitrary tangential velocity (ALE-scheme), cf. \cite{elliott2015error}. The viscosity parameter is $\mu =1/2$ and the density distributionis  $\rho=1$. Now, instead of using the example  in \cite{olshanskii2024eulerian}, our solution is defined as
\begin{equation}
    \bfu_T = \textbf{curl}_{\Ga} \psi \quad \text{with} \quad \psi = (x_1-0.2tx_3)x_2 - 2t, \qquad p=(x_1-0.2t)x_2+ x_3,
\end{equation}
where $\textbf{curl}_{\Ga} = \bfn \times\nbg$, which by \cite{reusken2018stream} we know  is tangential. Moreover $\divg\bfu_T =0$, while $p \in L^2_0(\Gat)$.  Despite not including the plot here, our experiments, would give the \emph{exact same convergence rates} for  the solution of \cite[Section 5]{olshanskii2024eulerian}. 
The data on the right-hand side $(\bff_h$, $\bm{\eta}_h)$, see \eqref{eq: L.M. ENS},
is an interpolation of the known smooth data.

We compare the primitive solutions with respect to  the parameter choice \circled{$k_\lambda$}.  First, we examine the case where $\underline{k_\lambda=k_u=2}$, for which the polynomial degree of the Lagrange multiplier  space $\Lambda_h$ is equal to the degree of the velocity  space $\bfV_h$. In this case, we use the formulation \eqref{eq: fully discrete fin elem approx ENS} (i.e. \eqref{eq: L.M. ENS} with $c_h(\bullet;\bullet,\bullet)$, see \cref{Sec: set-up ENS}) and employ  $\mathrm{\mathbf{P}}_{2}$ -- $\mathrm{P}_{1}$ --$\mathrm{P}_{2}$ \emph{Taylor-Hood iso-parametric finite elements}, so $k_g = k_u=2$. Second, we examine the $\underline{k_\lambda=k_u-1=1}$ case and  utilize the formulation \eqref{eq: fully discrete fin elem approx cov ENS} (i.e. \eqref{eq: L.M. ENS} with $c_h^{cov}(\bullet;\bullet,\bullet)$) according to the error analysis in \cref{sec: error analysis ENS}. So, we employ $\mathrm{\mathbf{P}}_{2}$ -- $\mathrm{P}_{1}$ -- $\mathrm{P}_{1}$ \emph{Taylor-Hood super-parametric surface finite elements}, i.e. with surface order approximation $k_g=3$. This reflects the limiting geometric factor $\bigo(h^{k_g-1})$ appearing in \cref{theorem: Velocity Error Estimates ENS,theorem: Pressures Error Estimate HR ENS}.
\begin{figure}
    \centering
    \includegraphics[width = \textwidth]{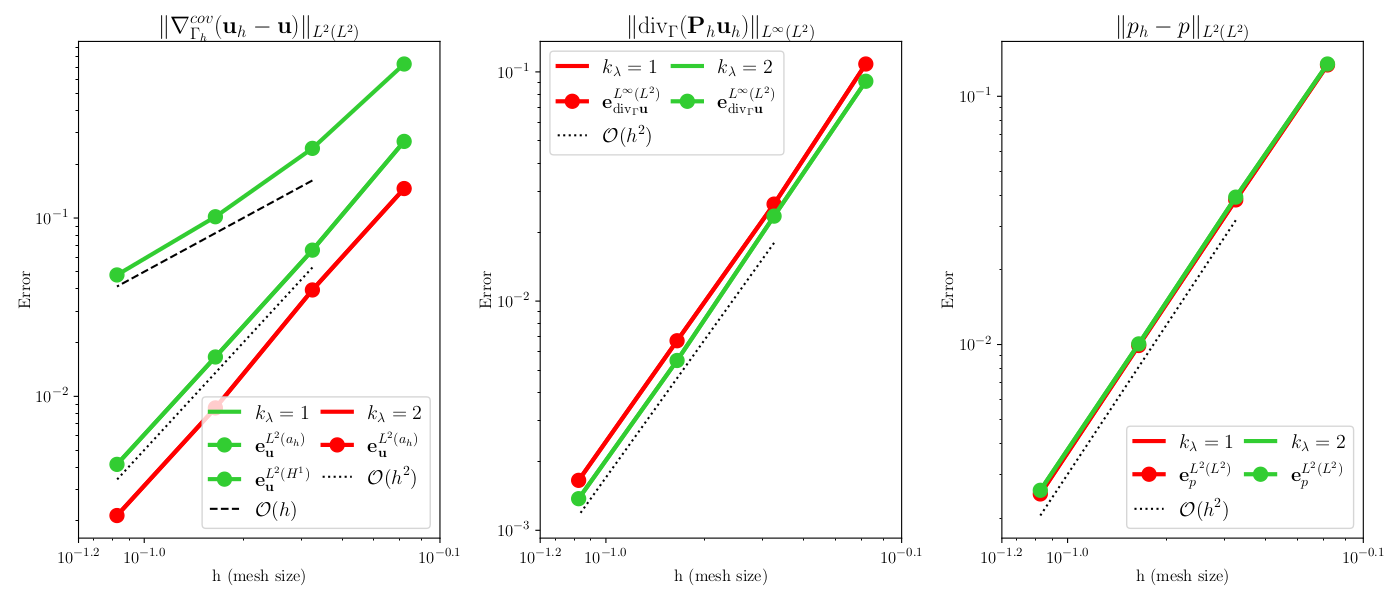}
    \caption{Moving-Right Sphere | Velocity - Pressure Errors $\bfe_{\bfu}^{L^{\infty}(\ah)}$, $\bfe_{\divg\bfu}^{L^{\infty}(L^2)}$,  $\bfe_{p}^{L^{2}(L^2)}$| Case 1: $k_{\lambda}=k_u$ corresponds to geometric approximation $k_g=k_u=2$ | Case 2: $k_{\lambda}=k_u-1$ correspond to geometric approximation $k_g=k_u+1=3$.}
    \label{fig: MovingSphere ah p}
\end{figure}

Starting  with an initial mesh size of $h_0=0.66$ and time-step $\Delta t_0 = 0.5$ we consider a series of regular bisection mesh-refinements, such that the spatial mesh size is halved while the temporal step is reduced by a factor of four, so that $\Delta t \sim h^2$. As mentioned, in this particular example we notice that the spatial error dominates the behavior of our numerical approximations.

In \cref{fig: MovingSphere ah p} we observe optimal rates for all our quantities. In particular for the energy and pressure estimates $\eu^{L^2(\ah)}$, $e_p^{L^2(L^2)}$ and we observe second order convergence, as expected from our theoretical results \cref{theorem: Velocity Error Estimates ENS,theorem: Pressures Error Estimate ENS,theorem: Pressures Error Estimate HR ENS}. Moreover, we notice reduced rate, $\bigo(h)$, for $\eu^{L^2(H^1)}$, in the $k_{\lambda}=k_u-1$ case, in line with the worse $H^1$-coercivity estimate \eqref{eq: coercivity and Korn's inequality Lagrange ENS}. This is not the case for $k_\lambda=k_u$ (as expected); see the improved $H^1$-coercivity estimate \eqref{eq: improved h1-ah bound ENS}.

The $L^{\infty}_{L^2}$ velocity errors are presented in \cref{fig: MovingSphere u Pu}. First, we report expected $\bigo(h^{2})$ convergence rates for the normal approximation $\bfe_{\bfng}^{L^{\infty}({L^2})}$ when $k_\lambda=k_u-1$ and better than expected $\bigo(h^{3})$ for $k_\lambda=k_u$; see \cite[Theorem 6.16]{elliott2024sfem} where $\bigo(h^{2.5})$ 
error bounds were established for the Stokes problem on stationary surfaces (or \cref{lemma: Error Bounds Ritz-Stokes ENS}) and \cite{elliott2025unsteady} where it was observed numerically for the unsteady Navier-Stokes equations on stationary surfaces.

\begin{figure}
    \centering
    \includegraphics[width = 0.9\textwidth]{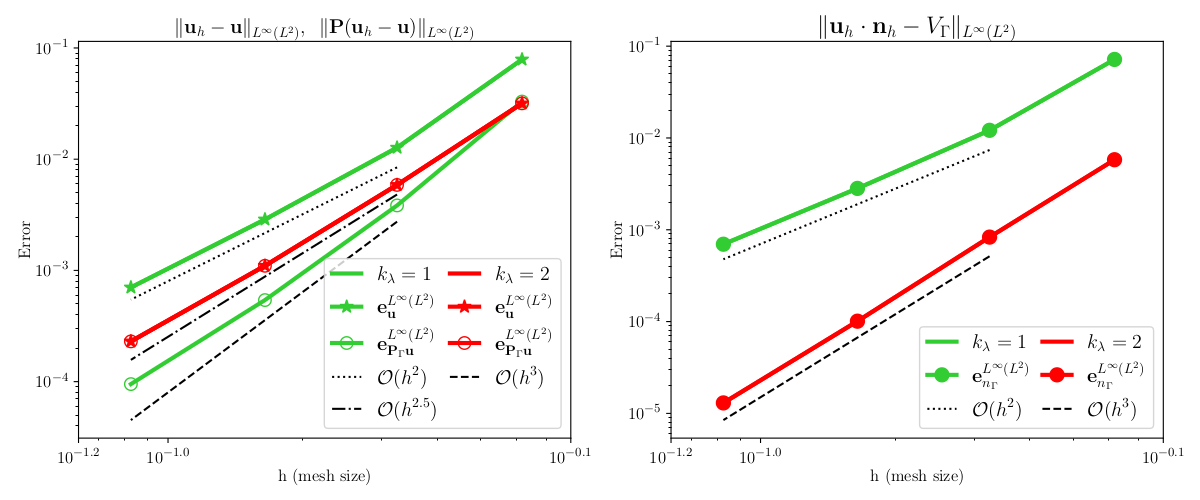}
    \caption{Moving-Right Sphere | Velocity Errors $\bfe_{\bfu}^{L^{\infty}(L^2)}$, $\bfe_{\bfPg\bfu}^{L^{\infty}(L^2)}$,  $\bfe_{\bfng}^{L^{\infty}(L^2)}$ | Case 1: $k_{\lambda}=k_u$ corresponds to geometric approximation $k_g=k_u=2$ | Case 2: $k_{\lambda}=k_u-1$ correspond to geometric approximation $k_g=k_u+1=3$.}
    \label{fig: MovingSphere u Pu}
\end{figure}

We also observe optimal convergence for the full $\eu^{L^{\infty}(L^2)}$ errors  in the $k_\lambda=k_u-1$ case; see \cref{theorem: Velocity Error Estimates ENS}, but notice higher than expected convergence rates $\bigo(h^{2.5})$ for the tangent part $\bfe_{\bfPg\bfu}^{L^{\infty}(L^2)}$. 
Similarly, we observe $\bigo(h^{2.5})$ convergence in both full and tangential $L^{\infty}_{L^2}$ velocity errors for the $k_\lambda=k_u$ case (it is expected that the full and tangential velocity errors would coincide in this case, since the normal velocity error is $\bigo(h^{3})$; see \cite[Theorem 6.14, Theorem 6.16]{elliott2024sfem}). Although this behavior is expected for the stationary surface Stokes (or unsteady Navier-Stokes) case, see \cite[Theorem 6.14, Theorem 6.16]{elliott2024sfem} or \cite{elliott2025unsteady} (in actuality in that case we expect an increase of one full order $\bigo(h^{3})$ and not half order $\bigo(h^{2.5})$),  it is not clear that this should be the case in the evolving setting. Indeed,  the Ritz-Stokes estimates for the material derivative in \cref{lemma: Error Bounds der Ritz-Stokes ENS,lemma: Error Bounds der Ritz-Stokes std ENS} do not indicate any increase in convergence. Therefore, we believe that this might be indicative of the material derivative Ritz-Stokes estimates not being sharp, e.g. \eqref{eq: Error Bounds der Ritz-Stokes L2 ENS} where we only prove an $\bigo(h^{\widehat{r}_u})=\bigo(h^{2})$ convergence. This is also speculated in \cref{remark: issues mat der ENS,remark: non tangential mat der ENS}, where we believe that it should be possible to show the  half-order increase, at least in the geometric part of the error.



Lastly, we also note a surprising super-convergence result for the $k_\lambda= k_u$ case, which was also observed in \cite[Section 7.3]{elliott2024sfem}, namely, that for affine finite element ($k_g=1$) we observe almost $\bigo(h^2)$ convergence rates across all of our quantities \cref{fig: MovingSphere kg1}.

\underline{About the choice of the approximation space $\Lambda_h$}, i.e. the choice of $k_\lambda$: Letting $k_\lambda= k_u$, while we do obtain optimal results even when using \emph{iso-parametric surface finite elements}, this comes at the expense of stability of the system, since for this choice the condition of the corresponding systems of equations scales much worse w.r.t. the mesh parameter $h$; for more details we refer to \cite{fries2018higher}.
\begin{figure}
    \centering
    \includegraphics[width = \textwidth]{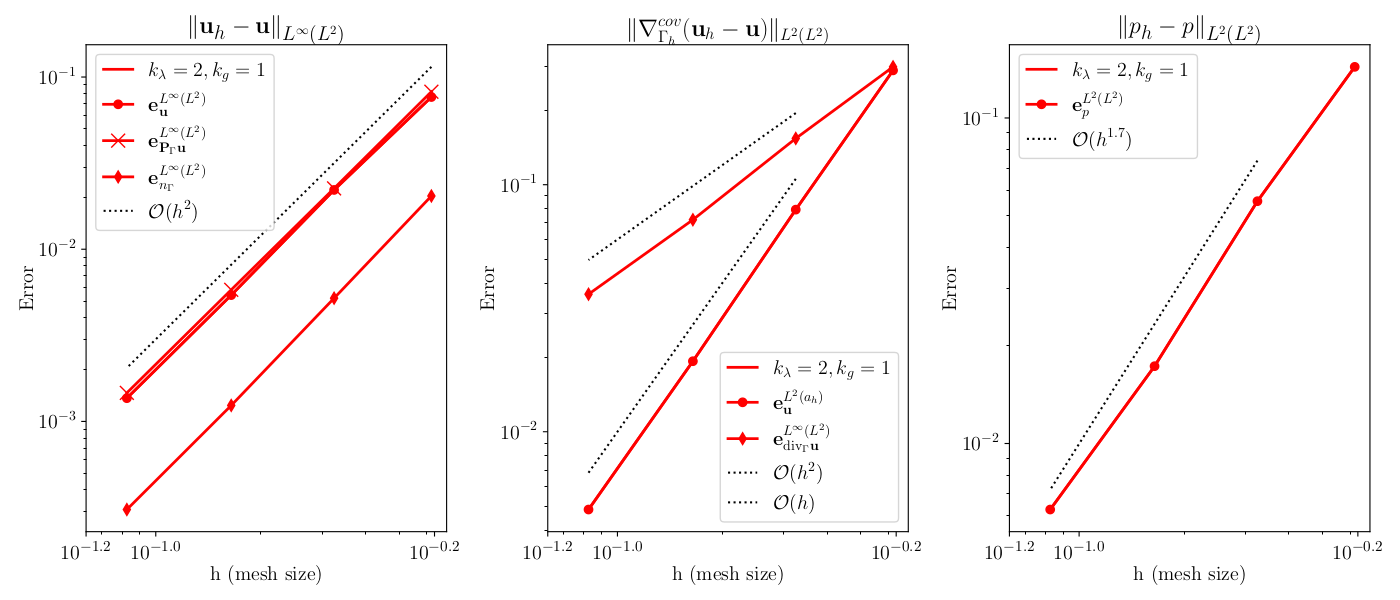}
    \caption{Moving-Right Sphere | Velocity - Pressure Errors | Flat Case: $k_{\lambda}=k_u=2$ corresponds to geometric approximation $k_g=1$.}
    \label{fig: MovingSphere kg1}
\end{figure}

\subsection{Example 2: Comparison Oscillating Sphere}\label{Sec: num comparison ENS}
In this example, we perform a simple comparison test for our numerical scheme \eqref{eq: L.M. ENS} based on the weak formulation \eqref{eq: NS Lagrange new ENS} (i.e.  $c^{(\cdot)} = c^{cov}$; see also \eqref{eq: fully discrete fin elem approx cov ENS} which we analyzed) when $k_\lambda=k_u-1$ and a penalty-based numerical scheme for the tangential Navier-Stokes on a passively evolving surfaces with prescribed velocity \cite{olshanskii2022tangential}; see \eqref{eq: P.M. ENS} below. The latter scheme was considered and studied extensively in \cite{olshanskii2024eulerian} where the authors used unfitted finite elements (TraceFem).
In our case, we present a naive numerical approximation \eqref{eq: P.M. ENS} (PM: Penalty Method) using fitted finite elements (ESFEM). Similar schemes were studied for the Stokes equations on stationary surfaces \cite{reusken2024analysis}, where a comparison to the Lagrange-based approach was carried out in \cite{elliott2024sfem,elliott2025unsteady}.
Furthermore, a scheme more similar to ours \eqref{eq: L.M. ENS} was also studied numerically in \cite{Krause2023ASurfaces}, where the authors, this time, used a penalty-based approach to constraint the normal velocity. However, since they   used a mesh regularization approach, and did not perform an error analysis, we decided against comparing it to our method. The penalty scheme we compare it with, instead, is the following:

\noindent {\textbf{(PM): }}
Given $\uh^0 \in \bfV_h^{0,div}$ and appropriate approximation for the data $\bff_h,\,{\eta}_h^n$, for $n=0,1,...,N$ find $\uhn \in \bfV_h^n$, $\{\phn,\lambda_h^n\}\in Q_h^n\times \Lambda_h^n$ such that 
\begin{align}
\!\!\!\!\!\!\!\!\!\!\begin{cases}
\tag{PM}
        \label{eq: P.M. ENS}
\frac{1}{\Delta t}\Big(\mh(\uhn,\bfP_{h}^{n}\vhn) - \mh(\uhnN,\bfP_{h}^{n-1}\vh^{n-1})\Big) - \gh(\TrVel;\uhn,\bfP_{h}^{n}\vhn) + \ahhat^T(\uhn,\vhn)\\
 \quad + c_h^{cov}(\underline{\uhnN}(\cdot,t_n),\uhn,\vhn) 
 +b_h(\vhn,\phn) = \mh(\bff_h^n,\bfP_{h}^{n}\vhn) + \mh(\uhnN,\bfP_{h}^{n-1}\underline{\matdt\vhn}(\cdot,t_{n-1})), \\
 \qquad\qquad\qquad\qquad\qquad\qquad\qquad\qquad\qquad  \ \ \  \! b_h(\uhn,\qhn) = m_h({\eta}_h^n,\qhn). 
\end{cases}
 \end{align}
for all $\vhn \in \bfV_h^n$ and $\vhnN \in \bfV_h^{n-1}$ and $\{\qhn,\xihn\}\in Q_h^n\times\Lambda_h^n$, where $\ahhat^T(\cdot,\cdot)$ is the  bilinear form $\hat{a}_n(\cdot,\cdot)$ in \cite[Eq. (4.1)]{olshanskii2024eulerian}, without the added stabilization term (in our setting $w_N = V_{\Gamma}$). Therefore, hidden also in this bilinear form is the penalty term $\tau\mh(\uh^n\cdot\nhtil^n,\vh\cdot\nhtil^n)$ which corresponds to the tangentiality condition $\bfu^n\cdot\bfn =0$. According to \cite{olshanskii2024eulerian}, to obtain optimal convergence we consider a higher-order approximation for the normal $\nhtil$, than the usual $\nh$, i.e. $\norm{\bfn-\nhtil}_{L^{\infty}(\Gah)}\leq ch^{k_g+1}$, and also choose the penalty term appropriately, e.g. $\tau = \frac{h^{-2}}{2}$. For the construction of such normal $\nhtil$ see \cite[Remark 3.3]{hansbo2020analysis} for further details. Finally, notice that the source terms $\bff_h^n$, ${\eta}_h^n$ are different from the source terms in \eqref{eq: L.M. ENS}. For instance, now, ${\eta}_h = \eta_{h,1}$ and so it does not include the term $\eta_{h,2}=V_{\Gah}$; see \eqref{eq: right-hand side approx ENS}, since we approximate the tangential Navier-Stokes on a passively evolving surface instead.

\begin{figure}
    \centering
    \includegraphics[width = \textwidth]{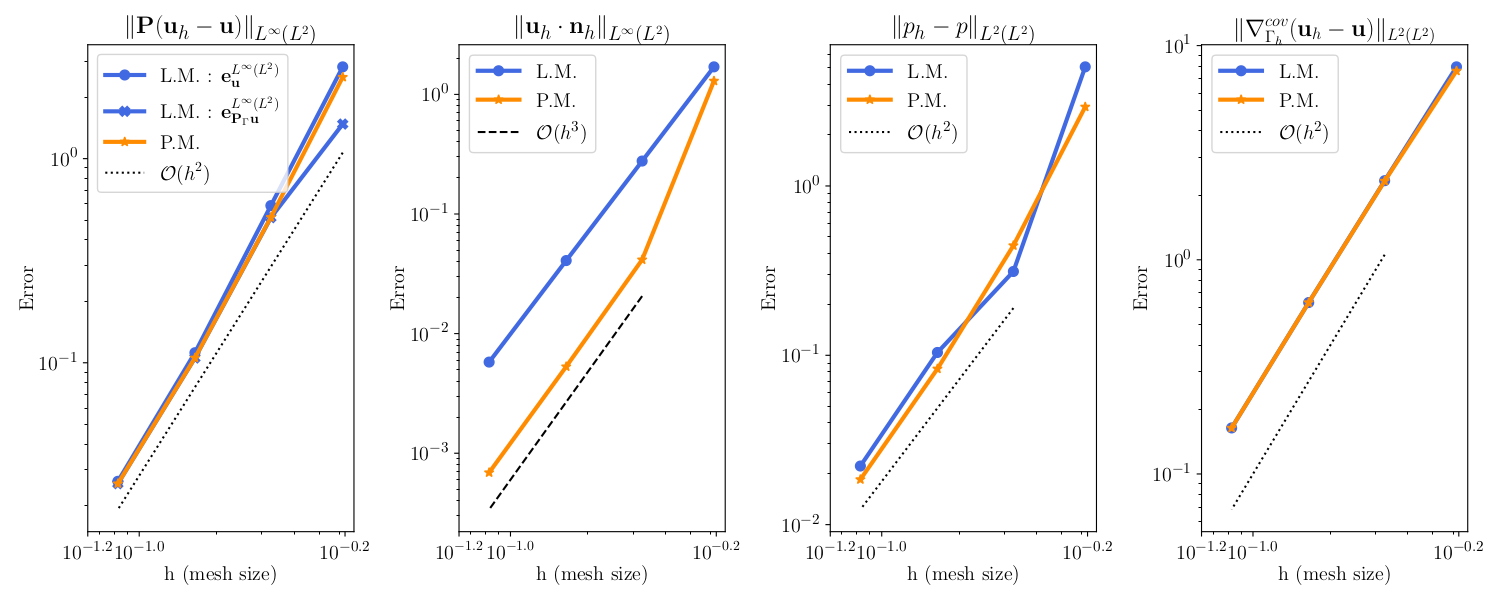}
    \caption{Oscillating Sphere | Errors | \eqref{eq: L.M. ENS}: \{$k_u=2, \, k_{pr}=k_{\lambda}=1,\, k_g=3$\},  \eqref{eq: P.M. ENS}: \{$k_u=2, \, k_{pr}=1,\, k_g=2,\, \tau \sim h^{-2}, \,  |\nhtil-\bfn| \sim h^{k_g+1}\sim h^{3}$\}}
    \label{fig: Osc Sphere All ENS}
\end{figure}

To compare the two methods, we consider an oscillating sphere $\mathcal{S}$ given by
\begin{equation}
    d(\bfx,t) = \norm{\bfx}_2^2 - r(t), \qquad \text{with } r(t)=1 + \frac{1}{4}sin(2\pi t),
\end{equation}
where $T=1$ the final time, and therefore the surface normal velocity $\FlVel$ is given by $\FlVel = V_{\Gamma}\bfng$, where $V_{\Gamma}\t = r'(t)=\frac{\pi}{2}cos(2\pi t)$. We also describe an exact tangential solution as
\begin{equation}
\begin{aligned}
    \bfu = \textbf{curl}_{\Ga} \psi, \ &\text{with} \ \psi =  (1-2t)\frac{1}{2\pi}\cos(2\pi x_1)\cos(2\pi x_2)\cos(2\pi x_3),\\
    &p = \sin(\pi x_1)\sin(2\pi x_2)\sin(2\pi x_3).
\end{aligned}
\end{equation}
where $\textbf{curl}_{\Ga} = \bfn \times\nbg$, which by \cite{reusken2018stream} we know it is tangential and moreover $p \in L^2_0(\Gat)$. The data then on the right-hand side of both \eqref{eq: L.M. ENS} and \eqref{eq: P.M. ENS}, can be calculated numerically with the help of the exact solution as interpolation of the smooth data.

For \eqref{eq: L.M. ENS}  we use $\mathrm{\mathbf{P}}_2$ -- $\mathrm{P}_1$ -- $\mathrm{P}_1$ \emph{Taylor-Hood} \emph{super-parametric finite elements}, with surface approximation of order $k_g=3$. While for \eqref{eq: P.M. ENS} we use $\mathrm{\mathbf{P}}_2$ -- $\mathrm{P}_1$ \emph{Taylor-Hood} iso-parametric surface finite elements with surface order approximation $k_g = 2$, while as noted before, for the penalty parameter we use $\tau = \frac{h^{-2}}{2}$ and the improved normal $\nhtil$.

We set the viscosity parameter $\mu =2\times10^{-2}$ and the density distribution $\rho=1$, for both cases, and use a sufficiently accurate quadrature rule, starting with an initial mesh size of $h_0=0.62$ and a time step $\Delta t_0 = 0.5$ and perform a series of regular bisection mesh-refinements, such that the spatial refinement is halved while the temporal refinement is reduced by a factor of four, ensuring that $\Delta t \sim h^2$. That, then, allows us to obtain optimal convergence; see \Cref{theorem: Velocity Error Estimates ENS,theorem: Pressures Error Estimate HR ENS}.

Now, \cref{fig: Osc Sphere All ENS} depicts the experimental order of convergence for both \eqref{eq: L.M. ENS} and \eqref{eq: P.M. ENS}. We observe the same optimal convergence for all our quantities. In particular, the only difference (in magnitude of error to precise) appears in the normal approximations, which was expected. Indeed, the normal convergence rate for \eqref{eq: P.M. ENS} aligns with the known literature \cite{reusken2024analysis,elliott2024sfem,elliott2025unsteady} on stationary surfaces, while we observe an improved rate $\bigo(h^{3})$ for \eqref{eq: L.M. ENS}  \cite{elliott2024sfem,elliott2025unsteady} (expected an $\bigo(h^{2})$ for $k_{\lambda}=k_g-1$). For the normal approximation, we want to note, once again, that \eqref{eq: L.M. ENS} compares the quantity $\max_{0 \leq n \leq N}\norm{\uh^n\cdot\nh-V_{\Gamma}}_{L^2(\Gahn)}$, while the penalty method \eqref{eq: P.M. ENS} the quantity $\max_{0 \leq n \leq N}\norm{\uh^n\cdot\nhtil^n-0}_{L^2(\Gahn)}$. 


\begin{figure}[h]
\centering
  \subfloat[Velocity $t=0$]{\includegraphics[width=0.45\textwidth]{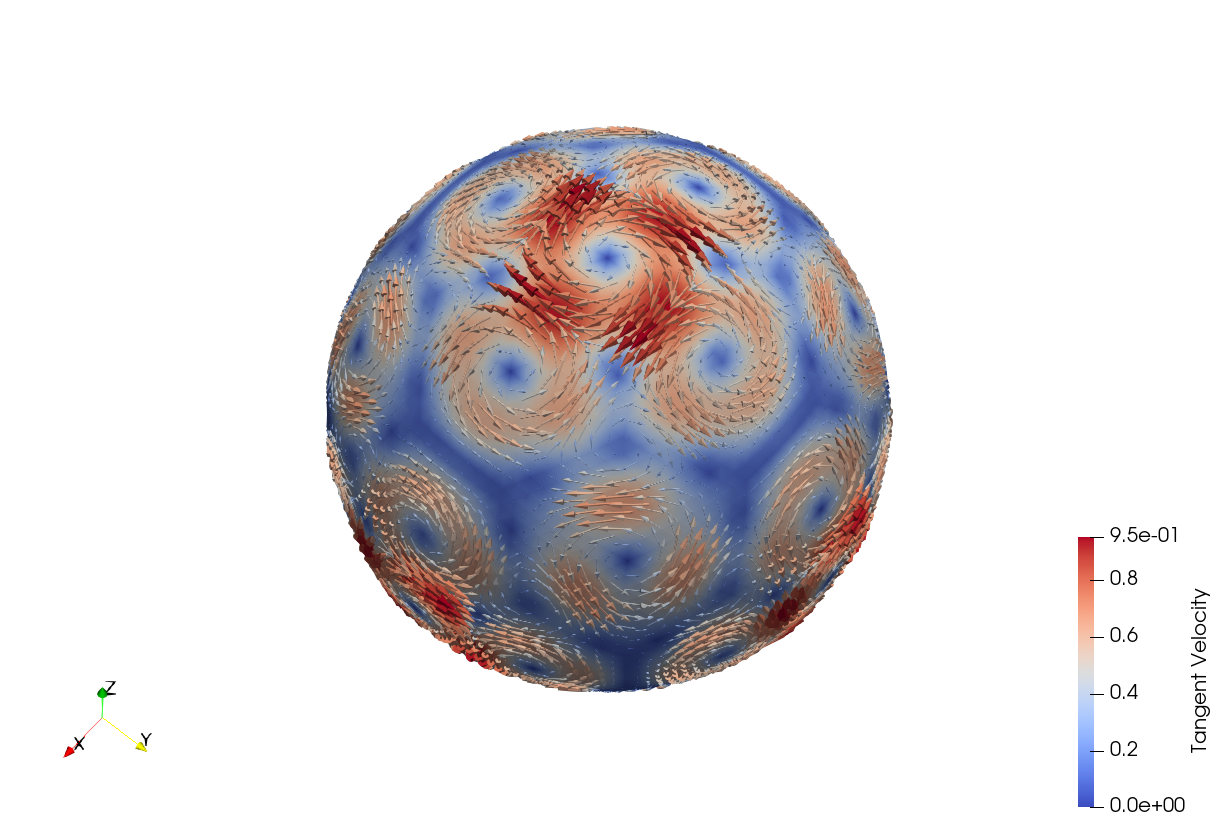}}
  \subfloat[Velocity $t=0.15$]{\includegraphics[width=0.45\textwidth]{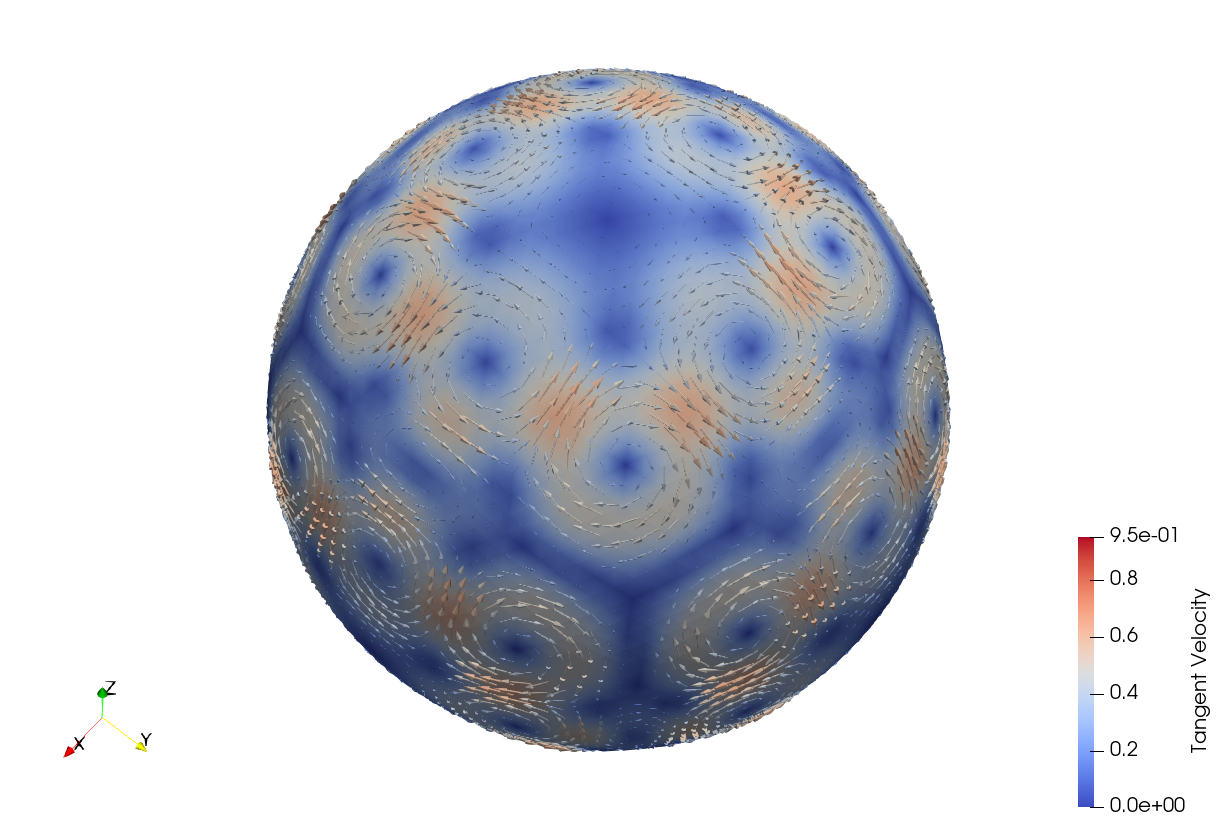}}
  \hfill
   \subfloat[Velocity $t=0.85$]{\includegraphics[width=0.45\textwidth]{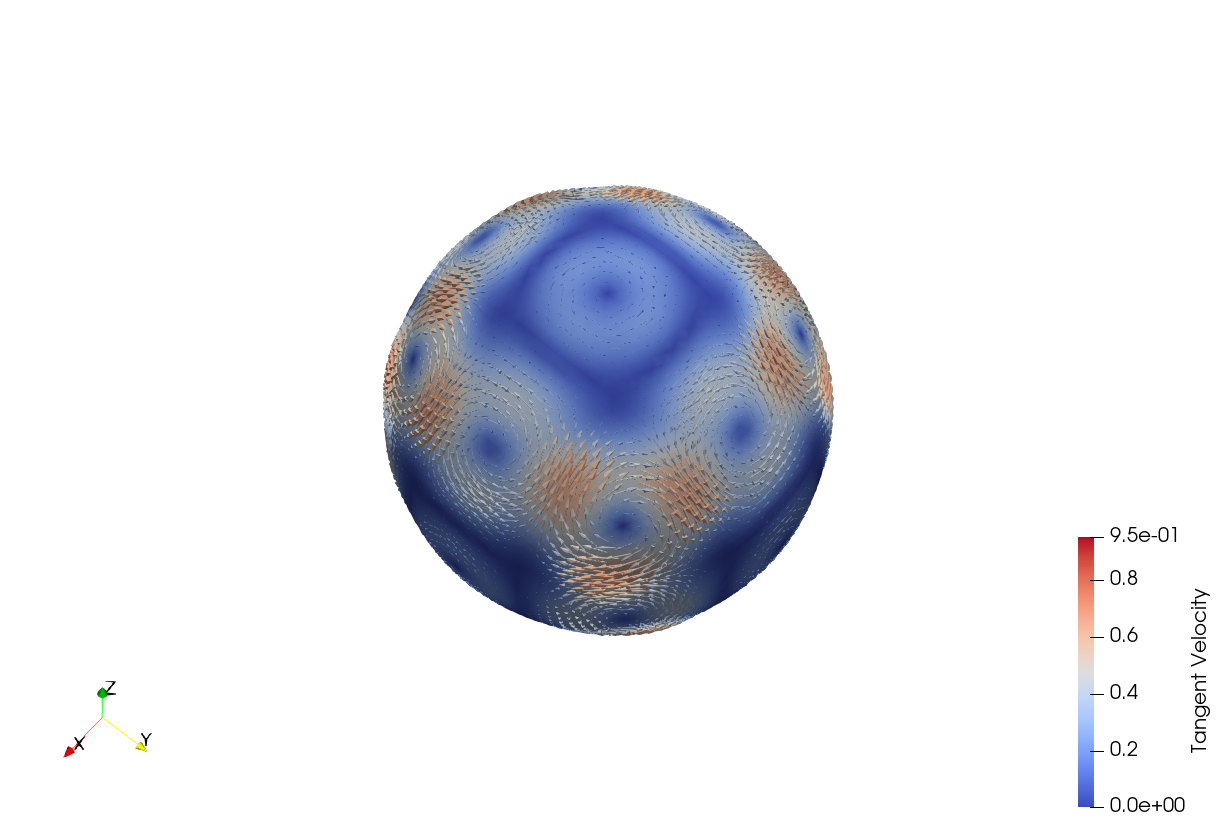}}
   \hfill
    \subfloat[Velocity $t=1$]{\includegraphics[width=0.45\textwidth]{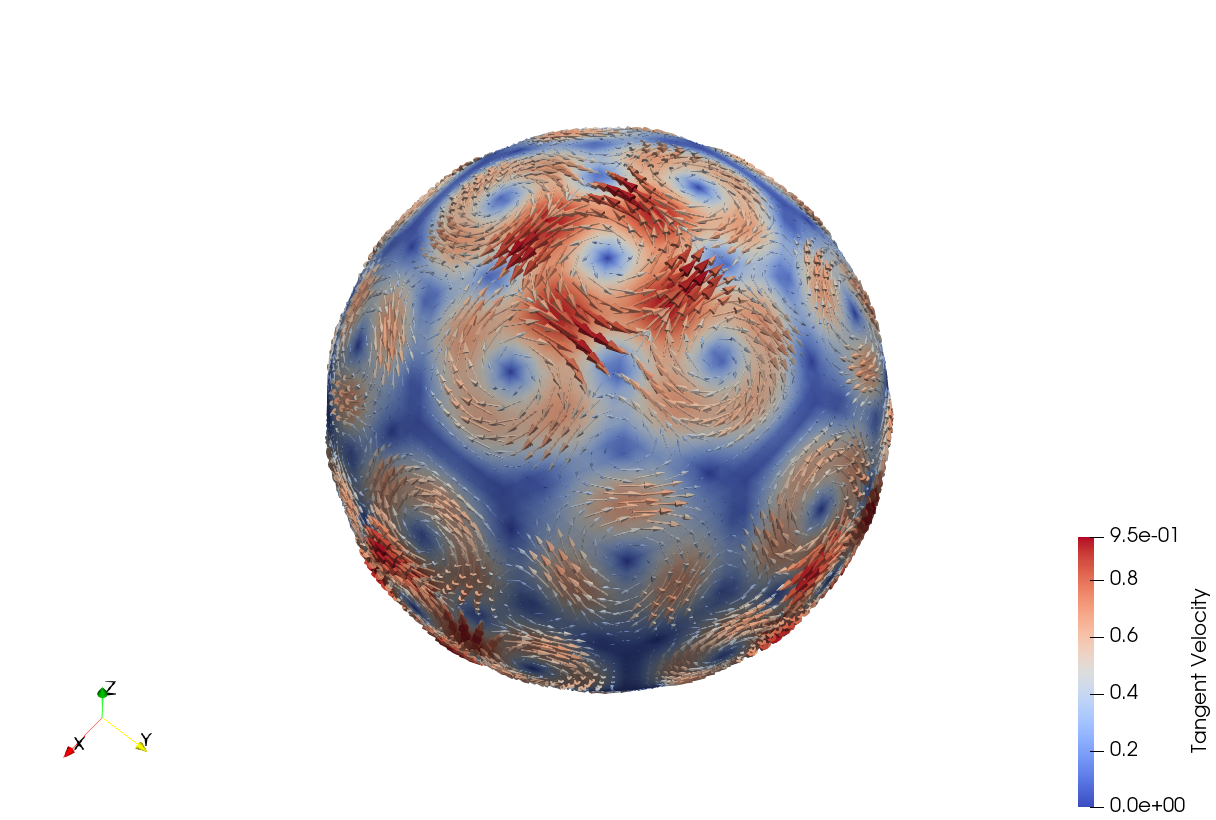}}
  \caption{Oscillating Sphere | Tangent Velocity $\bfPh\uh^n$ at different times-steps for $\mathbf{k_g=3}$, $\mathbf{k_u=2}$, $\mathbf{k_{pr}=k_{\lambda}=1}$, with mesh size $h=0.09$ and $\Delta t =2.5\times10^{-2}$. The length and direction of the arrows depict the strength and orientation of the current.}
  \label{fig: Dziuk Velocity}
\end{figure}

\section{Brief Summary and Outlook}

\quad \  \textbf{Brief Summary:}    
We addressed the numerical study of the evolving surface Navier--Stokes equations with prescribed normal velocity. Using Taylor--Hood surface finite elements we developed two fully time-discrete backwards Euler discretizations schemes, see \cref{sec: The Fully Discrete Scheme ENS}, depending on the choice of  the convective term $c^{(\cdot)}(\bullet;\bullet,\bullet)$ and approximation of the Lagrange multiplier involving the normal velocity constraint $\lh$. In order to simplify our calculations, we developed our numerical analysis
based on the discretizations of the solenoidal formulations introduced in \cref{sec: Variational formulation ENS}; see \cref{remark: about f source discrete ENS} also. For these simplified schemes in \cref{Sec: asssumptions about discrete scheme ENS,sec: error analysis ENS}, we established well-posedness as well as  optimal $L^2_{\ah}$ and $L^2_{L^2}\times L^2_{H^{-1}_h}$ (or $L^2_{L^2}\times L^2_{L_2}$) error bounds in \cref{sec: error analysis ENS}, for the velocity and pressures, respectively (again depending on the choice of the discretization analyzed and approximation space of $\lh$). To achieve this, we used two important ideas. First, we introduced a new surface Ritz-Stokes projection, presented in \cref{sec: Ritz-Stokes Projection ENS}, along with appropriate interpolation bounds. Second, we proposed a novel \emph{Leray time-projection} \eqref{eq: discrete time Leray proj ENS} to ensure weakly divergence conformity for our discrete velocity
solution at two different time-steps (surfaces) and therefore reduce, in a sense, our analysis to that of the  stationary case \cite{elliott2025unsteady}. That is, proving an auxiliary bound for the approximation of the time-derivative (material derivative) in a weaker dual norm, with the help of an inverse surface Stokes operator, to establish the pressure bounds. Lastly, we also provided numerical results to accompany and validate our theoretical results; see \cref{sec: Numerical results ENS}.

\vspace{5mm}

\textbf{Outlook:}  Let us briefly discuss some open problems and/or extensions, regarding the numerical study of the evolving surface Navier-Stokes equation \eqref{eq: NS Lagrange begin ENS}:
\begin{itemize}
    \item 
    In order to present a complete analysis, i.e. prove also an improved $L^2_{L^2}$(or $L^{\infty}_{L^2}$)-norm velocity error bounds, in accordance with our numerical investigation in \cref{Sec: moving sphere ENS}, it seems one would need to improve the material derivative estimates of the surface Ritz-Stokes projection in \cref{lemma: Error Bounds der Ritz-Stokes ENS}; see \cref{remark: non tangential mat der ENS,remark: issues mat der ENS} for more detailed discussion where we also proved some initial improvements. In general, though, the issue stems from the fact that the material derivative is no longer tangential $\matn\bfu\cdot\bfng\neq0$ even if the velocity is $\bfu\cdot\bfng=0$. 
    \item  In \cite{olshanskii2024eulerian} an Eulerian finite element method was studied for the tangential surface Navier-Stokes on a passively evolving surface (TSNSE) using Trace-FEM. Based on this, in the numerical experiment in \cref{Sec: num comparison ENS}, we introduced a new naive penalty approximation scheme (\cref{eq: P.M. ENS}) for (TSNSE) using evolving surface finite elements (ESFEM) instead. Following our analysis in this paper, we believe that it should be possible to prove stability and error bounds for \cref{eq: P.M. ENS} (or similar schemes), since then the analysis simply reduces to finding an appropriate Ritz-Stokes projection estimate \cref{lemma: Error Bounds Ritz-Stokes ENS,lemma: Error Bounds der Ritz-Stokes ENS}.
    \item The introduction of the \emph{Leray time-projection} \eqref{eq: discrete time Leray proj ENS} enables us, in a sense, to reduce the numerical analysis of the evolving surface Navier-Stokes equation  to the analysis of the stationary case one \cite{elliott2025unsteady}. Since the latter follows the arguments and ideas of standard NS equations on bulk domains (adapted to surfaces) e.g. \cite{JohnBook2016,ArchUnifDataAssimNS2020,FrutosGradDivNS2016}, we believe that results presented in the bulk domain case, e.g. grad-div stabilization \cite{FrutosGradDivNS2016,FrutosGradDivOseen2016} or other stabilization schemes \cite{fries2018higher,JohnBook2016}, should be easily accessible to evolving surfaces.
\end{itemize} 
\appendix
\section{Differential operators}\label{appendix: differential operators ENS}
In this appendix, we focus on the properties of surface differential operators. We assume that the surface $\Ga$ is smooth enough. All smooth extensions will be omitted for clarity reasons, i.e. $\partial_i f^e = \partial_i f$. Considering two vectors $\bfa, \, \bfb \, \in \mathbb{R}^3$ we use the Einstein summation convention, that is, $\bfa \cdot \bfb = \sum_{i=1}^3a_ib_i = a_ib_i$. We also introduce the following abbreviation for the i-th partial derivative, $\partial_i := \frac{\partial}{\partial x_i}$. Then 
$(\nb \bfu)_{ij} = \partial_j(u_i), \ 1\leq i,\,j \leq 3$.

Using these notation, we can express the surface differential operators for suitable scalar $f : \mathbb{R}^3 \to \mathbb{R}$
functions and vector-valued functions $\bfv  : \mathbb{R}^3 \to \mathbb{R}^3$ as
\begin{equation}\label{eq: surface operators abbreviations}
    \begin{aligned}
        &(\nbg f)_i = P_{ij}\partial_j f, \qquad \quad
        (\nbg \bfv)_{ij} =  \partial_k v_iP_{kj}, \qquad \quad (\nbgcov (\bfv))_{ij} = P_{jk}\partial_k v_lP_{li},\\
        &\qquad \qquad \qquad  \qquad\divg(\bfv) = (\nbgcov (\bfv))_{ii} = P_{ik}\partial_k v_lP_{li} = \partial_k v_iP_{ki},
    \end{aligned}
\end{equation}
where $(\bfPg)_{ij} = P_{ij} $  the orthogonal projection defined in \cref{Sec: The closed smooth surface ENS}. The following relations also hold:
\begin{equation}\label{eq: frequent relations}
    \begin{aligned}
        P_{ij} = \delta_{ij} - n_in_j,\quad P_{ij}n_j=0,\quad H_{ij} = \partial_i(n_j) = \partial_j(n_i)
        \\\partial_k{P_{ij}} = -n_iH_{kj} - n_jH_{ki}, \quad P_{ik}H_{kj} = H_{ij}.
    \end{aligned}
\end{equation}
Many of the results presented here can be found in \cite{jankuhn2018incompressible,Miura2020}.

\begin{lemma}[Identities]
For vector-valued functions $\bfw,\bfv \in C^1(\Ga)^3$, where $\bfv = \bfv_T + \bfv_n$ with $\bfv_n = v_N \bfng$, the following relationships hold: \vspace{-1mm}
\begin{align}
\label{eq: identity dot product nbgdir}
\nbg(\bfw\cdot\bfv) = \nbg\bfw^{t}\bfv + \nbg\bfv^{t}\bfw \\ 
        \nbg\bfv = \nbgcov \bfv +  \bfng\nbg(\bfv \cdot \bfng)^t - \bfng(\bfH \bfv)^t  \label{eq: identity dir to cov},\\
         \nbgcov \bfv = \nbgcov \bfv_T + \bfH v_N \label{eq: nbgcov split}.
    \end{align}
\end{lemma}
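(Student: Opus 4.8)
\textbf{Proof plan for the identities \eqref{eq: identity dot product nbgdir}--\eqref{eq: nbgcov split}.}
The plan is to verify each of the three identities by unwinding the component definitions collected in \eqref{eq: surface operators abbreviations} and \eqref{eq: frequent relations}, using the Einstein summation convention throughout. These are pointwise algebraic identities on $\Ga$, so no integration by parts or functional-analytic input is needed; the whole argument is a careful index computation. For \eqref{eq: identity dot product nbgdir}, I would start from $(\nbg(\bfw\cdot\bfv))_i = P_{ij}\partial_j(w_k v_k) = P_{ij}(\partial_j w_k) v_k + P_{ij} w_k (\partial_j v_k)$ by the Leibniz rule, and then recognise the first term as $(\nbg\bfw^t \bfv)_i = (\nbg\bfw)_{kj} P^{?}\dots$. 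The only subtlety is bookkeeping the transpose: since $(\nbg\bfw)_{ij} = (\partial_k w_i) P_{kj}$, one checks $(\nbg\bfw^t\bfv)_i = (\nbg\bfw)_{ki}v_k = (\partial_j w_k)P_{ji}v_k = P_{ij}(\partial_j w_k)v_k$ using symmetry of $\bfPg$, which matches exactly. The second term is handled symmetrically by swapping the roles of $\bfw$ and $\bfv$.

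For \eqref{eq: identity dir to cov}, I would begin with $(\nbgcov\bfv)_{ij} = P_{ik}(\nbg\bfv)_{kj} = P_{ik}(\partial_l v_k)P_{lj}$ and insert $P_{ik} = \delta_{ik} - n_i n_k$ in the outer projection, giving $(\nbg\bfv)_{ij}$ minus $n_i n_k (\partial_l v_k) P_{lj} = n_i (\nbg\bfv^t\, ?)$; here the term $n_k\partial_l v_k = \partial_l(v_k n_k) - v_k \partial_l n_k = \partial_l(\bfv\cdot\bfn) - v_k H_{lk}$ by the product rule and the definition $H_{lk} = \partial_l n_k$ from \eqref{eq: frequent relations}. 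Multiplying through by $P_{lj}$ and collecting, one recovers the column vectors $\bfng\,\nbg(\bfv\cdot\bfng)^t$ and $-\bfng(\bfH\bfv)^t$, using $\bfH\bfPg = \bfH$ to absorb the trailing $P_{lj}$ where needed; so the identity follows after rearrangement. Finally, \eqref{eq: nbgcov split} is the decomposition already recorded as \eqref{eq: split cov ENS} restricted to the tangential/normal splitting: writing $\bfv = \bfv_T + v_N \bfng$ and using linearity of $\nbgcov$ together with $\nbgcov(v_N\bfng) = \bfPg\nbg(v_N\bfng)\bfPg$, one expands by Leibniz, notes $\bfPg\nbg\bfng\bfPg = \nbgcov\bfn = \bfH$ and that the term involving $\nbg v_N \otimes \bfng$ is killed on the right by $\bfPg\bfng = 0$, leaving $\nbgcov\bfv_T + \bfH v_N$ (one also uses $\bfH\bfn = 0$ to drop a spurious term).

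The main obstacle, such as it is, is purely notational: keeping the transpose conventions for $\nbg\bfw$ straight (the paper uses $(\nbg\bfw)_{ij} = \partial_k w_i P_{kj}$, so $\nbg$ acts ``from the right'') and making sure that every stray projection factor $P_{lj}$ is either absorbed via $\bfH\bfPg = \bfH$ and $\bfPg\bfPg = \bfPg$ or annihilated via $\bfPg\bfn = 0$. There is no genuine mathematical difficulty; the identities are standard (cf. \cite{jankuhn2018incompressible,Miura2020}) and the proof is a routine but bookkeeping-heavy exercise in the component formulas \eqref{eq: surface operators abbreviations}--\eqref{eq: frequent relations}. I would present \eqref{eq: identity dir to cov} in full detail since it is the one actually used in the reformulation \eqref{eq: new pressure lambda ENS}, and relegate \eqref{eq: identity dot product nbgdir} and \eqref{eq: nbgcov split} to one-line verifications.
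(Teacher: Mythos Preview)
Your proposal is correct and follows essentially the same strategy as the paper: direct index computations from the definitions \eqref{eq: surface operators abbreviations}--\eqref{eq: frequent relations}. The only organizational difference is in \eqref{eq: identity dir to cov}: you insert $P_{ik}=\delta_{ik}-n_in_k$ and apply the product rule $n_k\partial_l v_k=\partial_l(\bfv\cdot\bfn)-v_kH_{lk}$ directly, whereas the paper first splits $\bfv=\bfv_T+\bfv_n$, writes $\nbg\bfv=\nbgcov\bfv+\bfng\bfng^t\nbg\bfv_T+\bfng\bfng^t\nbg\bfv_n$, and computes the two normal pieces separately via $\partial_k P_{lm}=-H_{km}n_l-H_{kl}n_m$; your route is arguably a line shorter but both are the same computation.
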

\begin{proof}
Eq. \eqref{eq: identity dot product nbgdir} is easy to see, since 
\begin{align*}
   \nbg(\bfw\cdot\bfv) = P_{ij} \partial_{j}(w_{k}v_{k}) = P_{ij}(\partial_{j}w_{k})v_{k} + (P_{ij}\partial_{j}v_{k})w_{k}.
\end{align*}
To prove \eqref{eq: identity dir to cov}   we notice that 
\begin{equation}\label{eq: nbgu to nbgcov}
   \nbg\bfv = \nbgcov\bfv + \bfng\bfng^t\nbg(\bfv_T) +  \bfng\bfng^t\nbg(\bfv_n). 
\end{equation}
We keep the first term unchanged and concentrate on the last two terms. So we have 
\begin{equation}
    \begin{aligned}\label{eq: nbgPu}
        \bfng\bfng^t\nbg\bfv_T &= (\bfng\bfng^t\nbg \bfPg\bfv)_{ij} =  
        n_i n_l\partial_k\big((\bfPg\bfv)_l\big)P_{kj} = n_i n_l \Big(\partial_k(P_{lm})v_m + P_{lm}\partial_k(v_m) \Big)P_{kj}\\
        &=  n_i n_l \big(-H_{km}n_lv_m - H_{kl}n_m v_m \big)P_{kj}  =  -H_{jm}v_m n_i = -\bfng(\bfH\bfu)^t,
    \end{aligned}
\end{equation}
where we have used the relations in \eqref{eq: frequent relations}. Now considering that $\bfv_n = (\bfu\cdot \bfng) \bfng =  v_N \bfng$, we may also obtain the following \vspace{-1mm}
\begin{equation}
    \begin{aligned}\label{eq: nbguN}
         \bfng\bfng^t\nbg\bfv_n &= n_i n_l \partial_k\big( v_N n_l  \big)P_{kj} = n_i n_l \partial_k(v_N)n_lP_{kj} + \underbrace{n_i n_l \partial_k(n_l)P_{kj}v_N}_{=0}  \\[-10pt]
        &= n_i\partial_k(v_N)P_{kj} =  \bfng\nbg^t v_N.
    \end{aligned}
\end{equation}\vspace{-3mm}

\noindent Thus, from \eqref{eq: nbgu to nbgcov}, \eqref{eq: nbgPu}, \eqref{eq: nbguN}, equation \eqref{eq: identity dir to cov} follows. Finally, for \eqref{eq: nbgcov split}, applying analogous calculations as in \eqref{eq: nbguN}, we obtain \vspace{-2mm}
\begin{equation*}
    \begin{aligned}
        \nbgcov\bfv_n &= P_{jk}\partial_k\big( v_N n_l  \big)P_{li} = \underbrace{P_{jk}\partial_k(v_N)n_lP_{li} }_{=0} + P_{jk}\partial_k(n_l)P_{li}v_N \\
        &= P_{jk}H_{lk}P_{li} v_N = H_{ij}v_N =  \bfH v_N.
    \end{aligned}
\end{equation*}\vspace{-5mm}

\end{proof}

\section{Transport formulae}\label{appendix: Transport formulae ENS}
\noindent The following identity, which can be found in \cite[Lemma 2.6]{dziukCons2013}, plays an important role for the transport formula:
\begin{equation}\label{eq: commutation derivative ENS}
    \matn \nb_{\Gat} g = \nb_{\Gat} \matn g - \mathcal{D}_{\Ga}(\FlVel)\nb_{\Gat}g,
\end{equation}
where (by our notation in \cref{sec: scalar-vector func ENS} and \eqref{eq: identity dot product nbgdir}) $\mathcal{D}_{\Ga}(\FlVel)$ the surface deformation tensor, written as
\begin{equation}\label{eq: def tensor transport 1 ENS}
\begin{aligned}
    \mathcal{D}_{\Ga}(\FlVel)_{ij} &= (\nb_{\Ga})_i(\FlVel)_j + n_i n_{k} (\nb_{\Ga})_j (\FlVel)_k=\nb_{\Ga}^t\FlVel - \bfng \bfng^t \nb_{\Ga}\FlVel\\
    &= \nb_{\Ga}^t\FlVel + \bfng (\nbg^tV_{\Ga} - (\bfH\bfv_n)^t) =  \nb_{\Ga}^t\FlVel + \bfng \nbg^tV_{\Ga},
    \end{aligned}
\end{equation}
where we omit the time argument (t), i.e. $\Gat = \Ga$, and where $\bfv\cdot \bfng = V_{\Gamma}$ by \eqref{eq: NS Lagrange ENS}.
\begin{lemma}[Transport formulae I]\label{lemma: Transport formulae app ENS}
    Let $\mathcal{M}\t$ be an evolving surface with normal velocity $\FlVel=V_{\Ga}\bfng$. Assume that the functions $f, \,g$ and the vector field $\bfw$ are sufficiently smooth, so that all the following quantities exist, then
    \begin{align}
    \label{eq: Transport formulae 1 app ENS}
        \frac{d}{dt} \int_{\mathcal{M}\t} g \,\ds &= \int_{\mathcal{M}\t} \matn g + g \,\divg(\bfv_n) \,\ds,\\
         \label{eq: Transport formulae 2 app ENS}
         \frac{d}{dt} \int_{\mathcal{M}\t} \bfw \cdot \nbg g \, \ds &=  \int_{\mathcal{M}\t} \matn \bfw \cdot \nbg g + \bfw \cdot \nbg \matn g   + \bfw \cdot \mathcal{B}_{\Ga}^{\diver}(\FlVel)\nbg g \, \ds, 
         \\
         &\hspace{-52.5mm} \text{ where the tensor }  \mathcal{B}_{\Ga}^{\diver}(\FlVel) = \mathrm{I}\,\divg(\FlVel) - \mathcal{D}_{\Ga}(\FlVel)\,(\mathrm{I}\text{ the } 3\times 3 \text{ Identity matrix}) \text{ and,}\nonumber
         \\
        \label{eq: Transport formulae 3 app ENS}
        \frac{d}{dt} \int_{\mathcal{M}\t} \mathcal{A}_{\Ga}\nbg f \cdot \nbg g \, \ds &=  \int_{\mathcal{M}\t} \mathcal{A}_{\Ga}\nbg \matn f \cdot \nbg g + \mathcal{A}_{\Ga}\nbg f \cdot \nbg \matn g \, \ds\\
        &\ + \int_{\mathcal{M}\t} \mathcal{B}_{\Ga}(\FlVel, \mathcal{A}_{\Ga}) \nbg f \cdot \nbg g \, \ds,\nonumber
    \end{align}
where the tensor $\mathcal{B}_{\Ga}(\FlVel, \mathcal{A}_{\Ga})$ is given by 
\begin{equation}\label{eq: mathcal b tensor}
 \mathcal{B}_{\Ga}(\FlVel, \mathcal{A}_{\Ga}) = \matn \mathcal{A}_{\Ga} + \divg(\FlVel) \mathcal{A}_{\Ga} - 2D(\FlVel, \mathcal{A}_{\Ga}),   
\end{equation} \vspace{-3mm}

\noindent with the deformation tensor defined by
$D(\FlVel, \mathcal{A}_{\Ga})_{ij} = \frac{1}{2} (\mathcal{A}_{\Ga} \nbg \FlVel + \nbg^t \FlVel \mathcal{A}_{\Ga}^t)$.
\end{lemma}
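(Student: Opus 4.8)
The statement to prove is \cref{lemma: Transport formulae app ENS}, the three transport formulae on an evolving surface $\mathcal{M}(t)$ with normal velocity $\FlVel = V_\Ga\bfng$.

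\medskip

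\textbf{Plan of proof.} The plan is to treat the three formulae \eqref{eq: Transport formulae 1 app ENS}, \eqref{eq: Transport formulae 2 app ENS}, \eqref{eq: Transport formulae 3 app ENS} in increasing order of difficulty, building each on the previous. For \eqref{eq: Transport formulae 1 app ENS} (the Leibniz rule for moving surfaces) I would invoke the standard surface transport theorem: pull back the integral over $\mathcal{M}(t)$ to the fixed reference manifold $\mathcal{M}(0)$ via the flow map $\Phi_t$, differentiate under the integral sign using that the Jacobian of $\Phi_t$ satisfies $\frac{d}{dt}J_t = J_t\,\divg(\FlVel)$, and note $\divg(\FlVel) = \divg(\bfv_n)$ since $\FlVel = \bfv_n$; then push forward. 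This is classical (e.g.\ the cited \cite{DziukElliott_acta,dziukCons2013}), so I would state it briefly. For \eqref{eq: Transport formulae 2 app ENS}, the key ingredient is the commutator identity \eqref{eq: commutation derivative ENS}, $\matn\nbg g = \nbg\matn g - \mathcal{D}_\Ga(\FlVel)\nbg g$. I would write $\bfw\cdot\nbg g$ as the integrand in \eqref{eq: Transport formulae 1 app ENS}, apply that formula, then use the product rule for $\matn$ and substitute \eqref{eq: commutation derivative ENS} to convert $\matn(\nbg g)$ into $\nbg\matn g$ plus the deformation-tensor term; collecting the $\divg(\FlVel)\mathrm{I} - \mathcal{D}_\Ga(\FlVel)$ terms gives exactly $\mathcal{B}_\Ga^{\diver}(\FlVel)$. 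Here one must be careful that $\mathcal{D}_\Ga(\FlVel)$ appears transposed or not according to whether it hits $\nbg g$ from the left; the expression \eqref{eq: def tensor transport 1 ENS} and the simplification $\bfng(\nbg^t V_\Ga - (\bfH\bfv_n)^t) = \bfng\nbg^t V_\Ga$ (which uses $\bfH\bfv_n = \bfH(V_\Ga\bfng) = 0$ from \eqref{eq: Weingarten map}) should be used to keep the bookkeeping clean.

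\medskip

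For \eqref{eq: Transport formulae 3 app ENS}, the most involved one, I would start from \eqref{eq: Transport formulae 1 app ENS} applied to the scalar integrand $\mathcal{A}_\Ga\nbg f\cdot\nbg g$, obtaining $\matn(\mathcal{A}_\Ga\nbg f\cdot\nbg g) + \divg(\FlVel)\,\mathcal{A}_\Ga\nbg f\cdot\nbg g$. Then expand $\matn$ by the product rule into three pieces: $(\matn\mathcal{A}_\Ga)\nbg f\cdot\nbg g$, $\mathcal{A}_\Ga(\matn\nbg f)\cdot\nbg g$, and $\mathcal{A}_\Ga\nbg f\cdot(\matn\nbg g)$. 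Applying \eqref{eq: commutation derivative ENS} to the last two converts them into the desired $\mathcal{A}_\Ga\nbg\matn f\cdot\nbg g + \mathcal{A}_\Ga\nbg f\cdot\nbg\matn g$ plus two correction terms $-\mathcal{A}_\Ga(\mathcal{D}_\Ga(\FlVel)\nbg f)\cdot\nbg g$ and $-\mathcal{A}_\Ga\nbg f\cdot(\mathcal{D}_\Ga(\FlVel)\nbg g)$. The final step is purely algebraic: show that the leftover terms $(\matn\mathcal{A}_\Ga) + \divg(\FlVel)\mathcal{A}_\Ga - \mathcal{A}_\Ga\mathcal{D}_\Ga(\FlVel) - \mathcal{D}_\Ga(\FlVel)^t\mathcal{A}_\Ga^t$ (suitably paired against $\nbg f$ and $\nbg g$) coincide with the symmetric deformation combination $\mathcal{B}_\Ga(\FlVel,\mathcal{A}_\Ga) = \matn\mathcal{A}_\Ga + \divg(\FlVel)\mathcal{A}_\Ga - 2D(\FlVel,\mathcal{A}_\Ga)$ with $D(\FlVel,\mathcal{A}_\Ga) = \tfrac12(\mathcal{A}_\Ga\nbg\FlVel + \nbg^t\FlVel\,\mathcal{A}_\Ga^t)$. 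One has to identify $\mathcal{A}_\Ga\mathcal{D}_\Ga(\FlVel)\nbg f\cdot\nbg g + \mathcal{A}_\Ga\nbg f\cdot\mathcal{D}_\Ga(\FlVel)\nbg g$ with $2D(\FlVel,\mathcal{A}_\Ga)\nbg f\cdot\nbg g$; this uses that $\mathcal{D}_\Ga(\FlVel)$ restricted to the tangent space equals $\nbg\FlVel$ up to a transpose (the normal part $\bfng\nbg^t V_\Ga$ annihilates tangential vectors), which is where the structure of \eqref{eq: def tensor transport 1 ENS} is essential.

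\medskip

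\textbf{Main obstacle.} I expect the principal difficulty to be the tensor bookkeeping in \eqref{eq: Transport formulae 3 app ENS} --- specifically, getting the transposes right in the deformation term and verifying that the two correction contributions from \eqref{eq: commutation derivative ENS} symmetrize correctly into $-2D(\FlVel,\mathcal{A}_\Ga)$. The subtlety is that \eqref{eq: commutation derivative ENS} produces $\mathcal{D}_\Ga(\FlVel)$ acting on $\nbg f$ and on $\nbg g$ from opposite sides relative to $\mathcal{A}_\Ga$, and one needs to use Einstein-summation identities from \cref{appendix: differential operators ENS} (in particular \eqref{eq: frequent relations} and the fact that $\nbg\FlVel$ is purely tangential after projection, since $\bfH\bfv_n = 0$) to collapse the expression. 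Everything else --- the Leibniz rule, the product rule for $\matn$, and the substitution of \eqref{eq: commutation derivative ENS} --- is routine. I would present the derivation of \eqref{eq: Transport formulae 3 app ENS} in detail and remark that \eqref{eq: Transport formulae 2 app ENS} is the special case $\mathcal{A}_\Ga = \bfw\otimes(\text{something})$, or rather follows by the same two-line argument, and that \eqref{eq: Transport formulae 1 app ENS} is quoted from the literature.
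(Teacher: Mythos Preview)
Your proposal is correct and follows the standard route: apply the basic Leibniz formula \eqref{eq: Transport formulae 1 app ENS}, use the product rule for $\matn$, and substitute the commutator identity \eqref{eq: commutation derivative ENS} to convert $\matn\nbg g$ into $\nbg\matn g$ plus the deformation correction. This is exactly the argument in the references the paper cites (\cite[Lemma A.1]{elliott2015error}, \cite[Lemma 2.1, Lemma 4.2]{DziukElliott_L2}); the paper itself gives no proof beyond those citations, so your write-up is in fact more detailed than what the paper provides.

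One small point of care in your algebraic identification for \eqref{eq: Transport formulae 3 app ENS}: the normal part $\bfng\,\nbg^t V_\Ga$ of $\mathcal{D}_\Ga(\FlVel)$ does not literally annihilate tangential vectors --- acting on $\nbg g$ it produces a normal vector --- but since this normal output is then dotted against a tangential quantity (either $\nbg f$ or $\mathcal{A}_\Ga\nbg f$, provided $\mathcal{A}_\Ga$ preserves the tangent space, which is the intended use), the contribution vanishes. With that caveat your collapse of $\mathcal{A}_\Ga\mathcal{D}_\Ga(\FlVel)\nbg f\cdot\nbg g + \mathcal{A}_\Ga\nbg f\cdot\mathcal{D}_\Ga(\FlVel)\nbg g$ into $2D(\FlVel,\mathcal{A}_\Ga)\nbg f\cdot\nbg g$ goes through.
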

\begin{proof}
   The proof \eqref{eq: Transport formulae 1 app ENS}, \eqref{eq: Transport formulae 2 app ENS}, \eqref{eq: Transport formulae 3 app ENS} are given in \cite[Lemma A.1]{elliott2015error}, or \cite[Lemma 2.1, Lemma 4.2]{DziukElliott_L2}. 
\end{proof}

\begin{lemma}[Transport formulae II]\label{lemma: Transport formulae app II ENS}
Let $\mathcal{M}\t$ be an evolving surface with normal velocity $\FlVel=V_{\Ga}\bfng$. Assume that the the vector fields $\bfw, \, \bfv$ are sufficiently smooth, so that all the following quantities exist, then the following transport formulae hold
    \begin{align}\label{eq: Transport formulae 4 app ENS}
         &\!\!\!\!\!\!\!\!\!\!\!\!\!\!\!\frac{d}{dt} \int_{\mathcal{M}\t} \nbgcov \bfw : \nbgcov \bfv \,\ds =  \int_{\mathcal{M}\t} \nbgcov \matn \bfw : \nbgcov \bfv + \nbgcov \bfw : \nbgcov \matn \bfv \,\ds  +d(\FlVel;\bfw,\bfv), \\
         \label{eq: Transport formulae 55 app ENS}
         &\!\!\!\!\!\!\!\!\!\!\!\!\!\!\!\frac{d}{dt} \int_{\mathcal{M}\t} \nbg^{cov,t} \bfw : \nbg^{cov,t} \bfv \,\ds =  \int_{\mathcal{M}\t} \nbg^{cov,t} \matn \bfw : \nbg^{cov,t} \bfv + \nbg^{cov,t} \bfw :\nbg^{cov,t} \matn \bfv \,\ds  +d_1(\FlVel;\bfw,\bfv),\\
         \label{eq: Transport formulae 66 app ENS}
         &\!\!\!\!\!\!\!\!\!\!\!\!\!\!\!\frac{d}{dt} \int_{\mathcal{M}\t} \nbg^{cov,t} \bfw : \nbgcov \bfv \,\ds =  \int_{\mathcal{M}\t} \nbg^{cov,t} \matn \bfw : \nbgcov \bfv + \nbg^{cov,t} \bfw :\nbgcov \matn \bfv \,\ds  +d_2(\FlVel;\bfw,\bfv),\\
         \label{eq: Transport formulae 77 app ENS}
         &\!\!\!\!\!\!\!\!\!\!\!\!\!\!\!\frac{d}{dt} \int_{\mathcal{M}\t} \nbgcov \bfw : \nbg^{cov,t} \bfv \,\ds =  \int_{\mathcal{M}\t} \nbgcov \matn \bfw : \nbg^{cov,t} \bfv + \nbgcov \bfw :\nbg^{cov,t} \matn \bfv \,\ds  +d_3(\FlVel;\bfw,\bfv),
    \end{align}
where $d(\FlVel;\bfw,\bfv),\, d_1(\FlVel;\bfw,\bfv),\,d_2(\FlVel;\bfw,\bfv),\,d_3(\FlVel;\bfw,\bfv)$ are given by
    \begin{align}\label{eq: Transport formulae d app ENS}
       \!\!\!\!\!\!\!\!\!\!\!\!\!\!d(t;\FlVel;\bfw,\bfv) &:= \int_{\mathcal{M}\t} \nbg \bfw : \nbgcov \bfv \Tilde{\mathcal{B}}_{\Ga}(\FlVel,\bfPg) \,\ds + \int_{\mathcal{M}\t} \nbgcov \bfw : \nbg \bfv \Tilde{\mathcal{B}}_{\Ga}(\FlVel,\bfPg) \,\ds,\\
       \label{eq: Transport formulae d1 app ENS}
       \!\!\!\!\!\!\!\!\!\!\!\!\!\!d_1(t;\FlVel;\bfw,\bfv) &:= \int_{\mathcal{M}\t} \Tilde{\mathcal{B}}_{\Ga}(\FlVel,\bfPg) \nbg^{t} \bfw : \nbg^{cov,t} \bfv  \,\ds + \int_{\mathcal{M}\t} \Tilde{\mathcal{B}}_{\Ga}(\FlVel,\bfPg) \nbg^{cov,t} \bfw : \nbg^{t} \bfv  \,\ds,\\ \label{eq: Transport formulae d2 app ENS}
       \!\!\!\!\!\!\!\!\!\!\!\!\!\!d_2(t;\FlVel;\bfw,\bfv) &:= \int_{\mathcal{M}\t} \big(\matn \bfPg + \frac{1}{2}\divg(\FlVel)\bfPg +  \nbg(\FlVel) \big) \nbg \bfw : \nbg^{cov,t} \bfv  \,\ds \\ 
       &\quad + \int_{\mathcal{M}\t} \nbgcov \bfw :  \nbg^t \bfv \big(\matn \bfPg + \frac{1}{2}\divg(\FlVel)\bfPg +  \nbg^t(\FlVel) \big)  \,\ds, \nonumber \\
       \label{eq: Transport formulae d3 app ENS}
       \!\!\!\!\!\!\!\!\!\!\!\!\!\!d_3(t;\FlVel;\bfw,\bfv) &:= \int_{\mathcal{M}\t} \nbg^t \bfw \big(\matn \bfPg + \frac{1}{2}\divg(\FlVel)\bfPg +  \nbg^t(\FlVel) \big)  : \nbg^{cov} \bfv  \,\ds \\ 
       &\quad + \int_{\mathcal{M}\t} \nbg^{cov,t} \bfw : \big(\matn \bfPg + \frac{1}{2}\divg(\FlVel)\bfPg +  \nbg(\FlVel) \big) \nbg^t \bfv  \,\ds, \nonumber
    \end{align}

\noindent where the tensor $\Tilde{\mathcal{B}}_{\Ga}(\FlVel,\bfPg)$ is given by
\begin{equation}
     \Tilde{\mathcal{B}}_{\Ga}(\FlVel,\bfPg) :=  \matn \bfPg + \frac{1}{2}\divg(\FlVel) \bfPg - E(\FlVel),
\end{equation}
with $E(\cdot)$ the rate-of-strain tensor and $D(\FlVel,\bfPg)=E(\FlVel)$. Notice, also, that $\Tilde{\mathcal{B}}_{\Ga}(\FlVel,\bfPg)$ is symmetric.
\end{lemma}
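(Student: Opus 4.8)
The plan is to reduce all four identities to the scalar transport formula \eqref{eq: Transport formulae 1 app ENS} --- equivalently, to the already-established \eqref{eq: Transport formulae 3 app ENS} --- together with the commutation identity \eqref{eq: commutation derivative ENS}, applied componentwise. First I would make two reductions that halve the work. Since the Frobenius inner product is invariant under transposing both of its arguments, $\nbg^{cov,t}\bfw:\nbg^{cov,t}\bfv = \nbgcov\bfw:\nbgcov\bfv$ pointwise, so \eqref{eq: Transport formulae 55 app ENS} is nothing but \eqref{eq: Transport formulae 4 app ENS} rewritten, and $d_1 = d$ then follows by cyclicity of the trace and the symmetry of $\Tilde{\mathcal{B}}_{\Ga}(\FlVel,\bfPg)$. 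Likewise \eqref{eq: Transport formulae 77 app ENS} (with $d_3$) is obtained from \eqref{eq: Transport formulae 66 app ENS} (with $d_2$) by interchanging $\bfw$ and $\bfv$. Hence it suffices to establish \eqref{eq: Transport formulae 4 app ENS} and \eqref{eq: Transport formulae 66 app ENS}.

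For \eqref{eq: Transport formulae 4 app ENS} I would set $\phi := \nbgcov\bfw:\nbgcov\bfv$ and apply \eqref{eq: Transport formulae 1 app ENS} (the surface moves with purely normal velocity, so $\bfv_n = \FlVel$ and $\divg(\bfv_n)=\divg(\FlVel)$), reducing matters to the computation of $\matn\phi$. Using $\nbgcov\bfw = \bfPg\nbg\bfw$, the product rule, and a row-by-row application of \eqref{eq: commutation derivative ENS} to $\nbg w_1,\nbg w_2,\nbg w_3$ --- together with the fact that the ambient-Cartesian material derivative acts componentwise, $(\matn\bfw)_k = \matn(w_k)$ --- one obtains $\matn(\nbgcov\bfw) = (\matn\bfPg)\nbg\bfw + \nbgcov(\matn\bfw) - \nbgcov\bfw\,\mathcal{D}_{\Ga}(\FlVel)^{t}$, and similarly for $\bfv$. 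Substituting isolates the two ``main'' terms $\nbgcov(\matn\bfw):\nbgcov\bfv + \nbgcov\bfw:\nbgcov(\matn\bfv)$ and leaves a remainder that I claim equals $d(\FlVel;\bfw,\bfv)$ in the stated form. Identifying the two would use: (i) $\nbg\bfv\,\bfPg = \nbg\bfv$ and $\bfPg\nbg\bfw = \nbgcov\bfw$, so $\bfI$ and $\bfPg$ are interchangeable in the slots that occur; (ii) normal components vanish against the tangential tensors $\nbgcov\bfw,\nbgcov\bfv$, so the non-tangential part of $\mathcal{D}_{\Ga}(\FlVel)$ (the $\bfng\nbg^{t}V_{\Ga}$ term in \eqref{eq: def tensor transport 1 ENS}) drops out; (iii) since the left-hand side and the main terms are symmetric in $\bfw\leftrightarrow\bfv$, the remainder is symmetric, so only the symmetric part of $\mathcal{D}_{\Ga}(\FlVel)$ survives --- on the tangent plane this is exactly $E(\FlVel) = D(\FlVel,\bfPg)$; and (iv) the $\matn\bfPg$ and $\tfrac12\divg(\FlVel)\bfPg$ pieces distribute evenly between the two factors. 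Assembling these yields $d(\FlVel;\bfw,\bfv)$ as in \eqref{eq: Transport formulae d app ENS} with $\Tilde{\mathcal{B}}_{\Ga}(\FlVel,\bfPg) = \matn\bfPg + \tfrac12\divg(\FlVel)\bfPg - E(\FlVel)$; its symmetry is immediate, since $\matn\bfPg$ is symmetric ($\bfPg$ being symmetric) and $E(\FlVel)$ is symmetric by construction.

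For \eqref{eq: Transport formulae 66 app ENS} the same computation goes through verbatim up to the final reassembly: the integrand $\nbg^{cov,t}\bfw:\nbgcov\bfv$ is not invariant under transposing both factors, so step (iii) is unavailable and $\mathcal{D}_{\Ga}(\FlVel)$ survives in its (tangential) unsymmetrized form $\nbg\FlVel$ instead of $E(\FlVel)$; this is precisely the ``$+\nbg(\FlVel)$'' (resp.\ ``$+\nbg^{t}(\FlVel)$'') appearing in $d_2$ in \eqref{eq: Transport formulae d2 app ENS}, while the $\matn\bfPg$ and $\tfrac12\divg(\FlVel)\bfPg$ contributions are treated exactly as above. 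Formula \eqref{eq: Transport formulae 77 app ENS} with $d_3$ then follows by the $\bfw\leftrightarrow\bfv$ interchange already noted.

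I expect the main obstacle to be purely the index bookkeeping in the reassembly step --- tracking which slot each copy of $\bfPg$, $\matn\bfPg$ and $\mathcal{D}_{\Ga}(\FlVel)$ acts on, and systematically discarding the normal contributions that vanish against the tangential factors, so as to recover the exact tensors $\Tilde{\mathcal{B}}_{\Ga}(\FlVel,\bfPg)$, $d_2$, $d_3$ rather than some variant involving $\bfI$ or an unsymmetrized gradient. Conceptually, however, this is a routine componentwise extension of the scalar transport formulae \eqref{eq: Transport formulae 1 app ENS}--\eqref{eq: Transport formulae 3 app ENS} and should introduce no new analytic difficulty.
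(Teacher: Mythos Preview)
Your proposal is correct and follows essentially the same approach as the paper: apply the scalar transport formula \eqref{eq: Transport formulae 1 app ENS}, use the commutation identity \eqref{eq: commutation derivative ENS} componentwise together with the product rule to compute $\matn(\bfPg\nbg\bfw)$, and then identify the remainder with the stated $d$-terms via $\bfPg\nbg(\cdot)=\nbgcov(\cdot)$, the symmetry of $\matn\bfPg$, and $D(\FlVel,\bfPg)=E(\FlVel)$. Your explicit reductions (that \eqref{eq: Transport formulae 55 app ENS} coincides with \eqref{eq: Transport formulae 4 app ENS} by transposing both Frobenius arguments, and that \eqref{eq: Transport formulae 77 app ENS} follows from \eqref{eq: Transport formulae 66 app ENS} by swapping $\bfw\leftrightarrow\bfv$) are exactly what the paper means by ``the rest \ldots\ can be derived similarly, using transpose properties''.
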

\begin{proof}
Let us prove \eqref{eq: Transport formulae 4 app ENS} now, then the rest of the transport formulae is proven in an similar manner, using transpose properties. Considering the formula \eqref{eq: commutation derivative ENS} for vector-valued functions we see that 
\begin{equation*}
    \matn \nbg \bfw = \nbg \matn \bfw - \nbg\bfw\mathcal{D}_{\Ga}^t(\FlVel),
\end{equation*}
and therefore, with similar calculations to \eqref{eq: Transport formulae 3 app ENS} (see \cite[Lemma 2.1]{DziukElliott_L2}), we obtain
\begin{equation*}
    \begin{aligned}
       & \frac{d}{dt} \int_{\mathcal{M}\t} \nbgcov \bfw : \nbgcov \bfv \,\ds = \frac{d}{dt}  \int_{\mathcal{M}\t} \bfPg \nbg \bfw : \bfPg \nbg \bfv \, \ds = 
         \int_{\mathcal{M}\t} \matn (\bfPg) \nbg \bfw : \nbgcov \bfv \, \ds \\ 
         &\qquad\qquad\qquad\qquad+ \int_{\mathcal{M}\t} \big(\nbg \matn \bfw - \nbg\bfw\nbg\FlVel \big) : \nbgcov \bfv \, \ds
         + \int_{\mathcal{M}\t} \nbgcov \bfw : \matn(\bfPg) \nbg \bfv \, \ds\\
         &\qquad\qquad\qquad\qquad+  \int_{\mathcal{M}\t} \nbgcov \bfw : \big(\nbg \matn \bfv - \nbg\bfv\nbg\FlVel \big) \, \ds +    \int_{\mathcal{M}\t} \divg(\FlVel) \nbgcov \bfw : \nbgcov \bfw  \, \ds,
    \end{aligned}
\end{equation*}
from which our desired estimate can be derived, since $\int_{\mathcal{M}\t} \nbg \bfw : \nbgcov \bfv \, \ds = \int_{\mathcal{M}\t} \nbgcov \bfw : \nbgcov \bfv \, \ds$ and 
\begin{equation*}
    \int_{\mathcal{M}\t} \nbg\bfw\nbg\FlVel : \nbgcov \bfv \, \ds = \int_{\mathcal{M}\t} \nbgcov\bfw\nbg\FlVel : \nbgcov \bfv \, \ds = \int_{\mathcal{M}\t} \nbgcov\bfw : \nbgcov \bfv \nbg^t\FlVel \, \ds,
\end{equation*}
where we also make use of the fact that $D(\FlVel,\bfPg) = E(\FlVel)$, the rate-of-strain tensor, $\bfPg \nbg(\cdot) = \nbgcov(\cdot)$ and $\matn(\bfPg)$ is symmetric. This gives \eqref{eq: Transport formulae 4 app ENS}. The rest \eqref{eq: Transport formulae 55 app ENS} -- \eqref{eq: Transport formulae 77 app ENS} can be derived similarly.
\end{proof}

In our case, we want to find a transport theorem w.r.t. the rate-of-strain tensor $E(\cdot)$. This is much more complicated since $E(\bfw) = \frac{1}{2}(\nbgcov\bfw + \nb_{\Ga}^{cov,t}\bfw)$, but notice that we have calculated the  transport property of all the possible combinations in \eqref{eq: Transport formulae 4 app ENS} -- \eqref{eq: Transport formulae 77 app ENS}. Based on these results, we can easily derive the following explicit expression. We also,  require an appropriate uniform in time bounds.
\begin{lemma}[Transport formulae III]\label{lemma: Transport formulae strain tensor app ENS}
 Let $\mathcal{M}\t$ be an evolving surface with normal velocity $\bfv_n=V_{\Ga}\bfng$. Assume that the vector fields $\bfw, \, \bfv$ are sufficiently smooth, so that all the following quantities exist, then    
  \begin{align}\label{eq: Transport formulae 5 app ENS}
         \frac{d}{dt} \int_{\mathcal{M}\t} E( \bfw) :  E( \bfv) \,\ds &=  \int_{\mathcal{M}\t}  E(  \matn \bfw) : E(\bfv)+  E(\bfw): E(\matn \bfv)\,\ds \\
         & +\bfd(t;\FlVel;\bfw,\bfv), \nonumber
    \end{align}
where by $\bfd(t;\cdot;\cdot,\cdot)$ we denote the expression
\begin{equation}\label{eq: bfd formula ENS}
    \bfd(t;\FlVel;\bfw,\bfv) = \frac{1}{4}\big(d(t;\FlVel;\bfw,\bfv)+d_1(t;\FlVel;\bfw,\bfv)+d_2(t;\FlVel;\bfw,\bfv)+d_3(t;\FlVel;\bfw,\bfv)\big),
\end{equation}
which is bounded uniform in time as:
\begin{equation}\label{eq: Transport formulae bfd app ENS}
  \bfd(t;\FlVel;\bfw,\bfv) \leq c \norm{\bfw}_{\ahat}\norm{\bfv}_{\ahat}.  
\end{equation}
\end{lemma}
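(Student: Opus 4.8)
The plan is to reduce \eqref{eq: Transport formulae 5 app ENS} to the four transport identities already established in \cref{lemma: Transport formulae app II ENS} by exploiting the bilinearity of the rate-of-strain tensor. Writing $E(\bfw) = \half(\nbgcov\bfw + \nbg^{cov,t}\bfw)$, one first expands
\[
  E(\bfw):E(\bfv) = \frac14\big(\nbgcov\bfw:\nbgcov\bfv + \nbgcov\bfw:\nbg^{cov,t}\bfv + \nbg^{cov,t}\bfw:\nbgcov\bfv + \nbg^{cov,t}\bfw:\nbg^{cov,t}\bfv\big),
\]
integrates over $\mathcal{M}(t)$ and differentiates in time. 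Applying \eqref{eq: Transport formulae 4 app ENS}, \eqref{eq: Transport formulae 77 app ENS}, \eqref{eq: Transport formulae 66 app ENS} and \eqref{eq: Transport formulae 55 app ENS} to the four summands respectively, the material-derivative contributions recombine --- again by bilinearity of $E(\cdot)$ --- into $\int_{\mathcal{M}(t)} E(\matn\bfw):E(\bfv) + E(\bfw):E(\matn\bfv)\,\ds$, while the remaining geometric terms are exactly $\frac14\big(d + d_1 + d_2 + d_3\big)(t;\FlVel;\bfw,\bfv)$, that is $\bfd(t;\FlVel;\bfw,\bfv)$ as defined in \eqref{eq: bfd formula ENS}. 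This step is purely algebraic bookkeeping, so I would not grind through it.

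For the uniform bound \eqref{eq: Transport formulae bfd app ENS} I would estimate each of $d, d_1, d_2, d_3$ by Cauchy--Schwarz. Every tensor appearing as a multiplicative factor in \eqref{eq: Transport formulae d app ENS}--\eqref{eq: Transport formulae d3 app ENS} --- namely $\Tilde{\mathcal{B}}_{\Ga}(\FlVel,\bfPg)$, $\matn\bfPg$, $\divg(\FlVel)\bfPg$, $\nbg\FlVel$ and $E(\FlVel)$ --- is bounded in $L^{\infty}(\Gat)$ uniformly in $t$: this is immediate from the standing smoothness assumption $\norm{\FlVel}_{C^{k}(\Gat)} \leq C_{V_{\Ga}}$ and the uniform regularity of $\{\Gat\}_{t\in[0,T]}$ (in particular $\matn\bfPg = -(\matn\bfn)\otimes\bfn - \bfn\otimes(\matn\bfn)$ with $\matn\bfn$ bounded). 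Consequently $|\bfd(t;\FlVel;\bfw,\bfv)| \leq c\,\norm{\bfw}_{H^1(\Gat)}\norm{\bfv}_{H^1(\Gat)}$, and the proof is completed by invoking the surface Korn inequality $\norm{\bfw}_{H^1(\Gat)} \leq c\big(\norm{E(\bfw)}_{L^2(\Gat)} + \norm{\bfw}_{L^2(\Gat)}\big) \leq c\norm{\bfw}_{\ahat}$, valid on closed surfaces with a constant depending only on $\mathcal{G}_T$.

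The one point requiring care is that $d_2$ and $d_3$ involve the full tangential gradients $\nbg\bfw$, $\nbg\bfv$ rather than their covariant (symmetric) parts, so the pointwise estimate only yields the $H^1$-norm; passing to the energy norm $\norm{\cdot}_{\ahat}$ genuinely uses Korn's inequality, and one must check that the Korn constant --- like all constants here --- is uniform over $t\in[0,T]$, which follows from the uniform smoothness of the evolving surface. I would also record the symmetry of $\Tilde{\mathcal{B}}_{\Ga}(\FlVel,\bfPg)$ from \cref{lemma: Transport formulae app II ENS}, which, exactly as in the proof of \eqref{dh boundedness ENS}, is what lets one pair $\nbgcov\bfv\,\Tilde{\mathcal{B}}_{\Ga}$ cleanly against $\nbg\bfw$. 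Beyond that, everything is a routine consequence of \cref{lemma: Transport formulae app II ENS} and the $L^{\infty}$ control of the flow-induced tensors.
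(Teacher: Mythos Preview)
Your proof is correct and follows essentially the same approach as the paper: expand $E(\bfw):E(\bfv)$ into its four covariant pieces, apply the four transport identities \eqref{eq: Transport formulae 4 app ENS}--\eqref{eq: Transport formulae 77 app ENS}, and recombine. For the bound \eqref{eq: Transport formulae bfd app ENS}, the paper's sketch differs only in that it exploits the symmetry of $\Tilde{\mathcal{B}}_{\Ga}(\FlVel,\bfPg)$ (together with $\bfPg\nbgcov\bfw=\nbgcov\bfw$) to reduce $d(t;\FlVel;\bfw,\bfv)$ directly to covariant-gradient norms $\norm{\nbgcov\bfw}_{L^2}\norm{\nbgcov\bfv}_{L^2}$ before passing to the energy norm, whereas you bound by the full $H^1$-norm and then invoke Korn; both routes end at the same place and are equally valid.
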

\begin{proof}
The equality \eqref{eq: Transport formulae 5 app ENS} can be easily proved by accounting for \eqref{eq: Transport formulae 4 app ENS} -- \eqref{eq: Transport formulae 77 app ENS}. As for the uniform bound of $\bfd(t;\cdot;\cdot,\cdot)$, one can look at \eqref{eq: Transport formulae d app ENS}, where the smoothness of $\mathcal{M}\t$, the symmetry of $\Tilde{\mathcal{B}}_{\Ga}$ and properties of the spectral and frobenious norms give us
\begin{equation*}
    \begin{aligned}
        d(t;\FlVel;\bfw,\bfv) \leq c \norm{\nbgcov\bfw}_{L^2(\Ga)} \norm{\nbgcov\bfv}_{L^2(\Ga)}.
    \end{aligned}
\end{equation*}
Then a straightforward extension gives the desired result for the rest $d_i$, $i=1,2,3$ \eqref{eq: Transport formulae d1 app ENS} -- \eqref{eq: Transport formulae d3 app ENS}.
\end{proof}

\bibliographystyle{siam}
\bibliography{bibliography}
\end{document}